\documentclass[utf-8]{amsbook} 

\usepackage{amsthm, amssymb, amsmath, amscd}
\usepackage{eurosym}
\usepackage{amsfonts}
\usepackage{amsmath}
\usepackage{setspace}
\usepackage[bookmarks=true]{hyperref} 
\usepackage{bbm}
\usepackage{graphicx,amscd}

\usepackage{slashed}

\usepackage{color}

\usepackage[all,cmtip]{xy}

\usepackage{indentfirst}

\newtheorem{theorem}{Theorem}[section]
\newtheorem{theorem*}{Theorem}[]
\newtheorem{metatheorem*}[theorem*]{Meta theorem}

\newtheorem{conjecture}[theorem]{Conjecture}
\newtheorem{cor}[theorem]{Corollary}

\newtheorem{lemma}[theorem]{Lemma}

\newtheorem{prop}[theorem]{Proposition}

\theoremstyle{definition}
\newtheorem{define}[theorem]{Definition}
\newtheorem{define*}[theorem*]{Definition}
\newtheorem{ex}[theorem]{Example}
\newtheorem{remark}[theorem]{Remark}

\DeclareFontFamily{OT1}{pzc}{}
\DeclareFontShape{OT1}{pzc}{m}{it}{<-> s * [1.1] pzcmi7t}{}
\DeclareMathAlphabet{\mathpzc}{OT1}{pzc}{m}{it}

\newcommand{\Hom}{\textnormal{Hom}}
\newcommand{\End}{\textnormal{End}}
\newcommand{\Lip}{\textnormal{Lip}}
\newcommand{\Dom}{\textnormal{Dom}\,}
\newcommand{\K}{\mathbb{K}}

\newcommand{\A}{\mathcal{A}}
\newcommand{\N}{\field{N}}
\newcommand{\id}{\mathrm{id}}
\newcommand{\ch}{\mathrm{ch}}
\newcommand{\ind}{\mathrm{ind}}

\newcommand{\field}[1]{\mathbb{#1}}

\newcommand{\zkz}{\field{Z}/k\field{Z}}
\newcommand{\C}{\field{C}}

\newcommand{\R}{\field{R}}
\newcommand{\Z}{\mathbbm{Z}}

\begin{document}
\title[Bordisms and $KK$]{Bordisms and unbounded $KK$-theory}

\date{\today}

\author{Robin J. Deeley, Magnus Goffeng, Bram Mesland}
\date{\today}

\begin{abstract}
This monograph studies $KK$-theory in its unbounded model. The central object is the $KK$-bordism group obtained by imposing the $KK$-bordism relation on unbounded $KK$-cycles. In the paradigm of noncommutative geometry, an unbounded $KK$-cycle is a noncommutative geometry in its own right and our approach allow for the study of mildly noncommutative geometries (orbifolds, foliations et cetera) as if they were closed manifolds. The techniques we introduce enable us to directly import manifold techniques and arguments into the important yet technical field of unbounded $KK$-theory. 

Recent decades has seen a tremendous progress in the study of the unbounded model for $KK$ as well as secondary invariants, the first motivated by refining computational tools in Kasparov's $KK$-theory and the second by applications to geometry and topology. The aim of this work is to provide a common framework for these two areas: equipping unbounded $KK$-cycles with a geometrically motivated relation recovering Kasparov's $KK$-theory that is computationally tractable for working with secondary invariants. 
\end{abstract}

\maketitle

\tableofcontents

\section{Introduction}

The seminal Atiyah-Singer index theorem \cite{AS1} dates back to the 1960's and exhibits the deep connection between algebra, topology and geometry on the one hand, and analysis on the other hand that elliptic operators and their index theory provide. To facilitate the connection further, Atiyah \cite{ASell} envisioned a (generalized) homology theory where abstract elliptic operators played the role of cycles and the index theorem was a functoriality statement. Atiyah's vision lead Kasparov \cite{Kas1, Kas2} to form his $KK$-theory and Baum-Douglas \cite{BD,BDbor} to form their geometric $K$-homology. The purpose of this monograph is to provide a new approach that marries the analytic techniques of Kasparov's $KK$-theory with the geometric ideas underlying Baum-Douglas' geometric $K$-homology and lies closer to current trends in geometry, topology and mathematical physics. Furthermore, this work aims to provide a coherent set of techniques for unbounded $KK$-theory that encompasses existing applications to index theory, $K$-homology and noncommutative geometry. 

Kasparov's bivariant $K$-theory ($KK$-theory) is a fundamental tool in the study of $C^*$-algebras and applications of $C^*$-algebas to topology, geometry and mathematical physics. Being rooted in $C^*$-algebra theory, $KK$-theory is analytic in nature and its origins lie in the analytic side of the Atiyah-Singer index theorem. Even so the bounded cycles used to define elements in the $KK$-group are at first glance far from the operators typically associated with index theory problems. Such operators are typically elliptic first order differential operators and in particular unbounded. Explicit examples are the de Rham operator, the signature operator, the spin or ${\rm spin^c}$ Dirac operator, among others. From an opposing point of view, one has Baum-Douglas' geometric $K$-homology that takes ${\rm spin^c}$-manifolds as its cycles and closely models the geometric applications. Baum-Douglas' geometric $K$-homology is a $K$-homological incarnation of classical bordism groups where the Thom isomorphism has been implemented by means of having coefficients in $K$-theory. Kasparov's analytic theory and Baum-Douglas geometric theory both come with their own virtues that are of importance in applications. Despite the existence of natural isomorphisms of the theories the discrepancy of the two creates technical problems and created pockets of ad hoc results that have been reinvented several times.

Based on the abundance of the naturally occurring unbounded examples and the conflict between geometric and analytic, we look to the work of Baaj-Julg \cite{baajjulg} who defined the notion of unbounded $KK$-cycle. The bounded transform takes these unbounded operators to Kasparov's notion of bounded $KK$-cycles and Baaj and Julg proved that it is a surjective map onto the $KK$-group \cite{baajjulg} in favourable circumstances. The standard examples from index theory listed above as well as Baum-Douglas' geometric cycles are easily shown to give Baaj-Julg cycles. Perhaps more to the point in our context, these examples give canonical Baaj-Julg cycles, but do not give canonical bounded $KK$-cycles. The associated bounded $KK$-cycles depend on the choice of a cut off function used in the bounded transform. To be clear, the $KK$-class associated to such an operator is canonical as different choices of cut off function lead to homotopic $KK$-cycles. Through this viewpoint, one can see the passage from defining $KK$-theory from bounded cycles to unbounded cycles as a movement to reduce the choices required to define cycles from index theoretical inputs and provide quantitative control. 

Baaj-Julg's unbounded approach to $KK$-theory plays an integral part in Alain Connes' program for spectral noncommutative geometry \cite{connesgravity,Connesbook,Connestrace,connesreconstr}, see also \cite{LRV,renniereconst}. The unbounded aspects of spectral noncommutative geometry have found numerous applications in particle physics \cite{cacicetal,chamconnes,chamconnes2,marcolliearly,nestcc,walterbook} and topological states of matter \cite{bellissardqhe,bellikthe,bournekellen,bourneherman,bournemesland,kellenschulz}. Unbounded $KK$-theory has in the mentioned applications to theoretical physics found utility in its explicit nature connecting analysis to geometry. In recent years the focus has been on the constructive approach to the Kasparov product \cite{BMS,leschkaad1, leschkaad2,kaaddiffabs,kucerovskyprod,LeschMesland,thebeastofmesland,mesren}, a method we shall only occasionally use in the monograph but it will not be center stage. When using Kasparov products, the unbounded approach by no means has a computational monopoly at the level of homotopy classes. In the same way, we by no means claim that the unbounded approach we take in this monograph is a unique method for solving index theoretical problems. {\bf However, the unbounded approach does carry a level of computational ubiquity that we want to highlight.} In this monograph we take unbounded $KK$-theory as a framework in which to perform computations in index theory and $KK$-theory pivoted towards geometric applications. With the viewpoint on noncommutative geometry as geometry, the techniques of this monograph provides geometric proofs for analytically technical results in index theory and bivariant $KK$-theory.

An equivalence relation is needed so that one can define $KK$-theory using Baaj-Julg's unbounded cycles. There are various results towards defining $KK$-groups via the unbounded picture, \cite{Kaadunbdd,meslandvandung} define unbounded $KK$-groups using a homotopy relation as Kasparov did and \cite{DGM} defines $KK$-bordism groups. In $KK$-theory there is an informal mantra that any reasonable equivalence relation on bounded $KK$-cycles will lead to the $KK$-group. {\bf The main analytic result of the present work is a proof that a similar situation occurs in the context of unbounded cycles.} Nevertheless we have concentrated our study on a relation defined using Hilsum's notion of bordism. Why have we chosen to do so? The answer again lies in our desire to make constructions in index theory, if not canonical, at least as explicit as possible. {\bf Hilsum \cite{hilsumcmodbun,hilsumfol,hilsumbordism} has an extensive body of work rich in ideas that aligns with the desire for canonical, and geometric constructions in the realm of noncommutative geometry.} The motivation  to search for a geometric as well as explicit equivalence relation is based on our goal to explore the geometry and index theory arising from the equivalence relation, much in analogy with Atiyah-Patodi-Singer index theory \cite{APS1,APS2,APS3}, in order to place the above listed works on applications of noncommutative geometry in a coherent foundation. Some terminology is required to proceed further with the justification of our choice of Hilsum bordism as the main relation of study. 

\subsection{Analytic versus geometric models for $K$-homology}

Let us discuss the different models and how the associated homology theories conceptualize the Atiyah-Singer index theorem. For the moment we will work with $K$-homology as it illustrates the general case of $KK$-theory. Let $X$ be a finite CW-complex. The Atiyah-Singer index theorem can be encoded into $K$-homology by considering two models. On the topological side one has the Baum-Douglas model for $K$-homology \cite{BD, BHS}, which uses cycles of the form $(M,E,f)$ where
\begin{enumerate}
\item $M$ is a closed, smooth, Riemannian, ${\rm spin^c}$-manifold;
\item $E$ is a smooth Hermitian vector bundle over $M$;
\item $f: M \rightarrow X$ is a continuous map.
\end{enumerate}
On the analytic side one has Kasparov's model, as described in Higson-Roe's classical book \cite{HRbook}, which uses cycles of the form $(H, \rho, F)$ where
\begin{enumerate}
\item $H$ is a complex separable Hilbert space;
\item $\rho: C(X) \rightarrow \mathcal{B}(H)$ is a representation;
\item $F\in \mathcal{B}(H)$ satisfying the following for any $a\in C(X)$, 
\[ \rho(a)(F-F^*)\in \mathcal{K}(H), \rho(a)(F^2-I)\in \mathcal{K}(H), [\rho(a), F] \in \mathcal{K}(H) \]
\end{enumerate}
Kasparov's analytic cycles form what Atiyah \cite{ASell} called abstract elliptic operators.

There is a map from geometric $K$-homology to analytic $K$-homology that is defined via
\begin{equation}
\label{bladlaad}
 (M, E, f) \mapsto  [(L^2(M; S\otimes E), \rho , \beta(D_E))] 
 \end{equation}
where $S$ is the spinor bundle associated to the ${\rm spin^c}$-structure, $L^2(M; S\otimes E)$ are the $L^2$-section of $S\otimes E$, $\rho: C(X) \rightarrow L^2(M; S\otimes E)$ is defined via multiplication of functions, $\beta(x):=x(1+x^2)^{-1/2}$ is the bounded transform, and $D$ is a Dirac operator associated to $S\otimes E$. We note that this map involves a choice of connection to obtain the Dirac operator, a choice of function in the bounded transform, et cetera and hence is not quite defined at the level of cycles but rather maps a geometric cycle to a class in analytic $K$-homology. The map \eqref{bladlaad} induces an isomorphism from geometric $K$-homology to analytic $K$-homology \cite{BHS, Jak, Rav} and in particular, when $X$ is a point we have the Atiyah-Singer's index theorem as stated in \cite{BD,BDbor} and discussed in more detail in \cite{baumvanerpI,baumvanerpII}.

Intermediary between the geometric Baum-Douglas model and the analytic Kasparov model sits Baaj-Julg's unbounded picture. Its role in relation to the geometric and analytic models was not clarified until the works \cite{DGM,Kaadunbdd,meslandvandung}, and we aim to provide further context in this work. The cycles for Baaj-Julg's unbounded picture can be motivated by the appearance of Dirac operators and the bounded transform in the transformation \eqref{bladlaad}. Baaj-Julg's unbounded cycles have the form $(H, \rho, D)$ where $D$ is an unbounded self adjoint operator that satisfies similar condition to $F$, for details see Definition \ref{UnbKKcycDef} below. These unbounded cycles are a variation on Connes' notion of spectral triples, the key object of study in Connes' program for spectral noncommutative geometry \cite{ConnesIHES,Connesbook,varillyetal}. The reader can find a number of applications and examples in the literature \cite{Connestrace,conneslott,connesgravity,connesreconstr,GMCK,GMRtwist,GRU,leschkaad1,Hoch3,LRV,lottlimit,neshtuset,renniereconst,rieffelquant}.

The unbounded cycles sit between the geometric and analytic realization of $K$-homology as we can factor the map \eqref{bladlaad} as
\begin{equation}
\label{bladlaadtwo}
 (M, E, f) \mapsto (L^2(M; S\otimes E), \rho , D_E)\mapsto [(L^2(M; S\otimes E), \rho , \beta(D_E))] 
 \end{equation}
We note that we can by augmenting the $(M, E, f)$ cycles with further data realize the first map in \eqref{bladlaadtwo} as a map at the level of cycles. Indeed, following \cite{HReta, Kescont}, one can consider cycles of the form $(M, E, f, D_E)$ where $M$ and $E$ are as before, $f$ is assumed to be Lipschitz and $D_E$ is a specific Dirac operator associated to the Clifford bundle $S\otimes E$.

Based on this discussion at the cycles level, a desirable feature of an equivalence relation on unbounded cycles is that an equivalence between geometric cycles explicitly produces an equivalence of the associated unbounded cycles. Hilsum's notion of bordism has this property, but first we will take a moment to unpack the previous sentence. The relation on geometric cycles is defined by taking the relation generated by direct sum/disjoint union, bordism and vector bundle modification. For example, suppose that $(M, E, f, D)$ is a cycle that is a boundary and $(W, F, g, D_W)$ is an explicit choice of bordism with boundary $(M, E, f, D)$. The associated analytic $K$-homology element is the zero element but we can ask for more; namely, is there an explicit Hilsum bordism associated to $(W, F, g, D_W)$? Indeed there is such a Hilsum bordism; this should not be too surprising since Hilsum defined his notion bordim to naturally include examples from differential geometry. Somewhat more surprisingly, the two other parts of relation (direct sum/disjoint and vector bundle modification) from geometric $K$-homology also lead to explicit $KK$-bordisms. In summary, using Hilsum bordism as our relation strengthens the relationship between the geometric cycles of the form $(M, E, f, D)$ and the analytic cycles of the form $(H, \rho, D)$.

Strengthening the relationship between geometric and analytic models is not only satisfying from a philosophical point of view but also has important implications in geometry and topology. The relative constructions considered by the first and second listed authors \cite{DeeZkz, DeeZkz2, DeeRZ, DeeMappingCone, DGsurI, DGrelI} are a prototypical example. Each of these relative constructions are based on the bordism relation in geometric $K$-homology (and bivariant generalizations). Once the geometric cycles and associated group have been defined one needs a map to the relevant analytic group. At the $KK$-theory level the relative group can be obtained from a mapping cone construction and the associated long exact sequence. The Hilsum bordism relation allows us to apply the relative process to unbounded analytic cycles in a way completely analogous to the one used on geometric cycles. In doing so, the methods of this monograph will serve to unify the various isomorphisms between the geometric and analytic relative groups considered in \cite{DeeZkz2, DeeMappingCone}. We complete this work in the second part of this monograph \cite{monographtwo}. The motivation for relative constructions is to better understand secondary invariants; a methodology that has been studied in the context of positive scalar curvature metrics \cite{HReta, PSrhoInd, PSZmap, SchPSCsur}, homotopy structure on manifolds and Lipschitz manifolds \cite{HRSur1, HRSur2, HRSur3, HReta, PS, PSsignInd, xieyu, zenobi} and for foliations \cite{andro,antoninifol,connesskandalis,gorolott,hilsumfol,kordy,tu99,tu99a}. All these examples have one thing in common, they concern mildly noncommutative geometries constructed from a group action on a classical geometric object.

\subsection{Main results and overview of the monograph}

In this monograph we overview the existing sphere of examples that in the literature has motivated the usage of $KK$-theory and NCG, as well as study how Hilsum's $KK$-bordisms fit into this picture. The viewpoint we take is that of unbounded $KK$-theory and the reader can find a summary of the basic elements underlying this theory in Section \ref{backgroundsec}. There is to the authors' knowledge no current, complete overview of modern examples in NCG of geometric origins. Part \ref{part:prelim} aims at providing a partial remedy to this omission. The field is too big to aim for completeness, but Section \ref{lknaldknadljnbla} contains an extensive overview of the topical examples central to this work and to unbounded $KK$-theory at large. We claim no novelty but we provide the examples in order to showcase how various NCG-structures fit into unbounded $KK$-theory. Let us summarize the examples of Section \ref{lknaldknadljnbla}. We refer the reader to Section \ref{backgroundsec} for notations and terminology.
There are canonical equivariant unbounded cycles and chains associated with several geometric situations, such as:
\begin{enumerate}
\item For Riemannian manifolds equipped with a Dirac type operator twisted by a $C^*$-bundle, possibly only Lipschitz manifolds, there is an associated unbounded chain. If the manifold is complete we arrive at a closed cycle for Lipschitz functions vanishing at infinity, and there is a rich theory of Dirac-Schrödinger operators.
\item For any $K$-oriented map $f:X\to Y$ of smooth manifolds, the wrong way map comes from an unbounded cycle.
\item The NCG-philosophy extends to truly noncommutative cases producing cycles and chains also for foliations, Cuntz-Pimsner algebras, groupoid $C^*$-algebras with cocycles, continuous trace algebras as well as Fell algebras.
\item Moreover, in the above cases when there is a group action present we arrive at equivariant cycles and chains that allow for descent constructions and  Baum-Connes assembly, producing mildly noncommutative geometries already from commutative geometries.
\end{enumerate}

We proceed in Section \ref{sec:analchain} by studying analytic techniques for manipulating continuity and ``smooth'' subalgebras. The results of Subsections \ref{subsec:autocont} and \ref{subsec:funccalc} allow us to deduce automatic continuity of chains and expand smooth subalgebras via functional calculus, these results are on the whole well known. In Subsection \ref{orderreducsubsec} we introduce methods of complex interpolation \cite{bergloef} into unbounded $KK$-theory; this tool is well studied in functional analysis but in noncommutative geometry it has at best been hinted at \cite{GMRtwist}.

We study Hilsum's notion of $KK$-bordisms in unbounded $KK$-theory in more detail in Part \ref{partII}, where we connect $KK$-bordisms to the examples of Part \ref{part:prelim}.  A significant feature is that $KK$-bordisms forms an equivalence relation on the unbounded $KK$-cycles for a pair $(\mathcal{A},B)$ of a $*$-algebra $\mathcal{A}$ and a $C^*$-algebra, and the set of equivalence classes $\Omega_*(\mathcal{A},B)$ forms an abelian group under direct sum of cycles. This fact goes back to \cite{DGM}, and by a result of Hilsum the bounded transform induces a well defined mapping
\begin{equation}
\label{boundednada}
\beta:\Omega_*(\mathcal{A},B)\to KK_*(A,B),
\end{equation}
where $A$ is the $C^*$-closure of $\mathcal{A}$. {\bf When $\mathcal{A}$ satisfies that the bounded transform \eqref{boundednada} is an isomorphism for all $B$, we say that $\mathcal{A}$ is $\Omega_*$-admissible.} As we discuss soon in Theorem \ref{mainthmadm}, large classes of $*$-algebras are $\Omega_*$-admissible..

We shall see how new, purely noncommutative, phenomena arises from $KK$-bordisms. \emph{An important feature is that in all key examples we can construct canonical $KK$-bordisms from pure analytic geometric reasons; a situation that differs from homotopies.} The reader can find the precise notions described in Section \ref{subsecglying}, Section \ref{exampleofboridmsmssec} focuses on general structure, while the main examples in Section \ref{sec:ncgexbord} relate back to the examples of Section \ref{lknaldknadljnbla}. The results in Sections \ref{exampleofboridmsmssec} and \ref{sec:ncgexbord} can be summarized as follows. 

\begin{theorem*}
There are canonical equivariant $KK$-bordisms associated with
\begin{enumerate}
\item Complete Riemannian manifolds with boundary equipped with a Dirac type operator twisted by a $C^*$-bundle, possibly only Lipschitz, see Subsections \ref{diraconcstarbundleexbordism}, \ref{diracschexbord}.
\item A positive scalar curvature metric on a compact manifold (see Subsection \ref{pscdiracexII}) or two homotopic topological manifolds in dimension $\neq 4$ (see Subsection \ref{bordimadinklm}).
\item The vector bundle modification relation in geometric $K$-homology implementing the Thom isomorphism \cite{BD,BDbor}, or more generally there is a null-bordism for weakly degenerate unbounded cycles, see Subsection \ref{subsecweakdegff}.
\item A homotopy of unbounded operators $(D_t)_{t\in [0,1]}$ induces a bordism with boundary $(\mathpzc{E},D_0)+(-(\mathpzc{E},D_1))$, see Subsection \ref{subsec:homotopyop}.
\item A reasonable perturbation $V$ of an operator $D$ in an unbounded cycle induces a bordism with boundary $(\mathpzc{E},D)+(-(\mathpzc{E},D+V))$, see Subsection \ref{subsec:perturbyop}.
\item A homotopy $(\pi_t)_{t\in [0,1]}$ of the left action in an unbounded cycle induces a bordism with boundary $({}_{\pi_0}\mathpzc{E},D)+(-({}_{\pi_1}\mathpzc{E},D))$, see Subsection \ref{subsec:homotoperep}.
\item Performing order reduction induces a bordism with boundary $(\mathpzc{E},D)+(-(\mathpzc{E},D|D|^{-\alpha}))$ for $\alpha\in [0,1)$ and also $KK$-bordisms can be order reduced in a similar fashion. See more in Subsection \ref{orderredusubsec}.
\end{enumerate}
There is also well defined operations for descent of equivariant cycles, that for any second countable locally compact group $G$ produces a map 
$$\rtimes G:\Omega^G_*(\mathcal{A},B)\to \Omega_*(\mathcal{A}\rtimes_c G,B\rtimes G).$$
Here $\mathcal{A}\rtimes_{ c} G:=C_c(G,\mathcal{A})$ with the twisted convolution product. And finally, for a proper, cocompact action of $G$ on a metric space $X$ we have an assembly map 
$$\mu_X:\Omega^G_*(\mathrm{Lip}_0(X),B)\to \Omega_*(\C,B\rtimes G).$$
\end{theorem*}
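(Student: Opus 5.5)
This theorem collects results proved later in Sections \ref{exampleofboridmsmssec} and \ref{sec:ncgexbord}. My plan is to reduce every item to one construction principle: a \emph{cylinder chain}. Given an unbounded cycle $(\mathpzc{E},D)$ for $(\mathcal{A},B)$ and a suitable family of data over $[0,1]$, I would build an unbounded chain with boundary in the sense of Hilsum (Section \ref{subsecglying}) for $(\mathcal{A},C([0,1],B))$ or for $(\mathcal{A},B)$ on a half-line. Its boundary is then identified with the difference of the endpoint cycles.

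\textbf{Geometric items (1) and (2).} For (1), start from a complete Riemannian manifold $W$ with boundary $M$ and a Dirac type operator twisted by a $C^*$-bundle. Choose a collar $M\times[0,1)$ on which the operator has product form $\gamma(\partial_t+D_M)$. The minimal closure of the Dirac operator on $W$ gives a symmetric chain. Its adjoint domain, restricted to the collar, gives Hilsum's boundary data. In the Lipschitz case I would work with Lipschitz functions constant near the boundary. I would check Hilsum's conditions using finite propagation and local compactness. For the Dirac–Schrödinger variant I would add a potential that is invertible at infinity.

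The examples in (2) should then follow from (1):
\begin{itemize}
\item For positive scalar curvature, use the cylinder $M\times[0,\infty)$ with its half-line end. Invertibility of the Dirac operator, via Lichnerowicz, makes that end \emph{null-bordant}.
\item For homotopy equivalent topological manifolds in dimension $\neq 4$, use the Hilsum–Skandalis/Lipschitz structure theory. This supplies a Lipschitz bordism, possibly with a mapping cylinder and a perturbation.
\end{itemize}

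\textbf{Operator-theoretic items (3)–(7).} For (3), a weakly degenerate cycle has a symmetry anticommuting with $D$ (up to bounded terms). I would cone it off over a half-line with operator $\gamma\partial_t+D$ and the symmetry glued in at infinity. Vector bundle modification then arises as a special case via the Bott element on the sphere bundle.

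For (4), form $\gamma\partial_t + D_t$ on $L^2([0,1])\otimes\mathpzc{E}$ with boundary at both ends. For (5), take $D_t=D+tV$; the condition that $V$ be ``reasonable'' (relatively bounded with small relative bound, or bounded) makes this fall under (4). For (6), let $\mathcal{A}$ act on $C([0,1],\mathpzc{E})$ by $(\pi_t)$, treated as a cycle over the cylinder. For (7), use complex interpolation from Subsection \ref{orderreducsubsec} to build the path $D|D|^{-s\alpha}$, $s\in[0,1]$. This path has uniformly controlled commutators, so it reduces to (4); the same argument applied with boundary gives order reduction of bordisms.

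\textbf{Descent and assembly.} For descent, I would apply Kasparov's descent at the unbounded level: $\mathpzc{E}\rtimes G$ with operator $D$ acting pointwise, for $\mathcal{A}\rtimes_c G$. The key check is that descent commutes with the boundary construction, so that bordisms map to bordisms. For assembly, I would compose descent with the class of the cut-off projection $p_c\in \mathrm{Lip}_0(X)\rtimes_c G$ for a Lipschitz cut-off function $c$. This pairing is an unbounded Kasparov product with a finitely generated projective module, which is well defined on cycles; independence of $c$ is handled via (6).

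\textbf{Main obstacle.} I expect the hardest step to be verifying Hilsum's domain and commutator conditions for the cylinder operators $\gamma\partial_t+D_t$ when the domain of $D_t$ varies with $t$, in (4) and (7). To handle this, I would first try to assume that $D_t-D_0$ is relatively bounded with uniform constants, and then obtain self-adjointness of the resulting operator via a Kato–Rellich/Wüst argument.
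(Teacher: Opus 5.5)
Your treatment of (4), (5) and of descent is essentially the paper's. The mechanism you use for null-bordisms, however, does not work.

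\textbf{The half-infinite cylinder is not a bordism.} The chain $(L^2[0,\infty)\hat{\boxtimes}\mathpzc{E},\partial\hat{\boxtimes}D)$ is a symmetric chain with boundary, but not a bordism. For $\phi\in C^\infty_c((0,1],\mathcal{A})$ with $\phi(1)$ acting nontrivially, $b(\phi)(1+T^*T)^{-1}$ is not compact, by translation invariance in $t$ (Example \ref{halflineex}). Invertibility of $D$ only makes the cylinder operator Fredholm (Lemma \ref{infcylfred}); it does not make its resolvent locally compact. So the positive scalar curvature case does not follow from (1): a psc manifold need not bound, and for $(\C,B)$-cycles item (1) needs a proper map, hence compact $W$.

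\textbf{The missing idea is the Shubin bordism} of Theorem \ref{weakdegnullbordthm}. For $D=D_0+S$ with $S$ invertible, commuting with $A$ and anticommuting with $D_0$, the interior operator is $\Psi_{[0,\infty)}(D_0)+X(t)(0\hat{\boxtimes}S)$ with $X(t)\sim\sqrt{1+t^2}$. It is the linear growth, combined with $SD_0+D_0S=0$, that gives a harmonic-oscillator lower bound and hence compact resolvent at infinity. A symmetry ``glued in at infinity'' with bounded coefficient would not do. This one construction handles three of your items:
\begin{itemize}
\item Positive scalar curvature is the case $D_0=0$, $S=D_{M,\mathcal{E}}$ (Theorem \ref{somepsosslresult}).
\item The homotopy-equivalence case is not a Lipschitz or mapping-cylinder bordism. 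A mapping cylinder is not a manifold, and homotopy equivalent manifolds need not be bordant in general. Instead, the Hilsum--Skandalis bounded perturbation $T_f$ makes the signature operator on $M\dot{\cup}-N$ invertible, and this is followed by the Shubin bordism (Theorem \ref{hilsumskandforblaass}).
\item A vector bundle modified cycle is not itself weakly degenerate. One first splits off a copy of the original cycle, using the projection onto the one-dimensional kernel of the fibrewise operator $D_S$ on $S^{2k}$. Only the complement, where $1\hat{\otimes}D_S$ has a spectral gap, is weakly degenerate (Theorem \ref{decomposingmoddir}).
\end{itemize}

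\textbf{Homotopy versus bordism.} A cycle over $C([0,1],B)$, which is how you set up the general cylinder and item (6) on $C([0,1],\mathpzc{E})$, is a Kasparov homotopy, not a Hilsum bordism. For (6) the paper's interior chain is $({}_\Pi(L^2[0,1]\hat{\boxtimes}\mathpzc{E}),\Psi(D))$ over $B$ (Theorem \ref{homotopyofrep}). This requires $t\mapsto\pi_t(a)$ to be $C^1$ in norm and constant near the endpoints, so that $\Pi(a)$ preserves $\Dom\Psi(D)$ with bounded commutator. Continuity does not suffice: for $\ell^1(\Z)$ the characters $\C_t$ are continuously homotopic, yet $(\C_s,0)\not\sim_{\rm bor}(\C_t,0)$ for $s\neq t$ (Theorem \ref{bordismforwiener}).

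\textbf{Varying domains in (7).} Your fallback of uniform relative bounds plus Kato--Rellich/W\"{u}st does not cover order reduction. Along $D(t)=D(1+D^2)^{(1-\chi(t))(\alpha-1)/2}$ the domains $\Dom(|D|^{\alpha+(1-\alpha)\chi(t)})$ are strictly nested. Moreover, $\dot{D}(t)$ is a multiple of $\chi'(t)\log(1+D^2)D(t)$, which is not $D(t)$-bounded. The paper instead proves closedness of the cylinder operator by a quadratic-form estimate. There the cross term is absorbed into $a\|D(t)\xi\|^2+C\|\xi\|^2$ for every $a>0$, because $x\log(1+x^2)=o(x^2)$ (Lemma \ref{relaraldknaldn}). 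The adjoint is then identified using the fibrewise approximate unit $(i\pm D/n)^{-1}$ (Theorem \ref{bordismforodrdthm}).

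\textbf{Order reduction of bordisms.} ``The same argument with boundary'' is not available. The interior operator is only symmetric, and $\Psi(D)_\alpha\neq\Psi(D_\alpha)$ destroys the product structure at the collar. The paper therefore attaches an infinite cylinder to get a self-adjoint $T_{(0,\infty)}$, reduces its order, and reglues it to $\Psi(D_\alpha)$ near the boundary. This needs $\mathcal{A}$ to be Banach, and yields a bordism only over $\mathcal{A}_\beta$ with $\beta>\alpha$ (Theorem \ref{orderreducingbordismsthm}).
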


The conceptual benefit of $KK$-bordisms is not just the canonical $KK$-bordisms arising in key examples, but in addition that it allows for the importing of geometric notions and ideas into the technical area of $KK$-theory. A concrete example is how wrong way maps give rise to unbounded cycles that up to explicit $KK$-bordisms is independent of geometric choices, see Section \ref{kkwrongbor}. A more lofty, yet conceptually interesting, feature is that rigourous proofs by picture are a possibility with $KK$-bordisms. We develop this line of thought in Part \ref{part:geocons}. The primary techniques of straightening the angle, the doubling construction, gluing on infinite cylinders (à la Melrose's $b$-calculus \cite{melroseaps}) are developed in Section \ref{sec:prelbord}. We also extend the ideas of complex interpolation from Subsection \ref{orderreducsubsec} to $KK$-bordisms in Subsection \ref{subsec:orderred}. These techniques are applied in Section \ref{sec:propoered} to prove the following rigidity results for the $KK$-bordism groups.

\begin{theorem*}
\label{mainthm3}
For a locally compact group $G$, a $G-C^*$-algebra $B$ the functor $\Omega_*^G(-,B)$, on the category of $G-*$-algebras $\mathcal{A}$ dense in a $C^*$-algebra $A$, is an additive functor invariant up to isomorphism under the following operations
\begin{enumerate}
\item Complex interpolation: if $\mathcal{A}_1$ is a Banach$^*$-algebra and $\mathcal{A}_1\subseteq \mathcal{A}\subseteq A$ the restriction mapping induces an isomorphism
$$\Omega_*^G(\mathcal{A},B)\cong \Omega_*^G(\mathcal{A}_1,B),$$
as soon as there is an $0<\alpha \leq 1$ such that $\mathcal{A}\subseteq [A,\mathcal{A}_1]_\alpha$.
\item Functional calculus: if $\mathcal{A}_1\subseteq A$ is a $^*$-algebra and $ \mathcal{A}\subseteq A$ is the smallest $*$-algebra containing $\mathcal{A}_1$ closed under smooth functional calculus, the restriction mapping induces an isomorphism
$$\Omega_*^G(\mathcal{A},B)\cong \Omega_*^G(\mathcal{A}_1,B).$$
\item Morita equivalence by FGPs: if $\mathcal{E}$ is a finitely generated projective $\mathcal{A}$-module the Kasparov product by $\mathcal{E}$ induces an isomorphism 
$$\Omega_*^G(\mathcal{A},B)\cong \Omega_*^G(\End_{\mathcal{A}}(\mathcal{E}),B).$$
\end{enumerate}
For a fixed $G-*$-algebra $\mathcal{A}$, the functor  $\Omega_*^G(\mathcal{A},-)$ is an additive functor invariant under Morita equivalence. 
\end{theorem*}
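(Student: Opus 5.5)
The plan is to handle each invariance separately, always by the same two-step scheme. First construct a map on cycles that respects $KK$-bordisms, so that it descends to $\Omega_*^G$. Then produce an inverse up to explicit bordisms, using the canonical $KK$-bordisms listed in the previous theorem: order reduction, perturbations, and homotopies of operators and representations.

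Additivity in both variables is immediate, since direct sums of cycles and of bordisms are compatible with the relevant operations. Morita invariance in the second variable, $\Omega_*^G(\mathcal{A},B)\cong\Omega_*^G(\mathcal{A},B')$ for a Morita equivalence bimodule ${}_{B'}X_B$, is done by interior tensor product with $X$ and $X^*$. Since $X$ carries no operator, $D\mapsto D\otimes 1$ is a genuine unbounded cycle and commutes with the gluing of bordisms. The composites are isomorphic to the identity via $X\otimes_B X^*\cong B'$, which gives unitarily equivalent cycles.

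\emph{(1) Complex interpolation.} The restriction map $\Omega_*^G(\mathcal{A},B)\to\Omega_*^G(\mathcal{A}_1,B)$ is well defined because a cycle or bordism for $\mathcal{A}$ is one for $\mathcal{A}_1\subseteq\mathcal{A}$.

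For surjectivity, take a cycle $(\mathpzc{E},D)$ for $\mathcal{A}_1$. For $a\in\mathcal{A}_1$, $[D,a]$ is bounded, while for $a\in A$ only compactness-type conditions hold. By the interpolation results of Subsection \ref{orderreducsubsec}, the order reduced operator $D|D|^{-\beta}$ has bounded commutators with $[A,\mathcal{A}_1]_\alpha\supseteq\mathcal{A}$ once $1-\beta\le\alpha$. Hence $(\mathpzc{E},D|D|^{-\beta})$ is a cycle for $\mathcal{A}$. By item (7) of the preceding theorem it is bordant, over $\mathcal{A}_1$, to $(\mathpzc{E},D)$.

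For injectivity, apply the same reasoning to a null-bordism over $\mathcal{A}_1$ of the restriction of an $\mathcal{A}$-cycle. The order reduction of $KK$-bordisms from Subsection \ref{subsec:orderred} converts it into an $\mathcal{A}$-bordism between the order reduction of the cycle and zero. The order reduction bordism itself is already an $\mathcal{A}$-bordism, because $\mathcal{A}$ has bounded commutators with both ends. This closes the argument.

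\emph{(2) Functional calculus.} Here the plan is to show that any cycle or bordism for $\mathcal{A}_1$ is automatically one for the closure $\mathcal{A}$ under smooth functional calculus. This uses Subsection \ref{subsec:funccalc}: the set of $a$ with $[D,a]$ bounded, together with its analogue for bordisms with boundary conditions, is closed under holomorphic and smooth functional calculus, for instance via Helffer--Sjöstrand-type integrals. The compactness conditions also pass to the closure. The restriction map is then bijective with inverse the identity on cycles. One technical point is that one may first need to replace $\mathcal{A}_1$ by its Lipschitz completion, which is a Banach$^*$-algebra, and this is compatible with (1).

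\emph{(3) FGP Morita equivalence.} Write $\mathcal{E}=p\mathcal{A}^n$. The map takes $(\mathpzc{E}',D)$ to $(p\mathpzc{E}'^n, pDp)$, with the Grassmann connection; this is a cycle for $\End_{\mathcal{A}}(\mathcal{E})=pM_n(\mathcal{A})p$, and the construction passes to bordisms. The inverse is product with the dual module $\mathcal{A}^n p$.

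The composite of the two maps is the cycle for $M_n(\mathcal{A})$ or $\mathcal{A}$ compressed by $p$. Relating it to the original cycle requires handling the degenerate complement and the difference between $pDp\oplus(1-p)D(1-p)$ and $D$. The difference $[D,p]$-terms are bounded perturbations, so item (5) of the preceding theorem handles them. The degenerate parts are null-bordant by item (3) (weakly degenerate cycles). One also needs stability under $M_n$, which is again a unitary rotation homotopy handled by item (6).

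I expect the main obstacle to be injectivity in (1): order reducing a bordism while keeping control of the boundary conditions, and then matching the collars. Straightening the angle and gluing cylinders from Section \ref{sec:prelbord} should handle this.
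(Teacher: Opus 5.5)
Your treatment of additivity, of Morita invariance in $B$, of complex interpolation and of functional calculus is the paper's argument. For interpolation you order-reduce cycles to get surjectivity and order-reduce bordisms (Subsection \ref{subsec:orderred}) to get injectivity. For functional calculus you observe that cycles and bordisms over $\mathcal{A}_1$ are automatically cycles and bordisms over the closure. Two harmless slips in (1):
\begin{itemize}
\item Theorem \ref{interpolationthm} only gives $[A,\mathcal{A}_1]_\alpha\subseteq\mathrm{Lip}(D_\gamma)$ for order $\gamma<\alpha$ \emph{strictly}, not $\le$.
\item The order-reduction bordism is constructed for invertible $D$, so you should first pass to the invertible double of Corollary \ref{invertibleldodcor}.
\end{itemize}

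The genuine gap is in (3). Your map $\Phi(\mathpzc{E}',D)=(p\mathpzc{E}'^n,pD_np)$ is fine. However, ``product with $\mathcal{A}^np$'' is an inverse only when $\mathcal{E}^*\otimes_{\End_{\mathcal{A}}(\mathcal{E})}\mathcal{E}\cong\mathcal{A}$. That is, $\mathcal{E}$ must be \emph{full}, and $\mathcal{A}$ must be unital and act unitally so that $\mathcal{A}\otimes_{\mathcal{A}}\mathpzc{E}'\cong\mathpzc{E}'$.

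The statement as quoted omits this hypothesis, but it cannot be dropped. For $\mathcal{A}=\C\oplus\C$ and $\mathcal{E}=\C\oplus 0$ one has $\End_{\mathcal{A}}(\mathcal{E})\cong\C$. Theorem \ref{bddisothm} gives $\Omega_0(\C\oplus\C,\C)\cong\Z^2$ but $\Omega_0(\C,\C)\cong\Z$. The paper's actual theorem in Subsection \ref{invariancesubsec} does assume $\mathcal{A}$ unital and $\mathcal{E}$ full.

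Your analysis of the composite does not supply the missing ingredient. The complement $((1-p)\mathpzc{E}'^n,(1-p)D_n(1-p))$ is degenerate (it has zero action) only as a cycle over the corner $pM_n(\mathcal{A})p$. Discarding it therefore only shows that $\Phi$ is stabilization followed by restriction to the corner $pM_n(\mathcal{A})p\subseteq M_n(\mathcal{A})$. It gives no inverse to that restriction, and for a non-full corner the restriction is not injective.

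Once fullness is assumed, the argument is shorter than yours. Both composites $\Psi\circ\Phi$ and $\Phi\circ\Psi$ return a unitarily isomorphic module. On it, the new operator differs from the old one by a difference of Grassmann connections, which is bounded because $\mathcal{A}\subseteq\mathrm{Lip}(D)$. Corollary \ref{locbddperturcor} then gives the bordism, so no $M_n$-rotation homotopy and no weak degeneracy are needed.
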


For the trivial group $G$ and under assumptions similar to $\Omega_*$-admissibility, we show in Section \ref{secexact} how $KK$-bordism groups additionally to the rigidity properties of Theorem \ref{mainthm3} enjoys exactness and homotopy invariance.

Index theory has found numerous applications in geometry and topology \cite{andro,antoninifol,connesskandalis,DeeZkz, DeeZkz2, DeeRZ, DeeMappingCone, DGrelI,DGsurI, HRSur1, HRSur2, HRSur3, HReta, PS, PSrhoInd, PSsignInd,PSZmap, SchPSCsur, tu99,tu99a,xieyu,zenobi}, and for manifolds with boundary the relevant index theory comes from the work of Atiyah-Patodi-Singer \cite{APS1,APS2,APS3} -- APS index theory. In particular higher APS index theory is of importance in topological applications \cite{guohochs,hilsumlip,hochswang,LP}, where the index theory arises from mildly noncommutative geometries with boundary. Following the general spirit of this work, we show in Section \ref{highihereinda} that $KK$-bordisms behave much like a noncommutative manifold with boundary admitting a fully developed higher APS index theory. Such methods will play an important role in the follow up work \cite{monographtwo} where the above mentioned canonical geometric $KK$-bordisms will be utilized for secondary invariants. We summarize the results of Section \ref{highihereinda} in the following theorem.

\begin{theorem*}
Let $B$ be a unital $C^*$-algebra and $\mathfrak{X}=(\mathfrak{X}^\circ,\Theta,\mathfrak{X}^\partial)$ a $(\C,B)$-bordism. Then $\mathfrak{X}$, or at least its very full modification (for details see Subsection \ref{subsec:specsec}), admits a trivializing operator $A$. For $\mathfrak{X}^\circ=(\mathpzc{N},T)$ and $\mathfrak{X}^\partial=(\mathpzc{E},D)$, we define the APS-realization of $\mathfrak{X}$ with trivializing operator $A$ to be 
\begin{align*}
T_{\rm APS}(A):=&T^*, \quad \mbox{with domain}\\
 \Dom T_{\rm APS}(A)&:=\{f\in \Dom(T^*): \; f|_{\partial\mathfrak{X}}\in \ker \chi_{[0,\infty)}(D+A)\}.
 \end{align*}
For more details on the domain, see Subsection \ref{subsec:apsforborddad}. The APS-realization of $\mathfrak{X}$ with trivializing operator $A$ is Fredholm and has a well defined index 
$$\ind_{\rm APS}(\mathfrak{X},A)\in K_*(B).$$
The APS-index of $KK$-bordisms satisfy the ordinary gluing formulas and spectral flow formulas (for details see Subsection \ref{subsec:glue}). 
\end{theorem*}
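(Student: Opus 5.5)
We need a plan for the APS theorem for $(\C,B)$-bordisms. The argument has three stages: existence of a trivializing operator, Fredholmness of the APS realization with a well-defined index, and the gluing and spectral flow formulas.

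\textbf{Step 1: the trivializing operator.} Write $\mathfrak{X}^\partial=(\mathpzc{E},D)$, an odd (relative to $\mathfrak{X}$) unbounded $(\C,B)$-cycle with compact resolvent on a countably generated $B$-module.

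1. Since $\mathfrak{X}$ is a null-bordism of its boundary, Hilsum's result behind \eqref{boundednada} gives that $\beta(\mathpzc{E},D)$ vanishes in the relevant $K$-group of $B$. Concretely, the index class of $D$ (or its spectral flow obstruction) vanishes.
2. By the standard argument in the style of Leichtnam–Piazza and Wahl, vanishing of this class yields a spectral section.
3. The spectral section can be realized as the positive spectral projection of $D+A$, for some self-adjoint $A\in\End^*_B(\mathpzc{E})$ with $D+A$ invertible.
4. The obstruction to step 2 is the need to stabilize: the vanishing of a $K_*(B)$ class only gives a spectral section after adding a trivial cycle. This is why the statement allows passing to the very full modification, which is adding a degenerate cycle $(\ell^2(\mathbb{N},B)^{\oplus 2},\text{invertible off-diagonal})$ to the boundary and correspondingly to $\mathfrak{X}^\circ$.
5. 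Unitality of $B$ is used to guarantee that finitely generated projective pieces split off and that Kuiper-type absorption holds.
6. After this modification the boundary is full, and a Mishchenko–Fomenko-type decomposition provides a finite-rank (over $B$) compression. The bounded perturbation $A$ can then be chosen to cancel the non-invertible part.

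\textbf{Step 2: Fredholmness and the index.} I would first straighten the angle and glue on an infinite cylinder $[0,\infty)\times\partial\mathfrak{X}$ carrying $\partial_u\otimes\gamma+(D+A)$, using the constructions of Section \ref{sec:prelbord}. Because $D+A$ is invertible, the operator on the cylinder end is invertible. Using the collar structure $\Theta$ of the bordism, I would then build a parametrix by patching:
\begin{itemize}
\item an interior parametrix coming from the local compactness of $T$ on functions vanishing near the boundary (the chain property);
\item the explicit half-line inverse, which by Fourier–Laplace analysis in $u$ is the APS boundary-condition inverse for $\chi_{[0,\infty)}(D+A)$.
\end{itemize}
The commutator errors are compact, since cut-off functions in the $u$-variable have bounded commutators and are locally compact. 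This shows that $T_{\rm APS}(A)$ is regular and Fredholm in the Hilbert $C^*$-module sense. Its index in $K_*(B)$ is then obtained after a compact perturbation making the kernel and cokernel projective, which again uses unitality of $B$. Independence of the auxiliary choices (collar width and cut-offs) follows by homotopy invariance of the module index.

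\textbf{Step 3: gluing and spectral flow.} For two trivializing operators $A,A'$, the difference of indices is the relative index of the two projections $\chi_{[0,\infty)}(D+A)$ and $\chi_{[0,\infty)}(D+A')$. This difference equals the spectral flow along the path $D+tA+(1-t)A'$, which gives the spectral flow formula. The gluing formula follows by splitting a glued bordism along a hypersurface, using the cylinder-stretching argument and the invertibility of the trivialized boundary operator. Both formulas reduce to the Bunke-type relative index theorem, which I would prove by comparing parametrices.

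\textbf{Main obstacle.} I expect Step 1 to be the hardest: passing from vanishing of a class in $KK$ to an actual bounded trivializing perturbation at the level of unbounded modules, which is what forces the very full modification. If that step fails, I would try to construct $A$ directly from the bordism, as a compression of $T$ to the collar.
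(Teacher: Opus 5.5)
Your overall architecture is the paper's: make the boundary very full, get a spectral section from $\ind_B(\partial\mathfrak{X})=0$ as in Leichtnam--Piazza, patch interior and half-line resolvents, and reduce gluing and spectral flow to a Bunke-type relative index argument. However, the mechanism you give in Step~1 for producing $A$ does not work.

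First, the piece you add must be an honest unital $(\C,B)$-cycle, so its operator needs compact resolvent. A bounded invertible off-diagonal operator on $\ell^2(\mathbb{N},B)^{\oplus 2}$ is not allowed. The piece must also come with an explicit filling. The paper uses $\mathfrak{Y}_{S^1,B}$ filled by the disc $\mathfrak{X}_{\mathbb{D},B}$. (Your idea can be repaired with an unbounded invertible operator with compact resolvent and free infinitely generated negative spectral subspace, filled by its Shubin bordism from Theorem~\ref{weakdegnullbordthm}.) What the added piece buys is very fullness in the sense of Definition~\ref{veryfodldleed} (Proposition~\ref{veryfullperturb}), not fullness.

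More seriously, a Mishchenko--Fomenko finite-rank cancellation only sees $K_0$-indices. When $\mathfrak{X}$ is even, $\partial\mathfrak{X}$ is ungraded, $D$ is self-adjoint and $\ind_B(\partial\mathfrak{X})\in K_1(B)$, so there is no kernel and cokernel to cancel. The missing idea is a lifting argument in the corona algebra:
\begin{itemize}
\item the projection $\tfrac12\bigl(1-q_{\mathpzc{E}}(D(1+D^2)^{-1/2})\bigr)$ is very full and its image under the index map vanishes, so it lifts modulo $\K_B(\mathpzc{E})$ to an honest projection (Meyer's theorem, \cite[Theorem 1]{LP});
\item Wu's functional calculus turns this lift into a spectral section (Theorem~\ref{existsppse});
\item Proposition~\ref{trivopprprop} then writes the section as $\chi_{[0,\infty)}(D+A)$ with $A$ spectrally localized, as the definition of a trivializing operator requires.
\end{itemize}
The same parity slip recurs in Step~2. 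For odd $\mathfrak{X}$, $T_{\rm APS}(A)$ is ungraded and self-adjoint, so its index is a $K_1(B)$-class, not a kernel-minus-cokernel class. You also never check that the modification leaves the index unchanged: $A\oplus 0$ must be trivializing and the filling must have vanishing APS index (Theorem~\ref{veryfullthemaadn}). Without this, $\ind_{\rm APS}(\mathfrak{X},A):=\ind_{\rm APS}(\mathfrak{X}_{\rm vf},A)$ is not consistent.

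In Step~2, compact patching errors give Fredholmness only once $T_{\rm APS}(A)$ is known to be closed and regular. The paper gets regularity and self-adjointness from approximate resolvents $r_\lambda$. These are built from $(i\lambda+T_{(0,\infty)})^{-1}$ and $(i\lambda+D_A)^{-1}$, where $D_A$ is the half-line operator with APS condition, invertible by \cite{WahlProductAPS}. The key estimates are $\|(i\lambda+T_{\rm APS}(A))r_\lambda-1\|=O(\lambda^{-1})$ and $r_\lambda^*=r_{-\lambda}$, and compact resolvent comes from Proposition~\ref{compachone}. Agreement with the index of the cylinder-end operator $T_\infty(A)$ is a separate result (Theorem~\ref{apsthemams}).

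In Step~3, Bunke's argument only reduces the spectral flow formula to the APS index of the cylinder $\Psi(\partial\mathfrak{X})$ with trivializing operator $-A\oplus A'$. Your claim that the index difference ``is the relative index of the two projections'' is exactly what remains to be proved. The paper proves it in Theorem~\ref{apscylinderthm}, with $P_A=\chi_{[0,\infty)}(D+A)$:
\begin{itemize}
\item using Lemma~\ref{biggerspecseclem}, Proposition~\ref{trivopprprop} and the gluing formula, it reduces to $A'=0$, $D$ invertible, and $P_A\geq P_{A'}$ commuting;
\item it then solves $\xi'+D\xi=0$ on $[0,1]$ explicitly, giving $\ker T^+\cong(P_A-P_{A'})\mathpzc{E}$ and $\ker T^-=0$.
\end{itemize}
Note also that $D+tA+(1-t)A'$ need not be invertible for intermediate $t$. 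So ``spectral flow along the path'' must be defined through spectral sections in any case, and in the graded case through the rotated family $P_0(t)$.
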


The results discussed above are somewhat independent of the connection to Kasparov's $KK$-theory implemented by the bounded transform. To fully understand the utility of $KK$-bordisms, we study the mapping properties of the bounded transform in Part \ref{partoniso}. The main technical result of this part is found in Section \ref{bddtransformisosec}, where we show that for $\mathcal{A}$ countably generated the bounded transform is an isomorphism. Our proof is inspired by work of Kaad \cite{Kaadunbdd}. Building on this result, and the rigidity properties summarized in Theorem \ref{mainthm3} above we conclude the following classes of examples where  the bounded transform is an isomorphism.

\begin{theorem*}
\label{mainthmadm}
The bounded transform
$$\beta:\Omega_*(\mathcal{A},B)\to KK_*(A,B),$$
is an isomorphism for all $C^*$-algebras $B$, that is $\mathcal{A}$ is $\Omega_*$-admissible, if it is of the following form:
\begin{enumerate}
\item $\mathcal{A}$ is any countably generated $*$-algebra dense in a $C^*$-algebra.
\item $\mathcal{A}=\mathrm{Lip}_0(X)$ and $A=C_0(X)$ for a Lipschitz manifold $X$.
\item $\mathcal{A}=C^\infty_c(\mathcal{G})$  and $A=C^*(\mathcal{G})$ for an etale groupoid $\mathcal{G}$ over an open subset $X$ of a compact, separable, metrizable space (and smooth functions are defined relative to an embedding into $\pmb{\Box}\times \pmb{C}$ -- the Hilbert cube product a Cantor set). In particular, it holds if $\mathcal{G}$ is of one of the following forms:
\begin{itemize}
\item $\mathcal{G}:=\Gamma$ is a discrete countable group, and so $C^\infty_c(\mathcal{G})=\C[\Gamma]$ is the group algebra and $C^*(\mathcal{G})=C^*(\Gamma)$. In particular, we have the natural isomorphism induced by the bounded transform
$$\Omega_*^\Gamma(\C,B)\cong KK_*^\Gamma(\C,B).$$
\item $\mathcal{G}:=X\rtimes \Gamma$, for a discrete countable group $\Gamma$ acting on $X$, and so $C^\infty_c(\mathcal{G})=C^\infty_c(X)\rtimes_{\rm alg} \Gamma$ and $C^*(\mathcal{G})=C_0(X)\rtimes \Gamma$.
\item $\mathcal{G}$ is the Deaconu groupoid associated with a surjective local homeomorphism $g:X\to X$, and so $C^\infty_c(\mathcal{G})=C^\infty_c(X)\rtimes_{\rm alg} \N$ and $C^*(\mathcal{G})=C_0(X)\rtimes_g \N=O_{E_g}$ is a Cuntz-Pimsner algebra.
\item $\mathcal{G}$ is the groupoid defining an AF-algebra $A$, i.e. $\mathcal{G}$ is a groupoid over the totally disconnected spectrum of a MASA in $A$, and $A=C^*(\mathcal{G})$.
\end{itemize}
\end{enumerate}
\end{theorem*}

We do note that there are examples where the bounded transform $\beta:\Omega_*(\mathcal{A},B)\to KK_*(A,B)$ is neither injective nor surjective, e.g. when $\mathcal{A}=L^1(G)$ for an amenable group $G$. We explore such examples in Section \ref{bddtransformnotisosec} and utilize these results in Section \ref{sec:failexc} to deduce that the $KK$-bordism groups in general fails to satisfy excision.

\subsection{Acknowledgements}

The authors wish to thank Michel Hilsum for an encouraging discussion leading up to this work. We are also grateful to our colleagues Jens Kaad, Matthias Lesch, and Adam Rennie for several rewarding discussions and collaborations running in parallel to writing this monograph. We thank Magnus Fries for proof reading an earlier draft and Heather Dudock for help with the figures. RJD was partially supported by NSF Grants DMS 2000057 and DMS 2247424, Simons Foundation Gift MP-TSM-00002896, and Simons Foundation Collaboration Grant for Mathematicians number 638449. MG was supported by the Swedish Research Council Grant VR 2018-0350 and VR 2025-03923.

\part{Preliminaries} 
\label{part:prelim}

\section{Elements of unbounded $KK$-theory}
\label{backgroundsec}

The main object of study in this work is the class of cycles appearing in the unbounded model of Kasparov's $KK$-theory. The unbounded model was first introduced by Baaj-Julg \cite{baajjulg}, and has since the played a prominent role in noncommutative geometry and index theory \cite{ConnesIHES,Connesbook,Connestrace,conneslott,connesgravity,GMCK,GMRtwist,GRU,varillyetal,leschkaad1,kaaddiffabs,Kaadunbdd,Hoch3,lottlimit,neshtuset,rieffelquant}. We start by introducing notation and terminology, mainly following \cite{DGM}. These notions will be exemplified in the next section.

We let $A$ denote a $C^*$-algebra and $\mathcal{A}\subseteq A$ a dense $*$-subalgebra. The structures we study depends on the choice of $\mathcal{A}\subseteq A$, and it can be thought of as a choice of smooth structure (see \cite{Connesbook}). The prototypical example is obtain from, $M$, a closed smooth manifold by taking $A$ to be the continuous complex valued functions on $M$ denoted by $C(M)$, and $\mathcal{A}$ to be the smooth complex valued functions on $M$ denoted by $C^{\infty}(M)$. However, as we will see, one is also interested in the case where $A$ is as in the previous sentence, but $\mathcal{A}$ is the complex valued Lipschitz functions on $M$ denoted by ${\rm Lip}(M)$.

For a $C^*$-algebra $B$, we use letters such as $\mathpzc{E}$ and $\mathpzc{F}$ to denote $B$-Hilbert $C^*$-modules and should we want to emphasize the dependence on $B$ we write $\mathpzc{E}_B$ and $\mathpzc{F}_B$ et cetera. The $C^*$-algebra of $B$-adjointable operators on $\mathpzc{E}$ is denoted by $\End_B^*(\mathpzc{E})$ and the ideal of $B$-compact operators by $\mathbb{K}_B(\mathpzc{E})$. For more details, see \cite{lancesbook}. When speaking of a left action of $\mathcal{A}$ or $A$ on $\mathpzc{E}$, we mean a $*$-homomorphism $A\to \End_B^*(\mathpzc{E})$.

A densely defined operator  $T$ from $\mathpzc{E}$ to $\mathpzc{F}$ will be written as $T:\mathpzc{E}\dashrightarrow \mathpzc{F}$. The graph of $T$ is written as $\mathrm{Graph}(T):=\{(\xi,T\xi)\in \mathpzc{E}\oplus\mathpzc{F}: \xi\in \Dom(T)\}$. We say that 
\begin{itemize}
\item $T$ is semi-regular if $T^*$ is densely defined;
\item $T$ is  regular if $1+T^*T$ has dense range;
\item $T$ is symmetric if $\mathpzc{E}=\mathpzc{F}$ and $T\subseteq T^*$;
\item $T$ is self-adjoint if $\mathpzc{E}=\mathpzc{F}$ and $T=T^*$.
\end{itemize}
For more on operators on Hilbert $C^*$-modules, see \cite{leschkaad2,lancesbook}. Note that for $B=\C$, $\mathpzc{E}$ is a Hilbert space and all semi-regular operators are regular. The domain $\Dom(T)$ carries a $B$-valued inner product
$$\langle\xi_1,\xi_2\rangle_T:=\langle\xi_1,\xi_2\rangle+\langle T\xi_1,T\xi_2\rangle,$$
called the graph inner product. It is the inner product that makes the graph inclusion 
$$\Dom(T)\hookrightarrow \mathrm{Graph}(T)\subseteq \mathpzc{E}\oplus\mathpzc{F},\quad  \xi\mapsto \xi\oplus T\xi,$$ 
an isometry where the inner product on the codomain is obtained from the one on $\mathpzc{E}\oplus\mathpzc{F}$. We say that $T$ is closed if $\Dom(T)$ is closed with respect to the norm defined from this inner product; in this case $\Dom(T)$ is a Hilbert $C^*$-module in its own right (it is unitarily isomorphic to $\mathrm{Graph}(T)$). If $T$ is a closed operator, $T$ is regular if and only if the inclusion $\iota: \Dom(T)\hookrightarrow \mathpzc{E}$ is adjointable. In this case, the adjoint is given by
\[\iota^{*}:\mathpzc{E}\to \Dom T,\quad \xi\mapsto (1+T^*T)^{-1/2}\xi.\]
Regularity of $T$ turns out to hold if and only if the sub-module $\mathrm{Graph}(T)\subseteq \mathpzc{E}\oplus\mathpzc{F}$ is complemented. Indeed, if $T$ is regular, then $\mathrm{Graph}(T)$ is the image of the graph projection 
$$p_T=\begin{pmatrix} 
(1+T^*T)^{-1}& T^*(1+TT^*)^{-1}\\
T(1+T^*T)^{-1}& TT^*(1+TT^*)^{-1}
\end{pmatrix}.$$
Moreover, letting $\pi_1:\mathpzc{E}\oplus\mathpzc{F}\to \mathpzc{E}$ denote the projection onto the first factor, there is a bijective correspondence 
\begin{align}
\label{graphdbibidkd}
\left\{p\in \mathrm{End}_B^*(\mathpzc{E}\oplus\mathpzc{F}): p^2=p^*=p, \; \pi_1\mathrm{im}(p)\subseteq \mathpzc{E} \mbox{ dense}, \; \pi_1|_{\mathrm{im}(p)} \mbox{ injective}\right\}\\
\nonumber
\leftrightarrow \left\{T:\mathpzc{E}\dashrightarrow \mathpzc{F}: \; T \mbox{ is a regular operator}\right\},
\end{align}
set up by mapping a projection $p$ to the operator $\xi\mapsto \pi_2(\pi_1|_{\mathrm{im}(p)}^{-1}\xi)$ with domain $\pi_1\mathrm{im}(p)$ and the inverse mapping being $T\mapsto p_T$.

If $T$ is a closed, regular operator, then $T^*T$ is self-adjoint. We define 
$$|T|:=\sqrt{T^*T},$$ 
from functional calculus of self-adjoint regular operators (see for example \cite{lancesbook}).

For a densely defined operator $T:\mathpzc{E}\dashrightarrow \mathpzc{E}$, we define the Lipschitz algebra 
$$\mathrm{Lip}(T):=\{a\in \End^*_B(\mathpzc{E}): a\Dom(T)\subseteq \Dom(T)\;\mbox{and}\; [T,a], \,[T,a^*] \;\mbox{are bounded}\}.$$
The terminology is motivated by the fact that the Lipschitz algebra of a first order elliptic differential operator on a compact manifold coincides with the algebra of Lipschitz continuous functions. The algebra $\mathrm{Lip}(T)$ is an operator algebra when embedding it is a norm-closed subalgebra 
$$\mathrm{Lip}(T)\hookrightarrow \End_B^*(\mathpzc{E}\oplus \mathpzc{E}), \quad a\mapsto \begin{pmatrix} a& 0\\ [T,a]& a\end{pmatrix}.$$
See more in \cite{BKM}. If $T$ is regular, we also write
$$\mathrm{Lip}_0(T):=\{a\in \mathrm{Lip}(T): a(1+T^*T)^{-1}\in \K_B(\mathpzc{E})\}.$$
If $T$ is symmetric and regular, we write
\begin{align*}
\mathrm{Lip}(T,T^*)&:=\{a\in\mathrm{Lip}(T): a\Dom(T^*)\subseteq \Dom(T)\}\quad\mbox{and}\\
\mathrm{Lip}_0(T,T^*)&.:=\mathrm{Lip}(T,T^*)\cap \mathrm{Lip}_0(T).
\end{align*}

In the next definition, we give the necessary definitions of unbounded $KK$-chains, cycles, and the relations and operations thereon. The definition of an {\it even} cycle (or chain) is obtained by including the bracketed data; the definition of {\it odd} cycle (or chain) is obtained by removing them.

\begin{define} 
\label{UnbKKcycDef}
An unbounded $KK$-chain over $(\mathcal{A}, B)$ is a pair, $(\mathpzc{E}, D)$ where 
\begin{enumerate}
\item $\mathpzc{E}$ is a (graded) Hilbert $(A, B)$-bimodule via a $*$-homomorphism $\pi:A\to \End_B^*(\mathpzc{E})$;
\item $D$ is an (odd) regular operator acting as an unbounded operator on $\mathpzc{E}$ with 
$$\pi(\mathcal{A})\subseteq \mathrm{Lip}(D).$$
\end{enumerate}
We write $\mathfrak{X}=(\mathpzc{E}, D)$. Moreover, we say the following.
\begin{itemize}
\item $\mathfrak{X}$ is symmetric if $D$ is symmetric.
\item $\mathfrak{X}$ is half-closed if $D$ is symmetric with 
$$\pi(\mathcal{A})\subseteq \mathrm{Lip}_0(D,D^*).$$
\item $\mathfrak{X}$ is closed if it is half-closed and $D$ is self-adjoint. We typically refer to closed $KK$-chains as cycles, or if we want to be overly clear a closed cycle.
\item $\mathfrak{X}$ is called a unital cycle if it is a cycle, and $A$ is unital and acts unitally on $\mathpzc{E}$. 
\item $\mathfrak{X}$ is Lipschitz if $\pi(\mathcal{A})\subseteq \mathrm{Lip}(|D|)$, where $|D|:=\sqrt{D^*D}$.
\end{itemize}
An isomorphism $\alpha:\mathfrak{X}_1\xrightarrow{\sim} \mathfrak{X}_2$ of two $(\mathcal{A},B)$-chains $\mathfrak{X}_i=(\mathpzc{E}_i, D_i)$ is a unitary isomorphism $\alpha:\mathpzc{E}_1\to \mathpzc{E}_2$ of $B$-modules, intertwining the $\mathcal{A}$-action and satisfying that 
$$D_2\alpha=\alpha D_1.$$
\end{define}

We emphasize that the identity $D_2\alpha=\alpha D_1$ is as unbounded operators, and the condition $\alpha\mathrm{Dom}(D_1)=\mathrm{Dom}(D_2)$ is implicit.

\begin{remark}
The way the notions of chains and cycles will be used, is that a cycle forms a cycle for the $KK$-bordism group thought of as a bivariant homology theory.  Cycles form a structure analogous to a compact manifold in unbounded $KK$, compare to Example \ref{diraconcstarbundleex} below. The relations imposed on this will be via $KK$-bordisms -- a special type of chain introduced by Hilsum \cite{hilsumcmodbun,hilsumfol,hilsumbordism} that can be thought of as an ``unbounded $KK$-cycle with boundary which is of product type at the boundary''. Similar ideas have previously appeared in \cite{DGM} and using more operator theoretic homotopy relations in \cite{Kaadunbdd,meslandvandung}.

A prototypical example of a cycle is a spectral triple $(\mathcal{A},\mathpzc{H},D)$, a notion introduced by Connes \cite{Connesbook}. A spectral triple is an $(\mathcal{A},\C)$-cycle. In the paradigm of noncommutative geometry, spectral triples are to be thought of as a spectral representation of a Riemannian manifold with self-adjointness of $D$ encoding completeness of the Riemannian metric \cite{connesreconstr}. As we shall see below in Example \ref{diraconcstarbundleex}, chains relax these geometric conditions and can be thought of as a generalization of non-compact manifolds, or more general singular manifolds for that matter. 

We nevertheless stress that in general the viewpoint taken in this work is not on cycles and chains as giving us a noncommutative geometry, i.e. as a generalization of manifolds, but rather that they form cycles and chains in a noncommutative generalization of bordism theory. The reader should keep Baum-Douglas' geometric model for $K$-homology in mind, \cite{BD,BDbor}, where a spin$^c$-manifold $M$ is thought of as a cycle for the $K$-homology of $M$ rather than describing its own geometry. This conceptual difference will not play any role at the level of technicalities but might be worthwhile to keep in mind.
\end{remark}

\begin{define} 
\label{oppKKCyc}
The \emph{opposite} of an $(\mathcal{A},B)$-chain $\mathfrak{X}=(\mathpzc{E}, D)$ is the chain
$$-\mathfrak{X}=
\begin{cases}
(-\mathpzc{E}, D), \quad\mbox{for even $(\mathpzc{E}, D)$},\\
(\mathpzc{E}, -D), \quad\mbox{for odd $(\mathpzc{E}, D)$.}
\end{cases}$$
Note that $-\mathpzc{E}$ is the module $\mathpzc{E}$ with the opposite grading. If $\mathfrak{X}$ is symmetric/half-closed/closed then so is $-\mathfrak{X}$.

The \emph{sum} of two $(\mathcal{A},B)$-chains $\mathfrak{X}_1=(\mathpzc{E}_1, D_1)$ and $\mathfrak{X}_2=(\mathpzc{E}_2, D_2)$ is given by their direct sum:
\begin{equation}
\label{dirsum}
\mathfrak{X}_1+\mathfrak{X}_2:=(\mathpzc{E}_1\oplus \mathpzc{E}_2, D_1\oplus D_2).
\end{equation}
Note that if $\mathfrak{X}_1$ and $\mathfrak{X}_2$ are symmetric/half-closed/closed then so is $\mathfrak{X}_1+\mathfrak{X}_2$.
\end{define}

\begin{define}
\label{pushpull}
If $\phi:B_1\to B_2$ is a $*$-homomorphism and $\mathfrak{X}=(\mathpzc{E},D)$ is an $(\mathcal{A},B_1)$-chain, we define the $(\mathcal{A},B_2)$-chain $\phi_*(\mathfrak{X})$ by 
$$\phi_*(\mathfrak{X}):=(\mathpzc{E}\otimes_\phi B_2,D\otimes_\phi 1_{B_2}).$$
If $\mathfrak{X}$ is symmetric/half-closed/closed then so is $\phi_*(\mathfrak{X})$.

If $\phi:\mathcal{A}_1\to \mathcal{A}_2$ is a $*$-homomorphism continuous in the topologies of the ambient $C^*$-algebras and $\mathfrak{X}=(\mathpzc{E},D)$ is an $(\mathcal{A}_2,B)$-chain, we define the $(\mathcal{A}_1,B)$-chain $\phi^*(\mathfrak{X})$ by 
$$\phi^*(\mathfrak{X}):=(_\phi\mathpzc{E},D),$$
where ${}_\phi \mathpzc{E}$ denotes $\mathpzc{E}$ equipped with the left action pulled back along $\phi$. If $\mathfrak{X}$ is symmetric/half-closed/closed then so is $\phi^*(\mathfrak{X})$.
\end{define}

If $\mathpzc{E}$ and $\mathpzc{F}$ are Hilbert $C^*$-modules, we let $\mathpzc{E}\boxtimes\mathpzc{F}$ denote their exterior tensor product. If $\mathpzc{E}$ and $\mathpzc{F}$ are graded, we let $\mathpzc{E}\hat{\boxtimes}\mathpzc{F}$ denote their exterior graded tensor product. If exactly one of $\mathpzc{E}$ and $\mathpzc{F}$ is graded, we set $\mathpzc{E}\hat{\boxtimes}\mathpzc{F}:=\mathpzc{E}\boxtimes\mathpzc{F}$. If none of $\mathpzc{E}$ and $\mathpzc{F}$ is graded, we let $\mathpzc{E}\hat{\boxtimes}\mathpzc{F}:= \mathpzc{E}\boxtimes\mathpzc{F}\oplus \mathpzc{E}\boxtimes\mathpzc{F}$ with the summands being graded evenly and oddly, respectively. If $T_1$ and $T_2$ are operators on $\mathpzc{E}$ and $\mathpzc{F}$, respectively, $T_1\boxtimes T_2$ is defined as the exterior tensor product. We define the graded exterior product of $T_1$ and $T_2$ by
$$T_1\hat{\boxtimes}T_2:=
\begin{cases}
T_1\boxtimes\id_{\mathpzc{F}}+\gamma_{\mathpzc{E}}\boxtimes T_2,\quad& \mbox{if both $\mathpzc{E}$ and $\mathpzc{F}$ are graded};\\
i\gamma_{\mathpzc{E}}T_1\boxtimes\id_{\mathpzc{F}}+\gamma_{\mathpzc{E}}\boxtimes T_2, &\mbox{if $\mathpzc{E}$ is graded but $\mathpzc{F}$ is not};\\
T_1\boxtimes\gamma_{\mathpzc{F}}+\id_{\mathpzc{E}}\boxtimes i\gamma_{\mathpzc{F}}T_2, &\mbox{if $\mathpzc{F}$ is graded but $\mathpzc{E}$ is not},
\end{cases}$$
and finally, if both $\mathpzc{E}$ and $\mathpzc{F}$ are ungraded, 
$$T_1\hat{\boxtimes}T_2:=\begin{pmatrix}0& iT_1\boxtimes\id_{\mathpzc{F}}+\id_{\mathpzc{E}}\boxtimes T_2\\
-iT_1\boxtimes\id_{\mathpzc{F}}+\id_{\mathpzc{E}}\boxtimes T_2&0 \end{pmatrix}.$$
If $T_1$ and $T_2$ are closed, the domain of $T_1\hat{\boxtimes}T_2$ is defined to be the Hilbert $C^*$-module $\Dom(T_1)\boxtimes\Dom(T_2)$ (viewing the domains as Hilbert $C^*$-modules in their graph inner product). We note that in this notation, 
$$T_1\hat{\boxtimes}T_2=T_1\hat{\boxtimes}0+0\hat{\boxtimes}T_2.$$
It is in general not clear if $T_1\hat{\boxtimes}T_2$ is closed or regular when $T_1$ and $T_2$ are. However, if one of them is self-adjoint and the other is symmetric, then $T_1\hat{\boxtimes}T_2$ is closed, symmetric and regular with $(T_1\hat{\boxtimes}T_2)^*=\overline{T_1^*\hat{\boxtimes}T_2^*}$, see \cite[Proposition 1.14]{DGM}. Another special case is considered below in Example \ref{suspensioofncada}. The reader should bear in mind that the construction of $T_1\hat{\boxtimes}T_2$ is closely related to the exterior product in $KK$-theory, cf. \cite{baajjulg}, and throughout this work more complicated construction relating to the interior product need also be considered.

Throughout the paper, we shall also use a variety of different techniques. These include
\begin{itemize}
\item Kasparov's $KK$-theory in its bounded incarnation, see more in \cite{Bla, jeto, Kas1,Kas2}.
\item Results for regularity and adjoints of sums $S+T$, see more in \cite{DGM, leschkaad2,LeschMesland}.
\item Functional calculus of self-adjoint regular operators on Hilbert $C^*$-modules, see more in \cite{lancesbook}.
\item Smooth and holomorphic calculus on local $C^*$-algebras, see more in \cite{BlaCun,varillyetal}.
\item Complex interpolation of Banach algebras, see more in \cite{bergloef}.
\end{itemize}

\section{Examples of cycles and chains}
\label{lknaldknadljnbla}

In order to provide further intuition to the cycles, chains and noncommutative geometries that are of relevance for this work let us collect some salient examples from the literature before entering the bulk of the text. We also provide examples of how to construct new cycles and chains from old ones by means of auxiliary structures. The reader well acquainted with the literature can skip this chapter on their first read. For more details and sources of possible $KK$-bordisms, see \cite{DGM, hilsumfol, hilsumlip, hilsumcmodbun}. In this work we treat these noncommutative objects as interesting in their own right, for further justifications and applications, see \cite{Connesbook}. The reader need not be an expert in all of the examples discussed in this section but can benefit from revisiting \cite{DGM} and become familiar with the material in Subsection \ref{suspensioofncada} and \ref{diraconcstarbundleex}. 

\subsection{Suspension operations on chains}
\label{suspensioofncada}
We start by describing a general procedure for suspending a chain with the Dirac operator on an interval. Suspension by intervals is a standard technique for manifolds (with boundary) and is not novel in the realm of unbounded $KK$-theory, yet somewhat unexplored in the literature. Most of the technical conclusions in this subsection are derived from  \cite[Theorem 1.18]{DGM}. The usage of suspension by intervals is central in the work of Hilsum \cite{hilsumbordism}: both in defining $KK$-bordisms and in its relation to $KK$-theory. Suspensions will be an important tool throughout this monograph and we now give a number of variations on this theme.

\subsubsection{Suspension by $[0,1]$} 
Let $\partial$ denote the operator $i\frac{\mathrm{d}}{\mathrm{d}x}$ on $L^2[0,1]$ with a superscript indicating its domain: $\partial^{\rm min}$ denotes the graph closure of the domain $C^\infty_c(0,1)$ and $\partial^{\rm max}$ its adjoint. For an operator $T:\mathpzc{N}\dashrightarrow\mathpzc{N}$, we define $\Psi(\mathpzc{N}):=L^2[0,1]\hat{\boxtimes}\mathpzc{N}$ and
\begin{equation}
\label{psiofd}
\Psi(T):=\overline{\partial^{\rm min}\hat{\boxtimes}T}:\Psi(\mathpzc{N})\dashrightarrow \Psi(\mathpzc{N}).
\end{equation}
It follows from \cite[Theorem 1.18]{DGM} that $\Psi(T)=\partial^{\rm min}\hat{\boxtimes}T$ (i.e. $\partial^{\rm min}\hat{\boxtimes}T$ is closed) is regular and symmetric if $T$ is regular and symmetric. Moreover, $\Psi(T)^*=\overline{\partial^{\rm max}\hat{\boxtimes}T^*}$ if $T$ is regular and symmetric.

\begin{define}
\label{suspbyinte}
For an $(\mathcal{A},B)$-chain $\mathfrak{Z}=(\mathpzc{N},T)$ we define its suspension by the unit invercal as the $(C^\infty_c((0,1),\mathcal{A}),B)$-chain 
$$\Psi(\mathfrak{Z}):=(\Psi(\mathpzc{N}),\Psi(T))=(L^2[0,1]\hat{\boxtimes}\mathpzc{N},\partial^{\rm min}\hat{\boxtimes}T).$$
\end{define}

\begin{prop}
\label{suspandprop}
Let $\mathfrak{Z}=(\mathpzc{N},T)$ be an $(\mathcal{A},B)$-chain.
\begin{itemize}
\item If $\mathfrak{Z}$ is symmetric, so is $\Psi(\mathfrak{Z})$. 
\item If $\mathfrak{Z}$ is half-closed, the inclusion $\Dom(\Psi(T))\hookrightarrow \Psi(\mathpzc{N})$ is locally compact for the $\mathcal{A}$-action. In particular, $\Psi(\mathfrak{Z})$ is a half-closed $(C^\infty_c((0,1),\mathcal{A}),B)$-chain when $\mathfrak{Z}$ is half-closed.
\end{itemize}
\end{prop}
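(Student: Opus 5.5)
The plan is to reduce everything to the structural facts quoted from \cite[Theorem 1.18]{DGM}: for $T$ regular and symmetric, $\Psi(T)=\partial^{\rm min}\hat{\boxtimes}T$ is closed, regular and symmetric, with adjoint $\Psi(T)^*=\overline{\partial^{\rm max}\hat{\boxtimes}T^*}$.

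\emph{Symmetry.} The first bullet is immediate from that theorem: $\partial^{\rm min}$ is symmetric, so if $T$ is symmetric then $\Psi(T)$ is symmetric and regular. It remains to check that $\Psi(\mathfrak{Z})$ is a chain for $C^\infty_c((0,1),\mathcal{A})$. For this I take an elementary tensor $f\otimes a$, with $f\in C^\infty_c(0,1)$ and $a\in\mathcal{A}$, and compute on the algebraic core $\Dom(\partial^{\rm min})\odot\Dom(T)$:
\[
[\Psi(T),f\otimes a]=\pm(\partial f)\otimes a+f\otimes[T,a],
\]
with the grading operators inserted according to the cases in the definition of $\hat{\boxtimes}$. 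Both terms are bounded. Closedness of $\Psi(T)$ then gives that $f\otimes a$ preserves $\Dom(\Psi(T))$. A general element of $C^\infty_c((0,1),\mathcal{A})$ is a finite sum of such elementary tensors, so the computation covers the whole algebra (if the algebra is read as a completed tensor product, one argues by approximation instead).

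\emph{Half-closedness.} The second bullet needs two things: that $\Psi(\mathcal{A})$-elements map $\Dom(\Psi(T)^*)$ into $\Dom(\Psi(T))$, and local compactness.

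1. \emph{Domain condition.} Multiplication by $f\in C^\infty_c(0,1)$ maps $\Dom(\partial^{\rm max})$ into $\Dom(\partial^{\rm min})$, and by assumption $a$ maps $\Dom(T^*)$ into $\Dom(T)$. So $f\otimes a$ maps the core $\Dom(\partial^{\rm max})\odot\Dom(T^*)$ of $\Psi(T)^*$ into $\Dom(\Psi(T))$, with graph-norm control coming from the commutator estimate above. Closedness then extends this to all of $\Dom(\Psi(T)^*)$.

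2. \emph{Local compactness.} We need $(f\otimes a)(1+\Psi(T)^*\Psi(T))^{-1/2}$ to be compact, or equivalently that $f\otimes a$ composed with the inclusion $\Dom(\Psi(T))\hookrightarrow\Psi(\mathpzc{N})$ is compact. The graph norm of $\Psi(T)$ dominates the graph norms of $\partial^{\rm min}\otimes 1$ and $1\otimes T$ separately, because of the anticommutation in $\hat{\boxtimes}$ (the cross terms vanish on the core). Hence $\Dom(\Psi(T))$ embeds continuously into $\Dom(\partial^{\rm min})\boxtimes\Dom(T)$.

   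To exploit this, I would factor $f\otimes a$ as $(f\otimes 1)(1\otimes a)$, handling $1\otimes a$ via the bounded commutator. The operator $f(1+\partial^{\rm max}\partial^{\rm min})^{-1/2}$ is compact on $L^2[0,1]$ by Rellich, and $a(1+T^*T)^{-1/2}$ is $B$-compact. Exterior tensor products of compacts are compact, so
\[
(f\otimes a)(1+\partial^{\rm max}\partial^{\rm min})^{-1/2}\boxtimes(1+T^*T)^{-1/2}
\]
is compact. Combining this with the domain embedding gives compactness of $(f\otimes a)\circ\iota$.

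I expect step 2 to be the main obstacle. Compactness of $f\otimes a$ from $\Dom(\partial^{\rm min})\boxtimes\Dom(T)$ is clean, but $\Dom(\Psi(T))$ is not literally that module, since it carries the joint graph norm. The risk is that the domination inequality only holds on the algebraic core. I would first try to prove the inequality
\[
\|\Psi(T)\xi\|^2=\|(\partial\otimes1)\xi\|^2+\|(1\otimes T)\xi\|^2
\]
on the core and extend it by closure, using the closedness statement from \cite[Theorem 1.18]{DGM}. If that extension fails, the fallback is to reduce to $\Psi(T)^*$ and exploit regularity via the graph projection.
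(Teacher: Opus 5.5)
Your treatment of symmetry, of the commutators $[\Psi(T),f\otimes a]$, and of the domain condition $(f\otimes a)\Dom(\Psi(T)^*)\subseteq\Dom(\Psi(T))$ is fine. Step 2, however, rests on a false inference.

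The cross terms do cancel on the algebraic core, which gives
\[
\langle\Psi(T)\xi,\Psi(T)\xi\rangle=\langle(\partial^{\rm min}\otimes 1)\xi,(\partial^{\rm min}\otimes 1)\xi\rangle+\langle(1\otimes T)\xi,(1\otimes T)\xi\rangle .
\]
This identity extends to all of $\Dom(\Psi(T))$ by closure without difficulty, so the risk you anticipate is not the real one. The problem is the ``hence'' that follows. The identity shows that $\Dom(\Psi(T))$ embeds into $\Dom(\partial^{\rm min})\boxtimes\mathpzc{N}$ and into $L^2[0,1]\boxtimes\Dom(T)$ separately. It does not embed into $\Dom(\partial^{\rm min})\boxtimes\Dom(T)$, whose norm also contains the mixed term $\langle(\partial^{\rm min}\otimes T)\xi,(\partial^{\rm min}\otimes T)\xi\rangle$.

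The continuous inclusion goes the other way, and in general it is strict. For $\mathpzc{N}=L^2[0,1]$ and $T=\partial^{\rm min}$, you are comparing $H^1_0$ of the square with the smaller space of functions that also have $\partial_x\partial_y$ in $L^2$. So compactness of $(f\otimes a)\bigl((1+\partial^{\rm max}\partial^{\rm min})^{-1/2}\boxtimes(1+T^*T)^{-1/2}\bigr)$ is compactness on a strictly smaller module. It says nothing about $(f\otimes a)(1+\Psi(T)^*\Psi(T))^{-1/2}$. The two separate embeddings do not help either, since $f\otimes a$ is compact on neither of those modules. Your stated fallback is too vague to close the gap.

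The missing idea is to use the identity quantitatively rather than through a tensor-product domain. Expand in the Dirichlet eigenbasis $s_n(x)=\sqrt{2}\sin(n\pi x)$, $n\geq 1$, of $\partial^{\rm max}\partial^{\rm min}$. This gives $L^2[0,1]\boxtimes\mathpzc{N}\cong\bigoplus_{n\geq 1}\mathpzc{N}$; in the ungraded case, do this in each of the two summands.

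Since $\partial^{\rm min}s_n=i\sqrt{2}\,n\pi\cos(n\pi x)$ and these cosines are orthonormal, the graph inner product of $\Psi(T)$ becomes $\sum_n\bigl((1+n^2\pi^2)\langle\xi_n,\xi_n\rangle+\langle T\xi_n,T\xi_n\rangle\bigr)$. Identify $\Dom(\Psi(T))$ unitarily with $\bigoplus_n\mathpzc{N}$ via $(1+n^2\pi^2+T^*T)^{1/2}$ on the $n$-th summand. Then $1\otimes a$ composed with $\Dom(\Psi(T))\hookrightarrow\Psi(\mathpzc{N})$ becomes the block-diagonal operator $\bigoplus_n a(1+n^2\pi^2+T^*T)^{-1/2}$.

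Each block is $B$-compact because $a(1+T^*T)^{-1/2}$ is. Each block also has norm at most $\|a\|(1+n^2\pi^2)^{-1/2}\to 0$. So the whole operator is a norm limit of its finite truncations, hence compact.

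This yields local compactness already for the $\mathcal{A}$-action itself, with no cut-off in the interval variable. That is the first assertion of the second bullet, which your proposal only addresses through $f\otimes a$. Half-closedness for $C^\infty_c((0,1),\mathcal{A})$ then follows together with your step 1.

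For comparison, the paper does none of this by hand. It obtains regularity, symmetry and the local compactness all at once from \cite[Theorem 1.18]{DGM}.
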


Compare Proposition \ref{suspandprop} to \cite[Example 1.17]{DGM}.

\begin{proof}
If $T$ is symmetric and regular, $\Psi(T)$ is symmetric and regular by \cite[Theorem 1.18]{DGM}. This proves that $\Psi(\mathfrak{Z})$ is a symmetric chain when $\mathfrak{Z}$ is. Indeed, if $\Dom(T)\hookrightarrow \mathpzc{N}$ is locally compact for the $\mathcal{A}$-action then $\Dom(\Psi(T))\hookrightarrow \Psi(\mathpzc{N})$ is locally compact for the $\mathcal{A}$-action by \cite[Theorem 1.18]{DGM}.
\end{proof}

\subsubsection{Suspension by $[0,\infty)$} 
We let $\partial_{(0,\infty)}$ denote the operator $i\frac{\mathrm{d}}{\mathrm{d}x}$ on $L^2[0,\infty)$ equipped with its minimal domain, that is, $\partial_{(0,\infty)}$ denotes the graph closure of the domain $C^\infty_c(0,\infty)$. For an operator $T:\mathpzc{N}\dashrightarrow\mathpzc{N}$, we define $\Psi_{(0,\infty)}(\mathpzc{N}):=L^2[0,\infty)\hat{\boxtimes}\mathpzc{N}$ and
\begin{equation}
\label{psiofdinfty}
\Psi_{(0,\infty)}(T):=\overline{\partial_{(0,\infty)}\hat{\boxtimes}T}:\Psi_{(0,\infty)}(\mathpzc{N})\dashrightarrow \Psi_{(0,\infty)}(\mathpzc{N}).
\end{equation}
An approximation argument combined with an argument as in \cite[Theorem 1.18]{DGM} show that $\Psi_{(0,\infty)}(T)=\partial_{(0,\infty)}\hat{\boxtimes}T$ (i.e. $\partial_{(0,\infty)}\hat{\boxtimes}T$ is closed) is regular and symmetric if $T$ is regular and symmetric. Moreover, $\Psi_{(0,\infty)}(T)^*=\overline{\partial^*_{(0,\infty)}\hat{\boxtimes}T^*}$ if $T$ is regular and symmetric.

\begin{define}
\label{suspbyintehalf}
For an $(\mathcal{A},B)$-chain $\mathfrak{Z}=(\mathpzc{N},T)$ we define its suspension by the half invercal as the $(C^\infty_c((0,\infty),\mathcal{A}),B)$-chain 
$$\Psi_{(0,\infty)}(\mathfrak{Z}):=(\Psi_{(0,\infty)}(\mathpzc{N}),\Psi_{(0,\infty)}(T))=(L^2[0,\infty)\hat{\boxtimes}\mathpzc{N},\partial_{(0,\infty)}\hat{\boxtimes}T).$$
\end{define}

\begin{prop}
\label{suspandprophalf}
Let $\mathfrak{Z}=(\mathpzc{N},T)$ be an $(\mathcal{A},B)$-chain.
\begin{itemize}
\item If $\mathfrak{Z}$ is symmetric, so is $\Psi_{(0,\infty)}(\mathfrak{Z})$. 
\item If $\mathfrak{Z}$ is half-closed, then $\Psi_{(0,\infty)}(\mathfrak{Z})$ is a half-closed $(C^\infty_c((0,\infty),\mathcal{A}),B)$-chain.
\end{itemize}
\end{prop}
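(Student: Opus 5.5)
The plan is to mimic the proof of Proposition \ref{suspandprop}, replacing $[0,1]$ by $[0,\infty)$ and supplying the approximation argument mentioned just before Definition \ref{suspbyintehalf}.

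\textbf{Symmetry.} The paragraph preceding Definition \ref{suspbyintehalf} states that $\Psi_{(0,\infty)}(T)$ is closed, regular and symmetric whenever $T$ is regular and symmetric, with adjoint $\overline{\partial^*_{(0,\infty)}\hat{\boxtimes}T^*}$. Since $\partial_{(0,\infty)}$ is symmetric, $\partial_{(0,\infty)}\hat{\boxtimes}T\subseteq \partial^*_{(0,\infty)}\hat{\boxtimes}T^*$, and symmetry follows.

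\textbf{Lipschitz condition.} We must show that $C^\infty_c((0,\infty),\mathcal{A})$ acts by elements of $\mathrm{Lip}(\Psi_{(0,\infty)}(T))$. For an elementary tensor $f\otimes a$, the commutator $[\partial_{(0,\infty)}\hat{\boxtimes}T, f\otimes a]$ equals $i f'\otimes a$, up to the grading factor, plus $f\otimes[T,a]$, again up to grading. Both terms are bounded because $a\in \mathrm{Lip}(T)$. Moreover, $f\otimes a$ preserves the algebraic tensor product $\Dom(\partial_{(0,\infty)})\odot\Dom(T)$, which is a core. A closedness argument then shows that it preserves $\Dom(\Psi_{(0,\infty)}(T))$. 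General elements of $C^\infty_c((0,\infty),\mathcal{A})$ are handled by finite sums, or by interpreting that algebra as the algebraic tensor product, as in the $[0,1]$ case.

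\textbf{Half-closedness.} Take $g\in C^\infty_c((0,\infty),\mathcal{A})$ with support in $[\epsilon,R]\times$ (a finite set of $\mathcal{A}$-elements). Choose a cutoff $\chi\in C^\infty_c(0,\infty)$ equal to $1$ on $[\epsilon,R]$, supported in $(\epsilon/2,2R)$. Two things must be checked.
\begin{enumerate}
\item \emph{$g$ maps $\Dom(\Psi_{(0,\infty)}(T)^*)$ into $\Dom(\Psi_{(0,\infty)}(T))$.} Since $\chi$ has compact support in the open interval, multiplication by $\chi$ maps $\Dom(\partial^*_{(0,\infty)})$ into $\Dom(\partial_{(0,\infty)})$. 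Together with $a\Dom(T^*)\subseteq\Dom(T)$, this gives the claim on the core $\Dom(\partial^*)\odot\Dom(T^*)$. We extend by closedness, using that $g$ is bounded in the graph norms, which the commutator estimate ensures.
\item \emph{$g(1+\Psi_{(0,\infty)}(T)^*\Psi_{(0,\infty)}(T))^{-1}$ is compact, i.e.\ local compactness.} Write $g=g\chi$. Multiplication by $\chi$ intertwines $\Dom(\Psi_{(0,\infty)}(T))$ with the domain of the corresponding operator on $L^2[\epsilon/2,2R]\hat{\boxtimes}\mathpzc{N}$. After rescaling, this is $\Psi(T)$ on $[0,1]$ with the minimal domain. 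By Proposition \ref{suspandprop} (i.e.\ \cite[Theorem 1.18]{DGM}), the inclusion $\Dom(\Psi(T))\hookrightarrow\Psi(\mathpzc{N})$ is locally compact for the $\mathcal{A}$-action. Hence $g\circ\iota$ factors as: a bounded map $\chi$ into $\Dom$ of the interval operator, followed by a compact map. This yields compactness of $g\iota$, and therefore of $g(1+\Psi^*\Psi)^{-1/2}$.
\end{enumerate}

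\textbf{Expected main obstacle.} The hard step is the localization in the second item. One must check that multiplication by $\chi$ is bounded from $\Dom(\Psi_{(0,\infty)}(T))$ to $\Dom(\Psi(T))$ on the finite interval, and that the identification of domains under the rescaling is compatible with closures of the graded tensor product. The commutator $[\partial,\chi]=i\chi'$ is bounded, so $\chi$ preserves the minimal domains. The approximation argument then needs density of elementary tensors in the domain of the closure. I would try first to verify this on $\Dom(\partial_{(0,\infty)})\odot\Dom(T)$ and pass to closures.
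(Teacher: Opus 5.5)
Your proposal is correct and follows the same route as the paper. Regularity, symmetry and the description of $\Psi_{(0,\infty)}(T)^*$ come from the approximation argument with \cite[Theorem 1.18]{DGM} stated before Definition \ref{suspbyintehalf}. Local compactness for the $C^\infty_c((0,\infty),\mathcal{A})$-action is inherited from the bounded-interval case: the paper merely asserts this, while you justify it by cutting off with $\chi$ and invoking Proposition \ref{suspandprop}, and you also supply the Lipschitz and $g\,\Dom(\Psi_{(0,\infty)}(T)^*)\subseteq\Dom(\Psi_{(0,\infty)}(T))$ checks that the paper leaves implicit.

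The one detail worth recording is that multiplication by $\chi$ is adjointable between the graph modules. It is the restriction to the complemented graph of an adjointable operator on $\Psi_{(0,\infty)}(\mathpzc{N})\oplus\Psi_{(0,\infty)}(\mathpzc{N})$; this uses regularity and boundedness of $[\Psi_{(0,\infty)}(T),\chi]$. That is what makes its composition with the compact inclusion on the interval a compact operator.
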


\begin{proof}
If $T$ is symmetric and regular, $\Psi_{(0,\infty)}(T)$ is symmetric and regular by an approximation argument combined with an argument as in \cite[Theorem 1.18]{DGM}. This proves that $\Psi_{(0,\infty)}(\mathfrak{Z})$ is a symmetric chain when $\mathfrak{Z}$ is. Indeed, if $\Dom(T)\hookrightarrow \mathpzc{N}$ is locally compact for the $\mathcal{A}$-action then $\Dom(\Psi_{(0,\infty)}(T))\hookrightarrow \Psi_{(0,\infty)}(\mathpzc{N})$ is locally compact for the $C^\infty_c((0,\infty),\mathcal{A})$-action.
\end{proof}

\subsubsection{Suspension by $\R$} 
We let $\partial_{\infty}$ denote the operator $i\frac{\mathrm{d}}{\mathrm{d}x}$ on $L^2(\R)$ equipped with its minimal domain, that is, $\partial_{\infty}$ denotes the graph closure of the domain $C^\infty_c(\R)$. The operator $\partial_{\infty}$ is self-adjoint and its domain coincides with its maximal domain $H^1(\R)$. For an operator $T:\mathpzc{N}\dashrightarrow\mathpzc{N}$, we define $\Psi_{\infty}(\mathpzc{N}):=L^2(\R)\hat{\boxtimes}\mathpzc{N}$ and
\begin{equation}
\label{psiofdrinfty}
\Psi_{\infty}(T):=\partial_{\infty}\hat{\boxtimes}T:\Psi_{\infty}(\mathpzc{N})\dashrightarrow \Psi_{\infty}(\mathpzc{N}).
\end{equation}
Since $\partial_\infty$ is self-adjoint, \cite[Theorem 1.3]{DGM} implies that the operator $\Psi_{\infty}(T)$ is closed, regular and symmetric if $T$ is regular and symmetric. Moreover, $\Psi_{\infty}(T)^*=\overline{\partial_{\infty}\hat{\boxtimes}T^*}$ if $T$ is regular and symmetric and if additionally $T$ is self-adjoint, then so is $\Psi_{\infty}(T)$.

\begin{define}
\label{suspbyinteR}
For an $(\mathcal{A},B)$-chain $\mathfrak{Z}=(\mathpzc{N},T)$ we define its suspension by the real line as the $(C^\infty_b(\R,\mathcal{A}),B)$-chain 
$$\Psi_{\infty}(\mathfrak{Z}):=(\Psi_{\infty}(\mathpzc{N}),\Psi_{\infty}(T))=(L^2(\R)\hat{\boxtimes}\mathpzc{N},\partial_{\infty}\hat{\boxtimes}T).$$
\end{define}

As in the subsubsections above, we conclude the following using \cite[Theorem 1.3]{DGM}.

\begin{prop}
\label{suspandpropR}
Let $\mathfrak{Z}=(\mathpzc{N},T)$ be an $(\mathcal{A},B)$-chain.
\begin{itemize}
\item If $\mathfrak{Z}$ is symmetric, so is $\Psi_{\infty}(\mathfrak{Z})$. 
\item If $\mathfrak{Z}$ is half-closed, then $\Psi_{\infty}(\mathfrak{Z})$ is a half-closed $(C^\infty_c(\R,\mathcal{A}),B)$-chain.
\item If $\mathfrak{Z}$ is closed, then $\Psi_{\infty}(\mathfrak{Z})$ is a closed $(C^\infty_c(\R,\mathcal{A}),B)$-chain.
\end{itemize}
\end{prop}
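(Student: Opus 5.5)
The plan is to mimic the proofs of Propositions \ref{suspandprop} and \ref{suspandprophalf}, with \cite[Theorem 1.3]{DGM} in place of \cite[Theorem 1.18]{DGM}; this is possible because $\partial_\infty$ is self-adjoint.

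\emph{Symmetry and regularity.} Assume $\mathfrak{Z}$ is symmetric, so $T$ is regular and symmetric. Since $\partial_\infty$ is self-adjoint with domain $H^1(\R)$, \cite[Theorem 1.3]{DGM} gives that $\Psi_\infty(T)=\partial_\infty\hat{\boxtimes}T$ is closed, regular and symmetric, with $\Psi_\infty(T)^*=\overline{\partial_\infty\hat{\boxtimes}T^*}$. It remains to check the Lipschitz condition for $f\in C^\infty_c(\R,\mathcal{A})$, or more generally $f\in C^\infty_b(\R,\mathcal{A})$ with bounded derivative. On the algebraic core $H^1(\R)\odot\Dom(T)$ one computes
\[
[\partial_\infty\hat{\boxtimes}T,f]=(\text{grading factor})\cdot i f' + [T,f(x)] \ \text{(pointwise in $x$)}.
\]
This is bounded provided $\sup_x\|[T,f(x)]\|<\infty$. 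For $f$ in the algebraic tensor product $C^\infty_c(\R)\odot\mathcal{A}$ this is immediate. For general $f$ I would interpret $C^\infty_c(\R,\mathcal{A})$ through this algebraic tensor product, or more generally through functions that are smooth into the operator algebra $\mathrm{Lip}(T)$. A core argument using closedness of $\Psi_\infty(T)$ then shows that $f$ preserves $\Dom\Psi_\infty(T)$. Hence $\Psi_\infty(\mathfrak{Z})$ is symmetric.

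\emph{Half-closed.} Assume $\pi(\mathcal{A})\subseteq\mathrm{Lip}_0(T,T^*)$. I need two things.

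First, $f\Dom(\Psi_\infty(T)^*)\subseteq\Dom(\Psi_\infty(T))$. Take $\xi$ in the core $H^1(\R)\odot\Dom(T^*)$ of $\Psi_\infty(T)^*$. Then $f\xi\in H^1\odot\Dom(T)$ up to closure, using $a\Dom(T^*)\subseteq\Dom(T)$ and the boundedness of $[T,a]$. A graph-norm estimate
\[
\|\Psi_\infty(T)f\xi\|\lesssim\|\xi\|_{\Psi_\infty(T)^*},
\]
together with closedness, extends this to the whole of $\Dom(\Psi_\infty(T)^*)$.

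Second, $f(1+\Psi_\infty(T)^*\Psi_\infty(T))^{-1}$ is compact. Since $f$ has compact support in $\R$, I would pick $\chi\in C^\infty_c(\R)$ with $\chi f=f$. The operator $f(1+\Psi_\infty(T)^*\Psi_\infty(T))^{-1/2}$ factors through the inclusion $\Dom\Psi_\infty(T)\hookrightarrow\Psi_\infty(\mathpzc{N})$, localized by $\chi$. On the support of $\chi$ the domain lies in $H^1_{\mathrm{loc}}$ in the $\R$-variable tensored with $\Dom(T)$. Rellich compactness of $H^1\to L^2$ on a bounded interval, combined with local compactness of $\Dom(T)\hookrightarrow\mathpzc{N}$ for the $\mathcal{A}$-action, yields compactness. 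Concretely, one can write the resolvent as an integral over $\R$ in the spectral variable of $\partial_\infty$ and approximate by finite-rank-in-$x$ pieces; this is the same mechanism as in \cite[Theorem 1.18]{DGM}, except the interval is replaced by $\mathrm{supp}\,\chi$.

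\emph{Closed case.} If $T$ is self-adjoint, \cite[Theorem 1.3]{DGM} gives $\Psi_\infty(T)$ self-adjoint, and the half-closed case finishes the argument.

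\emph{Main obstacle.} I expect the hardest step to be the local compactness for vector-valued $f$ in the Hilbert $C^*$-module setting. There Rellich must be combined with $B$-compactness rather than ordinary compactness. I would handle it by approximating $f$ in the $C^*$-norm by elementary tensors $\sum\phi_j\otimes a_j$, and noting that $\phi_j(1+\partial_\infty^2)^{-1/2}\hat{\boxtimes}a_j(1+T^*T)^{-1/2}$ is compact. Comparing with the resolvent of the full operator then requires a bounded-commutator argument using the anticommutation of the two summands of $\Psi_\infty(T)^*\Psi_\infty(T)$.
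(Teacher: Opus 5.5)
Your proposal is correct and takes the same route as the paper. The paper proves the proposition only by pointing to \cite[Theorem 1.3]{DGM}, using self-adjointness of $\partial_\infty$, for closedness, regularity, symmetry, the adjoint formula and self-adjointness, with local compactness handled as in Propositions \ref{suspandprop} and \ref{suspandprophalf}; you fill in the Lipschitz, domain and compactness checks it leaves implicit. One simplification: since $\Psi_\infty(T)$ is closed on $H^1(\R)\boxtimes\Dom(T)$, its graph norm is equivalent to the tensor norm, so for $f=\phi\otimes a$ the composition of $f$ with the inclusion $\iota:\Dom(\Psi_\infty(T))\hookrightarrow\Psi_\infty(\mathpzc{N})$ is the exterior tensor product of the Rellich-compact map $\phi:H^1(\R)\to L^2(\R)$ with the $B$-compact map $a:\Dom(T)\to\mathpzc{N}$, and no resolvent-comparison or anticommutation argument is needed.
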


\subsubsection{Shubin-suspension by $\R$} 
We let $\partial_{Sh}$ denote the odd operator 
$$\partial_{Sh}:=\begin{pmatrix}
0&i\partial_\infty+x\\
-i\partial_\infty+x& 0
\end{pmatrix}$$
on the graded Hilbert space $L^2(\R,\C^2)$ equipped with its domain as a sum. The operator $\partial_{Sh}$ is self-adjoint and its domain coincides with $\{f\in H^1(\R,\C^2): \hat{f}\in H^1(\R,\C^2)\}$. The operator $\partial_{Sh}$ has compact resolvent since $\partial_{Sh}^2$ up to a constant matrix is the harmonic oscillator. For an operator $T:\mathpzc{N}\dashrightarrow\mathpzc{N}$, we define $\Psi_{Sh,\infty}(\mathpzc{N}):=L^2(\R,\C^2)\hat{\boxtimes}\mathpzc{N}$ and
\begin{equation}
\label{psiofdinftysh}
\Psi_{Sh,\infty}(T):=\partial_{Sh}\hat{\boxtimes}T:\Psi_{Sh,\infty}(\mathpzc{N})\dashrightarrow \Psi_{Sh,\infty}(\mathpzc{N}).
\end{equation}
Since $\partial_{Sh}$ is self-adjoint,  \cite[Theorem 1.3]{DGM} implies that the operator $\Psi_{Sh,\infty}(T)$ is closed, regular and symmetric if $T$ is regular and symmetric. Moreover, $\Psi_{Sh,\infty}(T)^*=\overline{\partial_{sh}\hat{\boxtimes}T^*}$ if $T$ is regular and symmetric and if additionally $T$ is self-adjoint, then so is $\Psi_{Sh,\infty}(T)$.

\begin{define}
\label{suspbyinteShR}
For an $(\mathcal{A},B)$-chain $\mathfrak{Z}=(\mathpzc{N},T)$ we define its Shubin suspension by the real line as the $(C^\infty_b(\R,\mathcal{A}),B)$-chain 
$$\Psi_{Sh,\infty}(\mathfrak{Z}):=(\Psi_{Sh,\infty}(\mathpzc{N}),\Psi_{Sh,\infty}(T))=(L^2(\R,\C^2)\hat{\boxtimes}\mathpzc{N},\partial_{Sh}\hat{\boxtimes}T).$$
\end{define}

As in the subsubsections above, we conclude the following using \cite[Theorem 1.3]{DGM} and the fact that $\partial_{Sh}$ has compact resolvent.

\begin{prop}
\label{suspandpropShR}
Let $\mathfrak{Z}=(\mathpzc{N},T)$ be an $(\mathcal{A},B)$-chain.
\begin{itemize}
\item If $\mathfrak{Z}$ is symmetric, so is $\Psi_{Sh,\infty}(\mathfrak{Z})$. 
\item If $\mathfrak{Z}$ is half-closed, then $\Psi_{Sh,\infty}(\mathfrak{Z})$ is a half-closed $(C^\infty_b(\R,\mathcal{A}),B)$-chain.
\item If $\mathfrak{Z}$ is closed, then $\Psi_{Sh,\infty}(\mathfrak{Z})$ is a closed $(C^\infty_b(\R,\mathcal{A}),B)$-chain.
\end{itemize}
\end{prop}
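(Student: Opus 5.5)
The plan follows the pattern of Propositions \ref{suspandprop}--\ref{suspandpropR}. The operator-theoretic facts all come from \cite[Theorem 1.3]{DGM}, applied with the self-adjoint operator $\partial_{Sh}$ in the first tensor factor. The new input is the compactness of the resolvent of $\partial_{Sh}$.

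\emph{Symmetric case.} If $T$ is regular and symmetric, \cite[Theorem 1.3]{DGM} shows that $\Psi_{Sh,\infty}(T)=\partial_{Sh}\hat{\boxtimes}T$ is closed, regular and symmetric, with adjoint $\overline{\partial_{Sh}\hat{\boxtimes}T^*}$.

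It remains to show that $C^\infty_b(\R,\mathcal{A})$ acts by Lipschitz operators. Write $f\in C^\infty_b(\R,\mathcal{A})$ as acting pointwise. On the core $\Dom(\partial_{Sh})\odot\Dom(T)$, the commutator with $\partial_{Sh}\hat{\boxtimes}1$ is (up to grading signs and $i$) Clifford multiplication by $\partial_x f$. This is bounded because $f$ has bounded derivative. Multiplication by $x$ in $\partial_{Sh}$ commutes with $f$. The commutator with $1\hat{\boxtimes}T$ is pointwise $[T,f(x)]$, which is bounded provided $\sup_x\|[T,f(x)]\|<\infty$. For this I would interpret $C^\infty_b(\R,\mathcal{A})$ with respect to the Lipschitz seminorm topology of $\mathcal{A}$, as is implicit in Proposition \ref{suspandpropR}. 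A closure argument then extends the commutator estimate to the domain.

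\emph{Half-closed case.} We need $f\Dom(\Psi_{Sh,\infty}(T)^*)\subseteq\Dom(\Psi_{Sh,\infty}(T))$ and local compactness. For the domain mapping, use $\Psi_{Sh,\infty}(T)^*=\overline{\partial_{Sh}\hat{\boxtimes}T^*}$ and approximate by elementary tensors. Pointwise, $f(x)$ maps $\Dom(T^*)$ into $\Dom(T)$ with graph-norm bounds. The remaining point is that the commutator bound above controls the $\partial_{Sh}$-part.

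\emph{Main obstacle: local compactness.} We need $f(1+\Psi_{Sh,\infty}(T)^*\Psi_{Sh,\infty}(T))^{-1}$ to be compact even though $f$ need not decay in $x$. Here the compactness of $(1+\partial_{Sh}^2)^{-1}$ replaces the decay that $C_c$ provided for $\partial_\infty$. The graph norm of $\Psi_{Sh,\infty}(T)$ dominates both the $\partial_{Sh}$-graph norm and the $1\hat{\boxtimes}T$-graph norm, since the cross terms anticommute up to bounded terms. So $(1+\Psi_{Sh,\infty}(T)^*\Psi_{Sh,\infty}(T))^{-1/2}$ factors through
\[(1+\partial_{Sh}^2)^{-1/4}\hat{\boxtimes}\,(\cdot)\quad\text{and}\quad 1\hat{\boxtimes}(1+T^*T)^{-1/4}.\]
Next, approximate $f$ in operator norm by finite sums $\sum_j\phi_j\otimes a_j$ with $\phi_j\in C^\infty_b(\R)$ and $a_j\in\mathcal{A}$. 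This is done locally on compact intervals, where uniform approximation is possible. The tails are handled by the fact that $(1+\partial_{Sh}^2)^{-1/2}$ times $\chi_{|x|>R}$ has small norm as $R\to\infty$, because the harmonic oscillator dominates $x^2$. Each resulting term
\[\phi_j(1+\partial_{Sh}^2)^{-1/4}\hat{\boxtimes}\,a_j(1+T^*T)^{-1/4}\]
is an exterior tensor product of compacts, hence compact. The delicate point is justifying the factorization with fractional powers, that is, that the graph norm of the sum dominates each summand's graph norm. I would do this via the anticommutation $\{\partial_{Sh}\hat{\boxtimes}1,1\hat{\boxtimes}T\}=0$ on the core.

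\emph{Closed case.} If $T$ is self-adjoint, then $\Psi_{Sh,\infty}(T)$ is self-adjoint by \cite[Theorem 1.3]{DGM}, and the half-closed case gives the rest.
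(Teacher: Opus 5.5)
Your proposal is correct and takes the same route as the paper. The paper's entire proof is to invoke \cite[Theorem 1.3]{DGM} for closedness, regularity, symmetry and the adjoint of $\partial_{Sh}\hat{\boxtimes}T$, together with the compact resolvent of $\partial_{Sh}$ to get local compactness for the non-decaying algebra $C^\infty_b(\R,\mathcal{A})$. Your tail-plus-local-approximation argument and your explicit Lipschitz-type topology on $\mathcal{A}$, which is genuinely needed to ensure $\sup_x\|[T,f(x)]\|<\infty$, fill in details the paper leaves implicit.
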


\subsubsection{Shubin-suspension by $[0,\infty)$} 
\label{shubinbyhalf}
We let $\partial_{Sh,[0,\infty)}$ denote the odd operator 
\begin{equation}
\label{shuboinddn}
\partial_{Sh,[0,\infty)}:=\begin{pmatrix}
0&i\partial_{[0,\infty)}+x\\
-i\partial_{[0,\infty)}+x& 0
\end{pmatrix}
\end{equation}
on the graded Hilbert space $L^2([0,\infty),\C^2)$ equipped with its domain as a sum. The operator $\partial_{Sh,[0,\infty)}$ is symmetric and its domain coincides with $\Dom(\partial_{Sh})\cap H^1_0([0,\infty))$, where we use that $H^1_0([0,\infty))\subseteq H^1(\R)$ is a closed subspace. The operator $\partial_{Sh,[0,\infty)}$ has a compact domain inclusion because $\partial_{Sh}$ has a compact domain inclusion and  $\Dom(\partial_{Sh.[0,\infty)}\hookrightarrow  L^2([0,\infty))$ factors over $\Dom(\partial_{Sh})\hookrightarrow L^2(\R)\to L^2([0,\infty))$ where the last map is the projection. The operator $\partial_{Sh,[0,\infty)}^*$ is described by the same differential expression as in Equation \eqref{shuboinddn} but has domain 
$$\Dom(\partial_{Sh,[0,\infty)}^*)=\{f|_{[0,\infty)}: f\in \Dom(\partial_{Sh})\}=\{f\in H^1([0,\infty)): xf \in L^2([0,\infty))\}.$$
For an operator $T:\mathpzc{N}\dashrightarrow\mathpzc{N}$, we define $\Psi_{Sh,[0,\infty)}(\mathpzc{N}):=L^2([0,\infty),\C^2)\hat{\boxtimes}\mathpzc{N}$ and
\begin{equation}
\label{psiofdinftysh0inf}
\Psi_{Sh,[0,\infty)}(T):=\partial_{Sh,[0,\infty)}\hat{\boxtimes}T:\Psi_{Sh,[0,\infty)}(\mathpzc{N})\dashrightarrow \Psi_{Sh,[0,\infty)}(\mathpzc{N}).
\end{equation}
An approximation argument combining \cite[Theorem 1.3]{DGM} with \cite[Theorem 1.18]{DGM} show that $\Psi_{Sh,(0,\infty)}(T)$ is closed, regular and symmetric if $T$ is closed, regular and symmetric. Moreover, 
$$\Psi_{Sh,(0,\infty)}(T)^*=\overline{\partial^*_{Sh,(0,\infty)}\hat{\boxtimes}T^*},$$ 
if $T$ is regular and symmetric.

\begin{define}
\label{suspbyinteShhalf}
For an $(\mathcal{A},B)$-chain $\mathfrak{Z}=(\mathpzc{N},T)$ we define its Shubin suspension by the half line $[0,\infty)$ as the $(C^\infty_b([0,\infty),\mathcal{A}),B)$-chain 
$$\Psi_{Sh,[0,\infty)}(\mathfrak{Z}):=(\Psi_{Sh,[0,\infty)}(\mathpzc{N}),\Psi_{Sh,[0,\infty)}(T))=(L^2(\R,\C^2)\hat{\boxtimes}\mathpzc{N},\partial_{Sh,[0,\infty)}\hat{\boxtimes}T).$$
\end{define}

As above, we conclude the following using \cite[Theorem 1.3]{DGM} and compact domain inclusion of $\partial_{Sh,[0,\infty)}$.

\begin{prop}
\label{suspandpropShhalf}
Let $\mathfrak{Z}=(\mathpzc{N},T)$ be an $(\mathcal{A},B)$-chain.
\begin{itemize}
\item If $\mathfrak{Z}$ is symmetric, so is $\Psi_{Sh,[0,\infty)}(\mathfrak{Z})$. 
\item If $\mathfrak{Z}$ is half-closed, then $\Psi_{Sh,[0,\infty)}(\mathfrak{Z})$ is a half-closed $(C^\infty_b([0,\infty),\mathcal{A}),B)$-chain.
\end{itemize}
\end{prop}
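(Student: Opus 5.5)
The plan is to check the three defining conditions of a half-closed chain (Definition \ref{UnbKKcycDef}) for $\Psi_{Sh,[0,\infty)}(\mathfrak{Z})$ directly, following the model of Propositions \ref{suspandprop}, \ref{suspandpropShR} and \ref{suspandprophalf}. The first bullet (symmetry) is already contained in the discussion preceding Definition \ref{suspbyinteShhalf}: if $T$ is closed, regular and symmetric, then $\Psi_{Sh,[0,\infty)}(T)=\partial_{Sh,[0,\infty)}\hat{\boxtimes}T$ is closed, regular and symmetric. This follows from the approximation argument combining \cite[Theorem 1.3]{DGM} and \cite[Theorem 1.18]{DGM}. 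It also gives the adjoint formula $\Psi_{Sh,[0,\infty)}(T)^*=\overline{\partial^*_{Sh,[0,\infty)}\hat{\boxtimes}T^*}$, which I will use below.

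For the half-closed case I would take a general element of $C^\infty_b([0,\infty),\mathcal{A})$. By density and closedness of $\mathrm{Lip}$ in the operator-algebra norm described in Section \ref{backgroundsec}, it suffices to treat elementary tensors $f\otimes a$ with $f\in C^\infty_b([0,\infty))$ and $a\in\mathcal{A}$. The argument then has three steps.

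\emph{Lipschitz condition.} On the algebraic core $\Dom(\partial_{Sh,[0,\infty)})\odot\Dom(T)$, multiplication by $f$ preserves $H^1_0$ and the weighted condition $xg\in L^2$. The commutator is
$$[\Psi_{Sh,[0,\infty)}(T),f\otimes a]=(\pm i f'\sigma)\otimes a+\gamma f\otimes[T,a],$$
where $\sigma$ is the off-diagonal matrix in \eqref{shuboinddn}. Note that $x$ commutes with $f$, so only $f'$ appears. This operator is bounded because $f'$, $f$ and $[T,a]$ are bounded. Closedness of $\Psi_{Sh,[0,\infty)}(T)$ then shows that $f\otimes a$ preserves its domain.

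\emph{Local compactness.} I would factor $(f\otimes a)(1+\Psi^*\Psi)^{-1/2}$ through the domain inclusion of $\Psi_{Sh,[0,\infty)}(T)$. The domain of $\Psi^*\Psi$ sits inside $\Dom(\partial_{Sh,[0,\infty)})\boxtimes\Dom(T)$, and I would use the compact domain inclusion of $\partial_{Sh,[0,\infty)}$ (proved in Subsection \ref{shubinbyhalf} by factoring over $\partial_{Sh}$) together with $a(1+T^*T)^{-1/2}\in\K_B(\mathpzc{N})$. Compactness of the product then follows as in \cite[Theorem 1.3]{DGM}. The point is that, unlike Proposition \ref{suspandprop}, no decay of $f$ is needed: the harmonic-oscillator term $x$ supplies compactness at infinity.

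\emph{The condition $a\Dom(\Psi^*)\subseteq\Dom(\Psi)$.} This is the step I expect to be the main obstacle. Using the adjoint formula, $\Dom(\Psi^*)$ is the closure of $\Dom(\partial^*_{Sh,[0,\infty)})\odot\Dom(T^*)$. The factor $a$ maps $\Dom(T^*)$ into $\Dom(T)$ because $\mathfrak{Z}$ is half-closed, and $f$ maps $\Dom(\partial^*_{Sh,[0,\infty)})$ into itself. However, landing in $\Dom(\partial_{Sh,[0,\infty)})\subseteq H^1_0$ requires the boundary trace at $x=0$ to vanish, that is, $f(0)\,a\,\xi(0)=0$. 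I would first try to control this boundary term through the trace map $\Dom(\Psi^*)\to$ (boundary module). I would check how \cite[Theorem 1.18]{DGM} identifies $\Dom(\partial_{Sh,[0,\infty)}\hat{\boxtimes}T)$ when $T$ is only half-closed, to see whether the graded product absorbs the trace of $f(0)a\xi$. If it does not, the argument proves the inclusion exactly for those $f$ with $f(0)=0$, and the boundary value of $f$ at $0$ must be handled separately by the trace analysis.
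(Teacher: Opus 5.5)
Your first two steps are fine, and they are in substance all that the paper's one-line proof contains (the citation of \cite[Theorem 1.3]{DGM} together with the compact domain inclusion of $\partial_{Sh,[0,\infty)}$). They give symmetry and regularity of $\Psi:=\Psi_{Sh,[0,\infty)}(T)$, and $C^\infty_b([0,\infty),\mathcal{A})\subseteq \mathrm{Lip}_0(\Psi)$.

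The gap is your third step, and no finer trace analysis can close it. For $f(0)\neq 0$ the inclusion $(f\otimes a)\Dom(\Psi^*)\subseteq \Dom(\Psi)$ is false, so the second bullet is false as literally stated. Here is a counterexample:
\begin{itemize}
\item Take $\mathcal{A}=B=\C$ and the even cycle $\mathfrak{Z}=(\C,0)$, with $\C$ evenly graded. It is in particular half-closed, and $\Psi=\partial_{Sh,[0,\infty)}$.
\item By Subsection \ref{shubinbyhalf}, $\Dom(\Psi)\subseteq H^1_0$, while $\Dom(\Psi^*)=\{g\in H^1([0,\infty),\C^2): xg\in L^2\}$.
\item For $0\neq v\in\C^2$, the vector $g=e^{-x^2/2}v$ lies in $\Dom(\Psi^*)$ and satisfies $g(0)\neq 0$.
\item Hence for the constant function $f=1$, half-closedness would force $\Psi$ to be self-adjoint, which it is not.
\end{itemize}
The same test works for every half-closed $\mathfrak{Z}$. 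The vector $e^{-x^2/2}v\otimes\eta$ with $\eta\in\Dom(T)$ lies in $\Dom(\Psi^*)$ by a direct computation on the core. On the other hand, the cross terms in $\langle\Psi\xi,\Psi\xi\rangle$ cancel on the core, so $\Dom(\Psi)\subseteq \Dom(\partial_{Sh,[0,\infty)})\boxtimes\mathpzc{N}$, and on this module the trace at $x=0$ vanishes. Hence the inclusion fails whenever $f(0)\pi(a)\neq 0$: the graded product does not absorb the boundary value.

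Your proposal should therefore end by correcting the algebra, not by hoping the trace disappears. What is true, and all that bordisms require (Definition \ref{HilBorDef} only asks half-closedness for $b(\phi)$ with $\phi\in C^\infty_c((0,1],\mathcal{A})$; compare Proposition \ref{suspandprophalf}), is half-closedness over the subalgebra of $f\in C^\infty_b([0,\infty),\mathcal{A})$ with $f(0)=0$, in particular those vanishing near $0$.

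For these $f$ your argument goes through, but you must add a closure step. Mapping the algebraic core $\Dom(\partial^*_{Sh,[0,\infty)})\odot\Dom(T^*)$ into $\Dom(\Psi)$ does not by itself give the inclusion on all of $\Dom(\Psi^*)$. Given $\xi\in\Dom(\Psi^*)$, pick core elements $\xi_n$ with $\xi_n\to\xi$ and $\Psi^*\xi_n\to\Psi^*\xi$; this is where the adjoint formula $\Psi^*=\overline{\partial^*_{Sh,[0,\infty)}\hat{\boxtimes}T^*}$ is used. Then $\Psi(f\otimes a)\xi_n=(f\otimes a)\Psi^*\xi_n+c\,\xi_n$, where $c$ is bounded and built from $f'$, $f$ and $[T^*,a]=-[T,a^*]^*$. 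Closedness of $\Psi$ now yields $(f\otimes a)\xi\in\Dom(\Psi)$.
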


\subsection{Dirac operators on $C^*$-bundles}
\label{diraconcstarbundleex}
An important construction of $KK$-chains comes from geometric $K$-homology \cite{BD,BDbor}. The construction is that of unbounded $KK$-cycles from Dirac operators twisted by $C^*$-bundles and was explicitly considered in this context in \cite[Section 1.5 and 4]{DGM}, yet is present in a substantial amount of literature, e.g. \cite{hankpapsch,hilsumcmodbun,hilsumfol,hilsumbordism,LP, LPGAFA,PS, PSrhoInd,PSsignInd,WahlProductAPS}. In this setting, the adjectives closed, half-closed and symmetric all have geometric meaning. Let $W$ denote a complete Riemannian manifold with boundary. Whenever referring to a manifold, we tacitly mean a smooth manifold. We will use the convention that $C^\infty_c(W^\circ)$ denotes the space of smooth functions compactly supported in $W^\circ$ and $C^\infty_c(\overline{W})$ denotes the space of smooth functions  compactly supported in $\overline{W}$. In other words, elements of both spaces are compactly supported but elements of $C^\infty_c(W^\circ)$ are supported away from the boundary $\partial W$.

Consider a unital $C^*$-algebra $B$ and a Clifford-$B$-bundle $\mathcal{E}_B\to W$ with Clifford connection $\nabla_{\mathcal{E}}$. From this data, we can define the $B$-linear Dirac operator $\slashed{D}_\mathcal{E}:C^\infty_c(W^\circ, \mathcal{E}_B)\to C^\infty_c(W^\circ, \mathcal{E}_B)$. Such a situation arises for instance when $B=C^*(\Gamma)$, for a discrete group $\Gamma$, $W$ is a complete Riemannian spin$^c$-manifold with boundary and 
$$\mathcal{E}_{C^*(\Gamma)}=p^*S_{W}\times_{\Gamma}C^*(\Gamma)=S_W\otimes \mathcal{L}_{\tilde{W}},$$ 
for a $\Gamma$-Galois covering $p:\tilde{W}\to W$ and the complex spinor bundle $S_W\to W$. Here $\mathcal{L}_{\tilde{W}}:=\tilde{W}\times_{\Gamma}C^*(\Gamma)$ is the Mishchenko bundle. The Clifford connection $\nabla_{\mathcal{E}}$ is defined from the Clifford connection on $S_W$ and the flat hermitean connection on the Mishchenko bundle. This particular example plays an important role in the study of the assembly mapping for free actions of discrete groups, see for instance \cite{land}. It is studied in more detail below in Subsection \ref{diraconcstarbundleexbordism}. 

We say that a Dirac operator $\slashed{D}_\mathcal{E}$ on $\mathcal{E}_B$ is complete if any $f\in L^2(W,\mathcal{E}_B)$ such that $\slashed{D}_\mathcal{E}f\in L^2(W,\mathcal{E}_B)$ (in distributional sense) can be approximated by a sequence $(f_j)_{j\in \N}\subseteq C^\infty_c(\overline{W},\mathcal{E}_B)$ in the sense that $f_j\to f$ and $\slashed{D}_\mathcal{E}f_j\to \slashed{D}_\mathcal{E}f$ in $L^2$. Unless otherwise stated, we tacitly assume that all Dirac operators are complete. This is automatic if $\overline{W}$ is compact or if the Clifford connection is of bounded geometry.

By \cite[Lemma 1.24]{DGM}, the closure $D_\mathcal{E}^{\rm min}$ of $\slashed{D}_\mathcal{E}$ with domain $C^\infty_c(W^\circ, \mathcal{E}_B)$ as an operator on the $B$-Hilbert $C^*$-module $L^2(W,\mathcal{E}_B)$ fits into a symmetric chain $(L^2(W,\mathcal{E}_B),D_{\mathcal{E}}^{\rm min})$ for $(C^\infty_c(\overline{W}),B)$. It is half-closed when restricting to a $(C^\infty_c(W^\circ),B)$-chain and closed if and only if $\partial W=\emptyset$.

\begin{figure}
   \centering
 \includegraphics[height=5.35cm]{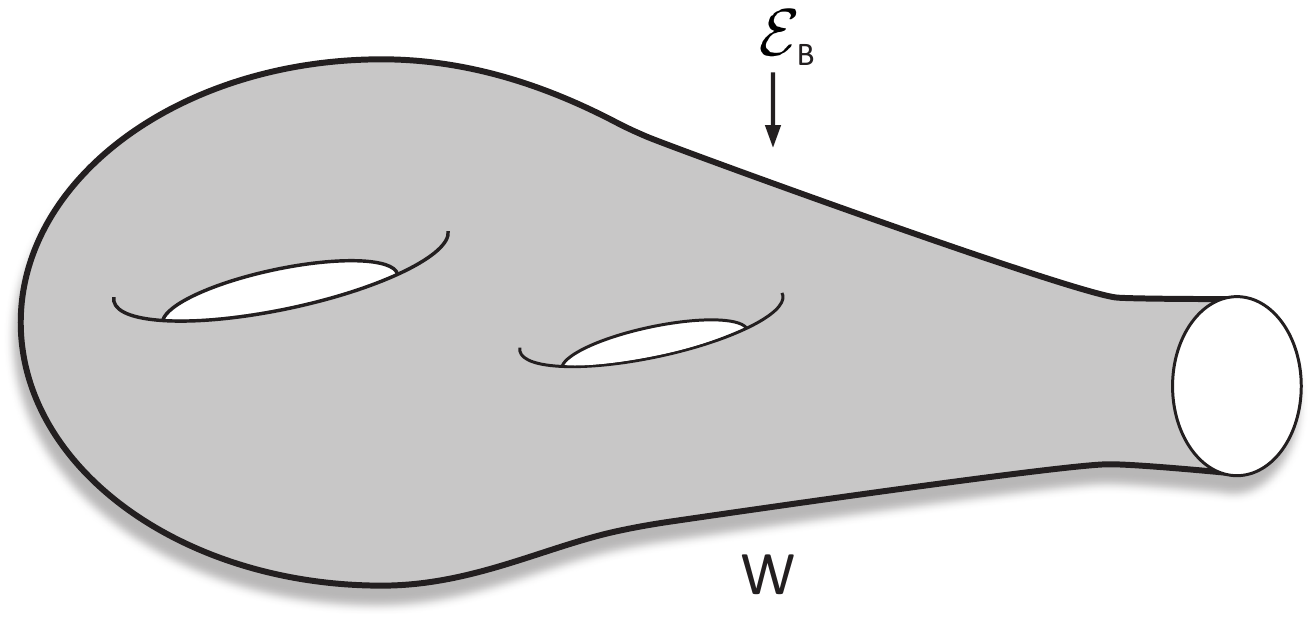}
  \caption{A $C^*$-bundle $\mathcal{E}_B$ over a compact manifold with boundary as in Example \ref{diraconcstarbundleex}.}
\end{figure}

For any metric space $X$, closed subspace $Y\subseteq X$, and Lipschitz continuous mapping $f:(W,\partial W)\to (X,Y)$ (i.e. $f(\partial W)\subseteq Y$) we can pull back the action along $f$ and consider $(L^2(W,\mathcal{E}_B),D_{\mathcal{E}}^{\rm min})$ as a symmetric $(\mathrm{Lip}(X),B)$-chain and as a half-closed $(\mathrm{Lip}_0(X\setminus Y),B)$-chain. Here $\mathrm{Lip}(X)$ denotes the algebra of uniformly bounded and uniformly Lipschitz functions on $X$ and $\mathrm{Lip}_0(X\setminus Y):=\mathrm{Lip}(X)\cap C_0(X\setminus Y)$.

The reader should note that in this class of examples there are few isomorphisms in the following sense. If $(L^2(W,\mathcal{E}_B),D_{\mathcal{E}}^{\rm min})$ and $(L^2(W,\mathcal{E}'_B),D_{\mathcal{E}'}^{\rm min})$ are two $(C^\infty_c(W^\circ),B)$-chains constructed as above, then they are isomorphic if and only if $\mathcal{E}_B\to W$ and $\mathcal{E}_B'\to W$ are isomorphic as Clifford $B$-bundles via a (unitary Clifford linear) isomorphism intertwining the Clifford connections.

\subsection{Group actions on cycles and chains}
\label{subsecgequicalalad}

Let us now consider a fairly general construction relating to the descent of a certain class of equivariant unbounded cycles. There is no generally accepted notion of equivariant unbounded cycles, and a standard reference is \cite{kucerovskythesis}. We shall fix a notion for equivariant unbounded cycles that is sufficient to study proper group actions on manifolds (cf. Subsection \ref{groupactionsex} below). 

\begin{define}
\label{gequicalalad}
Consider a second countable locally compact group $G$ acting on $C^*$-algebras $A$ and $B$, and the action of $G$ preserves the dense $*$-subalgebra $\mathcal{A}\subseteq A$. We shall say that $(\mathpzc{E},D)$ is a $G$-equivariant $(\mathcal{A},B)$-chain if 
\begin{enumerate}
\item $(\mathpzc{E},D)$ is an $(\mathcal{A},B)$-chain, $\mathpzc{E}$ is a $G$-equivariant $B$-Hilbert $C^*$-module and the left action $A\to \End_B^*(\mathpzc{E})$ is $G$-equviariant;
\item any $g\in G$ preserves $\Dom(D)$;
\item for any $g\in G$ and $a\in \mathcal{A}$, $a(D-gDg^{-1})$ and $(D-gDg^{-1})a$ extend to $B$-linear adjointable operators on $\mathpzc{E}$;
\item for any $a\in \mathcal{A}$, the functions
$$g\mapsto a(D-gDg^{-1})\quad \mbox{and}\quad g\mapsto(D-gDg^{-1})a,$$
are $*$-strongly continuous functions $G\to \End_B^*(\mathpzc{E})$.
\end{enumerate}
\end{define}

The reader can find that our definition of a $G$-equivariant $(\mathcal{A},B)$-chain in Definition \ref{gequicalalad} coincides with that of uniformly equivariant unbounded cycle in the sense of \cite[Definition I.2.7]{adathesis}, see also \cite[Definition 8.7]{danthesis}.

The descent construction will be a construction of a $(C_c(G,\mathcal{A}),B\rtimes G)$-chain from a $G$-equivariant $(\mathcal{A},B)$-chain. The construction follows \cite[Chapter 3]{Kas2} with suitable modifications to the unbounded setting, see also \cite[Chapter I.2.2]{adathesis}. Equipp $C_c(G,\mathpzc{E})$ with the left $C_c(G,A)$-action 
$$a.\xi(g)=\int_Ga(h)h.\xi(h^{-1}g)\mathrm{d}g,$$
and the right $C_c(G,B)$-action 
$$\xi.b(g)=\int_G\xi(h)h.b(h^{-1}g)\mathrm{d}g.$$
We write $A\rtimes G$ for the $C^*$-closure of $C_c(G,A)$ with the product 
$$a_1a_2(g):=\int_Ga_1(h)h.a_2(h^{-1}g)\mathrm{d}g$$
and $a^*(g)=\Delta(g)^{-1}g.a(g^{-1})^*$ where $\Delta:G\to \R_+$ denotes the modular function. We define $\mathpzc{E}\rtimes G$ as the $B\rtimes G$-Hilbert $C^*$-module closure of $C_c(G,\mathpzc{E})$ in the inner product
$$\langle \xi_1,\xi_2\rangle_{\mathpzc{E}\rtimes G}(g):=\int_Gh^{-1}\langle \xi_1(h),\xi_2(hg)\rangle_{\mathpzc{E}}\mathrm{d}g,$$
The left $C_c(G,A)$-action defines a left action $A\rtimes G\to \End_{B\rtimes G}^*(\mathpzc{E}\rtimes G)$. By \cite[Chapter 3]{Kas2}, any $T\in \End_B^*(\mathpzc{E})$ induces a $B\rtimes G$-linear adjointable operator $T\rtimes G\in \End_{B\rtimes G}^*(\mathpzc{E}\rtimes G)$ by defining 
$$(T\rtimes G)\xi(g):=T\xi(g), \quad \xi\in C_c(G,\mathpzc{E}),$$
and extending by continuity. In the unbounded setting, the authors have not been able to find a general definition of descent of regular operators. We use the following construction. 

\begin{define}
\label{descentofunbbbddd}
Let $T:\mathpzc{E}\dashrightarrow\mathpzc{F}$ be a regular operator between two $G$-equivariant $B$-Hilbert $C^*$-modules. We define the descent of $T$ as the regular operator $T\rtimes G:\mathpzc{E}\rtimes G\dashrightarrow\mathpzc{F}\rtimes G$ characterized from the bijective correspondence in Equation \eqref{graphdbibidkd} by declaring 
$$p_{T\rtimes G}:=p_T\rtimes G.$$ 
\end{define}

The operator $T\rtimes G$ is well defined as it is easily checked that $\pi_1|_{\mathrm{im}(p_T\rtimes G)}$ is injective and we have that $\pi_1(\mathrm{im}(p_T\rtimes G))\subseteq \mathpzc{E}\rtimes G$ is dense because $C_c(G,\Dom(T))\subseteq \pi_1(\mathrm{im}(p_T\rtimes G))$. A different but equivalent definition is possible. Namely, we can define $T\rtimes G$ from $\Dom(T\rtimes G):=\Dom(T)\rtimes G$ on which $T$ is defined by setting 
$$(T\rtimes G)\xi(g):=T\xi(g), \quad\mbox{for}\; \xi\in C_c(G,\Dom(T)),$$
and extending by continuity in the graph norm. The latter definition is more straightforward to work with, but Definition \ref{descentofunbbbddd} makes it easier to see that the process produces a regular operator. It is clear that $T\rtimes G$ is symmetric/self-adjoint if $T$ is and that $T\rtimes G$ has compact domain inclusion if $T$ does. For a $G$-equivariant $(\mathcal{A},B)$-chain $(\mathpzc{E},D)$ we write 
$$(\mathpzc{E},D)\rtimes G:=(\mathpzc{E}\rtimes G,D\rtimes G).$$
We call $(\mathpzc{E},D)\rtimes G$ the \emph{descended cycle} of the $G$-equivariant $(\mathcal{A},B)$-chain $(\mathpzc{E},D)$.

We write 
$$\mathcal{A}\rtimes_cG:=C_c(G,\mathcal{A}).$$

\begin{prop}
\label{descenfoscles}
For a $G$-equivariant $(\mathcal{A},B)$-chain $(\mathpzc{E},D)$ (in the sense of Definition \ref{gequicalalad}), $(\mathpzc{E},D)\rtimes G$ is a well defined $(\mathcal{A}\rtimes_c G,B\rtimes G)$-chain. If $(\mathpzc{E},D)\rtimes G$ is a symmetric/half-closed/closed $(\mathcal{A},B)$-chain, then $(\mathpzc{E},D)\rtimes G$ is a symmetric/half-closed/closed $(\mathcal{A}\rtimes_cG,B\rtimes G)$-chain. 

Moreover, if $(\mathpzc{E},D)$ is a $G$-equivariant symmetric $(\mathcal{A},B)$-chain with $a(1+D^*D)^{-1}\in \K_B(\mathpzc{E})$ for all $a\in \mathcal{A}$, then
\begin{equation}
\label{cpakakadldalad}
a(1+(D\rtimes G)^*D\rtimes G)^{-1}\in \K_{B\rtimes G}(\mathpzc{E}\rtimes G),\quad \forall a\in A\rtimes G.
\end{equation}
\end{prop}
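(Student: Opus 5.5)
We verify the axioms of Definition \ref{UnbKKcycDef} for $(\mathpzc{E}\rtimes G,D\rtimes G)$ directly on the dense subspaces $C_c(G,\mathpzc{E})$ and $C_c(G,\Dom(D))$, and then extend by continuity. Regularity of $D\rtimes G$ holds by construction: Definition \ref{descentofunbbbddd} defines it through the projection $p_T\rtimes G$. Symmetry and self-adjointness pass to $D\rtimes G$ by the remarks after that definition. Concretely, $(D\rtimes G)^*=D^*\rtimes G$, because $p_{D^*}$ is obtained from $p_D$ by the flip $\begin{pmatrix}0&-1\\1&0\end{pmatrix}$, which commutes with descent.

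The main task is to show $C_c(G,\mathcal{A})\subseteq\mathrm{Lip}(D\rtimes G)$. Take $a\in C_c(G,\mathcal{A})$ and $\xi\in C_c(G,\Dom(D))$. We have $a.\xi(g)=\int_G a(h)\,h.\xi(h^{-1}g)\,\mathrm{d}h$, and by Definition \ref{gequicalalad} each $h$ preserves $\Dom(D)$, so $a.\xi$ lies in the domain. The key step is the computation
$$[D\rtimes G,a]\xi(g)=\int_G [D,a(h)]\,h.\xi(h^{-1}g)\,\mathrm{d}h+\int_G a(h)\,(D-hDh^{-1})\,h.\xi(h^{-1}g)\,\mathrm{d}h .$$
The first kernel $h\mapsto[D,a(h)]$ should be strictly continuous and compactly supported, provided we interpret $C_c(G,\mathcal{A})$ as functions continuous for the Lipschitz operator-algebra topology on $\mathcal{A}$; this is a convention we must fix. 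The second kernel $h\mapsto a(h)(D-hDh^{-1})$ is $*$-strongly continuous and compactly supported by item (4) of Definition \ref{gequicalalad}. Both terms are therefore integrated forms of compactly supported strictly continuous kernels. By Kasparov's estimates \cite[Chapter 3]{Kas2}, these define adjointable operators on $\mathpzc{E}\rtimes G$, with norm bounded by the $L^1$-norm of the kernel. The same computation applies to $a^*$, using the other half of item (3).

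We then need to extend from the core $C_c(G,\Dom(D))$ to all of $\Dom(D\rtimes G)$, and check that $a$ maps the whole domain into itself. Since the commutator is bounded on a core, a standard graph-norm closure argument does this. The main obstacle is making the integrals above rigorous as strict/strong integrals in $\End^*_{B\rtimes G}$. We will handle this by pairing against $C_c(G,\mathpzc{E})$ in the inner product of $\mathpzc{E}\rtimes G$ and using uniform boundedness on compact subsets of $G$.

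For half-closedness we also need $a\Dom(D^*\rtimes G)\subseteq\Dom(D\rtimes G)$. This follows from the same integral formula, since $a(h)\Dom(D^*)\subseteq\Dom(D)$ holds pointwise and $(D-hDh^{-1})a(h)$ is bounded. Local compactness is a special case of \eqref{cpakakadldalad}.

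To prove \eqref{cpakakadldalad}, descent commutes with continuous functional calculus: $(1+(D\rtimes G)^*(D\rtimes G))^{-1}=(1+D^*D)^{-1}\rtimes G$, because both sides are built from the graph projection entries and $p\mapsto p\rtimes G$ is a $*$-homomorphism. It then suffices to take $a=f\otimes b$ with $f\in C_c(G)$ and $b\in A$; such elements are dense in $A\rtimes G$, and compactness is preserved under norm limits. For these, $a\,(R\rtimes G)$ is the integrated form of the kernel $h\mapsto f(h)\,b\,h.R$, where $R:=(1+D^*D)^{-1}$. The difference $h.R-R$ is controlled by item (3): $bD-b\,hDh^{-1}$ is bounded, which gives $b(h.R-R)$ compact via the resolvent identity. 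As a result, $b\,h.R\in\K_B(\mathpzc{E})$, norm-continuously in $h$. Kasparov's lemma \cite[Chapter 3]{Kas2} states that integrated forms of norm-continuous, compactly supported $\K_B(\mathpzc{E})$-valued kernels lie in $\K_{B\rtimes G}(\mathpzc{E}\rtimes G)$, and this concludes the proof. The delicate point in this part is the norm continuity in $h$. If it fails directly, we would first approximate $R$ by $f(D)$ with $f$ of compact support.
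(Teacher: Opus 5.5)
Your route is the paper's. The first part is Kasparov's Chapter 3 argument, which you make explicit through the commutator formula with kernels $h\mapsto[D,a(h)]$ and $h\mapsto a(h)(D-hDh^{-1})$. The compactness statement comes from $p_{D\rtimes G}=p_D\rtimes G$, i.e.\ $(1+(D\rtimes G)^*(D\rtimes G))^{-1}=(1+D^*D)^{-1}\rtimes G$, combined with the fact that $a(K\rtimes G)$ is $B\rtimes G$-compact for $a\in A\rtimes G$ and $A$-locally compact $K$; the paper cites this fact and you re-derive it.

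The weak point is your treatment of the kernel $h\mapsto a(h)\,(h.R)$, with $R=(1+D^*D)^{-1}$ and $h.R:=hRh^{-1}$. You derive compactness of $b\,(h.R)$ from item (3) of Definition \ref{gequicalalad} and a resolvent identity, and leave norm continuity in $h$ open. The resolvent step is not justified. For merely symmetric $D$, a resolvent identity for $h.R-R$ produces the second-order difference $D^*D-(hDh^{-1})^*(hDh^{-1})$ sandwiched between resolvents. Item (3) does not control this: it is a first-order statement, valid only after multiplying by elements of $\mathcal{A}$. Your fallback is also unavailable, since $f(D)$ is undefined for non-self-adjoint $D$, and norm-approximating $R$ does not by itself give continuity of $h\mapsto h.R$.

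Neither item (3) nor any resolvent identity is needed. $G$-invariance of $A$ and equivariance of the left action give
\[
a(h)\,(h.R)=h.\bigl((h^{-1}.a(h))\,R\bigr).
\]
Here $h\mapsto(h^{-1}.a(h))R$ is norm continuous into $\K_B(\mathpzc{E})$, by $A$-local compactness of $R$. Moreover $(h,k)\mapsto h.k$ is jointly norm continuous on $G\times\K_B(\mathpzc{E})$, because it is isometric and $h.\theta_{\xi,\eta}=\theta_{h\xi,h\eta}$. So for every $a\in C_c(G,A)$ the kernel is a compactly supported, norm-continuous $\K_B(\mathpzc{E})$-valued function, and the Kasparov lemma you quote finishes the argument, with no density reduction needed.

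The same identity repairs the half-closedness step. There the vectors $h.\xi(h^{-1}g)$ lie in $h\Dom(D^*)$, while Definition \ref{gequicalalad} only gives $h\Dom(D)=\Dom(D)$. Since $h^{-1}Dh-D$ is merely locally bounded, $G$ need not preserve $\Dom(D^*)$, so you cannot apply $a(h)\Dom(D^*)\subseteq\Dom(D)$ pointwise. Instead, for $\eta\in\Dom(D^*)$ write $a(h)\,h.\eta=h.\bigl((h^{-1}.a(h))\,\eta\bigr)\in h\Dom(D)=\Dom(D)$, using $h^{-1}.a(h)\in\mathcal{A}$; your closure argument can then proceed.

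The convention you must fix for $C_c(G,\mathcal{A})$ also matters for the second kernel. Item (4) gives continuity of $h\mapsto a_0(D-hDh^{-1})$ only for a fixed $a_0\in\mathcal{A}$, so for general $a$ you lack joint control. One fix: check the Lipschitz property on elementary tensors $f\otimes a_0$, which suffices since $\mathrm{Lip}(D\rtimes G)$ is a $*$-algebra, and take $\mathcal{A}\rtimes_c G$ to be the $*$-algebra they generate; this reduces everything to items (3) and (4) for fixed elements of $\mathcal{A}$.

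A minor slip: the adjoint identity is $p_{D^*}=v(1-p_D)v^*$ with $v$ the flip, not a conjugate of $p_D$ itself. This still gives $(D\rtimes G)^*=D^*\rtimes G$.
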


\begin{proof}
The first part of the proposition follows from the discussion above combined with the argument in \cite[Chapter 3]{Kas2}. To prove Equation \eqref{cpakakadldalad}, we note that the equality $p_{D\rtimes G}=p_D\rtimes G$ implies the equality
$$(1+(D\rtimes G)^*D\rtimes G)^{-1}=(1+D^*D)^{-1}\rtimes G.$$
As such, Equation \eqref{cpakakadldalad} follows from the fact that $a(K\rtimes G)\in \K_{B\rtimes G}(\mathpzc{E}\rtimes G)$ for $a\in A\rtimes G$ whenever $K\in \End_B^*(\mathpzc{E})$ is $A$-locally compact.
\end{proof}

\subsection{Group actions on manifolds}
\label{groupactionsex}
The constructions from Subsection \ref{diraconcstarbundleex} extend to isometric group actions, a situation well studied in the literature at least in bounded $KK$-theory \cite{BCHig,Kas2}. We here follow the terminology and notations of Subsection \ref{diraconcstarbundleex}. Consider a (second countable) locally compact group $G$ acting by smooth isometries on a complete Riemannian manifold with boundary $W$. If, for instance, $G$ acts properly and smoothly on a manifold with boundary $W$ then there is a complete Riemannian metric on $W$ in which $G$ acts by isometries. For a unital $C^*$-algebra $B$, assume that $S_B\to \overline{W}$ is a $G$-equivariant $B$-Clifford bundle. Pick a complete Dirac operator $\slashed{D}_S$ and let $D_S^{\rm min}$ denote its minimal closure, see details in Subsection \ref{diraconcstarbundleex}.

\begin{prop}
\label{descenofbundlesoddnam}
In the context of the preceeding paragraph, $(L^2(W,S_B),D_S^{\rm min})$ is a symmetric $G$-equivariant $(C^\infty_c(\overline{W}),B)$-chain restricting to a half-closed $G$-equivariant $(C^\infty_c(W^\circ),B)$-chain. In particular, $(L^2(W,S_B)\rtimes G,D_S^{\rm min}\rtimes G)$ is a symmetric $(C^\infty_c(\overline{W})\rtimes_cG,B\rtimes G)$-chain with 
$$a(1+(D_S^{\rm min}\rtimes G)^*D_S^{\rm min}\rtimes G)^{-1}\in \K_{B\rtimes G}(L^2(W,S_B)\rtimes G),\quad \forall a\in C_0(\overline{W})\rtimes G.$$
The chain $(L^2(W,S_B)\rtimes G,D_S^{\rm min}\rtimes G)$ restricts to a half-closed $(C^\infty_c(W^\circ)\rtimes_cG,B\rtimes G)$-chain.
\end{prop}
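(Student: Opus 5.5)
The plan is to reduce everything to Proposition \ref{descenofbundlesoddnam}'s predecessor, Proposition \ref{descenfoscles}. The only genuinely new content is that $(L^2(W,S_B),D_S^{\rm min})$ is a $G$-equivariant chain in the sense of Definition \ref{gequicalalad}. The non-equivariant statements are already recorded in Subsection \ref{diraconcstarbundleex}, via \cite[Lemma 1.24]{DGM}:
\begin{itemize}
\item $(L^2(W,S_B),D_S^{\rm min})$ is a symmetric $(C^\infty_c(\overline{W}),B)$-chain;
\item it is half-closed upon restriction to $C^\infty_c(W^\circ)$, so $a(1+D^*D)^{-1}$ is $B$-compact for $a\in C^\infty_c(W^\circ)$.
\end{itemize}
For the compactness claim over $C_0(\overline{W})\rtimes G$ we need $a(1+D^*D)^{-1}\in\K_B$ for every $a\in C^\infty_c(\overline{W})$, not only for interior-supported $a$. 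I would obtain this from the fact that $\Dom(D^{\rm min}_S)$ is contained in the maximal domain, which for a complete Dirac operator is locally $H^1$ up to the boundary. Rellich's lemma on the compact set $\mathrm{supp}(a)$, in its version for $B$-bundles as used in \cite{DGM}, then gives compactness of $a(1+D^*D)^{-1/2}$.

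Next I verify the conditions of Definition \ref{gequicalalad} one by one.
\begin{enumerate}
\item Since $G$ acts by isometries on $W$, it preserves the Riemannian volume, so the induced action $(g\xi)(x)=g\cdot\xi(g^{-1}x)$ on sections of the equivariant bundle $S_B$ is unitary and $B$-linear on $L^2(W,S_B)$. The left action of $C_0(\overline W)$ is equivariant by construction.
\item The action preserves $C^\infty_c(W^\circ,S_B)$, because isometries of a manifold with boundary map $W^\circ$ to itself. It also commutes with the graph norm up to replacing $D$ by $gDg^{-1}$, so preservation of $\Dom(D^{\rm min}_S)$ follows once $D-gDg^{-1}$ is controlled.
\item The key point is that $gDg^{-1}$ is the Dirac operator built from the pulled-back Clifford connection $g^*\nabla$. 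Clifford multiplication is $G$-invariant because $G$ acts isometrically and compatibly with the Clifford structure, so $D-gDg^{-1}=c(g^*\nabla-\nabla)$ is a zeroth order bundle endomorphism. It is smooth but possibly unbounded on $W$. After multiplying by $a\in C^\infty_c(\overline{W})$ it becomes a compactly supported smooth endomorphism, hence adjointable, which gives the boundedness of $a(D-gDg^{-1})$ and $(D-gDg^{-1})a$.
\item Strong continuity in $g$ comes from the smoothness of the action: the endomorphism $g^*\nabla-\nabla$ depends continuously on $g$ uniformly on compact sets, and the unitary representation of $G$ on $L^2$ is strongly continuous.
\end{enumerate}
If the connection is chosen $G$-invariant, which one can do by averaging over the proper action, then $D$ commutes with $G$ and items 3 and 4 are trivial. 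I would note this simplification, but argue in the general form.

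With equivariance established, Proposition \ref{descenfoscles} yields the following.
\begin{itemize}
\item The descended pair is a symmetric $(C^\infty_c(\overline W)\rtimes_cG,B\rtimes G)$-chain.
\item It restricts to a half-closed $(C^\infty_c(W^\circ)\rtimes_cG,B\rtimes G)$-chain; the domain condition $a\Dom(D^*)\subseteq\Dom(D)$ passes to $C_c(G,\cdot)$ pointwise and then by continuity.
\item Equation \eqref{cpakakadldalad} gives the compactness statement for $a\in C_0(\overline{W})\rtimes G$.
\end{itemize}

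The main obstacle is item 3 in the non-invariant-connection case: making precise that $gDg^{-1}$ is again a Dirac operator for the same Clifford structure. A second delicate point is the compactness of $a(1+D^*D)^{-1}$ for $a$ supported up to the boundary. For the latter I would first try the local $H^1$ regularity of $\Dom(D^{\rm min})$ near the boundary, where $D^{\rm min}$ imposes zero boundary values and so the domain embeds in $H^1_0$ locally.
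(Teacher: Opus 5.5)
Your route is the paper's. You take the non-equivariant facts from Subsection~\ref{diraconcstarbundleex}, and you use $G$-invariance of Clifford multiplication to see that $\slashed{D}_S-g\slashed{D}_Sg^{-1}=V(g)$ is a smooth zeroth-order endomorphism depending continuously on $g$. Compact support of $a$ then gives items 3 and 4 of Definition~\ref{gequicalalad}, and Proposition~\ref{descenfoscles} finishes the argument. You are right that Proposition~\ref{descenfoscles} also needs $a(1+D^*D)^{-1}\in\K_B$ for every $a\in C^\infty_c(\overline W)$, which the paper leaves implicit. Your first justification of this is false, however. The maximal domain is in general not locally $H^1$ up to the boundary: for the Dirac operator on the unit disc it contains the infinite-dimensional Bergman space of $L^2$ holomorphic functions, on which the graph norm is the $L^2$-norm, so it cannot embed compactly into $L^2$. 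Only the argument in your last paragraph works: $\Dom(D^{\rm min}_S)$ is locally $H^1_0$, so Rellich applies to $a\colon\Dom(D^{\rm min}_S)\to L^2(W,S_B)$.

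The genuine gap is item 2 of Definition~\ref{gequicalalad}. Since $g$ is unitary and maps $C^\infty_c(W^\circ,S_B)$ onto itself, $g\Dom(D^{\rm min}_S)$ is the minimal domain of $\slashed{D}_S-V(g)$. This equals $\Dom(D^{\rm min}_S)$ when $V(g)$ is bounded, because the graph norms are then equivalent. But item 3 only controls $V(g)$ after multiplication by $a$, so ``once $D-gDg^{-1}$ is controlled'' does not give domain preservation, and it can actually fail. Take $W=\R$, $G=\R$ acting by translations, and $\slashed{D}=i\partial_x-x^2$, the Dirac operator of the unitary connection $\mathrm{d}+ix^2\mathrm{d}x$ on the trivial line bundle. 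Then $\Dom(D^{\rm min})=\mathrm{e}^{-ix^3/3}H^1(\R)$, and translation by $t\neq0$ preserves it only if $xh\in L^2(\R)$ for all $h\in H^1(\R)$, which fails for $h=(1+x^2)^{-1/2}$. The paper's proof makes the same parenthetical claim (``thus preserving the domain''), so this gap is shared with its argument.

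Closing it needs an extra hypothesis, for instance that each $V(g)$ is a bounded endomorphism, as for a $G$-invariant $\slashed{D}_S$. Your averaging remark does not supply this: it requires a proper action and replaces the given operator. Without such a hypothesis, what does hold is the localized inclusion $a\,g\Dom(D^{\rm min}_S)\subseteq\Dom(D^{\rm min}_S)$ for $a\in C^\infty_c(\overline W)$. It follows by approximating $g\eta$ from $C^\infty_c(W^\circ,S_B)$ in the graph norm of $\slashed{D}_S-V(g)$ and using that $[\slashed{D}_S,a]$ and $aV(g)$ are bounded. This localized form is what is needed for $C_c(G,C^\infty_c(\overline W))$ to act on $\Dom(D^{\rm min}_S\rtimes G)$.
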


\begin{proof}
By the discussion in Subsection \ref{diraconcstarbundleex} and the fact that $G$ acts smoothly and isometrically (thus preserving the domain), it only remains to prove that  for any $a\in C^\infty_c(\overline{W})$, the functions
$$g\mapsto a(D_S^{\rm min}-gD_S^{\rm min}g^{-1})\quad \mbox{and}\quad g\mapsto(D_S^{\rm min}-gD_S^{\rm min}g^{-1})a,$$
are well defined norm continuous functions $G\to \End_B^*(L^2(W,S_B)$. Since $\slashed{D}_S$ is a Dirac operator, and  $G$ acts smoothly and isometrically, there is a smooth section $V\in C(G,C^\infty(\overline{W};\End_B^*(S_B)))$ such that 
$$\slashed{D}_S-g\slashed{D}_Sg^{-1}=V(g).$$
In particular, $(D_S^{\rm min}-gD_S^{\rm min}g^{-1})a=V(g)a$ and $a(D_S^{\rm min}-gD_S^{\rm min}g^{-1})=aV(g)$ act as an element of $C^\infty_c(\overline{W};\End_B^*(S_B))$ and depend continuously on $g$. The final conclusion of the proposition is immediate from Proposition \ref{descenfoscles}. The proof is complete.
\end{proof}

 Let us consider a special case and a consequence of Proposition \ref{descenofbundlesoddnam}.

\subsubsection{$B=\C$} 
Assume that $B=\C$, so $S\to W$ is a Clifford bundle and $\slashed{D}_S$ is an ordinary Dirac operator. 
The multiplicative unitary $U_W\in U\mathcal{M}(\K(L^2(W,S))\otimes C_0(G))$ defining the $G$-action on $L^2(W,S)$ defines an isomorphism of $C^*(G)$-Hilbert $C^*$-modules 
\begin{equation}
\label{spaeickalad}
L^2(W,S)\rtimes G\cong L^2(W,S)\otimes C^*(G).
\end{equation}
Equipping $L^2(W,S)\rtimes G$ with the left $G$-action $g\cdot(f\otimes \xi):=g\cdot f\otimes u_g\xi$ for $g\in G$, where $u_g\in \mathcal{M}(C^*(G))$ is the unitary associated with a group element, and the left $C_0(\overline{W})$-action $a\cdot(f\otimes \xi):=af\otimes \xi$ there is an induced left action $C_0(\overline{W})\rtimes G\to \End_{C^*(G)}^*(L^2(W,S)\otimes C^*(G))$ compatible with the isomorphism \eqref{spaeickalad}. The operator $D_S^{\rm min}\otimes 1$ is a regular and symmetric operator in $L^2(W,S)\otimes C^*(G)$ that under the isomorphism \eqref{spaeickalad} differs from $D_S^{\rm min}\rtimes G$ by a locally bounded operator (cf. the proof of Proposition \ref{descenofbundlesoddnam} and Definition \ref{locbbddd} below). The same argument as above shows that $(L^2(W,S)\otimes C^*(G),D_{S}^{\rm min}\otimes 1)$ is a symmetric $(C^\infty_c(\overline{W})\rtimes_cG,C^*(G))$-chain restricting to a half-closed $(C^\infty_c(W^\circ)\rtimes_cG,C^*(G))$-chain. If $\partial W=\emptyset$, the construction produces a closed cycle. By our discussion, there is a locally bounded $V$ such that 
$$(L^2(W,S)\otimes C^*(G),D_{S}^{\rm min}\otimes 1+V)\cong (L^2(W,S_B),D_S^{\rm min})\rtimes G.$$

Localising in the trivial $G$-representation $\epsilon$ recovers $(L^2(W,S),D_{S}^{\rm min})$ viewed as a $(C^\infty_c(\overline{W})\rtimes_cG,\C)$-chain, indeed we have that
$$(L^2(W,S)\otimes C^*(G),D_{S}^{\rm min}\otimes 1)\otimes_\epsilon \C=(L^2(W,S),D_{S}^{\rm min}).$$
The equality holds both as symmetric $(C^\infty_c(\overline{W})\rtimes_cG,\C)$-chains and as half-closed $(C^\infty_c(W^\circ)\rtimes_cG,\C)$-chains. The $(C^\infty_c(\overline{W})\rtimes_cG,\C)$-chain $(L^2(W,S),D_{S}^{\rm min})$ can be studied in a more direct fashion. A computation (see for instance \cite[Section 5.2]{boundarypap}) shows that for any $a\in C^\infty_c(\overline{W})\rtimes_cG$, $a$ preserves $\Dom(D_S^{\rm min})$ with $[D_S^{\rm min},a]$ bounded and any $j\in C^\infty_c(W^\circ)\rtimes_cG$ satisfies $$j\Dom((D_S^{\rm min})^*)\subseteq \Dom(D_S^{\rm min}).$$ 
The same argument as in Example \ref{diraconcstarbundleex} shows that $(L^2(W,S),D_{S}^{\rm min})$ is a symmetric $(C^\infty_c(\overline{W})\rtimes_cG,\C)$-chain restricting to a half-closed $(C^\infty_c(W^\circ)\rtimes_cG,\C)$-chain. The construction produces a closed cycle if and only if $\partial W=\emptyset$, indeed, by for instance \cite{BaerBall}, $\partial W=\emptyset$ if and only if $D_S^{\rm min}= (D_S^{\rm min})^*$.

\subsubsection{Assembly of a proper action} Returning to the case of a general (unital) $G-C^*$-algebra $B$, we shall define the assembled chain of the symmetric $G$-equivariant $(C^\infty_c(\overline{W}),B)$-chain $(L^2(W,S_B),D_S^{\rm min})$ when the $G$-action on $W$ is proper and cocompact. The assembly of the $G$-equivariant $(C^\infty_c(\overline{W}),B)$-chain $(L^2(W,S_S),D_{S}^{\rm min})$ will be a symmetric $(\C,B\rtimes G)$-chain which is closed if $\partial W=\emptyset$. This construction of assembly is related to the well known Baum-Connes assembly map, which will be discussed more in the follow up work \cite{monographtwo}. 

We assume that the $G$-action on $W$ is proper and cocompact (i.e. $\overline{W}/G$ is compact).  Fix a function $c\in C^\infty_c(\overline{W})$ such that $\int_G c(g\cdot x)\mathrm{d}g=1$ for all $x\in \overline{W}$, we call such a $c$ a \emph{support function}. We define the element $p_W\in C(G,C^\infty_c(\overline{W}))$ by 
\begin{equation}
\label{aksakakadldala}
p_W(g,x):=\sqrt{c(x)c(g\cdot x)}.
\end{equation}
Since the action is proper, $p_W\in C_c(G,C^\infty_c(\overline{W}))$ and so $p_W\in C_0(\overline{W})\rtimes G$. A short computation shows that $p_W$ is a projection. In particular, $p_W$ acts as a bounded $B\rtimes G$-linear adjointable operator on $L^2(W,S_B)\rtimes G$. Again, a short computation shows that $p_W$ preserves $\Dom(D_{S}^{\rm min}\rtimes G)$ and $[D_{S}^{\rm min}\rtimes G,p_W]$ has a bounded extension to a $B\rtimes G$-linear adjointable operator on  $L^2(W,S_B)\rtimes G$. We note that $C(\overline{W}/G)$ defines a central subalgebra of $\mathcal{M}(C_0(\overline{W})\rtimes G)$ and as such acts from the left on $L^2(W,S_B)\rtimes G$. We let $C^\infty(\overline{W}/G)\subseteq C(\overline{W}/G)$ denote the unital dense $*$-subalgebra defined from 
$$ C^\infty(\overline{W}/G):=\left\{\int_G [g^*a]\mathrm{d}g: a\in C^\infty_c(\overline{W})\right\}.$$
We let $C^\infty_c(W^\circ /G)\subseteq C_0(W^\circ/G)$ denote the dense $*$-subalgebra defined from 
$$ C^\infty_c(W^\circ/G):=\left\{\int_G [g^*a]\mathrm{d}g: a\in C^\infty_c(W^\circ)\right\}.$$

\begin{prop}
\label{assemblfldoado}
Let $W$, $S_B\to W$ and $D_S^{\rm min}$ be as above. Fix a support function $c$ and define $p_W$ as in Equation \eqref{aksakakadldala}. Write 
$$D_{S,\mu}^{\rm min}:=p_W(D_{S}^{\rm min}\rtimes G)p_W,$$
as a densely defined operator on $p_W(L^2(W,S_B)\rtimes G)$. Then the following holds:
\begin{enumerate}
\item $D_{S,\mu}^{\rm min}$ is regular and symmetric. Moreover, $(1+(D_{S,\mu}^{\rm min})^*D_{S,\mu}^{\rm min})^{-1}$ defines a $B\rtimes G$-compact operator on $L^2(W,S_B)\rtimes G$. The operator $D_{S,\mu}^{\rm min}$ is self-adjoint if $\partial W=\emptyset$.
\item Any $a\in C^\infty(\overline{W}/G)$ preserves $\Dom(D_{S,\mu}^{\rm min})$ and $[D_{S,\mu}^{\rm min},a]$ extends to a  $B\rtimes G$-linear adjointable operator on $L^2(W,S_B)\rtimes G$. 
\item Any $j\in C^\infty_c(W^\circ /G)$ satisfies $j\Dom((D_{S,\mu}^{\rm min})^*)\subseteq \Dom(D_{S,\mu}^{\rm min})$. 
\end{enumerate}
In particular, $(p_W(L^2(W,S_B)\rtimes G),p_W(D_{S}^{\rm min}\rtimes G)p_W)$ defines a symmetric $(C^\infty(\overline{W}/G),B\rtimes G)$-chain that restricts to a half-closed $(C^\infty_c(W^\circ/G),B\rtimes G)$-chain. The chain $(p_W(L^2(W,S_B)\rtimes G),p_W(D_{S}^{\rm min}\rtimes G)p_W)$ is closed if $\partial W=\emptyset$.
\end{prop}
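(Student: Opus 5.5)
The plan is to reduce everything to the descended chain $(L^2(W,S_B)\rtimes G,D_S^{\rm min}\rtimes G)$ of Proposition \ref{descenofbundlesoddnam}, which is already known to be a symmetric regular chain with $A$-locally compact resolvent, and then compress by the projection $p_W$. First I would verify the algebraic facts about $p_W$: using $\int_G c(g\cdot x)\,\mathrm{d}g=1$ one checks $p_W^2=p_W=p_W^*$ in $C_c(G,C^\infty_c(\overline{W}))\subseteq C_0(\overline{W})\rtimes G$, and that $p_W$ lies in $C^\infty_c(\overline{W})\rtimes_cG$. By Proposition \ref{descenofbundlesoddnam}, $p_W$ preserves $\Dom(D_S^{\rm min}\rtimes G)$ and $[D_S^{\rm min}\rtimes G,p_W]$ is bounded adjointable.

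For (1), write $D:=D_S^{\rm min}\rtimes G$ and decompose $D=p_WDp_W+(1-p_W)D(1-p_W)+R$, where $R=p_WD(1-p_W)+(1-p_W)Dp_W=p_W[p_W,D]+[D,p_W]p_W$ (up to signs) is bounded. Hence $p_WDp_W\oplus(1-p_W)D(1-p_W)$ is a bounded perturbation of $D$ on $\Dom(D)$, so it is closed, regular and symmetric (self-adjoint if $D$ is), and regularity/symmetry/self-adjointness pass to the direct summand $D_{S,\mu}^{\rm min}$ with $\Dom(D_{S,\mu}^{\rm min})=p_W\Dom(D)$ and $(D_{S,\mu}^{\rm min})^*=p_WD^*p_W$ on $p_W\Dom(D^*)$. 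Self-adjointness for $\partial W=\emptyset$ then follows since $D_S^{\rm min}$ is self-adjoint and descent preserves this. For compactness of the resolvent: the graph norms of $D_{S,\mu}^{\rm min}$ and of $D$ are equivalent on $p_W\Dom(D)$, so the domain inclusion of $D_{S,\mu}^{\rm min}$ equals $p_W\circ\iota_D\circ p_W$, and $p_W(1+D^*D)^{-1/2}$ is compact by Equation \eqref{cpakakadldalad} since $p_W\in C_0(\overline{W})\rtimes G$. This is the key use of cocompactness: $p_W$ is genuinely in the crossed product, not merely a multiplier.

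For (2) and (3), an element $a=\int_G g^*a_0\,\mathrm{d}g\in C^\infty(\overline{W}/G)$ is $G$-invariant, hence central in $\mathcal{M}(C_0(\overline{W})\rtimes G)$ and commutes with $p_W$. I would show that $a$ preserves $\Dom(D)$ with $[D,a]$ bounded by noting that $a$ is smooth and bounded with uniformly bounded gradient (by cocompactness), so $[\slashed{D}_S,a]=c(\mathrm{d}a)$ is a bounded $G$-invariant Clifford multiplication, and it descends to a bounded operator on the crossed product; then $[D_{S,\mu}^{\rm min},a]=p_W[D,a]p_W$. For (3), the point is that $jp_W=p_Wj p_W$ and $jp_W\in C^\infty_c(W^\circ)\rtimes_cG$ because $j$ is $G$-invariant with support away from $\partial W$ and $p_W$ has compact support; then the half-closedness statement of Proposition \ref{descenofbundlesoddnam} for $C^\infty_c(W^\circ)\rtimes_cG$, i.e. $jp_W\Dom(D^*)\subseteq\Dom(D)$, gives $j\Dom((D_{S,\mu}^{\rm min})^*)\subseteq\Dom(D_{S,\mu}^{\rm min})$. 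Here one needs half-closedness in the form $\mathrm{Lip}_0(D,D^*)$ for the descended chain, which I would extract from the corresponding statement for $D_S^{\rm min}$ by descent of the graph projections.

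The main obstacle I expect is the careful identification of $(p_WDp_W)^*$ with $p_WD^*p_W$ and the domain bookkeeping in the crossed product module, especially making the bounded commutator $[D,p_W]$ rigorous via Definition \ref{descentofunbbbddd}; I would handle this by working on the core $C_c(G,\Dom(D_S^{\rm min}))$ and computing $[D,p_W]$ explicitly as convolution by $c(\mathrm{d}_xp_W)$ plus the term $V(g)$ from the proof of Proposition \ref{descenofbundlesoddnam}, both compactly supported in $G$.
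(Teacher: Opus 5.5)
Your proposal is correct and follows essentially the same proof as the paper. Compactness comes from factoring the domain inclusion through $p_W\in C_0(\overline{W})\rtimes G$, and items (2)--(3) come from lifting $a$ and $j$ to $G$-invariant functions that commute with $p_W$. The one difference is in (1): where the paper cites \cite{thebeastofmesland} for regularity of the compression $p_W(D_S^{\rm min}\rtimes G)p_W$, you prove it directly by treating $p_WDp_W+(1-p_W)D(1-p_W)$ as a bounded perturbation of $D$, which is the same device the paper itself uses in the proof of Theorem \ref{somepsosslresult}.
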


\begin{proof}
Using that $p_W$ preserves $\Dom(D_{S}^{\rm min}\rtimes G)$ and that $[D_{S}^{\rm min}\rtimes G,p_W]$ has a bounded extension to a $B\rtimes G$-linear adjointable operator, it follows from \cite{thebeastofmesland} that $D_{S,\mu}^{\rm min}$ is regular and symmetric, and is self-adjoint if $\partial W=\emptyset$. To prove that $(1+(D_{S,\mu}^{\rm min})^*D_{S,\mu}^{\rm min})^{-1}$ is $B\rtimes G$-compact, it suffices to prove that $\Dom(D_{S,\mu}^{\rm min})=p_W\Dom(D_S^{\rm min}\rtimes G)\hookrightarrow p_W(L^2(W,S_B)\rtimes G)$ is $B\rtimes G$-compact. This is immediate as it factor over the $\Dom(D_S^{\rm min}\rtimes G)\xrightarrow{p_W} L^2(W,S_B)\rtimes G$ which is $B\rtimes G$-compact from Proposition \ref{descenofbundlesoddnam} because $p_W\in C_0(\overline{W})\rtimes G$. 

Item 2) follows from that any $a\in C^\infty(\overline{W}/G)$ lifts to a $G$-invariant function $\tilde{a}\in C^\infty_b(\overline{W})$ thus preserving $\Dom(D_S^{\rm min})$, with $[D_S^{\rm min},\tilde{a}]$ bounded, and $\tilde{a}$ commutes with $p_W$. Item 3) follows in a similar way noting that any any $j\in C^\infty_c(W^\circ /G)$ lifts to a $G$-invariant function $\tilde{j}\in C^\infty_b(\overline{W})$ supported in a $G$-compact subset of $W^\circ$ and so $j(\Dom(D_S^{\rm min})^*)\subseteq \Dom(D_S^{\rm min})$. 
\end{proof}

We remark that the structure on $(p_W(L^2(W,S_B)\rtimes G),p_W(D_{S}^{\rm min}\rtimes G)p_W)$ as a symmetric $(C^\infty(\overline{W}/G),B\rtimes G)$-chain will later be used to construct $KK$-bordisms between different assembled $G$-equivariant cycles. In light of Proposition \ref{assemblfldoado}, we make the following definition.

\begin{define}
When the $G$-action on $W$ is proper and cocompact, and $S_B\to W$ is as above, we define the symmetric $(C^\infty(\overline{W}/G),B\rtimes G,B\rtimes G)$-chain
$$\mu_0(L^2(W,S),D_{S}^{\rm min}):=(p_W(L^2(W,S_B)\rtimes G),p_W(D_{S}^{\rm min}\rtimes G)p_W).$$
The \emph{assembled chain} of $(L^2(W,S_B),D_S^{\rm min})$ is defined as the symmetric $(\C,B\rtimes G)$-chain
$$\mu(L^2(W,S),D_{S}^{\rm min}):=\iota^*\mu_0(L^2(W,S),D_{S}^{\rm min}),$$
where $\iota:\C\to C^\infty(\overline{W}/G)$ denotes the inclusion of the unit.
\end{define}

\begin{remark}
The assembled chain is a $(\C,B\rtimes G)$-chain obtained from compressing the descended cycle with $p_W$, i.e. an unbounded Kasparov product. See more in \cite{BCHig,Kas2}.
\end{remark}

\begin{remark}
If $W$ is a $G$-compact manifold then the assembled chain $\mu(L^2(W,S_B)\rtimes G,D_{S}^{\rm min}\rtimes G)$ is a $(\C,B\rtimes G)$-cycle and $\mu_0(L^2(W,S_B)\rtimes G,D_{S}^{\rm min}\rtimes G)$ is a $(C^\infty(W/G),B\rtimes G)$-cycle. Up to $KK$-bordism, the main object of study in this work, the cycles $\mu(L^2(W,S_B)\rtimes G,D_{S}^{\rm min}\rtimes G)$ and $\mu_0(L^2(W,S_B)\rtimes G,D_{S}^{\rm min}\rtimes G)$ are independent of the choices involved and up to $KK$-bordism depend only on the pair $(W,S_B)$ of a manifold $W$ with a proper, cocompact $G$-action and a $G$-equivariant bundle $S_B\to W$ of finitely generated projective $B$-bundles. These results can be found in Subsection \ref{diraconcstarbundleexbordism} below.
\end{remark}

\begin{remark}
We remark that if $G=\Gamma$ is discrete and acts properly and freely on $W$ and $B=\C$, then by \cite{land} we have an isomorphism of $(\C,C^*(\Gamma))$-chains
$$\mu(L^2(W,S),D_{S}^{\rm min})\cong(L^2(W/\Gamma,S\otimes \mathcal{L}_W),D_{S\otimes \mathcal{L}}^{\rm min}+A),$$
where $\mathcal{L}_W\to W/\Gamma$ is the Mishchenko bundle, the right hand side is as in Example \ref{diraconcstarbundleex} and $A$ is an operator defined by a bundle endomorphism.
\end{remark}

\subsubsection{Pulled back chains}
Similarly to Example \ref{diraconcstarbundleex}, for any metric space $X$ with an isometric $G$-action and a closed $G$-invariant subspace $Y\subseteq X$, and an equivariant Lipschitz continuous mapping $f:(W,\partial W)\to (X,Y)$ (i.e. $f(\partial W)\subseteq Y$) we can pull back the action along $f$ and consider the constructions above as symmetric chains over $\mathrm{Lip}_0(X)\rtimes_cG$ and as half-closed chains over $\mathrm{Lip}_0(X\setminus Y)\rtimes_cG$.

\subsection{Foliations}
\label{foliationsex}
Let us take our examples one step further by considering a foliated manifold. Let $M$ be a manifold which we for simplicity assume to be compact. A foliation of $M$ is an integrable subbundle $\mathcal{F}\subseteq TM$. We let $\mathcal{G}_\mathcal{F}\rightrightarrows M$ denote the associated holonomy groupoid. More details can be found in the literature \cite{connesskandalis,hilsumfol,kordy}.

Let $\slashed{D}_\mathcal{F}$ be a formally self-adjoint first order longitudinal elliptic differential operator acting on sections of some vector bundle $S\to M$. We can complete $C^\infty_c(\mathcal{G}_\mathcal{F},s^*S)$ to a $C^*(\mathcal{G}_\mathcal{F})$-Hilbert $C^*$-module $\mathpzc{E}_\mathcal{F}$ and $\slashed{D}_\mathcal{F}$ lifts to a densely defined operator on $\mathpzc{E}_\mathcal{F}$ whose closure we denote by $D_\mathcal{F}$. It is well known that $D_\mathcal{F}$ is a self-adjoint regular operator with $(i\pm D_\mathcal{F})^{-1}\in \mathbb{K}_{C^*(\mathcal{G}_\mathcal{F})}(\mathpzc{E}_\mathcal{F})$. Therefore $(\mathpzc{E}_\mathcal{F},D_\mathcal{F})$ forms a $(C^\infty(M),C^*(\mathcal{G}_\mathcal{F}))$-cycle. 

On the other hand, if $\slashed{T}_\mathcal{F}$ is a formally self-adjoint first order transverally elliptic differential operator acting on sections of some vector bundle $H\to M$ we denote its closure by $T_\mathcal{F}$ on $L^2(M,H)$. It is well known that $T_\mathcal{F}$ is a self-adjoint operator with $a(i\pm T_\mathcal{F})^{-1}\in \mathbb{K}(L^2(M,H))$ for any $a\in C^*(\mathcal{G}_\mathcal{F})$. Therefore $(L^2(M,H),T_\mathcal{F})$ forms a $(C^\infty_c(\mathcal{G}_\mathcal{F}),\C)$-cycle. 

Assume now that $W\subseteq M$ is a smooth $\mathcal{G}_\mathcal{F}$-invariant domain. We write $\mathpzc{E}_{W,\mathcal{F}}$ for the closure of 
$C^\infty_c(W^\circ)\mathpzc{E}_\mathcal{F}$ as a $C^*(\mathcal{G}_\mathcal{F})$-module. Let $D_{W,\mathcal{F}}^{\rm min}$ denote the closure of $$\slashed{D}_\mathcal{F}:C^\infty_c(r^{-1}(W^\circ),s^*S)\to C^\infty_c(r^{-1}(W^\circ),s^*S),$$ as an operator on $\mathpzc{E}_{W,\mathcal{F}}$. There is a continuous inclusion $\Dom(D_{W,\mathcal{F}}^{\rm min})\hookrightarrow \Dom(D_\mathcal{F})$. In particular, if $D_{W,\mathcal{F}}^{\rm min}$ is regular then $(1+(D_{W,\mathcal{F}}^{\rm min})^*D_{W,\mathcal{F}}^{\rm min})^{-1}\in\mathbb{K}_{C^*(\mathcal{G}_\mathcal{F})}(\mathpzc{E}_{W,\mathcal{F}})$ and the pair $(\mathpzc{E}_{W,\mathcal{F}},D_{W,\mathcal{F}}^{\rm min})$ forms a symmetric $(C^\infty(\overline{W}),C^*(\mathcal{G}_\mathcal{F}))$-chain and a half-closed $(C^\infty_c(W^\circ),C^*(\mathcal{G}_\mathcal{F}))$-chain.

In the same spirit, we define $T_{W,\mathcal{F}}^{\rm min}$ as the closure of $$\slashed{T}_\mathcal{F}:C^\infty_c(W^\circ,H)\to C^\infty_c(W^\circ,H),$$ in $L^2(W,H).$ We write $C^\infty_c(\mathcal{G}_\mathcal{F},W):=C^\infty_c(W^\circ)C^\infty_c(\mathcal{G}_\mathcal{F})C^\infty_c(W)$ which is a $*$-subalgebra of $C^*(\mathcal{G}_\mathcal{F})$. Since there is a continuous inclusion $\Dom(T_{W,\mathcal{F}}^{\rm min})\hookrightarrow \Dom(T_\mathcal{F})$, we have that $a(1+(T_{W,\mathcal{F}}^{\rm min})^*T_{W,\mathcal{F}}^{\rm min})^{-1}\in\mathbb{K}(L^2(W,H))$ for any $a\in C^\infty_c(\mathcal{G}_\mathcal{F},W)$. The pair $(L^2(W,H),T_{W,\mathcal{F}}^{\rm min})$ forms a half-closed $(C^\infty_c(\mathcal{G}_\mathcal{F},W),\C)$-chain.

\subsection{Wrong way maps in unbounded KK}
\label{kkwrong}

Connes-Skandalis \cite{connesskandalis} studied wrong way maps in $KK$-theory, later tied closer together with $KK$-theory in Emerson-Meyer's work on correspondences \cite{emersonmeyer}. Related questions in unbounded $KK$-theory have been studied in detail in the last years \cite{kaadfactor,kaadhalf,kaadfactor2}. The wrong way map associated with a $K$-oriented smooth map $f:X\to Y$ of smooth manifolds is a class $[f_!]\in KK_*(C_0(X),C_0(Y))$. Recall that a $K$-orientation of a smooth map $f$ is a choice of spin$^c$-structure on $T_f=T^*X\oplus f^*T^*Y$. A direct way to construct $[f_!]$ is to use the spin$^c$-structure on $T_f$ to construct a Bott like element in $K^*_X(T^*X\times Y)$ and apply Poincaré duality $K^*_X(T^*X\times Y)\to KK_*(C_0(X),C_0(Y))$. Wrong way maps were proven in \cite{connesskandalis} to be functorial in the sense that if $f:X\to Y$ and $g:Y\to Z$ are $K$-oriented, then 
\begin{equation}
\label{alkdnadln}
[(g\circ f)_!]=[f_!]\otimes_{C_0(Y)}[g_!].
\end{equation}
We can describe the class  $[f_!]$ via an unbounded cycle: when returning to this construction below in Subsection \ref{kkwrongbor}, we shall see the unbounded cycle is canonical up to $KK$-bordism and for an appropriate unbounded Kasparov product fulfills \eqref{alkdnadln} up to bordism, 

Consider a $K$-oriented smooth map $f:X\to Y$. We can always factor $f$ as 
\begin{equation}
\label{adlanladnfactor}
f=\pi\circ \iota\circ 0,
\end{equation}
where 
\begin{enumerate}
\item[i)] $0:X\to N$ is the zero section of a spin$^c$-vector bundle $N\to X$ of even rank;
\item[ii)] $\iota:N\to Z$ is an inclusion as an open subset into a smooth manifold $Z$;
\item[iii)] $\pi:Z\to Y$ is a fibre bundle with compact fibres and a fibrewise spin$^c$-structure.
\end{enumerate}
Indeed, the maps are constructed as follows. We can choose an embedding $j:X\to Z_0$ into a compact spin$^c$-manifold $Z_0$. Set $Z:=Z_0\times Y$ and take $N$ as the normal bundle of the embedding $j\times f:X\to Z$. Without loss of generality, we can assume that $N$ has even rank. The inclusion $\iota:N\to Z$ is as a tubular neighborhood of $\mathrm{im}(j\times f)$. Finally, $\pi:Z\to Y$ is defined as the projection onto $Y$. By functoriality of wrong way maps, we have that 
$$[f_!]=[0_!]\otimes_{C_0(N)}[\iota_!]\otimes_{C_0(Z)}[\pi_!].$$
We approach the construction of an unbounded representative of $[f_!]$ via the factorization \eqref{adlanladnfactor}, reducing the construction to maps from the cases i-iii and to forming an unbounded Kasparov product.\\

\subsubsection{Case i: wrong way maps from zero sections}

Consider an even rank spin$^c$-vector bundle $p:N\to X$, and as above $0:X\to N$ will denote the zero section viewed as a smooth $K$-oriented map of manifolds. The class $[0_!]\in KK_0(C_0(X),C_0(N))$ is the well studied Thom isomorphism associated with the spin$^c$-structure on $N$ \cite{careywang,connesskandalis,emersonmeyer}. We fix a metric on the vector bundle $N\to X$. Let $S_N\to X$ denote the complex spinor bundle for defining the spin$^c$-structure of $N$, i.e. $S$ is a choice of Clifford bundle for $N$ defining a graded irreducible bundle of modules for the Clifford algebra bundle $\C\ell(N)\to X$. In particular, by direct comparison to the constructions in \cite{careywang,connesskandalis}, the class $[0_!]\in KK_0(C_0(X),C_0(N))$ has an unbounded representative given by the  $(C_0(X),C_0(N))$-cycle $(C_0(N,p^*S_N),c)$ where $c$ is the self-adjoint and regular unbounded multiplier of $C_0(N,p^*\C\ell(N))$ defined from Clifford multiplication by the fibrewise coordinate. In other words, $c$ is defined from its action on the core $C_c(N,p^*\C\ell(N))$ by $[c\xi](x,v):=v.\xi(x,v)$ for $\xi\in C_c(N,p^*\C\ell(N))$ and where the $.$ denotes Clifford multiplication. 

The $(C_0(X),C_0(N))$-cycle $(C_0(N,p^*S_N),c)$ depends only on the choice of metric on $N$ and the choice of complex spinor bundle $S_N$. But in fact, a change of metric on $N$ or of complex spinor bundle $S_N$ (as long as it defines the same spin$^c$-structure) does not change the unitary equivalence class of $(C_0(N,p^*S_N),c)$. However, the choices of spin$^c$-structures on $N$ form an $H^2(X,\Z)$-torsor and so does the set of unitary equivalence classes of possible unbounded representatives $(C_0(N,p^*S_N),c)$ as $S_N$ varries over possible spin$^c$-structures.\\

\subsubsection{Case ii: wrong way maps from open inclusions}

If $\iota:N\to Z$ is an inclusion as an open subset into a smooth manifold $Z$, the corresponding wrong way map $[\iota_!]$ is well known to be represented by the unbounded $(C_0(N),C_0(Z))$-cycle $(C_0(N)_{C_0(Z)},0)$ where $C_0(N)_{C_0(Z)}$ denotes the space $C_0(N)$ is viewed as a right $C_0(Z)$-Hilbert $C^*$-module via the ideal inclusion $C_0(N)\hookrightarrow C_0(Z)$ induced by $\iota$. For future reference, we remark that despite the unbounded cycle representing $[\iota_!]$ is easy to describe it is precisely this factor in $f=\pi\circ \iota\circ 0$ that creates most analytic problems since $\iota(N)\subseteq Z$ is in general not a complete submanifold and as such is delicate to treat in unbounded $KK$-theory \cite{kaadhalf,mesren}. \\

\subsubsection{Case iii: wrong way maps from spin$^c$-fibre bundles}

Let us turn to considering a fibre bundle $\pi:Z\to Y$ with compact fibres and a fibrewise spin$^c$-structure. This case has been studied in unbounded $KK$-theory in \cite{kaadfactor}. We choose a fibrewise Riemannian structure and write $S_Z\to Z$ for the fibrewise spin$^c$-structure. That is, $S_Z$ is an irreducible Clifford bundle for the bundle of vertical vectors $T_v:=\ker(D\pi:TZ\to TY)$ which we assume to be graded if $T_v$ has even rank. We can complete $C^\infty_c(Z;S_Z)$ into a $C_0(Y)$-Hilbert module $L^2(Z/Y;S_Z)$ through the $C_0(Y)$-valued inner product 
\begin{equation}
\label{innerprodforz}
\langle f_1,f_2\rangle_{L^2(Z/Y;S_Z)}(y):=\int_{\pi^{-1}(y)} \langle f_1,f_2\rangle_{S_Z}\mathrm{d} \rho,
\end{equation}
where $\langle\cdot,\cdot\rangle_{S_Z}$ denotes the hermitean structure on $S_Z$ and $\mathrm{d}  \rho$ the fibrewise volume density induced from the fibrewise Riemannian structure. We can choose a fibrewise Dirac operator $\slashed{D}_Z:C^\infty_c(Z;S_Z)\to C^\infty_c(Z;S_Z)$, which is odd if the fibres have even dimension. The fibrewise Dirac operator is $C_c^\infty(Y)$-linear and extends to a self-adjoint, regular unbounded $C_0(Y)$-linear operator $D_Z$ on $L^2(Z/Y;S_Z)$, with core $C^\infty_c(Z;S_Z)$. In fact, the case at hand is a special case of Subsection \ref{foliationsex} since $D_Z$ is a longitudinal Dirac operator on the manifold $Z$ foliated by the fibres $\{\pi^{-1}(y):\; y\in Y\}$. We see that the collection $(L^2(Z/Y;S_Z),D_Z)$ is a $(C^\infty_c(Z),C_0(Y))$-cycle for representing $[\pi_!]$, see \cite{connesskandalis}.\\

\subsubsection{Composing together to $f$}

We shall now construct an unbounded representative for the Kasparov product of the cycles  $(C_0(N,p^*S_N),c)$, $(C_0(N)_{C_0(Z)},0)$ and $(L^2(Z/Y;S_Z),D_Z)$ arising from the factorization $f=\pi\circ\iota\circ 0$ in Equation \eqref{adlanladnfactor}. Even if it is theoretically possible to do the unbounded Kasparov product constructively via choices of connections, we will build the product by hand for two reasons: firstly, we want to avoid the technical burdens involved in the formalism for constructive unbounded Kasparov products, and secondly, we are in fact only interested in finding a good unbounded representative for $[f_!]$. 

We first note that the Kasparov product of $(C_0(N,p^*S_N),c)$, $(C_0(N)_{C_0(Z)},0)$ can be computed directly in the constructive unbounded Kasparov product as the unbounded $(C_0(X),C_0(Z))$-cycle $(C_0(N,p^*S_N)_{C_0(Z)},c)$ where $C_0(N,p^*S_N)_{C_0(Z)}$ denotes $C_0(N,p^*S_N)$ viewed as a $C_0(Z)$-Hilbert $C^*$-module via the ideal inclusion $C_0(N)\subseteq C_0(Z)$ defined from the open embedding $\iota$.

In light of this observation, we define the $C_0(Y)$-Hilbert $C^*$-module $\mathpzc{E}_f$ as the closure of $C_c^\infty(N,p^*S_N\otimes S_Z|_{\iota(N)})$ in the $C_0(N)$-valued inner product defined ad verbatim to \eqref{innerprodforz}. The $C^\infty_c(X)$-action on $C_c^\infty(N,p^*S_N\otimes S_Z|_{\iota(N)})$ extends by continuity to a left $C_0(Y)$-linear adjointable action of $C_0(X)$ on $\mathpzc{E}_f$. Here we note that $\mathpzc{E}_f$ depends not only on $f$, but on the choice of factorization \eqref{adlanladnfactor}. The natural surjection from the projective tensor product
\begin{equation}
\label{prodmap}
C^\infty_c(N,p^*S_N)\tilde{\otimes} C^\infty_c(Z,S_Z)\to C_c^\infty(N,p^*S_N\otimes S_Z|_{\iota(N)}),
\end{equation}
defined from interior tensor product of vector bundles on $N$ extends by continuity to an isomorphism of $(C_0(X),C_0(Y))$-Hilbert bimodule 
$$C_0(N,p^*S_N)\otimes_{C_0(N)}L^2(Z/Y,S_Z)\to \mathpzc{E}_f.$$

\begin{lemma}
\label{adlknalknad}
Given a Clifford connection $\nabla$ on $S_N$ and the data prescribed above, there is a uniquely determined, self-adjoint, regular, $C_0(Y)$-linear unbounded operator $D_f$ on $\mathpzc{E}_f$ with core $C_c^\infty(N,p^*S_N\otimes S_Z|_{\iota(N)})$ such that 
$$D_f\xi=(1_{p^*S_N}\otimes_\nabla D_Z)\xi+(c\otimes 1_{S_Z})\xi,$$
for $\xi\in C_c^\infty(N,p^*S_N\otimes S_Z|_{\iota(N)})$. Here we are identifying $(1_{p^*S_N}\otimes_\nabla D_Z)$ via the map \eqref{prodmap} with the differential operator on $C_c^\infty(N,p^*S_N\otimes S_Z|_{\iota(N)})$ obtained from restricting $D$ to an operator on $C^\infty_c(N,S_Z|_{\iota(N)})$ and twisting by the Clifford connection on $S_N$.
\end{lemma}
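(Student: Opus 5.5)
The strategy is to treat $D_f$ as a fibrewise Dirac--Schrödinger operator: a first order operator along the fibres of $\pi$, restricted to the open set $\iota(N)$, plus the confining potential $c\otimes 1_{S_Z}$. Uniqueness is immediate, since a closed operator is determined by its values on a core. So everything reduces to one claim: the operator $D_f^0$ defined by the displayed formula on $C_c^\infty(N,p^*S_N\otimes S_Z|_{\iota(N)})$ is essentially self-adjoint and regular on $\mathpzc{E}_f$, and we then set $D_f:=\overline{D_f^0}$. Throughout, $D_f^0$ is symmetric and $C_c^\infty(Y)$-linear, because $D_Z$ is formally self-adjoint and fibrewise, $\nabla$ is a Clifford connection, and $c$ is fibrewise Clifford multiplication.

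\emph{Step 1: reduce to Hilbert spaces.} To pass from the $C_0(Y)$-module $\mathpzc{E}_f$ to Hilbert spaces, I would invoke the local--global principle for regular operators of Kaad--Lesch \cite{leschkaad2}. It says that a closed symmetric operator on a Hilbert $C_0(Y)$-module is self-adjoint and regular as soon as each localization at a point evaluation $y\in Y$ is self-adjoint. The localization of $\mathpzc{E}_f$ at $y$ is $L^2(N_y;p^*S_N\otimes S_Z)$, where $N_y:=\iota(N)\cap\pi^{-1}(y)$ is an open subset of the compact fibre $Z_y$. On it, $D_f^0$ localizes to $D_y:=1\otimes_\nabla D_{Z_y}+c\otimes 1$ acting on compactly supported smooth sections. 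It would suffice to prove essential self-adjointness of each $D_y$, but uniformly in $y$ on compact subsets of $Y$, since the estimates below are obtained locally uniformly anyway.

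\emph{Step 2: the key estimate (main obstacle).} The difficulty is that $N_y$ is incomplete in the metric pulled back from $Z_y$. It is precisely the growth of $|v|$ towards the edge of the tubular neighbourhood that must compensate for this. First, by the graded tensor product conventions, $c\otimes 1$ and $1\otimes_\nabla D_Z$ anticommute up to a term $R$ built from Clifford multiplication by the vertical derivative of $v$ and from the connection terms. Then, for $\xi$ in the core,
$$\|D_y\xi\|^2=\|(1\otimes_\nabla D_Z)\xi\|^2+\|c\xi\|^2+\langle \xi,R\xi\rangle .$$
Choose the tubular embedding $\iota$, which the construction leaves free, so that on $\iota(N)$ the bundle metric dominates the pulled-back Riemannian data. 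Concretely, I would take a radial reparametrization $v\mapsto v/(1-|v|^2)^{1/2}$-type identification of $N$ with a disc bundle, and ask that
$$|R|\le C(1+|v|)\quad\text{and}\quad |\mathrm{grad}_{Z}\,|v||\le C(1+|v|^2).$$
An absorption argument then gives $\|c\xi\|\le C'(\|D_y\xi\|+\|\xi\|)$ on the core, and by closure on the maximal domain.

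\emph{Step 3: essential self-adjointness by cut-offs.} Let $\xi\in\Dom(D_y^*)$ and put $\chi_n:=\phi(|v|^{1/2}/n)$ for a fixed $\phi\in C_c^\infty(\R)$ equal to $1$ near $0$. Then $\chi_n\xi$ lies in the minimal domain, by local elliptic regularity on the relatively compact set $\{|v|\le Kn^2\}$. Moreover, $[D_y,\chi_n]$ is Clifford multiplication by $\mathrm{grad}\,\chi_n$, which is bounded by $Cn^{-1}(1+|v|)$ on its support. Step 2 gives $(1+|v|)\xi\in L^2$, so $[D_y,\chi_n]\xi\to 0$. Hence $\chi_n\xi\to\xi$ and $D_y\chi_n\xi\to D_y^*\xi$, so $D_y^*=\overline{D_y}$. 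The constants are uniform over compact subsets of $Y$, and Step 1 then yields that $D_f$ is self-adjoint and regular on $\mathpzc{E}_f$ with the stated core.

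I expect Step 2 to be the main obstacle. The whole argument hinges on making the choice of tubular identification compatible with the growth bounds on $R$ and $\mathrm{grad}\,|v|$. If that turns out to be awkward, the fallback is to use the sum theorems for regular operators of \cite{leschkaad2,LeschMesland} instead, with $c\otimes 1$ as the self-adjoint part and $1\otimes_\nabla D_Z$ as a relatively bounded anticommutator perturbation.
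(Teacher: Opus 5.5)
Step~1 is unproblematic, but Steps~2 and~3 fail as written.

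\textbf{The growth conditions in Step~2 cannot be met.} Because $\nabla$ is a Clifford connection, $R$ is exactly Clifford multiplication by the vertical covariant derivative $\nabla^{\mathrm{vert}}v$ of the tautological section. There are no further connection terms, so pointwise $\|R\|\gtrsim|\nabla^{\mathrm{vert}}v|\geq|\mathrm{grad}^{\mathrm{vert}}\,|v||$. A bound $\|R\|\le C(1+|v|)$ would therefore make $\log(1+|v|)$ Lipschitz on $N_y$. That is impossible: $|v|\to\infty$ at $\partial N_y$, and $\partial N_y$ lies at finite distance inside the compact fibre.

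Your concrete profile $v\mapsto v/(1-|v|^2)^{1/2}$ is worse than unhelpful. It gives $|\mathrm{grad}\,|v||\sim(1+|v|^2)^{3/2}$, violating your second bound too. For $Y$ a point it yields $|v|\approx c\,d^{-1/2}$, with $d$ the distance to the edge. That is an integrable mass term, so already the one-dimensional model is limit circle there, and the operator is not essentially self-adjoint on the proposed core.

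What the absorption actually needs, as does the Callias-type estimate the paper invokes, is the relative bound $\|R\|\le a|v|^2+C$ with $a<1$. This is attainable, e.g.\ with a radial profile $r=\rho\tfrac{2}{\pi}\arctan(|v|/\lambda)$ for $\lambda$ large. It also forces $|v|\gtrsim d^{-1}$ at the edge.

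\textbf{Step~3 is circular, and this is the more serious gap.} The absorption estimate is proved on the core, so by closure it holds on $\Dom(\overline{D_y})$, not on $\Dom(D_y^*)$. Extending it to the maximal domain presupposes that the core is dense there, which is precisely essential self-adjointness. Yet Step~3 uses $(1+|v|)\xi\in L^2$ for $\xi\in\Dom(D_y^*)$.

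The inclusion $\Dom(D_y^*)\subseteq\Dom(|v|)$ is the real content of the lemma. In the paper it is the ``mollifier argument'' giving $D_f^*=D_{\iota(N),0}^*+\overline{c\otimes 1_{S_Z}}$. After that, the blow-up of $|v|$ at the edge yields $D_{\iota(N),0}^*+\overline{c\otimes 1_{S_Z}}=D_{\iota(N),0}+\overline{c\otimes 1_{S_Z}}$, and this last step is what your cut-off argument does correctly once the inclusion is known.

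The inclusion also does not follow by applying the core estimate to $\chi_n\xi$. With $|\mathrm{grad}\,|v||\sim|v|^2$, $|\mathrm{grad}\,\chi_n|$ is of order $|v|$ on the transition region, not $n^{-1}(1+|v|)$ as you claim. The commutator term is then as large as the potential there and cannot be absorbed. You need a separate argument, e.g.\ a sum theorem identifying the adjoint of a self-adjoint plus a symmetric operator under the relative anticommutator bound. That is your fallback, and essentially the paper's route.
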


\begin{proof}
We write $\slashed{D}_f=1_{p^*S_N}\otimes_\nabla D_Z+c\otimes 1_{S_Z}$. We here interpret the Clifford connection $\nabla$ on $S_N$ as lifted to $p^*S_N$. We write $D_{\iota(N),0}$ for the closure of $1_{p^*S_N}\otimes_\nabla D_Z$ acting on $C_c^\infty(N,p^*S_N\otimes S_Z|_{\iota(N)})$. Arguing as in  \cite{kucerovskycallias}, we see that $\slashed{D}_f^2-(1_{p^*S_N}\otimes_\nabla D_Z)^2-c^2\otimes 1_{S_Z}$ is bounded from below, so $D_f$ is the sum 
$$D_f=D_{\iota(N),0}+\overline{c\otimes 1_{S_Z}}.$$
Similarly, after carrying out a mollifier argument, we see that 
$$D_f^*=D_{\iota(N),0}^*+\overline{c\otimes 1_{S_Z}}=D_{\iota(N),0}+\overline{c\otimes 1_{S_Z}}.$$
\end{proof}

Given a Clifford connection $\nabla$ on $S_N$ and a vertical connection $\nabla'$ on $S_Z$, we can construct a vertical Clifford connection $\tilde{\nabla}$ on $p^*S_N\otimes S_Z|_{\iota(N)}\to \iota(N)$. We can define the submodule $\mathpzc{E}_f^1\subseteq \mathpzc{E}_f$ as the closure of $C_c^\infty(N,p^*S_N\otimes S_Z|_{\iota(N)})$ in the inner product 
$$\langle f_1,f_2\rangle_{\mathpzc{E}_f^1}:=\langle f_1,f_2\rangle_{\mathpzc{E}_f}+\langle \tilde{\nabla}f_1, \tilde{\nabla}f_2\rangle_{L^2(Z/Y;p^*S_N\otimes S_Z\otimes T_v)}.$$
Standard arguments show that $\mathpzc{E}_f^1$ does not depend on the choices of metrics and connections. The proof of Lemma \ref{adlknalknad} and elliptic regularity in the ongitudinal pseudodifferential calculus on $\pi:Z\to Y$ implies the following characterization of the domain of $D_f$.

\begin{prop}
The domain of $D_f$ is given by 
$$\mathrm{Dom}(D_f)=\mathpzc{E}_f^1\cap (i+c\otimes 1_{S_Z})^{-1}\mathpzc{E}_f.$$
\end{prop}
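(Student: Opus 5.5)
The proof proposal is to characterize $\Dom(D_f)$ as the maximal domain of the sum $D_{\iota(N),0}+\overline{c\otimes 1_{S_Z}}$, whose closedness was established in the proof of Lemma \ref{adlknalknad}. It then remains to identify the two summands' domains separately. The key input is the quadratic form estimate already used there: for $\xi\in C_c^\infty(N,p^*S_N\otimes S_Z|_{\iota(N)})$ we have
$$\|\slashed{D}_f\xi\|^2\geq \|(1_{p^*S_N}\otimes_\nabla D_Z)\xi\|^2+\|(c\otimes 1_{S_Z})\xi\|^2-C\|\xi\|^2.$$
This holds in the $C_0(Y)$-valued sense, and follows from the anticommutator $\{1\otimes_\nabla D_Z,c\otimes 1\}$ being a bundle endomorphism of order zero and bounded, arguing as in \cite{kucerovskycallias}. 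Since $C_c^\infty$ is a core for $D_f$ by Lemma \ref{adlknalknad}, closing up gives
$$\Dom(D_f)=\Dom(D_{\iota(N),0})\cap \Dom(\overline{c\otimes 1_{S_Z}}),$$
with graph norm of $D_f$ equivalent to the sum of the two graph norms.

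The second summand is straightforward. $c\otimes 1_{S_Z}$ is Clifford multiplication by the fibre coordinate, so it is self-adjoint with $(c\otimes 1)^2=|v|^2$. Its closure therefore has domain $(i+c\otimes 1_{S_Z})^{-1}\mathpzc{E}_f$, by functional calculus for the self-adjoint regular operator $\overline{c\otimes 1}$.

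The main step is to show $\Dom(D_{\iota(N),0})=\mathpzc{E}_f^1$, where the difficulty is that $\iota(N)\subseteq Z$ is not complete. The inclusion $\mathpzc{E}_f^1\subseteq \Dom(D_{\iota(N),0})$ is immediate, since $1\otimes_\nabla D_Z$ is a first order vertical operator whose symbol is controlled by $\tilde\nabla$. For the reverse inclusion I would invoke vertical elliptic regularity in the longitudinal pseudodifferential calculus along $\pi:Z\to Y$: the fibres are compact and $1\otimes_\nabla D_Z$ is vertically elliptic, which gives the Gårding-type estimate
$$\|\tilde\nabla\xi\|\leq C(\|(1\otimes_\nabla D_Z)\xi\|+\|\xi\|)$$
on $C_c^\infty$. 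Two points need care. First, this estimate must be uniform up to the "boundary" of $\iota(N)$. Second, any lower order terms coming from derivatives of the pulled-back connection on $p^*S_N$ must be bounded. To handle the first, I would exploit the fact that $\iota(N)$ is a tubular neighbourhood on which all geometric data extend to bounded geometry. Alternatively, the estimate can be obtained first on $Z$ and then restricted, since compactly supported sections in $\iota(N)$ extend by zero.

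Once both domains are identified, the intersection formula gives the claim. The expected obstacle is the equivalence of graph norms, that is, the uniform lower bound on the anticommutator together with the uniform vertical Gårding inequality on the non-complete open set $\iota(N)$. This is also the point where independence of $\mathpzc{E}_f^1$ from the choices of metrics and connections enters.
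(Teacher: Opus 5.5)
Your route is the paper's: its proof is a one-line appeal to the decomposition $D_f=D_{\iota(N),0}+\overline{c\otimes 1_{S_Z}}$ from the proof of Lemma~\ref{adlknalknad}, combined with longitudinal elliptic regularity. You identify $\Dom(\overline{c\otimes 1_{S_Z}})$ by functional calculus and $\Dom(D_{\iota(N),0})$ with $\mathpzc{E}_f^1$. Both are closures of $C_c^\infty(N,p^*S_N\otimes S_Z|_{\iota(N)})$, so a vertical G\aa rding inequality on that core suffices. That is the intended content.

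\textbf{The justification of the quadratic estimate fails as written.}

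Since $\nabla$ is a Clifford connection, $\{1_{p^*S_N}\otimes_\nabla D_Z,c\otimes 1_{S_Z}\}$ is, up to sign, $\sum_j c_N(\nabla_{e_j}v)\hat\otimes c_Z(e_j)$ for a vertical orthonormal frame $(e_j)$ of $Z$. Its size is therefore that of the vertical derivative of the fibre coordinate $v$, measured in the metric of $Z$. Each fibre $\pi^{-1}(y)$ is compact, while $|v|\to\infty$ at the frontier of $\iota(N)$. Hence this derivative is necessarily unbounded. The endomorphism is pointwise traceless, so it is not even bounded from below; this also affects the wording ``bounded from below'' in the proof of Lemma~\ref{adlknalknad}.

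What you actually need is a relative form bound. Choose the radial profile of the tubular embedding $\iota$ so that the anticommutator has pointwise norm at most $\epsilon|v|^2+C$ with $\epsilon<1$. This is possible because $\int_0^\infty(\epsilon r^2+C)^{-1}\,\mathrm{d}r$ can be made smaller than the width of the tube by taking $C$ large. You then get your estimate with $\|(c\otimes 1_{S_Z})\xi\|^2$ replaced by $(1-\epsilon)\|(c\otimes 1_{S_Z})\xi\|^2$, which is all that is used afterwards.

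\textbf{``Closing up'' gives only one inclusion.} Write $A=D_{\iota(N),0}$ and $B=\overline{c\otimes 1_{S_Z}}$. If $\xi_n\to\xi$ in the graph norm of $D_f$ with $\xi_n$ in the core, the estimate makes $A\xi_n$ and $B\xi_n$ Cauchy. Hence $\Dom(D_f)\subseteq\Dom(A)\cap\Dom(B)$ and $D_f\subseteq A+B$. However, the closure of the core in the sum of graph norms could a priori be strictly smaller than $\Dom(A)\cap\Dom(B)$. The missing inclusion comes from self-adjointness of $D_f$ (Lemma~\ref{adlknalknad}). Since $A+B$ on $\Dom(A)\cap\Dom(B)$ is symmetric, $D_f\subseteq A+B\subseteq (A+B)^*\subseteq D_f^*=D_f$.

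\textbf{The alternative G\aa rding argument.} Proving the G\aa rding inequality on $Z$ and restricting via extension by zero does not apply directly. The bundle $p^*S_N$ and its pulled back connection live only on $\iota(N)$. Instead, place $\iota(N)$ inside a slightly larger tube on which the projection $p$ is still smooth, so that all twisting data extend past the frontier, and argue there.
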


We now come to the main descriptive result for our unbounded representative of the wrong way map.

\begin{theorem}
\label{wrongwayconst}
The pair $(\mathpzc{E}_f,D_f)$ forms a $(C^\infty_c(X),C_0(Y))$-cycle representing $[f_!]\in KK_*(C_0(X),C_0(Y))$.
\end{theorem}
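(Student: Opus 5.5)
The proof has two parts. First we show that $(\mathpzc{E}_f,D_f)$ is a $(C^\infty_c(X),C_0(Y))$-cycle. Then we identify its class with $[f_!]=[0_!]\otimes_{C_0(N)}[\iota_!]\otimes_{C_0(Z)}[\pi_!]$.

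\emph{The cycle axioms.} Lemma \ref{adlknalknad} already gives that $D_f$ is self-adjoint and regular, so only two things remain.

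First, $C^\infty_c(X)$, acting through $p^*$ as functions constant along the fibres of $N$, must lie in $\mathrm{Lip}(D_f)$. For $a\in C^\infty_c(X)$ the function $p^*a$ commutes with $c\otimes 1_{S_Z}$. Its commutator with $1\otimes_\nabla D_Z$ is Clifford multiplication by the vertical part of $d(p^*a)$, restricted to $\iota(N)$. This is a bounded bundle endomorphism: $p^*a$ is smooth with bounded derivatives on $p^{-1}(\mathrm{supp}\, a)$, and the fibres of $\pi$ are compact. Invariance of the core $C_c^\infty(N,p^*S_N\otimes S_Z|_{\iota(N)})$ under $p^*a$ then gives $p^*a\,\Dom(D_f)\subseteq \Dom(D_f)$.

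Second, $p^*a(1+D_f^2)^{-1}$ must be $C_0(Y)$-compact. We use the domain description $\Dom(D_f)=\mathpzc{E}_f^1\cap(i+c\otimes 1)^{-1}\mathpzc{E}_f$ from the proposition preceding the theorem. On this domain, multiplication by $p^*a$ factors through $\chi\cdot(i+c)^{-1}$, where $\chi$ is a cutoff equal to $1$ on $p^{-1}(\mathrm{supp}\,a)$. Since $c$ grows like $|v|$ in the fibres of $N$, this factor is a $C_0$ function on $\iota(N)$ in the vertical directions. Combined with the vertical $H^1$-control coming from $\mathpzc{E}_f^1$, the fibrewise Rellich lemma for the compact-fibred bundle $\pi:Z\to Y$ gives $C_0(Y)$-compactness. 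This is the same mechanism that makes $(L^2(Z/Y;S_Z),D_Z)$ a cycle, now localized to the relatively compact set $\{|v|\le R\}\cap p^{-1}(\mathrm{supp}\,a)$.

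\emph{Identifying the class.} We use the factorization $\mathpzc{E}_f\cong C_0(N,p^*S_N)\otimes_{C_0(N)}L^2(Z/Y,S_Z)$. The product $[0_!]\otimes[\iota_!]$ is represented by $(C_0(N,p^*S_N)_{C_0(Z)},c)$, as noted before the theorem. It then suffices to verify the Kucerovsky criterion for unbounded Kasparov products, with $D_f$ as candidate for the product of $(C_0(N,p^*S_N)_{C_0(Z)},c)$ and $(L^2(Z/Y;S_Z),D_Z)$. The three conditions are:
\begin{enumerate}
\item \emph{Connection condition.} For $\xi$ in the dense subspace $C^\infty_c(N,p^*S_N)$, the commutator of $D_f$ with $T_\xi:\eta\mapsto\xi\otimes\eta$ equals $T_\xi D_Z$ up to a bounded term, namely $\nabla\xi$ plus Clifford multiplication by $c\xi$. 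This holds precisely because $D_f$ is built from $D_Z$ twisted by the Clifford connection $\nabla$.
\item \emph{Domain condition.} $\Dom(D_f)\subseteq\Dom(c\otimes 1)$. This follows from the decomposition $D_f=D_{\iota(N),0}+\overline{c\otimes 1}$ established in the proof of Lemma \ref{adlknalknad}.
\item \emph{Positivity.} The semi-boundedness $\langle c\xi,D_f\xi\rangle+\langle D_f\xi,c\xi\rangle\ge -C\langle\xi,\xi\rangle$. This amounts to the anticommutator $\{c\otimes 1,\,1\otimes_\nabla D_Z\}$ being bounded from below, which is the Callias-type estimate already quoted from \cite{kucerovskycallias} in that proof. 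Concretely, $c$ anticommutes with the $S_Z$-Clifford part up to the derivative of $c$ in the directions of $N$ that are vertical for $\pi$, and this derivative is bounded.
\end{enumerate}

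The main obstacle is this positivity estimate, together with its core-level domain subtleties, on the incomplete open subset $\iota(N)\subseteq Z$. The danger is that fibres of $\pi$ cut $\iota(N)$ in non-compact pieces, so the estimates have to be made uniformly near $\partial\iota(N)$. I would first try to handle this by choosing the tubular embedding so that the metric on $N$ blows up at the boundary, making $|c|\to\infty$ there. Then $(i+c)^{-1}$ is supported effectively in the interior, and every estimate can be performed on compact pieces and glued with a partition of unity. Independence of the choices is not needed here; it only enters later, in Subsection \ref{kkwrongbor}.
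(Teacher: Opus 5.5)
Your overall route is the paper's. Its proof consists of invoking Kucerovsky's criterion for the pair $(C_0(N,p^*S_N)_{C_0(Z)},c)$, $(L^2(Z/Y;S_Z),D_Z)$ with $D_f$ as candidate product. Your checks of the Lipschitz property, local compactness, the connection condition and the domain condition are fine; the domain condition and the compactness step rest, as in the paper, on Lemma \ref{adlknalknad} and the ensuing domain description.

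The gap is in the positivity condition, which you rightly call the crux but justify with a false claim.
\begin{itemize}
\item The anticommutator $\{c\otimes 1,1\otimes_\nabla D_Z\}$ is the endomorphism $\sum_j \mathrm{cl}_Z(e_j)\,c(\nabla_{e_j}v)$, with $(e_j)$ a vertical orthonormal frame \emph{for the metric of $Z$}.
\item In that metric the vertical derivative of $v$ is unbounded. A fibre of $\pi$ meeting $\iota(N)$ does so in a proper open subset of a compact manifold. Since $\iota$ is a diffeomorphism from all of $N$, $|v|\to\infty$ along vertical paths of finite length running into $\partial\iota(N)$.
\item This endomorphism is odd for the grading of $S_N$, so its spectrum is symmetric about $0$. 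Hence ``bounded from below'' means ``bounded'', and your reduction to the anticommutator alone discards exactly the term that saves the estimate.
\end{itemize}
What Kucerovsky's condition requires is
\[
2(c\otimes 1)^2+\{c\otimes 1,1\otimes_\nabla D_Z\}\geq -C .
\]
This amounts to a growth bound $|\mathrm{d}_{\rm vert}v|_Z\leq\kappa|v|^2+C$ with $\kappa$ small. It is a genuine constraint on the tubular identification.

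Suppose $\iota$ is the normal exponential map precomposed with $v\mapsto\rho(|v|)v/|v|$. Then $|v|$ grows at rate $1/\rho'(|v|)$ per unit $Z$-length in the radial direction.
\begin{itemize}
\item The profile $\rho(r)=\epsilon(1-(1+r)^{-1/2})$ gives rate $\sim|v|^{3/2}$, and the bound holds.
\item The profile $\rho(r)=\epsilon(1-\mathrm{e}^{-r})$ gives rate $\sim\mathrm{e}^{|v|}$, and the bound fails. Worse, in the radial model one then has $\int|v|\,\mathrm{d}s<\infty$ at $s=0$, where $s$ is the $Z$-distance to $\partial\iota(N)$. So both model solutions $\mathrm{e}^{\pm\int|v|\,\mathrm{d}s}$ are square integrable there, and even essential self-adjointness on the core fails.
\end{itemize}

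Your proposed remedy does not supply this bound. The blow-up of $|c|$ at $\partial\iota(N)$ happens automatically for any such identification, and it is exactly what makes the anticommutator unbounded. Since the estimate fails in every neighbourhood of $\partial\iota(N)$, estimating on compact pieces and gluing with a partition of unity cannot recover it.

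The missing step is to fix the radial profile of $\iota$ so that two things hold. First, $|v|$ blows up fast enough for the potential to control the boundary, i.e. $\int|v|\,\mathrm{d}s=\infty$. Second, its vertical gradient stays $o(|v|^2)$. The same relative bound is what underlies the self-adjointness and the decomposition $D_f=D_{\iota(N),0}+\overline{c\otimes 1}$ in Lemma \ref{adlknalknad}, which you use in the domain and compactness steps. With $\iota$ chosen this way, your three conditions go through as you describe.
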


\begin{proof}
We have that $[f_!]=[0_!]\otimes_{C_0(N)}[\iota_!]\otimes_{C_0(Z)}[\pi_!]$, and by the discussion above $[0_!]\otimes_{C_0(N)}[\iota_!]$ is represented by the $(C_0(X),C_0(Z))$-cycle $(C_0(N,p^*S_N)_{C_0(Z)},c)$. Therefore, it suffices to show that $[(\mathpzc{E}_f,D_f)]$ is the Kasparov product of the class $[(C_0(N,p^*S_N)_{C_0(Z)},c)]$ with the class $[(L^2(Z/Y;S_Z),D_Z)]$. This fact follows using Kucerovsky's theorem \cite[Theorem 13]{kucerovskyprod} applied in the same way as in \cite{kucerovskycallias}.

\end{proof}

\subsection{Groupoid quasi-cocycles}
\label{groupoidsex}
We have already seen a variety of cycles on groupoids above, such as transformation groupoids in Subsection \ref{groupactionsex} or holonomy groupoids of foliations in Subsection \ref{foliationsex}, and we shall see more of them below in Subsection \ref{fellalgebraex} on Fell algebras. Let us consider a slightly more exotic construction, extending constructions in \cite{bournemesland,GRU,meslandcocycle}. Let $\mathcal{G}$ denote a topological groupoid with object space $X$. 

\begin{define}
Consider a continuous field of real Hilbert spaces $\mathfrak{H}$ over the unit space of $\mathcal{G}$, with a strongly continuous unitary $\mathcal{G}$-action. Let $c:\mathcal{G}\to \mathfrak{H}$ be a continuous mapping such that $c(g)\in \mathfrak{H}_{r(g)}$. 
\begin{itemize}
\item We say that $c$ is a cocycle if for any composable $g,h\in \mathcal{G}$ we have that 
$$c(gh)=c(g)+gc(h).$$
If $c$ is a cocycle, we define its kernel $\ker(c)\subseteq \mathcal{G}$ as the closed subgroupoid $\{g\in \mathcal{G}:c(g)=0\in \mathfrak{H}_{r(g)}\}$.
\item We say that $c$ is a quasi-cocycle if for any $g\in \mathcal{G}$, the continuous function 
$$r^{-1}(\mathcal{G})\ni h\mapsto c(gh)-gc(h)\in \mathfrak{H}_{r(g)},$$
is norm bounded. 
\item We say that $c$ is proper if for any compact $K\subseteq X$ and any bounded set $B\subseteq \mathfrak{H}|_K$, the set $c^{-1}(B)\subseteq \mathcal{G}$ is pre-compact. 
\end{itemize}
\end{define}

\begin{remark}
We note that there is a one-to-one correspondence between cocycles and affine isometric actions. To see this, suppose that $\mathfrak{H}$ is a continuous field of Hilbert spaces with a strongly continuous unitary $\mathcal{G}$-action. Then a cocycle $c$ and an affine isometric action $\pi_c$ mutually determine each other from the rule that $c(g)=\pi_c(g).0$. 
\end{remark}

\begin{remark}
A groupoid is said to be a-$T$-menable if it admits a proper cocycle on a continuous field of Hilbert spaces $\mathfrak{H}$. This notion was used in \cite{tu99a}. In fact, by \cite{tu99a} any amenable groupoid admits a proper cocycle on a continuous field of Hilbert spaces $\mathfrak{H}$. A priori, $\mathfrak{H}$ is only a field of \emph{real} Hilbert spaces.
\end{remark}

Let us mention a few examples: 
\begin{enumerate}
\item If $\mathfrak{H}=X\times \mathcal{H}$, for a Hilbert space $\mathcal{H}$ with the trivial action, a cocycle is just a groupoid cocycle $\mathcal{G}\to \mathcal{H}$. Unbounded Kasparov cycles for this context was considered for $\mathcal{H}=\R$ in \cite{meslandcocycle} and for a finite-dimensional $\mathcal{H}$ in \cite{bournemesland}.
\item If $\mathcal{G}=\Gamma$ is a discrete group, $\mathfrak{H}$ is a $\Gamma$-Hilbert space and a (quasi-)cocycle is simply a group (quasi-)cocycle $c:\Gamma\to \mathfrak{H}$. Unbounded Kasparov cycles were studied in this context in \cite{GRU}. By definition, a group admits a proper cocycle if and only if it is a-T-menable (e.g. if it is amenable) and a proper quasi-cocycle if and only if it is a-TT-menable (e.g. if it is hyperbolic). Only virtually abelian groups admit proper quasi-cocycles with values in a finite-dimensional Hilbert space, so in general it is required that $\mathcal{H}$ is infinite-dimensional. 
\end{enumerate}

Assume that $S_\mathfrak{H}$ is a continuous field of complex Hilbert spaces over the unit space of $\mathcal{G}$, with a strongly continuous unitary $\mathcal{G}$-action, and that there is a $\mathcal{G}$-equivariant linear mapping $\mathfrak{c}:\mathfrak{H}\to \mathbb{B}(S_{\mathfrak{H}})$ such that 
\begin{itemize}
\item $\mathfrak{c}(\xi)\in S_{\mathfrak{H},x}$ for $\xi\in \mathfrak{H}_x$;
\item $\mathfrak{c}(\xi)^*=\mathfrak{c}(\xi)$ and $\mathfrak{c}(\xi)^2=\|\xi\|^2$;
\item for any continuous section $\xi$ of $\mathfrak{H}$, $\mathfrak{c}$ is a strictly continuous section.
\end{itemize}
Let $X$ denote the unit space of $\mathcal{G}$. Let $B$ denote the $C^*$-subalgebra of the multiplier algebra of the norm continuous sections of $\mathbb{K}(S_{\mathfrak{H}})$ generated by the set 
$$\{\mathfrak{c}(\xi):\; \xi\;\mbox{a continuous global section of $\mathfrak{H}$ vanishing at $\infty$}\}.$$
The $C^*$-algebra $B$ carries a strongly continuous action of $\mathcal{G}$. As a $C_0(X)$-algebra, the fibre of $B$ in each point is the CAR-algebra. 

We define $L^2(\mathcal{G};B)$ as the $B$-Hilbert $C^*$-module completion of the space of continuous compact supported functions $f:\mathcal{G}\to B$ such that $f(g)\in B_{r(g)}$ in the $B$-valued inner product 
$$\langle f_1,f_2\rangle_{L^2(\mathcal{G};B)}(x)=\int_{g\in r^{-1}(x)} f_1(g^{-1})^*f_2(g).$$
The groupoid action on $B$ defines a $\mathcal{G}$-action on $L^2(\mathcal{G};B)$ by $(g.f)(h)=g.(f(g^{-1}h))$. There is a left action of $C^*(\mathcal{G})$ on $L^2(\mathcal{G};B)$ obtained by integrating the action $\rho(g)f(h)=f(hg)$. 

\begin{prop}
Let $c$ be a proper quasi-cocycle on a groupoid $\mathcal{G}$, and $B$ and $L^2(\mathcal{G},B)$ constructed as above. Define the operator $D_c$ densely on $L^2(\mathcal{G},B)$ as 
$$D_cf(g)=\mathfrak{c}(c(g))f(g),$$
on 
$$\Dom(D_c):=\{f\in L^2(\mathcal{G},B): \int_{g\in s^{-1}(x)} \|c(g)\|_{r(g)}^2f(g^{-1})^*f(g)\; \mbox{converges in $B$}\}.$$
Then $D_c$ is a regular and self-adjoint operator fitting into an unbounded $(C_c(\mathcal{G}),B)$-Kasparov module $(L^2(\mathcal{G},B),D_c)$.
\end{prop}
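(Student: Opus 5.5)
The plan is to treat $D_c$ as a ``multiplication operator'' on the module $L^2(\mathcal{G},B)$. Since the symbol $\mathfrak{c}(c(g))$ squares to $\|c(g)\|^2$, the analysis reduces to a scalar multiplication operator plus commutator estimates.

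\emph{Self-adjointness and regularity.} On the dense submodule $C_c(\mathcal{G},B)$ (with $f(g)\in B_{r(g)}$), the formula $D_cf(g)=\mathfrak{c}(c(g))f(g)$ is well defined. It is symmetric because $\mathfrak{c}(c(g))^*=\mathfrak{c}(c(g))$ pointwise and the inner product is built from pointwise products. I will first show that $D_c^2$ acts on $C_c$ as multiplication by the scalar function $\ell(g):=\|c(g)\|^2$, using $\mathfrak{c}(\xi)^2=\|\xi\|^2$. Next I will show that for any $\lambda\in\C\setminus\R$ the operator
$$R_\lambda f(g):=(\lambda+\mathfrak{c}(c(g)))(\lambda^2-\|c(g)\|^2)^{-1}f(g)$$
is an adjointable bounded operator. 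Indeed it is pointwise multiplication by a strictly continuous, uniformly bounded section, because $\|(\lambda-\mathfrak{c}(\xi))^{-1}\|\le|\mathrm{Im}\lambda|^{-1}$. This $R_\lambda$ inverts $\lambda-D_c$ on the given domain. Both $R_{\pm i}$ have dense range containing $C_c$, and the closure of $D_c|_{C_c}$ coincides with the maximal multiplication domain described in the statement. Together these give self-adjointness and regularity, via the criterion that $D\pm i$ are surjective with adjointable inverses. The stated domain is exactly the set where $\sqrt{\ell}f\in L^2$, which is precisely $\Dom(D_c)$ because $|D_c|=\sqrt{\ell}$.

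\emph{Bounded commutators.} For $a\in C_c(\mathcal{G})$ acting by the integrated right-regular action $\rho$, I will compute on $C_c$:
$$[D_c,\rho(a)]f(g)=\int a(h)\big(\mathfrak{c}(c(g))-\mathfrak{c}(c(gh))\big)f(gh)\,\mathrm{d}h,$$
up to the correct Haar-system conventions and equivariance twists. The quasi-cocycle property says that $c(gh)-c(g)$ is norm bounded uniformly in $g$ for $h$ in the compact support of $a$. Here one likely has to use $c(gh)-gc(h)$ bounded and $c(h)$ bounded on compacts, and adjust for which side the action enters. Since $\|\mathfrak{c}(\xi)\|=\|\xi\|$, the integral kernel is then bounded by $\sup|a|\cdot C_K$ on a compact support. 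A Schur-type estimate (a $C^*$-module version using $\langle Tf,Tf\rangle\le\|T\|^2\langle f,f\rangle$ for integral operators with compactly supported, uniformly bounded kernels) shows that the commutator extends to an adjointable operator. Hence $\rho(C_c(\mathcal{G}))\subseteq\mathrm{Lip}(D_c)$, and domain preservation follows from the same computation.

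\emph{Local compactness, the main obstacle.} We need $\rho(a)(1+D_c^2)^{-1}\in\K_B(L^2(\mathcal{G},B))$. Here $(1+D_c^2)^{-1}$ is multiplication by $(1+\ell)^{-1}$, and properness of $c$ means that this function vanishes at infinity on $r^{-1}(K)$ for compact $K$. I would write $(1+\ell)^{-1}=\chi_n+\epsilon_n$ with $\chi_n\in C_c(\mathcal{G})$ and $\|\epsilon_n\|_\infty\to0$, which reduces the claim to showing that $\rho(a)\chi$ is $B$-compact for $\chi\in C_c(\mathcal{G})$. I expect this to be the delicate step. The plan is to show that $\rho(a)\chi$ is an integral operator with a continuous compactly supported kernel $k(g,g')\in B_{r(g)}$, and then to approximate it in norm by finite sums of rank-one operators $\theta_{\xi,\eta}$ with $\xi,\eta\in C_c(\mathcal{G},B)$, using a partition of unity on the compact support. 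The subtlety is that the kernel takes values in $B$ rather than in $\C$ and must be factored through the module structure. For that I will use that $B$ is generated by the $\mathfrak{c}(\xi)$ and has an approximate unit of compactly supported elements, so that $k$ can be approximated by sums $\xi(g)\eta(g')^*$.
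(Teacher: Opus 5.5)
Your proposal follows the same route as the paper's proof, which has three ingredients. First, self-adjointness and regularity hold because the resolvent is pointwise multiplication by $(\lambda-\mathfrak{c}(c(g)))^{-1}$. Second, local compactness holds because properness of $c$ makes this multiplier vanish at infinity once localized by a compactly supported $a$; your kernel approximation using local units of $B$ just fills in what the paper asserts in one line. Third, bounded commutators come from the quasi-cocycle property, which the paper only cites.

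The one point to keep straight is the one you already flag. For $C_c(\mathcal{G})$ acting by right translations you need $\sup_{g}\|c(gh)-c(g)\|<\infty$ locally uniformly in $h$. That follows from the uniform condition $\sup_{g,h}\|c(gh)-c(g)-gc(h)\|<\infty$, but not from the pointwise-in-$g$ condition as literally stated in the paper. The literal condition instead matches the action-twisted left translations $f\mapsto g.f(g^{-1}\,\cdot)$.
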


\begin{proof}
It is clear that $D_c$ is a regular and self-adjoint operator from the fact that $(i\pm D_c)^{-1}f(g)=(i\pm \mathfrak{c}(c(g)))^{-1}f(g)$. Since $c$ is proper, $(i\pm D_c)^{-1}$ is defined from pointwise multiplication by a function in $C_0(\mathcal{G},B)$ and so $a(i\pm D_c)^{-1}$ is $B$-compact for any $a\in C_c(\mathcal{G})$ (acting as a convolution operator). The proof that $C_c(\mathcal{G})$ preserves $\Dom(D_c)$ and commutes with $D_c$ up to a bounded operator goes as in \cite{GRU}.
\end{proof}

\begin{remark}
It is in general not clear what the class of this cycle is in $KK$. When there is a complex structure on $\mathfrak{H}$, the constructions above relate to the constructions of $\gamma$-elements \cite{HigKasp,tu99a} for proving the Baum-Connes conjecture for a-T-menable group(oid)s.
\end{remark}

\subsection{Continuous trace algebras}
\label{conttracealgexex}

Continuous trace algebras are mildly noncommutative $C^*$-algebras. A continuous trace algebra is a $C^*$-algebra $A$ with Hausdorff spectrum such that for any representation $\pi$ there is a neighborhood $U$ of $[\pi]\in \hat{A}$ and an element $x\in A$ such that $\pi'(x)$ is a rank one projection for all $[\pi']\in U$. A continuous trace algebra always arise as the space of global sections of a bundle of compact operators on some Hilbert space over the spectrum and is up to stable isomorphism characterized by its spectrum and its Dixmier-Douady invariant, an element in the third integral $\mathrm{\check{C}}$ech cohomology of the spectrum. There are several techniques to treat bivariant $K$-theory of continuous trace algebras. We shall focus on a simple example arising from $T$-duality. In general $T$-duality produces a dual continuous trace algebra to a continuous trace algebra whose spectrum is a principal circle bundle, for a detailed overview see \cite{rosenbergconttrabook}. 

Consider a principal circle bundle $\pi:Z\to X$ where we assume $Z$ to be a closed manifold. The $U(1)$-action on $Z$ induces an $\R$-action and we consider the algebra $A:=C(Z)\rtimes \R$. We write $\mathcal{A}:=\mathcal{S}(\R,C^\infty(Z))\subseteq A$ for the dense $*$-subalgebra of smooth Schwarz functions. We can identify $C^\infty(Z)\cong \oplus_{k\in \Z}^\mathcal{S} C^\infty(X,L_k)$, where $L_k:=Z\times_{\chi_k}\C$ for the character $\chi_k(z):=z^k$ of $U(1)$ and $\oplus_{k\in \Z}^\mathcal{S}$ denotes that we allow for all sequences $(a_k)_{k\in \Z}$ with $\sum_k k^N\|a_k\|_{C^l(X,L_k)}<\infty$ for all $N$ and $l$. The $*$-algebra $\mathcal{A}$ can similarly be identified with $\mathcal{A}\cong \oplus_{k\in \Z}^\mathcal{S} \mathcal{S}(X\times \R,L_k)$ where $\oplus_{k\in \Z}^\mathcal{S}$ denotes that we allow for all sequences $(a_k)_{k\in \Z}$ with $\sum_k k^N\|(1+t^2)^Ma_k\|_{C^l(X\times \R,L_k)}<\infty$ for all $N$, $M$ and $l$.

We have that $A$ is a continuous trace algebra with spectrum $X\times S^1$ and its Dixmier-Douady invariant $\delta_{DD}(A)\in \check{H}^3(X\times S^1,\Z)$ is uniquely determined by the property that its push down to $X$ coincides with the first Chern class $c_1(Z)\in \check{H}^2(X,\Z)$ of $Z$. Indeed, given an $x\in X$ and a $z=\mathrm{e}^{i\theta}\in S^1$, there is a representation $\lambda_{(x,z)}:A\to \mathbb{K}(L^2[0,2\pi])$ (unique up to unitary isomorphism) defined on a sequence $(a_k)_{k\in \Z}\in \mathcal{A}\cong \oplus_{k\in \Z}^\mathcal{S} \mathcal{S}(X\times \R,L_k)$ and $f\in L^2[0,2\pi]$ as 
$$\lambda_{(x,z)}f(u):=\frac{1}{2\pi}\sum_{k,l\in \Z}\int_0^{2\pi} a_l(x,u-k-t)\mathrm{e}^{ik\theta+ilt}f(t)\mathrm{d}t$$

Let $Z$ be equipped with a $U(1)$-equivariant Riemannian metric, and consider a $U(1)$-equivariant Clifford bundle $S\to Z$. We pick a $U(1)$-equivariant Dirac operator $D$ on $S$ equipped with its maximal domain $H^1(Z,S)$ making $D$ self-adjoint. Then $(L^2(Z,S),D)$ forms a cycle for $(C^\infty(Z),\C)$. We can also consider $(L^2(Z,S)\hat{\boxtimes} L^2(\R),D\hat{\boxtimes} \partial)$ which is clearly an $(\mathcal{A},\C)$-cycle when $A=C(Z)\rtimes \R$ acts on $L^2(Z,S)\hat{\boxtimes} L^2(\R)$ via the ordinary $C(Z)$-action and $\R$ acts diagonally with the dual action on $L^2(\R)$. In fact, the two are related in $K$-homology by that 
$$[(L^2(Z,S),D)]\mapsto [(L^2(Z,S)\hat{\boxtimes} L^2(\R),D\hat{\boxtimes} \partial)],$$
under the Connes-Thom isomorphism $K^*(C(Z))\cong K^{*+1}(C(Z)\rtimes \R)=K^{*+1}(A)$.

\subsection{Fell algebras}
\label{fellalgebraex}
Fell algebras are one step further than continuous trace algebras away from commutative spaces. A Fell algebra is a $C^*$-algebra $A$ such that for any representation $\pi$ there is a neighborhood $U$ of $[\pi]\in \hat{A}$ and an element $x\in A$ such that $\pi'(x)$ is a rank one projection for all $[\pi']\in U$. The definition of a Fell algebra is formally the same as that of a continuous trace algebra with the Hausdoff condition on $\hat{A}$ dropped. Being the spectrum of a $C^*$-algebra, the spectrum is only mildly non-Hausdorff, it is always locally locally compact and locally Hausdorff (see \cite{CHR, HKS}). As in the case of continuous trace algebras, Fell algebras are characterized up to stable isomorphism by their spectrum and a Dixmier-Duoady type invariant (see \cite{CHR, HKS}). The seemingly innocent transition from Hausdorff spectrum to non-Hausdorff spectrum opens up a quite much larger class of examples, see for instance \cite{DGY}. An instance where Fell algebras have played an important role in index theory is in \cite{KasparovSkandalis} where they were used to describe the $\gamma$-element for a group acting on a building, e.g. a discrete subgroup of a linear adelic group.

Let us consider a simple example of a Fell algebra. Let $X$ be a compact locally Hausdorff space admitting a local homeomorphism $\psi:M\to X$ from a smooth manifold $M$. Existence of $\psi$ ensures that $X$ is a smooth compact non-Hausdorff manifold. The equivalence relation $R(\psi):=\{(x,y)\in M\times M: \,\psi(x)=\psi(y)\}$ is an \'etale groupoid over $M$, and $A_\psi:=C^*(R(\psi))$ is a Fell algebra with spectrum $X$ (see \cite{CHR}). We also set $\mathcal{A}_\psi:=C^\infty_c(R(\psi))$ viewed as a dense $*$-subalgbra of $A_\psi$.

We assume that $M$ admits an $R(\psi)$-equivariant Riemannian metric and that $S\to M$ is an $R(\psi)$-equivariant Clifford bundle. The reader should note that these choices correspond to picking a Riemannian metric and a Clifford bundle on $X$. Riemannian metrics need not exist on non-Hausdorff manifolds (see for example \cite{KasparovSkandalis}). However, if a Riemannian metric exists then Clifford bundles can always be constructed. From this data, and a choice of connection, we can construct a Dirac operator $\slashed{D}_S$ whose principal symbol is $R(\psi)$-equivariant that extends to a self-adjoint operator $D_S$ in $L^2(M,S)$. We arrive at a cycle $(L^2(M,S),D_S)$ for $(\mathcal{A}_\psi,\C)$. 

We can make the construction slightly more explicit. Assume that $X$ is a smooth compact non-Hausdorff manifold. Pick a covering $(U_j)_{j=1}^N$ of $X$ consisting of Hausdorff charts on $X$. We can define $M:=\dot{\cup}_{j=1}^N U_j$ and $\psi:M\to X$ by $\psi|_{U_j}=\id_{U_j}$. If $X$ admits a Riemannian metric it induces an $R(\psi)$-equivariant metric by simply restricting to each $U_j$. We can pick an $R(\psi)$-equivariant Clifford bundle $S\to M$ and a choice of Clifford connection on $X$ induces a Dirac operator $\slashed{D}_S$ whose principal symbol is $R(\psi)$-equivariant. By equipping $\slashed{D}_S$ with suitable self-adjoint boundary conditions on $\partial M$ (e.g. APS-conditions), we arrive at a self-adjoint operator $D_S$ on $L^2(M,S)$. We note that in this context, we have an embedding of $C^*$-algebras
$$A_\psi\hookrightarrow M_N(C_0(M)), $$
that we denote by $\psi^*$ and is given by $\psi^*(f):=fe_{ij}$ for $f\in C_0(R(\psi)\cap (U_i\times U_j))=C_0(U_i\cap U_j)$, where $e_{ij}$ is the matrix unit with a $1$ in the $(i,j)$:the place. Since $C^*(R(\psi))=\sum_{i,j}C_0(R(\psi)\cap (U_i\times U_j))$ the map $\psi^*$ is uniquely determined from this property. In particular, for any Clifford bundle $S_0\to M$ we can ensure that $S:=\C^N\otimes S_0$ is $R(\psi)$-equivariant. If we take a Clifford bundle $S_0$ on $M$, and construct $D_S$ as $D_{S_0}\otimes 1_{\C^N}$ we have the following equality of $(\mathcal{A}_\psi,\C)$-cycles
$$(L^2(M,S),D_S)=\psi^*\left[((L^2(M,S),D_{S_0})\otimes \C^N\right].$$

We can also construct examples of symmetric and half-closed chains on a smooth compact non-Hausdorff manifold with boundary $X$. Assume that there is a local homeomorphism $\psi:\overline{W}\to \overline{X}$ from a smooth compact manifold with boundary $W$. There is an induced mapping $\psi_\partial:\partial W\to \partial X$, and since $\psi$ is a local homeomorphism $\psi^\circ:W^\circ \to X^\circ$ is proper. We arrive at a short exact sequence of Fell algebras
$$0\to A_{\psi^\circ}\to A_\psi\to A_{\psi_\partial}\to 0.$$
As above, we assume that $W$ admits an $R(\psi)$-equivariant Riemannian metric and that $S\to M$ is an $R(\psi)$-equivariant Clifford bundle. We construct a Dirac operator $\slashed{D}_S$ whose principal symbol is $R(\psi)$-equivariant. The closure of $\slashed{D}_S$ is a self-adjoint operator $D_S$ in $L^2(W,S)$ with $\Dom(D_S)=H^1_0(W^\circ,S)$. The pair $(L^2(W,S),D_S)$ forms a symmetric $(\mathcal{A}_\psi,\C)$-chain and a half-closed $(\mathcal{A}_{\psi^\circ},\C)$-chain.

\subsection{Cuntz-Krieger algebras}
\label{ckex}
Cuntz-Krieger algebras form a well studied class of $C^*$-algebras. They are often purely infinite making their noncommutative geometry challenging. In \cite{GMCK}, a class of spectral triples on Cuntz-Krieger algebras was constructed exhausting their $K$-homology groups. The starting point for this construction was an unbounded Kasparov cycle over a maximal abelian subalgebras, and the spectral triples came out from localizing in points of the spectrum of the maximal abelian subalgebra. This idea was later extended to Cuntz-Pimsner algebras in \cite{GMR} and to the stable Ruelle algebra of a Wieler solenoid in \cite{DGMW}. We consider a special case of the construction for Cuntz-Pimsner algebras in Example \ref{cpalgebronmanfdodoex}. For the sake of exemplifying cycles we explain the construction of spectral triples from \cite{GMCK,GMR} in the special case of the Cuntz algebra $O_N$, with a particular focus on homotopy relations. The spectral triples considered here were in a mildly modified form studied in a more geometric context in \cite{gmon} for $O_N$ and later in general in \cite{gergofmes}. 

We now consider a noncommutative example of cycles on the $C^*$-algebra $O_N$. This is the universal $C^*$-algebra generated by $N$ isometries with orthogonal ranges. In other words, $O_N$ is generated by elements $S_1,\ldots,S_N$ satisfying $1=S_i^*S_i=\sum_{j=1}^n S_jS_j^*$ for any $i$. We let $O_N^\infty\subseteq O_N$ denote the $*$-algebra generated by the elements $S_1,\ldots,S_N$. There is a unique KMS-state $\Psi\in O_N^*$ of inverse temperature $\log(N)$ and we let $L^2(O_N)$ denote the associated GNS-representation of $O_N$. By \cite[Lemma 2.13]{GMR}, there is an orthogonal decomposition 
$$L^2(O_N)=\bigoplus_{n\in \Z}\bigoplus_{k=\max(0,-n)} \mathcal{H}_{n,k},$$ 
where $\mathcal{H}_{n,k}$ are spaces of dimension at most $N^{n+2k}$. 

\begin{remark}
\label{functorforhnk}
The construction of $\mathcal{H}_{n,k}$ is independent of basis. In fact $\mathcal{H}_{n,k}$ defines a covariant functor on the category of finite-dimensional vector spaces with morphisms being unitary isomophisms. In particular, given a preferred basis of $\C^N$ such as the standard basis, there is a preferred ON-basis for $\mathcal{H}_{n,k}$. 
\end{remark}

Consider the function 
\begin{equation}
\label{definedodepsi} 
\psi:\{(n,k)\in \Z\times \N: k+n\geq 0\}\to \Z, \quad 
\psi(n,k):=
\begin{cases} 
n, \; &k=0\\
-|n|-k,\; &k>0. 
\end{cases}
\end{equation}
There is an odd $(O_N^\infty,\C)$-cycle $(L^2(O_N),D)$ defined from declaring $D$ diagonal in the decomposition $L^2(O_N)=\bigoplus_{n\in \Z}\bigoplus_{k=\max(0,-n)} \mathcal{H}_{n,k}$ and setting 
$$D|_{\mathcal{H}_{n,k}}=\psi(n,k).$$ 
The associated $K$-homology class $[(L^2(O_N),D)]\in KK^1(O_N,\C)\cong \Z/(N-1)\Z$ is a generator.

Let $\lambda:O_N\to \mathbb{B}(L^2(O_N))$ be the representation $a\mapsto \sum_{j=1}^n S_j aS_j^*$ and let $_\lambda L^2(O_N)$ denote $L^2(O_N)$ viewed as an $(O_N,\C)$-Hilbert $C^*$-module for the left action by $\lambda$. We claim that as $(O_N^\infty,\C)$-cycles, there is an isomorphism 
$$N(L^2(O_N),D)\cong (_\lambda L^2(O_N),D+B),$$ 
for a bounded operator $B\in \mathbb{B}(L^2(O_N))=\End_\C^*(L^2(O_N))$. It should be noted that this observation does not depend on the particular spectral triple at hand, but rather on the relations in the algebra $O_N$. Indeed, define $\alpha:L^2(O_N)\otimes \C^N\to L^2(O_N)$ by $\alpha(x_1,\ldots, x_N):=\sum_{i=1}^N S_ix_i$. This is a unitary due to the relations $1=S_i^*S_i=\sum_{j=1}^n S_jS_j^*$ (which implies $S_i^*S_j=\delta_{ij}$). Finally, 
$$\alpha^*D\alpha=(S_i^*DS_j)_{i,j=1}^N=D\otimes \id_{\C^N}+(S_i^*[D,S_j])_{i,j=1}^N.$$
Therefore, the operator $B:=\alpha(S_i^*[D,S_j])_{i,j=1}^N\alpha^*=\sum [D,S_j]S_j^*\in \mathbb{B}(L^2(O_N))$ will ensure that $\alpha:N(L^2(O_N),D)\xrightarrow{\sim} (_\lambda L^2(O_N),D+B)$. 

In fact, for any unitary $u\in O_N$ we can define a $*$-endomorphism $\varphi_u$ on $O_N$ by declaring $\varphi_u(S_i):=uS_i$. The unitary $u$ is determined by $u=\sum_{j=1}^n \varphi_u(S_i)S_i^*$.  We have that $\lambda=\varphi_u$ where $u=\sum_{i,j=1}^n S_iS_jS_i^*S_j^*$ is a symmetry. Therefore $u$ is norm-smoothly homotopic to $1$ in the unitary group of $O_N$. We can conclude that there exists a $*$-homomorphism $\pi:O_N\to C([0,1],O_N)$ such that $\mathrm{ev}_0\circ \pi=\id_{O_N}$, $\mathrm{ev}_1\circ \pi=\lambda$ and $\pi$ restricts to a $*$-homomorphism $O_N^\infty\to C^\infty([0,1],\tilde{O}_N^\infty)$ where $\tilde{O}_N^\infty$ is the closure of $O_N^\infty$ under holomorphic functional calculus in $O_N$. We shall return to this example below in Subsection \ref{ckexbordism}.

\subsection{Cuntz-Pimsner algebras on manifolds}
\label{cpalgebronmanfdodoex}
Above in Example \ref{ckex}, we saw a spectral triple on the Cuntz algebra. Let us consider a geometrically more interesting example, previously considered in \cite{boundarypap}, of a Cuntz-Pimsner algebra defined from a vector bundle on a manifold. Let $E\to W$ be a hermitean vector bundle on a smooth Riemannian manifold $W$, potentially with boundary. For simplicity we assume that $E$ has constant rank $N$. The sections $\mathpzc{V}:=C_0(W,E)$ (vanishing at infinity but not necessarily on the boundary) forms a $C_0(W)$-Hilbert $C^*$-module. Since $C_0(W)$ is commutative, we view $\mathpzc{V}$ as a bimodule over $C_0(W)$. We can form the Fock module
$$\mathpzc{F}_\mathpzc{V}:=\bigoplus_{n\geq 0} \mathpzc{V}^{\otimes n}=\bigoplus_{n\geq 0} C_0(W,E^{\otimes n}),$$
where we interpret $\mathpzc{V}^{\otimes 0}=C_0(W)$ and the direct sum is of Hilbert $C^*$-modules. Any section $\xi\in \mathpzc{V}=C_0(W,E)$ defines a creation operator 
$$T_\xi\eta:=\xi\otimes \eta, \quad \eta\in \mathpzc{F}_\mathpzc{V}.$$
The creation operator $T_\xi$ is bounded and adjointable, with $\|T_\xi\|\leq \|\xi\|$ and $T_\xi^*$ annihilates $\mathpzc{V}^{\otimes 0}$ and acts on a simple tensor $\eta=\eta_1\otimes \cdots\otimes \eta_n$ as 
$$T_\xi^*\eta=\langle \xi,\eta_1\rangle \eta_2 \otimes \cdots\otimes \eta_n.$$
The $C^*$-algebra generated by all creation operators is denoted by $\mathcal{T}_{\mathpzc{V}}\subseteq \End_{C_0(W)}^*(\mathpzc{F}_\mathpzc{V})$ and is called the Cuntz-Pimsner-Toeplitz algebra of $\mathpzc{V}$. Using some arguments with partitions of unity, it is readily seen that $\K_{C_0(W)}(\mathpzc{F}_\mathpzc{V})$ forms an ideal in $\mathcal{T}_{\mathpzc{V}}$ and the Cuntz-Pimsner algebra of $\mathpzc{V}$ is defined as the quotient $\mathcal{O}_{\mathpzc{V}}:=\mathcal{T}_{\mathpzc{V}}/\K_{C_0(W)}(\mathpzc{F}_\mathpzc{V})$. By definition, it fits into an extension 
$$0\to \K_{C_0(W)}(\mathpzc{F}_\mathpzc{V})\to \mathcal{T}_{\mathpzc{V}}\to \mathcal{O}_{\mathpzc{V}}\to 0.$$
This extension was, in much larger generality, proven by Pimsner \cite{Pimsner} to be semisplit and so defines a bivariant $KK$-class $\partial_{\mathpzc{V}}\in KK_1( \mathcal{O}_{\mathpzc{V}},C_0(W))$. 

We consider the operator valued weight $\psi:\mathcal{O}_{\mathpzc{V}}\dashrightarrow C_0(W)$ constructed in \cite{renniesim}, and we write $\Xi_{\mathpzc{V}}$ for the completion of its domain as a $C_0(W)$-Hilbert $C^*$-module. Left multiplication in $\mathcal{O}_{\mathpzc{V}}$ defines a representation 
$$\mathcal{O}_{\mathpzc{V}}\to \End_{C_0(W)}^*(\Xi_{\mathpzc{V}}).$$
By \cite{GMR}, we can decompose 
$$\Xi_{\mathpzc{V}}=\bigoplus_{n\in \Z,\, k\in \N} \Xi_{n,k}^{\mathpzc{V}},$$
where $\Xi_{n,k}^{\mathpzc{V}}=0$ unless $n+k\geq 0$. By the arguments in \cite[Section 5.4]{boundarypap}, we have that 
$$\Xi_{n,k}^{\mathpzc{V}}=C_0(W;\mathcal{H}_{n,k}(E)),$$
where $\mathcal{H}_{n,k}$ is the functor discussed in Remark \ref{functorforhnk}. In particular, we can consider $\Xi_{\mathpzc{V}}$ as the Hilbert $C^*$-module of sections of the (strongly continuous) bundle of Hilbert spaces $\bigoplus_{n,k} \mathcal{H}_{n,k}(E)$. Defining the function $\psi$ as in Equation \ref{definedodepsi}, and let $D$ be the diagonal operator in the decomposition $\Xi_{\mathpzc{V}}=\bigoplus_{n\in \Z,\, k\in \N} \Xi_{n,k}^{\mathpzc{V}}$ with
$$D|_{\mathcal{H}_{n,k}}=\psi(n,k).$$ 
Write $\mathcal{O}_{\mathpzc{V}}^{\rm Lip}$ for the $*$-algebra generated by the image of $\{T_\xi: \xi\in \mathrm{Lip}_c(W,E)\}$ in $\mathcal{O}_{\mathpzc{V}}$. The inclusion $\mathcal{O}_{\mathpzc{V}}^{\rm Lip}\subseteq \mathcal{O}_{\mathpzc{V}}$ is dense. The data $(\Xi_{\mathpzc{V}},D)$ defines an $(\mathcal{O}_{\mathpzc{V}}^{\rm Lip}, C_0(W))$-cycle by \cite{GMR}. 

Following \cite{GMR}, let us construct a $(\mathcal{O}_{\mathpzc{V}}^{\rm Lip},\C)$-chain starting from the $(\mathcal{O}_{\mathpzc{V}}^{\rm Lip}, C_0(W))$-cycle $(\Xi_{\mathpzc{V}},D)$ and a Dirac operator on $W$. For simplicity, we assume that $W$ is a compact manifold with boundary and that $E$ is a vector bundle on $\overline{W}$. We write $\mathcal{O}_{\mathpzc{V}}$ for the Cuntz-Pimsner algebra defined from the module of sections of $E\to \overline{W}$ and $\mathcal{O}_{\mathpzc{V}}^\circ$ for the Cuntz-Pimsner algebra defined from the module of sections of $E|_{W^\circ}\to W^\circ$. We note that $\mathcal{O}_{\mathpzc{V}}^\circ=\mathcal{O}_{\mathpzc{V}}C_0(W^\circ)=C_0(W^\circ)\mathcal{O}_{\mathpzc{V}}$ is an ideal in $\mathcal{O}_{\mathpzc{V}}$ and $\mathcal{O}_{\mathpzc{V}}/\mathcal{O}_{\mathpzc{V}}^\circ$ is the Cuntz-Pimsner algebra defined from the module of sections of $E|_{\partial W}\to \partial W$.

Choose a smooth frame for $E$ and let $p_E\in M_{N_p}(C^\infty(\overline{W}))$ denote the associated projection. We have that $\Xi_{n,k}^{\mathpzc{V}}$ is the finitely generated projective $C(\overline{W})$-module defined from the projection $\mathcal{H}_{n,k}(p_E)$. Let $\nabla_{n,k}$ denote the Grassmann connection on $\mathcal{H}_{n,k}(E)$ defined from $\mathcal{H}_{n,k}(p_E)$. 

Assume that $\slashed{D}$ is a Dirac operator on some Clifford bundle $S\to W$. Write $\slashed{D}_{n,k}$ for the twisted Dirac operator $1\otimes_{\nabla_{n,k}}\slashed{D}$ on $S\otimes \mathcal{H}_{n,k}(E)$.  We write 
$$L^2(W;S\otimes \Xi_{\mathpzc{V}}):=\Xi_{\mathpzc{V}}\otimes_{C(\overline{W})}L^2(W;S)=\bigoplus_{n,k} L^2(W;S\otimes \mathcal{H}_{n,k}(E)).$$
Define the operator $D_E$ densely on $L^2(W;S\otimes \Xi_{\mathpzc{V}})$ as the closure of the operator defined on $\bigoplus_{n,k}^{\rm alg} C^\infty_c(W;S\otimes \mathcal{H}_{n,k}(E))$ as $\bigoplus_{n,k}(\slashed{D}_{n,k}+\psi(n,k)\gamma_S\otimes 1_{\mathcal{H}_{n,k}(E)})$. 

\begin{prop}
The closed operator $D_E$ on the Hilbert space $L^2(W;S\otimes \Xi_{\mathpzc{V}})$ is symmetric and its domain consists of all 
$$(f_{n,k})_{n,k}\in \bigoplus_{n,k} H^1_0(W;S\otimes \mathcal{H}_{n,k}(E))$$
such that 
$$\sum_{n,k}\left(\|f\|_{H^1_0(W;S\otimes \mathcal{H}_{n,k}(E))}^2+|\psi(n,k)|^2\|f\|_{L^2(W;S\otimes \mathcal{H}_{n,k}(E))}^2\right)<\infty.$$
Moreover, $(L^2(W;S\otimes \Xi_{\mathpzc{V}}),D_E)$ forms a symmetric $(\mathcal{O}_{\mathpzc{V}}^{\rm Lip},\C)$-chain restricting to a half-closed $(\mathcal{O}_{\mathpzc{V}}^{\rm Lip}\cap \mathcal{O}_{\mathpzc{V}}^\circ,\C)$-chain. The cycle is closed if and only if $\partial W=\emptyset$.
\end{prop}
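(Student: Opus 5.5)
The plan is to treat $D_E$ as a direct sum of operators, one for each block $(n,k)$, and to reduce everything to facts about a single twisted Dirac operator on a compact manifold with boundary.

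\emph{Single block.} For fixed $(n,k)$ write $D_{n,k}:=\overline{\slashed{D}_{n,k}+\psi(n,k)\gamma_S\otimes 1}$. Since $\gamma_S$ anticommutes with $\slashed{D}_{n,k}$, on $C^\infty_c(W^\circ;S\otimes\mathcal{H}_{n,k}(E))$ we get
$$\|D_{n,k}f\|^2=\|\slashed{D}_{n,k}f\|^2+|\psi(n,k)|^2\|f\|^2,$$
up to a zeroth order term. This term is $\langle [\slashed{D}_{n,k},\psi\gamma_S]f,f\rangle$, and it vanishes when $\gamma_S$ is a grading that anticommutes with Clifford multiplication. If $\gamma_S$ is only an endomorphism, it is instead bounded uniformly in $(n,k)$.

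Elliptic regularity on the compact manifold $\overline{W}$ gives $\|\slashed{D}_{n,k}f\|^2+\|f\|^2\simeq\|f\|^2_{H^1_0}$ on $C^\infty_c(W^\circ)$. The key point is that the constants can be taken uniform in $(n,k)$. To see this, use that $\nabla_{n,k}$ is a Grassmann connection for $\mathcal{H}_{n,k}(p_E)$. Its curvature and connection form are tensorial expressions in $p_E$ and $dp_E$, but they are \emph{not} obviously uniformly bounded as the rank grows; the norm of $d\mathcal{H}_{n,k}(p_E)$ may grow linearly in $n+k$.

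I expect this uniformity to be the main obstacle. The first thing I would try is to obtain Gårding constants $C(1+|n|+k)$, which are still dominated by the $|\psi(n,k)|^2$ term. Indeed $|\psi(n,k)|\geq |n|+k$ whenever $k>0$, and for $k=0$, $n\geq0$ the bundle $\mathcal{H}_{n,0}(E)=E^{\otimes n}$ has connection form of size $O(n)$. Absorbing this via $ab\le \epsilon a^2+b^2/\epsilon$ against $|\psi|^2$ gives the two-sided equivalence of norms claimed in the statement. Consequently $\Dom(D_E)$, the closure of the algebraic direct sum, is exactly the stated weighted $\ell^2$-sum of $H^1_0$ spaces.

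\emph{Symmetry.} $D_E$ is symmetric because each summand is formally self-adjoint on compactly supported sections, and closures and direct sums preserve symmetry. Regularity is automatic since the coefficient algebra is $\C$.

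\emph{Chain properties.} For $a=T_\xi$ with $\xi\in\mathrm{Lip}_c(W,E)$, the argument of \cite{GMR} shows $[D,T_\xi]$ is bounded in the $\psi$-grading, since $T_\xi$ shifts $(n,k)$ by a bounded amount and $\psi$ is coarsely Lipschitz. The remaining part $[\slashed{D}_{n,k},T_\xi]$ is Clifford multiplication by $\nabla\xi$ plus a connection discrepancy. Choosing the Grassmann connections compatibly with tensor products makes this discrepancy bounded by $\|\xi\|_{\rm Lip}$. Hence $\mathcal{O}^{\rm Lip}_{\mathpzc{V}}\subseteq\mathrm{Lip}(D_E)$, and $\mathcal{O}^{\rm Lip}_{\mathpzc{V}}$ preserves $\Dom(D_E)$ because multiplication by Lipschitz sections preserves $H^1_0$.

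\emph{Half-closedness.} For $a\in\mathcal{O}^{\rm Lip}_{\mathpzc{V}}\cap\mathcal{O}^\circ_{\mathpzc{V}}$, approximate $a$ by $\chi a$ with $\chi\in C^\infty_c(W^\circ)$. Interior elliptic regularity then gives $\chi\Dom(D_E^*)\subseteq\Dom(D_E)$. Compactness of $a(1+D_E^*D_E)^{-1}$ follows from two facts: Rellich on each block, and the fact that $|\psi(n,k)|\to\infty$ with finite-dimensional $\mathcal{H}_{n,k}$, so the tail blocks contribute small norm.

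\emph{Closedness.} If $\partial W=\emptyset$, then $H^1_0=H^1$ and each block is self-adjoint, so $D_E$ is self-adjoint by the direct sum structure. Conversely, if $\partial W\neq\emptyset$, already the $(0,0)$ block $\slashed{D}$ with $H^1_0$ domain fails to be self-adjoint by \cite{BaerBall}, so $D_E$ is not self-adjoint.
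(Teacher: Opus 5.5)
Your treatment of the domain is fine. Because $\gamma_S$ is the grading, the cross term vanishes exactly. The Grassmann and curvature errors are $O(1+|\psi(n,k)|)$ and are absorbed by $\psi(n,k)^2$, and the finitely many blocks with $|\psi|\leq R$ are handled by the fixed-block elliptic estimate. Your compactness argument is also fine; in fact $(1+D_E^*D_E)^{-1}$ is itself compact, since $D_E^*D_E\geq\psi(n,k)^2$ on each block. The self-adjointness criterion via the $(0,0)$ block is correct too. The paper gives no separate argument for this proposition (it rests on \cite{GMR} and \cite{boundarypap}), and your blockwise route is the natural one.

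The genuine gap is in the half-closedness step. The property $a\Dom(D_E^*)\subseteq\Dom(D_E)$ is not stable under norm limits. So ``approximate $a$ by $\chi a$'' only proves it for $C^\infty_c(W^\circ)\mathcal{O}^{\rm Lip}_{\mathpzc{V}}$. But $\mathcal{O}^{\rm Lip}_{\mathpzc{V}}\cap\mathcal{O}^\circ_{\mathpzc{V}}$ contains elements such as $T_{t\xi}$, with $t$ a boundary defining function, which are not compactly supported in $W^\circ$. Your argument never uses that $a$ is Lipschitz, and without that the claim fails: $t^{1/2}$ vanishes at $0$ but does not map $H^1[0,1]$ into $H^1_0[0,1]$.

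The missing step goes as follows.
\begin{itemize}
\item Elements of $\mathcal{O}^{\rm Lip}_{\mathpzc{V}}$ are Lipschitz sections of the $C(\overline{W})$-algebra $\mathcal{O}_{\mathpzc{V}}$, which is locally $C(U,O_N)$. So $a\in\mathcal{O}^\circ_{\mathpzc{V}}$ forces $\|a(y)\|\leq L\,t(y)$.
\item Take $\chi_j=\rho(jt)$ with $\rho=0$ on $[0,1]$, $\rho=1$ on $[2,\infty)$ and $|\rho'|\leq 2$. Then $\|c(\mathrm{d}\chi_j)a\|\leq\sup_y|\mathrm{d}\chi_j(y)|\,\|a(y)\|\leq 4L$, uniformly in $j$.
\item For $x\in\Dom(D_E^*)$ one has $ax\in\Dom(D_E^*)$, since $a,a^*\in\mathrm{Lip}(D_E)$.
\item Your interior step gives $\chi_jax\in\Dom(D_E)$, with $D_E(\chi_jax)=\chi_jD_E^*(ax)+c(\mathrm{d}\chi_j)ax$ bounded in $j$.
\item Since $\chi_jax\to ax$, closedness of $D_E$ (pass to a weakly convergent subsequence) gives $ax\in\Dom(D_E)$.
\end{itemize}

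Several points in your chain-property paragraph also need correcting. First, $\psi$ is not coarsely Lipschitz on the index set, since $\psi(n,0)-\psi(n,1)=2n+1$. So bounded shifts of $(n,k)$ do not by themselves bound $[\psi(n,k)\gamma_S,T_\xi]$. What makes the estimate of \cite{GMR} work is the filtration structure. Creation operators map $\Xi_{n,k}$ into $\Xi_{n+1,k}\oplus\Xi_{n+1,k-1}$. The $k=0$ (Fock) part is invariant under creation operators, and annihilation operators leave it only at the vacuum. Hence the only matrix entries of $T_\xi$ between $k=0$ and $k\geq1$ are between $\Xi_{-1,1}$ and $\Xi_{0,0}$.

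Second, you do not get to choose the connections: they are prescribed as the Grassmann connections of $\mathcal{H}_{n,k}(p_E)$. You must check that these agree with the restricted tensor product connections. They do, because $\mathcal{H}_{n,k}(E)$ is natural under unitaries, hence a parallel subbundle of a tensor bundle of $E$ and $\overline{E}$. This is what makes $[1\otimes_\nabla\slashed{D},T_\xi]=c(T_{\nabla\xi})$ uniformly bounded.

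Third, your fallback for a non-grading $\gamma_S$ is incorrect. If $\gamma_S$ does not anticommute with Clifford multiplication, then $\{\slashed{D}_{n,k},\gamma_S\}$ is first order, and $\psi$ times it is not uniformly controlled. That case does not arise here.
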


\subsection{Dirac-Schr\"{o}dinger operators}
\label{diracschex}
A well studied class of operators are Dirac-Schr\"{o}dinger operators. See for instance \cite{bunkerelative, guohochs,HRbook,leschkaad1,kucerovskycallias,vandungencallias}. We introduce this notion in a fair amount of generality, similarly to that in Example \ref{diraconcstarbundleex}. We consider a complete Riemannian manifold with boundary $W$ and a Clifford-$B$-bundle $\mathcal{E}_B\to W$ equipped with a complete $B$-linear Dirac operator $\slashed{D}_\mathcal{E}:C^\infty_c(W^\circ, \mathcal{E}_B)\to C^\infty_c(W^\circ, \mathcal{E}_B)$. We also fix a self-adjoint element $V\in C^\infty(W^\circ, \End_B^*(\mathcal{E}_B))$ commuting with Clifford multiplication. Consider the differential expression
\begin{equation}
\label{dirscalakdn}
\slashed{D}_{\mathcal{E},V}:=
\begin{cases}
\begin{pmatrix}0&\slashed{D}_\mathcal{E}+iV\\
\slashed{D}_\mathcal{E}-iV& 0\end{pmatrix}, \; &\mbox{if $\dim(W)$ is odd}\\
\,&\\
\slashed{D}_\mathcal{E}+\gamma V, \; &\mbox{if $\dim(W)$ is even}.
\end{cases}
\end{equation}
At this stage, we consider $\slashed{D}_{\mathcal{E},V}$ as a $B$-linear operator on $C^\infty_c(W^\circ, \mathcal{E}_B)$, or in the odd-dimensional case $C^\infty_c(W^\circ, \mathcal{E}_B\oplus \mathcal{E}_B)$. We shall be interested in realizations of $\slashed{D}_{\mathcal{E},V}$ in the Hilbert $C^*$-module $L^2(W, \mathcal{E}_B)$ and to study its index theory. These operators are called Dirac-Schrödinger operators, or sometimes Callias type operators. Already at this stage, we note that if $D_{\mathcal{E},V}$ is a closed, regular, symmetric extension of $\slashed{D}_{\mathcal{E},V}$ then $(L^2(W, \mathcal{E}_B),D_{\mathcal{E},V})$ is a symmetric $(C^\infty_c(\overline{W}),B)$-chain. Further properties of $V$ at the boundary of $W$ and at infinity ensures that this chain restricts to a half-closed or even a closed chain. The analysis of Dirac-Schrödinger operators requires some further tools that we review in the coming chapters, and as such we return to study further details below in Example \ref{diracschexbord}.

\subsection{Spin-Dirac operators and positive scalar curvature}
\label{pscdiracex}
Turning to a more specific example, let $W$ be an $n$-dimensional spin manifold with boundary equipped with a Riemannian metric $g$ of positive scalar curvature. The scalar curvature $s_g$ is a scalar function formally defined as the full contraction of the metric tensor, and a direct relation to the geometry can be seen from the Taylor expansion at $r=0$ of the Riemannian volume of a ball centered at $x\in W^\circ$ with small enough radius $r>0$:
$$\mathrm{vol}(B(x,r))=\omega_nr^n\left(1+\frac{s_g(x)}{6(n+2)}r^2+O(r^4)\right).$$
Here $\omega_n$ denotes the volume ball in $\R^n$. 

We consider the spin bundle $S_W\to W$ and let $\slashed{D}_W$ denote the spin-Dirac operator defined from the Riemannian metric. For more details, see \cite{lawsonmich,PSrhoInd,SchPSCsur,PSZmap,stolzicm,xieyu}. The differential operator $\slashed{D}_W$ acts on $C^\infty(W,S_W)$. By Example \ref{diraconcstarbundleex} the minimal extension of $\slashed{D}_W$ fits into a symmetric $(C^\infty_c(\overline{W}),\C)$-chain and into a half-closed $(C^\infty_c(W^\circ),\C)$-chain; these chains are closed if and only if $\partial W=\emptyset$. In the presence of an isometric group action, Subsection \ref{groupactionsex} provides the context for constructing descent and crossed products of these chains. The Schrödinger-Lichnerowicz formula gives that as differential operators we have the equality
$$\slashed{D}_W^2=\nabla^\dagger\nabla+\frac{s_g}{4},$$
where $\nabla$ is the spin connection and $\nabla^\dagger$ its formal adjoint. 

If $s_g>0$ we say that $g$ has positive scalar curvature and if $\inf_{x\in W}s_g(x)>0$ we say that $g$ has uniformly positive scalar curvature. We tacitly assume $s_g>0$ for the remainder of this example. It is here of interest to note some special cases. 
\begin{enumerate}
\item If $W$ is a complete Riemannian manifold, then $\slashed{D}_W$ is essentially self-adjoint and extends to a self-adjoint operator $D_W$ on $L^2(W,S_W)$. If $g$ has uniformly positive scalar curvature the Schr\"{o}dinger-Lichnerowicz formula shows that $D_W$ has a spectral gap at $0$ of size $\inf_{x\in W}s_g(x)$. This occurs for instance if $W$ is the total space of a spin-Galois covering, and $g$ is the equivariant lift of a metric of positive scalar curvature.
\item If $W$ is a complete Riemannian manifold with has uniformly positive scalar curvature, and $\mathcal{E}_B\to W$ is a hermitean flat $B$-bundle (of bounded geometry) the twisted spin-Dirac operator $\slashed{D}_{W,\mathcal{E}}$ is essentially self-adjoint and its closure $D_{W,\mathcal{E}}$ to a regular operator on $L^2(W,S_W\otimes \mathcal{E}_B)$ (see \cite{hankpapsch}). Moreover, the Schrödinger-Lichnerowicz formula gives the equality
$$D_W^2=\nabla^*_\mathcal{E}\nabla_\mathcal{E}+\frac{s_g}{4},$$
where $\nabla_\mathcal{E}$ is the spin connection twisted by $\mathcal{E}$. In particular,  $D_{W,\mathcal{E}}$ as a self-adjoint operator on $L^2(W,S_W\otimes \mathcal{E}_B)$ has a spectral gap at $0$ of size $\inf_{x\in W}s_g(x)$. 
\end{enumerate}

We shall revisit this class of examples below in Section \ref{pscdiracexII} and the follow up work \cite{monographtwo}. The fact that spin-Dirac operators have spectral gaps is an analytic fact allowing us to construct explicit null-bordisms. We note that in the special cases considered above we have not mentioned the case of when $W$ has a boundary; in this case suitable boundary conditions are required and we shall take more care with such realizations below in Section \ref{pscdiracexII} .

\subsection{Lipschitz manifolds}
\label{topolmfsex}

An interesting class of geometric examples comes from Lipschitz manifolds. What makes this class particularly interesting is a theorem of Sullivan \cite{SulLip} stating that any topological manifold in dimension $\neq 4$ admits a Lipschitz structure. As such, there are subtle rigidity questions at hand that we will study using $KK$-bordisms. These questions have previously been studied in for instance \cite{hilsum83, hilsum89,sulltele, teleacta, teleihes, zenobi}.

We shall recall some basic notions to set suitable notations and terminology. Our convention throughout this work is that a manifold is tacitly assumed to be smooth, and weaker notions of manifolds are singled out with a further adjective. Recall that a \emph{topological manifold} $M$ is a Hausdorff space equipped with an atlas $(U_j,\psi_j)_{j\in I}$ consisting of an open cover $(U_j)_{j\in I}$ and homeomorphisms $\psi_j:U_j\to V_j\subseteq \R^{n_j}$, where $V_j$ is open. A topological manifold with boundary is defined from an atlas with boundary: a collection $(U_j,\psi_j)_{j\in I}$ as before but the codomain of the homeomorphism $\psi_j:U_j\to V_j\subseteq \R^{n_j}_+$ is an open subset $V_j\subseteq \R^{n_j}_+:=\{(x_1,\ldots, x_{n_j})\in \R^{n_j}: x_{n_j}\geq 0\}$. For a topological manifold with boundary, the boundary is denoted by $\partial M$ and has an atlas defined from $(U_j\cap \psi_j^{-1}(V_j\cap (\R^{n_j-1}\times \{0\})),\psi_j|)_{j\in I}$.

If $M$ is connected, then $n_j$ is independent of $j$. In general, the dimension is locally constant (with value $n_j$ on $U_j$). We note that it implicitly follows that for each $i,j\in I$, the map
\begin{equation}
\label{psiipsijbetdldald}
\psi_i\circ \psi_j^{-1}:\psi_j(U_i\cap U_j)\to  \psi_i(U_i\cap U_j),
\end{equation}
is a continuous map of open subsets of $\R^{n_j}$ (or $\R^{n_j}_+$ if the boundary is non-empty). Of course, if each of the maps in Equation \eqref{psiipsijbetdldald} is smooth we call $(U_j,\psi_j)_{j\in I}$ a smooth atlas and we call $M$ a manifold. 

If $(U_j,\psi_j)_{j\in I}$ is an atlas such that the maps in Equation \eqref{psiipsijbetdldald} are Lipschitz continuous, we call $(U_j,\psi_j)_{j\in I}$ a Lipschitz atlas and we say that $M$ has a Lipschitz structure. A topological manifold equipped with a Lipschitz structure is called a \emph{Lipschitz manifold}. A Lipschitz manifold $M$ with Lipschitz atlas $(U_j,\psi_j)_{j\in I}$ satisfying that the Jacobian determinants of the Lipschitz mappings appearing in \eqref{psiipsijbetdldald} are a.e. positive, we say that $M$ is an \emph{oriented Lipschitz manifold}. For more details, see for instance \cite{hilsum83,teleihes}. 

Similarly, a Lipschitz manifold with boundary is defined and the boundary of a Lipschitz manifold with boundary is again a Lipschitz manifold. We note that if $W$ is a Lipschitz manifold with boundary, then its double $2W:=W\cup_{\partial W}W$ is a Lipschitz manifold. If $W$ is an oriented Lipschitz manifold with boundary, we orient $2W$ as $W\cup_{\partial W}(-W)$. 

Due to its central importance, we state Sullivan's theorem on Lipschitz structures on topological manifolds from \cite{SulLip}. The statement in \cite{SulLip} is for topological manifolds but it follows from the doubling construction that it extends to topological manifolds with boundary.

\begin{theorem}
\label{lnknad}
Let $M$ be a topological manifold with boundary and no connected component of dimension $4$. Then $M$ can be equipped with the structure of a Lipschitz manifold with boundary and it is unique up to topological isotopy.
\end{theorem}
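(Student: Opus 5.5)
The plan is to reduce to Sullivan's theorem for manifolds without boundary, as in \cite{SulLip}, by a doubling argument. Let $M$ be a topological manifold with boundary and no component of dimension $4$. First, using Brown's collar theorem, fix a topological collar $c:\partial M\times[0,1)\to M$. Then form the topological double $2M:=M\cup_{\partial M}M$. This is a topological manifold without boundary whose components all have the same dimensions as the components of $M$, so none has dimension $4$. Sullivan's theorem therefore gives a Lipschitz structure on $2M$, unique up to topological isotopy.

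Next I would recover a Lipschitz structure on $M$ from the one on $2M$. The double carries the flip involution $\tau$, which fixes $\partial M$, but $\tau$ need not be Lipschitz for the chosen structure. So I would not use $\tau$ directly. Instead I would use that $\partial M\subseteq 2M$ is a locally flat codimension one submanifold, since it is bicollared by the two copies of $c$. The key step, and the main obstacle, is to show that this bicollared hypersurface can be isotoped to a Lipschitz submanifold with a Lipschitz bicollar. I would first apply Sullivan's theorem separately to $\partial M$ (dimension $\neq 4$ is fine, but dimension $3$ also occurs for $\dim M=4$, which is excluded). This gives a Lipschitz structure on $\partial M\times(-1,1)$. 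By uniqueness up to isotopy on the open set given by the bicollar, the restriction of the structure of $2M$ to that neighbourhood is isotopic to this product structure. After applying the isotopy, $\partial M$ becomes a Lipschitz hypersurface with a product neighbourhood. Cutting along it gives a Lipschitz atlas on $M$ with boundary charts modelled on $\R^{n-1}\times[0,\infty)$.

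For uniqueness, suppose $M$ carries two Lipschitz structures with boundary. Doubling each of them along $\partial M$ gives two Lipschitz structures on $2M$, since Lipschitz charts glue across a Lipschitz boundary via reflection in the last coordinate. By Sullivan's uniqueness these are related by a homeomorphism of $2M$ isotopic to the identity. I would then arrange, again by the relative form of the uniqueness statement on the collar, that this homeomorphism preserves $\partial M$ and each half. Restricting it to one half then gives the required isotopy on $M$. The relative (collar-respecting) uniqueness is the delicate input in both halves of the argument. If it cannot be extracted cleanly from \cite{SulLip}, I would fall back on citing Sullivan's relative version, which holds for manifolds with a fixed structure near a closed subset.
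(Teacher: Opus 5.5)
Your doubling strategy is the one the paper has in mind; it only asserts that the boundary case ``follows from the doubling construction''. However, the step where you apply Sullivan's theorem to $\partial M$ fails, and not for the reason you mention. Dimension $3$ is harmless for Sullivan. The problem is $\dim\partial M=4$, which occurs exactly when $\dim M=5$, and the hypothesis allows that case.

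This cannot be patched, because the statement itself is false there. As the paper notes, the boundary of a Lipschitz manifold with boundary inherits a Lipschitz atlas: by invariance of domain, transition maps of half-space charts preserve $\R^{n-1}\times\{0\}$. By Donaldson--Sullivan there are closed topological $4$-manifolds $N$ admitting no Lipschitz (indeed no quasiconformal) structure. Examples are simply connected ones with definite non-diagonalizable intersection form, such as Freedman's $E_8$-manifold. Take $M=N\times[0,1)$:
\begin{itemize}
\item $M$ has no component of dimension $4$;
\item its double $N\times(-1,1)$ is Lipschitz by Sullivan;
\item yet $M$ admits no Lipschitz structure with boundary.
\end{itemize}
In particular, no Lipschitz structure on $2M$ can make an isotope of $N\times\{0\}$ a Lipschitz hypersurface, which is exactly what your key step requires. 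Uniqueness fails as well. Take for $N$ a topological $4$-manifold carrying two Lipschitz structures that are not quasiconformally equivalent (also Donaldson--Sullivan). Any Lipschitz homeomorphism between the two product structures on $N\times[0,1)$ would restrict to one on the boundary, which is impossible.

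So the doubling argument needs the extra hypothesis that no component of $\partial M$ has dimension $4$ (equivalently, no $5$-dimensional component of $M$ has nonempty boundary). Under that hypothesis your outline is a reasonable way to make the paper's remark precise. The input you flag is genuinely needed: an isotopy defined only on the bicollar $U\cong\partial M\times(-1,1)$ cannot be transported to $2M$ unless it is the identity near the ends of $U$. The relative form of Sullivan's theorem supplies exactly this: structures extend from a neighbourhood of a closed set, and uniqueness holds rel that set (see Tukia--V\"ais\"al\"a). With that relative version available, the double is not even needed for existence. You can put the product structure on a collar $\partial M\times[0,1)$ and extend it over the interior.
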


Let us define the relevant geometric structures on Lipschitz manifolds. Since the transition functions in a Lipschitz atlas are only required to be Lipschitz, it is in general not the case that the tangent bundle exists as a vector bundle. However, the tangent bundle and its exterior powers exist as measurable vector bundles so their spaces of $L^2$-sections are well defined. Therefore some more care is needed in construction. We shall only recall the salient points of this construction, and refer the reader to \cite{hilsum83,teleihes} for further details.

Fix a Lipschitz manifold with boundary $W$ and an atlas $(U_j,\psi_j)_{j\in I}$. We tacitly assume each $U_j$ to be pre-compact in $W$. We write $U_j^\circ:=U_j\cap \psi_j^{-1}(V_j\cap (\R^{n_j-1}\times (0,\infty)))$ so that $(U_j^\circ,\psi_j|)_{j\in I}$ is an atlas for the Lipschitz manifold $W^\circ:=W\setminus \partial W$.

An $L^2_{\rm loc}$-form of degree $k$ on $W$ is a collection $\omega=(\omega_j)_{j\in I}$ where $\omega_j\in L^2(V_j,\wedge^k\R^{n_j})$ satisfies the compatibility condition that 
$$\omega_j=(\psi_i\circ \psi_j^{-1})^*\omega_i, \quad\mbox{on $\psi_j(U_i\cap U_j)$ for any $i,j\in I$}.$$ 
Write $L^2_{\rm loc}(W;\wedge^k)$ for the vector space of $L^2_{\rm loc}$-forms of degree $k$. We say that an $L^2_{\rm loc}$-form $\omega=(\omega_j)_{j\in I}$ of degree $k$ is in the domain of the exterior derivative if $\mathrm{d}\omega_j\in L^2(V_j,\wedge^{k+1}\R^{n_j})$, where the exterior derivative is interpreted in a distributional sense. If $\omega=(\omega_j)_{j\in I}$ is in the domain of the exterior derivative, we define 
$$\mathrm{d}\omega:=(\mathrm{d}\omega_j)_{j\in I},$$
which is readily seen to be an $L^2_{\rm loc}$-form of degree $k+1$. Since $\mathrm{d}^2=0$ for forms on $\R^n$, for any $L^2_{\rm loc}$-form $\omega$ in the domain of the exterior derivative, $\mathrm{d}\omega$ is also in the domain of the exterior derivative and $\mathrm{d}^2\omega=0$. 

Analogously to the case on a manifold, we can define the support of an $L^2_{\rm loc}$-form. If $\omega$ is a compactly supported $L^2_{\rm loc}$-form we can define $\int_W \omega:=\sum_j \int_{V_j} \chi_j\omega_j$, where $(\chi_j)_j$ is a collection of functions in $\Lip_c(V_j)$ such that $\sum_{j} \psi_j^*\chi_j=1$ on $W$. For a compactly supported $\omega$ in the domain of the exterior derivative, it follows from Stokes' theorem that $\int_W \mathrm{d}\omega=\int_{\partial W}\omega$. 

A Riemannian metric $g$ on a Lipschitz manifold is a collection $g=(g_j)_{j\in I}$ where $g_j$ is a metric on $V_j$ and the collection satisfies the compatibility condition that 
$$g_j=(\psi_i\circ \psi_j^{-1})^*g_i, \quad\mbox{on $\psi_j(U_i\cap U_j)$ for any $i,j\in I$}.$$ 
Given a Riemannian metric, we can define the Hodge star of an $L^2_{\rm loc}$-form. For simplicity, we assume that our Lipschitz manifold has constant dimension $n$. If $\omega$ is of degree $k$, its Hodge star is the $L^2_{\rm loc}$-form $*_g\omega$ of degree $n-k$ defined by
$$*_g\omega=(*_{g_j}\omega_j)_{j\in I},$$
where $*_{g_j}$ is the Hodge star in $V_j$. The compatibility condition on $\omega$ and $g$ ensures that  $*_g\omega$ is an $L^2_{\rm loc}$-form of degree $n-k$. We define 
$$L^2(W;\wedge^k)=\left\{\omega\in L^2_{\rm loc}(W;\wedge^k): \int_W\omega\wedge *_g\omega:=\sum_j \int_{V_j} \chi_j\omega_j\wedge *_{g_j}\omega_j<\infty\right\}.$$
We also write $L^2(W;\wedge^*):=\bigoplus_k L^2(W;\wedge^k)$. The operator $*_g$ defines a unitary isomorphism $L^2(W;\wedge^k)\to L^2(W;\wedge^{n-k})$ and $*_g^2=(-1)^{k(n-k)}$. We define the symmetry $\tau_g$ on $L^2(W;\wedge^*)$ by setting 
$$\tau_g\omega:=i^{(n-k)(k-1)+n/2}*_g\omega, \quad \omega\in L^2(W;\wedge^k).$$
We write 
\begin{equation}
\label{decomwithhodegrestar}
L^2(W;\wedge^*)=L^2(W;\wedge_+)\oplus L^2(W;\wedge_-),
\end{equation}
where $L^2(W;\wedge_\pm):=\ker(\tau_g\mp1)$.

We have the closed densely defined operator $\mathrm{d}:L^2(W;\wedge^k)\dashrightarrow L^2(W;\wedge^{k+1})$ defined by restricting the exterior derivative to 
$$\Dom(\mathrm{d}):=\{\omega\in L^2(W;\wedge^*): \mathrm{d}\omega\in L^2(W;\wedge^{*+1})\}.$$
The formal adjoint of the exterior derivative can be defined from the densely defined $\delta:L^2(W;\wedge^k)\dashrightarrow L^2(W;\wedge^{k-1})$ which acts as 
$$\delta\omega:=(-1)^{n(k+1)+1}*_g\mathrm{d}*_g\omega, \quad\mbox{for  $\omega\in \Dom(\delta):=*_g\Dom(\mathrm{d})$,}$$
of degree $k$. We define the Hodge-de Rham operator 
$$\slashed{D}_{\rm HdR}:=\mathrm{d}+\delta,$$
as a formal differential expression acting on $\Dom(\delta)\cap\Dom(\mathrm{d})$. We note that $\slashed{D}_{\rm HdR}$ and its domain depends on the Riemannian metric. Note that $\tau_g$ preserves the domain of $\slashed{D}_{\rm HdR}$ and 
$$\tau_g\slashed{D}_{\rm HdR}=(-1)^{n+1}\slashed{D}_{\rm HdR}\tau_g \quad \mbox{thereon}.$$
We further note that the compactly supported Lipschitz functions $\Lip_c(W^\circ)$ preserve $\mathrm{Dom}(\slashed{D}_{\rm HdR})$ and elements of $\Lip_c(W^\circ)$ have bounded commutators with $\slashed{D}_{\rm HdR}$. Moreover, $\Lip_c(W^\circ)\mathrm{Dom}(\slashed{D}_{\rm HdR})$ is invariant under $\tau_g$.

\begin{define}
Let $W$ be an oriented Riemannian Lipschitz manifold of dimension $n$ with Riemannian metric $g$. 

If $n$ is even, we define $D_{\rm sign}$ as the closure in $L^2(W,\wedge^*)$ of the restriction of $\slashed{D}_{\rm HdR}$ to $\Lip_c(W^\circ)\mathrm{Dom}(\slashed{D}_{\rm HdR})$. We consider $D_{\rm sign}$ as an odd operator in the grading defined from $\tau_g$.

If $n$ is odd, we define $D_{\rm sign}$ as the closure in $L^2(W,\wedge_+)$ of the restriction of $\slashed{D}_{\rm HdR}$ to $\Lip_c(W^\circ)\mathrm{Dom}(\slashed{D}_{\rm HdR})\cap L^2(W,\wedge_+)$, where $L^2(W,\wedge_+)$ is as in Equation \eqref{decomwithhodegrestar}.
\end{define}

We shall need some further terminology. We say that a Riemannian metric on a Lipschitz manifold $W$ with boundary is complete if the distance function associated with the Riemannian metric makes $W$ into a complete metric space. 

\begin{prop}
\label{signatureforlip}
Let $W$ be an oriented Lipschitz manifold with boundary equipped with a complete Riemannian metric. 
\begin{enumerate}
\item $D_{\rm sign}$ is symmetric. Moreover, $(1+D_{\rm sign}^*D_{\rm sign})^{-1}$ is a $C_0(\overline{W})$-locally compact operator on $L^2(W,\wedge^*W)$ (or $L^2(W,\wedge_+)$ if the dimension is odd). The operator $D_{\rm sign}$ is self-adjoint if $\partial W=\emptyset$.
\item Any $a\in \Lip_0(\overline{W})$ preserves $\Dom(D_{\rm sign})$ and $[D_{\rm sign},a]$ extends to a bounded operator on $L^2(W,\wedge^*W)$ (or $L^2(W,\wedge_+)$ if the dimension is odd). 
\item Any $j\in \Lip_0(W^\circ)$ satisfies $j\Dom((D_{\rm sign})^*)\subseteq\Dom(D_{\rm sign})$. 
\end{enumerate}
In particular, $(L^2(W,\wedge^*W),D_{\rm sign})$ (or $(L^2(W,\wedge_+),,D_{\rm sign})$ if the dimension is odd) defines a symmetric $(\Lip_0(\overline{W}),\C)$-chain that restricts to a half-closed $(\Lip_0(W^\circ),\C)$-chain. The chain $(L^2(W,\wedge^*W),D_{\rm sign})$ (or $(L^2(W,\wedge_+),,D_{\rm sign})$ if the dimension is odd) is closed if $\partial W=\emptyset$.
\end{prop}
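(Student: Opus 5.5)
The plan is to reduce everything to local statements in Lipschitz charts, where the Hodge–de Rham operator of a bounded measurable Riemannian metric behaves like a first order elliptic operator with $L^\infty$ coefficients. This is the approach of Teleman and Hilsum \cite{teleihes,hilsum83}, and the argument in Example \ref{diraconcstarbundleex} (compare \cite[Lemma 1.24]{DGM}) then goes through almost verbatim.

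\textbf{Symmetry.} The first step is to check that $D_{\rm sign}$ is symmetric. On the core $\Lip_c(W^\circ)\Dom(\slashed{D}_{\rm HdR})$, the forms are compactly supported in $W^\circ$. Stokes' theorem for compactly supported $L^2_{\rm loc}$-forms in the domain of $\mathrm{d}$, together with $\delta=\pm *_g\mathrm{d}*_g$, gives $\langle \mathrm{d}\omega,\eta\rangle=\langle \omega,\delta\eta\rangle$ with no boundary term. Hence $\slashed{D}_{\rm HdR}$ is formally self-adjoint there, and so is its closure. In odd dimension one additionally uses that $\tau_g$ commutes with $\slashed{D}_{\rm HdR}$, so the restriction to $L^2(W,\wedge_+)$ makes sense.

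\textbf{Items 2 and 3.} For $a\in\Lip_0(\overline{W})$ the Leibniz rule holds for $\mathrm{d}$ applied to $a\omega$, giving $[\mathrm{d},a]=\mathrm{d}a\wedge$. Correspondingly $[\delta,a]$ is $-\iota_{\nabla a}$ (up to sign). Since $\mathrm{d}a\in L^\infty$ with norm controlled by the Lipschitz constant and the metric is bounded in charts, $a$ preserves the core and $[D_{\rm sign},a]$ is bounded. Passing to the closure then yields item 2.

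For item 3, given $j\in\Lip_0(W^\circ)$ we may assume, by a density and partition of unity argument using the bounded commutators, that $j$ has compact support in $W^\circ$. Take $\omega\in\Dom(D_{\rm sign}^*)$. Then $\slashed{D}_{\rm HdR}\omega\in L^2$ distributionally. The commutator formula gives $j\omega\in\Dom(\mathrm{d})\cap\Dom(\delta)$, and $j\omega$ is compactly supported in $W^\circ$. Choosing a cutoff $\chi\in\Lip_c(W^\circ)$ with $\chi j=j$, we get $j\omega=\chi(j\omega)$, which lies in the core.

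\textbf{Local compactness and self-adjointness.} Item 1's compactness claim, that $a(1+D_{\rm sign}^*D_{\rm sign})^{-1}$ is compact for $a\in C_0(\overline W)$, is the main obstacle. This is where a Rellich-type statement is needed, and that is not automatic for $L^\infty$-coefficient operators. The plan is to reduce to $a$ Lipschitz with compact support, where this means showing $a\Dom(D_{\rm sign}^*)\hookrightarrow L^2$ is compact.

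Working in a chart, one would use the Gaffney-type estimate $\|\mathrm{d}\omega\|+\|\delta\omega\|$ for forms supported in a compact set. The target is a compact embedding into $L^2$, in the spirit of the compactness results for the de Rham complex in Lipschitz domains, namely Picard's compactness theorem and Teleman's local analysis. I would first try citing \cite{teleihes,hilsum83}, where exactly this is proven via the $L^2$-Hodge theory on Lipschitz manifolds. If that fails, the fallback is a Weck–Picard style argument: bi-Lipschitz change to the Euclidean metric and the Gaffney inequality with bounded measurable coefficients.

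Once item 1's compactness is in place, self-adjointness when $\partial W=\emptyset$ follows from completeness. Take cutoffs $\chi_R\in\Lip_c(W)$ with $\chi_R=1$ on $B(x_0,R)$ and Lipschitz constant at most $1/R$. For $\omega\in\Dom(D_{\rm sign}^*)$, item 3 (now with $W^\circ=W$) gives $\chi_R\omega\in\Dom(D_{\rm sign})$. Moreover $D_{\rm sign}(\chi_R\omega)\to D_{\rm sign}^*\omega$, since the commutator terms are $O(1/R)$. This is the standard Chernoff/Gaffney argument and shows $D_{\rm sign}=D_{\rm sign}^*$. The final conclusions about chains then follow directly from the definitions in Definition \ref{UnbKKcycDef}.
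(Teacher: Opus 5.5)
Your symmetry argument and item 2 follow the paper. The compactness step, however, is aimed at a false statement. You propose to show that $a\Dom(D_{\rm sign}^*)\hookrightarrow L^2$ is compact for compactly supported Lipschitz $a$. For $a\in C_0(\overline{W})$ these functions must be allowed to be nonzero on $\partial W$, and then the claim fails. Already for a compact smooth $W$ and $a=1$, $\ker D_{\rm sign}^*$ is infinite dimensional: on the unit disc it contains $f+h\,\mathrm{d}x\wedge\mathrm{d}y$ for every polynomial $h+if$ in $z$, and on this kernel the graph norm is just the $L^2$-norm.

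For the same reason, elements of $\Dom(D_{\rm sign}^*)$ near the boundary need not have $\mathrm{d}\omega,\delta\omega\in L^2$. So neither your Gaffney-type estimate nor Picard's theorem applies there; Picard's theorem needs a boundary condition. What is actually needed is compactness of $a:\Dom(D_{\rm sign})\to L^2$, because $(1+D_{\rm sign}^*D_{\rm sign})^{-1}$ maps into $\Dom(D_{\rm sign}^*D_{\rm sign})\subseteq\Dom(D_{\rm sign})$. With the correct (minimal) domain, the paper removes the boundary entirely. Extension by zero maps the core $\Lip_c(W^\circ)\Dom(\slashed{D}_{\rm HdR})$ isometrically in graph norm into the domain of the corresponding operator on the double $2W$, which is a Lipschitz manifold without boundary. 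There Teleman's Rellich theorem \cite[Theorem 7.1]{teleihes} gives compactness, with no boundary charts or Weck--Picard theory needed. Your chart-by-chart fallback could also be made to work once the domain is corrected, but it would require the boundary version of the selection theorem.

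A smaller point concerns item 3. The commutator formula only gives $(\mathrm{d}+\delta)(j\omega)\in L^2$ weakly. It does not give $\mathrm{d}(j\omega)\in L^2$ and $\delta(j\omega)\in L^2$ separately, which is what membership in $\Dom(\mathrm{d})\cap\Dom(\delta)$ means. Without elliptic regularity this needs an argument. For example, in a chart $\mathrm{d}$ does not depend on $g$, so its minimal and maximal extensions agree by mollification. For the Hilbert complex $(\mathrm{d},\mathrm{d}^*=\delta_g)$, the operator $\mathrm{d}+\mathrm{d}^*$ on $\Dom(\mathrm{d})\cap\Dom(\mathrm{d}^*)$ is then self-adjoint, and by locality this forces compactly supported elements of $\Dom(D_{\rm sign}^*)$ into $\Dom(\mathrm{d})\cap\Dom(\delta)$.

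The paper also passes over this as a short computation, but your route depends on it more. The paper cites Hilsum \cite{hilsum89} for self-adjointness when $\partial W=\emptyset$, whereas your cutoff (Gaffney--Chernoff) argument derives it entirely from item 3. Once this local step is supplied, your derivation is a legitimate self-contained replacement for that citation.
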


\begin{proof}
It follows from the construction that $\Lip_c(W^\circ)\mathrm{Dom}(D_{\rm sign})$ is a core for $D_{\rm sign}$. Stokes theorem now implies that that $D_{\rm sign}$ is symmetric on a core, and so it is symmetric. It was proven in \cite{hilsum89} that if a Riemannian metric is complete and $\partial W=\emptyset$, then $D_{\rm sign}$ is self-adjoint. To complete the proof of item 1), it suffices to prove that $\Lip_c(W)\mathrm{Dom}(D_{\rm sign})\hookrightarrow L^2(W,\wedge^*)$ (or $L^2(W,\wedge_+)$ if the dimension is odd) is compact. By going to the double, we can assume that $W$ has empty boundary in which case compactness of $\Lip_c(W)\mathrm{Dom}(D_{\rm sign})\hookrightarrow L^2(W,\wedge^*)$ follows from the Rellich type result of \cite[Theorem 7.1]{teleihes}.

Item 2) follows from that $\Lip_0(\overline{W})$ preserves the core $\Lip_c(W^\circ)\mathrm{Dom}(D_{\rm sign})$ and on there has bounded commutators. Item 3) follows from a short computation showing that if $\xi\in \Dom((D_{\rm sign})^*)$ is compactly supported in $W^\circ$, then $\xi\in \mathrm{Dom}(\slashed{D}_{\rm HdR})$ (or $\mathrm{Dom}(\slashed{D}_{\rm HdR})\cap L^2(W,\wedge_+)$ so it is in the core $\Lip_c(W^\circ)\mathrm{Dom}(D_{\rm sign})$ for $D_{\rm sign}$.
\end{proof}

Now we turn to twists of the signature operator on Lipschitz manifolds by hermitean $B$-bundles, where $B$ is a unital $C^*$-algebra. As a starting point, we are using the constructions of Hilsum \cite{hilsum00}. A  $B$-bundle $\mathcal{E}_B\to W$ is a locally trivial bundle of finitely generated projective $B$-modules. By standard results, $\mathcal{E}_B$ has a Lipschitz structure unique up to isomorphism. A hermitean $B$-bundle is a $B$-bundle equipped with a fibrewise $B$-valued inner product such that if $U\subseteq W$ is any open subset then 
$$\langle \xi_1,\xi_2\rangle_{\mathcal{E}}\in \Lip_0(U),\quad \mbox{for all $\xi_1,\xi_2\in \Lip_0(U,\mathcal{E}_B)$}.$$
A Lipschitz connection on $\mathcal{E}_B$ is a $B$-linear map $\nabla:\Lip(W,\mathcal{E}_B)\to L^\infty(W; T^*W\otimes \mathcal{E}_B)$ such that in any local trivialization $\mathcal{E}_B|_U\cong U\times pB^N$ (for some $N$ and some $p\in \Lip(U,M_N(B))$) we have that $\nabla|_U=\mathrm{d}+\theta_U$ and its curvarture 
$$\nabla|_U^2=\mathrm{d}\theta|_U+\theta|_U\wedge \theta|_U$$ 
is a well defined bounded $2$-form. If $\nabla$ is a Lipschitz connection, then its curvature $\nabla^2\in L^\infty(W,\wedge^2 T^*W\otimes \End_B^*(\mathcal{E}_B))$ is a well defined bounded $2$-form valued endomorphism of $\mathcal{E}_B$. We say that a Lipschitz connection on $\mathcal{E}_B$ is hermitean if 
$$\mathrm{d}\langle \xi_1,\xi_2\rangle_{\mathcal{E}}=\langle \nabla\xi_1,\xi_2\rangle_{\mathcal{E}}+\langle \xi_1,\nabla\xi_2\rangle_{\mathcal{E}},$$
for any two Lipschitz sections $\xi_1$ and $\xi_2$.

The next proposition is an immediate consequence of Proposition \ref{signatureforlip} and the results of \cite[Section 1.4]{DGM}.

\begin{prop}
\label{technicalpropforlipschitz}
Let $W$ be an oriented Lipschitz manifold with boundary which is either compact or equipped with a complete metric. Assume that $\mathcal{E}_B\to W$ is a hermitean $B$-bundle with a hermitean Lipschitz connection $\nabla_\mathcal{E}$. Write 
$$D_{{\rm sign},\mathcal{E}}:=D_{\rm sign}\otimes_{\nabla_\mathcal{E}}1.$$
\begin{enumerate}
\item $D_{{\rm sign},\mathcal{E}}$ is symmetric. Moreover, $(1+D_{{\rm sign},\mathcal{E}}^*D_{{\rm sign},\mathcal{E}})^{-1}$ is a $C_0(\overline{W})$-locally $B$-compact operator on $L^2(W,\wedge^*W\otimes \mathcal{E}_B)$. The operator $D_{{\rm sign},\mathcal{E}}$ is self-adjoint if $\partial W=\emptyset$.
\item Any $a\in \Lip_0(\overline{W})$ preserves $\Dom(D_{{\rm sign},\mathcal{E}})$ and $[D_{{\rm sign},\mathcal{E}},a]$ extends to a $B$-linear adjointable operator on $L^2(W,\wedge^*W\otimes \mathcal{E}_B)$. 
\item Any $j\in \Lip_0(W^\circ)$ satisfies $j\Dom(D_{{\rm sign},\mathcal{E}}^*)\subseteq\Dom(D_{{\rm sign},\mathcal{E}})$. 
\end{enumerate}
In particular, $(L^2(W,\wedge^*W\otimes \mathcal{E}_B),D_{{\rm sign},\mathcal{E}})$ defines a symmetric $(\Lip_0(\overline{W}),B)$-chain that restricts to a half-closed $(\Lip_0(W^\circ),B)$-chain. The chain $(L^2(W,\wedge^*W\otimes \mathcal{E}_B),D_{{\rm sign},\mathcal{E}})$ is closed if $\partial W=\emptyset$.
\end{prop}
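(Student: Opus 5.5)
The plan is to reduce to Proposition \ref{signatureforlip} by localizing in trivializations of $\mathcal{E}_B$, and to treat the twisted operator as a bounded perturbation of an untwisted one, following the scheme of \cite[Section 1.4]{DGM} for Dirac operators on $C^*$-bundles. First I will make $D_{{\rm sign},\mathcal{E}}$ concrete. Since $W$ is compact or complete and $\mathcal{E}_B$ is locally trivial, I choose a Lipschitz partition of unity $(\chi_j)$ subordinate to trivializing charts $\mathcal{E}_B|_{U_j}\cong U_j\times p_jB^{N_j}$ with $p_j\in\Lip(U_j,M_{N_j}(B))$. On $U_j$ the twisted operator has the form $p_j(D_{\rm sign}\otimes 1_{B^{N_j}})p_j+c(\theta_j)$, where $c(\theta_j)$ is Clifford/exterior multiplication by the bounded connection form. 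The operator $D_{\rm sign}\otimes 1_{B^{N_j}}$ on $L^2(U_j,\wedge^*)\otimes B^{N_j}$ is the exterior tensor product of the chain from Proposition \ref{signatureforlip} with $(B^{N_j},0)$. Hence it is regular and symmetric, with the domain, commutator and local compactness properties inherited from item 1)--3) there, where compactness becomes $B$-compactness because we tensor with a finitely generated projective module. Compressing by the Lipschitz projection $p_j$ changes the operator only by the bounded term $[D_{\rm sign},p_j]$, and $c(\theta_j)$ is bounded. I will then define $D_{{\rm sign},\mathcal{E}}$ as the closure on the core $\Lip_c(W^\circ)$-multiples of $\sum_j\chi_j$-localized sections of the form $\Dom(D_{\rm sign})\otimes_{\Lip}\Lip(W,\mathcal{E}_B)$.

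Second, I verify the properties item by item. For symmetry, I check on the core that the hermitean property of $\nabla_\mathcal{E}$ together with Stokes' theorem (as in the proof of Proposition \ref{signatureforlip}) gives $\langle D\xi,\eta\rangle=\langle \xi,D\eta\rangle$ in $B$. For item 2), I use the Leibniz rule: $[D_{{\rm sign},\mathcal{E}},a]=c(\mathrm{d}a)\otimes 1$, which is bounded and adjointable for $a\in\Lip_0(\overline{W})$, and $a$ preserves the core. For item 3), I localize with $\chi_j$: if $\xi\in\Dom(D_{{\rm sign},\mathcal{E}}^*)$, then $\chi_j j\xi$ lies in the maximal domain of the untwisted operator on a trivializing chart up to bounded terms. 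Proposition \ref{signatureforlip}(3), tensored with $p_jB^{N_j}$, then puts it in the minimal domain. Finally I sum over the finitely many $j$ meeting the support of $j$.

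Third, for local compactness in item 1), I show that $a(1+D^*D)^{-1}$ is $B$-compact for $a\in C_0(\overline W)$. By density and finite partitions it suffices to treat $a$ supported in one chart. There the inclusion $\Dom\hookrightarrow L^2$ factors through $p_j(\Dom(D_{\rm sign})\otimes B^{N_j})$, whose inclusion is $\mathcal{K}\otimes M_{N_j}(B)$-compact by the Rellich result \cite[Theorem 7.1]{teleihes} used in Proposition \ref{signatureforlip}. Self-adjointness when $\partial W=\emptyset$ needs more care, because regularity over $B$ is not automatic.

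I expect the \textbf{main obstacle} to be regularity together with self-adjointness in the Hilbert $C^*$-module setting. For self-adjointness I would use completeness: take Lipschitz cutoffs $\phi_n\to 1$ with $\|\mathrm{d}\phi_n\|_\infty\to 0$. Then $\phi_n\xi\to\xi$ for $\xi\in\Dom(D^*)$, and by item 3) $\phi_n\xi\in\Dom(D)$, with $D\phi_n\xi=\phi_nD^*\xi+c(\mathrm{d}\phi_n)\xi$ converging; so $D=D^*$. For regularity I would invoke the local-global principle of Kaad--Lesch \cite{leschkaad2}: it suffices that every localization $D\otimes_\omega 1$ in a state $\omega$ on $B$ is self-adjoint (respectively that $D^*\otimes_\omega 1=(D\otimes_\omega1)^*$ in the symmetric case). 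Each localization is a twisted signature operator on a Hilbert bundle over a Lipschitz manifold, for which the same cutoff argument applies. This is precisely the mechanism of \cite[Section 1.4]{DGM}, which is why I cite it for the final assembly.
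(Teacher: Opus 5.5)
Your plan follows essentially the same route as the paper, which obtains the statement as an immediate consequence of Proposition \ref{signatureforlip} together with the twisting results of \cite[Section 1.4]{DGM}: you view $D_{{\rm sign},\mathcal{E}}$ as a Lipschitz compression of $D_{\rm sign}\otimes 1_{B^N}$ plus a bounded connection term, and transfer items 1)--3) from the scalar case.

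The one step to tighten is regularity when $\partial W\neq\emptyset$. There the cutoff argument cannot give $(D\otimes_\omega 1)^*=D^*\otimes_\omega 1$, since cutoffs in $\Lip_0(W^\circ)$ cannot tend to $1$ with vanishing gradient near the boundary. You should instead obtain this identity chart by chart from $(D_{\rm sign}\otimes 1_{H_\omega})^*=D_{\rm sign}^*\otimes 1_{H_\omega}$. Alternatively, when $\mathcal{E}_B$ is cut out by a global Lipschitz projection $p$, note directly that $pTp\oplus(1-p)T(1-p)$ is a bounded perturbation of the regular operator $T=D_{\rm sign}\otimes 1_{B^N}$.
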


The following result of Hilsum-Skandalis \cite{hilsumskand} will later be of importance for constructing relative cycles from homotopy structures on topological manifolds (in dimension $\neq 4$). The results of \cite{hilsumskand} are generally formulated for manifolds, but as is described on \cite[page 95]{hilsumskand} the results extend to Lipschitz manifolds.

\begin{theorem}
\label{hilsumskandforbla}
Let $M$ and $N$ be two compact oriented Lipschitz manifolds of dimension $n$ and set $Z:=M\dot{\cup}-N$. Assume that $\mathcal{E}_B\to N$ is a flat hermitean $B$-bundle, and write $\mathcal{F}_B:=f^*\mathcal{E}_B\dot{\cup}\mathcal{E}_B\to Z$ and pick with a flat hermitean Lipschitz connection $\nabla_\mathcal{F}$. 

If $f:M\to N$ is a homotopy equivalence, then there is an explicitly constructed $T_f\in \End_B^*(L^2(Z,\wedge^*Z\otimes \mathcal{F}_B))$ such that the operator $D_{{\rm sign},\mathcal{F}}+T_f$ is invertible. 
\end{theorem}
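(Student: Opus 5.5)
The plan is to follow Hilsum--Skandalis' construction, transported to the Lipschitz setting via the $L^2$-de Rham theory of Teleman and Hilsum recalled above. First I would replace the homotopy equivalence $f$ by a Lipschitz map. This is possible since $M$ and $N$ are compact Lipschitz manifolds, so continuous maps can be uniformly approximated by Lipschitz maps within their homotopy class. Pulling back $L^2$-forms along a Lipschitz map is not bounded in general, so I would use the Hilsum--Skandalis trick. Choose a submersion $p:N\times D^k\to N$, $(y,v)\mapsto \exp_{f}$-type perturbation; in the Lipschitz setting, use an embedding $N\subseteq\R^k$ with a Lipschitz retraction of a neighbourhood. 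Then define
$$T_f\omega:=\int_{D^k} p_f^*\omega\wedge \nu,$$
where $p_f(x,v)=p(f(x),v)$ is a Lipschitz submersion and $\nu$ is a bump volume form on the disc $D^k$ of total integral one. Twisting by $\mathcal{E}_B$ through the flat connection, $p_f^*$ extends to bundle-valued forms, giving a bounded adjointable $B$-linear map $L^2(N,\wedge^*\otimes\mathcal{E}_B)\to L^2(M,\wedge^*\otimes f^*\mathcal{E}_B)$. This map commutes with $\mathrm{d}$ and preserves domains.

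Next I would show that $T_f$ induces an isomorphism on $L^2$-cohomology of the twisted de Rham complexes, i.e. of Hilbert--Poincaré complexes over $B$. To do this, take $g$ a Lipschitz homotopy inverse and build $T_g$ similarly. Homotopies $f\circ g\simeq \id$ and $g\circ f\simeq\id$ give bounded chain homotopies $y$ with $1-T_gT_f=\mathrm{d}y+y\mathrm{d}$, constructed by integrating over $[0,1]\times D^k$. I also need $T_f^\dagger T_f$ (adjoint with respect to the Poincaré duality pairing $\omega\mapsto\tau_g\omega$) to be homotopic to the identity. This uses that $f$ has degree one, so pullback respects the duality pairing up to chain homotopy; I would deduce it from $\int_M f^*\omega=\int_N\omega$ for closed top forms.

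Then I would apply the abstract Hilsum--Skandalis lemma. Take two Hilbert--Poincaré complexes $(E,\mathrm{d},\tau)$ and $(E',\mathrm{d}',\tau')$ and a homotopy equivalence $T$ compatible with duality up to homotopy. On the direct sum complex over $Z=M\dot\cup -N$, with duality $\tau\oplus(-\tau')$, one writes down an explicit perturbation of the duality operator by bounded operators built from $T$, its dual and the homotopy $y$. This yields a new self-adjoint involution $\tau_t$, or equivalently a bounded perturbation of $\mathrm{d}+\delta$, for which $D+T_f$ is invertible, since the mapping cone of a homotopy equivalence is contractible. The resulting operator $T_f$ is a combination of $T_f$, $T_f^*$, $y$, $\tau$, hence lies in $\End_B^*(L^2(Z,\wedge^*Z\otimes\mathcal{F}_B))$. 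Invertibility of the perturbed operator uses regularity and self-adjointness from Proposition \ref{technicalpropforlipschitz}, together with bounded commutators of $\mathrm{d}$ with the correction terms.

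The main obstacle is the analytic step: verifying that $T_f$ and the homotopy operators are bounded on $L^2$ and map $\Dom(\mathrm{d})$ into $\Dom(\mathrm{d})$ for merely Lipschitz maps and flat Lipschitz connections. For this I would use that fibre integration along a Lipschitz submersion has a.e. bounded Jacobian, hence is bounded on $L^2$. Commutation with $\mathrm{d}$ would first be checked distributionally on Lipschitz forms and then extended by closure, using that $\Lip$-forms are a core.
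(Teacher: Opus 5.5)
Your route is the one the paper relies on. The paper gives no proof of its own and cites \cite{hilsumskand}, including the remark there that the construction carries over to Lipschitz manifolds. The weak point is exactly that Lipschitz step.

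For $T_f\omega=\int_{D^k}p_f^*\omega\wedge\nu$ to be defined on $L^2$-forms at all, to be bounded, and to satisfy $\mathrm{d}T_f=T_f\mathrm{d}$, you need the following: for every $y\in N$, the map $v\mapsto p(y,v)$ must push $\nu$ forward to a measure on $N$ whose density is bounded uniformly in $y$. That is a uniform \emph{lower} bound on the co-area Jacobian of $p$ in the $v$-directions. The ``a.e.\ bounded Jacobian'' you invoke is the upper bound that Lipschitz continuity gives for free, and it is not the relevant condition.

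An embedding $N\subseteq\R^k$ with an arbitrary Lipschitz neighbourhood retraction $r$ does not supply the lower bound. Such an $r$ can map an open set onto a single point of $N$. Then $p(y,v)=r(y+v)$ produces an atom, and $T_f\omega$ depends on the values of $\omega$ on a null set. In the smooth case one uses the nearest-point projection from a tubular neighbourhood, but a Lipschitz submanifold can have zero reach, and no such projection exists.

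You have to build the averaging family by hand. One way is a composite $p(y,v_1,\ldots,v_m)=\phi_m(v_m)\circ\cdots\circ\phi_1(v_1)(y)$ of chart translations $\phi_j(v)(y)=\psi_j^{-1}(\psi_j(y)+\chi_j(y)v)$. Here you take a finite Lipschitz atlas and Lipschitz cut-offs $\chi_j$ such that the interiors of the sets $\{\chi_j=1\}$ cover $N$. For small parameters these maps are uniformly bi-Lipschitz homeomorphisms, so their pull-backs are bounded on $L^2$-forms and commute with $\mathrm{d}$ by \cite{teleihes}. Translating in a chart where $\chi_j\equiv 1$ near the base point then gives the uniform density bound. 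The same bound is what justifies the chain rule for your Lipschitz pull-backs and the boundedness of your homotopy operators over $[0,1]\times D^k$.

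The duality step is also under-justified. Your perturbation uses a bounded $y$ with $1-T_f^\dagger T_f=\mathrm{d}y+y\mathrm{d}$, which is a chain-level statement. The identity $\int_M f^*\omega=\int_N\omega$ for top forms only shows that $T_f^\dagger T_f$ induces the identity on cohomology. Over a coefficient $C^*$-algebra $B$ that does not produce such a $y$, and in any case $T_f$ is not multiplicative, precisely because of the averaging.

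One way to obtain $y$ is in three steps:
\begin{enumerate}
\item Realize $T_f^\dagger$ as integration along the fibres of $p_f\colon M\times D^k\to N$, defined by duality and made bounded by the density estimate.
\item Homotope $\mathrm{pr}_M^*T_f$ to $p_f^*$ on $M\times D^k$ by bounded homotopies built from $(x,v,w,t)\mapsto p(f(x),(1-t)w+tv)$.
\item Apply the projection formula. Up to bounded homotopy this reduces $T_f^\dagger T_f$ to multiplication by the locally constant function $(p_f)_*(\mathrm{pr}_{D^k}^*\nu)=\deg f=1$.
\end{enumerate}
The last step is where orientation preservation of $f$, which the statement leaves implicit, actually enters.
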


\subsection{Cutting down cycles to ideals}

Let $(\mathpzc{E},D)$ be an $(\mathcal{A},B)$-cycle. Assume that $\mathpzc{E}_0\subseteq \mathpzc{E}$ is an $A$-invariant submodule. We write $J:=\{a\in A: a\mathpzc{E}\subseteq \mathpzc{E}_0\}$ which by definition is a closed twosided ideal in $A$. We can define $D_0:=D|_{\mathpzc{E}_0}$. The operator $D_0$ is symmetric and a quick argument shows that $D_0$ is closed. Since there is a continuous inclusion $\Dom(D_0)\hookrightarrow \Dom(D)$, we conclude that if $D_0$ is regular then $(\mathpzc{E}_0,D_0)$ is a symmetric $(\mathcal{A},B)$-chain that restricts to a half-closed $(\mathcal{A}\cap J,B)$-chain. Based on the previous statement, there are a number of natural questions:
\begin{enumerate}
\item Under what conditions is $D_0$ regular? 
\item Can the adjoint domain $\Dom(D_0^*)$ and the Lipschitz algebra $\mathrm{Lip}(D_0^*,D_0)$ be described? 
\end{enumerate}
Related results can be found in \cite{magnus2} restricted to the case $B=\C$, where $(\mathpzc{E}_0,D_0)$ was proven to form a relative spectral triple and $\Dom(D_0^*)$ was explicitly described. The questions above are to the authors' knowledge wide open in the case of a general $B$.

\section{Analysis of chains}
\label{sec:analchain}

The main object of study in this monograph are the cycles and chains of a pair $(\mathcal{A},B)$. In this subsection we collect some more subtle points concerning the analysis of these objects. These include automatic continuity results when $\mathcal{A}$ is a Banach algebra, various versions of functional calculus preserving classes of chains as well as results introducing methods of complex interpolation in the study of unbounded chains.

\subsection{Automatic continuity}
\label{subsec:autocont}

For chains where $\mathcal{A}$ is a Banach algebra, we can from the closed graph theorem conclude an automatic continuity result ensuring ``differentiability'' of elements from $\mathcal{A}$.

\begin{prop}[Automatic continuity of chains]
\label{autoodkkd}
Let $B$ be a $C^*$-algebra and $\mathcal{A}$ be equipped with a Banach algebra norm $\|\cdot\|_{\mathcal{A}}$ making the inclusion $\mathcal{A}\subseteq A$ continuous. If $(\mathpzc{E},D)$ is an $(\mathcal{A},B)$-chain, then there is a constant $C>0$ such that 
$$\|[D,a]\|_{\End_B(\mathpzc{E})}\leq C\|a\|_{\mathcal{A}}, \quad\forall a\in \mathcal{A}.$$
\end{prop}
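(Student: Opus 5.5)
We apply the closed graph theorem to the linear map
$$\delta:\mathcal{A}\to \End_B^*(\mathpzc{E}),\quad a\mapsto \overline{[D,a]}.$$
Here $\overline{[D,a]}$ denotes the unique bounded extension of the commutator, defined on $\Dom(D)$. It exists because $\pi(\mathcal{A})\subseteq \mathrm{Lip}(D)$. The map $\delta$ is linear and goes between Banach spaces: $\mathcal{A}$ with $\|\cdot\|_{\mathcal{A}}$, and $\End_B^*(\mathpzc{E})$ with the operator norm. Once $\delta$ is closed, the closed graph theorem makes it bounded, which is the claimed estimate. There is no need for $\delta$ to take values in adjointable operators. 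It suffices to work in the Banach space of bounded $B$-linear maps $\End_B(\mathpzc{E})$, which is what the norm in the statement refers to.

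To prove closedness, take $a_n\to a$ in $\mathcal{A}$ and $\delta(a_n)\to T$ in norm, and show $T=\delta(a)$. By continuity of $\mathcal{A}\subseteq A$, and because $*$-homomorphisms are contractive, $\pi(a_n)\to\pi(a)$ in norm. The key step is to test against the domains. Take $\xi\in\Dom(D)$ and $\eta\in \Dom(D^*)$. Then
$$\langle \eta, [D,a_n]\xi\rangle = \langle D^*\eta, a_n\xi\rangle - \langle a_n^*\eta, D\xi\rangle .$$
As $n\to\infty$, the right-hand side converges in $B$ to $\langle D^*\eta,a\xi\rangle-\langle a^*\eta,D\xi\rangle$, while the left-hand side converges to $\langle \eta,T\xi\rangle$. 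Since $a\in\mathcal{A}$, we know $a\xi\in\Dom(D)$, so the first term equals $\langle \eta, Da\xi\rangle$. The second term equals $\langle \eta, aD\xi\rangle$. Hence $\langle\eta,(T-[D,a])\xi\rangle=0$ for all $\eta\in\Dom(D^*)$.

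The obstacle is to conclude that $T\xi=[D,a]\xi$ from this. We need $\Dom(D^*)$ to be dense in $\mathpzc{E}$, and this is exactly where semi-regularity enters: $D$ is regular, so $D^*$ is densely defined. A Hilbert module vector orthogonal to a dense subspace vanishes, so $T\xi=[D,a]\xi$ on $\Dom(D)$. Therefore $T=\delta(a)$ and the graph of $\delta$ is closed.

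The closed graph theorem then gives $C>0$ with $\|[D,a]\|\le C\|a\|_{\mathcal{A}}$. Note that we never needed closedness of $\mathcal{A}$ in $A$, only completeness of $\mathcal{A}$ in its own norm together with the continuity of the inclusion.
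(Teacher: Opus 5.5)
Your proof is correct and follows the paper's argument: the closed graph theorem is applied to $a\mapsto [D,a]$, and closedness of the graph is checked using continuity of $\mathcal{A}\subseteq A$. The only difference is how you identify the limit: you pair against the dense submodule $\Dom(D^*)$, whereas the paper's one-line step $T\xi=[D,a]\xi$ follows most directly from closedness of $D$, since $a_j\xi\to a\xi$ and $Da_j\xi\to T\xi+aD\xi$ for $\xi\in\Dom(D)$.
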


\begin{proof}
Consider the linear mapping $[D,-]:\mathcal{A}\to \End_B(\mathpzc{E})$, $a\mapsto [D,a]$. This linear mapping is closed since if $a_j\to a$ in $\mathcal{A}$ and $[D,a_j]\to T$ in $\End_B^*(\mathpzc{E})$, then $a_j\to a$ in $\End_B^*(\mathpzc{E})$ (since $\mathcal{A}\subseteq A$ continuous), so $T\xi=[D,a]\xi$ for all $\xi\in \Dom(D)$, and it follows that $T=[D,a]$. The closed graph theorem implies that $[D,-]$ is bounded and the proposition follows.
\end{proof}

\subsection{Functional calculus}
\label{subsec:funccalc}

We shall now study the effect of functional calculus on the algebra $\mathcal{A}$ to the sets of cycles and chains. The material in this section is not new, see \cite{BlaCun,varillyetal}, however we present it in a way adapted for our purposes. For a tempered distribution $f$ on $\R^n$, we let $\hat{f}$ denote its Fourier transform. Note that $\hat{f}$ is a smooth function as soon as $f$ is compactly supported. We also note that $\hat{f}$ is integrable whenever $f\in C^\alpha_c(\R^n)$ for $\alpha>n$, where for $\alpha=k+\alpha_0$ with $\alpha_0\in [0,1)$, 
$$C^\alpha_c(\R^n):=\{f\in C_c(\R^n): \partial_x^\beta f\in C^{\alpha_0}(\R^n), \; \forall |\beta|\leq k\}.$$
Here $C^{\alpha_0}(\R^n)$ denotes the space of functions H\"{o}lder continuous with exponent $\alpha_0$. We will occasionally also consider the spaces 
$$C^{k,1}_c(\R^n):=\{f\in C_c(\R^n): \partial_x^\beta f\in \mathrm{Lip}(\R^n), \; \forall |\beta|\leq k\}.$$
Note that there is a strict isometric inclusion $C^{k+1}_c(\R^n)\subseteq C^{k,1}_c(\R^n)$ and a continuous, non-dense, inclusion $C^{k,1}_c(\R^n)\subseteq C^{\alpha}_c(\R^n)$ for $\alpha<k+1$.

\begin{define}
Assume $\alpha>n$. Let $\mathcal{A}\subseteq A$ be a $*$-subalgebra. For a self-adjoint $n$-tuple $(a_1,\ldots, a_n)\in \mathcal{A}^n$ and a function $f\in C^\alpha_c(\R^n)$, we define 
$$f(a_1,\ldots,a_n):=\frac{1}{(2\pi)^n}\int_{\R^n}\hat{f}(t_1,\ldots, t_n)\mathrm{e}^{it_1a_1}\cdots \mathrm{e}^{it_na_n}\mathrm{d}t_1\cdots \mathrm{d}t_n\in \tilde{A}.$$
Here $\tilde{A}$ denotes the unitalization of $A$. 

Assume that $\mathcal{A}$ is unital. For $\alpha>1$, let $\mathcal{A}^{(\alpha)}$ denote the $*$-algebra generated by elements $f(a_1,\ldots,a_n)$ where $(a_1,\ldots, a_n)\in \mathcal{A}^n$ is a self-adjoint $n$-tuple and $f\in C^{\alpha n+1}_c(\R^n)$. We define the $*$-algebra
$$\overline{\mathcal{A}}^{(\alpha)}:=\cup_{k\in \N} \mathcal{A}_k^{(\alpha)},$$
where $\mathcal{A}_k^{(\alpha)}$ is defined inductively by $\mathcal{A}_0^{(\alpha)}:=\mathcal{A}$ and $\mathcal{A}_{k+1}^{(\alpha)}:=(\mathcal{A}_k^{(\alpha)})^{(\alpha)}$.

Assume that $\mathcal{A}$ is non-unital. For $\alpha>1$, let $\mathcal{A}^{(\alpha)}$ denote the $*$-algebra generated by elements $f(a_1,\ldots,a_n)$ where $(a_1,\ldots, a_n)\in \mathcal{A}^n$ is a self-adjoint $n$-tuple and $f\in C^{\alpha n+1}_c(\R^n)$ satisfying the condition that $f(t_1,\ldots, t_n)=0$ whenever $t_j=0$ for some $j$. We define the $*$-algebra
$$\overline{\mathcal{A}}^{(\alpha)}:=\cup_{k\in \N} \mathcal{A}_k^{(\alpha)},$$
where $\mathcal{A}_k^{(\alpha)}$ is defined inductively by $\mathcal{A}_0^{(\alpha)}:=\mathcal{A}$ and $\mathcal{A}_{k+1}^{(\alpha)}:=(\mathcal{A}_k^{(\alpha)})^{(\alpha)}$.

\end{define}

\begin{remark}
We remark that for any self-adjoint $a$ in a unital $C^*$-algebra $A$ and $t\in \R$, $\mathrm{e}^{ita}\in A$ is a well defined unitary and depends norm continuously on $t$. As such, for any self-adjoint $n$-tuple $(a_1,\ldots, a_n)\in \mathcal{A}^n$, the function $(t_1,\ldots, t_n)\mapsto \mathrm{e}^{it_1a_1}\cdots \mathrm{e}^{it_na_n}$ is a well defined norm continuous function $\mathbb{R}^n\to \mathcal{U}(A)$. Therefore the integral defining $f(a_1,\ldots,a_n)$ converges in norm topology for $f\in C^\alpha_c(\R^n)$ and $\alpha>n$. 

We also note that it is only for technical reasons we assume our functions $f$ to have compact support. In the ambient $C^*$-algebra $A$, the association $f\mapsto f(a_1,\ldots,a_n)$ coincides with successive functional calculus and as such extends by continuity to any $f\in C(\R^n)$ and depends only on the compact subset $\prod_{j=1}^n \mathrm{Spec}(a_j)\subseteq \R^n$.
\end{remark}

\begin{theorem}
\label{funccalccompthm}
Assume that $\mathpzc{E}$ is a Hilbert $C^*$-module and $D:\mathpzc{E}\dashrightarrow \mathpzc{E}$ is a regular operator. Let $\mathcal{A}\subseteq \End_B^*(\mathpzc{E})$ be a $*$-algebra. Then for any $\alpha>1$, the following holds:
\begin{enumerate}
\item If $\mathcal{A}\subseteq \mathrm{Lip}(D)$, then $\overline{\mathcal{A}}^{(\alpha)}\subseteq \mathrm{Lip}(D)$
\item If $\mathcal{A}\subseteq \mathrm{Lip}_0(D)$, then $\overline{\mathcal{A}}^{(\alpha)}\subseteq \mathrm{Lip}_0(D)$
\item If $D$ is symmetric and $\mathcal{A}\subseteq \mathrm{Lip}(D,D^*)$, then $\overline{\mathcal{A}}^{(\alpha)}\subseteq \mathrm{Lip}(D,D^*)$
\end{enumerate}
In particular, if $(\mathpzc{E},D)$ is a symmetric/half-closed/closed chain for $(\mathcal{A},B)$ then $(\mathpzc{E},D)$ also defines a symmetric/half-closed/closed chain for $(\overline{\mathcal{A}}^{(\alpha)},B)$ for all $\alpha>1$.
\end{theorem}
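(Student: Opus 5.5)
By induction on $k$ it suffices to show that each of the three properties passes from $\mathcal{A}$ to $\mathcal{A}^{(\alpha)}$. Each of $\mathrm{Lip}(D)$, $\mathrm{Lip}_0(D)$ and $\mathrm{Lip}(D,D^*)$ is a $*$-algebra: products preserve domains and $[D,ab]=[D,a]b+a[D,b]$, and $\mathrm{Lip}_0(D)$ and $\mathrm{Lip}(D,D^*)$ are ideals, respectively closed under products, inside $\mathrm{Lip}(D)$. So the task reduces to showing that a single generator $f(a_1,\ldots,a_n)$ with $f\in C^{\alpha n+1}_c(\R^n)$ lies in the relevant class. The plan is to use the Fourier integral representation and Duhamel's formula.

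For a self-adjoint $a\in\mathrm{Lip}(D)$, I first show that $\mathrm{e}^{ita}$ preserves $\Dom(D)$ and that $[D,\mathrm{e}^{ita}]$ is bounded with $\|[D,\mathrm{e}^{ita}]\|\le |t|\,\|[D,a]\|$. To do this, expand $\mathrm{e}^{ita}=\sum_m (ita)^m/m!$. Each power $a^m$ preserves $\Dom(D)$ and satisfies $\|[D,a^m]\|\le m\|a\|^{m-1}\|[D,a]\|$, so the series converges in the Lipschitz operator-algebra norm of $\mathrm{Lip}(D)$, i.e.\ the norm coming from the embedding $a\mapsto\begin{pmatrix}a&0\\ [D,a]&a\end{pmatrix}$. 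Since $\mathrm{Lip}(D)$ is closed in that norm, $\mathrm{e}^{ita}\in\mathrm{Lip}(D)$. Summing the series gives the Duhamel form
$$[D,\mathrm{e}^{ita}]=\int_0^1 \mathrm{e}^{ista}\,it[D,a]\,\mathrm{e}^{i(1-s)ta}\,\mathrm{d}s,$$
and the bound $|t|\|[D,a]\|$ follows. By Leibniz, the product $\mathrm{e}^{it_1a_1}\cdots\mathrm{e}^{it_na_n}$ lies in $\mathrm{Lip}(D)$ with commutator bounded by $\sum_j|t_j|\|[D,a_j]\|$. The integral defining $f(a_1,\ldots,a_n)$ therefore converges in the Lipschitz norm as soon as $(1+|t|)\hat f(t)\in L^1(\R^n)$. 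This holds for $f\in C^{\alpha n+1}_c$: the extra derivative absorbs the factor $|t|$, and $\alpha n>n$ gives integrability. Closedness of $\mathrm{Lip}(D)$ then yields item 1.

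For item 2 the adjustment depends on whether $\mathcal{A}$ is unital. In the non-unital case, $f$ vanishes on the coordinate hyperplanes, so $f(a_1,\ldots,a_n)=a_1 g(a_1,\ldots,a_n)$ for a suitable $g$. One can write $f(t)=t_1 g(t)$ with $g$ of one lower regularity, and the regularity margin in $C^{\alpha n+1}$ is designed for exactly this step. Then $a_1 g(\ldots)(1+D^*D)^{-1}$ is compact, because $g(\ldots)\in\mathrm{Lip}(D)$ preserves $\Dom(D)$, so $g(\ldots)(1+D^*D)^{-1}=(1+D^*D)^{-1}g(\ldots)+$ terms involving commutators and $(1+D^*D)^{-1/2}$, each absorbed by $a_1$ on the left. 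More cleanly, I reduce to showing that $\{b\in\mathrm{Lip}(D): b\,\iota\ \text{compact}\}$ is a right ideal in $\mathrm{Lip}(D)$, where $\iota:\Dom(D)\to\mathpzc{E}$ is the inclusion. In the unital case, $\mathrm{Lip}_0(D)$ contains $1$, so $1(1+D^*D)^{-1}$ is compact and $\mathrm{Lip}_0(D)=\mathrm{Lip}(D)$, which reduces item 2 to item 1.

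For item 3, a similar factorisation applies. In the non-unital case, $f(a)=g(a)a_n$ with $g(a)\in\mathrm{Lip}(D)$, and $a_n\Dom(D^*)\subseteq\Dom(D)$ while $g(a)\Dom(D)\subseteq\Dom(D)$. In the unital case $1\in\mathrm{Lip}(D,D^*)$ forces $D=D^*$, so again item 3 reduces to item 1. The step I expect to be the main obstacle is the factorisation $f=t_1g$ (or $f=g\,t_n$) with sufficient regularity of $g$ to keep $\hat g$, weighted by $(1+|t|)$, integrable, together with the justification that the factorisation holds at the level of the ordered exponential formula. I would first handle this through the coordinatewise statement $f(\ldots,a_n)=g(\ldots,a_n)a_n$ using the last factor $\mathrm{e}^{it_na_n}$ and one-variable Hadamard division, checking that the loss of one derivative is covered by the $+1$ in $C^{\alpha n+1}$.
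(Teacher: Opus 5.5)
Your argument for item 1 (Duhamel formula for $[D,\mathrm{e}^{ita}]$, the Leibniz rule, and convergence of the Fourier integral in the norm of $\mathrm{Lip}(D)$ because $\int(1+|t|)|\hat{f}(t)|\,\mathrm{d}t<\infty$) is essentially the paper's.

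The gap is in items 2 and 3 in the non-unital case. There everything rests on the Hadamard division $f=x_1g$ (or $f=g\,x_n$) together with the claim $g(a_1,\ldots,a_n)\in\mathrm{Lip}(D)$. With your tools, that claim requires $\int(1+|t|)|\hat{g}(t)|\,\mathrm{d}t<\infty$. Your regularity count double-counts here. As you say yourself in item 1, the $+1$ in $C^{\alpha n+1}_c$ is already spent absorbing the weight $|t|$. After division $g$ is only in $C^{\alpha n}_c$, so your decay heuristic gives $|\hat{g}(t)|\lesssim|t|^{-\alpha n+\epsilon}$. Then $\int_{\R^n}|t|\,|\hat{g}(t)|\,\mathrm{d}t$ is only guaranteed finite for $\alpha>1+1/n$. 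So the step you flag as the main obstacle does not close for $\alpha\in(1,1+1/n]$. Patching it would need finer harmonic analysis (Bernstein-type embeddings into the Fourier algebra, and something more delicate still for $n=1$), which the proposal does not supply.

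The division can be avoided entirely, and this is how the paper proceeds.

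For item 2 nothing beyond item 1 is needed. The set $\{x\in\End_B^*(\mathpzc{E}): x(1+D^*D)^{-1}\in\K_B(\mathpzc{E})\}$ is a norm-closed left ideal containing $\mathcal{A}$, hence it contains the $C^*$-closure $A$. Since $f(a_1,\ldots,a_n)\in A$, combining with item 1 gives $\mathcal{A}^{(\alpha)}\subseteq\mathrm{Lip}_0(D)$.

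For item 3, use the vanishing of $f$ on the coordinate hyperplanes to rewrite
$$f(a_1,\ldots,a_n)=\frac{1}{(2\pi)^n}\int_{\R^n}\hat{f}(t)\,(\mathrm{e}^{it_1a_1}-1)\cdots(\mathrm{e}^{it_na_n}-1)\,\mathrm{d}t.$$
Each factor $\mathrm{e}^{it_ja_j}-1=\sum_{k\geq1}(it_j)^ka_j^k/k!$ has no constant term. So the series converges in $\mathrm{Lip}(D,D^*)$, normed by the $\mathrm{Lip}(D)$-norm plus the $\Dom(D^*)\to\Dom(D)$ graph norm.

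The product of these factors has norm $O(1+|t_1|+\cdots+|t_n|)$. For the $\Dom(D^*)\to\Dom(D)$ part, compute $D$ on the product as $D^*$ (since $D\subseteq D^*$). Then use that each $\mathrm{e}^{it_ja_j}$ preserves $\Dom(D^*)$ with $\|[D^*,\mathrm{e}^{it_ja_j}]\|\leq|t_j|\,\|[D,a_j]\|$.

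The integral then converges in $\mathrm{Lip}(D,D^*)$ using only $\int(1+|t|)|\hat{f}(t)|\,\mathrm{d}t<\infty$, with no derivative lost.
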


Before proving the theorem we will need a lemma. For a function $f$ on $\R^n$, we define the seminorm
$$\|f\|_W:=\frac{1}{(2\pi)^n}\sum_{j=1}^n\int_{\R^n} |t_j \hat{f}(t_1,\ldots, t_n)|\mathrm{d}t_1\cdots \mathrm{d}t_n.$$
We note that if $f\in C^{\alpha n+1}_c(\R^n)$, then $\hat{f}$ is smooth and satisfies $\hat{f}(t)=O(|t|^{-\alpha n-1+\epsilon})$ as $|t|\to \infty$, for any $\epsilon>0$. Thus $\|f\|_W<\infty$ for any $f\in C^{\alpha n+1}_c(\R^n)$ with $\alpha>1$.

\begin{lemma}
\label{funccalclemmavari}
Assume that $\mathpzc{E}$ is a Hilbert $C^*$-module and $D:\mathpzc{E}\dashrightarrow \mathpzc{E}$ is a regular operator. If $f$ satisfies $\|f\|_W<\infty$, then for any self-adjoint $n$-tuple $(a_1,\ldots, a_n)\in \mathrm{Lip}(D)^n$, it holds that $f(a_1,\ldots, a_n)\in \mathrm{Lip}(D)$ and
$$\|[D,f(a_1,\ldots, a_n)]\|\leq  \|f\|_W \max_j \|[D,a_j]\|.$$
\end{lemma}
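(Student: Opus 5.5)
The plan is to differentiate the Fourier representation of $f(a_1,\ldots,a_n)$ term by term. The argument has three steps: a one-variable estimate for $\mathrm{e}^{ita}$, a Leibniz expansion for the product of exponentials, and integration against $\hat{f}$.

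\emph{Step 1: a single exponential.} Fix a self-adjoint $a\in \mathrm{Lip}(D)$. I first show that $\mathrm{e}^{ita}\in \mathrm{Lip}(D)$ and
$$[D,\mathrm{e}^{ita}]=\int_0^t i\,\mathrm{e}^{isa}[D,a]\mathrm{e}^{i(t-s)a}\,\mathrm{d}s,\qquad \|[D,\mathrm{e}^{ita}]\|\leq |t|\,\|[D,a]\|.$$

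The power series route is not good enough. It gives $\mathrm{e}^{ita}\Dom(D)\subseteq\Dom(D)$ by closedness of $D$, because partial sums preserve $\Dom(D)$ and commutators of powers are $[D,a^k]=\sum_j a^j[D,a]a^{k-1-j}$. However, it only yields the weak bound $\|[D,\mathrm{e}^{ita}]\|\leq \sum_k |t|^k k\|a\|^{k-1}\|[D,a]\|/k!$, which grows exponentially in $|t|$. To get the linear bound, I use the Duhamel formula. For $\xi\in\Dom(D)$, set $F(s)=\mathrm{e}^{-isa}D\mathrm{e}^{isa}\xi$ and differentiate: $F'(s)=\mathrm{e}^{-isa}i[D,a]\mathrm{e}^{isa}\xi$. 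This is justified by the series argument, since $s\mapsto \mathrm{e}^{isa}\xi$ is differentiable in the graph norm of $D$. Integrating $F'$ gives the Duhamel formula, and the integrand has norm at most $\|[D,a]\|$, which yields the bound. Doing this rigorously on the domain is the main technical point, and I expect it to be the main obstacle. The remaining steps are bookkeeping.

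\emph{Step 2: the product of exponentials.} The Leibniz rule gives
$$[D,\mathrm{e}^{it_1a_1}\cdots\mathrm{e}^{it_na_n}]=\sum_j \mathrm{e}^{it_1a_1}\cdots[D,\mathrm{e}^{it_ja_j}]\cdots \mathrm{e}^{it_na_n}.$$
Since the other factors are unitaries, the norm of the $j$-th term is at most $|t_j|\,\|[D,a_j]\|$.

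\emph{Step 3: integrating against $\hat f$.} Since $\|f\|_W<\infty$, the map
$$t\mapsto \hat{f}(t)\,[D,\mathrm{e}^{it_1a_1}\cdots\mathrm{e}^{it_na_n}]$$
is norm-integrable (strongly measurable, and dominated by $|\hat f(t)|\sum_j|t_j|\,\|[D,a_j]\|$). Let $T$ be its integral times $(2\pi)^{-n}$. For $\xi\in\Dom(D)$, closedness of $D$ lets us pass $D$ under the Bochner integral, using $\hat f\in L^1$ for the undifferentiated term. Hence $f(a_1,\ldots,a_n)\xi\in\Dom(D)$ and
$$[D,f(a_1,\ldots,a_n)]\xi=T\xi.$$
Then $\|T\|\leq (2\pi)^{-n}\int\sum_j|t_j\hat f|\,\max_j\|[D,a_j]\|=\|f\|_W\max_j\|[D,a_j]\|$, which is the bound claimed in the statement.

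For the condition on $[D,f^*]$, note that $f(a_1,\ldots,a_n)^*=\int\overline{\hat f}\,\mathrm{e}^{-it_na_n}\cdots \mathrm{e}^{-it_1a_1}$, and the same argument applies to this expression. If needed, one treats the case where the $a_j$ merely lie in $\tilde{A}$ by adjoining a unit, which has zero commutator with $D$.
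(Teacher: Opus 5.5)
Your proposal is correct and takes essentially the same route as the paper. The paper also writes $[D,f(a_1,\ldots,a_n)]$ as the Fourier integral of $\hat f(t)\,[D,\mathrm{e}^{it_1a_1}\cdots\mathrm{e}^{it_na_n}]$, expands it by the Leibniz rule into terms containing $[D,\mathrm{e}^{it_ja_j}]$, and then defers to the lemma of Gracia-Bond\'ia--V\'arilly--Figueroa that it cites, which is the Duhamel bound $\|[D,\mathrm{e}^{ita}]\|\leq |t|\,\|[D,a]\|$ that you prove. Your graph-norm differentiability argument, the passage of the closed operator $D$ under the Bochner integral, and the treatment of $f(a_1,\ldots,a_n)^*$ supply details the paper leaves to that reference.
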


\begin{proof}
We compute that 
\begin{align*}
[D,f(a_1,\ldots, a_n)]&=\frac{1}{(2\pi)^n}\int_{\R^n}\hat{f}(t_1,\ldots, t_n)[D,\mathrm{e}^{it_1a_1}\cdots \mathrm{e}^{it_na_n}]\mathrm{d}t_1\cdots \mathrm{d}t_n=\\
&=\frac{1}{(2\pi)^n}\sum_{j=1}^n\int_{\R^n}\hat{f}(t_1,\ldots, t_n)\mathrm{e}^{it_1a_1}\cdots [D,\mathrm{e}^{it_ja_j}]\cdots \mathrm{e}^{it_na_n}\mathrm{d}t_1\cdots \mathrm{d}t_n.
\end{align*}
The proof now proceeds as in \cite[Lemma 10.15]{varillyetal}.
\end{proof}

\begin{proof}[Proof of Theorem \ref{funccalccompthm}]
Since $\mathrm{Lip}(D)$, $\mathrm{Lip}_0(D)$ and $ \mathrm{Lip}(D,D^*)$ are $*$-algebras, it suffices to prove that the properties listed in items 1)-3) are preserved under the procedure $\mathcal{A}\rightsquigarrow\mathcal{A}^{(\alpha)}$. 

It follows directly from Lemma \ref{funccalclemmavari} that $\mathcal{A}^{(\alpha)}\subseteq \mathrm{Lip}(D)$ if $\mathcal{A}\subseteq \mathrm{Lip}(D)$. This proves item 1). Item 2) follows from item 1), since $\mathcal{A}^{(\alpha)}\subseteq A$ and a continuity argument shows that $a(1+D^*D)^{-1}\in \K_B(\mathpzc{E})$ for all $a \in A$ if $\mathcal{A}\subseteq \mathrm{Lip}_0(D)$. 

To prove item 3), we note that the statement is trivial unless $\mathcal{A}$ is non-unital. Indeed, if the identity operator belongs to $\mathrm{Lip}(D,D^*)$ then $D$ is self-adjoint in which case item 3) follows from item 1). We must therefore show that $$f(a_1,\ldots, a_n)\Dom(D^*)\subseteq \Dom(D)$$ for all $a_1,\ldots, a_n\in \mathrm{Lip}(D,D^*)$ and $f\in C^{\alpha n+1}_c(\R^n)$ satisfying $f(t_1,\ldots, t_n)=0$ whenever $t_j=0$ for some $j$. We note that the condition on $f$ guarantees that  
$$f(a_1,\ldots,a_n):=\frac{1}{(2\pi)^n}\int_{\R^n}\hat{f}(t_1,\ldots, t_n)(\mathrm{e}^{it_1a_1}-1)\cdots (\mathrm{e}^{it_na_n}-1)\mathrm{d}t_1\cdots \mathrm{d}t_n.$$
Note that $\R\to t_j\mapsto \mathrm{e}^{it_ja_j}-1\in \mathrm{Lip}(D,D^*)$ is continuous since $a_j\in \mathrm{Lip}(D,D^*)$ and $\mathrm{e}^{it_ja_j}-1=\sum_{k=1}^\infty \frac{(it)^k}{k!}a_j^k$ as series in $\mathrm{Lip}(D,D^*)$ which is uniformly absolutely convergent on compacts in $t_j\in \R$. A direct computation as in the proof of Lemma \ref{funccalclemmavari} shows that 
$$\|(\mathrm{e}^{it_1a_1}-1)\cdots (\mathrm{e}^{it_na_n}-1)\|_{\mathrm{Lip}(D,D^*)}\leq C(|t_1|+\cdots |t_n|),$$
for a constant $C$ depending only on the values of $\|a_j\|_{\mathrm{Lip}(D,D^*)}$. We conclude that $f(a_1,\ldots, a_n)\Dom(D^*)\subseteq \Dom(D)$.
\end{proof}

We now proceed to holomorphic functional calculus. If $\mathcal{A}\subseteq A$ is a unital $*$-algebra, we let $\mathcal{A}^{\mathcal{O}}$ denote the $*$-algebra generated by $\{f(a)\in A: a\in \mathcal{A}, \; f\in \mathcal{O}(\mathrm{Spec}_A(a))\}$. Here $\mathcal{O}(K)$ denotes the space of functions holomorphic on a neighbourhood of a compact subset $K\subseteq \C$. We define 
$$\overline{\mathcal{A}}^{\mathcal{O}}:=\cup_{k\in \N} \mathcal{A}^{\mathcal{O}}_k,$$ 
where $\mathcal{A}^{\mathcal{O}}_0:=\mathcal{A}$ and $\mathcal{A}^{\mathcal{O}}_k$ are defined inductively by $\mathcal{A}^{\mathcal{O}}_{k+1}:=(\mathcal{A}^{\mathcal{O}}_k)^{\mathcal{O}}$. If $\mathcal{A}$ is non-unital, we define $\mathcal{A}^{\mathcal{O}}$ denote the $*$-algebra generated by $$\left\{f(a)\in A: a\in \mathcal{A}, \; f\in \mathcal{O}(\mathrm{Spec}_A(a)), \; f(0)=0\right\}$$ and $\overline{\mathcal{A}}^{\mathcal{O}}$ is defined in the analogous way. The following theorem holds by standard techniques from contour integration.

\begin{theorem}
\label{funccalcholothm}
Assume that $\mathpzc{E}$ is a Hilbert $C^*$-module and $D:\mathpzc{E}\dashrightarrow \mathpzc{E}$ is a regular operator. Let $\mathcal{A}\subseteq \End_B^*(\mathpzc{E})$ be a $*$-algebra. Then the following holds:
\begin{enumerate}
\item If $\mathcal{A}\subseteq \mathrm{Lip}(D)$, then $\overline{\mathcal{A}}^{\mathcal{O}}\subseteq \mathrm{Lip}(D)$
\item If $\mathcal{A}\subseteq \mathrm{Lip}_0(D)$, then $\overline{\mathcal{A}}^{\mathcal{O}}\subseteq \mathrm{Lip}_0(D)$
\item If $D$ is symmetric and $\mathcal{A}\subseteq \mathrm{Lip}(D,D^*)$, then $\overline{\mathcal{A}}^{\mathcal{O}}\subseteq \mathrm{Lip}(D,D^*)$
\end{enumerate}
In particular, if $(\mathpzc{E},D)$ is a symmetric/half-closed/closed chain for $(\mathcal{A},B)$ then $(\mathpzc{E},D)$ also defines a symmetric/half-closed/closed chain for $(\overline{\mathcal{A}}^{\mathcal{O}},B)$ for all $\alpha>1$.
\end{theorem}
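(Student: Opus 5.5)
The plan is to mirror the proof of Theorem \ref{funccalccompthm}, with the Fourier integral replaced by a Cauchy contour integral. The three classes $\mathrm{Lip}(D)$, $\mathrm{Lip}_0(D)$ and $\mathrm{Lip}(D,D^*)$ are $*$-algebras. It therefore suffices to show that each property in items 1)--3) is preserved by the single step $\mathcal{A}\rightsquigarrow\mathcal{A}^{\mathcal{O}}$; induction over $k$ then handles $\overline{\mathcal{A}}^{\mathcal{O}}=\cup_k\mathcal{A}^{\mathcal{O}}_k$. Fix $a\in\mathcal{A}$ and $f$ holomorphic on a neighbourhood $U$ of $\mathrm{Spec}_A(a)$. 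Choose a cycle $\Gamma\subseteq U\setminus\mathrm{Spec}_A(a)$ winding once around the spectrum, and write
$$f(a)=\frac{1}{2\pi i}\oint_\Gamma f(\lambda)(\lambda-a)^{-1}\mathrm{d}\lambda,$$
which converges in norm because $\lambda\mapsto(\lambda-a)^{-1}$ is norm continuous on $\Gamma$.

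For item 1), the first step is to show that each resolvent $(\lambda-a)^{-1}$ lies in $\mathrm{Lip}(D)$. Put $r=(\lambda-a)^{-1}$ and take $\xi\in\Dom(D)$. Since $a$ preserves $\Dom(D)$, it is enough to show $r\xi\in\Dom(D)$, and then the algebraic identity $[D,r]=r[D,a]r$ holds.

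I expect this to be the main obstacle: $\Dom(D)$-invariance of the inverse is not automatic. I would handle it by viewing $\mathrm{Lip}(D)$ as the norm-closed operator algebra embedded via $a\mapsto\begin{pmatrix}a&0\\ [D,a]&a\end{pmatrix}$ (see \cite{BKM}) and showing that it is spectrally invariant in $\End_B^*(\mathpzc{E})$. Concretely, form the candidate inverse $\begin{pmatrix}r&0\\ r[D,a]r&r\end{pmatrix}$ of the image of $\lambda-a$. Then test it on graph elements: for $\eta\in\Dom(D^*)$ and $\xi\in\mathpzc{E}$, one has
$$\langle D^*\eta,r\xi\rangle=\langle \eta,(r[D,a]r+\dots)\xi\rangle.$$
Combined with closedness of $D$, using $D=D^{**}$ and density arguments through the graph projection (regularity), this gives $r\xi\in\Dom(D)$. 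Alternatively, and more simply: if $\xi\in\Dom(D)$ and $\eta:=r\xi$, then $(\lambda-a)\eta=\xi$. Approximate $\eta$ by the elements $(\lambda-a)^{-1}$ computed through a Neumann series around a point where convergence holds in the Banach algebra $\mathrm{Lip}(D)$, and propagate holomorphically along $\Gamma$. I will try the closedness argument first.

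Given resolvent invariance, $[D,r]=r[D,a]r$ is bounded, and it depends norm continuously on $\lambda\in\Gamma$. Integrating, and using closedness of $D$ to pass $D$ through the norm-convergent integral on $\Dom(D)$, yields $f(a)\in\mathrm{Lip}(D)$. Applying the same argument to $a^*$ and $\bar f(\bar\cdot)$ handles the adjoint condition.

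Item 2) follows exactly as in Theorem \ref{funccalccompthm}: local compactness passes to the $C^*$-closure $A$, and $f(a)\in A$.

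For item 3), when $\mathcal{A}$ is unital the statement reduces to item 1). Otherwise $f(0)=0$, and I use the identity $f(\lambda)=\lambda g(\lambda)$ for a holomorphic $g$ on $U$, which is unitizable by $f(a)=a\,g(a)$ with $g(a)\in\mathrm{Lip}(D)+\C1$ by the unital version of item 1). Then $f(a)\Dom(D^*)=g(a)\,a\Dom(D^*)\subseteq g(a)\Dom(D)\subseteq\Dom(D)$. Here I use $ag=ga$ and the fact that the unitized $g(a)$ preserves $\Dom(D)$. This completes the induction.
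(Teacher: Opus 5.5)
Your overall plan is exactly the ``standard contour integration'' the paper invokes: reduce to the single step $\mathcal{A}\rightsquigarrow\mathcal{A}^{\mathcal{O}}$, write $f(a)$ as a Cauchy integral, show each resolvent $r_\lambda=(\lambda-a)^{-1}$ lies in $\mathrm{Lip}(D)$ with $[D,r_\lambda]=r_\lambda[D,a]r_\lambda$, and integrate using closedness of $D$. Your handling of items 2) and 3), including the factorization $f(a)=a\,g(a)$, is fine. The gap is at the step you yourself flag: neither of your two routes proves $r_\lambda\Dom(D)\subseteq\Dom(D)$ for every $\lambda\in\Gamma$.

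\textbf{The graph/closedness route is circular.} The operator $M:=\bigl(\begin{smallmatrix}\lambda-a&0\\-[D,a]&\lambda-a\end{smallmatrix}\bigr)$ is invertible in $\End_B^*(\mathpzc{E}\oplus\mathpzc{E})$, its inverse is your candidate, and $M\,\mathrm{Graph}(D)\subseteq\mathrm{Graph}(D)$. However, $r_\lambda\Dom(D)\subseteq\Dom(D)$ is equivalent to $M\,\mathrm{Graph}(D)=\mathrm{Graph}(D)$. An invertible operator need not map an invariant closed subspace onto itself; think of the bilateral shift and $\ell^2(\N)\subseteq\ell^2(\Z)$. The identity with ``$\dots$'' does not supply this, and for $\xi\in\mathpzc{E}$ rather than $\xi\in\Dom(D)$ the conclusion you want is false.

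\textbf{The Neumann-series propagation is only partial.} Let $\|a\|_{\mathrm{Lip}}:=\|a\|+\|[D,a]\|+\|[D,a^*]\|$; it is submultiplicative, and $\mathrm{Lip}(D)$ is complete in it because $D$ is closed. Using this and the uniform bound $\|[D,r_\lambda]\|\leq\|r_\lambda\|^2\|[D,a]\|$, the set of $\lambda\notin\mathrm{Spec}(a)$ with $r_\lambda\in\mathrm{Lip}(D)$ is open and relatively closed in the resolvent set. It contains $\{|\lambda|>\|a\|_{\mathrm{Lip}}\}$. So you obtain the unbounded component of $\C\setminus\mathrm{Spec}(a)$ and nothing more. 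The spectrum can have holes that $\Gamma$ must enter, e.g.\ $\mathrm{Spec}(a)=S^1$ with $f(z)=z^{-1}$. On such components you need an independent base point, and that is precisely the inverse-closedness you are trying to prove.

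\textbf{The missing idea is to use the $*$-structure of $\mathrm{Lip}(D)$}, which is a Banach $*$-algebra in $\|\cdot\|_{\mathrm{Lip}}$.
\begin{enumerate}
\item If $c\in\mathrm{Lip}(D)$ is invertible in $\End_B^*(\mathpzc{E})$, write $c^{-1}=(c^*c)^{-1}c^*$.
\item For the positive invertible $b=c^*c$, set $t:=1-b/\|b\|$. Then $t=t^*$, $\|t\|<1$, and $\|[D,t^n]\|\leq n\|t\|^{n-1}\|[D,t]\|$.
\item Hence $b^{-1}=\|b\|^{-1}\sum_{n\geq 0}t^n$ converges in $\|\cdot\|_{\mathrm{Lip}}$, and $b^{-1}\in\mathrm{Lip}(D)$ by closedness.
\end{enumerate}
Equivalently, your propagation argument does work for self-adjoint elements, whose resolvent set is connected, and passing to $c^*c$ reduces to that case. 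This gives $\mathrm{Spec}_{\mathrm{Lip}(D)}(a)=\mathrm{Spec}(a)$. The map $\lambda\mapsto r_\lambda$ is then continuous into $(\mathrm{Lip}(D),\|\cdot\|_{\mathrm{Lip}})$, so the Cauchy integral converges in $\mathrm{Lip}(D)$. That completes item 1), and the rest of your argument goes through as written.
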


\subsection{Order reduction and complex interpolation} 
\label{orderreducsubsec}

Consider a Banach $*$-algebra $\mathcal{A}$ densely and continuously embedded in a $C^*$-algebra $A$. For $\alpha\in [0,1]$, we write 
$$\mathcal{A}_\alpha:=[A,\mathcal{A}]_\alpha,$$
for the complex interpolation space, which again is a Banach $*$-algebra. For details, see \cite{bergloef}. Note that $\mathcal{A}_0=A$ and $\mathcal{A}_1=\mathcal{A}$. We have that $\mathcal{A}\subseteq \mathcal{A}_\alpha$ is dense for all $\alpha\in [0,1]$, hence also $\mathcal{A}_\alpha\subseteq A$ is dense for all $\alpha\in [0,1]$. The next theorem gives a procedure for ``reducing the order'' of a cycle in such a way that it extends to a complex interpolation space. This result will be used later in this monograph to study $KK$-bordism groups of low regularity functions on manifolds, e.g. the algebra of Lipschitz functions. For a closed and regular operator $D$ we use the notation
$$F_D:= D(1+D^*D)^{-1/2},$$
which is an adjointable operator with $F_D^*=F_{D^*}$. If $D$ is self-adjoint, then $F_D$ is self-adjoint.

\begin{theorem}
\label{interpolationthm}
Let $\alpha\in (0,1]$ and assume that $(\mathpzc{E},D)$ is a symmetric chain for $(\mathcal{A},B)$ with $D$ self-adjoint. Define the operator 
\begin{align*}
D_\alpha&:=D(1+D^2)^{(\alpha-1)/2}=F_D(1+D^2)^{\alpha/2},\\
\Dom(D_\alpha)&:=\Dom(1+D^2)^{\alpha/2}=(1+D^2)^{-\alpha/2}\mathpzc{E}.
\end{align*}
Then the following holds:
\begin{enumerate}
\item $D_\alpha$ is closed, self-adjoint and regular. 
\item $(\mathpzc{E},D_\alpha)$ is a symmetric chain for $(\mathcal{A}_\beta,B)$ with $\mathcal{A}_\beta\subseteq \mathrm{Lip}(|D_\alpha|)$ for any $\beta\in (\alpha,1]$ 
\item If $(\mathpzc{E},D)$ is a closed chain for $(\mathcal{A},B)$, then $(\mathpzc{E},D_\alpha)$ is a closed chain for $(\mathcal{A}_\alpha,B)$ for any $\beta\in (\alpha,1]$.
\end{enumerate}
\end{theorem}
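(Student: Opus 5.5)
Item 1 is pure functional calculus. The function $x\mapsto x(1+x^2)^{(\alpha-1)/2}$ is a real-valued continuous function. Applying it to the self-adjoint regular operator $D$ via the continuous functional calculus for regular operators (see \cite{lancesbook}) gives a self-adjoint regular operator. Its domain is $\Dom((1+D^2)^{\alpha/2})$, since $|x|(1+x^2)^{(\alpha-1)/2}$ and $(1+x^2)^{\alpha/2}$ differ by a bounded function whose reciprocal is bounded away from $0$ at infinity. Moreover $D_\alpha=F_D(1+D^2)^{\alpha/2}$ with $F_D$ bounded and commuting with $(1+D^2)^{\alpha/2}$, so this is consistent with the stated domain. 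Note also that $|D_\alpha|$ and $(1+D^2)^{\alpha/2}$ differ by a bounded operator, so Lipschitz properties for one transfer to the other.

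For item 2 the plan is complex interpolation of the commutator map, as in the proof of Proposition \ref{autoodkkd}. By that proposition, $a\mapsto [D,a]$ is bounded $\mathcal{A}\to\End_B^*(\mathpzc{E})$. The key estimate I aim for is that for $0<\gamma<1$,
$$\|[(1+D^2)^{\gamma/2},a]\|\leq C_\gamma\|[D,a]\|^{\gamma}\|a\|^{1-\gamma}\lesssim \|a\|_{\mathcal{A}},$$
and likewise for $[D_\gamma,a]$. I would prove it using the integral formula $(1+D^2)^{\gamma/2}=c_\gamma\int_0^\infty\lambda^{\gamma/2-1}(1+D^2)(1+\lambda+D^2)^{-1}\,\mathrm{d}\lambda$, and the standard bound $\|[(1+\lambda+D^2)^{-1}D^{k},a]\|\lesssim(1+\lambda)^{(k-1)/2}\|[D,a]\|$ for $k=0,1,2$; for $k=2$ this uses the identity $D^2(1+\lambda+D^2)^{-1}=1-(1+\lambda)(1+\lambda+D^2)^{-1}$. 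Splitting the $\lambda$-integral at a level depending on $\|[D,a]\|/\|a\|$ yields the estimate. Alternatively, I would view $z\mapsto [(1+D^2)^{z/2},a]$ on the strip $0\leq \mathrm{Re}\, z\leq 1$ as an analytic family bounded by $\|a\|_A$ on $\mathrm{Re}\, z=0$ and by $\|a\|_{\mathcal{A}}$ on $\mathrm{Re}\, z=1$ (imaginary powers are unitary). The interpolation property of $[A,\mathcal{A}]_\beta$, applied to the bilinear/analytic family $(z,a)\mapsto[(1+D^2)^{z/2},a]$ via the Stein interpolation theorem, then shows that $[(1+D^2)^{\alpha/2},a]$ is bounded for $a\in\mathcal{A}_\beta$ whenever $\beta\geq\alpha$. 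The strict inequality $\beta>\alpha$ gives slack to absorb the polynomial growth in $\mathrm{Im}\, z$ of the constants. I also need to show that $a\in\mathcal{A}_\beta$ preserves $\Dom(D_\alpha)$. I would obtain this by approximating $a$ by elements of $\mathcal{A}$ (dense in $\mathcal{A}_\beta$) and using closedness of $D_\alpha$ together with the uniform commutator bound. Combining the bounds for $(1+D^2)^{\alpha/2}$ and $F_D$ gives both $\mathcal{A}_\beta\subseteq\mathrm{Lip}(D_\alpha)$ and $\mathcal{A}_\beta\subseteq\mathrm{Lip}(|D_\alpha|)$. The boundedness of $[F_D,a]$ for $a\in\mathcal{A}$ is the standard Baaj--Julg fact, and $[F_D,a]$ is trivially bounded for $a\in A$.

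For item 3, $D_\alpha$ is self-adjoint, so it remains to check local compactness. This holds because $(1+D_\alpha^2)^{-1}$ is a function of $D$ vanishing at infinity and dominated by $(1+D^2)^{-\alpha}$. Hence $a(1+D_\alpha^2)^{-1}$ is $B$-compact whenever $a(1+D^2)^{-1}$ is, which extends from $\mathcal{A}$ to all $a\in A$ by norm continuity.

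The main obstacle is the interpolation step: making the Stein-type interpolation rigorous for the unbounded operator family, in particular controlling domains and growth in $\mathrm{Im}\, z$. This is where the gap $\beta>\alpha$ enters, and I would first try the direct integral estimate to avoid it.
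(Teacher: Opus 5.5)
Your items 1 and 3 are fine and essentially match the paper. The gap is in item 2, in the passage from $\mathcal{A}$ to $\mathcal{A}_\beta$.

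Your Stein-type argument rests on the claim that $z\mapsto[(1+D^2)^{z/2},a]$ is bounded by $\|a\|_{\mathcal{A}}$ on the line $\mathrm{Re}\,z=1$. This is false for a symmetric chain. Already at $z=1$ it says that $[(1+D^2)^{1/2},a]$ is bounded for every $a\in\mathcal{A}$. Since $(1+D^2)^{1/2}-|D|$ is bounded, that means $\mathcal{A}\subseteq\mathrm{Lip}(|D|)$. This is exactly the Lipschitz property that is not assumed, and which the theorem is designed to produce (compare Corollary \ref{ononaod}). Unitarity of imaginary powers does not help: with $\Delta:=(1+D^2)^{1/2}$ one has $[\Delta^{1+it},a]=\Delta^{it}[\Delta,a]+[\Delta^{it},a]\Delta$, and neither term is controlled.

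The paper instead interpolates the family $D_z=F_D\Delta^{z}$ on the strip $0\leq\mathrm{Re}\,z\leq 1-\epsilon$.
\begin{itemize}
\item On $\mathrm{Re}\,z=0$ the bound $2\|a\|_A$ is trivial.
\item On $\mathrm{Re}\,z=1-\epsilon$ one uses that $[\Delta^{1-\epsilon},a]$ is bounded for $a\in\mathrm{Lip}(D)$ (Lemma \ref{lipforpower}). Your resolvent integral proves precisely this, once your resolvent bound is corrected to $(1+\lambda)^{(k-3)/2}$ instead of $(1+\lambda)^{(k-1)/2}$. The imaginary-power term is handled by Lemma \ref{cpxpowerlemma}, applied to a power $\Delta^{1-\delta}$ for which $a$ is Lipschitz.
\end{itemize}
This gives $\|[D_\alpha,a]\|\lesssim\|a\|_{\mathcal{A}_\theta}$ with $\theta=\alpha/(1-\epsilon)$. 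One can choose $\epsilon>0$ with $\theta\leq\beta$ exactly because $\beta>\alpha$. So the strict inequality pays for the loss of $\epsilon$ at the top line, not for growth in $\mathrm{Im}\,z$; moderate growth is absorbed by the usual factor $\mathrm{e}^{\delta z^2}$.

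Relatedly, your direct estimate as you use it (ending in $\lesssim\|a\|_{\mathcal{A}}$) only yields bounded commutators for $a\in\mathcal{A}$. By itself it does not avoid the interpolation step. It can be made to, and this would be a genuinely different route from the paper's. Keep its multiplicative form $\|[\Delta^{\gamma},a]\|\lesssim\|a\|_A^{1-\gamma}\|a\|_{\mathcal{A}}^{\gamma}$. By the $J$-method, this makes $a\mapsto[\Delta^\gamma,a]$ bounded on the real interpolation space $(A,\mathcal{A})_{\gamma,1}$. Since $\mathcal{A}\hookrightarrow A$, one has $[A,\mathcal{A}]_\beta\subseteq(A,\mathcal{A})_{\beta,\infty}\subseteq(A,\mathcal{A})_{\alpha,1}$ for $\alpha<\beta$. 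Taking $\gamma=\alpha$ then handles $|D_\alpha|$ with no imaginary powers at all, with domain invariance following from your density-plus-closedness argument.

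Finally, bounded $[F_D,a]$ and bounded $[\Delta^\alpha,a]$ do not combine to give a bounded $[D_\alpha,a]$. In
\[
[D_\alpha,a]=F_D[\Delta^\alpha,a]+[F_D,a]\Delta^\alpha
\]
you need $[F_D,a]\Delta^{\alpha}$ to be bounded. For $a\in\mathrm{Lip}(D)$ and $\alpha<1$ this follows from the same resolvent integral, but it must again be transported to $\mathcal{A}_\beta$. Alternatively, interpolate $D_z$ itself, as the paper does.
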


The reader should note that if $(\mathpzc{E},D)$ is a symmetric chain for $(\mathcal{A},B)$ with $D$ self-adjoint, then $(\mathpzc{E},D)$ is half-closed if and only if it is closed.

\begin{remark}
It seems unlikely that Theorem \ref{interpolationthm} can be made into a sharp result, i.e. that $(\mathpzc{E},D_\alpha)$ would be a symmetric chain for $(\mathcal{A}_\alpha,B)$. At best, one could hope for the theorem being sharp for Lipschitz cycles. Compare to \cite[Chapter 1, Proposition 7.4]{taylortoolspde}.
\end{remark}

In order to prove the theorem, we shall need a series of lemmas. 

\begin{lemma}
\label{cpxpowerlemma}
Let $\Delta$ be a positive, invertible, self-adjoint, and regular operator on a $B$-Hilbert module $\mathpzc{E}$. For any $\epsilon>0$, there is a $C>0$ such that 
$$\|[\Delta^{it},a]\Delta^{1-\epsilon}\|\leq C\|[\Delta,a]\|,$$
for all $t\in \R$ and $a\in \mathrm{Lip}(\Delta)$.
\end{lemma}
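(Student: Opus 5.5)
The plan is to reduce the commutator to resolvent commutators, where the hypothesis $a\in\mathrm{Lip}(\Delta)$ enters only through
$$[(\lambda+\Delta)^{-1},a]=-(\lambda+\Delta)^{-1}[\Delta,a](\lambda+\Delta)^{-1},\quad \lambda\geq 0.$$
The real work is to arrange the $t$-dependence so that it cancels rather than accumulates. I have not verified that the proposed representation in fact yields a constant independent of $t$, and I flag below exactly where this could fail.

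\textbf{Step 1 (removing the phase).} Since $\Delta^{it}$ is unitary and commutes with $\Delta^{1-\epsilon}$, write
$$[\Delta^{it},a]\Delta^{1-\epsilon}=\bigl(\Delta^{it}a\Delta^{-it}-a\bigr)\Delta^{1-\epsilon}\Delta^{it}.$$
Hence it suffices to bound $\|(\Delta^{it}a\Delta^{-it}-a)\Delta^{1-\epsilon}\|$ uniformly in $t$.

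\textbf{Step 2 (a representation with one free power of $\Delta$).} Write $\Delta^{it}=\Delta\,\Delta^{-w}$ with $w=1-it$. Then on the dense set $\Delta^{-1}\mathpzc{E}$,
$$[\Delta^{it},a]=\Delta[\Delta^{-w},a]+[\Delta,a]\Delta^{-w}.$$
\begin{itemize}
\item The second term, times $\Delta^{1-\epsilon}$, equals $[\Delta,a]\Delta^{it}\Delta^{-\epsilon}$. This is bounded by $\|\Delta^{-\epsilon}\|\,\|[\Delta,a]\|$ uniformly in $t$, using invertibility of $\Delta$.
\item For the first term, $\operatorname{Re}w=1$ sits on the boundary of the Balakrishnan formula, so I shift: $\Delta^{-w}=\Delta^{-\epsilon/2}\Delta^{-w'}$ with $w'=1-\epsilon/2-it$ and $0<\operatorname{Re}w'<1$. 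Then use
$$\Delta^{-w'}=\frac{\sin(\pi w')}{\pi}\int_0^\infty\lambda^{-w'}(\lambda+\Delta)^{-1}\,\mathrm{d}\lambda,$$
and expand $[\Delta^{-\epsilon/2}\Delta^{-w'},a]$ by the Leibniz rule. The term $[\Delta^{-\epsilon/2},a]$ is treated by the same formula with the real exponent $\epsilon/2$, so it carries no $t$-dependence.
\end{itemize}
Combining the resolvent identity with $\|\Delta^{\theta}(\lambda+\Delta)^{-1}\|\lesssim(1+\lambda)^{\theta-1}$ for $\theta\in[0,1]$ makes every integrand absolutely integrable. The spare factor $\Delta^{-\epsilon/2}$ supplies the $\lambda^{-\epsilon/2}$ decay at infinity, and invertibility of $\Delta$ handles $\lambda\to 0$. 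This gives a bound of the form $c(t)\|[\Delta,a]\|$.

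\textbf{Step 3 (uniformity in $t$; the main obstacle).} The naive estimate from Step 2 yields $c(t)\sim|\sin(\pi w')|\sim e^{\pi|t|}$, because $|\lambda^{-w'}|=\lambda^{-\operatorname{Re}w'}$ carries no oscillation in absolute value. The growth in the prefactor must therefore be compensated by cancellation in the oscillating factor $\lambda^{it}$. I would do this by substituting $\lambda=e^{s}$, so that the integral becomes a Fourier transform in $s$ of the smooth operator-valued function
$$s\mapsto e^{(1-\operatorname{Re}w')s}(e^{s}+\Delta)^{-1}[\Delta,a](e^{s}+\Delta)^{-1}\Delta^{1-\epsilon/2},$$
whose norm decays exponentially as $s\to\pm\infty$ thanks to the $\epsilon$-room. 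The aim is to show that this Fourier transform decays like $e^{-\pi|t|}$, matching $1/|\sin\pi w'|$. This is plausible because the analogous scalar computation is exact: by the beta integral, $\int_0^\infty\lambda^{-w'}(\lambda+\mu)^{-2}\,\mathrm{d}\lambda$ equals $\mu^{-w'-1}\,\pi w'/\sin(\pi w')$, which cancels the sine exactly.

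To make the operator version rigorous, I would shift the $s$-contour to $\operatorname{Im}s=\pm\pi\mp\delta$. The obstruction is the poles of $(e^{s}+\mu)^{-1}$ at $\operatorname{Im}s=\pi$, and the gap $\delta>0$ is paid for by the remaining power of $\Delta^{\epsilon/2}$. The estimates needed on the shifted line are the uniform bounds $\|(e^{s}+\Delta)^{-1}\|$ for complex $s$ with $|\operatorname{Im}s|\leq\pi-\delta$, which follow from sectoriality of the positive operator $\Delta$. This contour-shift estimate, which must be carried out with $[\Delta,a]$ sandwiched between two resolvents, is the step I expect to be hardest. If it fails to give uniformity, it would at least show precisely how the constant $C$ must depend on $t$.
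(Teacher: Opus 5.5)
Your Step 3 cannot be completed, because its target is false: no constant independent of $t$ exists. Let $\mathpzc{E}$ be a Hilbert space ($B=\C$) with orthonormal basis $(e_n)_{n\geq 0}$. Let $\Delta e_0=e_0$ and $\Delta e_n=\mu_ne_n$ with $\mu_n=1+\frac{1}{n}$ for $n\geq 1$. This $\Delta$ is positive, invertible, self-adjoint and regular, and every bounded operator lies in $\mathrm{Lip}(\Delta)$. For the rank-one operator $a_n\xi:=\langle e_n,\xi\rangle e_0$ one has
\[
\|[\Delta,a_n]\|=\frac{1}{n},\qquad \|[\Delta^{it},a_n]\Delta^{1-\epsilon}\|=|1-\mu_n^{it}|\,\mu_n^{1-\epsilon}.
\]
At $t_n=\pi/\log\mu_n\approx\pi n$ the right-hand quantity equals $2\mu_n^{1-\epsilon}$, which forces $C\geq 2n\min(1,2^{1-\epsilon})$ for every $n$. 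Eigenvalue pairs $n,\ n+\mathrm{e}^{-n}$ give the same conclusion for an unbounded $\Delta$ with compact resolvent.

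Your own scalar check already contains the obstruction. It gives $\frac{\sin(\pi w')}{\pi}\int_0^\infty\lambda^{-w'}(\lambda+\mu)^{-2}\,\mathrm{d}\lambda=w'\mu^{-w'-1}$, so the factor $\mathrm{e}^{\pi|t|}$ cancels but a factor $|w'|\sim|t|$ survives. When $[\Delta,a]$ is sandwiched between resolvents at two spectral values $\mu,\nu$, the integral becomes the divided difference $(\mu^{-w'}-\nu^{-w'})/(\nu-\mu)\approx w'\mu^{-w'-1}$ for $\mu\approx\nu$. So your term $\Delta^{1-\epsilon/2}[\Delta^{-w'},a]\Delta^{1-\epsilon}$ really has size about $|t|\,\mu^{-\epsilon}\|[\Delta,a]\|$ on spectrally close vectors.

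In the contour shift itself, moving to $\mathrm{Im}\,s=\pm(\pi-\delta)$ leaves a factor $\mathrm{e}^{\delta|t|}$. The price of the gap $\delta$ is not absorbed by $\Delta^{\epsilon/2}$, which only buys integrability in $\mathrm{Re}\,s$: each of the two resolvents costs $(\sin\delta)^{-1}$. Taking $\delta\sim(1+|t|)^{-1}$ yields the true statement $\|[\Delta^{it},a]\Delta^{1-\epsilon}\|\leq C(1+|t|)^2\|[\Delta,a]\|$. The example shows that at least linear growth in $|t|$ is unavoidable.

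The paper's proof differs in form but has the same defect. It writes $\Delta^z$, for $\mathrm{Re}(z)<0$, as a keyhole-contour integral and sandwiches $[\Delta,a]$ between two resolvents. It then asserts an estimate of the integrand uniform in $z$ and passes to the strong limit $z\to it$ on $\Dom(\Delta)$. On any contour of fixed opening angle $\theta$, however, $|\lambda^z|=|\lambda|^{\mathrm{Re}(z)}\mathrm{e}^{-\mathrm{Im}(z)\arg\lambda}$ is not bounded uniformly in $\mathrm{Im}(z)$. Narrowing $\theta$ to compensate costs $(\sin\theta)^{-2}$ from the two resolvents, exactly as above, so that route also yields only the polynomial bound.

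The polynomial bound is all that the interpolation argument in Theorem~\ref{interpolationthm} needs. Stein interpolation for analytic families tolerates polynomial growth in $\mathrm{Im}(z)$ along the boundary lines, for instance after multiplying by $\mathrm{e}^{\delta' z^2}$. Your Steps 1 and 2 are sound. Replace the goal of Step 3 by a bound of the form $C(1+|t|)^N\|[\Delta,a]\|$, and your argument then goes through.
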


\begin{proof}
For $\mathrm{Re}(z)<0$, we can write $\Delta^z$ as a norm convergent integral
$$\Delta^z=\frac{1}{2\pi i}\int_\Gamma \lambda^z(\lambda-\Delta)^{-1}\mathrm{d} \lambda,$$
where $\Gamma$ is a key hole contour enclosing the branch cut of $\lambda\mapsto \lambda^z$. We therefore have that 
$$[\Delta^z,a]=\frac{1}{2\pi i}\int_\Gamma \lambda^z(\lambda-\Delta)^{-1}[\Delta,a](\lambda-\Delta)^{-1}\mathrm{d} \lambda.$$
Therefore, for $\epsilon>0$, the operator $[\Delta^z,a]\Delta^{1-\epsilon}$ is norm bounded and we can express it as a norm convergent integral 
$$[\Delta^z,a]=\frac{1}{2\pi i}\int_\Gamma \lambda^z(\lambda-\Delta)^{-1}[\Delta,a](\lambda-\Delta)^{-1}\Delta^{1-\epsilon}\mathrm{d} \lambda.$$
Estimating the integrand, we see that for any $\epsilon>0$ there is a $C>0$ such that for all $\mathrm{Re}(z)<0$, 
\begin{equation}
\label{uniesitememd}
\|[\Delta^{z},a]\Delta^{1-\epsilon}\|\leq C\|[\Delta,a]\|.
\end{equation}

For $\xi\in \Dom(\Delta)$, we have the norm convergent limit
$$\lim_{z\to it}[\Delta^{z},a]\Delta^{1-\epsilon}\xi=[\Delta^{it},a]\Delta^{1-\epsilon}\xi.$$
From Equation \eqref{uniesitememd}, we deduce that $\|[\Delta^{it},a]\Delta^{1-\epsilon}\xi\|\leq C\|[\Delta,a]\|\|\xi\|$ for $\xi\in \Dom(\Delta)$ and the lemma follows from density.
\end{proof}

\begin{lemma}
\label{lipforpower}
Let $D$ be a regular self-adjoint operator. Then 
$$\mathrm{Lip}(D)\subseteq \mathrm{Lip}((1+D^2)^{1/2-\epsilon}),$$ 
for any $\epsilon>0$.
\end{lemma}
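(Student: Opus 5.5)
The plan is to realize $(1+D^2)^{1/2-\epsilon}$ through the complex powers of $\Delta:=(1+D^2)^{1/2}$, which is positive, invertible, self-adjoint and regular with $\Dom(\Delta)=\Dom(D)$. Lemma \ref{cpxpowerlemma} then gives a commutator estimate for $a\in\mathrm{Lip}(\Delta)$. So I first reduce to that setting, and then apply a resolvent/contour representation of the real power $\Delta^{1-2\epsilon}=(1+D^2)^{1/2-\epsilon}$.

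\emph{Step 1: $\mathrm{Lip}(D)\subseteq\mathrm{Lip}(\Delta)$ up to an arbitrarily small loss.} Rather than proving this directly, I work with a representation of $\Delta^{s}$ for $s\in(0,1)$ that only involves resolvents of $D^2$. Write
$$\Delta^{s}=(1+D^2)^{s/2}=\frac{\sin(\pi s/2)}{\pi}\int_0^\infty \lambda^{s/2-1}(1+D^2)(\lambda+1+D^2)^{-1}\,\mathrm{d}\lambda .$$
Equivalently, write $\Delta^{s}=(1+D^2)\,(1+D^2)^{s/2-1}$, and use the Stieltjes formula
$$(1+D^2)^{-\beta}=\frac{\sin\pi\beta}{\pi}\int_0^\infty\lambda^{-\beta}(\lambda+1+D^2)^{-1}\,\mathrm{d}\lambda$$
for $\beta\in(0,1)$. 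For $a\in\mathrm{Lip}(D)$ and $\xi$ in a suitable core (e.g.\ $\Dom(D^2)$), compute
$$[(\lambda+1+D^2)^{-1},a]=-(\lambda+1+D^2)^{-1}\bigl(D[D,a]+[D,a]D\bigr)(\lambda+1+D^2)^{-1}.$$
The standard estimates $\|D(\lambda+1+D^2)^{-1}\|\lesssim(1+\lambda)^{-1/2}$ and $\|(\lambda+1+D^2)^{-1}\|\le(1+\lambda)^{-1}$ then bound this commutator by $C(1+\lambda)^{-3/2}\|[D,a]\|$. I use this also when it is multiplied on one side by a factor of $D$ or $(1+D^2)^{1/2}$, which costs at most $(1+\lambda)^{1/2}$.

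\emph{Step 2: the commutator of $\Delta^{1-2\epsilon}$.} Since $\Delta^{1-2\epsilon}=\Delta\cdot\Delta^{-2\epsilon}$, I write, on $\xi\in\Dom(D)$,
$$[\Delta^{1-2\epsilon},a]\xi=\frac{\sin\pi\epsilon}{\pi}\int_0^\infty\lambda^{-\epsilon}\,[(1+D^2)(\lambda+1+D^2)^{-1}\Delta^{-1},a]\xi\,\mathrm{d}\lambda .$$
I expand each integrand by the Leibniz rule into terms of the form (bounded function of $D$)$\cdot[D,a]\cdot$(bounded function of $D$). Here I use $[\Delta^{-1},a]=-\Delta^{-1}[\Delta,a]\Delta^{-1}$, and I handle $[\Delta,a]$ formally via $D[D,a]+[D,a]D$ sandwiched between factors of $\Delta^{-1}$ and resolvents. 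The resulting terms decay like $(1+\lambda)^{-1/2}$ at most, so after multiplying by $\lambda^{-\epsilon}$ the integral needs $\lambda^{-1/2-\epsilon}$-type decay. To get this I balance the powers: I place an extra $(\lambda+1+D^2)^{-\delta}$ factor on each side. Equivalently, I exploit that the difference $\Delta^{1-2\epsilon}$ versus $\Delta$ buys $\lambda^{-\epsilon}$ integrability. The output is the bound
$$\|[\Delta^{1-2\epsilon},a]\xi\|\le C_\epsilon\|[D,a]\|\|\xi\|.$$
I also check invariance of the domain: $a\Dom(\Delta^{1-2\epsilon})\subseteq\Dom(\Delta^{1-2\epsilon})$ follows because $a\Dom(D)\subseteq\Dom(D)$, which gives the core, and then a closedness argument extends to the full domain. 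Adjoint-closedness of $\mathrm{Lip}(D)$ handles $a^*$.

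\emph{Main obstacle.} The main obstacle is the convergence of the $\lambda$-integral near $\lambda=\infty$. Naive estimates give exactly borderline $\lambda^{-1}$ decay for $[\Delta,a]$, which is the well-known failure of $\mathrm{Lip}(D)\subseteq\mathrm{Lip}(|D|)$. The point of the $\epsilon$-loss is to convert this into $\lambda^{-1-\epsilon}$. If the direct bookkeeping is messy, my fallback is to mimic Lemma \ref{cpxpowerlemma}. I would write $\Delta^{1-2\epsilon}=\Delta^{-2\epsilon}\Delta$, handle $[\Delta^{-2\epsilon},a]\Delta$ with the key-hole contour integral of that lemma, and treat the term $\Delta^{-2\epsilon}[\Delta,a]$ via $[\Delta,a]=[\Delta^2,a]\Delta^{-1}$ corrected by the resolvent identity. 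Here $[\Delta^2,a]=D[D,a]+[D,a]D$, and the $\Delta^{-2\epsilon}$ factors supply the needed decay.
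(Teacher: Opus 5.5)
There is a genuine gap in Step 2, and the fallback does not close it. You factor $\Delta^{1-2\epsilon}=\Delta\cdot\Delta^{-2\epsilon}$ and expand the commutator of the integrand $\Delta(\lambda+\Delta^2)^{-1}$ by the Leibniz rule. This produces the term $[\Delta,a](\lambda+\Delta^2)^{-1}=[\Delta,a]\Delta^{-1}\cdot\Delta(\lambda+\Delta^2)^{-1}$. From your resolvent estimates this term is only $O(\lambda^{-1/2})$. Integrated against $\lambda^{-\epsilon}$ it reproduces $[\Delta,a]\Delta^{-2\epsilon}$, whose boundedness is essentially the statement you are trying to prove. So this route can only work through a cancellation between the Leibniz pieces, and you never exhibit one.

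Your own bookkeeping shows the problem. Integrands of size $\lambda^{-\epsilon}(1+\lambda)^{-1/2}$ are not integrable. ``Placing an extra $(\lambda+1+D^2)^{-\delta}$ factor on each side'' is not something the algebra supplies.

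The fallback is circular. Lemma \ref{cpxpowerlemma} assumes $a\in\mathrm{Lip}(\Delta)$ and bounds everything by $\|[\Delta,a]\|$, which is exactly what is unavailable for $a\in\mathrm{Lip}(D)$. Also, $[\Delta,a]=[\Delta^2,a]\Delta^{-1}$ is not an identity; the correct relation is $[\Delta^2,a]\Delta^{-1}=[\Delta,a]+\Delta[\Delta,a]\Delta^{-1}$.

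Your Step 1 already contains a complete proof; you just never applied it with $s=1-2\epsilon$. It suffices to take $0<\epsilon<1/2$, the other cases being trivial. For $\xi\in\Dom(D)$ the integral
\[
(1+D^2)^{1/2-\epsilon}\xi=\frac{\cos \pi\epsilon}{\pi}\int_0^\infty \lambda^{-1/2-\epsilon}(1+D^2)(\lambda+1+D^2)^{-1}\xi\,\mathrm{d}\lambda
\]
converges absolutely. Since $(1+D^2)(\lambda+1+D^2)^{-1}=1-\lambda(\lambda+1+D^2)^{-1}$, the commutator of the integrand with $a$ is $-\lambda[(\lambda+1+D^2)^{-1},a]$. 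By your resolvent estimate this has norm at most $\lambda(1+\lambda)^{-3/2}\|[D,a]\|$.

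Now $\int_0^\infty\lambda^{1/2-\epsilon}(1+\lambda)^{-3/2}\,\mathrm{d}\lambda$ is finite precisely because $\epsilon>0$; it diverges logarithmically at $\epsilon=0$. This gives $\|[(1+D^2)^{1/2-\epsilon},a]\xi\|\le C_\epsilon\|[D,a]\|\|\xi\|$ on the core $\Dom(D)$, and your closedness argument finishes the proof.

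This differs from the paper's route. The paper writes $[(1+D^2)^{1/2-\epsilon},a]=-(1+D^2)^{1/2-\epsilon}[(1+D^2)^{\epsilon-1/2},a](1+D^2)^{1/2-\epsilon}$ and expands the negative power through resolvents. It gains the needed decay by absorbing a weight $(1+D^2)^{1/2-\epsilon}$ on \emph{both} sides of $[D,a]$, which gives integrands of size $\lambda^{-1-\epsilon}$. The direct Balakrishnan-type representation avoids that conjugation step.
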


\begin{proof}
By definition, $\mathrm{Lip}(D)$ is a $*$-algebra. It suffices to prove the lemma for $\epsilon\in (0,1/2)$. We write 
$$(1+D^2)^{\epsilon-1/2}=-\frac{1}{\pi}\int_0^\infty \lambda^{\epsilon-1/2}(1+\lambda+D^2)^{-1}\mathrm{d}\lambda,$$
which is a norm convergent integral. Using the computations of \cite[Section 2.3 and Lemma B.1]{boundarypap}, we have for $a\in\mathrm{Lip}(D)$  that 
\begin{align*}
[(1+&D^2)^{\epsilon-1/2},a]=-\frac{1}{\pi}\int_0^\infty \lambda^{\epsilon-1/2}[(1+\lambda+D^2)^{-1},a]\mathrm{d}\lambda=\\
=&-\frac{1}{\pi}\int_0^\infty \lambda^{\epsilon-1/2}(1+\lambda+D^2)^{-1/2}D(1+\lambda+D^2)^{-1/2}[D,a](1+\lambda+D^2)^{-1}\mathrm{d}\lambda-\\
&-\frac{1}{\pi}\int_0^\infty \lambda^{\epsilon-1/2}(1+\lambda+D^2)^{-1}[D,a]D(1+\lambda+D^2)^{-1}\mathrm{d}\lambda.
\end{align*}
By standard norm estimates, as in \cite[Section 2.3 and Lemma B.1]{boundarypap}, we have that 
\begin{align*}
[(1+&D^2)^{1/2-\epsilon},a]=-(1+D^2)^{1/2-\epsilon}[(1+D^2)^{\epsilon-1/2},a](1+D^2)^{1/2-\epsilon}=\\
=&\frac{1}{\pi}\int_0^\infty \lambda^{\epsilon-1/2}(1+D^2)^{1/2-\epsilon}(1+\lambda+D^2)^{-1/2}\\
&\qquad\qquad\qquad\qquad D(1+\lambda+D^2)^{-1/2}[D,a](1+\lambda+D^2)^{-1}(1+D^2)^{1/2-\epsilon}\mathrm{d}\lambda+\\
&+\frac{1}{\pi}\int_0^\infty \lambda^{\epsilon-1/2}(1+D^2)^{1/2-\epsilon}(1+\lambda+D^2)^{-1}\\
&\qquad\qquad\qquad\qquad[D,a]D(1+\lambda+D^2)^{-1}(1+D^2)^{1/2-\epsilon}\mathrm{d}\lambda,
\end{align*}
applied to $\xi\in \Dom((1+D^2)^{1/2-\epsilon})$ produces a norm bounded integral bounded by $\|[D,a]\|\|\xi\|$, and so $[(1+D^2)^{1/2-\epsilon},a]$ is norm bounded on $\Dom((1+D^2)^{1/2-\epsilon})$. Indeed, the integrands have norms behaving like $\lambda^{\epsilon-1/2}(1+\lambda)^{-2\epsilon-1/2}$. Since $a$ a priori preserves $\Dom(D)=\Dom((1+D^2)^{1/2})$ which is a core for $(1+D^2)^{1/2-\epsilon}$, we have that $a\in \mathrm{Lip}((1+D^2)^{1/2-\epsilon})$.
\end{proof}

\begin{proof}[Proof of Theorem \ref{interpolationthm}]
Item (1) follows directly from functional calculus of self-adjoint regular operator. The most involved part is the proof is item (2). Before entering into the proof of item (2), we study item (3). In fact, we prove that item (3) follows from item (2). Assume that $(\mathpzc{E},D)$ is a closed chain for $(\mathcal{A},B)$ with $(\mathpzc{E},D_\alpha)$ a symmetric chain for $(\mathcal{A}_\beta,B)$. For any $T\in \mathrm{End}_B^*(\mathpzc{E})$ it is equivalent that $T(1+D_\alpha^2)^{-1}\in \K_B(\mathpzc{E})$ and $T(1+D^2)^{-\alpha}\in \K_B(\mathpzc{E})$ because $D_\alpha^2=D^2(1+D^2)^{\alpha-1}$. Moreover, $T(1+D^2)^{-\alpha}\in \K_B(\mathpzc{E})$ holds if and only if $T(1+D^2)^{-1}\in \K_B(\mathpzc{E})$, so we conclude that $a(1+D_\alpha^2)^{-1}\in \K_B(\mathpzc{E})$ for all $a\in \mathcal{A}_\beta$ (and even for all $a\in A$). This completes the prove that item (3) follows from item (2).

We now consider item (2). Notice that for any $z\in \mathbb{C}$, we can define the regular operator 
$$D_z:=D(1+D^2)^{(z-1)/2}.$$ 
To prove item (2), the theory of complex interpolation implies that it suffices to prove that for any $\epsilon\in (0,1)$, then for any $a\in \mathcal{A}$, there are constants $C_0, C_1\geq 0$ such that 
\begin{equation}
\label{cpxest}
\|[D_z,a]\|_{\mathrm{End}_B^*(\mathpzc{E})}\leq C_j,
\end{equation}
for all $z$ with $\mathrm{Re}(z)=j$ for $j\in \{0,1-\epsilon\}$. If this is indeed the case, there is for any $0\leq \alpha<\beta\leq 1$ a constant $C_{\alpha,\beta}>0$ such that $\|[D_\alpha,a]\|_{\mathrm{End}_B^*(\mathpzc{E})}\leq C_{\alpha,\beta} \|a\|_{\mathcal{A}_\beta}$ for all $a\in \mathcal{A}$. By density, we then conclude that $\|[D_\alpha,a]\|_{\mathrm{End}_B^*(\mathpzc{E})}\leq C_{\alpha,\beta} \|a\|_{\mathcal{A}_\beta}$ for all $a\in \mathcal{A}_\beta$.

Set $\Delta:=(1+D^2)^{1/2}$ which is positive, invertible, self-adjoint, and regular since $D$ is regular. Define the contraction $F:=D\Delta^{-1}$. We can write 
$$D_z=F\Delta^z.$$
For $\mathrm{Re}(z)=0$, $D_z$ is a contraction. Hence, $\|D_z\|_{\mathrm{End}_B^*(\mathpzc{E})}\leq 1$ for $\mathrm{Re}(z)=0$, and 
$$\|[D_z, a]\|_{\mathrm{End}_B^*(\mathpzc{E})}\leq 2\|a\|_A.$$
We can therefore take $C_0=2\|a\|_A$. 

For $z=1-\epsilon+it$ and $a\in \mathcal{A}$, we write 
$$[D_z,a]=\Delta^{it}[D_{1-\epsilon},a]+[\Delta^{it},a]D_{1-\epsilon}=\Delta^{it}[D_{1-\epsilon},a]+[\Delta^{it},a]D_{1-\epsilon}.$$
The first term is uniformly bounded in $t$ by Lemma \ref{lipforpower} and the second term is uniformly bounded in $t$ by  Lemma \ref{cpxpowerlemma}. The estimate in Equation \eqref{cpxest} follows, and so does item 2).
\end{proof}

\part{Bordisms in $KK$}
\label{partII}

\section{Hilsum's notion of bordism and the $KK$-bordism group}
\label{subsecglying}

In this section we recall Hilsum's notion of $KK$-bordisms of unbounded $KK$-cycles \cite{hilsumbordism} and the associated bivariant $K$-theory group \cite{DGM}. The geometric underpining and motivation for $KK$-bordisms go back to Hilsum \cite{hilsumcmodbun,hilsumfol,hilsumbordism} and we encourage the reader to read these papers for further context and ideas. While the material in this section can be found in \cite{DGM}, the results and notations set the stage for the examples of $KK$-bordisms we discuss below in Sections \ref{exampleofboridmsmssec} and \ref{sec:ncgexbord}. The $KK$-bordism group recalled in this section from \cite{DGM} is the central object of study in the present work.

As above, we fix $C^*$-algebras $A$ and $B$ and a dense $*$-subalgebra $\mathcal{A}\subseteq A$.

\begin{define} 
\label{HilBorDef} 
A symmetric chain with boundary is a collection $\mathfrak{X}=(\mathfrak{Z},\Theta,\mathfrak{Y})$ where
\begin{enumerate}
\item The boundary cycle $\mathfrak{Y}=(\mathpzc{E}, D)$ is an odd/even $(\mathcal{A}, B)$-cycle;
\item The interior chain $\mathfrak{Z}=(\mathpzc{N}, T)$ is a symmetric even/odd $(\mathcal{A}, B)$-chain;
\item The boundary data $\Theta=(\theta,p)$, describes boundary compatibility in the sense that
\begin{itemize}
\item $p$ is a projection in $\End_B^*(\mathpzc{N})$ that commutes with the $A$-action ($p$ is even in the graded situation);
\item $\theta: p\mathpzc{N} \rightarrow L^2[0,1]\hat{\boxtimes} \mathpzc{E}$ is an isomorphism, respecting the grading when there is one (i.e. $\mathfrak{Z}$ is even and $\mathfrak{Y}$ is odd).
\end{itemize}
\end{enumerate}
These objects are compatible in the following sense described using the representation 
$$b:C([0,1],A)\to \End_B^*(\mathpzc{N}), \quad b(\phi):=\phi(1)(1-p)+p\theta^{-1} \phi\theta.$$
We require that the following conditions to hold:
\begin{enumerate}
\item[a)] For $\phi\in C^\infty_c(0,1]$,  
$$b(\phi)\Dom T^*\subseteq \Dom T\quad\mbox{and}\quad T^*b(\phi)=Tb(\phi)\quad\mbox{on}\quad \Dom T^*.$$
\item[b)] For $\phi\in C^\infty_c(0,1)$,  
$$\phi \Dom \Psi(D)=\theta b(\phi)\Dom T \quad \mbox{and} \quad T=\theta^{-1}\Psi(D)\theta\quad \mbox{on}\quad b(\phi)\Dom T.$$
\item[c)] For $\phi_1,\phi_2\in C^\infty[0,1]$ satisfying $\phi_1\phi_2=0$,  
$$b(\phi_1) T b(\phi_2)=0.$$
\end{enumerate}

If additionally $\mathfrak{Z}$ is a half-closed $(C^\infty_c((0,1],\mathcal{A}),B)$-chain under $b$ \footnote{In other words, $b(\phi)(I+T^*T)^{-1}$ is compact for any $\phi\in C^\infty_c((0,1],\mathcal{A})$.} we say that $\mathfrak{X}$ is a bordism. In this case we say that the $(\mathcal{A}, B)$-cycle $\partial \mathfrak{X}:=\mathfrak{Y}$ is the boundary of $\mathfrak{X}$ and that the half-closed $(C^\infty_c((0,1],\mathcal{A}),B)$-chain $\mathfrak{X}^\circ:=\mathfrak{Z}$ is the interior of $\mathfrak{X}$. We say that $\mathfrak{X}$ is a filling bordism for its boundary $\partial \mathfrak{X}$.
\end{define}

\begin{remark}
In a symmetric chain with boundary $\mathfrak{X}=(\mathfrak{Z},\Theta,\mathfrak{Y})$, we require the parity of $\mathfrak{Z}$ to be different from the parity of $\mathfrak{Y}$. Geometrically, it means the dimension of a boundary drops by one. In terms of cycles, the parity shift also arise from the boundary data $\Theta$ implementing a ``local'' equivalence of a suspension of $\mathfrak{Y}$ with $\mathfrak{Z}$ ``near its boundary''.
\end{remark}

\begin{figure}
   \centering
  \includegraphics[height=5.35cm]{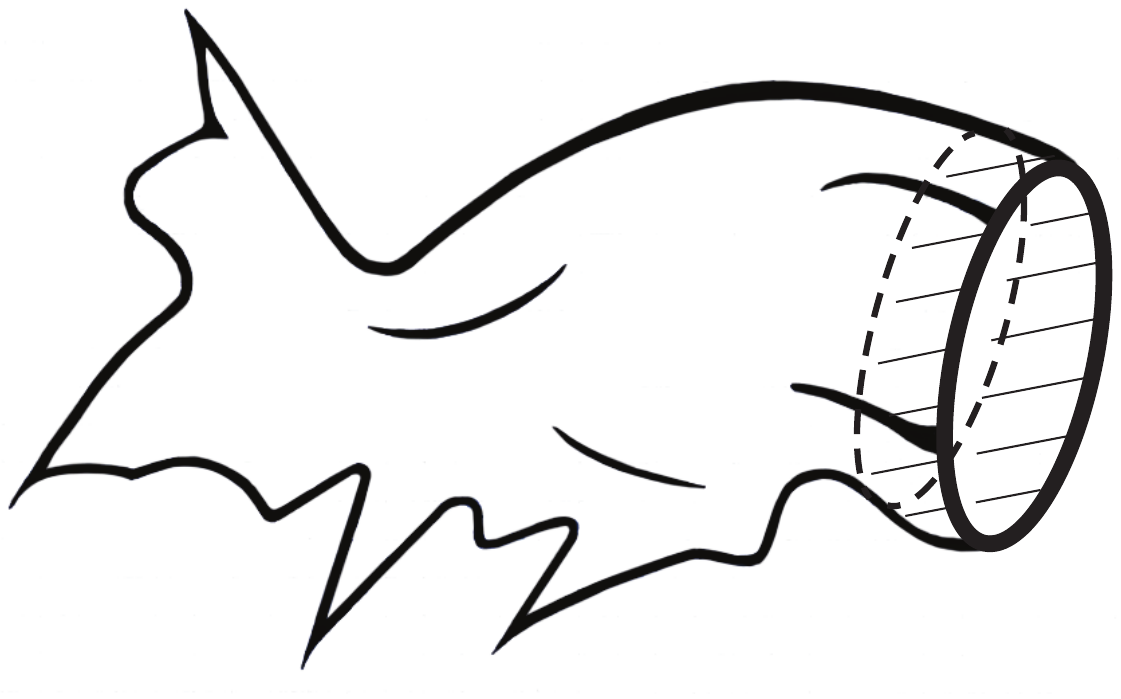}
  \caption{A graphical representation of a $KK$-bordism, with the shaded part on the right representing the collar neighbourhood of the boundary.}
\end{figure}

\begin{remark}
The reader should be made aware of the fact that for the primary relations we consider in this work, the only role of the boundary data $\Theta$ is to exist. In the upcoming work \cite{monographtwo}, we use bordisms to construct relative $KK$-groups to study secondary invariants in which case the precise boundary data will affect the class in the relative group. Therefore the boundary data is included in the notation for a bordism, even if it in all examples considered in this work is more or less clear from context. 
\end{remark}

\begin{remark}
\label{interohc}
By \cite[Lemma 5.4]{hilsumbordism}, the interior $\mathfrak{X}^\circ$ of a symmetric chain with boundary $\mathfrak{X}$ always forms a half-closed $(C^\infty_c((0,1),\mathcal{A}),B)$-chain under $b$. 
\end{remark}

\begin{remark}
It was proven in \cite[Theorem 6.2 and 7.2]{hilsumbordism} that if $\mathfrak{Y}=(\mathpzc{E},D)$ is the boundary of a bordism $\mathfrak{X}=(\mathfrak{Z},\Theta,\mathfrak{Y})$, then the bounded $KK$-cycle $(\mathpzc{E},D(1+D^2)^{-1/2})$ defines the $0$-class in $KK_*(A,B)$. The idea in the proof is that the half-closed $(C^\infty_c((0,1),\mathcal{A}),B)$-chain $\mathfrak{X}^\circ$ represents the class $(\mathpzc{E},D(1+D^2)^{-1/2})$ under $KK_*(A,B)\cong KK_{*+1}(C_0((0,1),A),B)$ and lifts to a half-closed $(C^\infty_c((0,1],\mathcal{A}),B)$-chain which vanishes as $KK_{*+1}(C_0((0,1],A),B)\cong 0$.
\end{remark}

\begin{ex}
\label{halflineex}
Let $(\mathpzc{E}, D)$ be an unbounded $(\mathcal{A},B)$-cycle. Consider the $(\mathcal{A},B)$-chain 
$$\mathfrak{Z}:=\Psi_{(0,\infty)}(\mathpzc{E},D)\equiv (L^2([0, \infty)\hat{\boxtimes}\mathpzc{E}, \partial_x^{\min}\hat{\boxtimes}D),$$ 
defined in Definition \ref{suspbyintehalf}. This chain is symmetric by Proposition \ref{suspandprophalf}, see also \cite[Proof of Lemma 2.5]{DGM}. For $\Theta=(\id_{(L^2[0,1]\hat{\boxtimes}\mathpzc{E}},\chi_{[0,1]}\otimes 1_{\mathpzc{E}})$, $(\mathfrak{Z},\Theta,\mathfrak{Y})$ is a symmetric chain with boundary. However, it is not a bordism as $b(\phi)(1+T^*T)^{-1/2}\notin \mathbb{K}_B(L^2([0, \infty)\hat{\boxtimes}\mathpzc{E})$ if $\phi(1)\neq 0$. See more in \cite[Lemma 2.5]{DGM}. One may conclude that any cycle is the boundary of a symmetric chain with boundary but a non-zero $KK$-class would obstruct the existence of a filling bordism.
\end{ex}

\begin{remark}
To save on the number of symbols used, we sometimes write $(\mathfrak{X}^\circ,\Theta,\mathfrak{X}^\partial)$ for a bordism. 
\end{remark}

\begin{define}
\label{isoofsymwbound}
Let $\mathfrak{X}_i=(\mathfrak{X}^\circ_i,\Theta_i,\mathfrak{X}^\partial_i)$, $i=1,2$ be symmetric $(\mathcal{A},B)$-chains with boundary. An isomorphism $\pmb{\alpha}:\mathfrak{X}_1\xrightarrow{\sim}\mathfrak{X}_2$ is a collection $\pmb{\alpha}=(\alpha^\circ,\alpha^\partial)$ where $\alpha^\circ:\mathfrak{X}^\circ_1\to \mathfrak{X}^\circ_2$ and $\alpha^\partial:\mathfrak{X}^\partial_1\to \mathfrak{X}^\partial_2$ are isomorphisms (see Definition \ref{UnbKKcycDef}) compatible in the sense that $\alpha^\circ p_1=p_2\alpha^\circ$ and the following diagram commutes:
$$\begin{CD}
p_1\mathpzc{N}_1 @>\alpha^\circ|>\cong > p_2\mathpzc{N} \\
@V\theta_1V\cong V @V\cong V\theta_2 V \\
L^2[0,1]\hat{\boxtimes}\mathpzc{E}_1 @>\id_{L^2[0,1]}\boxtimes\alpha^\partial>\cong > L^2[0,1]\hat{\boxtimes}\mathpzc{E}_2.
\end{CD}$$
Here $\mathfrak{X}^\circ_i=(\mathpzc{N}_i,T_i)$, $\Theta_i=(\theta_i,p_i)$ and $\mathfrak{X}^\partial_i=(\mathpzc{E}_i,D_i)$.
\end{define}

Recall the operations on chains from Definition \ref{oppKKCyc}.

\begin{define} 
\label{oppKKBor}
For a symmetric chain with boundary $\mathfrak{X}=(\mathfrak{X}^\circ,\Theta,\mathfrak{X}^\partial)$, we define the opposite symmetric chain with boundary
$$-\mathfrak{X}:=(-\mathfrak{X}^\circ,\Theta,-\mathfrak{X}^\partial).$$
For two symmetric $(\mathcal{A},B)$-chains with boundary $\mathfrak{X}_i=(\mathfrak{X}^\circ_i,\Theta_i,\mathfrak{X}^\partial_i)$, $i=1,2$, we define their direct sum to be 
$$\mathfrak{X}_1+ \mathfrak{X}_2:=(\mathfrak{X}^\circ_1+ \mathfrak{X}^\circ_2,\Theta_1\oplus \Theta_2,\mathfrak{X}^\partial_1+ \mathfrak{X}^\partial_2).$$
\end{define}

Hilsum \cite[Section 10]{hilsumbordism} has shown that one can glue two $KK$-bordisms with common boundary to produce an honest unbounded $KK$-cycle. A crucial point in proving that $KK$-bordism is an equivalence relation is to show that one can glue $KK$-bordisms along common summands in their boundary and obtain a new $KK$-bordism, see \cite[Section 2.4]{DGM}. We will need this construction, and review it shortly here.

We will use the following notation: given $KK$-bordisms $\mathfrak{X}=(\mathfrak{X}^\circ,\Theta,\mathfrak{X}^\partial)$ and $\mathfrak{X}'=(\mathfrak{X}'^\circ,\Theta',-\mathfrak{X}^\partial)$ with common boundary $\mathfrak{X}^\partial$ we let 
\begin{equation} \label{glueToGetClosed}
\mathfrak{X} \#_{\mathfrak{X}^\partial}\mathfrak{X}'
\end{equation}
denote the unbounded $KK$-cycle obtained from the process discussed in \cite[Section 10]{hilsumbordism}.

We will also use the generalization of this gluing construction developed in \cite[Section 2.4]{DGM}. More specifically, we need \cite[Theorem 2.20]{DGM}, which briefly put says the following: suppose $\mathfrak{X}=(\mathfrak{X}^\circ,\Theta,\mathfrak{X}^\partial)$ and $\mathfrak{X}'=(\mathfrak{X}'^\circ,\Theta',\mathfrak{X}'^\partial)$ are two $KK$-bordisms, and
$$\mathfrak{X}^\partial=\mathfrak{Y}_1+\mathfrak{Y}_2\quad\mbox{and}\quad \mathfrak{X}'^\partial=-\mathfrak{Y}_2+\mathfrak{Y}_3,$$
for some closed cycles $\mathfrak{Y}_1$, $\mathfrak{Y}_2$, and $\mathfrak{Y}_3$. Then there is an explicitly constructed bordism $\mathfrak{X}''$ such that 
$$\partial \mathfrak{X}''=\mathfrak{Y}_1+\mathfrak{Y}_3.$$
A picture describing this setup is found in Figure \ref{section25gluing}. Of course, the statement ``explicitly constructed" is informal; the reader can see the statement of \cite[Theorem 2.20]{DGM} for the precise details of the construction. The idea is to glue together $\mathfrak{X}^\circ$ with $\mathfrak{X}'^\circ$ along the tubular neighborhood (defined from the boundary data) of the component of the boundary corresponding to $\mathfrak{Y}_2$. As in Equation \eqref{glueToGetClosed}, we denote the bordism obtained from this construction by 
\begin{equation} 
\label{glueToGetBordism}
\mathfrak{X} \#_{\mathfrak{Y}_2}\mathfrak{X}'
\end{equation}
\begin{figure}
   \centering
  \includegraphics[height=5.35cm]{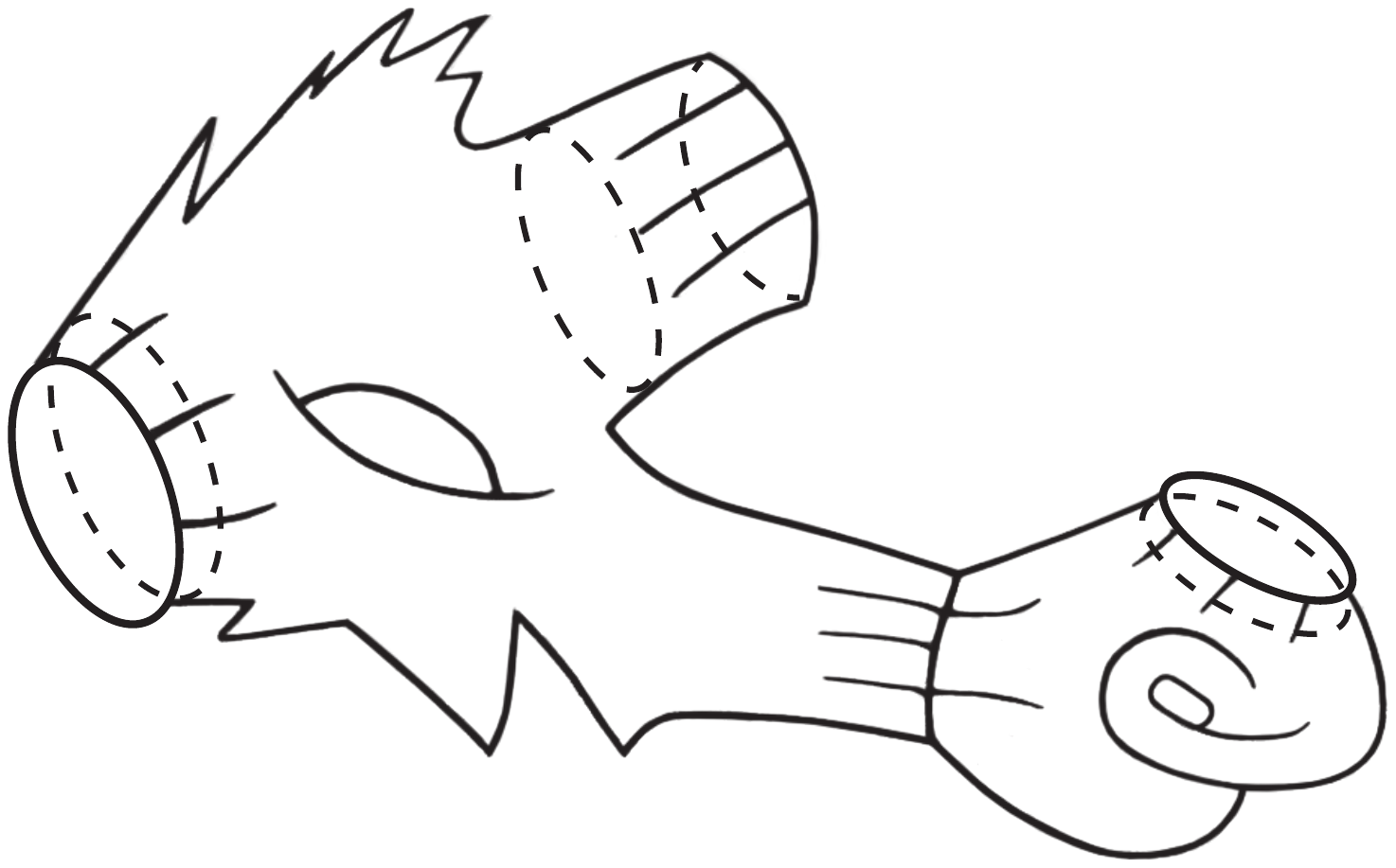}
 \caption{Gluing of two $KK$-bordisms $\mathfrak{X}$ and $\mathfrak{X}'$ along a common boundary $\mathfrak{Y}_2$.}
\label{section25gluing}
\end{figure}

We formalize this discussion in a proposition.

\begin{prop}
\label{subsecglyinggluing}
Let $\mathfrak{Y}_1$, $\mathfrak{Y}_2$ and $\mathfrak{Y}_3$ be $(\mathcal{A},B)$-cycles and $\mathfrak{X}$ and $\mathfrak{X}'$ be $(\mathcal{A},B)$-bordisms. If $\mathfrak{Y}_1\sim_{\rm bor} \mathfrak{Y}_2$ via $\mathfrak{X}$, i.e. $\partial \mathfrak{X}=\mathfrak{Y}_1+(-\mathfrak{Y}_2)$, and $\mathfrak{Y}_2\sim_{\rm bor} \mathfrak{Y}_3$ via $\mathfrak{X}'$, i.e. $\partial \mathfrak{X}'=\mathfrak{Y}_2+(-\mathfrak{Y}_3)$, then $\mathfrak{Y}_1\sim_{\rm bor} \mathfrak{Y}_3$ via $\mathfrak{X}\#_{\mathfrak{Y}_2}\mathfrak{X}'$.
\end{prop}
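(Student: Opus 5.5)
The statement is essentially a repackaging of \cite[Theorem 2.20]{DGM}, so the plan is to put the two bordisms in exactly the form that theorem requires and then read off the boundary of the glued bordism.

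We have $\partial\mathfrak{X}=\mathfrak{Y}_1+(-\mathfrak{Y}_2)$ and $\partial\mathfrak{X}'=\mathfrak{Y}_2+(-\mathfrak{Y}_3)$. To match the template $\mathfrak{X}^\partial=\mathfrak{Y}_1+\mathfrak{Y}_2$, $\mathfrak{X}'^\partial=-\mathfrak{Y}_2+\mathfrak{Y}_3$, we take $-\mathfrak{Y}_2$ as the cycle glued along in the first bordism and $-\mathfrak{Y}_3$ as the remaining summand in the second. The second boundary must then read $-(-\mathfrak{Y}_2)+(-\mathfrak{Y}_3)$, so we need $-(-\mathfrak{Y}_2)=\mathfrak{Y}_2$.

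This involution holds on the nose from Definition \ref{oppKKCyc}:
\begin{itemize}
\item in the even case, reversing the grading twice returns the original grading;
\item in the odd case, $-(-D)=D$.
\end{itemize}
Only the boundary summands matter for the template, so nothing needs checking at the level of the boundary data $\Theta$.

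With this matching, \cite[Theorem 2.20]{DGM} produces an explicit bordism $\mathfrak{X}\#_{\mathfrak{Y}_2}\mathfrak{X}'$ (the notation of \eqref{glueToGetBordism}) whose boundary is $\mathfrak{Y}_1+(-\mathfrak{Y}_3)$. By definition this says $\mathfrak{Y}_1\sim_{\rm bor}\mathfrak{Y}_3$ via $\mathfrak{X}\#_{\mathfrak{Y}_2}\mathfrak{X}'$.

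The main point to watch is the direct-sum ordering. The theorem is stated with the glued summand in a particular position, so we may need to reorder summands, for instance if the summand to be glued comes first in one boundary but second in the other. The flip unitary $\mathpzc{E}_1\oplus\mathpzc{E}_2\to\mathpzc{E}_2\oplus\mathpzc{E}_1$ intertwines the operators and the $\mathcal{A}$-actions. It therefore gives an isomorphism of cycles, and with the identity on the interior chain and the correspondingly permuted $\theta$, an isomorphism of bordisms in the sense of Definition \ref{isoofsymwbound}. Because the theorem is stated up to such bookkeeping, this reduction is harmless and no further analysis is required.
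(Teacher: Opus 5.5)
Your proposal is correct and matches the paper, which gives no separate proof and simply formalizes \cite[Theorem 2.20]{DGM} with the relabeling you describe: take $-\mathfrak{Y}_2$ and $-\mathfrak{Y}_3$ as the template's second and third cycles, and use $-(-\mathfrak{Y}_2)=\mathfrak{Y}_2$. No reordering of summands is actually needed here, because the glued summand already sits second in $\partial\mathfrak{X}$ and first in $\partial\mathfrak{X}'$, exactly as the template requires.
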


Let us now define the bordism relation on the set of isomorphism classes of $(\mathcal{A},B)$-cycles.

\begin{define}
\label{HilBorKKcycle}
Two $(\mathcal{A},B)$-cycles $\mathfrak{Y}_1$ and $\mathfrak{Y}_2$ are bordant if there exists a $KK$-bordism $\mathfrak{X}$ over $(\mathcal{A}, B)$, with $\partial \mathfrak{X}=\mathfrak{Y}_1+(-\mathfrak{Y}_2)$.
\end{define}

It was proven in \cite[Proposition 2.22]{DGM} that $KK$-bordism is an equivalence relation. Let us sketch why this is the case in terms of results stated below in this monograph:
\begin{itemize}
\item (Transitive) Follows from the gluing construction above in Proposition \ref{subsecglyinggluing}. 
\item (Reflexive) For any cycle $\mathfrak{Y}$, the cylinder construction $\Psi(\mathfrak{Y})$ (see Definition \ref{suspbyinte}) defines a bordism $\mathfrak{Y}\sim_{\rm bor}\mathfrak{Y}$.
\item (Symmetric) If $\mathfrak{Y}_1\sim_{\rm bor} \mathfrak{Y}_2$ via $\mathfrak{X}$, so $\partial \mathfrak{X}=\mathfrak{Y}_1+(- \mathfrak{Y}_2)$, then $\partial (-\mathfrak{X})= (-\mathfrak{Y}_1)+\mathfrak{Y}_2$ and $\mathfrak{Y}_2\sim_{\rm bor} \mathfrak{Y}_1$.
\end{itemize}

\begin{define} (See \cite[Definition 2.23]{DGM})
\label{defofbordgroup}
We define the $\Z/2\Z$-graded abelian group $\Omega_*( \mathcal{A}, B)$ as the set of bordism classes of $(\mathcal{A},B)$-cycles with addition defined from the direct sum $+$ of cycles. 
\end{define}

It is not immediate that $\Omega_*( \mathcal{A}, B)$ is a well-defined abelian group. This fact is proven in \cite[Theorem 2.24]{DGM}. To summarize this argument, it is clear that direct sum defines the structure of a $\Z/2\Z$-graded semigroup on $\Omega_*( \mathcal{A}, B)$ and since the bordism relation is reflexive, it follows that for any cycle $\mathfrak{Y}$ there is a bordism $\mathfrak{Y}+(-\mathfrak{Y})\sim_{\rm bor}0$ so any element admits an inverse. The explicit additive inverse is given at the level of cycles by $-[\mathfrak{Y}]=[-\mathfrak{Y}]$, where $[\cdot]$ denotes the $KK$-bordism class.

\begin{remark}
The $KK$-bordism group $\Omega_*( \mathcal{A}, B)$ relates to Kasparov's $KK$-group $KK_*(A,B)$ via the bounded transform $\beta:(\mathpzc{E},D)\mapsto (\mathpzc{E},D(1+D^2)^{-1/2})$. For more details, see below in Subsection \ref{subsecbddtrans}. In several instances, the bounded transform induces an isomorphism $\Omega_*( \mathcal{A}, B)\cong KK_*(A,B)$ as we will show in Part \ref{partoniso} below.
\end{remark}

\section{Examples of $KK$-bordisms from technical origins}
\label{exampleofboridmsmssec}

We consider several general classes of $KK$-bordisms that will play a role in the abstract study of the $KK$-bordism groups. We study classical equivalences coming from perturbations in Subsection \ref{subsec:perturbyop} as well as homotopies; in Subsection \ref{subsec:homotopyop} we study homotopies of the operator and in Subsection \ref{subsec:homotoperep} we study homotopies of the left action. We also study $KK$-bordisms whose origins are not classical; we show that weakly degenerate cycles produce a nullbordism in Subsection \ref{subsecweakdegff} and how complex interpolation allows us to reduce the order of operators in Subsection \ref{orderredusubsec}. In each of these cases, the situation at hand produces a canonical bordism.

\subsection{Homotopy of the operator}
\label{subsec:homotopyop}

A natural source of examples for $KK$-bordisms comes from paths $(D(t))_{t\in [0,1]}$ of operators in a fixed Hilbert $C^*$-module. To ensure that such paths indeed induce $KK$-bordisms, one can impose a technical condition similar to differentiability. In practice, such conditions are often utilized in commutator estimates for the derivative in the $t$-direction in the spirit of Kaad-Lesch \cite{leschkaad2} and later extended in \cite{DGM}. We first state the general technical result, after which we indicate how differentiability enters the game.

\begin{theorem}
\label{homotopywithsweakcond}
Let $\mathpzc{E}$ and $\mathpzc{W}$ be Hilbert $C^*$-modules and $i:\mathpzc{W}\to \mathpzc{E}$ an $\mathcal{A}$-locally compact dense adjointable inclusion. Assume that $\overline{D}=(D(t))_{t\in [0,1]}$ is a family of regular, self-adjointable operators satisfying 
\begin{enumerate}
\item $D(t)$ is constant on $[0,1/2]\cup [2/3,1]$
\item $\Dom(D(t))\subseteq \mathpzc{W}$ for all $t\in [0,1]$
\item $\mathcal{A}\subseteq \cap_{t\in [0,1]}\mathrm{Lip}(D(t))$
\item The operator $\partial^{\rm min}\hat{\boxtimes} \overline{D}$ defined on 
\[ \{\xi\in C^\infty_c((0,1),\mathpzc{W}): \xi(t)\in \Dom(D(t)) \hbox{ for all }t\in [0,1]\} \] is closable. Moreover, its closure $T$ is regular and 
\[ \{\xi\in C^\infty([0,1],\mathpzc{W}): \xi(t)\in \Dom(D(t))\hbox{ for all } t\in [0,1]\} 
\] is a core for its adjoint.
\end{enumerate}
Then $(\mathpzc{E},D(t))$ is a cycle for $(\mathcal{A},B)$ for all $t\in [0,1]$ and 
$$((L^2[0,1]\hat{\boxtimes}\mathpzc{E}, T),(\id, \chi_{[0,1/4]\cup[3/4,1]})),$$ 
defines a bordism 
$$(\mathpzc{E},D(0))\sim_{\rm bor}(\mathpzc{E},D(1)).$$
\end{theorem}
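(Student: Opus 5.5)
We verify the conditions of Definition \ref{HilBorDef} for $\mathfrak{Z}=(L^2[0,1]\hat{\boxtimes}\mathpzc{E},T)$, with $p=\chi_{[0,1/4]\cup[3/4,1]}$ and $\theta=\id$. The boundary cycle is $(\mathpzc{E},D(0))+(-(\mathpzc{E},D(1)))$. We identify $p\mathpzc{N}$ with $L^2[0,1]\hat{\boxtimes}(\mathpzc{E}\oplus\mathpzc{E})$ by rescaling $[0,1/4]$ and $[3/4,1]$ to $[0,1]$, the latter with orientation reversed. This reversal turns $\partial$ into $-\partial$ and so accounts for the opposite orientation on the second summand.

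First, each $(\mathpzc{E},D(t))$ is a cycle. It is self-adjoint by hypothesis, and $\mathcal{A}\subseteq\mathrm{Lip}(D(t))$ by item (3). Local compactness holds because $\Dom(D(t))\hookrightarrow\mathpzc{W}$ is continuous (closed graph, using item (2)) and $i$ is $\mathcal{A}$-locally compact. Hence $a(1+D(t)^2)^{-1/2}$ factors over $ai$ and is compact.

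Next, $T$ is symmetric: on the core $\{\xi\in C^\infty_c((0,1),\mathpzc{W}):\xi(t)\in\Dom(D(t))\}$ an integration by parts, using self-adjointness of each $D(t)$, gives symmetry, and this passes to the closure. Regularity is assumed in item (4). The Lipschitz condition for $C^\infty([0,1],\mathcal{A})$ acting pointwise is checked on the same core: the commutator is $[T,\phi]=i\phi'\hat{\boxtimes}1+(t\mapsto[D(t),\phi(t)])$. This requires a uniform bound on $\|[D(t),a]\|$ in $t$. On $[0,1/2]\cup[2/3,1]$ the bound is trivial. On the middle part I would obtain it from Proposition \ref{autoodkkd}-style closed graph reasoning, or simply add it as a tacit hypothesis implicit in item (3).

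Near the ends, $D(t)$ is constant, so by item (4) and the explicit descriptions of $\Psi(D(0))$ and $\Psi(D(1))$ from \cite[Theorem 1.18]{DGM}, $T$ agrees with $\Psi(D(j))$ on elements supported in $[0,1/2)$ and $(2/3,1]$. Conditions b) and c) follow immediately, since $T$ is a local (first-order in $t$) operator.

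For condition a), take $\phi\in C^\infty_c(0,1]$ under $b$, which translates to a function vanishing near $t=0$ and $t=1$ but equal to $\phi(1)$ on the middle region. For $\xi\in\Dom T^*$, approximate $\xi$ by the adjoint core of item (4). Multiplying by such a cutoff yields compactly supported elements of the core of $T$, with $T^*$ and $T$ agreeing there, and a graph-norm limit gives $b(\phi)\Dom T^*\subseteq\Dom T$.

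The main obstacle is the half-closed condition: $b(\phi)(1+T^*T)^{-1}$ must be compact for $\phi\in C^\infty_c((0,1],\mathcal{A})$. By condition a), $b(\phi)(1+T^*T)^{-1/2}$ maps into $\Dom T$, so it suffices to show that $\Dom T\hookrightarrow L^2[0,1]\hat{\boxtimes}\mathpzc{E}$ is $\mathcal{A}$-locally compact after cutting off. For this I would show that $\Dom T$ embeds continuously into $H^1_0([0,1],\mathpzc{E})\cap L^2([0,1],\mathpzc{W})$ up to lower order terms. This is where item (2) and the inclusion $i$ enter: $\|\xi\|_{\mathpzc{W}}\lesssim\|D(t)\xi\|+\|\xi\|$ uniformly in $t$. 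Compactness then follows as in \cite[Theorem 1.18]{DGM}, since $H^1_0[0,1]\to L^2[0,1]$ is compact and $a\,i$ is compact, and the combination $\partial^{\min}\hat{\boxtimes}(\cdot)$ yields compactness of the product via a Kaad--Lesch type estimate. Controlling the cross term $\langle\partial\xi,D(t)\xi\rangle$, which no longer vanishes because of $t$-dependence, is the delicate point. This is precisely what the closability and core assumptions in item (4) are designed to circumvent, and I would handle it using them directly rather than by differentiability estimates.
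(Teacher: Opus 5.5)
Your outline follows the paper's proof: the cycle property comes from factoring resolvents through $i:\mathpzc{W}\to\mathpzc{E}$, conditions a)--c) from locality of $T$ where $D(t)$ is constant, and half-closedness from a locally compact embedding of $\Dom T$. The paper settles the last step in one line, asserting that $\Dom T$ lies ``by continuity'' in $H^1_0[0,1]\hat{\boxtimes}\mathpzc{E}\cap L^2[0,1]\hat{\boxtimes}\mathpzc{W}$. You rightly flag this as the delicate point, but your proposed fix does not work. Item (4) is purely qualitative (closability, regularity, a core for $T^*$), so it gives no inequality bounding $\|\xi'\|$ or $\|\xi\|_{L^2\mathpzc{W}}$ by the graph norm of $T$. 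Likewise, the uniform estimate $\|\xi\|_{\mathpzc{W}}\lesssim\|D(t)\xi\|+\|\xi\|$ you invoke, and a uniform bound on $\|[D(t),a]\|$, do not follow from (2) and (3). The closed graph theorem, or (3) itself, only gives them one $t$ at a time, so they must be assumed, as you already suggest for the commutators.

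In fact, under (1)--(4) alone the half-closed property can fail, so this step cannot be completed as stated. Take $\mathcal{A}=A=B=\C$, $\mathpzc{E}=\ell^2(\N)$ ungraded, and $\mathpzc{W}=\{x:\sum_n n^4|x_n|^2<\infty\}$. Choose pairwise disjoint intervals $I_n$ inside a compact $J\subset(1/2,2/3)$ and smooth monotone $g_n$ with $g_n=-1$ left of $I_n$ and $g_n=1$ right of $I_n$. Set $D(t)=\mathrm{diag}(n^2g_n(t))$. Every $t$ meets at most one $I_n$, so $\Dom D(t)=\mathpzc{W}$ for all $t$ and (1)--(3) hold. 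Moreover $T$ is the block sum over $n$ of the bounded perturbations $\partial^{\rm min}\hat{\boxtimes}n^2g_n$ of $\partial^{\rm min}\hat{\boxtimes}0$, so (4) holds: finite block sums of $C^\infty_c$, resp.\ $C^\infty$, functions lie in the two sets in (4) and are cores for $T$, resp.\ $T^*$. Now put $G_n(t):=\int_{t_n}^t n^2g_n$ with $t_n\in I_n$ the zero of $g_n$, and fix $\chi\in C^\infty_c((1/2,2/3))$ equal to $1$ near $J$. Let $\eta_n$ be the normalization of $\chi\mathrm{e}^{-G_n}e_n$, placed in the summand on which $T$ acts by $\frac{\mathrm{d}}{\mathrm{d}t}+D(t)$. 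Since $(\frac{\mathrm{d}}{\mathrm{d}t}+n^2g_n)\mathrm{e}^{-G_n}=0$, the only error comes from $\chi'$, where $\mathrm{e}^{-G_n}$ is exponentially small in $n^2$. So the $\eta_n$ are orthonormal, supported where $b(\phi)=\phi(1)$, and satisfy $\|T\eta_n\|\to0$. Hence $b(\phi)(1+T^*T)^{-1/2}$ is not compact when $\phi(1)\neq0$. Also $\|\eta_n'\|\sim n^2$, so the paper's claimed inclusion $\Dom T\subseteq H^1_0[0,1]\hat{\boxtimes}\mathpzc{E}$ fails too.

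The missing ingredient is a quantitative hypothesis on the $t$-dependence. One option is the setting of Proposition \ref{homotopywithstrongcond}: $D\in C^1([0,1],\Hom_B^*(\mathpzc{W},\mathpzc{E}))$ with $\Dom D(t)=\mathpzc{W}$, whose graph norms are then uniformly equivalent to the $\mathpzc{W}$-norm by continuity of inversion. Another is the resolvent differentiability of Lemma \ref{lemmahomotopyresolven}. With such a hypothesis, the cross term becomes, after integrating by parts, $\pm\langle\xi,\dot D(t)\xi\rangle$, which is bounded by $\epsilon\|D(t)\xi\|^2+C_\epsilon\|\xi\|^2$. The graph norm of $T$ then dominates the norm of $H^1_0[0,1]\hat{\boxtimes}\mathpzc{E}\cap L^2[0,1]\hat{\boxtimes}\mathpzc{W}$, and your argument goes through.
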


We remark that the conditions of Theorem \ref{homotopywithsweakcond} is a variation of the notion of unbounded operator homotopy from \cite{meslandvandung}. We shall below in Proposition \ref{homotopywithstrongcond} see that an unbounded operator homotopy satisfying an additional differentiability assumption induces a family as in Theorem \ref{homotopywithsweakcond}. Due to the technical nature of condition 4) it is unclear if all unbounded operator homotopies induces families as in Theorem \ref{homotopywithsweakcond}.

\begin{proof}
We first prove that $(\mathpzc{E},D(t))$ is a cycle for $(\mathcal{A},B)$ for all $t\in [0,1]$. For any $a\in \mathcal{A}$, item 2) ensures that the operator $(i\pm D(t))^{-1}$ on $\mathpzc{E}$ factors over $i:\mathpzc{W}\to \mathpzc{E}$ which by assumption ensures that $D(t)$ has $\mathcal{A}$-locally compact resolvent. In combination with item 3), we deduce that $\mathcal{A}\subseteq \mathrm{Lip}_0(D(t))$ for all $t\in [0,1]$. Since $D(t)$ is self-adjoint by assumption, $(\mathpzc{E},D(t))$ is a cycle.

To prove that $((L^2[0,1]\hat{\boxtimes}\mathpzc{E}, T),(\id, \chi_{[0,1/4]\cup[3/4,1]}))$ defines a bordism 
\[ (\mathpzc{E},D(0))\sim_{\rm bor}(\mathpzc{E},D(1)), \]
it suffices to prove that $(L^2[0,1]\hat{\boxtimes}\mathpzc{E}, T)$ is a half-closed $(C^\infty_c((0,1],\mathcal{A}),B)$-chain. The action of $C^\infty_c((0,1],\mathcal{A})$ that the boundary data induces on $L^2[0,1]\hat{\boxtimes}\mathpzc{E}$ factors over the natural action of $C_c((0,1),\mathcal{A})\cap \mathrm{Lip}([0,1],\mathcal{A})$ on $L^2[0,1]\hat{\boxtimes}\mathpzc{E}$. We note that the domain of $T$ is by continuity contained in $H^1_0[0,1]\hat{\boxtimes}\mathpzc{E}\cap L^2[0,1]\hat{\boxtimes}\mathpzc{W}$ which by local compactness of $i:\mathpzc{W}\to \mathpzc{E}$ is locally compactly included in $L^2[0,1]\hat{\boxtimes}\mathpzc{E}$. Combining these two facts with item 4), it is immediate that $(L^2[0,1]\hat{\boxtimes}\mathpzc{E}, T)$ is a half-closed $(C^\infty_c((0,1],\mathcal{A}),B)$-chain.
\end{proof}

Let us state a result that is useful in proving condition 4) of Theorem \ref{homotopywithsweakcond}. It follows from \cite[Theorem 1.3]{DGM}. 

\begin{lemma}
\label{lemmahomotopyresolven}
Let $\mathpzc{E}$ be a Hilbert $C^*$-module. Assume that $\overline{D}=(D(t))_{t\in [0,1]}$ is a family of regular, self-adjointable operators sharing a common core $\mathcal{D}_0$. We further assume that for any $\mu\in \mathbb{R}\setminus \{0\}$, the function $R_\mu:[0,1]\to \End_B^*(\mathpzc{E})$, $R_\mu(t):=(i\mu+ D(t))^{-1}$ is norm-differentiable and for each $t\in [0,1]$, $D(t)R_\mu'(t)$ extends to an adjointable operator and $t\mapsto D(t)R_\mu'(t)$ defines a norm-continuous function $[0,1]\to \End_B^*(\mathpzc{E})$.

Then the operator $\partial^{\rm min}\hat{\boxtimes} \overline{D}$ defined on $C^\infty_c((0,1),\mathcal{D}_0)$ is closable, and its closure $T$ is regular and $C^\infty([0,1],\mathcal{D}_0)$ is a core for its adjoint.
\end{lemma}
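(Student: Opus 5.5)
We reduce Lemma \ref{lemmahomotopyresolven} to the setting of \cite[Theorem 1.3]{DGM} by viewing the family $\overline{D}=(D(t))_{t\in[0,1]}$ as a single operator on $L^2[0,1]\hat{\boxtimes}\mathpzc{E}=L^2([0,1],\mathpzc{E})$. We realize it as a self-adjoint regular operator $\mathbb{D}$ acting pointwise in $t$. We then regard $\partial^{\rm min}\hat{\boxtimes}\overline{D}$ as the sum $\partial^{\rm min}\hat{\boxtimes}0+0\hat{\boxtimes}\mathbb{D}$, a symmetric operator plus a self-adjoint operator which fail to commute only through the $t$-dependence of $D(t)$.

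\textbf{Step 1.} Define $\mathbb{D}$ through its resolvent. Set $(\mathbb{R}_\mu\xi)(t):=R_\mu(t)\xi(t)$. Since $R_\mu$ is norm-differentiable, hence norm-continuous, $\mathbb{R}_\mu$ is an adjointable operator on $C([0,1],\mathpzc{E})$ and on $L^2([0,1],\mathpzc{E})$. It satisfies the resolvent identity, $\mathbb{R}_\mu^*=\mathbb{R}_{-\mu}$, and it has dense range because $C([0,1],\mathcal{D}_0)$ lies in the range. Hence there is a unique regular self-adjoint $\mathbb{D}$ with $(i\mu+\mathbb{D})^{-1}=\mathbb{R}_\mu$, and $C^\infty([0,1],\mathcal{D}_0)$ is a core for it. The latter uses norm-continuity of $t\mapsto R_\mu(t)$ to approximate in graph norm by piecewise smooth interpolation. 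Similarly, $\partial^{\rm min}\hat{\boxtimes}0$ and $\partial^{\rm max}\hat{\boxtimes}0$ are regular, with cores $C^\infty_c((0,1),\mathcal{D}_0)$ and $C^\infty([0,1],\mathcal{D}_0)$ respectively.

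\textbf{Step 2.} Verify the commutator hypothesis of \cite[Theorem 1.3]{DGM} for the pair $(S,\mathbb{D})$, with $S=\partial^{\rm min}$ or $S=\partial^{\rm max}$ (tensored with $1$). We need $\mathbb{R}_\mu$ to preserve $\Dom(S)$, and $[S,\mathbb{D}]\mathbb{R}_\mu$ to be relatively bounded in the appropriate sense. For $\xi\in C^\infty([0,1],\mathcal{D}_0)$,
$$\partial_t(\mathbb{R}_\mu\xi)=\mathbb{R}_\mu\partial_t\xi+R_\mu'\xi,$$
so $\mathbb{R}_\mu$ preserves the domains of $\partial^{\rm max}$ and $\partial^{\rm min}$, the latter because boundary values are multiplied by $R_\mu(0)$ and $R_\mu(1)$. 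The commutator is $[\partial_t,\mathbb{R}_\mu]=R_\mu'$. The assumption that $t\mapsto D(t)R_\mu'(t)$ is norm-continuous says exactly that $\mathbb{D}[S,\mathbb{R}_\mu]$ is bounded, uniformly controlled in $\mu$ after the standard rescaling $R_\mu'=-R_\mu D'R_\mu$ heuristics. This is the Kaad--Lesch type condition: in the form
$$(S+\mathbb{D})^*(S+\mathbb{D})\geq c\,(S^*S+\mathbb{D}^2)-C,$$
it guarantees that the sum is closed on $\Dom S\cap\Dom\mathbb{D}$, regular, and has adjoint $\overline{S^*+\mathbb{D}}$.

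\textbf{Step 3.} Conclude. Apply \cite[Theorem 1.3]{DGM} with the graded tensor conventions, so that $\partial^{\rm min}\hat{\boxtimes}\overline{D}=\overline{(S\hat{\boxtimes}0)+(0\hat{\boxtimes}\mathbb{D})}$. This shows closability and regularity of $T$ on the core $C^\infty_c((0,1),\mathcal{D}_0)$, which is a core by Step 1. It also gives $T^*=\overline{\partial^{\rm max}\hat{\boxtimes}\overline{D}}$, and $C^\infty([0,1],\mathcal{D}_0)$ is a core for $T^*$ because it is a core for the domain intersection $\Dom(\partial^{\rm max})\cap\Dom(\mathbb{D})$ in the sum norm.

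\textbf{Main obstacle.} We expect the difficulty to lie in Step 2, namely translating the hypothesis on $D(t)R_\mu'(t)$ into the precise relative boundedness required by \cite[Theorem 1.3]{DGM}, including uniformity as $|\mu|\to\infty$. The identification of cores is a secondary obstacle; there we would first try mollification in $t$ combined with the resolvent $\mathbb{R}_\mu$ as a smoothing operator.
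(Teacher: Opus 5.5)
Your proposal follows the paper's route: the paper proves this lemma simply by invoking \cite[Theorem 1.3]{DGM} for the pair $\partial\hat{\boxtimes}0$ and $0\hat{\boxtimes}\overline{D}$, and the hypothesis on $D(t)R_\mu'(t)$ is there precisely so that, as in your Step 2, $\mathbb{D}[\partial,\mathbb{R}_\mu]$ is the bounded multiplication operator $t\mapsto D(t)R_\mu'(t)$. What still has to be made explicit is the core statements: being a core for each summand separately does not make $C^\infty_c((0,1),\mathcal{D}_0)$ (resp.\ $C^\infty([0,1],\mathcal{D}_0)$) a core for the sum, so Step 3 needs a joint approximation (e.g.\ cut-offs and mollification in $t$ combined with the smoothing operators $in(in+\mathbb{D})^{-1}$); both this and your Step 1 claim that $C([0,1],\mathcal{D}_0)$ lies in $\Dom\mathbb{D}$ require continuity of $t\mapsto D(t)x$ for $x\in\mathcal{D}_0$, which has to be extracted from the hypothesis on $D(t)R_\mu'(t)$, because norm-continuity of $R_\mu$ alone does not imply it.
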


The following result follows from \cite[Proposition 2.13]{DGM}. 

\begin{prop}
\label{homotopywithstrongcond}
Let $\mathpzc{E}$ and $\mathpzc{W}$ be Hilbert $C^*$-modules and $i:\mathpzc{W}\to \mathpzc{E}$ an $\mathcal{A}$-locally compact dense adjointable inclusion. Assume that $D\in C^1([0,1],\Hom_B^*(\mathpzc{W},\mathpzc{E}))$ satisfies that $D(t)$ viewed as a densely defined operator on $\mathpzc{E}$ defines an $(\mathcal{A},B)$-cycle $(\mathpzc{E},D(t))$ for all $t\in [0,1]$. Then $\overline{D}:=(D(t))_{t\in [0,1]}$ satisfies the assumptions of Theorem \ref{homotopywithsweakcond}. In particular, 
$$(\mathpzc{E},D(0))\sim_{\rm bor}(\mathpzc{E},D(1)).$$
\end{prop}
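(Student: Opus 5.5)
The plan is to check the four hypotheses of Theorem \ref{homotopywithsweakcond} for the family $\overline{D}=(D(t))_{t\in[0,1]}$, with condition 4) obtained from Lemma \ref{lemmahomotopyresolven}. Before doing this, we reparametrize. Replace $D$ by $D\circ\rho$, where $\rho:[0,1]\to[0,1]$ is a smooth nondecreasing function with $\rho=0$ on $[0,1/2]$ and $\rho=1$ on $[2/3,1]$. The composite $D\circ\rho$ is still $C^1$ into $\Hom_B^*(\mathpzc{W},\mathpzc{E})$. It has the same endpoints and is constant near the ends, which gives condition 1).

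Condition 2) holds because $D(t)$ is a bounded map $\mathpzc{W}\to\mathpzc{E}$, so as an operator on $\mathpzc{E}$ its domain is $\mathpzc{W}$ (or contained in it). Indeed, each $(\mathpzc{E},D(t))$ is a cycle with self-adjoint $D(t)$ defined on $\mathpzc{W}$, so $\Dom D(t)=\mathpzc{W}$. In particular all $D(t)$ share the common core $\mathcal{D}_0=\mathpzc{W}$. Condition 3) is part of the hypothesis that each $(\mathpzc{E},D(t))$ is an $(\mathcal{A},B)$-cycle.

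For condition 4) we verify the hypotheses of Lemma \ref{lemmahomotopyresolven}. For $\mu\neq0$ we have $R_\mu(t)=(i\mu+D(t))^{-1}$. This is a bounded map $\mathpzc{E}\to\mathpzc{W}$, because $\mathpzc{W}=\Dom D(t)$ and, by the closed graph theorem, the graph norm of $D(t)$ is equivalent to the $\mathpzc{W}$-norm.

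The first key step is uniformity of this equivalence in $t$. By $C^1$-continuity and compactness of $[0,1]$, the norms $\|R_\mu(t)\|_{\mathpzc{E}\to\mathpzc{W}}$ are uniformly bounded. For this, write $i\mu+D(s)=(1+(D(s)-D(t))R_\mu(t))(i\mu+D(t))$ and use a Neumann series for $s$ near $t$.

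The second key step is the resolvent identity
$$R_\mu(s)-R_\mu(t)=-R_\mu(s)\,(D(s)-D(t))\,R_\mu(t).$$
It shows that $R_\mu$ is norm differentiable as a map $[0,1]\to\Hom(\mathpzc{E},\mathpzc{W})$, hence also into $\End_B^*(\mathpzc{E})$, with
$$R_\mu'(t)=-R_\mu(t)D'(t)R_\mu(t).$$
Consequently
$$D(t)R_\mu'(t)=-(1-i\mu R_\mu(t))\,D'(t)R_\mu(t),$$
which is adjointable and norm continuous in $t$. Adjointability holds because $D'(t)\in\Hom^*_B(\mathpzc{W},\mathpzc{E})$ and $R_\mu(t)\colon\mathpzc{E}\to\mathpzc{W}$ is adjointable, being the composition of the graph-norm isometry with the equivalence of Hilbert module structures.

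Lemma \ref{lemmahomotopyresolven} then yields closability and regularity of $T$, together with the core statement. Finally, Theorem \ref{homotopywithsweakcond} gives the bordism $(\mathpzc{E},D(0))\sim_{\rm bor}(\mathpzc{E},D(1))$.

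I expect the main obstacle to be adjointability. The issue is to justify that $R_\mu(t)$ is adjointable as a map into $\mathpzc{W}$ with its given inner product, not only bounded. The $\mathpzc{W}$-inner product and the graph inner product of $D(t)$ give equivalent norms, but a bounded module map between them need not be adjointable a priori. I would handle this by factoring through $\iota^*$ as in the discussion of regular operators, writing $R_\mu(t)=(i\mu+D(t))^{-1}$ as a composite of adjointable maps, using that $D(t)\in\Hom_B^*(\mathpzc{W},\mathpzc{E})$ is invertible onto $\mathpzc{E}$ after adding $i\mu$. If that route stalls, I would fall back on citing \cite[Proposition 2.13]{DGM} directly.
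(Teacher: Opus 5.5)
Your proposal is correct and follows the route the paper intends: the paper only cites \cite[Proposition 2.13]{DGM}, and your verification of condition 4) through Lemma \ref{lemmahomotopyresolven}, using the resolvent identity to get $R_\mu'(t)=-R_\mu(t)D'(t)R_\mu(t)$, is the same argument the paper runs for Theorem \ref{perturbationgenerelthm}. The adjointability issue you flag is harmless and needs no fallback: $i\mu+D(t)\in\Hom_B^*(\mathpzc{W},\mathpzc{E})$ is a bijection, and a bijective adjointable map between Hilbert $C^*$-modules has an adjointable inverse (by the closed range theorem its adjoint is also bijective), so $R_\mu(t)\in\Hom_B^*(\mathpzc{E},\mathpzc{W})$.
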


\subsection{Perturbations}
\label{subsec:perturbyop}

Another natural source of examples of $KK$-bordisms arise from perturbations of the operator $D$ appearing in a cycle $(\mathpzc{E},D)$. As in the previous subsection, we give a rather general theorem depending on a technical assumption after which we specialize to cases where the technical assumptions is simplified into more conceptual assumptions. At the end we consider some corollaries that will play a role later in the monograph.

\begin{theorem}
\label{perturbationgenerelthm}
Let $(\mathpzc{E},D)$ and $(\mathpzc{E},D')$ be two $(\mathcal{A},B)$-cycles defined on the same $B$-Hilbert $C^*$-module $\mathpzc{E}$. Assume the following:
\begin{itemize}
\item $D$ and $D'$ share a common core $\mathcal{D}_0$.
\item For any $s\in [0,1]$, the operator $sD+(1-s)D'$ is regular and self-adjoint.
\end{itemize}
Then for any $\chi\in C^\infty_c([0,1),[0,1])$ with $\chi(0)=1$ and $\chi'\in C^\infty_c(0,1)$, the path 
$$D(t):=\chi(t)D+(1-\chi(t))D',$$
satisfies the assumptions of Lemma \ref{lemmahomotopyresolven} and Theorem \ref{homotopywithsweakcond}. In particular, the bordism constructed in Theorem \ref{homotopywithsweakcond} implements a bordism
$$(\mathpzc{E},D)\sim_{\rm bor}(\mathpzc{E},D').$$
\end{theorem}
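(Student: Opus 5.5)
The plan is to check the four hypotheses of Theorem \ref{homotopywithsweakcond} for the family $D(t)=\chi(t)D+(1-\chi(t))D'$, and to get hypothesis 4) from Lemma \ref{lemmahomotopyresolven}. Take $\chi$ as in the statement. After reparametrizing $t$, or choosing $\chi$ so that $\chi=1$ near $[0,1/2]$ and $\chi=0$ on $[2/3,1]$, the family is constant on $[0,1/2]\cup[2/3,1]$. It equals $D$ at $t=0$ and $D'$ at $t=1$. Each $D(t)$ is regular and self-adjoint by the second assumption.

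For the auxiliary module I take $\mathpzc{W}:=\Dom(D)\cap\Dom(D')$ with the norm $\|\xi\|_{D}+\|\xi\|_{D'}$. This is the graph module of $D\oplus D'$ restricted to the diagonal; as a Hilbert module it is $\Dom(D\oplus D')$ intersected with the diagonal. It contains the common core $\mathcal{D}_0$, so $i:\mathpzc{W}\to\mathpzc{E}$ is dense. The inclusion is $\mathcal{A}$-locally compact because it factors through $\Dom(D)\hookrightarrow\mathpzc{E}$, which is locally compact since $(\mathpzc{E},D)$ is a cycle.

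Hypothesis 3) follows because $\mathrm{Lip}(D)\cap\mathrm{Lip}(D')$ is contained in $\mathrm{Lip}(D(t))$, since commutators add linearly. Some care is needed here: $a$ must preserve $\Dom(D(t))$. I would handle this by showing $\Dom(D(t))=\Dom(D)\cap\Dom(D')$ for $t\in(0,1)$. Closedness of the sum $sD+(1-s)D'$, which is assumed self-adjoint on the core, should give this via the graph norm comparison on $\mathcal{D}_0$. The same identity gives hypothesis 2), $\Dom(D(t))\subseteq\mathpzc{W}$.

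The main step is verifying the hypotheses of Lemma \ref{lemmahomotopyresolven}. By the resolvent identity,
$$R_\mu(t)-R_\mu(s)=R_\mu(t)\bigl(D(s)-D(t)\bigr)R_\mu(s)=(\chi(s)-\chi(t))\,R_\mu(t)(D-D')R_\mu(s).$$
This requires $(D-D')R_\mu(s)$ to be bounded, which holds once $\Dom(D(s))\subseteq\Dom(D)\cap\Dom(D')$ with a graph-norm estimate. Norm-differentiability then gives
$$R_\mu'(t)=-\chi'(t)\,R_\mu(t)(D-D')R_\mu(t).$$
Multiplying by $D(t)$, the factor $D(t)R_\mu(t)$ is bounded, so $D(t)R_\mu'(t)$ is adjointable. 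Norm continuity in $t$ follows from norm continuity of $t\mapsto R_\mu(t)$ and of $t\mapsto(D-D')R_\mu(t)$, both via the same identity. On $[0,1/2]\cup[2/3,1]$ the derivative vanishes.

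I expect the main obstacle to be the uniform boundedness of $(D-D')R_\mu(t)$, i.e. the domain identification with uniform constants in $t$. The endpoint case of $\chi(t)\in\{0,1\}$ is excluded because $\chi'$ vanishes there. For interior $s$ I would try a closed graph argument, using that $sD+(1-s)D'$ is closed on $\Dom(D)\cap\Dom(D')$. Uniformity on the compact support of $\chi'$ would then follow from continuity or a Baire/compactness argument.

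With all hypotheses in place, Lemma \ref{lemmahomotopyresolven} gives hypothesis 4) with core $C^\infty([0,1],\mathcal{D}_0)$. Theorem \ref{homotopywithsweakcond} then yields the bordism $(\mathpzc{E},D)\sim_{\rm bor}(\mathpzc{E},D')$.
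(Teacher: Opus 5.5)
Your route is the paper's route: resolvent identity, the formula $R_\mu'(t)=-\chi'(t)R_\mu(t)(D-D')R_\mu(t)$, then Lemma \ref{lemmahomotopyresolven} and Theorem \ref{homotopywithsweakcond}. However, the step you single out as the main obstacle genuinely fails, and closed graph plus compactness cannot rescue it.

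The support of $\chi'$ is not contained in $\{0<\chi<1\}$. At $t_0:=\inf\mathrm{supp}\,\chi'$ one has $\chi(t_0)=1$, and $\chi=0$ at $\sup\mathrm{supp}\,\chi'$. The fact that $\chi'(t_0)=0$ only controls the value at $t_0$. Norm continuity of $t\mapsto D(t)R_\mu'(t)$ needs control as $t\downarrow t_0$, and there two things go wrong:
\begin{itemize}
\item the best constant $C_t$ in $\|(D-D')\xi\|\leq C_t\|\xi\|_{D(t)}$ must blow up unless $\Dom(D)\subseteq\Dom(D')$ (let $t\to t_0$ on the common core and use closedness of $D'$);
\item $|\chi'(t)|/(1-\chi(t))$ is unbounded near $t_0$, because $\log(1-\chi)\to-\infty$ there.
\end{itemize}

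Here is a concrete instance. Take $\mathpzc{E}=\ell^2(\N)$, $\mathcal{A}=B=\C$, $D=\mathrm{diag}(n)$ and $D'=\mathrm{diag}(n^2)$, with common core the finitely supported sequences. Every hypothesis of the theorem holds. Yet with $\epsilon:=1-\chi$ one gets
$$\|D(t)R_\mu'(t)\|\geq |\epsilon'(t)|\lim_{n\to\infty}\frac{(n^2-n)\,(n+\epsilon(t)(n^2-n))}{\mu^2+(n+\epsilon(t)(n^2-n))^2}=\frac{|\epsilon'(t)|}{\epsilon(t)},$$
which is unbounded on every right neighbourhood of $t_0$. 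Likewise $\|(D-D')R_\mu(t)\|\geq 1/\epsilon(t)$, so your claimed uniform bound on $\mathrm{supp}\,\chi'$ is false. The path therefore does not satisfy the hypotheses of Lemma \ref{lemmahomotopyresolven}. The paper's proof asserts continuity of $(i\mu+D(t))R_\mu'(t)$ without addressing this point. So this is a defect of the argument itself, and of the intermediate claim in the statement, not just of your write-up.

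Your choice $\mathpzc{W}=\Dom(D)\cap\Dom(D')$ has the same blind spot. Hypothesis 2) of Theorem \ref{homotopywithsweakcond} must hold for $t\in[0,t_0]$, where $D(t)=D$. In the example above, $\Dom(D)\not\subseteq\Dom(D)\cap\Dom(D')$.

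What is missing is an extra hypothesis such as $\Dom(D)=\Dom(D')$. This is exactly what holds in the application, Corollary \ref{locbddperturcor}, via Kato--Rellich. Under it your plan does work:
\begin{itemize}
\item the closed-graph constant is locally uniform in $s$ (perturb around $s_0$ using $\|(D-D')\xi\|\leq K\|\xi\|_D$ and absorb), hence uniform on $[0,1]$ by compactness;
\item $\mathpzc{W}:=\Dom(D)$ satisfies hypothesis 2);
\item the resolvent identity then gives the required norm continuity.
\end{itemize}
Without such a hypothesis, Lemma \ref{lemmahomotopyresolven} is not applicable to this path, and a different argument would be needed.
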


\begin{proof}
Since $sD+(1-s)D'$ is a regular self-adjoint operator for any $s\in [0,1]$, $D(t)$ will for any $t\in [0,1]$ be regular and self-adjoint. On the common core, it is readily verified that $R_\mu$ satisfies that 
$$R_\mu'(t)=(i\mu+D(t))^{-1}(\chi'(t)D'-\chi'(t)D)(i\mu+D(t))^{-1}.$$
We can conclude that $R_\mu$ is continuous in $t$. Since 
$$(i\mu+D(t))R_\mu'(t)=(\chi'(t)D'-\chi'(t)D)(i\mu+D(t))^{-1},$$
it follows that $(i\mu+D(t))R_\mu'(t)$ gives a well defined continuous mapping $[0,1]\to \End_B^*(\mathpzc{E})$. Combining Lemma \ref{lemmahomotopyresolven} with Theorem \ref{homotopywithsweakcond}, we see that $(\mathpzc{E},D)\sim_{\rm bor}(\mathpzc{E},D')$.
\end{proof}

We recall that if $V$ and $D$ are symmetric, regular operators with $\Dom(V)\supseteq \Dom(D)$, we say that $V$ is relatively bounded by $D$ with norm bound $a>0$ if for some $b>0$ and any $\xi\in \Dom(D)$, 
$$\langle V\xi,V\xi\rangle \leq a\langle D\xi,D\xi\rangle+b\langle \xi,\xi\rangle.$$

\begin{cor}
\label{locbddperturcor}
Let $(\mathpzc{E},D)$ be a cycle for $(\mathcal{A},B)$ and $V$ a symmetric, regular operator on $\mathpzc{E}$ which is relatively bounded with norm bound $<1$. Then $(\mathpzc{E},D+V)$ is a cycle for $(\mathcal{A},B)$. Moreover, for any $\chi\in C^\infty_c([0,1),[0,1])$ with $\chi(0)=1$ and $\chi'\in C^\infty_c(0,1)$ the path 
$$D(t):=D+\chi(t)V,$$
satisfies the assumptions of Theorem \ref{homotopywithsweakcond} and induces a bordism 
$$(\mathpzc{E},D)\sim_{\rm bor}(\mathpzc{E},D+V).$$
\end{cor}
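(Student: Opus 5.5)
The plan is to reduce Corollary \ref{locbddperturcor} to Theorem \ref{perturbationgenerelthm} applied with $D':=D+V$, and then to read off the path. Indeed, $\chi(t)D+(1-\chi(t))(D+V)=D+(1-\chi(t))V$. After the reparametrization $\chi\mapsto 1-\chi$, this is exactly the path $D+\chi(t)V$ in the statement, with endpoints swapped. The convention in the statement pins the endpoint $t=0$ to $D$; this can be achieved by reversing $t\mapsto 1-t$. Alternatively, one observes that the computation in the proof of Theorem \ref{perturbationgenerelthm} only uses that $\chi'$ is compactly supported in $(0,1)$. So it suffices to verify the hypotheses of Theorem \ref{perturbationgenerelthm}.

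\textbf{Self-adjointness and regularity.} For each $s\in[0,1]$, $sV$ is symmetric with $\Dom(sV)\supseteq\Dom(D)$. It is relatively $D$-bounded with bound $s^2a\leq a<1$ in the quadratic form sense. By the Kato--Rellich theorem for regular operators on Hilbert $C^*$-modules (as in \cite{leschkaad2} or \cite[Section 1]{DGM}), $D+sV$ is regular and self-adjoint on $\Dom(D)$.

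\textbf{Common core.} Since every $D+sV$ has domain $\Dom(D)$ with graph norms equivalent to that of $D$, the common core is $\mathcal{D}_0:=\Dom(D)$ itself.

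\textbf{$(\mathpzc{E},D+V)$ is a cycle.} Compactness follows from
\[
a(i+D+V)^{-1}=a(i+D)^{-1}\,(i+D)(i+D+V)^{-1}.
\]
Here the last factor is adjointable because $\Dom(D+V)=\Dom(D)$ with equivalent graph norms, so $a(i+D+V)^{-1}$ is $B$-compact.

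\textbf{Main obstacle.} The step I expect to be hardest is the Lipschitz condition: showing that $a\in\mathcal{A}$ preserves $\Dom(D)$ (which holds) and that $[D+V,a]$ is bounded. This requires $[V,a]$ bounded, which does not follow from relative boundedness alone. I would therefore interpret the hypothesis as including $\mathcal{A}\subseteq \mathrm{Lip}(V)$ (e.g.\ $V$ locally bounded, as in the later Definition \ref{locbbddd}), or as implicit in the phrase ``$(\mathpzc{E},D+V)$ is a cycle''. I would state explicitly that this is the assumption used.

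With the hypotheses in place, Theorem \ref{perturbationgenerelthm} (via Lemma \ref{lemmahomotopyresolven} and Theorem \ref{homotopywithsweakcond}) yields the bordism $(\mathpzc{E},D)\sim_{\rm bor}(\mathpzc{E},D+V)$.
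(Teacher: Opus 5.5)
Your argument is the paper's: Kato--Rellich (\cite[Theorem 4.4]{leschkaad2}) makes every $D+sV$, $s\in[0,1]$, self-adjoint and regular with domain $\Dom(D)$, which is then a common core, and Theorem \ref{perturbationgenerelthm} applies. It applies most directly to the ordered pair $(D+V,D)$, since $\chi(t)(D+V)+(1-\chi(t))D=D+\chi(t)V$. Note that in the statement $D(0)=D+V$, so no reparametrization is needed.

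Your worry about the Lipschitz condition is well founded, and the paper's proof passes over it. For a counterexample, take $L^2(S^1)$ with $D=-i\,\mathrm{d}/\mathrm{d}x$, and let $V$ be the Fourier multiplier $\mathrm{e}^{inx}\mapsto \tfrac12(-1)^n n\,\mathrm{e}^{inx}$, which has relative bound $\tfrac14$. Then the commutator $[D+V,\mathrm{e}^{ix}]$ sends $\mathrm{e}^{inx}$ to $\bigl(1+\tfrac12(-1)^{n+1}(2n+1)\bigr)\mathrm{e}^{i(n+1)x}$ and is unbounded. So a hypothesis such as $\mathcal{A}\subseteq\mathrm{Lip}(V)$ is genuinely needed for $(\mathpzc{E},D+V)$ to be a cycle. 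It holds automatically for bounded $V$, which is the case in the paper's later uses of the corollary.
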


\begin{proof}
By the Kato-Rellich theorem (see \cite[Theorem 4.4]{leschkaad2}), $D+V$ is self-adjoint and regular with $\Dom(D+V)=\Dom(D)$, so $\Dom(D)$ is a common core for $D$ and $D+V$. In fact, the same holds for $D+sV$ any $s\in [0,1]$ so the corollary now follows from Theorem \ref{perturbationgenerelthm}.
\end{proof}

Specific $KK$-bordisms, such as that appearing in Corollary \ref{locbddperturcor}, will play a role in explicit relative constructions, so we make the following definition.

\begin{define}
\label{transgres}
Let $(\mathpzc{E},D)$ be a cycle for $(\mathcal{A},B)$ and $V$ a symmetric, regular operator on $\mathpzc{E}$ which is relatively bounded with norm bound $<1$. We denote the bordism from Corollary \ref{locbddperturcor} by $\mathcal{T}(\mathpzc{E},D,V)$; we call it the transgression bordism of $(\mathpzc{E},D)$ and $V$. 
\end{define}

\begin{define}
\label{locbbddd}
A symmetric regular operator $V$ is said to be $\mathcal{A}$-locally bounded if for any $a\in \mathcal{A}$, $a\Dom(V)\subseteq \Dom(V)$ and $aV$ and $Va$ extend to bounded adjointable operators. When $\mathcal{A}$ is clear from context, we just say that $V$ is locally bounded.
\end{define}

\begin{cor}[Invertible double]
\label{invertibleldodcor}
Let $(\mathpzc{E},D)$ be a cycle for $(\mathcal{A},B)$ and define the operator $\tilde{D}$ on $\tilde{\mathpzc{E}}:=\mathpzc{E}\oplus \mathpzc{E}$ as 
$$\tilde{D}:=
\begin{pmatrix}
D&(1+D^2)^{-1/2}\\
(1+D^2)^{-1/2}& -D
\end{pmatrix}$$
Equipp $\tilde{\mathpzc{E}}$ with the left $A$-action defined by $a.(\xi_1\oplus \xi_2):=a\xi_1\oplus 0$. Then $(\tilde{\mathpzc{E}},\tilde{D})$ is an $(\mathcal{A},B)$-cycle satisfying 
\begin{enumerate}
\item $\tilde{D}$ is invertible.
\item $\tilde{D}$ is a locally bounded perturbation of $D\oplus 0$ on $\tilde{\mathpzc{E}}$. 
\item For any $\chi\in C^\infty_c([0,1),[0,1])$ with $\chi(0)=1$ and $\chi'\in C^\infty_c(0,1)$ the path
$$\tilde{D}(t):=
\begin{pmatrix}
D&\chi(t)(1+D^2)^{-1/2}\\
\chi(t)(1+D^2)^{-1/2}& -\chi(t)D
\end{pmatrix}$$
satisfies the assumptions of Theorem \ref{homotopywithsweakcond}.
\end{enumerate}
In particular, $D\oplus 0$ and $\tilde{D}$  satisfy the assumptions of Theorem \ref{perturbationgenerelthm}, and so Theorem \ref{homotopywithsweakcond}  induces a bordism
$$(\mathpzc{E},D)\sim_{\rm bor}(\tilde{\mathpzc{E}},\tilde{D}).$$
\end{cor}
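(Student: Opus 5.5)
The plan is to verify the three listed properties of $\tilde{D}$ directly by functional calculus, and then obtain the bordism from Theorem \ref{perturbationgenerelthm} together with a null-bordism of the zero-action summand. Throughout, write $R:=(1+D^2)^{-1/2}$. This is a bounded, positive, self-adjoint operator that commutes with $D$ on $\Dom(D)$.

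\emph{The operator $\tilde{D}$.} Write $\tilde{D}=(D\oplus(-D))+\begin{pmatrix}0&R\\ R&0\end{pmatrix}$. The first term is self-adjoint and regular on $\Dom(D)\oplus\Dom(D)$. The second is a bounded self-adjoint perturbation, so the Kato–Rellich theorem as quoted in the proof of Corollary \ref{locbddperturcor} shows that $\tilde{D}$ is self-adjoint and regular with $\Dom(\tilde{D})=\Dom(D)\oplus\Dom(D)$. Since $DR=RD$ on $\Dom(D)$, the off-diagonal terms of $\tilde{D}^2$ cancel, giving
$$\tilde{D}^2=\big(D^2+(1+D^2)^{-1}\big)\oplus\big(D^2+(1+D^2)^{-1}\big).$$
The elementary inequality $s+(1+s)^{-1}\geq 1$ for $s\geq 0$ then gives $\tilde{D}^2\geq 1$, so $\tilde{D}$ is invertible. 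This proves item (1).

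\emph{The cycle conditions.} The action $a\mapsto a\oplus 0$ preserves $\Dom(\tilde{D})$, and the commutator
$$[\tilde{D},a\oplus 0]=\begin{pmatrix}[D,a]&-aR\\ Ra&0\end{pmatrix}$$
is bounded. Local compactness follows from
$$(a\oplus 0)(1+\tilde{D}^2)^{-1}=a\,g(D)(1+D^2)^{-1}\oplus 0,$$
where $g(x)=(1+x^2)/(1+x^2+(1+x^2)^{-1})$ is bounded. Hence $(\tilde{\mathpzc{E}},\tilde{D})$ is an $(\mathcal{A},B)$-cycle.

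\emph{Local boundedness.} The difference $V:=\tilde{D}-(D\oplus 0)=\begin{pmatrix}0&R\\ R&-D\end{pmatrix}$ is symmetric and regular with $\Dom(V)=\mathpzc{E}\oplus\Dom(D)$. This domain is preserved by $a\oplus 0$. Moreover $(a\oplus 0)V=\begin{pmatrix}0&aR\\0&0\end{pmatrix}$ and $V(a\oplus 0)=\begin{pmatrix}0&0\\Ra&0\end{pmatrix}$ are both bounded. This proves item (2).

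\emph{The path in item (3).} Observe that $\tilde{D}(t)=\chi(t)\tilde{D}+(1-\chi(t))(D\oplus 0)$. So item (3) is the conclusion of Theorem \ref{perturbationgenerelthm} applied to $D\oplus 0$ and $\tilde{D}$, and we check its hypotheses.
\begin{itemize}
\item $D\oplus 0$ is an $(\mathcal{A},B)$-cycle on $\tilde{\mathpzc{E}}$. The second summand carries the zero action, so local compactness there is vacuous.
\item $\Dom(D)\oplus\Dom(D)$ is a common core for $D\oplus 0$ and $\tilde{D}$.
\item The convex combination $s\tilde{D}+(1-s)(D\oplus 0)=(D\oplus(-sD))+s\begin{pmatrix}0&R\\R&0\end{pmatrix}$ is self-adjoint and regular for every $s\in[0,1]$, being a diagonal self-adjoint regular operator plus a bounded self-adjoint one.
\end{itemize}
For the module $\mathpzc{W}$ in Theorem \ref{homotopywithsweakcond}, I would take $\mathpzc{W}=\Dom(D)\oplus\mathpzc{E}$, with the graph norm in the first slot. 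This $\mathpzc{W}$ contains every $\Dom(\tilde{D}(t))$, and its inclusion into $\tilde{\mathpzc{E}}$ is $\mathcal{A}$-locally compact for the action $a\oplus 0$. I expect this choice to be the main point to check carefully: the proof of Theorem \ref{perturbationgenerelthm} leaves $\mathpzc{W}$ implicit, and the degenerate second summand means one must confirm that the inclusion remains locally compact when the action kills that summand. The result is a bordism $(\tilde{\mathpzc{E}},D\oplus 0)\sim_{\rm bor}(\tilde{\mathpzc{E}},\tilde{D})$.

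\emph{Comparing with $(\mathpzc{E},D)$.} Finally, $(\tilde{\mathpzc{E}},D\oplus 0)=(\mathpzc{E},D)+(\mathpzc{E}_0,0)$, where $\mathpzc{E}_0$ denotes $\mathpzc{E}$ with the zero $A$-action. The cycle $(\mathpzc{E}_0,0)$ is null-bordant via the half-line chain of Example \ref{halflineex}. The obstruction to compactness noted there disappears because the action, and hence every $b(\phi)$ restricted to this summand, is zero. Adding the cylinder $\Psi(\mathpzc{E},D)$ gives $(\mathpzc{E},D)+(\mathpzc{E}_0,0)\sim_{\rm bor}(\mathpzc{E},D)$. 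Transitivity, via Proposition \ref{subsecglyinggluing}, then yields $(\mathpzc{E},D)\sim_{\rm bor}(\tilde{\mathpzc{E}},\tilde{D})$.
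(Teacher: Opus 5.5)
Your proposal is correct and follows the same route as the paper's proof: the same computation showing that $(a\oplus 0)(\tilde D-D\oplus 0)$ and $(\tilde D-D\oplus 0)(a\oplus 0)$ are bounded, invertibility of $\tilde D$ (which you make explicit via $\tilde D^2=(D^2+(1+D^2)^{-1})\oplus(D^2+(1+D^2)^{-1})\geq 1$), and item (3) obtained from Theorem \ref{perturbationgenerelthm}, which rests on Lemma \ref{lemmahomotopyresolven} exactly as the paper's proof does. The one addition is a step the paper leaves implicit: the homotopy bordism only gives $(\tilde{\mathpzc{E}},\tilde D)\sim_{\rm bor}(\tilde{\mathpzc{E}},D\oplus 0)$, and you close this correctly by writing $(\tilde{\mathpzc{E}},D\oplus 0)=(\mathpzc{E},D)+(\mathpzc{E}_0,0)$ and showing the zero-action summand is null-bordant via the half-line chain of Example \ref{halflineex}.
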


We remark that below, in Corollary \ref{existenceofveryfoidleled} on page \pageref{existenceofveryfoidleled}, we show that doubling is not needed for there to exist a bounded perturbation of $D$ making it invertible as soon as $\A$ acts unitally and $D$ satisfies the technical assumption of being very full (see Definition \ref{veryfodldleed} on page \pageref{veryfodldleed}).

\begin{proof}
It is readily verified that $\tilde{D}$ is invertible. The operator $\tilde{D}$ is even a locally compact perturbation of $D\oplus 0$ because for any $a\in \mathcal{A}$, 
$$a(\tilde{D}-D\oplus 0)=\begin{pmatrix}
0&a(1+D^2)^{-1/2}\\
0& 0
\end{pmatrix}
\quad\mbox{and}\quad 
(\tilde{D}-D\oplus 0)a=
\begin{pmatrix}
0&0\\
(1+D^2)^{-1/2}a& 0
\end{pmatrix}.$$
Item 3) follows using Lemma \ref{lemmahomotopyresolven}, and the proof of the corollary is complete.
\end{proof}

\begin{cor}
\label{locbddanticommcor}
Let $(\mathpzc{E},D)$ be a cycle for $(\mathcal{A},B)$ and $V$ a self-adjoint, regular, $\mathcal{A}$-locally bounded operator on $\mathpzc{E}$. Assume that 
\begin{enumerate}
\item $(i\pm D)^{-1}\Dom V\subseteq \Dom V\cap \Dom(DV)$ 
\item $(DV+VD)(i\mu+D)^{-1}$ has an adjointable extension to $\mathpzc{E}$ for all $\mu\in \R\setminus\{0\}$
\end{enumerate}
Then $(\mathpzc{E},D+V)$ is a cycle for $(\mathcal{A},B)$. Moreover, for any $\chi\in C^\infty_c([0,1),[0,1])$ with $\chi(0)=1$ and $\chi'\in C^\infty_c(0,1)$ the path 
$$D(t):=D+\chi(t)V,$$
satisfies the assumptions of Theorem \ref{homotopywithsweakcond} and induces a bordism 
$$(\mathpzc{E},D)\sim_{\rm bor}(\mathpzc{E},D+V).$$
\end{cor}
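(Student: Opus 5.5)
The plan is to reduce Corollary \ref{locbddanticommcor} to Theorem \ref{homotopywithsweakcond} via Lemma \ref{lemmahomotopyresolven}, following the pattern of the proof of Theorem \ref{perturbationgenerelthm}. The anticommutator condition replaces relative boundedness: it is a Kaad--Lesch type criterion for self-adjointness and regularity of sums. The proof has four steps, in this order.

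\emph{Step 1: $D+sV$ is self-adjoint and regular for all $s\in[0,1]$, with $\Dom(D)\cap\Dom(V)$ a common core.} Assumptions 1) and 2) are invariant under replacing $V$ by $sV$. I would therefore invoke the sum theorem of \cite{leschkaad2} (anticommutator relatively bounded by $D$, in the form of \cite[Theorem 1.3]{DGM}). This gives that $D+sV$ is closed, self-adjoint and regular on $\Dom(D)\cap\Dom(V)$, and that the graph norm of $D+sV$ dominates the graph norms of $D$ and $V$ separately. The domination comes from the identity
\[
\langle (D+V)\xi,(D+V)\xi\rangle=\langle D\xi,D\xi\rangle+\langle V\xi,V\xi\rangle+\langle (DV+VD)\xi,\xi\rangle ,
\]
with the last term controlled by $\epsilon\langle D\xi,D\xi\rangle+C_\epsilon\langle\xi,\xi\rangle$ using assumption 2).

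\emph{Step 2: $(\mathpzc{E},D+V)$ is an $(\mathcal{A},B)$-cycle.} For $a\in\mathcal{A}$, $a$ preserves $\Dom(D)$ and, by local boundedness, $\Dom(V)$. The commutator $[D+V,a]=[D,a]+Va-aV$ is then bounded. For local compactness, write $a(i+D+V)^{-1}$ as $a(i+D)^{-1}$ composed with $(i+D)(i+D+V)^{-1}$. The second factor is adjointable by the graph norm domination of Step 1, and the first is compact. This handles all $a\in\mathcal{A}$, and hence all $a\in A$ by density.

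\emph{Step 3: the hypotheses of Lemma \ref{lemmahomotopyresolven} for $D(t)=D+\chi(t)V$.} Take $\mathcal{D}_0=\Dom(D)\cap\Dom(V)$. As in the proof of Theorem \ref{perturbationgenerelthm},
\[
R_\mu'(t)=-\chi'(t)(i\mu+D(t))^{-1}V(i\mu+D(t))^{-1}.
\]
Then $D(t)R_\mu'(t)=-\chi'(t)\bigl(1-i\mu(i\mu+D(t))^{-1}\bigr)V(i\mu+D(t))^{-1}$. This is adjointable because $V(i\mu+D(t))^{-1}$ is bounded by Step 1. Norm continuity in $t$ follows from the resolvent identity together with uniform bounds in $s\in[0,1]$ from Step 1.

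\emph{Step 4: the remaining hypotheses of Theorem \ref{homotopywithsweakcond}.} Reparametrize $\chi$ so that the path is constant near the endpoints. Take $\mathpzc{W}=\Dom(D)$, which is $\mathcal{A}$-locally compactly included in $\mathpzc{E}$, and note $\Dom(D(t))\subseteq\Dom(D)$ by Step 1. Condition 3) of the theorem follows as in Step 2, and condition 4) is supplied by the lemma.

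The main obstacle is Step 1, and specifically the uniformity in $s$ of the graph norm equivalence, which Steps 2 and 3 both rely on. I would extract it from the quantitative form of the Kaad--Lesch estimate, using that the anticommutator of $D$ with $sV$ scales linearly in $s$.
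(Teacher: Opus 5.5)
Your outline is the paper's: the Kaad--Lesch sum theorem for $D+sV$, local boundedness for the cycle property, and then the resolvent criterion. The paper reaches Lemma \ref{lemmahomotopyresolven} by citing Theorem \ref{perturbationgenerelthm}, whose proof is essentially your Step 3.

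\textbf{The gap.} Step 3 fails at exactly the uniformity you flag, and the quantitative Kaad--Lesch estimate cannot supply it. On a core, the anticommutator bound gives
\[
\langle (D+sV)\xi,(D+sV)\xi\rangle\geq (1-s\epsilon)\langle D\xi,D\xi\rangle+s^{2}\langle V\xi,V\xi\rangle-sC_\epsilon\langle \xi,\xi\rangle .
\]
This controls the graph norm of $D$ uniformly in $s\in[0,1]$, but it controls $V$ only through $sV$. So you get $\|V(i\mu+D+sV)^{-1}\|=O(s^{-1})$, and nothing at $s=0$. There $V(i\mu+D)^{-1}$ need not be bounded, because hypotheses 1)--2) do not make $V$ relatively $D$-bounded.

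\textbf{A counterexample to the uniform bound.} Take $\ell^2(\N)\otimes\C^2$ with $c_c(\N)$ acting diagonally, $D=\bigoplus_{n\geq 1}n\sigma_1$ and $V=\bigoplus_{n\geq1}n^2\sigma_2$ (Pauli matrices). These satisfy all the hypotheses, with $DV+VD=0$. On the $n$-th block $\|V(i\mu+D+sV)^{-1}\|=n^2(\mu^2+n^2+s^2n^4)^{-1/2}$, and the supremum over $n$ is exactly $1/s$. Here $D(t)R_\mu(t)$ has inverse $1+i\mu D(t)^{-1}$ of norm at most $1+|\mu|$. Since $R_\mu'(t)=-\chi'(t)R_\mu(t)VR_\mu(t)$, this gives $\|D(t)R_\mu'(t)\|\geq (1+|\mu|)^{-1}|\chi'(t)|/\chi(t)$. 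For every admissible $\chi$, the quantity $|\chi'|/\chi=|(\log\chi)'|$ is unbounded as $t$ increases to the first zero of $\chi$. That zero lies in $(0,1)$ because $\chi(0)=1$ and $\chi$ has compact support in $[0,1)$. Hence $t\mapsto D(t)R_\mu'(t)$ is not even bounded, let alone norm continuous. Lemma \ref{lemmahomotopyresolven} therefore does not apply to $D(t)=D+\chi(t)V$.

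\textbf{The paper does not fill this either.} Its proof relies on Theorem \ref{perturbationgenerelthm}, which asserts norm continuity of $\chi'(t)(D'-D)(i\mu+D(t))^{-1}$. That is justified when $D'-D$ is relatively bounded, as in the Kato--Rellich setting of Corollary \ref{locbddperturcor}, but not under hypotheses 1)--2) alone.

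\textbf{How to close it.} You need an extra ingredient; either of the following works.
\begin{enumerate}
\item Add the hypothesis that $V$ is relatively $D$-bounded. Then the uniform $D$-estimate above gives $\sup_{s\in[0,1]}\|V(i\mu+D+sV)^{-1}\|<\infty$. The domains $\Dom(D(t))=\Dom(D)$ are constant, and the resolvent identity $R_\mu(t)-R_\mu(t')=(\chi(t')-\chi(t))R_\mu(t)VR_\mu(t')$ yields the continuity you want.
\item Verify condition 4) of Theorem \ref{homotopywithsweakcond} for the cylinder operator directly, without going through Lemma \ref{lemmahomotopyresolven}.
\end{enumerate}
In the example above the blocks are two-dimensional, so condition 4) can be checked by hand. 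The corollary's conclusion is therefore not contradicted; what breaks is the route through Lemma \ref{lemmahomotopyresolven}. Steps 1, 2 and 4 are fine as written. Step 2 uses only $s=1$ and needs no uniformity.
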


\begin{proof}
The assumptions on $D$ and $V$ ensure that $D+sV$ is regular and self-adjoint on $\Dom(D)\cap \Dom(sV)$ for all $s\in [0,1]$ (see \cite[Theorem 4.4]{leschkaad2} or \cite[Theorem 1.3]{DGM}). Furthermore, the assumptions guarantee that the module $\Dom(D)\cap \Dom(V)$ is a common core for $D+sV$ for all $s\in [0,1]$. Since $V$ is locally bounded, it follows that $a\Dom(D+V)=a\Dom(D)$ for all $a\in \mathcal{A}$, and moreover that $(\mathpzc{E},D+V)$ is a cycle for $(\mathcal{A},B)$. As such, the assumptions of Theorem \ref{perturbationgenerelthm} hold and the assumptions of Theorem \ref{homotopywithsweakcond} follow. 
\end{proof}

\begin{remark}
\label{commentoncprreosls}
Assume that $(\mathpzc{E},D)$ and $V$ are as in Corollary \ref{locbddanticommcor}. We note that if $(i\pm V)^{-1}(i\pm D)^{-1}$ is a compact operator then so is $(i\pm(D+V))^{-1}$. This fact follows from that $(i\pm(D+V))^{-1}=\lim_{n\to \infty} (i+\frac{1}{n}V)^{-1}(i\pm(D+V))^{-1}$ in norm and $(i+\frac{1}{n}V)^{-1}(i\pm(D+V))^{-1}$ factors over 
$$\Dom(D)\hookrightarrow \mathpzc{E}\xrightarrow{(i+\frac{1}{n}V)^{-1}} \mathpzc{E},$$
which is compact if $(i\pm V)^{-1}(i\pm D)^{-1}$ is compact. In particular, if a cycle $(\mathpzc{E},D)$ admits a $V$ as in Corollary \ref{locbddanticommcor} with $(i\pm V)^{-1}(i\pm D)^{-1}$ being compact, then $(\mathpzc{E},D)\sim_{\rm bor} (\mathpzc{E},D+V)$ with $D+V$ being $B$-Fredholm. 

In \cite{vandungenodd}, a construction producing such a $V$ from an approximate identity (satisfying additional properties) was given.
\end{remark}

\subsection{Weakly degenerate cycles}
\label{subsecweakdegff}

The $KK$-bordisms in unbounded $KK$-theory encompasses a wider notion of bordism than that coming from algebraic topology. Indeed, we shall here give an example of a $KK$-bordism that we below in Corollary \ref{coronvecbunhvhvjhv} use to implement vector bundle modification in geometric $K$-homology. Vector bundle modification is the main difference between the bordism relation defining bordism groups and the Baum-Douglas relation defining $K$-homology groups. We make the following definition motivated by \cite[Definition 3.1]{DGM}.

\begin{define}
\label{lknalkadn}
Let $(\mathcal{E},D)$ be an $(\mathcal{A},B)$-chain. A weakly degenerate decomposition of $(\mathcal{E},D)$ is pair of regular operators $(D_0,S)$ such that $D=D_0+S$, implicitly it thus holds that $\Dom(D)=\Dom(D_0)\cap \Dom(S)$, and additionally that 
\begin{enumerate}
\item $S$ is self-adjoint and admits a bounded inverse, $\Dom(S)$ is preserved by $A$ and $A$ commutes with $S$ on $\Dom(S)$.
\item There is a common core $X\subseteq \Dom(S)\cap \Dom(D_0)$ for $S$ and $D_0$ with $SX_0\subseteq \Dom(D_0)$, $D_0X_0\subseteq \Dom(S)$ and 
$$SD_0+D_0S=0\quad\mbox{on}\quad X_0.$$
\item $\mathcal{A}\subseteq \mathrm{Lip}(D_0)$. 
\end{enumerate}
A chain admitting a weakly degenerate decomposition is called weakly degenerate.
\end{define}

Let $X$ denote the self-adjoint densely defined multiplication operator on $L^2[0,\infty)$ defined from a smooth increasing function $X:[0,\infty)\to [1,\infty)$ which is constant equal to $1$ on $[0,1]$ and coincides with $\sqrt{1+x^2}$ for $x$ large enough. If $(\mathpzc{E},D)$ is a cycle with a weakly degenerate decomposition $D=D_0+S$, we define the symmetric operator $D_{sh}$ on $\Psi_{[0,\infty)}(\mathpzc{E})$ as 
$$D_{sh}:=\Psi_{[0,\infty)}(D_0)+X(0\hat{\boxtimes}S).$$
The domain of $D_{sh}$ is given by the elements $\xi\in (H^1_0[0,\infty)\hat{\boxtimes}\mathpzc{E})\cap (L^2[0,\infty)\hat{\boxtimes}\Dom(D))\subseteq \Psi_{[0,\infty)}(\mathpzc{E})$ satisfying the condition $\xi(0)=0$ and 
$$\int_0^\infty\left(\|\xi'(t)\|_{\mathpzc{E}}^2+\|D_0\xi(t)\|_\mathpzc{E}^2+X(t)^2\|S\xi(t)\|_\mathpzc{E}^2\right)\mathrm{d}t<\infty.$$
It was proven in \cite[Theorem 3.4]{DGM} that $D_{sh}$ is a symmetric and regular operator. In fact, $((\Psi_{[0,\infty)}(\mathpzc{E}),D_{sh}),(\id,\chi_{[0,1]}))$ was proven to be a bordism with boundary $(\mathpzc{E},D)$ in \cite[Theorem 3.4]{DGM}. The chain $(\Psi_{[0,\infty)}(\mathpzc{E}),D_{sh})$ should be thought of as the Shubin suspension $\Psi_{Sh,[0,\infty)}(\mathpzc{E},D)$ from \ref{suspbyinteShhalf} but ``straightened out'' at the boundary. 

\begin{theorem}
\label{weakdegnullbordthm}
If $(\mathcal{E},D)$ is a weakly degenerate cycle then 
$$(\mathpzc{E},D)\sim_{\rm bor}0.$$ 
More precisely, this bordism is implemented by the Shubin bordism 
$$\mathrm{Sh}(\mathpzc{E},D=D_0+S):=((\Psi_{[0,\infty)}(\mathpzc{E}),D_{sh}),(\id,\chi_{[0,1]}),(\mathpzc{E},D))$$ 
constructed from a weakly degenerate decomposition.  
\end{theorem}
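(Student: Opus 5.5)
The plan is to check directly that the triple $\mathrm{Sh}(\mathpzc{E},D=D_0+S)$ satisfies Definition \ref{HilBorDef} with boundary $(\mathpzc{E},D)$. By Definition \ref{HilBorKKcycle} applied with $\mathfrak{Y}_2=0$, this gives $(\mathpzc{E},D)\sim_{\rm bor}0$. The statement was essentially established in \cite[Theorem 3.4]{DGM}. I would reorganize that argument in three steps: symmetry and regularity, the collar conditions a)--c), and the half-closedness near the boundary. For the first step I would show that $D_{sh}$ is symmetric and regular, with $\Dom(D_{sh})$ the weighted domain described above. The approach is to square the operator on the core $C^\infty_c((0,\infty),X_0)$. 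There the anticommutation $SD_0+D_0S=0$ makes the cross terms between $\Psi_{[0,\infty)}(D_0)$ and $X(0\hat{\boxtimes}S)$ reduce to the term $X'\cdot(\text{Clifford factor})\hat{\boxtimes}S$. Since $X'$ is bounded and $S$ is dominated by $XS$, this term is relatively bounded with bound $<1$, up to a constant. Regularity and the identification of $D_{sh}^*$ then follow from \cite[Theorem 1.3 and 1.18]{DGM}, or from Kato--Rellich in the form \cite[Theorem 4.4]{leschkaad2}, exactly as for the Shubin suspension $\Psi_{Sh,[0,\infty)}$ in Subsubsection \ref{shubinbyhalf}. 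The $\mathcal{A}$-action, extended to $C^\infty_c([0,\infty),\mathcal{A})$, preserves the domain and has bounded commutators. Indeed, $A$ commutes with $S$ and with $X$, while $\mathcal{A}\subseteq \mathrm{Lip}(D_0)$ takes care of the $D_0$ part and $\partial_t$ only produces bounded derivatives of $\phi$.

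Second, I would verify the boundary compatibility conditions with $\Theta=(\id,\chi_{[0,1]})$. On $[0,1]$ we have $X\equiv 1$, so there $D_{sh}$ agrees with $\Psi(D_0)+0\hat{\boxtimes}S=\Psi(D)$, using the sign convention for $\hat{\boxtimes}$. This identification is meant up to the orientation of the collar: I reflect $t\mapsto 1-t$ so that the boundary sits at $t=1$ in the convention of $b$. Conditions b) and c) are then locality statements for a first-order differential operator in $t$. Condition a) says that cutoffs supported away from the end $t=0$ of the half-line map $\Dom(D_{sh}^*)$ into $\Dom(D_{sh})$. I would get this from the description $D_{sh}^*=\overline{\Psi_{[0,\infty)}(D_0)^*+X(0\hat{\boxtimes}S)}$, the only difference being the boundary condition at $t=0$, which a cutoff removes.

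The main obstacle is the third step: half-closedness as a $(C^\infty_c((0,1],\mathcal{A}),B)$-chain. This requires that $b(\phi)(1+D_{sh}^*D_{sh})^{-1}$ be compact even when $\phi$ does not vanish at the boundary point, which means $\phi$ extends as $\phi(1)$ over the whole infinite half-line. Example \ref{halflineex} shows this fails without $S$, so the growth $X(t)\sim t$ combined with $S$ being invertible must supply compactness at $t\to\infty$. I would argue that $D_{sh}^*D_{sh}\geq X^2 S^2 - C \geq c X^2 - C$ on the core, because $S^2\geq \|S^{-1}\|^{-2}$ and the cross terms are bounded. Consequently $\chi_{[R,\infty)}(1+D_{sh}^*D_{sh})^{-1/2}$ has norm $O(1/R)$. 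It therefore suffices to show that $a\chi_{[0,R]}(1+D_{sh}^*D_{sh})^{-1/2}$ is compact for each $R$. On $[0,R]$ the domain embeds into $H^1\hat{\boxtimes}\mathpzc{E}\cap L^2\hat{\boxtimes}\Dom(D)$. Since $(\mathpzc{E},D)$ is a cycle, $a(1+D^2)^{-1/2}$ is compact, and this gives local compactness on the compact interval in the same way as in Proposition \ref{suspandprop}. The delicate point is making the lower bound on $D_{sh}^*D_{sh}$ rigorous on the domain rather than only on the core. I would first try to control the commutator term $X'\hat{\boxtimes}S$ by $\epsilon X^2S^2+C_\epsilon$ and then pass to closures.
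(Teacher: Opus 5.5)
Your proposal is correct and is essentially the paper's proof, which consists of invoking \cite[Theorem 3.4]{DGM}: you check the bordism axioms directly, and the growing term $X(0\hat{\boxtimes}S)$ together with the invertibility of $S$ supplies exactly the compactness at $t\to\infty$ that fails for the half-line cylinder of Example \ref{halflineex}. Two small corrections: no reflection $t\mapsto 1-t$ is needed, since in the convention of Definition \ref{HilBorDef} the boundary already sits at $t=0$ (condition a) there uses $\phi\in C^\infty_c(0,1]$, and $b(\phi)=\phi(1)$ on $[1,\infty)$), so $D_{sh}=\Psi(D)$ on $[0,1]$ with $\theta=\id$ exactly as stated, whereas reflecting would flip the sign of $\partial$; and the Kato--Rellich alternative does not apply, because neither summand of $D_{sh}$ is relatively bounded by the other, so regularity has to come from the anticommutator estimate you derived, which is precisely the hypothesis of \cite[Theorem 1.3]{DGM} (combined with \cite[Theorem 1.18]{DGM} at $t=0$).
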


Weakly degenerate cycles play an important role in connecting unbounded $KK$-theory and geometric $K$-homology. Below in Subsection \ref{diraconcstarbundleexbordism}, we shall see how all the constituents of the Baum-Douglas relation defining geometric $K$-homology fit together with $KK$-bordisms and in particular in Corollary \ref{coronvecbunhvhvjhv} we see how Theorem \ref{weakdegnullbordthm} relates vector bundle modification to $KK$-bordisms. Let us turn to Kaad's notion of spectrally decomposable cycles and show that they are bounded perturbations of special type of weakly degenerate cycles. First, we recall what a spectrally decomposable cycle is.

\begin{define}[Definition 4.1 of \cite{Kaadunbdd}]
Let $(\mathpzc{E},D)$ be a cycle for $(\mathcal{A},B)$. We say that $(\mathpzc{E},D)$ is spectrally decomposed by $P$ if $P$ is an orthogonal projection on $\mathpzc{E}$ preserving $\Dom(D)$ and satisfying 
\begin{enumerate}
\item $P$ commutes with the $A$-action
\item $DP=PD$ on $\Dom(D)$
\item $DP$ and $D(P-1)$ are positive and regular operators
\item If $(\mathpzc{E},D)$ is an even cycle, the operator $2P-1$ is odd.
\end{enumerate}
If $(\mathpzc{E},D)$ admits a spectrally decomposing projection, we say that $(\mathpzc{E},D)$ is spectrally decomposable.
\end{define}

\begin{theorem}
\label{specdecompnulbord}
Let $(\mathpzc{E},D)$ be a cycle for $(\mathcal{A},B)$ which is spectrally decomposed by $P$. Then $(\mathpzc{E},D+i\gamma(2P-1))$ has a weakly degenerate decomposition given by $(D,i\gamma(2P-1))$. In particular, a spectrally decomposable cycle $(\mathpzc{E},D)$ for $(\mathcal{A},B)$ satisfies that 
$$(\mathpzc{E},D)\sim_{\rm bor} 0,$$
via the ``bounded-pertubation-bordism'' $(\mathpzc{E},D)\sim_{\rm bor}(\mathpzc{E},D+i\gamma(2P-1))$ of Corollary \ref{locbddperturcor} glued together with the ``weakly-degenerate-bordism'' $(\mathpzc{E},D+i\gamma(2P-1))\sim_{\rm bor} 0$ from Theorem \ref{weakdegnullbordthm}.
\end{theorem}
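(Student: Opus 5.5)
The plan is to verify the items of Definition \ref{lknalkadn} for the decomposition $D+i\gamma(2P-1)=D_0+S$ with $D_0:=D$ and $S:=i\gamma(2P-1)$, and then to combine Corollary \ref{locbddperturcor} with Theorem \ref{weakdegnullbordthm} by means of the gluing of Proposition \ref{subsecglyinggluing}.

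I treat the even case first. Here $\gamma$ is the grading and $2P-1$ is odd, so $\gamma(2P-1)$ is odd and $S=i\gamma(2P-1)$ is an odd bounded operator. It is self-adjoint, since $(i\gamma(2P-1))^*=-i(2P-1)\gamma=i\gamma(2P-1)$, where the last step uses that $2P-1$ anticommutes with $\gamma$. Moreover $S^2=-\gamma(2P-1)\gamma(2P-1)=\gamma^2(2P-1)^2=1$, so $S$ is a bounded self-adjoint unitary and has a bounded inverse. In the odd case I interpret $\gamma$ through the paper's odd conventions; equivalently, I pass to the doubled graded picture so that $S$ is again an odd self-adjoint unitary.

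Item 1 needs $S$ to commute with $A$. The projection $P$ commutes with $A$ by assumption, and $\gamma$ commutes with $A$ because the action is even, so $S$ commutes with $A$; since $S$ is bounded, $\Dom(S)=\mathpzc{E}$ is preserved. Item 3 holds because $\mathcal{A}\subseteq\mathrm{Lip}(D)$, which is part of the cycle assumption. For item 2, I take the common core $X_0:=\Dom(D)$. This works because $P$ and $\gamma$ preserve $\Dom(D)$, the latter since $D$ is odd, so $S\Dom(D)\subseteq\Dom(D)$. On $\Dom(D)$ the operator $D$ commutes with $P$, and $D$ anticommutes with $\gamma$, so
$$SD+DS=i\gamma(2P-1)D+iD\gamma(2P-1)=i\gamma(2P-1)D-i\gamma D(2P-1)=0.$$
Finally, since $S$ is bounded, $D+S$ is regular and self-adjoint with $\Dom(D+S)=\Dom(D)$.

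It remains to check that $(\mathpzc{E},D+S)$ is a cycle, so that Theorem \ref{weakdegnullbordthm} applies. A bounded perturbation preserves local compactness of the resolvent, and $S$ commutes with $\mathcal{A}$, so the commutators are unchanged; hence $(\mathpzc{E},D+S)$ is indeed a cycle. Theorem \ref{weakdegnullbordthm} then gives the Shubin null-bordism $(\mathpzc{E},D+S)\sim_{\rm bor}0$.

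A bounded self-adjoint $S$ is relatively bounded with norm bound $0<1$, so Corollary \ref{locbddperturcor} gives the transgression bordism $(\mathpzc{E},D)\sim_{\rm bor}(\mathpzc{E},D+S)$. Gluing this with the Shubin null-bordism via Proposition \ref{subsecglyinggluing} yields $(\mathpzc{E},D)\sim_{\rm bor}0$. The positivity of $DP$ and $D(P-1)$ is not needed for the argument, beyond guaranteeing that the decomposition is meaningful.

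The main obstacle I expect is the odd case: one has to make precise what $\gamma$ means there and check that the anticommutation and the parity conventions still hold. I would handle this by passing through the standard identification of odd cycles with $\C\ell_1$-graded ones.
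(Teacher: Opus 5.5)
For even cycles your argument is correct and is the paper's own (one-line) proof: check Definition \ref{lknalkadn} for $D_0=D$ and $S=i\gamma(2P-1)$, then combine Corollary \ref{locbddperturcor}, Theorem \ref{weakdegnullbordthm} and the gluing of Proposition \ref{subsecglyinggluing}. In fact $S$ is a Clifford symmetry in the sense of Definition \ref{cliffsymdef}, which explains why positivity of $DP$ and $D(P-1)$ never enters.

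\textbf{The gap is the odd case.} The paper's proof also says nothing about it, since $i\gamma(2P-1)$ presupposes a grading. Your proposed remedy does not work.

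Take the doubled picture $\hat{\mathpzc{E}}=\mathpzc{E}\otimes\C^2$ with grading $\gamma=1\otimes\sigma_3$ and $\hat D=D\otimes\sigma_1$. The induced projection $\hat P=P\otimes 1$ has $2\hat P-1$ even, so $i\gamma(2\hat P-1)=i(2P-1)\otimes\sigma_3$ is even and anti-self-adjoint. Your verification of $S^*=S$ and $S^2=1$ used precisely that $2P-1$ is odd.

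The obstruction is more basic than this choice of $\hat P$. The paper's $\Omega_*$ has no $\C\ell_1$-graded variant. As a plain even $(\mathcal{A},B)$-cycle, $(\hat{\mathpzc{E}},\hat D)$ is always null-bordant, because $1\otimes\sigma_2$ is a Clifford symmetry (Proposition \ref{cliffsymlemprop}). So nothing about $(\mathpzc{E},D)$ can be concluded from it. If instead you keep the $\C\ell_1$-generator $1\otimes\sigma_2$ as structure, every odd $S$ graded-commuting with it has the form $X\otimes\sigma_1$. Then $S\hat D+\hat D S=(XD+DX)\otimes 1$, so you need an $X$ on $\mathpzc{E}$ anticommuting with $D$. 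Such an $X$ cannot be built from $P$, because $P$ commutes with $D$.

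What does work for odd cycles is $S:=2P-1$. It is a self-adjoint unitary commuting with $A$ and with $D$, and
$$DS=DP+D(P-1)\geq 0.$$
This is exactly where the positivity hypothesis is used. It gives $DS=|D|$, hence $D+XS=(|D|+X)S$ and
$$(D+XS)^2=(|D|+X)^2\geq D^2+X^2$$
for the growing scalar function $X$ of the Shubin construction. So the coercive estimate behind the Shubin bordism survives, with a nonnegative cross term $2\langle X|D|\xi,\xi\rangle$ in place of a vanishing one. However, item 2 of Definition \ref{lknalkadn} fails for this $S$. This is therefore a variant of Theorem \ref{weakdegnullbordthm} whose proof must be redone, not quoted.
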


\begin{proof}
The conditions on a spectral decomposition ensures that $(D,i\gamma(2P-1))$ is a weakly degenerate decomposition of $D+i\gamma(2P-1)$ and the theorem follows from Corollary \ref{locbddperturcor} and Theorem \ref{weakdegnullbordthm}.
\end{proof}

\begin{cor}
\label{specedmopmad}
Let $(\mathpzc{E},D)$ be a cycle for $(\mathcal{A},B)$ with $D$ is invertible. Assume that there is a real valued odd function $\chi\in C^\infty(\R)$ with $\chi(x)=1$ for all $x>0$ belonging to the spectrum of $D$, such that $\chi(D)$ commutes with $A$. Then  $(\mathpzc{E},D)$ is spectrally decomposed by $P:=(\chi(D)+1)/2$ and $(\mathpzc{E},D)$ is nullbordant.
\end{cor}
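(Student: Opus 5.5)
The idea is to check directly that $P:=(\chi(D)+1)/2$ satisfies the four conditions in the definition of a spectrally decomposing projection, and then apply Theorem \ref{specdecompnulbord}. Since $D$ is invertible, its spectrum lies in $\R\setminus(-\epsilon,\epsilon)$ for some $\epsilon>0$. The function $\chi$ is odd and equals $1$ on $\mathrm{Spec}(D)\cap(0,\infty)$, so it equals $-1$ on $\mathrm{Spec}(D)\cap(-\infty,0)$. By functional calculus of self-adjoint regular operators, $\chi(D)$ is therefore a self-adjoint unitary, and $\chi(D)=\mathrm{sign}(D)=D|D|^{-1}$. Hence $P=\chi_{(0,\infty)}(D)$ is an orthogonal projection.

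Next we verify the conditions in turn. Being a bounded Borel function of $D$, $P$ preserves $\Dom(D)$ and commutes with $D$ there; this gives condition (2). Condition (1) follows from the hypothesis that $\chi(D)$ commutes with $A$.

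For condition (3), note that $DP=\chi_{(0,\infty)}(D)D$ and $D(P-1)=-\chi_{(-\infty,0)}(D)D$. Both are functions of $D$ given by nonnegative Borel functions of polynomial growth, so functional calculus makes them positive, self-adjoint and regular. The domain is inherited from $D$ via $D=DP+D(1-P)$.

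For condition (4), in the even case $D$ is odd, so $\gamma D\gamma=-D$. Hence $\gamma\chi(D)\gamma=\chi(-D)=-\chi(D)$ since $\chi$ is odd. Thus $2P-1=\chi(D)$ is odd.

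With all four conditions in place, Theorem \ref{specdecompnulbord} gives $(\mathpzc{E},D)\sim_{\rm bor}0$. Concretely, this is the bounded perturbation bordism to $D+i\gamma\chi(D)$ from Corollary \ref{locbddperturcor}, glued with the Shubin bordism of Theorem \ref{weakdegnullbordthm}.

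The only place needing care is condition (3), in particular the regularity and positivity of $DP$ as an unbounded operator on a Hilbert $C^*$-module. I would handle this by working in the continuous functional calculus of the bounded transform $F_D$. On $\mathrm{Spec}(D)$ the function $\chi$ coincides with a continuous function, so $P$ is a continuous function of $F_D$, and $DP=f(D)$ with $f(x)=\max(x,0)$. Regularity and positivity of $f(D)$ are then standard from \cite{lancesbook}.
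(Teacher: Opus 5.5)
Your strategy is the route the paper intends, since it gives no argument beyond citing Theorem~\ref{specdecompnulbord}: identify $P$ with the positive spectral projection of $D$, check Kaad's conditions (1)--(4), then apply that theorem. Your checks of (1)--(4) are correct \emph{once} $\chi(D)=D|D|^{-1}$ is known. Your closing remark is also the right way to phrase them: on a Hilbert $C^*$-module only continuous functional calculus is available, and it suffices because $\mathrm{Spec}(D)$ avoids a neighbourhood of $0$.

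The gap is your first deduction. Oddness of $\chi$ together with $\chi=1$ on $\mathrm{Spec}(D)\cap(0,\infty)$ only gives $\chi=-1$ on the reflected set $-(\mathrm{Spec}(D)\cap(0,\infty))$. It says nothing about $\chi$ on $\mathrm{Spec}(D)\cap(-\infty,0)$ unless $\mathrm{Spec}(D)$ is symmetric.

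For even cycles the spectrum is symmetric, because $\gamma D\gamma=-D$, so there your proof is complete. For odd cycles it need not be, and then the statement as literally worded is false. Take $\mathcal{A}=C^\infty(S^1)$, $\mathpzc{E}=L^2(S^1)$ and $D=-i\,\mathrm{d}/\mathrm{d}\theta+\frac13$, whose spectrum is $\Z+\frac13$. Take $\chi(x)=\frac{2}{\sqrt3}\sin(2\pi x)$, which is smooth and odd with $\chi(n+\frac13)=1$ for every $n\in\Z$.
\begin{itemize}
\item Then $\chi(D)=1$ commutes with $A$, $D$ is invertible, and $P=1$, so every hypothesis holds.
\item But $DP=D$ is not positive, so $P$ is not a spectral decomposition.
\item The cycle is a bounded perturbation of the circle Dirac operator, so it represents a generator of $KK_1(C(S^1),\C)\cong\Z$. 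By Theorem~\ref{alknlkanda} it is therefore not nullbordant.
\end{itemize}

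So the step cannot be repaired; you must make the intended hypothesis explicit. It is that $\chi(x)=\mathrm{sign}(x)$ on $\mathrm{Spec}(D)$, i.e.\ $\chi(x)=1$ whenever $x>0$ and $x$ or $-x$ lies in $\mathrm{Spec}(D)$. Equivalently, it is simply that $D|D|^{-1}$ commutes with $A$, since for invertible $D$ a suitable smooth odd $\chi$ always exists. This matches how the corollary is used later in the paper, where $\chi(D)=\mathrm{sign}(\beta(D))$. With that hypothesis, the rest of your argument goes through unchanged.
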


We now present one more variation on the themes above.

\begin{define}
\label{cliffsymdef}
Suppose that $\mathfrak{Y}=(\mathpzc{E},D)$ is an $(\mathcal{A},B)$-cycle and that $J\in \End_B^*(\mathpzc{E})$ is a symmetry (i.e. $J=J^*$ and $J^2=1$) commuting with the $A$-action such that $JD=-DJ$ on $\Dom(D)$. If $\mathfrak{Y}$ is even, we assume that $J$ is odd. We then say that $J$ is a Clifford symmetry for $\mathfrak{Y}$.
\end{define}

\begin{prop}
\label{cliffsymlemprop}
Suppose that $J$ is a Clifford symmetry for a cycle $\mathfrak{Y}=(\mathpzc{E},D)$. Then the cycle $(\mathpzc{E},D+J)$ is weakly degenerate and $\mathfrak{Y}\sim_{\rm bor} 0$ via a canonical bordism $\mathfrak{X}_{\mathfrak{Y},J}$ constructed from gluing together the $KK$-bordism  $\mathfrak{Y}\sim_{\rm bor}(\mathpzc{E},D+J)$ (see Corollary \ref{locbddperturcor}) with the bordism $\mathrm{Sh}(\mathpzc{E},D+J)$ defined for a weakly degenerate cycle in Theorem \ref{weakdegnullbordthm}.
\end{prop}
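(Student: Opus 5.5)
The plan is to split the statement into two steps. First, $(\mathpzc{E},D)\sim_{\rm bor}(\mathpzc{E},D+J)$ via the transgression bordism of Corollary \ref{locbddperturcor}. Second, $(\mathpzc{E},D+J)$ is weakly degenerate, so Theorem \ref{weakdegnullbordthm} gives $(\mathpzc{E},D+J)\sim_{\rm bor}0$. Gluing the two bordisms along the common boundary $(\mathpzc{E},D+J)$, using Proposition \ref{subsecglyinggluing} (that is, \cite[Theorem 2.20]{DGM}), then produces the canonical null-bordism $\mathfrak{X}_{\mathfrak{Y},J}$.

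For the first step, set $V:=J$. Since $J$ is a bounded self-adjoint operator, it is relatively bounded by $D$ with relative bound $0<1$: we have $\langle J\xi,J\xi\rangle=\langle\xi,\xi\rangle$, so we may take $a$ arbitrarily small and $b=1$. In the even case $J$ is odd, so $D+J$ is again an odd operator. Corollary \ref{locbddperturcor} then shows that $(\mathpzc{E},D+J)$ is an $(\mathcal{A},B)$-cycle and yields the transgression bordism $\mathcal{T}(\mathpzc{E},D,J)$ with boundary $(\mathpzc{E},D)+(-(\mathpzc{E},D+J))$.

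For the second step, take the decomposition $D_0:=D$ and $S:=J$ in Definition \ref{lknalkadn}, and check its three conditions.
\begin{enumerate}
\item $S=J$ is self-adjoint with bounded inverse $J^{-1}=J$. Its domain $\Dom(S)=\mathpzc{E}$ is trivially $A$-invariant, and $J$ commutes with $A$ by the definition of a Clifford symmetry.
\item Take the common core to be $\Dom(D)$. The relation $JD=-DJ$ on $\Dom(D)$ implicitly includes $J\Dom(D)\subseteq\Dom(D)$; if not, this follows by applying the relation to $J$ itself. Hence $S\Dom(D)\subseteq\Dom(D_0)$, while $D_0\Dom(D)\subseteq\mathpzc{E}=\Dom(S)$ holds trivially, and $SD_0+D_0S=JD+DJ=0$ there.
\item $\mathcal{A}\subseteq\mathrm{Lip}(D)$ holds because $(\mathpzc{E},D)$ is a cycle.
\end{enumerate}
The domain condition $\Dom(D+J)=\Dom(D)\cap\Dom(J)=\Dom(D)$ is clear.

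Theorem \ref{weakdegnullbordthm} therefore supplies the Shubin bordism $\mathrm{Sh}(\mathpzc{E},D+J)$ with boundary $(\mathpzc{E},D+J)$. Gluing it to $\mathcal{T}(\mathpzc{E},D,J)$ along the summand $(\mathpzc{E},D+J)$ gives a bordism with boundary $(\mathpzc{E},D)$, which is the claim $\mathfrak{Y}\sim_{\rm bor}0$.

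The only delicate points are bookkeeping. One is the orientation sign: the transgression bordism has $-(\mathpzc{E},D+J)$ in its boundary while $\mathrm{Sh}$ has $+(\mathpzc{E},D+J)$, which is exactly the configuration required for gluing. The other is the verification that $J$ preserves $\Dom(D)$. In the even case one should also note that $J$ being odd is what makes $D+J$ odd, and hence what makes $\mathrm{Sh}$ a graded bordism of the correct parity.
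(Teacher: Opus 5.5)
Your argument is correct and follows the paper's own route. The paper's statement already encodes the proof: glue the transgression bordism of Corollary~\ref{locbddperturcor} (with $V=J$) to the Shubin bordism $\mathrm{Sh}(\mathpzc{E},D+J)$, using the weakly degenerate decomposition $(D_0,S)=(D,J)$, in exact parallel with Theorem~\ref{specdecompnulbord}, where $(D,i\gamma(2P-1))$ is used. The only caveat is orientation: the paper is not consistent about whether the path in Corollary~\ref{locbddperturcor} runs from $D$ to $D+J$ or back, so if $\mathcal{T}(\mathpzc{E},D,J)$ turns out to carry $+(\mathpzc{E},D+J)$ in its boundary, glue its opposite instead; nothing else in your argument changes.
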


\subsection{Homotopy of the representation}
\label{subsec:homotoperep}

The definition of bordism allows for encoding not just regular enough homotopies in the operator, but also in the action of the algebra $\mathcal{A}$. In this subsection, we shall use the notation ${}_\pi \mathpzc{E}$ for an $A-B$-Hilbert $C^*$-module for which the left action of $A$ is defined from a representation $\pi:A\to \End_B^*(\mathpzc{E})$. 

\begin{define}
Let $\mathpzc{E}$ be a $B$-Hilbert module equipped with a self-adjoint regular operator $D$ and $\pi_0,\pi_1:A\to \End_B^*(\mathpzc{E})$ two representations for which $({}_{\pi_0} \mathpzc{E},D)$ and $({}_{\pi_1} \mathpzc{E},D)$ are $(\mathcal{A},B)$-cycles. We say that $\pi_0$ is smoothly homotopic to $\pi_1$ if there is a $\Pi:A\to C([0,1],\End_B^*(\mathpzc{E}))$ such that 
\begin{enumerate}
\item $\Pi_t:=\mathrm{ev}_t\circ \pi$ satisfies that $\Pi_t=\pi_t$ for $t\in \{0,1\}$;
\item $\Pi(\mathcal{A})\subseteq C^1([0,1],\End_B^*(\mathpzc{E}))$;
\item For any $t\in [0,1]$, $({}_{\Pi_t} \mathpzc{E},D)$ is a cycle for $(\mathcal{A},B)$.
\end{enumerate}
For technical simplicity, we tacitly assume that $\Pi$ is constant on $[0,1/4]\cup[3/4,1]$.
\end{define}

\begin{theorem}
\label{homotopyofrep}
Let $({}_{\pi_0} \mathpzc{E},D)$ and $({}_{\pi_1} \mathpzc{E},D)$ be two $(\mathcal{A},B)$-cycles and $\Pi$ a smooth homotopy between $\pi_0$ and $\pi_1$. Then the collection 
$$(({}_{\Pi}(L^2[0,1]\hat{\boxtimes}\mathpzc{E}),\Psi(D)),(\id,\chi_{[0,1/4]\cup[3/4,1]})),$$ 
defines a bordism 
$$({}_{\pi_0} \mathpzc{E},D)\sim_{\rm bor}({}_{\pi_1} \mathpzc{E},D).$$
\end{theorem}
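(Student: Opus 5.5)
The proof will check directly that the proposed collection satisfies Definition \ref{HilBorDef}, in parallel with the proof of Theorem \ref{homotopywithsweakcond}. The interior chain is $\mathfrak{Z}=({}_{\Pi}(L^2[0,1]\hat{\boxtimes}\mathpzc{E}),\Psi(D))$. Here $A$ acts fibrewise by $(a\xi)(t)=\Pi_t(a)\xi(t)$, and $\Psi(D)=\partial^{\rm min}\hat{\boxtimes}D$ is the suspension from Definition \ref{suspbyinte}. By \cite[Theorem 1.18]{DGM}, $\Psi(D)$ is closed, regular and symmetric with $\Psi(D)^*=\overline{\partial^{\rm max}\hat{\boxtimes}D}$. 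The boundary data is $p=\chi_{[0,1/4]\cup[3/4,1]}$, with $\theta$ the identity after rescaling each collar to $[0,1]$ and flipping the collar at $0$. The boundary cycle is $({}_{\pi_0}\mathpzc{E},D)+(-({}_{\pi_1}\mathpzc{E},D))$. Since $\Pi$ is constant on the collars, $p$ commutes with the $A$-action. On the collars $\Pi$ agrees with $\pi_0$ or $\pi_1$, so conditions b) and c) reduce to locality of $\partial^{\rm min}\hat{\boxtimes}D$ in the $t$-variable and its product form, exactly as for the cylinder $\Psi(\mathfrak{Y})$ that gives reflexivity.

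The first real step is to show that $\mathfrak{Z}$ is a symmetric chain for $\mathcal{A}$ under the action $\Pi$. Take $a\in\mathcal{A}$. Because $\Pi(a)\in C^1([0,1],\End_B^*(\mathpzc{E}))$ and each $\Pi_t(a)\in\mathrm{Lip}(D)$, I would first work on the core $C^\infty_c((0,1),\Dom D)$ and then pass to the closure. On that core,
\[
[\Psi(D),\Pi(a)]=c\,\Pi(a)'\hat{\boxtimes}1+\gamma\hat{\boxtimes}[D,\Pi_t(a)],
\]
where $c$ is a Clifford or grading factor. The first term is bounded by the $C^1$ assumption. For the second term I need $\sup_t\|[D,\Pi_t(a)]\|<\infty$ together with strong measurability. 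Uniform boundedness will follow from the uniform boundedness principle once $t\mapsto[D,\Pi_t(a)]\xi$ is continuous for $\xi\in\Dom D$. That continuity I would get by applying Proposition \ref{autoodkkd} in a family version, or directly from $[D,\Pi_t(a)]\xi=D\Pi_t(a)\xi-\Pi_t(a)D\xi$ and the closedness of $D$.

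\textbf{I expect this to be the main obstacle.} The definition of a smooth homotopy does not obviously give a $t$-uniform Lipschitz bound, nor that $\Pi_t(a)\Dom D$ depends continuously on $t$. If it cannot be proved, I would read the $C^1$ hypothesis as including it, i.e. $\Pi(\mathcal{A})\subseteq\mathrm{Lip}$ of the family uniformly. Given the uniform bound, invariance of $\Dom\Psi(D)$ under $\Pi(a)$ follows by approximation on the core, since $\Psi(D)$ is closed.

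It remains to verify condition a) and half-closedness for $C^\infty_c((0,1],\mathcal{A})$ acting through $b$. Such an element acts as $\phi(t)\Pi_t(a(t))$, where the cutoff $\phi$ vanishes near the boundary $t=0$, which maps to the collar end, while at the glued ends the cutoff is $1$ and $\Pi$ is constant there. I would derive $b(\phi)\Dom\Psi(D)^*\subseteq\Dom\Psi(D)$ from the same statement for $\Psi(D)$ on $[0,1]$ in Proposition \ref{suspandprop} and Remark \ref{interohc}. Multiplication by $\Pi(a)$ preserves $\Dom\Psi(D)$ by the previous step. It also preserves $\Dom\Psi(D)^*$, because the maximal domain is described by $H^1$-regularity in $t$ together with $L^2(\Dom D)$, and the commutator computation above carries over to $\partial^{\rm max}$.

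For local compactness, $b(\phi)(1+\Psi(D)^*\Psi(D))^{-1}$ factors as $\Pi(a)\cdot\psi(t)(1+\Psi(D)^*\Psi(D))^{-1}$ for a scalar cutoff $\psi$. This operator is norm-approximated by piecewise-constant-in-$t$ sums $\sum\chi_{I_k}\Pi_{t_k}(a)$ composed with the resolvent, using the norm continuity of $\Pi(a)$. Each such summand is compact: $\Pi_{t_k}(a)$ is compact relative to $D$, and the $H^1_0$ directions give compactness in $t$, both by Proposition \ref{suspandprop}. Since compact operators form a norm-closed set, the limit is compact, and this completes the bordism.
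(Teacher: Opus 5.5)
Your route is the paper's. Its proof is one sentence: by Proposition \ref{suspandprop}, $\Psi(D)$ makes the collection a symmetric chain with boundary that is half-closed for $C^\infty_c((0,1],\mathcal{A})$. Your local-compactness step is correct and supplies what that sentence omits: you freeze $\Pi$ on small intervals, use only norm continuity of $t\mapsto\Pi_t(a)$, and apply Proposition \ref{suspandprop} to each constant representation $\pi_{t_k}$.

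However, Proposition \ref{suspandprop} concerns a constant action. The step you flag is a genuine gap, and it cannot be closed from the stated definition of a smooth homotopy. Your suggested derivations fail:
\begin{itemize}
\item Closedness of $D$ only identifies limits of $D\Pi_t(a)\xi$ that already exist, so it gives no continuity in $t$.
\item Pointwise bounds on the non-complete subspace $\Dom D$ do not yield an operator-norm bound via Banach--Steinhaus.
\item Automatic continuity cannot help, because the problem is uniformity in $t$; the example below has $\mathcal{A}=\C$.
\end{itemize}
In fact the bound is false in general. Take $\mathcal{A}=A=B=\C$, $\mathpzc{E}=\ell^2(\N)$, and $D$ diagonal with eigenvalues $\lambda_n\to\infty$. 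Choose disjoint intervals $I_k\subseteq(1/4,3/4)$ accumulating at $1/2$, and a function $\theta$ that on $I_k$ is a bump of $C^k$-norm at most $2^{-k}$. Let $\Pi_t(1)=p_t$ be the projection onto $v_t=\cos\theta(t)e_0+\sin\theta(t)e_{m_k}$ for $t\in I_k$, and $v_t=e_0$ elsewhere.
\begin{itemize}
\item Then $t\mapsto p_t$ is norm-smooth and constant near the endpoints.
\item Each $p_t$ is a finite-rank element of $\mathrm{Lip}(D)$, so each $({}_{p_t}\ell^2,D)$ is a cycle.
\item But $\|Dp_te_0\|\geq|\sin\theta(t)\cos\theta(t)|\lambda_{m_k}$ on $I_k$.
\end{itemize}
Choosing the $m_k$ so that the $\lambda_{m_k}$ grow fast enough, the function $t\mapsto\psi(t)p_te_0$ is not in $L^2([0,1],\Dom D)$, which contains $\Dom\Psi(D)$. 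Here $\psi\in C^\infty_c(0,1)$ equals $1$ on $[1/4,3/4]$, and this function is $\Pi(1)$ applied to $\psi e_0\in\Dom\Psi(D)$. So $\Pi(1)$ does not preserve $\Dom\Psi(D)$, and the interior is not an $(\mathcal{A},B)$-chain under $\Pi$. The paper's proof tacitly assumes the missing uniform bound.

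Your fallback is therefore not an optional reading but a necessary extra hypothesis. For instance, require that $t\mapsto[D,\Pi_t(a)]$ be uniformly bounded and $*$-strongly continuous for each $a\in\mathcal{A}$. Alternatively, require $\Pi(\mathcal{A})\subseteq C([0,1],\mathrm{Lip}(D))$ for the operator-algebra norm on $\mathrm{Lip}(D)$. With either, the rest of your argument goes through:
\begin{itemize}
\item The commutator formula on the core $C^\infty_c((0,1),\Dom D)$, together with closedness, gives $\Pi(a)\in\mathrm{Lip}(\Psi(D))$.
\item Writing $b(\phi)=M\psi$ with a scalar cutoff $\psi$ vanishing near $t=0,1$ gives condition a) and $b(\phi)\Dom\Psi(D)^*\subseteq\Dom\Psi(D)$.
\end{itemize}
Two small corrections. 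First, invariance of $\Dom\Psi(D)^*$ under $\Pi(a)$ is not needed, and your description of that domain is inaccurate. It is strictly larger than $H^1([0,1],\mathpzc{E})\cap L^2([0,1],\Dom D)$, since it contains homogeneous solutions $t\mapsto\mathrm{e}^{\mp tD}\eta$ not lying in $L^2([0,1],\Dom D)$. The invariance does follow from the usual adjoint argument once $\Pi(a),\Pi(a)^*\in\mathrm{Lip}(\Psi(D))$. Second, identifying a collar of length $1/4$ with $L^2[0,1]$ by rescaling multiplies $i\frac{\mathrm{d}}{\mathrm{d}t}$ by $4$. So condition b) holds only with collars of unit length, e.g.\ on $L^2[0,4]$ with $p=\chi_{[0,1]\cup[3,4]}$; the statement has the same slip.
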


\begin{proof}
It is clear that if $(({}_{\Pi}(L^2[0,1]\hat{\boxtimes}\mathpzc{E}),\Psi(D)),(\id,\chi_{[0,1/4]\cup[3/4,1]}))$ is a bordism, then it defines a bordism $({}_{\pi_0} \mathpzc{E},D)\sim_{\rm bor}({}_{\pi_1} \mathpzc{E},D)$. By Proposition \ref{suspandprop}, $\Psi(D)$ is regular, symmetric and makes $(({}_{\Pi}(L^2[0,1]\hat{\boxtimes}\mathpzc{E}),\Psi(D)),(\id,\chi_{[0,1/4]\cup[3/4,1]}))$ into a symmetric chain with boundary and it is half-closed for the $C^\infty_c((0,1],\mathcal{A})$-action ensuring that we have a bordism.
\end{proof}

We will now state a consequence of Theorem \ref{homotopyofrep} that will play an important role later in the monograph when we want to prove injectivity statements for the bounded transform. The result was first proven by Kaad \cite{Kaadunbdd} in the context of unbounded homotopy, but we shall see that it readily extends to the context of $KK$-bordisms.

\begin{lemma}[Proposition 6.2 of \cite{Kaadunbdd}]
\label{prop62kaad}
Let $(\mathpzc{E},D)$ and $(\mathpzc{E},D')$ be two $(\mathcal{A},B)$-cycles on the same $B$-Hilbert $C^*$-module $\mathpzc{E}$ with left action $\pi:A\to \End_B^*(\mathpzc{E})$. Assume that $F\in \End_B^*(\mathpzc{E})$ is an odd symmetry such that 
\begin{enumerate}
\item $F\in \mathrm{Lip}(D)\cap \mathrm{Lip}(D')$
\item There exists positive regular operators $\Delta$ and $\Delta'$ defined on $\Dom(DF)$ and $\Dom(D'F)$, respectively, such that $\Delta-DF$ and $\Delta'-D'F$ have adjointable extensions to $\mathpzc{E}$.
\item For each $a\in \mathcal{A}$, $[F,a]\mathpzc{E}\subseteq \Dom(D)\cap \Dom(D')$.
\end{enumerate} 
Set $P:=(1+F)/2$. Then the operator 
$$\hat{D}_F:= 
\begin{pmatrix}
PDP-(1-P)D'(1-P)&0\\
0& (1-P)D(1-P)-PD'P
\end{pmatrix},$$
makes $(\mathpzc{E}\oplus (- \mathpzc{E}), \hat{D}_F)$ into a cycle for $(\mathcal{A},B)$ which is unitarily equivalent to the cycle $(\mathpzc{E}\oplus (-\mathpzc{E}), D\oplus (-D'))$ . Moreover, $F$ induces a representation 
$$\Pi:A\to C([0,1],\End_B^*(\mathpzc{E}\oplus (-\mathpzc{E}))),$$ 
such that 
\begin{itemize}
\item[a)] $\pi=\mathrm{ev}_0\circ \Pi$ is smoothly homotopic to $\pi':=\mathrm{ev}_1\circ\Pi$ relative to the cycle $(\mathpzc{E}\oplus (-\mathpzc{E}), \hat{D}_F)$.
\item[b)] There is an adjointable operator $V$ constructed from the data above such that $({}_{\pi'}(\mathpzc{E}\oplus (-\mathpzc{E})), \hat{D}_F+V)$ is spectrally decomposable. 
\item[c)] The symmetry $F$ and the operator $V$ induces a bordism 
$$(\mathpzc{E},D)\sim_{\rm bor} (\mathpzc{E},D').$$
\end{itemize}
\end{lemma}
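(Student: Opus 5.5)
Most of this lemma is Kaad's Proposition 6.2 in \cite{Kaadunbdd}, proved there for unbounded homotopies. My plan is to reuse his constructions for the cycle $\hat{D}_F$, the homotopy $\Pi$ and the perturbation $V$. The only genuinely new input is the translation of each step into $KK$-bordisms, using the results of Section \ref{exampleofboridmsmssec}.

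\textbf{Step 1: $\hat D_F$ is a cycle unitarily equivalent to $D\oplus(-D')$.} Since $F$ is an odd symmetry, $P=(1+F)/2$ is a projection. Hypothesis 1) gives $P\in\mathrm{Lip}(D)\cap\mathrm{Lip}(D')$. Hence $PDP+(1-P)D(1-P)$ differs from $D$ by the bounded operator $-P[D,P]+(1-P)[D,P]$, and similarly for $D'$. I then write down Kaad's explicit unitary on $\mathpzc{E}\oplus(-\mathpzc{E})$, built from $P$ and $1-P$, which swaps the $(1-P)$-parts of the two summands. Conjugating $D\oplus(-D')$ by this unitary gives the block-compressed operator. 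The claimed unitary equivalence holds up to a bounded self-adjoint perturbation built from $[D,P]$ and $[D',P]$. Either Kaad's choice of unitary absorbs this perturbation exactly, or I remove it with Corollary \ref{locbddperturcor}, since bounded perturbations are bordisms. Self-adjointness, regularity and $\mathcal{A}\subseteq\mathrm{Lip}_0$ are preserved under both operations.

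\textbf{Step 2: the homotopy of representations.} I take $\Pi_t$ to be the representation $a\mapsto\pi(a)\oplus\pi(a)$ conjugated by the rotation $R_t=\begin{pmatrix}\cos(\tfrac{\pi t}{2})P+(1-P)&\sin(\tfrac{\pi t}{2})P\\-\sin(\tfrac{\pi t}{2})P&\cos(\tfrac{\pi t}{2})P+(1-P)\end{pmatrix}$, reparametrised so that it is constant near the endpoints. Then $\Pi(\mathcal{A})\subseteq C^1$ is clear.

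For each $t$ I must check that $({}_{\Pi_t}(\mathpzc{E}\oplus-\mathpzc{E}),\hat D_F)$ is a cycle. The difference $\Pi_t(a)-\pi(a)\oplus\pi(a)$ is a sum of terms involving $[F,a]$. By hypothesis 3), $[F,a]$ maps into $\Dom(D)\cap\Dom(D')$, and the closed graph theorem then makes $D[F,a]$ and $D'[F,a]$ bounded. This gives the bounded commutators with $\hat D_F$, and local compactness of the resolvent is inherited. Theorem \ref{homotopyofrep} now yields a bordism between ${}_\pi$ and ${}_{\pi'}$, which proves a).

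\textbf{Step 3: the spectrally decomposable perturbation.} At $t=1$ the representation $\pi'$ is $P\pi P$ on one block and $(1-P)\pi(1-P)$ on the other, up to off-diagonal compact-type terms. Restricted to the ranges of $P$ and $1-P$, the operator $\hat D_F$ is, modulo bounded errors, $\pm(PDP)$ and $\pm(PD'P)$.

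Hypothesis 2) lets me replace $DF$ by the positive operator $\Delta$, and $D'F$ by $\Delta'$, at the cost of an adjointable error. Collecting these errors defines $V$. The operator $\hat D_F+V$ then commutes with a projection $Q$ built from $P$ on each summand, and is positive on $\mathrm{im}\,Q$ and negative on $\mathrm{im}(1-Q)$. Moreover $Q$ commutes with $\pi'(A)$, which follows by construction of $\pi'$. This proves b).

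\textbf{Step 4: assembling c), and the main obstacle.} The bordism chain is
$$(\mathpzc{E},D)+(-(\mathpzc{E},D'))\cong(\mathpzc{E}\oplus-\mathpzc{E},\hat D_F)\sim_{\rm bor}({}_{\pi'}\cdots,\hat D_F)\sim_{\rm bor}({}_{\pi'}\cdots,\hat D_F+V)\sim_{\rm bor}0.$$
The steps are, in order: Step 1, Theorem \ref{homotopyofrep}, Corollary \ref{locbddperturcor}, and Theorem \ref{specdecompnulbord}. Gluing these with Proposition \ref{subsecglyinggluing} shows $(\mathpzc{E},D)+(-(\mathpzc{E},D'))\sim_{\rm bor}0$. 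Since inverses in $\Omega_*$ are realised by $-$, this gives $(\mathpzc{E},D)\sim_{\rm bor}(\mathpzc{E},D')$.

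I expect Step 3 to be the main obstacle. There I need the exact positivity and regularity conditions in the definition of spectral decomposability, and I need $Q$ to preserve the domain. I would verify these by transcribing Kaad's domain computations directly.
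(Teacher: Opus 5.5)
Your overall structure matches the paper's. There, too, the cycle $\hat D_F$, the homotopy and $V$ are taken from Kaad, and c) is deduced exactly as in your Step~4 from Corollary \ref{locbddperturcor}, Theorem \ref{specdecompnulbord} and Theorem \ref{homotopyofrep}.

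However, your reconstruction of Kaad's part has a step that fails as written: the claim in Step~3 that $Q$ commutes with $\pi'(A)$ ``by construction''. For your rotation, $R_1=\begin{pmatrix}1-P&P\\-P&1-P\end{pmatrix}$, and
\[\Pi_1(a)=R_1\bigl(\pi(a)\oplus\pi(a)\bigr)R_1^*=\begin{pmatrix}\tfrac12(a+FaF)&[P,a]\\-[P,a]&\tfrac12(a+FaF)\end{pmatrix}.\]
Its off-diagonal entries $\pm\tfrac12[F,a]$ do not vanish. Spectral decomposability needs exact commutation with the $A$-action, not commutation modulo compacts; also, $a\mapsto PaP$ is not a representation.

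What you are missing is what hypothesis 2) actually gives. Since $FP=P$ and $F(1-P)=-(1-P)$, the first diagonal entry $PDP-(1-P)D'(1-P)$ of $\hat D_F$ equals $P\Delta P+(1-P)\Delta'(1-P)$ up to a bounded self-adjoint operator. Likewise the second entry equals $-(1-P)\Delta(1-P)-P\Delta'P$ up to a bounded operator. So after a block-diagonal bounded perturbation $V$, the \emph{entire} first summand is positive and the entire second summand is negative. The decomposing projection is therefore simply $Q=1\oplus 0$, and b) needs a representation that is exactly block diagonal.

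That representation is $\Pi_0=\pi\oplus\pi$, your $t=0$ endpoint. It acts admissibly on $\hat D_F$ precisely because of hypothesis 3). The homotopy is needed in the other direction. The representation that travels with the unitary in Step~1 is not $\pi\oplus\pi$; for that you would need $[F,\pi(A)]=0$. Indeed $R_1^*\hat D_F R_1=(-D')\oplus D$ up to bounded terms, so it is $({}_{\Pi_1}(\mathpzc{E}\oplus\mathpzc{E}),\hat D_F)$ that is equivalent, up to a bounded perturbation, to $(\mathpzc{E},D)+(-(\mathpzc{E},D'))$.

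Thus Steps~1--3 do not fit together as you wrote them. They do once you reverse the parametrisation of $\Pi$ (use $R_{1-t}$), and then Step~4 goes through verbatim.

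In the even case you should also grade $\mathpzc{E}\oplus\mathpzc{E}$ by $\begin{pmatrix}0&\gamma\\\gamma&0\end{pmatrix}$. For this grading $\hat D_F$ and $2Q-1$ are odd and every $\Pi_t$ is even, whereas $\hat D_F$ is not odd for $\gamma\oplus(-\gamma)$.
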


\begin{proof}
The proofs of the fact that $(\mathpzc{E}\oplus (-\mathpzc{E}), \hat{D}_F)$ is a cycle for $(\mathcal{A},B)$ which is unitarily equivalent to $(\mathpzc{E}\oplus (-\mathpzc{E}), D\oplus (-D'))$ and of item a) and b) can be found in the proof of \cite[Proposition 6.2]{Kaadunbdd}. Item c) follows from item a) and b) using Corollary \ref{locbddperturcor}, Theorem \ref{specdecompnulbord} and Theorem \ref{homotopyofrep}.

\end{proof}

\begin{cor}
Let $(\mathpzc{E},D)$ and $(\mathpzc{E},D')$ be two $(\mathcal{A},B)$-cycles satisfying the following
\begin{itemize}
\item $D$ and $D'$ are invertible and 
$$D|D|^{-1}=D'|D'|^{-1}.$$
\item Both cycles $(\mathpzc{E},D)$ and $(\mathpzc{E},D')$ satisfy the Lipschitz condition (see Definition \ref{UnbKKcycDef} on page \pageref{UnbKKcycDef}).
\end{itemize}
Then there is a bordism $(\mathpzc{E},D)\sim_{\rm bor}(\mathpzc{E},D')$.
\end{cor}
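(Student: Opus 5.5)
We apply Lemma~\ref{prop62kaad} with the odd symmetry $F:=D|D|^{-1}=D'|D'|^{-1}$. Since $D$ and $D'$ are invertible and self-adjoint, $F$ is well defined by functional calculus, self-adjoint with $F^2=1$, and odd in the even case because $D$ is odd. What remains is to check the three hypotheses of Lemma~\ref{prop62kaad}; item c) of that lemma then gives the bordism $(\mathpzc{E},D)\sim_{\rm bor}(\mathpzc{E},D')$.

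Hypothesis 2) is immediate. We have $DF=D\cdot D|D|^{-1}=|D|$ on $\Dom(DF)=\Dom(D)$ and likewise $D'F=|D'|$. So take $\Delta:=|D|$ and $\Delta':=|D'|$; these are positive and regular, and $\Delta-DF=0$, $\Delta'-D'F=0$.

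Hypothesis 1) says $F\in\mathrm{Lip}(D)\cap\mathrm{Lip}(D')$. As $F$ is a bounded Borel function of $D$, it commutes with $D$ and preserves $\Dom(D)$, so $[D,F]=0$. Similarly $F$ is a function of $D'$, so $[D',F]=0$.

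Hypothesis 3) is the main obstacle: we must show $[F,a]\mathpzc{E}\subseteq\Dom(D)\cap\Dom(D')$ for $a\in\mathcal{A}$. Here the Lipschitz condition enters. Write
\[
[F,a]=[D|D|^{-1},a]=[D,a]|D|^{-1}+D\,[|D|^{-1},a]=[D,a]|D|^{-1}-D|D|^{-1}[|D|,a]|D|^{-1}.
\]
This identity makes sense because $a\in\mathrm{Lip}(|D|)$ preserves $\Dom(|D|)=\Dom(D)$.
\begin{itemize}
\item The second term equals $-F[|D|,a]|D|^{-1}$. It maps into $\Dom(D)$ only if $[|D|,a]|D|^{-1}\mathpzc{E}\subseteq\Dom(D)$, which is not automatic.
\item For the same reason, $[D,a]|D|^{-1}$ need not land in $\Dom(D)$.
\end{itemize}
So a naive estimate fails. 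We would handle this in one of two ways.
\begin{itemize}
\item First option: invoke the version of Kaad's argument that only needs $[F,a]$ to be compatible with the domains in the weak sense furnished by the Lipschitz condition. This is how \cite{Kaadunbdd} treats Lipschitz cycles with equal phases.
\item Second option (the one we would try first): reduce to the hypothesis by a bordism. Replace $D$ and $D'$ by order-reduced or smoothed versions; for instance, perturb via the homotopy $D|D|^{-s}$ as in Theorem~\ref{perturbationgenerelthm} and the order reduction of Theorem~\ref{interpolationthm}. This keeps the phase $F$ fixed. Then shrink $\mathcal{A}$ to the dense subalgebra of elements $a$ for which $[F,a]$ is smoothing in both domains. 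The Lipschitz condition should make this subalgebra large enough, by an argument on $[F,a]$ via the integral formula $|D|^{-1}=\frac{2}{\pi}\int_0^\infty(\lambda^2+D^2)^{-1}\,\mathrm{d}\lambda$ together with the commutator estimates of Lemma~\ref{lipforpower}.
\end{itemize}

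The expected crux is exactly this domain condition. Everything else is formal: $F$ is a symmetry commuting with both operators, so the cycle $\hat{D}_F$ of Lemma~\ref{prop62kaad} takes the form $|D|\oplus(-|D'|)$ on the spectral pieces.

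Once hypotheses 1)–3) are checked, Lemma~\ref{prop62kaad} c) yields the bordism $(\mathpzc{E},D)\sim_{\rm bor}(\mathpzc{E},D')$. This bordism is obtained by gluing, using Proposition~\ref{subsecglyinggluing}, the bordisms from Theorem~\ref{homotopyofrep}, Corollary~\ref{locbddperturcor} and Theorem~\ref{specdecompnulbord}.
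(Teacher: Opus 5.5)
Your route is the paper's: apply Lemma~\ref{prop62kaad} with $F=D|D|^{-1}=D'|D'|^{-1}$, $\Delta=|D|$ and $\Delta'=|D'|$. Your checks of hypotheses 1) and 2) are fine. The gap is hypothesis 3). You leave it unproved and treat it as a genuine obstruction, but it follows in one line from the Lipschitz condition once you put $|D|^{-1}$ on the \emph{left} of the commutator rather than on the right.

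Since $a\in\mathrm{Lip}(|D|)$ preserves $\Dom(|D|)$, we have $[|D|^{-1},a]=-|D|^{-1}[|D|,a]|D|^{-1}$ as bounded operators. On $\Dom(D)$ we also have $F=|D|^{-1}D$. Hence for $\xi\in\Dom(D)$,
\[
[F,a]\xi=|D|^{-1}[D,a]\xi+[|D|^{-1},a]D\xi=|D|^{-1}\bigl([D,a]-[|D|,a]F\bigr)\xi .
\]
Both sides are bounded, so by density $[F,a]=|D|^{-1}\bigl(\overline{[D,a]}-\overline{[|D|,a]}F\bigr)$ on all of $\mathpzc{E}$. Therefore $[F,a]\mathpzc{E}\subseteq|D|^{-1}\mathpzc{E}=\Dom(D)$. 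The same computation with $F=|D'|^{-1}D'$ gives $[F,a]\mathpzc{E}\subseteq\Dom(D')$. Your two separate terms $[D,a]|D|^{-1}$ and $F[|D|,a]|D|^{-1}$ need not land in $\Dom(D)$, but their difference does. That is all that is needed, and it is what the paper means by ``by Lipschitz regularity''. With this identity inserted, your argument is complete and coincides with the paper's.

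Neither of your proposed workarounds would close the gap as written. Citing Kaad's treatment is not an argument. The second option is unsound. A bordism over a smaller dense subalgebra $\mathcal{A}_0\subseteq\mathcal{A}$ only gives a relation in $\Omega_*(\mathcal{A}_0,B)$. The restriction map $\Omega_*(\mathcal{A},B)\to\Omega_*(\mathcal{A}_0,B)$ is not injective in general; the Wiener algebra example of Section~\ref{bddtransformnotisosec} shows this. So that route would not give $(\mathpzc{E},D)\sim_{\rm bor}(\mathpzc{E},D')$ as $(\mathcal{A},B)$-cycles. Moreover, passing to $D|D|^{-s}$ keeps the phase $F$ but still leaves hypothesis 3) to be verified for the new operator.
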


\begin{proof}
The corollary follows by using $F:=D|D|^{-1}=D'|D'|^{-1}$ in Lemma \ref{prop62kaad}. For this choice of $F$, the operators $\Delta:=DF=|D|$ and $\Delta':=D'F=|D'|$ are positive and regular. Moreover, $[F,a]\mathpzc{E}\subseteq \Dom(D)$ by Lipschitz regularity of $D$ and $[F,a]\mathpzc{E}\subseteq \Dom(D')$ by Lipschitz regularity of $D'$.
\end{proof}

\subsection{Order reduction}
\label{orderredusubsec}

The order reduction procedure studied in Subsubsection \ref{orderreducsubsec} (see page \pageref{orderreducsubsec}) gives a way to deform cycles so that it can extend to a complex interpolation space. We shall now construct a bordism showing that order reduction does not change the bordism class. 

Consider a cycle $(\mathpzc{E},D)$ for $(\mathcal{A},B)$ with $D$ invertible. The results of this subsection holds for a general $*$-algebra $\mathcal{A}$, although they are most useful in the case when $\mathcal{A}$ is a Banach $*$-algebra. Choose a function $\chi\in C^\infty_c([0,1),[0,1])$ with $\chi(0)=1$ and $\chi'\in C^\infty_c(0,1)$ negative. Fix an $\alpha\in (0,1]$, and define the path of regular, self-adjoint operators 
\begin{equation}
\label{dalpahom}
D(t):=D(1+D^2)^{(1-\chi(t))(\alpha-1)/2}.
\end{equation}
We have that $D(0)=D$ and $D(1)=D_\alpha\equiv D(1+D^2)^{(\alpha-1)/2}$.

\begin{theorem}
\label{bordismforodrdthm}
Take $\alpha\in (0,1]$. Let $(\mathpzc{E},D)$ be an invertible $(\mathcal{A},B)$-cycle and let $(\mathpzc{E},D_\alpha)$ denote its order reduction. Then the path $(D(t))_{t\in [0,1]}$ from $D_\alpha$ to $D$ defined in Equation \eqref{dalpahom} satisfies the assumptions of Theorem \ref{homotopywithsweakcond} (see page \pageref{homotopywithsweakcond}). In particular, there is a bordism of $(\mathcal{A},B)$-cycles
$$(\mathpzc{E},D)\sim_{\rm bor} (\mathpzc{E},D_\alpha).$$ 
\end{theorem}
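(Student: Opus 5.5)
The plan is to verify the four hypotheses of Theorem \ref{homotopywithsweakcond}, obtaining condition 4) from Lemma \ref{lemmahomotopyresolven}. All operators $D(t)$ are functions of the single self-adjoint regular operator $D$. Write $s(t):=(1-\chi(t))(\alpha-1)/2\in[(\alpha-1)/2,0]$, so that $D(t)=D(1+D^2)^{s(t)}$.

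Condition 1) is arranged by reparametrising $\chi$ so that $\chi'$ is supported in $(1/2,2/3)$; then $D(t)=D$ on $[0,1/2]$ and $D(t)=D_\alpha$ on $[2/3,1]$. For condition 2), take $\mathpzc{W}:=\Dom(D_\alpha)=(1+D^2)^{-\alpha/2}\mathpzc{E}$ with its graph norm. Since $s(t)\geq (\alpha-1)/2$, we have $\Dom(D(t))\subseteq \Dom((1+D^2)^{\alpha/2})=\mathpzc{W}$. The inclusion $i:\mathpzc{W}\to\mathpzc{E}$ is $\mathcal{A}$-locally compact because $a(1+D^2)^{-\alpha/2}\in\K_B(\mathpzc{E})$; this follows from $a(1+D^2)^{-1/2}$ being compact, as in the proof of item (3) of Theorem \ref{interpolationthm}.

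Condition 3), that $\mathcal{A}\subseteq\mathrm{Lip}(D(t))$ uniformly in $t$, is what makes the lemmas of Subsection \ref{orderreducsubsec} necessary. Write $D(t)=F_D\Delta^{1+2s(t)}$ with $\Delta=(1+D^2)^{1/2}$ and exponent $\beta(t)=1+2s(t)\in[\alpha,1]$. Then
$$[D(t),a]=[F_D,a]\Delta^{\beta}+F_D[\Delta^\beta,a].$$
The second term should be bounded by Lemma \ref{lipforpower} combined with an interpolation between $\Delta^0$ and $\Delta^{1}$, or alternatively by a contour-integral estimate in the style of Lemma \ref{cpxpowerlemma}. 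For $\beta=1$ we use $[D,a]$ directly. For the first term, $[F_D,a]\Delta^\beta$ is handled by the integral formula for $F_D$ in \cite[Lemma B.1]{boundarypap}.

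Here invertibility of $D$ should enter. Working with $|D|^{\beta}$ instead of $\Delta^\beta$ changes $D(t)$ only by a bounded perturbation. This is the main obstacle: at $\beta<1$ one loses the boundedness of the commutator one would get at $\beta=1$. What I would try first is to show boundedness of $[\Delta^{\beta},a]$ for $\beta\le 1$ directly from the resolvent integral
$$\Delta^{\beta-1}=c_\beta\int_0^\infty\lambda^{(\beta-1)/2}(1+\lambda+D^2)^{-1}\,\mathrm{d}\lambda,$$
giving estimates uniform in $t$. If that fails, I would weaken the claim to $a$ preserving $\Dom(D(t))$ with bounded commutator for each fixed $t$, which is all that condition 3) requires.

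For condition 4), apply Lemma \ref{lemmahomotopyresolven} with common core $\mathcal{D}_0:=\Dom(D)$ (or $\bigcap_k\Dom(D^k)$), which is a core for every $D(t)$ by functional calculus. The resolvent $R_\mu(t)=(i\mu+f_t(D))^{-1}$, with $f_t(x)=x(1+x^2)^{s(t)}$, is norm-differentiable with
$$R_\mu'(t)=-R_\mu(t)\,s'(t)\,D(t)\log(1+D^2)\,R_\mu(t).$$
Then $D(t)R_\mu'(t)$ is given by the scalar function $-s'(t)f_t(x)^2\log(1+x^2)(i\mu+f_t(x))^{-2}$. Because $\log(1+x^2)$ grows slower than any power, this function is uniformly bounded and norm continuous in $t$, so the lemma applies and Theorem \ref{homotopywithsweakcond} yields the stated bordism.
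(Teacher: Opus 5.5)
\textbf{The gap is in condition 4).} Your own formula shows that $D(t)R_\mu'(t)$ is the operator $g_t(D)$ with
$$g_t(x)=-s'(t)\,\log(1+x^2)\,\frac{f_t(x)^2}{(i\mu+f_t(x))^2}.$$
Since $|f_t(x)|\to\infty$ as $|x|\to\infty$, the fraction tends to $1$. No decaying factor is left to absorb the logarithm. So $g_t$ is unbounded wherever $s'(t)\neq 0$, i.e.\ on the support of $\chi'$ as soon as $\alpha<1$.

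Hence the hypothesis of Lemma \ref{lemmahomotopyresolven} fails for this path. The derivative $\dot D(t)=s'(t)\log(1+D^2)D(t)$ is not relatively bounded by $D(t)$; it is worse by a logarithm. Your conditions 1)--3) are fine. The paper simply declares them clear with $\mathpzc{W}=\Dom(D_\alpha)$, and 3) can be read off, pointwise in $t$, from the strip estimates in the proof of Theorem \ref{interpolationthm}.

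\textbf{How the paper handles 4).} It does not invoke Lemma \ref{lemmahomotopyresolven}; instead it adapts the sum-of-operators argument from \cite[Theorem 1.3]{DGM}. Write $\mathfrak{t}=\partial\hat{\boxtimes}0$, let $\mathfrak{s}$ be the operator given by the path, and set $\mathfrak{s}_1=0\hat{\boxtimes}D$. The cross terms in $\langle(\mathfrak{s}+\mathfrak{t})\xi,(\mathfrak{s}+\mathfrak{t})\xi\rangle$ produce a multiple of $\langle\chi'\log(1+\mathfrak{s}_1^2)\mathfrak{s}\xi,\xi\rangle$. Lemma \ref{relaraldknaldn} is exactly the statement that this is bounded by $a\langle\mathfrak{s}\xi,\mathfrak{s}\xi\rangle+C\langle\xi,\xi\rangle$ with $a>0$ arbitrary. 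The resulting estimate
$$\langle(\mathfrak{s}+\mathfrak{t})\xi,(\mathfrak{s}+\mathfrak{t})\xi\rangle\geq(1-a)\langle\mathfrak{s}\xi,\mathfrak{s}\xi\rangle+\langle\mathfrak{t}\xi,\mathfrak{t}\xi\rangle-C\langle\xi,\xi\rangle$$
on a core gives closedness of $\mathfrak{s}+\mathfrak{t}$ on $\Dom(\mathfrak{s})\cap\Dom(\mathfrak{t})$.

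Next, one shows that $\Dom(\mathfrak{s}+\mathfrak{t})$ is graph-dense in $\Dom((\mathfrak{t}^*+\mathfrak{s})^*)$. This uses the approximate units $(i\pm\mathfrak{s}_1/n)^{-1}$ and the fact that $\mathfrak{s}_1$ anticommutes with $\mathfrak{t}$ and $\mathfrak{t}^*$. The local-global principle then yields regularity of $T$ together with $T^*=\overline{\mathfrak{t}^*+\mathfrak{s}}$, which is condition 4).

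The idea you are missing is that the $t$-derivative can only be controlled relative to $\mathfrak{s}^2$, in form sense and with an arbitrarily small constant, not relative to $\mathfrak{s}$ in operator norm. A resolvent-derivative criterion of the type in Lemma \ref{lemmahomotopyresolven} therefore cannot be applied to this path as it stands.
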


Before proceeding with the proof, we need a lemma.

\begin{lemma}
\label{relaraldknaldn}
Let $\mathfrak{s}$ be a self-adjoint and regular operator on a Hilbert $C^*$-module. Then $\mathfrak{s}\log(1+\mathfrak{s}^2)$ is relatively bounded by $\mathfrak{s}^2$ with arbitrary relative bound $a>0$.
\end{lemma}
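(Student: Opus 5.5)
The plan is to reduce the statement to a scalar inequality via functional calculus of the self-adjoint regular operator $\mathfrak{s}$. By the definition of relative boundedness recalled before Corollary \ref{locbddperturcor}, we must show the following. First, $\Dom(\mathfrak{s}^2)\subseteq\Dom(\mathfrak{s}\log(1+\mathfrak{s}^2))$. Second, for every $a>0$ there is $b>0$ with
$$\langle \mathfrak{s}\log(1+\mathfrak{s}^2)\xi,\mathfrak{s}\log(1+\mathfrak{s}^2)\xi\rangle\leq a\langle \mathfrak{s}^2\xi,\mathfrak{s}^2\xi\rangle+b\langle\xi,\xi\rangle,\quad \xi\in\Dom(\mathfrak{s}^2).$$
Both operators are functions of $\mathfrak{s}$, namely $f(\mathfrak{s})$ and $g(\mathfrak{s})$ with $f(x)=x\log(1+x^2)$ and $g(x)=x^2$. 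The inequality of $B$-valued inner products is therefore equivalent to the operator inequality $|f|^2(\mathfrak{s})\leq a g^2(\mathfrak{s})+b$ on $\Dom(\mathfrak{s}^2)$.

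The key scalar step is that $x^2\log(1+x^2)^2\leq a x^4+b$ for all $x\in\R$. Since $\log(1+x^2)^2/x^2\to0$ as $|x|\to\infty$, there is $R>0$ such that $x^2\log(1+x^2)^2\leq a x^4$ for $|x|\geq R$. On $|x|\leq R$ the left-hand side is bounded by a constant, which we take as $b$.

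To pass from the scalar inequality to operators, we use that the continuous functional calculus for self-adjoint regular operators on Hilbert $C^*$-modules (cf.\ \cite{lancesbook}) is a $*$-homomorphism from $C_0(\R)$ into $\End_B^*$, and is order preserving. Write $h(x):=(1+x^4)^{-1/2}$ and $\eta:=(1+\mathfrak{s}^4)^{1/2}\xi$ for $\xi\in\Dom(\mathfrak{s}^2)$. This is legitimate because $\Dom(\mathfrak{s}^2)=(1+\mathfrak{s}^4)^{-1/2}\mathpzc{E}$ by regularity. Then
$$f(\mathfrak{s})\xi=(fh)(\mathfrak{s})\eta\quad\mbox{and}\quad \mathfrak{s}^2\xi=(gh)(\mathfrak{s})\eta,$$
where $fh$ and $gh$ are bounded continuous functions. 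In particular $f(\mathfrak{s})$ is defined on $\Dom(\mathfrak{s}^2)$, which gives the domain inclusion. The desired inequality becomes
$$\langle (|fh|^2-a|gh|^2-b|h|^2)(\mathfrak{s})\eta,\eta\rangle\leq0,$$
and this follows from positivity of the functional calculus applied to the pointwise nonpositive bounded function $|fh|^2-a|gh|^2-bh^2$.

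The only point needing care is this last step: justifying the functional calculus for the unbounded functions on $\Dom(\mathfrak{s}^2)$ via bounded functions in $C_b(\R)$, which act through the strict extension of the calculus to the multiplier algebra. I expect no genuine obstacle here, since the argument is routine.
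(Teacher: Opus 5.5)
Your proof is correct and follows the paper's argument: both reduce, via functional calculus of $\mathfrak{s}$, to the scalar fact that $x\log(1+x^2)=o(x^2)$ as $|x|\to\infty$. The paper states the scalar step in unsquared form, $|x\log(1+x^2)|\leq ax^2+C$, which is the form bound actually used in the proof of Theorem \ref{bordismforodrdthm}; you use the squared version matching the stated definition of relative boundedness and also spell out the passage to operators, and the two versions agree up to adjusting constants.
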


\begin{proof}
The follows from the elementary statement that for any $a>0$ there is a $C>0$ such that 
$$-ax^2-C\leq x\log(1+x^2)\leq ax^2+C, \quad\forall x\in \R.$$
This last statement in turn follows from the fact that $x\log(1+x^2)=o(x^2)$ as $|x|\to \infty$.
\end{proof}

\begin{proof}[Proof of Theorem \ref{bordismforodrdthm}]
It is clear that the path $(D(t))_{t\in [0,1]}$ satisfies the assumptions 1) to 3) of Theorem \ref{homotopywithsweakcond} with $\mathpzc{W}:=\Dom(|D|^\alpha)=\Dom(D_\alpha)$. To prove assumption 4) of Theorem \ref{homotopywithsweakcond}, we will adapt an argument from the proof of \cite[Theorem 1.3]{DGM} (which in turn is a variation of the proof of \cite[Proposition 7.7]{leschkaad2}). For notational convenience, we set $\mathpzc{X}:=L^2[0,1]\hat{\boxtimes}\mathpzc{E}$

We write $T=\overline{\mathfrak{t}+\mathfrak{s}}$ where $\mathfrak{t}=\partial\hat{\boxtimes}0$ and $\mathfrak{s}$ is the densely defined operator on $\mathpzc{X}$ defined from the path $(D(t))_{t\in [0,1]}$. It is clear that $\mathfrak{t}$ and $\mathfrak{s}$ are symmetric and regular, indeed $\mathfrak{s}$ is self-adjoint. We write $\mathfrak{s}_1:=0\hat{\boxtimes}D$ which is a self-adjoint and regular operator on $\mathpzc{X}$. By construction, $\mathfrak{s}$ is relatively bounded by $\mathfrak{s}_1$ (with norm bound 1), and $\mathfrak{s}_1$ share a common core with $\mathfrak{t}$ where $\mathfrak{s}_1\mathfrak{t}+\mathfrak{t}\mathfrak{s}_1=0$. By the local-global principle \cite[Theorem 1.1]{leschkaad2}, the theorem follows if $\mathfrak{t}+\mathfrak{s}$ is closed and $T^*=\overline{\mathfrak{t}^*+\mathfrak{s}}$. This follows upon proving that $\mathfrak{t}+\mathfrak{s}$ is closed and that $\Dom(\mathfrak{s}+\mathfrak{t})\subseteq \Dom((\mathfrak{t}^*+\mathfrak{s})^*)$ is dense in graph norm.

We first prove that $\mathfrak{s}+\mathfrak{t}$ is closed. We note that $\mathfrak{s}_1$ and $\mathfrak{t}$ anti-commute (on their common core $\Dom(\mathfrak{s}_1\mathfrak{t})=\Dom(\mathfrak{t}\mathfrak{s}_1)$) and that $\Dom(\mathfrak{t}\mathfrak{s}_1^2)=\Dom(\mathfrak{s}_1^2\mathfrak{t})$ is a core for $\mathfrak{s}+\mathfrak{t}$. For $\xi\in \Dom(\mathfrak{t}\mathfrak{s}_1^2)$, we compute that for any $a>0$ there is a $C>0$ such that
\begin{align*}
\langle (\mathfrak{s}+\mathfrak{t})\xi,(\mathfrak{s}+\mathfrak{t})\xi\rangle&=\langle \mathfrak{s}\xi,\mathfrak{s}\xi\rangle+\langle \mathfrak{t}\xi,\mathfrak{t}\xi\rangle+\langle \mathfrak{s}\xi,\mathfrak{t}\xi\rangle+\langle \mathfrak{t}\xi,\mathfrak{s}\xi\rangle\\
&=\langle \mathfrak{s}\xi,\mathfrak{s}\xi\rangle+\langle \mathfrak{t}\xi,\mathfrak{t}\xi\rangle-\frac{1-\alpha}{2}\langle \chi'\log(1+\mathfrak{s}_1^2)\mathfrak{s}\xi,\xi\rangle\geq\\
&\geq  (1-a)\langle \mathfrak{s}\xi,\mathfrak{s}\xi\rangle+\langle \mathfrak{t}\xi,\mathfrak{t}\xi\rangle-C\langle \xi,\xi\rangle.
\end{align*}
Indeed, the estimate at the end follows from Lemma \ref{relaraldknaldn} and the fact that $\log(1+\mathfrak{s}_1^2)$ is relatively bounded by $\log(1+\mathfrak{s}^2)$. We conclude that a Cauchy sequence in the graph norm of $\mathfrak{s}+\mathfrak{t}$ is also a Cauchy sequence in the graph norm of $\mathfrak{s}$ and the graph norm of $\mathfrak{t}$. Since $\Dom(\mathfrak{s})\cap \Dom(\mathfrak{t})$ is closed in the sum of graph norms from $\mathfrak{s}$ and $\mathfrak{t}$, and by the argument above also for the graph norm of $\mathfrak{s}+\mathfrak{t}$, the defining identity $\Dom(\mathfrak{s}+\mathfrak{t})=\Dom(\mathfrak{s})\cap \Dom(\mathfrak{t})$ allows us to conclude that $\mathfrak{s}+\mathfrak{t}$ is closed.

We now prove that $\Dom(\mathfrak{s}+\mathfrak{t})\subseteq \Dom((\mathfrak{t}^*+\mathfrak{s})^*)$ is dense in graph norm, which suffices for us to conclude $(\mathfrak{t}+\mathfrak{s})^*=\overline{\mathfrak{t}^*+\mathfrak{s}}$. Define a sequence of bounded operators by
$$u_{n,\pm}:=(i\pm \mathfrak{s}_1/n)^{-1}.$$
The sequence $(u_{n,\pm})_{n\in \N}$ is an approximate unit in the sense that for any $\xi\in \mathpzc{X}$, $u_{n,\pm}\xi\to \xi$ in norm on $\mathpzc{X}$. By construction, $u_n\mathpzc{X}\subseteq \Dom(\mathfrak{s}_1)\subseteq \Dom(\mathfrak{s})$ for any $n$. Since 
$$\Dom((\mathfrak{t}^*+\mathfrak{s})^*)\cap \Dom(\mathfrak{s})\subseteq \Dom(\mathfrak{t}),$$ 
we have that $u_{n,\pm}\Dom((\mathfrak{t}^*+\mathfrak{s})^*)\subseteq \Dom(\mathfrak{t}+\mathfrak{s})$. Take an element $\xi\in \Dom((\mathfrak{t}^*+\mathfrak{s})^*)$ and set $\xi_n:=u_{n,-} \xi$. We have that $\xi_n\in \Dom(\mathfrak{s}+\mathfrak{t})$. If $\xi_n\to \xi$ in the graph norm of $(\mathfrak{s}+\mathfrak{t}^*)^*$, our proof is complete. For $\eta\in \Dom(\mathfrak{t}^*)$, we compute (using $\mathfrak{s}_1\mathfrak{t}^*+\mathfrak{t}^*\mathfrak{s}_1=0$ on $\Dom(\mathfrak{t}^*\mathfrak{s}_1)=\Dom(\mathfrak{s}_1\mathfrak{t}^*)$) that 
\begin{align*}
\langle \xi_n,\mathfrak{t}^*\eta\rangle&=\left\langle \left(-\frac{i}{n}\mathfrak{s}_1+1\right)^{-1}\xi, \mathfrak{t}^*\eta\right\rangle=\\
&=\left\langle \left(\frac{i}{n}\mathfrak{s}_1+1\right)^{-1}(\mathfrak{s}+\mathfrak{t}^*)^*\xi,\eta\right\rangle-\left\langle \mathfrak{s}c\left(\frac{\mathfrak{s}_1}{n}\right)\left(-\frac{i}{n}\mathfrak{s}_1+1\right)^{-1}\xi, \eta\right\rangle=\\
&=\left\langle u_{n,+}(\mathfrak{s}+\mathfrak{t}^*)^*\xi,\eta\right\rangle-\left\langle \mathfrak{s}c\left(\frac{\mathfrak{s}_1}{n}\right)\xi_n, \eta\right\rangle,
\end{align*}
where $c(x)=(1-ix)(1+ix)^{-1}$. Using that $c(\mathfrak{s}_1/n)\to 1$ strictly, we can conclude that $(\mathfrak{s}+\mathfrak{t}^*)^*\xi_n=(\mathfrak{s}+\mathfrak{t})\xi_n\to (\mathfrak{s}+\mathfrak{t}^*)^*\xi$ and that $\Dom(\mathfrak{s}+\mathfrak{t})\subseteq \Dom((\mathfrak{t}^*+\mathfrak{s})^*)$ is dense in graph norm.
\end{proof}

\begin{cor}
\label{surofrestcomplx}
Take $\alpha\in (0,1]$. Assume that $\mathcal{A}$ is a Banach $*$-algebra and that the inclusion $\mathcal{A}\hookrightarrow A$ is continuous. Let $\mathcal{A}_\alpha:=[A,\mathcal{A}]_\alpha$ denote the complex interpolation space. The pullback along the inclusion $\mathcal{A}\hookrightarrow \mathcal{A}_\alpha$ induces a surjection
$$\Omega_*(\mathcal{A}_\alpha,B)\to \Omega_*(\mathcal{A},B),$$
for any $C^*$-algebra $B$.
\end{cor}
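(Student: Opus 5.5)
The plan is to show that every $(\mathcal{A},B)$-cycle is bordant, as an $(\mathcal{A},B)$-cycle, to one that is already a cycle over the larger algebra $\mathcal{A}_\alpha$. The restriction of that cycle along $\mathcal{A}\hookrightarrow\mathcal{A}_\alpha$ is the cycle itself, since the module, operator and left action of $A$ are all unchanged. Its class in $\Omega_*(\mathcal{A}_\alpha,B)$ then maps onto the given class in $\Omega_*(\mathcal{A},B)$.

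The steps are as follows.

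\emph{Step 1 (invertibility).} Start from an $(\mathcal{A},B)$-cycle $(\mathpzc{E},D)$. Apply Corollary \ref{invertibleldodcor} to get a bordism $(\mathpzc{E},D)\sim_{\rm bor}(\tilde{\mathpzc{E}},\tilde D)$ with $\tilde D$ invertible. The left action of $A$ is $a.(\xi_1\oplus\xi_2)=a\xi_1\oplus 0$, which is still an action of all of $A$, as the later steps require. From now on assume $D$ is invertible.

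\emph{Step 2 (order reduction is a bordism).} Fix $\beta\in(0,\alpha)$; this is possible since $\alpha>0$. Theorem \ref{bordismforodrdthm} gives a bordism of $(\mathcal{A},B)$-cycles $(\mathpzc{E},D)\sim_{\rm bor}(\mathpzc{E},D_\beta)$, where $D_\beta=D(1+D^2)^{(\beta-1)/2}$.

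\emph{Step 3 (the reduced cycle extends).} Apply Theorem \ref{interpolationthm} with exponent $\beta$. It says $D_\beta$ is self-adjoint and regular, and that $(\mathpzc{E},D_\beta)$ is a symmetric chain for $(\mathcal{A}_\gamma,B)$ for every $\gamma\in(\beta,1]$; in particular for $\gamma=\alpha$.

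For closedness, I would reuse the argument in the proof of item (3) of that theorem rather than its literal (slightly garbled) statement. Since $D_\beta^2=D^2(1+D^2)^{\beta-1}$, an operator $T$ satisfies $T(1+D_\beta^2)^{-1}\in\mathbb{K}_B(\mathpzc{E})$ if and only if $T(1+D^2)^{-1}\in\mathbb{K}_B(\mathpzc{E})$. The latter holds for every $a\in A$ by continuity from $\mathcal{A}$, hence for every $a\in\mathcal{A}_\alpha\subseteq A$. Together with self-adjointness of $D_\beta$, this makes $(\mathpzc{E},D_\beta)$ an $(\mathcal{A}_\alpha,B)$-cycle.

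\emph{Step 4 (conclusion).} The pullback of $(\mathpzc{E},D_\beta)$ along $\mathcal{A}\hookrightarrow\mathcal{A}_\alpha$ is literally the $(\mathcal{A},B)$-cycle $(\mathpzc{E},D_\beta)$. By Steps 1 and 2 it is bordant to the original cycle, via Proposition \ref{subsecglyinggluing} (transitivity of bordism). This proves surjectivity.

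The main obstacle is Step 3, specifically the requirement that the order be reduced strictly below $\alpha$. Theorem \ref{interpolationthm} only gives Lipschitz control of $D_\beta$ on $\mathcal{A}_\gamma$ for $\gamma>\beta$, which is why $\beta<\alpha$ rather than $\beta=\alpha$. Beyond that, the work is checking that the invertible double does not break the hypotheses of Theorems \ref{interpolationthm} and \ref{bordismforodrdthm}; the only facts used are self-adjointness and invertibility of $\tilde D$ and that $A$ acts on $\tilde{\mathpzc{E}}$.
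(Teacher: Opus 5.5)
Your proof is correct and follows the paper's argument: order-reduce to $D_\beta$ with $\beta\in(0,\alpha)$ via Theorem \ref{bordismforodrdthm}, then invoke Theorem \ref{interpolationthm} to see that $(\mathpzc{E},D_\beta)$ is an $(\mathcal{A}_\alpha,B)$-cycle. Your first step, passing to the invertible double via Corollary \ref{invertibleldodcor}, is a worthwhile addition: Theorem \ref{bordismforodrdthm} is stated only for invertible cycles, and the paper's proof of this corollary leaves that step implicit (it appears explicitly only in the proof of Corollary \ref{ononaod}).
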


\begin{proof}
Take a $\beta\in (0,\alpha)$. For any cycle $(\mathpzc{E},D)$ for $(\mathcal{A},B)$, Theorem \ref{bordismforodrdthm} shows that $(\mathpzc{E},D)\sim_{\rm bor} (\mathpzc{E},D_\beta)$ which by Theorem \ref{interpolationthm} (see page \pageref{interpolationthm}) defines a cycle for $(\mathcal{A}_\alpha,B)$. It follows that $[(\mathpzc{E},D)]$ is in the image of $\Omega_*(\mathcal{A}_\alpha,B)\to \Omega_*(\mathcal{A},B)$.
\end{proof}

\begin{cor}
\label{ononaod}
For any $*$-algebra $\mathcal{A}$ and $C^*$-algebra $B$, any element of $\Omega_*(\mathcal{A},B)$ can be represented by an invertible Lipschitz cycle.
\end{cor}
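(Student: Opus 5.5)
Starting from an arbitrary $(\mathcal{A},B)$-cycle $\mathfrak{Y}=(\mathpzc{E},D)$, I will produce a bordant cycle that is both invertible and Lipschitz in three moves: make it invertible, check it is still a cycle, then order reduce.

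First I make the operator invertible without leaving the bordism class. The invertible double of Corollary \ref{invertibleldodcor} gives a bordism $(\mathpzc{E},D)\sim_{\rm bor}(\tilde{\mathpzc{E}},\tilde{D})$ with $\tilde{D}$ invertible. Here $\tilde{D}$ differs from $D\oplus 0$ by the bounded self-adjoint operator with off-diagonal entries $(1+D^2)^{-1/2}$. Since $\tilde{\mathpzc{E}}$ carries the action $a\cdot(\xi_1\oplus\xi_2)=a\xi_1\oplus 0$, the cycle $(\tilde{\mathpzc{E}},\tilde{D})$ is still an $(\mathcal{A},B)$-cycle. It is invertible, so it satisfies the hypothesis of Theorem \ref{bordismforodrdthm}.

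Next I apply order reduction. For $\alpha\in(0,1)$, Theorem \ref{bordismforodrdthm} gives a bordism $(\tilde{\mathpzc{E}},\tilde{D})\sim_{\rm bor}(\tilde{\mathpzc{E}},\tilde{D}_\alpha)$, where $\tilde{D}_\alpha=\tilde{D}(1+\tilde{D}^2)^{(\alpha-1)/2}$. This operator is again invertible: $x\mapsto x(1+x^2)^{(\alpha-1)/2}$ is monotone, vanishes only at $0$, and is bounded away from $0$ on $\mathrm{Spec}(\tilde D)$. By transitivity (Proposition \ref{subsecglyinggluing}), $\mathfrak{Y}$ is bordant to $(\tilde{\mathpzc{E}},\tilde{D}_\alpha)$.

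It remains to show that the reduced cycle is Lipschitz, i.e.\ $\mathcal{A}\subseteq\mathrm{Lip}(|\tilde{D}_\alpha|)$; I expect this to be the main point. I write
\[
|\tilde{D}_\alpha|=g\big((1+\tilde{D}^2)^{1/2}\big),\qquad g(s)=(s^2-1)^{1/2}s^{\alpha-1}.
\]
Then $|\tilde{D}_\alpha|$ differs from $(1+\tilde{D}^2)^{\alpha/2}$ by the operator $h\big((1+\tilde D^2)^{1/2}\big)$, where $h(s)=g(s)-s^{\alpha}$. This $h$ is bounded on the spectrum $[1,\infty)$, since $g(s)-s^\alpha=O(s^{\alpha-2})$. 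A bounded function of $\tilde D$, however, need not commute with $a$ up to a bounded error on the right domain, so I would instead control commutators directly. Lemma \ref{lipforpower} gives $\mathrm{Lip}(\tilde{D})\subseteq\mathrm{Lip}((1+\tilde{D}^2)^{1/2-\epsilon})$. For the power $\alpha/2<1/2$ I would rerun the integral-representation argument in that lemma's proof. Writing $|\tilde D_\alpha|=(\tilde D^2)^{1/2}(1+\tilde D^2)^{(\alpha-1)/2}$, I would use the resolvent integral
\[
(\tilde D^2)^{\alpha/2}\text{-type}\quad\int_0^\infty \lambda^{\alpha/2-1}\,\tilde D^2(\lambda+\tilde D^2)^{-1}\,\mathrm{d}\lambda
\]
together with the estimates of \cite[Section 2.3 and Lemma B.1]{boundarypap}. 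The commutator with $a$ then gives integrands of order $\lambda^{\alpha/2-1}(1+\lambda)^{-1/2}$, which are integrable because $\alpha<1$; hence $[|\tilde D_\alpha|,a]$ is bounded by $C\|[\tilde D,a]\|$. Domain invariance holds because $a$ preserves $\Dom(\tilde D)$, which is a core for $\Dom(|\tilde{D}_\alpha|)=\Dom((1+\tilde{D}^2)^{\alpha/2})$. I expect this last commutator estimate, whose integrability relies on $\alpha<1$, to be the only genuinely technical step.
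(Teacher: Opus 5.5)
Your proof is correct, but it obtains the Lipschitz property differently from the paper.

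The paper also starts with the invertible double. It then completes $\mathcal{A}$ in the norm $\|a\|_A+\|[D,a]\|$ to make it a Banach $*$-algebra. After order reduction it reads off $\mathcal{A}=\mathcal{A}_1\subseteq\mathrm{Lip}(|D_\beta|)$ from Theorem \ref{interpolationthm}(2), the complex-interpolation statement, taking the interpolation parameter equal to $1$. You skip both the Banach completion and the interpolation machinery and prove the commutator bound directly. This makes clear that the only real input is $\alpha<1$. The paper's packaging, on the other hand, also gives the extension to interpolation spaces that is used later in Theorem \ref{cpxinerpbofodthm}.

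Your final step can be shortened considerably, because your worry about bounded functions of $\tilde D$ is unfounded. The operator $h\bigl((1+\tilde D^2)^{1/2}\bigr)$ is bounded and adjointable, so its commutator with $a$ is automatically bounded. Adding it also does not change the domain. Hence
\[
\mathrm{Lip}(|\tilde D_\alpha|)=\mathrm{Lip}\bigl((1+\tilde D^2)^{\alpha/2}\bigr),
\]
and the right-hand side contains $\mathcal{A}$ by Lemma \ref{lipforpower} applied directly with $\epsilon=(1-\alpha)/2$. There is no need to rerun its proof.

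Note also that the integral you sketch estimates $[(\tilde D^2)^{\alpha/2},a]$, not $[|\tilde D_\alpha|,a]$. To conclude from it you would need exactly the bounded-difference step you set aside. That step is valid, since $|x|(1+x^2)^{(\alpha-1)/2}-|x|^\alpha$ is bounded on $\mathbb{R}$.
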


\begin{proof}
Consider a cycle $(\mathpzc{E},D)$ for $(\mathcal{A},B)$. By Corollary \ref{invertibleldodcor} (see page \pageref{invertibleldodcor}), we can assume that $D$ is invertible. Upon completing $\mathcal{A}$ in the norm $\|a\|_D:=\|a\|_A+\|[D,a]\|_{\End_B^*(\mathpzc{E})}$ we can assume that $\mathcal{A}$ is a Banach $*$-algebra. Take a $\beta\in (0,1)$. Theorem \ref{bordismforodrdthm} shows that $(\mathpzc{E},D)\sim_{\rm bor} (\mathpzc{E},D_\beta)$ which by Theorem \ref{interpolationthm} (see page \pageref{interpolationthm}) defines a Lipschitz cycle for $(\mathcal{A}_\alpha,B)$ for any $\alpha\in (\beta,1]$, in particular for $\alpha=1$. 
\end{proof}

\section[Examples of $KK$-bordisms from index theory and NCG]{Examples of $KK$-bordisms from index theory and noncommutative geometry}
\label{sec:ncgexbord}

In this section we will consider geometric examples of $KK$-bordisms, building on the constructions from Section \ref{exampleofboridmsmssec}. The purpose of these examples is to exemplify the notion of $KK$-bordism as well as providing explicit $KK$-bordisms that are of interest to applications in secondary invariants. Similarly to the previous section, in each of the cases the $KK$-bordisms arise canonically from the underlying geometry.

\subsection{Dirac operators on $C^*$-bundles}
\label{diraconcstarbundleexbordism}
A rather basic construction of $KK$-bordisms stem from the example considered in Subsection \ref{diraconcstarbundleex} of a Dirac operator twisted by a hermitean $B$-bundle. We refer the reader to Subsection \ref{diraconcstarbundleex} for references, yet note that $KK$-bordisms was in this context only studied in \cite{DGM}. We follow the notation of Subsection \ref{diraconcstarbundleex}. If $W$ is a complete Riemannian manifold with boundary, we say that $W$ is of product type near the boundary if the metric $g_W$ on $W$ in a uniform collared neighborhood can be written as $g_W=\mathrm{d}t^2+g_{\partial W}$ where $t$ denotes the unit normal variable to $\partial W$ and by uniform collared neighborhood we mean the subset of $W$ defined from $0\leq t\leq \epsilon$ for some $\epsilon>0$. Similarly, we define the notion of a Clifford bundle and Clifford connection being product type near the boundary.

\begin{lemma}
\label{generiddakladlalald}
Let $W$ be an oriented complete Riemannian manifold with boundary, and $\mathcal{E}_B\to W$ a Clifford $B$-bundle with a Clifford connection with all structures being of product type near the boundary. Assume that $X$ is a metric space and that $f:W\to X$ is a proper Lipschitz continuous mapping. Then the $(\Lip_0(X),B)$-cycle $(f|_{\partial W})^*(L^2(\partial W,\mathcal{E}_B^\partial), D_{\mathcal{E}}^\partial)$ is nullbordant via a bordism whose interior chain is given by $f^*(L^2( W,\mathcal{E}_B), D_{\mathcal{E}}^{\rm min})$ and the boundary data $\Theta$ defined from the identification of a collar neighborhood of the boundary with the product of $\partial W$ with $0\leq t\leq \epsilon$.
\end{lemma}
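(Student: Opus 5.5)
The plan is to verify directly the conditions of Definition \ref{HilBorDef} for the triple $(\mathfrak{Z},\Theta,\mathfrak{Y})$ with $\mathfrak{Z}=f^*(L^2(W,\mathcal{E}_B),D_\mathcal{E}^{\rm min})$ and $\mathfrak{Y}=(f|_{\partial W})^*(L^2(\partial W,\mathcal{E}_B^\partial),D^\partial_\mathcal{E})$. First I rescale the collar so that it is parametrized by $t\in[0,1]$; this is harmless because $\epsilon$ only enters through a dilation, and a dilation of the normal variable can be absorbed into the unitary $\theta$ at the cost of a constant multiple of $\mathrm{d}/\mathrm{d}t$. Concretely, I take the collar $U=\{0\leq t\leq \epsilon\}\cong[0,\epsilon]\times\partial W$ and let $p=\chi_U$ be multiplication by its indicator. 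Then $\theta:pL^2(W,\mathcal{E}_B)\to L^2[0,1]\hat{\boxtimes}L^2(\partial W,\mathcal{E}^\partial_B)$ is the unitary induced by the product structure, with the orientation chosen so that the boundary $\partial W$ corresponds to the endpoint $t=1$ of $[0,1]$. In the odd-dimensional case this uses the standard identification $\mathcal{E}_B|_U\cong \mathcal{E}^\partial_B\oplus\mathcal{E}^\partial_B$ via Clifford multiplication by $\mathrm{d}t$, matching the conventions for $\hat{\boxtimes}$. The facts that $p$ commutes with the $\Lip_0(X)$-action and that $\theta$ respects gradings are immediate. Two further inputs are already available: $\mathfrak{Y}$ is a cycle because $\partial W$ is complete without boundary (Subsection \ref{diraconcstarbundleex}), and $\mathfrak{Z}$ is a symmetric chain by \cite[Lemma 1.24]{DGM}.

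Next I check the compatibility conditions a)--c). By the product structure, on $U$ the differential expression $\slashed{D}_\mathcal{E}$ equals $\theta^{-1}(\partial\hat{\boxtimes}\slashed{D}^\partial)\theta$, and $b(\phi)$ is multiplication by a Lipschitz function of $(t,x)$. Condition c) then follows from locality of the differential operator, and condition b) from the fact that the minimal domains agree on functions supported strictly inside the collar. Condition a) says that cutting off away from $\partial W$, that is, by $\phi$ with $\phi\in C^\infty_c(0,1]$ in the collar variable and $\phi(1)$ extended to the rest of $W$, maps $\Dom(D^{\rm max})$ into $\Dom(D^{\rm min})$. This is standard interior elliptic regularity combined with completeness: completeness gives approximation by $C^\infty_c(\overline{W})$ sections, and any such section multiplied by a function vanishing near $\partial W$ lies in $C^\infty_c(W^\circ)$, hence in the minimal domain.

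The remaining and main point is half-closedness for $C^\infty_c((0,1],\Lip_0(X))$ acting via $b$. What has to be shown is that $b(\phi\otimes a)(1+T^*T)^{-1}$ is compact for $a\in\Lip_0(X)$. The element $b(\phi\otimes a)$ acts by multiplication by $\tilde\phi\cdot(a\circ f)$, where $\tilde\phi$ vanishes near $\partial W$. Since $f$ is proper and $a$ vanishes at infinity, $a\circ f\in C_0(W)$, so this multiplier belongs to $C_0(W^\circ)$ modulo norm limits. Half-closedness of $D^{\rm min}_\mathcal{E}$ as a $(C_\infty_c(W^\circ),B)$-chain, again from \cite[Lemma 1.24]{DGM}, gives compactness for compactly supported multipliers, and a norm approximation argument extends this to $C_0(W^\circ)$. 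The same multipliers also lie in $\mathrm{Lip}(D^{\rm min},D^{{\rm min}*})$, by the Lipschitz property of $f$ together with condition a). I expect the main obstacle to be precisely this mapping of $\Dom(D^*)$ into $\Dom(D^{\rm min})$ for functions that are only Lipschitz and not smooth, uniformly at infinity. I plan to handle it by noting that Lipschitz multipliers preserve the minimal domain with $[\slashed{D},g]=c(\mathrm{d}g)$ bounded, and then combining this with the cutoff away from the boundary. Once this is in place, $\partial$ of the bordism is $\mathfrak{Y}$, which is the asserted nullbordism.
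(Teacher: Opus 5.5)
Your plan follows the paper's proof. You verify that the triple is a symmetric chain with boundary using the product structure on the collar. You then get half-closedness from the fact that $b(C^\infty_c((0,1],\Lip_0(X)))$ consists of multiplications by Lipschitz functions on $\overline{W}$ that vanish near $\partial W$ and, by properness of $f$, at infinity; for such functions $D^{\rm min}_{\mathcal{E}}$ is half-closed. Your completeness argument for condition a) and for Lipschitz cutoffs correctly spells out what the paper calls immediate. Three details need repair, however.

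\textbf{The rescaling.} Reducing to a collar of length one by dilating the normal variable does not work. The unitary dilation $L^2[0,\epsilon]\to L^2[0,1]$ turns $i\,\mathrm{d}/\mathrm{d}t$ into $\epsilon^{-1}i\,\mathrm{d}/\mathrm{d}s$, so on the collar you get $(\epsilon^{-1}\partial)\hat{\boxtimes}D^\partial_{\mathcal{E}}$. Condition b) of Definition \ref{HilBorDef} demands exactly $T=\theta^{-1}\Psi(D^\partial_{\mathcal{E}})\theta$, so the constant factor is not harmless. The paper instead attaches a cylinder $[0,1]\times\partial W$ (when $\epsilon\geq 1$ a sub-collar suffices). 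This keeps the unit-speed product structure.

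\textbf{The orientation.} Since $b(\phi)=\phi(1)(1-p)+p\theta^{-1}\phi\theta$ and half-closedness is required for $C^\infty_c((0,1],\mathcal{A})$, the boundary $\partial W$ must correspond to $0\in[0,1]$, and the endpoint $1$ must face the interior. You state the opposite (boundary at $t=1$). Your later check of a) and of compactness tacitly uses the correct convention, so only the stated choice needs fixing.

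\textbf{The action on the collar.} Writing $b(\phi\otimes a)$ as multiplication by $\tilde\phi\cdot(a\circ f)$ requires $\theta$ to intertwine the $\Lip_0(X)$-actions. That means $f$ must be constant in the normal variable on the collar. Otherwise $b(\phi)$ multiplies by $\phi(t)(f(0,y))$ on the collar and by $\phi(1)(f(x))$ off it. This function jumps at the inner edge of the collar, so it does not preserve $\Dom(D^{\rm min}_{\mathcal{E}})$. The paper's claim that $b$ factors over $\Lip_0(\overline{W})$ rests on the same tacit product-type assumption on $f$, so you should state it explicitly.
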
 

\begin{proof}
By Subsection \ref{diraconcstarbundleex}, the $(\Lip_0(\overline{W}),B)$-chain $(L^2( W,\mathcal{E}_B), D_{\mathcal{E}}^{\rm min})$ is symmetric and restricts to a half-closed $(\Lip_0(W^\circ),B)$-chain. We can assume that $\epsilon=1$ in the definition of the product structure at the boundary, otherwise we attach a cylinder. We let $p$ denote the characteristic function of the subset of $W$ where $0\leq t\leq 1$. It is immediate from the construction that 
$$(f^*(L^2( W,\mathcal{E}_B), D_{\mathcal{E}}^{\rm min}),(\theta,p), (f|_{\partial W})^*(L^2( \partial W,\mathcal{E}_B^\partial), D_{\mathcal{E}}^{\partial})),$$
is a symmetric chain with boundary. Here $\theta$ is the obvious isomorphism defined from the product structure at the boundary. The lemma now follows from the fact that the representation $b:C^\infty([0,1],\Lip_0(X))\to \End_B^*(L^2( W,\mathcal{E}_B))$ factors over $\Lip_0(\overline{W})$ (since $f$ is proper and Lipschitz) and $b(C^\infty_c((0,1],\Lip_0(X)))\subseteq \Lip_0(W^\circ)$.
\end{proof}

\begin{prop} 
\label{generiddakladlalaldbb}
Let $M$ be an oriented complete Riemanian manifold and $\mathcal{E}_B\to M$ a Clifford $B$-bundle with two Clifford connections from which two different twisted Dirac operators $D_{\mathcal{E}}$ and $D_{\mathcal{E}}'$ are constructed. Assume that $X$ is a metric space and that $f:M\to X$ is a proper Lipschitz continuous mapping. Then there is a $KK$-bordism 
$$f^*(L^2(M,\mathcal{E}_B), D_{\mathcal{E}})\sim_{\rm bor} f^*(L^2( M,\mathcal{E}_B), D_{\mathcal{E}}'),$$
of  $(\Lip_0(X),B)$-cycles.
\end{prop}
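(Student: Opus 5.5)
The plan is to realize the bordism as a perturbation bordism in the sense of Corollary \ref{locbddperturcor} (or Corollary \ref{locbddanticommcor}), using that two Clifford connections on $\mathcal{E}_B$ differ by a bounded-on-compacts $1$-form. Write $\nabla'=\nabla+\omega$ with $\omega\in C^\infty(M;T^*M\otimes\End_B^*(\mathcal{E}_B))$. Then the two twisted Dirac operators differ by the zeroth order term $V:=c(\omega)$, obtained by Clifford contracting $\omega$. This is a smooth self-adjoint bundle endomorphism; it is odd in the graded case because Clifford multiplication by covectors is odd. Hence $D_{\mathcal{E}}'=D_{\mathcal{E}}+V$ on $C^\infty_c(M,\mathcal{E}_B)$. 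Since $M$ is complete and both operators are complete Dirac operators, both closures are self-adjoint and regular, and $C^\infty_c(M,\mathcal{E}_B)$ is a common core.

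The first step I will try is to apply Theorem \ref{perturbationgenerelthm} directly. The point is that the convex combination $sD_{\mathcal{E}}+(1-s)D_{\mathcal{E}}'$ is again the Dirac operator of the Clifford connection $\nabla+(1-s)\omega$. So for every $s\in[0,1]$ it is a complete Dirac operator, essentially self-adjoint on $C^\infty_c$, and regular on $L^2(M,\mathcal{E}_B)$ by the discussion in Subsection \ref{diraconcstarbundleex} (via \cite[Lemma 1.24]{DGM}). The common core hypothesis holds with $\mathcal{D}_0=C^\infty_c(M,\mathcal{E}_B)$.

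Theorem \ref{perturbationgenerelthm} then provides the path $D(t)$. Through Lemma \ref{lemmahomotopyresolven} and Theorem \ref{homotopywithsweakcond} this gives a bordism of $(\Lip_c(M),B)$-cycles. Pulling back along $f$ turns it into a bordism of $(\Lip_0(X),B)$-cycles, since $f$ is proper and Lipschitz, so $f^*\Lip_0(X)\subseteq\Lip_0(M)$ acts with bounded commutators and locally compact resolvent. Concretely, in the bordism of Theorem \ref{homotopywithsweakcond} the action of $C^\infty_c((0,1],\Lip_0(X))$ factors through Lipschitz functions on $[0,1]\times M$ vanishing at infinity and near $t=0$. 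Its half-closedness follows as in Lemma \ref{generiddakladlalald}, because the interior operator is a Dirac operator on the manifold with boundary $[0,1]\times M$.

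The main obstacle is that $\omega$ need not be bounded, so $V$ need not be relatively bounded with respect to $D_{\mathcal{E}}$ when $M$ is non-compact. This blocks a direct use of Corollary \ref{locbddperturcor}, and it also complicates checking in Theorem \ref{perturbationgenerelthm} that $R_\mu'(t)$ and $D(t)R_\mu'(t)$ are norm continuous, since $V(i\mu+D(t))^{-1}$ may be unbounded. I would circumvent this by viewing the bordism itself geometrically. On $[0,1]\times M$, take the Clifford connection $\nabla+(1-\chi(t))\omega$, made of product type near $t=0,1$ by the choice of $\chi$. Its Dirac operator is the operator $T$ of Theorem \ref{homotopywithsweakcond}. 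Its minimal closure is then a symmetric chain by the general theory of Subsection \ref{diraconcstarbundleex}, with completeness inherited from $M$, and Lemma \ref{generiddakladlalald}, applied to $W=[0,1]\times M$ with the proper map $f\circ\mathrm{pr}_M$, yields the bordism with boundary $f^*(L^2(M,\mathcal{E}_B),D_{\mathcal{E}})+(-f^*(L^2(M,\mathcal{E}_B),D_{\mathcal{E}}'))$. This geometric route avoids any global bound on $\omega$. The remaining point to verify is that the boundary identification produces exactly the two cycles with the correct orientations.
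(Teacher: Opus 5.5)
Your final geometric route, which applies Lemma \ref{generiddakladlalald} to $W=M\times[0,1]$ with the proper Lipschitz map $f\circ\mathrm{pr}_M$ and a Clifford connection interpolating between the two given ones (of product type near the ends), is exactly the paper's proof. The preliminary attempt via Theorem \ref{perturbationgenerelthm} is not needed, and you were right to drop it, since $\nabla'-\nabla$ need not be bounded, and hence $c(\omega)$ need not be relatively $D_{\mathcal{E}}$-bounded, on non-compact $M$.
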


\begin{proof}
We can take $W=M\times [0,1]$ in Lemma \ref{generiddakladlalald} and define a Clifford connection on $\mathcal{E}_B\times [0,1]$ by interpolating between the two different Clifford connections, and the proposition follows from Lemma \ref{generiddakladlalald}.
\end{proof}

\begin{prop} 
\label{generiddakladlalaldcc}
Let $M$ be an oriented compact manifold with two Riemannian metrics $g$ and $g'$ and $\mathcal{E}_B\to M$ a $B$-bundle with the structures of a $B$-Clifford bundle with a Clifford connection from both metric; inducing two different twisted Dirac operators $D_{\mathcal{E}}$ and $D_{\mathcal{E}}'$. Assume that $X$ is a metric space and that $f:M\to X$ is a Lipschitz continuous mapping. Then there is a $KK$-bordism 
$$f^*(L^2_g(M,\mathcal{E}_B), D_{\mathcal{E}})\sim_{\rm bor} f^*(L^2_{g'}( M,\mathcal{E}_B), D_{\mathcal{E}}'),$$
of  $(\Lip_0(X),B)$-cycles.
\end{prop}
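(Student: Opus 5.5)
We mimic the proof of Proposition \ref{generiddakladlalaldbb}: build a cylinder $W=M\times[0,1]$ with a metric interpolating between $g$ and $g'$, and apply Lemma \ref{generiddakladlalald}. Choose a smooth cutoff $\rho\in C^\infty([0,1],[0,1])$ equal to $0$ on $[0,1/4]$ and to $1$ on $[3/4,1]$. Put on $W$ the metric
$$g_W:=\mathrm{d}t^2+(1-\rho(t))g+\rho(t)g'.$$
This is a Riemannian metric, since convex combinations of positive definite forms are positive definite. Because $M$ is compact, $W$ is compact, hence complete. Near each boundary component $g_W$ is of product type, namely $\mathrm{d}t^2+g$ near $t=0$ and $\mathrm{d}t^2+g'$ near $t=1$. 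The map $F:=f\circ\mathrm{pr}_M:W\to X$ is Lipschitz, and it is proper in the sense needed because $W$ is compact: the $\Lip_0(X)$-action factors over $\Lip(\overline{W})=\Lip_0(\overline{W})$.

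Next we build a Clifford $B$-bundle over $W$ that restricts near the ends to the two given structures. The underlying $B$-bundle is $\mathrm{pr}_M^*\mathcal{E}_B$, possibly doubled (as $\mathcal{E}_B\oplus\mathcal{E}_B$ with the standard grading and $t$-direction Clifford action) to accommodate the parity shift between the odd and even cases. The Clifford actions of $(M,g)$ and $(M,g')$ are related by the canonical isometry
$$A_t:=\big((1-\rho(t))g+\rho(t)g'\big)^{-1/2}g^{1/2}:(TM,g)\to\big(TM,(1-\rho(t))g+\rho(t)g'\big),$$
defined via the positive square root of the endomorphism relating the two metrics.

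We use $A_t$ to define a Clifford multiplication $c_t$ for the metric $g_t$ on $\mathcal{E}_B$ by transporting $c_g$. At $t=1$ this yields a Clifford module structure for $g'$, but possibly not the given one, $c_{g'}$. Both are Clifford $B$-module structures on the same bundle for the same metric $g'$. Two issues arise here.
\begin{itemize}
\item In general such structures need not be isomorphic; the statement presupposes they are compatible, for example both induced from a fixed spin$^c$/Clifford datum twisted by a common $B$-bundle. So we assume a unitary bundle automorphism $U$ intertwining $c_{g'}$ with the transported action at $t=1$, homotoped to the identity, or we absorb $U$ by a unitary isomorphism of cycles at the boundary.
\item We then choose Clifford connections on the transported bundle over $W$ that agree near $t=0$ and $t=1$ with the pullbacks of the given Clifford connections. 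This is possible by patching with the cutoff $\rho$, since Clifford connections form an affine space.
\end{itemize}

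The resulting Dirac operator $D_W$ is, near each end, of the form $\Psi$ of the boundary operator. Lemma \ref{generiddakladlalald} then gives a nullbordism of the boundary cycle. The boundary cycle is
$$F|_{\partial W}^*\big((L^2_g(M,\mathcal{E}_B),D_{\mathcal{E}})+(-(L^2_{g'}(M,\mathcal{E}_B),D'_{\mathcal{E}}))\big).$$
The minus sign comes from the orientation of the end $t=1$ being reversed. After possibly using the unitary identification from the previous step, which is an isomorphism of cycles and therefore harmless, this says exactly that the two cycles are bordant.

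The main obstacle is the identification of Clifford structures. The Hilbert spaces $L^2_g$ and $L^2_{g'}$ differ, since the volume forms differ, and the Clifford modules must be matched. I would first try to handle this via the unitary $L^2_g\to L^2_{g'}$ given by multiplication by $(\mathrm{dvol}_g/\mathrm{dvol}_{g'})^{1/2}$ combined with $A_1$. This builds a unitary isomorphism of the $g'$-cycle with a cycle on a common bundle, thereby reducing the statement to the connection-interpolation argument of Proposition \ref{generiddakladlalaldbb}.
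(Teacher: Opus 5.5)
Your argument is correct and is essentially the paper's proof: take $W=M\times[0,1]$ with a product-type metric interpolating between $g$ and $g'$ and apply Lemma~\ref{generiddakladlalald}, where your transport of Clifford multiplication along $A_t$, together with the unitary matching at $t=1$, makes precise the compatibility of the two Clifford structures that the paper's ``by interpolation'' tacitly presupposes (some such hypothesis is genuinely needed, since two Clifford module structures on the same bundle can have different $K$-homology classes even when $g=g'$). Your closing idea of reducing to Proposition~\ref{generiddakladlalaldbb} via the density unitary $L^2_g\to L^2_{g'}$ is unnecessary, because $L^2(W,g_W)$ already interpolates between $L^2_g$ and $L^2_{g'}$, and it would not work as stated, since after conjugating by that unitary the operator's principal symbol is still Clifford multiplication for $g'$, so it is not a Dirac operator for $g$.
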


\begin{proof}
We can take $W=M\times [0,1]$ in Lemma \ref{generiddakladlalald} and define the metric, and the Clifford bundle structure and Clifford connection on $\mathcal{E}_B\times [0,1]$ by interpolation, and the proposition follows from Lemma \ref{generiddakladlalald}.
\end{proof}

\begin{prop} 
Let $M$ and $M'$ be compact oriented Riemannian manifolds and $\mathcal{E}_B\to M$, $\mathcal{E}_B'\to M'$ are $B$-Clifford bundles with Clifford connections; inducing two twisted Dirac operators $D_{\mathcal{E}}$ and $D_{\mathcal{E}'}$. Assume that there exists an oriented compact Riemannian manifold with boundary $W$ such that $\partial W=M\dot{\cup}(-M')$ and a $B$-Clifford bundle $\mathcal{F}_B\to W$ such that $\mathcal{F}_B^\partial|_{\partial W}=\mathcal{E}_B\dot{\cup}(-\mathcal{E}_B')$ as Clifford $B$-bundles on $\partial W$. Assume that $X$ is a metric space and that $f:W\to X$ is a Lipschitz continuous mapping. Then there is a $KK$-bordism 
$$(f|_M)^*(L^2(M,\mathcal{E}_B), D_{\mathcal{E}})\sim_{\rm bor} (f|_{M'})^*(L^2( M',\mathcal{E}_B'), D_{\mathcal{E}'}),$$
of  $(\Lip_0(X),B)$-cycles with interior chain $f^*(L^2(W,\mathcal{F}_B), D_{\mathcal{F}})$ for a suitable $D_{\mathcal{F}}$ extending $D_{\mathcal{E}}\oplus (-D_{\mathcal{E}'})$.
\end{prop}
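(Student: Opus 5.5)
The plan is to reduce the statement to Lemma \ref{generiddakladlalald}, applied to the manifold with boundary $W$ after arranging product structures near $\partial W$. The conclusion then follows from the nullbordism produced by the lemma together with the sign convention in Definition \ref{HilBorKKcycle}.

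\emph{Step 1: arrange product structure near $\partial W$.} The hypothesis only gives $\mathcal{F}_B|_{\partial W}=\mathcal{E}_B\dot{\cup}(-\mathcal{E}_B')$ as Clifford $B$-bundles. It does not give a product metric, Clifford structure or connection near the boundary. I would first deform the data on $W$ in a collar $\partial W\times[0,\epsilon)$ so that:
\begin{itemize}
\item the metric is $\mathrm{d}t^2+g_{\partial W}$;
\item the Clifford bundle is the pullback of $\mathcal{E}_B\dot\cup(-\mathcal{E}_B')$, with the standard identification of $\mathcal{F}_B$ near the boundary with the suspension of the boundary bundle in the graded sense;
\item the Clifford connection is the pullback of the given Clifford connections on $\mathcal{E}_B$ and $\mathcal{E}_B'$.
\end{itemize}
The Clifford structure and connection on $\mathcal{F}_B$ are interpolated by a partition of unity in $t$ with the given ones away from the boundary, exactly as in the proofs of Propositions \ref{generiddakladlalaldbb} and \ref{generiddakladlalaldcc}. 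The $D_{\mathcal{F}}$ in the statement is then the minimal closure of the resulting Dirac operator, so its restriction to the boundary is $D_{\mathcal{E}}\oplus(-D_{\mathcal{E}'})$. If it is more convenient, I would instead attach a cylinder $\partial W\times[0,1]$ carrying product data and glue it to $W$, smoothing the structures at the seam.

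\emph{Step 2: apply the lemma.} Since $W$ is compact, any Lipschitz $f:W\to X$ is proper. Lemma \ref{generiddakladlalald} then says that
$$(f|_{\partial W})^*(L^2(\partial W,\mathcal{F}^\partial_B),D^\partial_{\mathcal{F}})$$
is nullbordant, with interior chain $f^*(L^2(W,\mathcal{F}_B),D_{\mathcal{F}})$. By construction this boundary cycle equals
$$(f|_M)^*(L^2(M,\mathcal{E}_B),D_{\mathcal{E}})+\bigl(-(f|_{M'})^*(L^2(M',\mathcal{E}_B'),D_{\mathcal{E}'})\bigr).$$
Here the sign comes from Definition \ref{oppKKCyc}: reversing the orientation of $M'$ reverses the grading in the even case, and reverses the sign of the Dirac operator in the odd case, so the boundary component over $M'$ is the opposite cycle. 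By Definition \ref{HilBorKKcycle}, a bordism whose boundary is $\mathfrak{Y}_1+(-\mathfrak{Y}_2)$ is precisely a bordism $\mathfrak{Y}_1\sim_{\rm bor}\mathfrak{Y}_2$, which is the claim.

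\emph{Main obstacle.} The delicate point is Step 1 together with the orientation and grading bookkeeping. One must check that the identification $\theta$ of the collar with $L^2[0,1]\hat{\boxtimes}L^2(\partial W,\mathcal{F}^\partial_B)$ intertwines $D_{\mathcal{F}}$ with $\Psi(D^\partial_{\mathcal{F}})$, with the parity conventions of the graded exterior product $\hat{\boxtimes}$. One must also check that the induced orientation on the $M'$ component really produces $-D_{\mathcal{E}'}$, or the reversed grading, and not merely an isomorphic cycle. I would verify this with the standard local computation $D_{\mathcal{F}}=c(\mathrm{d}t)(\partial_t+D^\partial)$ in the collar, treating the even and odd boundary dimensions separately. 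If the cycles only agree up to isomorphism, I would absorb that isomorphism into $\theta$ using Definition \ref{isoofsymwbound}.
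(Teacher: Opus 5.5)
Your proposal is correct and is essentially the paper's argument: the paper deduces the statement from Lemma \ref{generiddakladlalald} together with Propositions \ref{generiddakladlalaldbb} and \ref{generiddakladlalaldcc}. That is, it makes the structures on $W$ of product type near $\partial W$, applies the lemma, and uses cylinder bordisms with interpolated metric and Clifford data, glued on by transitivity, to reach the prescribed boundary data. You instead build that interpolation into the collar of $W$ before invoking the lemma (or, in your alternative, glue the cylinder on directly), which has the small advantage of producing the stated interior chain $f^*(L^2(W,\mathcal{F}_B),D_{\mathcal{F}})$ exactly.
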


\begin{proof}
The proposition follows from Lemma \ref{generiddakladlalald}, Proposition \ref{generiddakladlalaldbb} and Proposition \ref{generiddakladlalaldcc}.
\end{proof}

Another instance of a $KK$-bordism with geometric origins come from Baum-Douglas' notion of vector bundle modification \cite{BD,BDbor, BHS, HReta}. This was discussed in \cite[Subsection 4.2]{DGM}. In fact, the notion of weakly degenerate cycles in \ref{subsecweakdegff} is motivated by vector bundle modification and its appearance in the geometric structure group \cite{DGsurIII}. The analytic details originate in work of Higson-Roe on $\eta$-invariants \cite{HReta}. 

We first consider the case of vector bundle modification of oriented manifolds by oriented vector bundles. The spin$^c$-case is discussed below in Remark \ref{soniniun}. Oriented vector bundle modification was studied in detail in \cite{guentnerkhom}. Assume that $W$ is an oriented manifold with boundary and $V\to W$ is an oriented vector bundle of even rank equipped with a Riemannian structure. For simplicity, we assume that the rank of $V$ is constant. Let $\field{R}\to W$ denote the trivial real line bundle. At the level of manifolds, we define the vector bundle modification of $W$ as 
$$W^V:=S(V\oplus \field{R}),$$
i.e. the total space of the sphere bundle $\pi_V:S(V\oplus \field{R})\to W$. 

At the level of Clifford bundles, the vector bundle modification goes as follows. We start with the situation in the fibre. Let $2k$ denote the rank of $V$. Consider the complex exterior algebra $\Lambda^*:=\wedge^*T^*S^{2k}\otimes \C$ which is an $SO(2k)$-equivariant Clifford bundle on $S^{2k}$. Following  \cite{guentnerkhom}, we consider the two $SO(2k)$-equivariant gradings $\epsilon_1$ defined by form degree and $\epsilon_2$ defined from the Hodge $*$ as 
$$\epsilon_2\omega:=i^{p(p-1)+k}*\omega, \quad \omega\in \wedge^pT^*S^{2k}\otimes \C.$$
The gradings $\epsilon_1$ and $\epsilon_2$ commute and both grade $\Lambda^*$ as a Clifford bundle. We form the graded $SO(2k)$-equivariant Clifford bundle $S:=\ker(\epsilon_1\epsilon_2-1)$ graded by the form degree $\epsilon_1$. For later purposes, we note that the construction of $S$ extends ad verbatim when replacing $SO(2k)$ by $Spin^c(2k)$. This construction globalizes to a $B$-Clifford bundle $\mathcal{E}_B\to W$ using the oriented frame bundle $P_V\to W$ -- the principal $SO(2k)$-bundle of oriented ON-frames of $V$. Note that $S_V:=P_V\times_{SO(2k)}S\to P_V\times_{SO(2k)}S^{2k}=W^V$ is a well defined graded vector bundle with a graded Clifford action of vertical tangent vectors. Using that $T(W^V)$ splits into horizontal and vertical directions, we arrive at a $B$-Clifford bundle 
$$\mathcal{E}_B^V:=\pi_V^*\mathcal{E}_B\hat{\otimes} S_V\to W^V.$$
We note in particular that we can identify 
$$C^\infty(W^V,\mathcal{E}^V_B)= [C^\infty(P_V\times S^{2k}, \mathcal{E}_B\times S))]^{SO(2k)},$$ 
where we write $[\mathcal{V}]^{SO(2k)}$ for the $SO(2k)$-invariant part of a representation $\mathcal{V}$. Upon choosing $SO(2k)$-invariant Laplacians, the same identification holds for the $B$-Hilbert $C^*$-module analogues of Sobolev spaces:
\begin{equation}
\label{hsident}
H^s(W^V, \mathcal{E}^V_B)=\big[H^s(P_V\times S^{2k},\mathcal{E}_B\times S))\big]^{SO(2k)}.
\end{equation}

Let us turn to the vector bundle modification of Dirac operators. On the Clifford bundle $S\to S^{2k}$ there is an $SO(2k)$-equivariant Dirac operator $D_S$ such that the even part $D_S^+$ has a one-dimensional kernel, giving the trivial $SO(2k)$-representation, and $\ker D^-_S=0$. For later purposes, we note that $D_S$ is also $Spin^c(2k)$-equivariant and $\ker D_S^+$ is the trivial $Spin^c(2k)$-representation. Write $e_S$ for the projection onto $\ker D_S^+$. By elliptic regularity $e_S\in \Psi^{-\infty}(S^{2k},S)$ is a smoothing operator and since $\ker D_Q^+$ is the trivial representation, $e_S$ is invariant. More details can be found in \cite[Proposition 3.11]{BHS} or \cite{guentnerkhom}. For a $B$-Clifford bundle $\mathcal{E}_B\to W$ over an oriented manifold $W$ equipped with a Dirac operator $D_\mathcal{E}$, the associated vector bundle modified Dirac operator on $\mathcal{E}_B^V\to W^V$ is defined by
$$D^V_\mathcal{E}:=\left(D_\mathcal{E}\hat{\otimes} 1+1\hat{\otimes} D_S\right)|_{C^\infty(W^V, \mathcal{E}_B^V)}.$$
Computing in local coordinates one verifies that $D^V_\mathcal{E}$ is a well defined Dirac operator. By density, we often consider $D^V_\mathcal{E}$ as an operator on $H^1(W^V, \mathcal{E}^V_B)$, or the minimal realization $D^V_{\mathcal{E},{\rm min}}$ with domain $H^1_0(W^V, \mathcal{E}^V_B)$ or even the maximal realization $D^V_{\mathcal{E},{\rm max}}:=(D^V_{\mathcal{E},{\rm min}})^*$. Both  $D^V_{\mathcal{E},{\rm max}}$ and $D^V_{\mathcal{E},{\rm min}}$ are regular and closed by the discussion in Subsection \ref{diraconcstarbundleex}. It is readily verified in local coordinates that for any $f\in H^1(W^V, \mathcal{E}^V_B)$ we have
\begin{align*}
\langle (D_\mathcal{E}&\hat{\otimes} 1)f,(1\hat{\otimes}D_S)f\rangle_{ L^2(P_V\times S^{2k},\mathcal{E}_B\times S))}\\
&+\langle (1\hat{\otimes}D_S)f,(D_\mathcal{E}\hat{\otimes} 1)f\rangle_{ L^2(P_V\times S^{2k}, \mathcal{E}_B\times S))}=0.
\end{align*}
Therefore, we have on $H^2(W^V, \mathcal{E}^V_B)$ that 
$$(D^V_\mathcal{E})^2:=\left(D_\mathcal{E}^2\hat{\otimes} 1+1\hat{\otimes} D_S^2\right)|_{H^2(W^V, \mathcal{E}^V_B)}.$$

A key feature of this construction is how the projection $e_S$ decomposes the Sobolev spaces $H^s(W^V, \mathcal{E}^V_B)$. Let us recall some salient results from \cite{DGsurIII} summarized in the following theorem. We tacitly assume that $H^s(W^V, \mathcal{E}^V_B)$ is defined from an invariant Laplacian on $P_V\times S^{2k}$.

\begin{prop}
\label{eqprop}
Consider the operator 
$$e_S^V:=(1\boxtimes e_S)|_{C^\infty(W^V,\mathcal{E}^V_B)}.$$
The operator $e_S^V$ defines a projection in any of the Sobolev $B$-modules $H^s(W^V, \mathcal{E}^V_B)$ satisfying the following 
\begin{itemize}
\item $e_S^V$ commutes with the left action of $C^\infty(W)$ on $H^s(W^V,\mathcal{E}^V_B)$.
\item $[D^V_\mathcal{E},e_S^V]=0$ on $H^s(W^V,\mathcal{E}^V_B)$.
\item There is a graded $C^\infty(W)$-linear isomorphism of $B$-Hilbert modules
$$e^V_SH^s(W^V,\mathcal{E}^V_B)\cong H^s(W,\mathcal{E}_B),$$
intertwining $D^V_\mathcal{E}e_S^V$ with $D_\mathcal{E}$.
\item For any $f\in H^1(W^V, \mathcal{E}^V_B)$ we have that
\begin{align*}
\langle (D_\mathcal{E}&\hat{\otimes} 1)f,(1\hat{\otimes}D_S)f\rangle_{ L^2(P_V\times S^{2k},\mathcal{E}_B\times S))}\\
&+\langle (1\hat{\otimes}D_S)f,(D_\mathcal{E}\hat{\otimes} 1)f\rangle_{ L^2(P_V\times S^{2k}, \mathcal{E}_B\times S))}=0,\quad\mbox{and}\\
\langle (1\hat{\otimes} D_S)f,&(1\hat{\otimes} D_S)f\rangle_{ L^2(W^V, \mathcal{E}^V_B)}\geq c\langle (1-e_Q^V)f, (1-e_Q^V)f\rangle_{\mathpzc{E}_B}
\end{align*}
for a $c>0$ only depending on $k$.
\end{itemize}
\end{prop}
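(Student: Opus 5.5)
The plan is to reduce everything to the $SO(2k)$-equivariant geometry of the fibre $S^{2k}$, using the identification \eqref{hsident} of $H^s(W^V,\mathcal{E}^V_B)$ with the $SO(2k)$-invariant part of $H^s(P_V\times S^{2k},\mathcal{E}_B\times S)$. Since $e_S$ is an $SO(2k)$-invariant smoothing operator, $1\boxtimes e_S$ is a bounded projection on every $H^s(P_V\times S^{2k},\mathcal{E}_B\times S)$ that commutes with the $SO(2k)$-action. It therefore restricts to a projection $e_S^V$ on the invariant part. The Sobolev spaces are defined from an invariant Laplacian, which we will choose of product type $\Delta_{P_V}\otimes 1+1\otimes\Delta_{S^{2k}}$. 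Then $1\boxtimes e_S$ commutes with $\Delta$, so $e_S^V$ is self-adjoint in each $H^s$-inner product. Commutation with $C^\infty(W)$ is immediate, because functions on $W$ pull back to functions on $P_V$ and act in the first tensor factor only.

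For the commutation $[D^V_\mathcal{E},e_S^V]=0$ we will write $D^V_\mathcal{E}=D_\mathcal{E}\hat{\otimes}1+1\hat{\otimes}D_S$ on invariant smooth sections. Here $D_\mathcal{E}$ is lifted horizontally to $P_V$, so it acts only in the $P_V$-variables. The projection $e_S$ is even, since $\ker D_S^-=0$ means the kernel is concentrated in even degree, and it commutes with $D_S$ because it is a spectral projection. Hence $1\hat{\otimes}e_S$ commutes with both summands, and the identity extends by density to $H^s$.

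For the isomorphism $e_S^VH^s(W^V,\mathcal{E}^V_B)\cong H^s(W,\mathcal{E}_B)$ we will fix a unit vector $\sigma$ spanning $\ker D_S^+$. Because $\ker D_S^+$ is the trivial $SO(2k)$-representation, $\sigma$ is $SO(2k)$-invariant, and the map $\xi\mapsto \pi^*\xi\otimes\sigma$ is well defined. It sends sections of $\mathcal{E}_B$ over $W$, viewed as $SO(2k)$-invariant functions on $P_V$, into the range of $e_S^V$, and every element of that range has this form. We must check that this map is unitary for the $H^s$-inner products up to equivalence of norms. This follows because the $H^s$-norm on the product of $\pi^*\xi\otimes\sigma$ factorizes as a norm of $\xi$ on $W$ times the fixed finite $\sigma$-contribution. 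The map is graded because $\sigma$ is even, and it is $C^\infty(W)$-linear. It intertwines $D^V_\mathcal{E}e^V_S$ with $D_\mathcal{E}$ because $D_S\sigma=0$, so only the $D_\mathcal{E}\hat{\otimes}1$ term survives.

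For the last item, the anticommutation identity was already verified in local coordinates above the statement, and we will simply recall it. The lower bound follows from the spectral gap of $D_S$ on the compact manifold $S^{2k}$. The operator $D_S$ has discrete spectrum, so $D_S^2\geq c(1-e_S)$ with $c$ the smallest nonzero eigenvalue of $D_S^2$, which depends only on $k$. Applying this fibrewise after tensoring with the identity on $P_V$, and then restricting to invariants, gives the inequality as $B$-valued inner products, by positivity of $1\otimes(D_S^2-c(1-e_S))$ in the exterior tensor product. The main obstacle I expect is bookkeeping rather than substance: making the $H^s$ statements rigorous for $B$-Hilbert modules, and checking that horizontal lifting makes $D_\mathcal{E}\hat{\otimes}1$ genuinely commute with $1\hat{\otimes}e_S$ on invariant sections. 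For the latter I would rely on the fact that the connection on $S_V$ is induced from $P_V$, so vertical operators are parallel.
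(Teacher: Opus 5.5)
The paper gives no proof of this proposition; it is recalled from \cite{DGsurIII}. Your reduction to $SO(2k)$-invariant sections over $P_V\times S^{2k}$ via \eqref{hsident} is the route the setup is designed for, and it uses that $e_S=\chi_{\{0\}}(D_S)$ is even, $SO(2k)$-invariant and of rank one. With that reduction, your arguments for the projection property, for commutation with $C^\infty(W)$ and with $D^V_\mathcal{E}$, for the identification $\xi\mapsto\pi^*\xi\otimes\sigma$, and for the spectral-gap inequality are sound. One small addition: for self-adjointness of $e^V_S$ in the $H^s$-inner products, say explicitly that the fibre part of the Laplacian commutes with $e_S$, for instance by taking it to be $D_S^2$. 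This is not automatic for an arbitrary invariant Laplacian.

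The genuine gap is the first identity of the last item. You cannot simply recall it: it is part of what the proposition claims, and the sentence before the proposition only asserts it. It is also false as stated for $f\in H^1$ when $\partial W\neq\emptyset$.

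Here is the argument you need. Put $A:=D_\mathcal{E}\hat{\otimes}1$ and $B:=1\hat{\otimes}D_S$.
\begin{itemize}
\item Both operators are odd, so the graded signs give $AB+BA=0$ on smooth sections.
\item $B$ is symmetric with no boundary contributions, because the fibres $S^{2k}$ are closed.
\item For $f$ smooth with compact support in the interior, $A$ is also symmetric on $f$ and $Bf$. Hence $\langle Af,Bf\rangle+\langle Bf,Af\rangle=\langle f,(AB+BA)f\rangle=0$.
\item By continuity in the $H^1$-norm, the identity then holds on $H^1_0(W^V,\mathcal{E}^V_B)$.
\end{itemize}

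For general $f\in H^1$, moving $A$ across the inner product leaves a Green boundary term of the form $\pm\int_{\partial W^V}\langle c(\nu)(1\hat{\otimes}D_S)f,f\rangle$. This term does not vanish in general. For example, take trivial $V$ and $f=u\hat{\otimes}v$, with $v$ an eigensection of $D_S$ of nonzero eigenvalue and $u$ with suitable nonzero boundary values. So state and prove the identity for $f\in H^1_0$, or for $\partial W=\emptyset$. That is all that is used later, since Theorem \ref{decomposingmoddir} works on $(1-e^V_S)H^1_0$.
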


We summarize the features above in the following theorem.

\begin{theorem}
\label{decomposingmoddir}
Let $W$ be a complete Riemannian manifold with boundary, $V\to W$ an oriented vector bundle of even rank, $B$ a unital $C^*$-algebra and $D_\mathcal{E}$ a complete Dirac operator on a Clifford $B$-bundle $\mathcal{E}_B\to W$. Consider the $B$-Hilbert $C^*$-module $\mathpzc{E}_V:=(1-e_S^V)L^2(W^V,\mathcal{E}^V_B)$ and the operator thereon
$$D_\mathpzc{E}:=(1-e^V_S)D_{\mathcal{E},{\rm min}}^V,$$
with domain 
$$\mathrm{Dom}(D_{\mathpzc{E}}):=(1-e_S^V)H^1_0(W^V,\mathcal{E}^V_B).$$
Then the following holds:
\begin{enumerate}
\item The pair $(\mathpzc{E}_V,D_\mathpzc{E})$ is a symmetric $(\Lip_0(W),B)$-chain, and a half-closed  $(\Lip_0(W^\circ),B)$-chain. The half-closed $(\Lip_0(W^\circ),B)$-chain $(\mathpzc{E}_V,D_\mathpzc{E})$ is closed if and only if $\partial W=\emptyset$.
\item There is a unitary equivalence of symmetric $(\Lip_0(W),B)$-chains
$$(L^2(W^V,\mathcal{E}_B^V),D_{\mathcal{E},{\rm min}}^V)\cong (L^2(W,\mathcal{E}_B),D_{\mathcal{E},{\rm min}})+(\mathpzc{E}_V,D_\mathpzc{E}).$$
\item  The symmetric $(\Lip_0(W),B)$-chain $(\mathpzc{E}_V,D_\mathpzc{E})$ is weakly degenerate (see Definition \ref{lknalkadn}), where a weakly degenerate decomposition $D_\mathpzc{E}=D_{0,\mathpzc{E}}+S_\mathpzc{E}$ is given by 
\begin{align*}
D_{0,\mathpzc{E}}&:=\overline{D_\mathcal{E}\hat{\otimes} 1|_{(1-e_S^V)H^1_0(W^V, \mathcal{E}_B^V)}}\quad\mbox{and}\\ 
S_{\mathpzc{E}}&:=\overline{1\hat{\otimes} D_S|_{(1-e_S^V)H^1_0(W^V, \mathcal{E}_B^V)}}.
\end{align*}
\end{enumerate}
\end{theorem}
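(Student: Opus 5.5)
The whole proof rests on the orthogonal decomposition by $e_S^V$ from Proposition \ref{eqprop}. I would establish item (2) first and then read off items (1) and (3) from it.

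\textbf{Step 1: the minimal domain splits.} Since $e_S^V$ acts only in the fibre direction, it preserves $C^\infty_c((W^V)^\circ,\mathcal{E}^V_B)$, and by Proposition \ref{eqprop} it is bounded on $H^1$. So it preserves $H^1_0(W^V,\mathcal{E}^V_B)=\Dom(D^V_{\mathcal{E},{\rm min}})$. It also commutes with $D^V_\mathcal{E}$ and with the left action of $\Lip_0(W)$; for the Lipschitz functions I would use density and the fact that $C^\infty(W)$ commutes with $e_S^V$. Consequently $D^V_{\mathcal{E},{\rm min}}$ is diagonal in
$$L^2(W^V,\mathcal{E}^V_B)=e_S^VL^2\oplus (1-e_S^V)L^2,$$
and the diagonal entries are closed, symmetric and regular, being compressions by a projection that commutes with the operator.

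\textbf{Step 2: identify the $e_S^V$-summand and deduce items (1) and (2).} Proposition \ref{eqprop} gives a unitary $e^V_SL^2(W^V,\mathcal{E}^V_B)\cong L^2(W,\mathcal{E}_B)$. It maps $e_S^VH^1$ onto $H^1$ and intertwines $D^V_\mathcal{E}e_S^V$ with $D_\mathcal{E}$. I would check that $e_S^VC^\infty_c((W^V)^\circ)$ corresponds to $C^\infty_c(W^\circ)$, so that minimal domains correspond to minimal domains; this gives item (2). Item (1) then follows because direct summands of symmetric or half-closed chains inherit those properties, given the known chain properties of $(L^2(W^V,\mathcal{E}_B^V),D^V_{\mathcal{E},{\rm min}})$ from Subsection \ref{diraconcstarbundleex}. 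The pullback along $\pi_V$ is proper and Lipschitz. For closedness: if $\partial W=\emptyset$, then $\partial W^V=\emptyset$ and the operator is self-adjoint. Conversely, if $\partial W\neq\emptyset$, the deficiency module of $D^V_{\mathcal{E},{\rm min}}$ lives in boundary data on $\partial W^V=(\partial W)^V$. I would argue that the boundary trace restricted to $(1-e_S^V)$ is nonzero, because the complement of the kernel of $D_S$ on $S^{2k}$ is nonzero. Hence $D_\mathpzc{E}$ is not self-adjoint.

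\textbf{Step 3: weak degeneracy (item (3)).} The core is $X_0=(1-e_S^V)C^\infty_c((W^V)^\circ)$, and I would verify the conditions of Definition \ref{lknalkadn} in turn.

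First, $S_\mathpzc{E}$ is self-adjoint. Since $1\hat{\otimes}D_S$ acts fibrewise on compact spheres it is essentially self-adjoint, and it commutes with $\Lip_0(W)$, which acts by pullback from the base. Its inverse is bounded by the spectral gap estimate $\|(1\hat{\otimes}D_S)f\|^2\geq c\|(1-e_S^V)f\|^2$ in Proposition \ref{eqprop}.

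Second, the anticommutation $D_{0}S+SD_{0}=0$ on $X_0$ follows from the polarized identity in Proposition \ref{eqprop}, or directly from the grading conventions of $\hat\otimes$ in local product coordinates.

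Third, $\Lip_0(W)\subseteq\mathrm{Lip}(D_{0,\mathpzc{E}})$ because commutators are given by horizontal Clifford multiplication by the differential.

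\textbf{Main obstacle.} The hard part is showing that $D_\mathpzc{E}=D_{0,\mathpzc{E}}+S_\mathpzc{E}$ holds with
$$\Dom(D_\mathpzc{E})=\Dom(D_{0,\mathpzc{E}})\cap\Dom(S_\mathpzc{E}),$$
and that $D_{0,\mathpzc{E}}$ is regular. I would use the anticommutation to get
$$\|(D_0+S)f\|^2=\|D_0f\|^2+\|Sf\|^2 \quad\text{on } X_0,$$
so the graph norms of $D_\mathpzc{E}$ and of $D_0$ and $S$ jointly are equivalent, and closures agree. For regularity of $D_0$ I would invoke the sum theorems \cite[Theorem 1.3]{DGM} and \cite{leschkaad2} for anticommuting pairs. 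The point needing care is the boundary: $D_0$ carries the minimal (Dirichlet-type) condition only in the horizontal direction, and one must check that its closure on $X_0$ still matches $H^1_0$ jointly with $S$. I expect the identification of the Sobolev spaces in \eqref{hsident} to handle this.
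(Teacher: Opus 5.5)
Your proposal is correct and follows the paper's route. The paper gives no separate proof: it presents the theorem as a summary of Proposition \ref{eqprop}, namely the $e_S^V$-splitting of the Sobolev modules, the identification $e_S^VH^s(W^V,\mathcal{E}^V_B)\cong H^s(W,\mathcal{E}_B)$ intertwining $D^V_\mathcal{E}e_S^V$ with $D_\mathcal{E}$, the anticommutation identity, and the spectral gap of $1\hat{\otimes}D_S$ on the range of $1-e_S^V$; you read items (1)--(3) off these facts exactly as intended, and in more detail than the paper. One slip: the sum theorems \cite[Theorem 1.3]{DGM} and \cite{leschkaad2} assume regularity of both summands, so they give regularity of $D_{0,\mathpzc{E}}+S_\mathpzc{E}$ but cannot prove that $D_{0,\mathpzc{E}}$ itself is regular, which the definition of a weakly degenerate decomposition requires; that needs its own argument (e.g.\ local--global or mollifier) for this horizontal Dirac-type operator on the complement of $\ker D_S$.
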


\begin{cor} 
\label{coronvecbunhvhvjhv}
Let $M$ be a complete Riemannian manifold, $V\to M$ an oriented vector bundle of even rank, $B$ a unital $C^*$-algebra and $D_\mathcal{E}$ a complete Dirac operator on a Clifford $B$-bundle $\mathcal{E}_B\to M$. Assume that $X$ is a metric space and that $f:M\to X$ is a proper Lipschitz continuous mapping. Then there is a $KK$-bordism 
$$(f\circ \pi_V)^*(L^2(M^V,\mathcal{E}_B^V), D_{\mathcal{E}}^V)\sim_{\rm bor} f^*(L^2( M,\mathcal{E}_B), D_{\mathcal{E}}),$$
of  $(\Lip_0(X),B)$-cycles.
\end{cor}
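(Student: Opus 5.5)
We apply Theorem \ref{decomposingmoddir} with $W=M$, so $\partial W=\emptyset$. Item 2) gives a unitary equivalence of $(\Lip_0(M),B)$-cycles
$$(L^2(M^V,\mathcal{E}_B^V),D_{\mathcal{E},{\rm min}}^V)\cong (L^2(M,\mathcal{E}_B),D_{\mathcal{E},{\rm min}})+(\mathpzc{E}_V,D_\mathpzc{E}).$$
Since $M$ is complete and the Dirac operators are complete, the minimal realizations are self-adjoint, and $D^V_{\mathcal{E},{\rm min}}=D^V_\mathcal{E}$, $D_{\mathcal{E},{\rm min}}=D_\mathcal{E}$. By item 1), $(\mathpzc{E}_V,D_\mathpzc{E})$ is closed, i.e.\ a cycle. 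Pulling back along the proper Lipschitz map $f$ (Definition \ref{pushpull}) preserves cycles, isomorphisms and direct sums. Properness ensures that $f^*$ maps $\Lip_0(X)$ into $\Lip_0(M)$, which keeps the local compactness conditions. Thus
$$(f\circ\pi_V)^*(L^2(M^V,\mathcal{E}_B^V),D^V_\mathcal{E})\cong f^*(L^2(M,\mathcal{E}_B),D_\mathcal{E})+f^*(\mathpzc{E}_V,D_\mathpzc{E}),$$
where the left action on $M^V$ factors through $\pi_V^*$. This is consistent with the $C^\infty(W)$-linearity in Proposition \ref{eqprop}.

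Next, item 3) of Theorem \ref{decomposingmoddir} says that $(\mathpzc{E}_V,D_\mathpzc{E})$ has the weakly degenerate decomposition $D_{0,\mathpzc{E}}+S_\mathpzc{E}$. This decomposition survives pullback along $f$: the operators are unchanged, $S_\mathpzc{E}$ commutes with the action of functions on $M$ (it acts only fibrewise), and $f^*\Lip_0(X)\subseteq \Lip(D_{0,\mathpzc{E}})$ because these functions are Lipschitz on $M$. Invertibility of $S_\mathpzc{E}$ comes from the spectral gap estimate in Proposition \ref{eqprop}, $\langle S f,Sf\rangle\ge c\langle f,f\rangle$ on the range of $1-e_S^V$. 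The anticommutation on a common core is the identity displayed there. Theorem \ref{weakdegnullbordthm} therefore gives the Shubin null-bordism $f^*(\mathpzc{E}_V,D_\mathpzc{E})\sim_{\rm bor}0$.

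Finally we assemble the pieces. The reflexive cylinder bordism $\Psi(\mathfrak{Y})$ for $\mathfrak{Y}=f^*(L^2(M,\mathcal{E}_B),D_\mathcal{E})$ can be summed with the Shubin null-bordism, using Definition \ref{oppKKBor}. This gives a bordism with boundary $\mathfrak{Y}+f^*(\mathpzc{E}_V,D_\mathpzc{E})+(-\mathfrak{Y})$. Transporting along the isomorphism above, we obtain the claimed bordism. Alternatively, one can note that in $\Omega_*$ the class of a sum is the sum of the classes and that isomorphic cycles are bordant.

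The main obstacle I expect is verifying that the core condition in Definition \ref{lknalkadn} holds precisely: we need $S X_0\subseteq\Dom(D_0)$ and $D_0X_0\subseteq\Dom(S)$ on a common core. I would take $X_0=(1-e_S^V)C^\infty_c(M^V,\mathcal{E}^V_B)$. This works because $e_S^V$ is smoothing and invariant, so it preserves compactly supported smooth sections. It also commutes with both tensor factors, using the Sobolev identification \eqref{hsident}, so the anticommutation holds pointwise on $X_0$.
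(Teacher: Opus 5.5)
Your proposal is correct and follows the paper's intended argument: apply Theorem \ref{decomposingmoddir} with $W=M$ to split off the weakly degenerate summand $(\mathpzc{E}_V,D_\mathpzc{E})$, then kill that summand with the Shubin null-bordism of Theorem \ref{weakdegnullbordthm}, after pulling back along the proper Lipschitz map $f$. Your hands-on verification of the weakly degenerate axioms (invertibility of $S_\mathpzc{E}$, the common core, anticommutation) is already supplied by item 3) of Theorem \ref{decomposingmoddir}; the only genuinely new check, that the decomposition survives pullback along $f$, is handled correctly.
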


\begin{remark}
\label{soniniun}
The constructions above can be carried out also in the context of spin$^c$-manifolds. Let $W$ be a spin$^c$-manifold with boundary with spin$^c$-structure defined from the Clifford bundle $S_W\to W$. All Clifford $B$-bundles take the form $\mathcal{F}_B\otimes S_W$ where $\mathcal{F}_B\to W$ is a bundle of finitely generated projective $B$-modules. The procedure of vector bundle modification is carried out for $V\to W$ being even rank and spin$^c$, so $W^V$ inherits a spin$^c$-structure from the embedding $W^V=S(V\oplus \R)\subseteq V\oplus \R$. In this case, the vertical Clifford bundle $S_V$ decomposes as $S_V=S_{\rm vert}\otimes Q$ where $S_{\rm vert}\to W^V$ is a vertical Clifford module defined from the spin$^c$-structure of $V$ and $Q\to W^V$ is the Bott bundle (see for example \cite[Section 2.5]{Rav}). This implies that in the oriented setting of vector bundle modification,
$$(\mathcal{F}_B\otimes S_W)^V=[\pi_V^*\mathcal{F}_B\otimes Q]\otimes S_{W^V},$$
or in other words, $\mathcal{F}_B\to W$ is vector bundle modified to $\pi_V^*\mathcal{F}_B\otimes Q\to W^V$. The vector bundle modification at the level of Dirac operators is carried out ad verbatim to above. 
\end{remark}

\subsection{Bordism invariance of the index of elliptic pseudodifferential operators}

Let us provide another application of $KK$-bordisms to index theory. The following result is in no way remarkable in its own right, but showcases the general idea for how to use $KK$-bordisms in index theory. The reader can contrast this result with the bordism proof of Atiyah-Singer's index theorem \cite{gilkeyas}.

\begin{prop}
\label{andindeinede}
Let $M$ be a compact manifold, $E_1,E_2\to M$ two hermitean vector bundles and that $P:C^\infty(M;E_1)\to C^\infty(M;E_2)$ is an elliptic pseudodifferential operator of order $1$ with principal symbol $\sigma_1(P)\in C^\infty(S^*M,\pi^*\mathrm{Iso}(E_1, E_2))$ (also viewed as a homogeneous degree one section in $C^\infty(T^*M\setminus M,\pi^*\mathrm{Iso}(E_1, E_2))$). Assume the following:
\begin{enumerate}
\item $M=\partial W$ is the boundary of a compact manifold with boundary $W$; 
\item $E_1\oplus E_2$ extends to a vector bundle $F\to W$;
\item writing $\xi_n$ for the conormal variable to the boundary, the section 
$$\begin{pmatrix}
\xi_n& \sigma_1(P)\\
\sigma_1(P)^*& -\xi_n\end{pmatrix},$$
of $C^\infty(S^*W|_{\partial W},\pi^*\mathrm{Aut}(E_1\oplus E_2))$ extends to an element of $C^\infty(S^*W,\pi^*\mathrm{Aut}(F))$.
\end{enumerate}
Then $\ind(P)=0$.
\end{prop}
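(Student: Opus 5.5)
The plan is to realize $\ind(P)$ as the image of an unbounded $(\C,\C)$-cycle under the bounded transform, and then show that this cycle is the boundary of a $KK$-bordism built from the data on $W$. Since a boundary of a bordism has vanishing class in $KK_*(\C,\C)$ (Hilsum's result recalled after Definition \ref{HilBorDef}), the index must vanish.

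\textbf{Step 1 (the boundary cycle).} First replace $P$ by a self-adjoint odd operator on $L^2(M;E_1\oplus E_2)$, graded by the splitting $E_1\oplus E_2$:
$$D_P:=\begin{pmatrix}0&P^*\\ P&0\end{pmatrix}.$$
Here $P$ is closed on $H^1$ by ellipticity, so $D_P$ is self-adjoint with compact resolvent. Thus $(L^2(M;E_1\oplus E_2),D_P)$ is an even $(\C,\C)$-cycle, and its class in $KK_0(\C,\C)\cong\Z$ is $\ind(P)$. By Theorem \ref{perturbationgenerelthm} and Corollary \ref{locbddperturcor}, we may freely modify $P$ by lower order terms up to bordism. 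For example, we can use the order-zero smoothing perturbations and pass to a first order operator whose symbol is exactly $\sigma_1(P)$ near a given quantization.

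\textbf{Step 2 (the interior chain).} Choose a metric on $W$ of product type near $\partial W$, together with a product structure on $F$ near the boundary. Using hypothesis 3), pick a first order elliptic pseudodifferential operator $\slashed{T}$ on $F$ with symbol equal to the given extension $\tau\in C^\infty(S^*W,\pi^*\mathrm{Aut}(F))$ (homogenized, and made symmetric by averaging with its formal adjoint, which keeps it elliptic since $\tau$ is self-adjoint at the boundary). Then deform $\slashed{T}$ in a collar so that it equals
$$\begin{pmatrix} i\partial_t & P^*\\ P& -i\partial_t\end{pmatrix}$$
exactly there. This is the form $\Psi(D_P)=\partial^{\rm min}\hat{\boxtimes}D_P$ of Definition \ref{suspbyinte} after identifying the grading conventions, and it is possible because the symbols agree at the boundary by hypothesis 3). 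Let $T$ be the minimal closure on $C^\infty_c(W^\circ;F)$.

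\textbf{Step 3 (bordism axioms).} We must verify that $T$ is symmetric and regular (automatic for $B=\C$), and that conditions a)--c) of Definition \ref{HilBorDef} hold with $p$ the indicator of the collar and $\theta$ the product identification. Condition b) holds by construction, and c) holds by locality of the symbol up to smoothing terms, which can be cut off. We must also check the half-closedness: $b(\phi)(1+T^*T)^{-1}$ should be compact for $\phi\in C^\infty_c((0,1])$, and $b(\phi)\Dom T^*\subseteq\Dom T$. Since $W$ is compact, this reduces to interior elliptic regularity: elements of $\Dom T^*$ are locally $H^1$ in $W^\circ$, and Rellich gives compactness. The collar end is handled as in Proposition \ref{suspandprop}.

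\textbf{Main obstacle.} I expect the hardest part to be the nonlocality of pseudodifferential operators in condition c) and in the product-form requirement near the boundary. A genuine $\Psi$DO is not supported in the collar, and $b(\phi_1)Tb(\phi_2)\neq0$ for disjoint supports. To handle this, I would first use Corollary \ref{locbddperturcor} (or Theorem \ref{perturbationgenerelthm}) on $M$, and the fact that off-diagonal parts are smoothing hence bounded, to replace $T$ by $\sum_j \chi_j T\chi_j'$ for a partition of unity with $\chi_j'\chi_j=\chi_j$ supported in small sets. This makes the operator local at scale below the collar width, at the cost of a bounded symmetric perturbation.

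Granting this, $\mathfrak{X}=((L^2(W;F),T),\Theta,(L^2(M;E_1\oplus E_2),D_P))$ is a bordism, so $[D_P]=0$ in $KK_0(\C,\C)$ and $\ind(P)=0$.
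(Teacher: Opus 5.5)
Your overall route is the paper's: build a formally self-adjoint first order elliptic operator on $F\to W$ that agrees with the product operator $\Psi(D_P)$ in a collar, take its minimal closure, recognize a $(\C,\C)$-bordism whose boundary is the even cycle of $P$, and conclude from Hilsum's bordism invariance together with $KK_0(\C,\C)\cong\Z$. The gap is in Step 2, at the only point where hypothesis 3) is actually used. You allow the extension $\tau$ to be a non-self-adjoint automorphism and then symmetrize, claiming that $\frac{1}{2}(\tau+\tau^*)$ stays invertible because $\tau$ is self-adjoint over $\partial W$. That claim is false. Self-adjointness and invertibility on $S^*W|_{\partial W}$ only give invertibility of $\frac{1}{2}(\tau+\tau^*)$ in a neighbourhood of $S^*W|_{\partial W}$. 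In the interior, the real part of an invertible section can degenerate: for a projection $\Pi$, the unitary $\Pi+i(1-\Pi)$ has real part $\Pi$.

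This cannot be repaired starting from a merely invertible extension, because such an extension carries no information. Write $\sigma_\partial$ for the boundary symbol in 3) and $\Pi_+:=\chi_{(0,\infty)}(\sigma_\partial)$. The paths $s\mapsto\sigma_\partial|\sigma_\partial|^{-s}$ and $t\mapsto\Pi_++\mathrm{e}^{i\pi(1-t)}(1-\Pi_+)$ connect $\sigma_\partial$ to $1$ through invertible sections. Using a collar, $\sigma_\partial$ therefore always extends invertibly over $S^*W$ once $F$ exists. A self-adjoint invertible extension, by contrast, exists only when $\Pi_+$ extends to a projection over $S^*W$; this is a genuine $K$-theoretic condition, and it is what the cobordism argument needs.

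Without it the conclusion fails. On $S^2\times S^2=\partial(S^2\times D^3)$, take $P=D_V\oplus D_{V'}^*$, where $D_V$ and $D_{V'}$ are spin Dirac operators twisted by $V=H\boxtimes H$ and $V'=H\boxtimes H^{-1}$, with $H$ the Hopf bundle. Then $\ind(P)=1-(-1)=2$. Yet $E_1\oplus E_2=S\otimes(V\oplus V')$ has rank $8$ and $K$-class $8[H\boxtimes 1]$, so it is pulled back from the first factor and extends over $S^2\times D^3$.

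So you must take the extension in 3) to be self-adjoint. This is exactly how the paper's proof uses the hypothesis: it obtains a formally self-adjoint elliptic $T$ ``by assumption''. With that reading no averaging is needed, and the rest of your argument agrees with the paper's. Your treatment of nonlocality in Step 3 is extra care on a point the paper passes over.
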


\begin{proof}
By assumption, we can take a formally self-adjoint, elliptic, first order pseudo-differential operator $T$ on $W$ such that 
$$Tf=\begin{pmatrix}
-i\partial_n& P\\
P^*& i\partial_n\end{pmatrix}f,$$
for any $f\in C^\infty_c(W^\circ,F)$ supported in a small enough collar neighborhood of the boundary. We view $T$ as a closed symmetric operator on $L^2(W,F)$ by viewing it as the closure of $T$ acting on $C^\infty_c(W^\circ,F)$. It follows from the Gårding inequality that we equivalently can define $T$ acting in a distributional sense on the domain $H^1_0(W,F)$. As such, it is clear that there is a bordism of $(\C,\C)$-cycles with interior term being the odd chain $(L^2(W,F),T)$ and boundary being the even chain
$$\left(L^2(M,E_1\oplus E_2), \begin{pmatrix}
0& P\\
P^*& 0\end{pmatrix}\right).$$
Therefore, $[P]=0$ in $KK_0(\C,\C)$, or in other words $\ind(P)=0$.
\end{proof}

\begin{remark}
There is a more direct way to prove Proposition \ref{andindeinede} from Atiyah-Singer's index theorem. The extension assumption on $\sigma_1(P)$ in Proposition \ref{andindeinede} implies that $[\sigma_1(P)]\in K^0(T^*M)$ is the image of some $x\in K^1(T^*\overline{W})$ under the restriction mapping $K^1(T^*\overline{W})\to K^1(T^*W|_M)$ composed with the Thom isomorphism $K^1(T^*W|_M)\cong K^0(T^*M)$. Therefore we can compute with Stokes' theorem that
\begin{align*}
\ind(P)=\int_{T^*M} \ch[\sigma_1(P)]\wedge \mathrm{Td}(M)=&\int_{\partial(T^*W)} \ch (x)\wedge \mathrm{Td}(M)=\\
=&\int_{T^*W} \mathrm{d}(\ch (x)\wedge \mathrm{Td}(W))=0.
\end{align*}
\end{remark}

\subsection{Descent and assembly of equivariant cycles}
\label{subsecgequicalaladbordism}

The theory of $KK$-bordisms from Section \ref{subsecglying} extend to the $G$-equivariant setting, providing an extension to the unbounded model of the results of \cite{BCHig,Kas2}. We follow the notations and conventions of Subsection \ref{subsecgequicalalad}, recall in particular Definition \ref{gequicalalad} of a $G$-equivariant chain. We fix a second countable locally compact group $G$ throughout the subsection.

\begin{define} 
\label{HilBorDefGequi} 
Assume that $A$ and $B$ are $G-C^*$-algebras and $\mathcal{A}\subseteq A$ is a $G$-invariant dense $*$-subalgebra. A $G$-equivariant symmetric $(\mathcal{A},B)$-chain with boundary is an $(\mathcal{A},B)$-symmetric chain with boundary $\mathfrak{X}=(\mathfrak{Z},\Theta,\mathfrak{Y})$ where
\begin{enumerate}
\item The boundary cycle $\mathfrak{Y}=(\mathpzc{E}, D)$ is a $G$-equivariant $(\mathcal{A}, B)$-cycle;
\item The interior chain $\mathfrak{Z}=(\mathpzc{N}, T)$ is a $G$-equivariant symmetric $(\mathcal{A}, B)$-chain;
\item The boundary data $\Theta=(\theta,p)$ satisfies that
\begin{itemize}
\item $p$ is a $G$-invariant projection in $\End_B^*(\mathpzc{N})$;
\item $\theta: p\mathpzc{N} \rightarrow L^2[0,1]\hat{\boxtimes} \mathpzc{E}$ is a $G$-equivariant isomorphism.
\end{itemize}
\end{enumerate}

If additionally $\mathfrak{X}$ is a bordism, we say that $\mathfrak{X}$ is a $G$-equivariant bordism. 
\end{define}

The results of Section \ref{subsecglying} extends ad verbatim to the $G$-equivariant setting. If two $G$-equivariant $(\mathcal{A},B)$-cycles $\mathfrak{Y}_1$ and $\mathfrak{Y}_2$ admit a $G$-equivariant $KK$-bordism $\mathfrak{X}$ over $(\mathcal{A}, B)$, with $\partial \mathfrak{X}=\mathfrak{Y}_1+(-\mathfrak{Y}_2)$ we say that $\mathfrak{Y}_1$ and $\mathfrak{Y}_2$ are $G$-equivariantly bordant and write
$$\mathfrak{Y}_1\sim_{{\rm bor},G} \mathfrak{Y}_2.$$
We define the $G$-equviariant $KK$-bordism group as 
$$\Omega_*^G(\mathcal{A},B):=\left\{\mbox{isomorphism classes of $G$-equivariant $(\mathcal{A},B))$-cycles}\right\}/\sim_{{\rm bor},G}.$$

Many of the constructions above, such as those in Section \ref{exampleofboridmsmssec}, extend to the equivariant setting. Let us discuss weak degeneracy in more detail. We follow the setup of Subsection \ref{subsecweakdegff}.

\begin{define}
\label{lknalkadn}
Let $(\mathcal{E},D)$ be a $G$-equivariant $(\mathcal{A},B)$-chain. An equivariant weakly degenerate decomposition of $(\mathcal{E},D)$ is pair of regular operators $(D_0,S)$ such that $D=D_0+S$, implicitly it thus holds that $\Dom(D)=\Dom(D_0)\cap \Dom(S)$, and additionally that 
\begin{enumerate}
\item $S$ is $G$-invariant, self-adjoint and admits a bounded inverse, $\Dom(S)$ is preserved by $A$ and $A$ commutes with $S$ on $\Dom(S)$.
\item There is a common $G$-invariant core for $X\subseteq \Dom(S)\cap \Dom(D_0)$ with $SX_0\subseteq \Dom(D_0)$, $D_0X_0\subseteq \Dom(S)$ and 
$$SD_0+D_0S=0\quad\mbox{on}\quad X_0.$$
\item $\mathcal{A}\subseteq \mathrm{Lip}(D_0)$. 
\end{enumerate}
A chain admitting an equivariant weakly degenerate decomposition is called equivariant weakly degenerate.
\end{define}

Following Subsection \ref{subsecweakdegff}, if $(\mathcal{E},D)$ is a cycle with an equivariant weakly degenerate decomposition $D=D_0+S$, we define the symmetric operator $D_{sh}$ on $\Psi_{[0,\infty)}(\mathpzc{E})$ as 
$$D_{sh}:=\Psi_{[0,\infty)}(D_0)+X(0\hat{\boxtimes}S).$$
The operator $D_{sh}$ is a symmetric and regular operator fitting into a bordism $((\Psi_{[0,\infty)}(\mathpzc{E}),D_{sh}),(\id,\chi_{[0,1]}))$ with boundary $(\mathpzc{E},D)$, see \cite[Theorem 3.4]{DGM}. Since $S$ is $G$-invariant, we have that $((\Psi_{[0,\infty)}(\mathpzc{E}),D_{sh}),(\id,\chi_{[0,1]}))$ is a $G$-equivariant bordism. We conclude the following generalization of Theorem \ref{weakdegnullbordthm}.

\begin{theorem}
\label{weakdegnullbordthmequi}
If $(\mathpzc{E},D)$ is a $G$-equivariant weakly degenerate cycle then there is a $G$-equivariant bordism
$$(\mathpzc{E},D)\sim_{{\rm bor},G}0.$$ 
\end{theorem}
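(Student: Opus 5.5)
The plan is to take the Shubin bordism $\mathrm{Sh}(\mathpzc{E},D=D_0+S)$ of Theorem \ref{weakdegnullbordthm}, built from the given equivariant weakly degenerate decomposition, and check that every ingredient is $G$-equivariant in the sense of Definition \ref{HilBorDefGequi}. Non-equivariantly, \cite[Theorem 3.4]{DGM} already shows that
$$((\Psi_{[0,\infty)}(\mathpzc{E}),D_{sh}),(\id,\chi_{[0,1]}),(\mathpzc{E},D))$$
is a bordism with boundary $(\mathpzc{E},D)$. So regularity and symmetry of $D_{sh}$, the collar conditions a)--c), and half-closedness for the $C^\infty_c((0,1],\mathcal{A})$-action all come for free.

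First I would equip $\Psi_{[0,\infty)}(\mathpzc{E})=L^2[0,\infty)\hat{\boxtimes}\mathpzc{E}$ with the $G$-action $1\otimes g$. The projection $p=\chi_{[0,1]}\otimes 1$ commutes with this action, and the boundary isomorphism $\theta=\id$ onto $L^2[0,1]\hat{\boxtimes}\mathpzc{E}$ is equivariant. This settles item 3) of Definition \ref{HilBorDefGequi}. The $A$-action pulled back via $b$ is equivariant because the $A$-action on $\mathpzc{E}$ is.

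Second, I would verify Definition \ref{gequicalalad} for the interior chain $(\Psi_{[0,\infty)}(\mathpzc{E}),D_{sh})$. Domain invariance comes from the explicit description of $\Dom(D_{sh})$, which involves only the norms of $\xi'(t)$, $D_0\xi(t)$ and $X(t)S\xi(t)$. Since $S$ is $G$-invariant, $G$ preserves $\Dom(S)$ and commutes with $S$, and $g$ acts pointwise in $t$, so it commutes with $\partial$ and with $X$. One then has
$$D_{sh}-gD_{sh}g^{-1}=1\hat{\boxtimes}(D_0-gD_0g^{-1}),$$
with the sign conventions of $\hat{\boxtimes}$. Because $S$ is invariant, $D_0-gD_0g^{-1}=D-gDg^{-1}$ on $\Dom(D)$, and this is locally bounded and $*$-strongly continuous in $g$ by the equivariance of the boundary cycle. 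Tensoring with $L^2[0,\infty)$ preserves both adjointability of $a(D-gDg^{-1})$ and strong continuity.

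The main obstacle is this last step. The identity $D_0-gD_0g^{-1}=D-gDg^{-1}$ holds a priori only on $\Dom(D)=\Dom(D_0)\cap\Dom(S)$, whereas $\Dom(D_{sh})$ imposes a weighted condition on $S$. I would handle this by working on the $G$-invariant core $X_0$ from Definition \ref{lknalkadn}, tensored with $C^\infty_c$ functions vanishing at $0$, which is a core for $D_{sh}$ by the proof of \cite[Theorem 3.4]{DGM}. On this core the commutator identity holds, and the bounded extensions of $a(D_{sh}-gD_{sh}g^{-1})$ and $(D_{sh}-gD_{sh}g^{-1})a$ then follow by closure.

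Once these checks are done, the Shubin bordism is a $G$-equivariant bordism, and hence $(\mathpzc{E},D)\sim_{{\rm bor},G}0$.
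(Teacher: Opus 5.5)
This is essentially the paper's argument. The paper simply notes that the Shubin bordism $((\Psi_{[0,\infty)}(\mathpzc{E}),D_{sh}),(\id,\chi_{[0,1]}))$ from \cite[Theorem 3.4]{DGM} is $G$-equivariant because $S$ is $G$-invariant, and you have spelled out the routine checks. The ``main obstacle'' you flag is not really one: the explicit description gives $\Dom(D_{sh})\subseteq L^2[0,\infty)\hat{\boxtimes}\Dom(D)$, so $D_0-gD_0g^{-1}=D-gDg^{-1}$ holds pointwise in $t$ on all of $\Dom(D_{sh})$ and no core-and-closure step is needed.
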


We now turn to discussing descent of unbounded equivariant cycles, as in Subsection \ref{subsecgequicalalad}, at the level of $KK$-bordism groups.

\begin{prop}
The descent mapping 
$$(\mathpzc{E},D)\mapsto (\mathpzc{E},D)\rtimes G,$$
constructed on cycles in Proposition \ref{descenfoscles}, induces a group homomorphism
$$\rtimes G: \Omega_*^G(\mathcal{A},B)\to \Omega_*(\mathcal{A}\rtimes_c G,B\rtimes G).$$
\end{prop}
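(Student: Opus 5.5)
The strategy is to show that descent is compatible with every ingredient of the bordism relation. Descent already sends cycles to cycles by Proposition \ref{descenfoscles}. It is also clearly additive and commutes with taking opposites: $(\mathpzc{E}_1\oplus\mathpzc{E}_2)\rtimes G=\mathpzc{E}_1\rtimes G\oplus \mathpzc{E}_2\rtimes G$, and the grading and the sign of $D$ pass through. Isomorphic equivariant cycles descend to isomorphic cycles, since an equivariant unitary $\alpha$ descends to $\alpha\rtimes G$, which intertwines the descended operators by the graph-projection description $p_{D\rtimes G}=p_D\rtimes G$. Granting this, the only real task is the following: if $\mathfrak{X}=(\mathfrak{Z},\Theta,\mathfrak{Y})$ is a $G$-equivariant bordism, then
$$\mathfrak{X}\rtimes G:=(\mathfrak{Z}\rtimes G,(\theta\rtimes G,p\rtimes G),\mathfrak{Y}\rtimes G)$$
is an $(\mathcal{A}\rtimes_cG,B\rtimes G)$-bordism. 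Well-definedness and the homomorphism property then follow at once.

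We verify the structure of a bordism in the following order.

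\textbf{Step 1: interior and boundary.} $\mathfrak{Z}\rtimes G$ is a symmetric chain and $\mathfrak{Y}\rtimes G$ is a cycle, both by Proposition \ref{descenfoscles}.

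\textbf{Step 2: boundary data.} Since $p$ is $G$-invariant, $p\rtimes G$ is a projection commuting with the action of $A\rtimes G$; $p$ commutes with $A$ and is invariant, so it commutes with convolution. Since $\theta$ is $G$-equivariant, it descends to a unitary
$$\theta\rtimes G:(p\rtimes G)(\mathpzc{N}\rtimes G)\to (L^2[0,1]\hat{\boxtimes}\mathpzc{E})\rtimes G\cong L^2[0,1]\hat{\boxtimes}(\mathpzc{E}\rtimes G).$$
Here $G$ acts trivially on the interval. We must also check that under this identification $\Psi(D)\rtimes G=\Psi(D\rtimes G)$; this can be seen on the core $C_c(G,\Dom\Psi(D))$. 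With these identifications, the representation $b$ of the descended data equals $b\rtimes G$ restricted to $C^\infty([0,1],\mathcal{A}\rtimes_cG)$.

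\textbf{Step 3: conditions a)--c).} For the domain statements we use $\Dom(T\rtimes G)=\Dom(T)\rtimes G$, together with the identity $(T\rtimes G)^*=T^*\rtimes G$, which follows from $p_{T^*}$ being determined by $p_T$. Conditions a) and b) then hold on the cores $C_c(G,\cdot)$ pointwise in $g$, and extend by closedness. Condition c) holds pointwise and hence after descent.

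\textbf{Step 4: half-closedness.} We must show that $b(\phi)(1+(T\rtimes G)^*(T\rtimes G))^{-1}$ is compact for $\phi\in C^\infty_c((0,1],\mathcal{A}\rtimes_cG)$. This follows from
$$(1+(T\rtimes G)^*(T\rtimes G))^{-1}=(1+T^*T)^{-1}\rtimes G,$$
which holds because $(T\rtimes G)^*=T^*\rtimes G$. We then argue as in the proof of Proposition \ref{descenfoscles}, using local compactness of $(1+T^*T)^{-1}$ for the $C^\infty_c((0,1],\mathcal{A})$-action. In addition we need the Lipschitz property and condition a) for elements of $C_c(G,\cdot)$. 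This uses items 3) and 4) of Definition \ref{gequicalalad}: commutators of convolution elements with $T\rtimes G$ are integrals of $a(h)[T,\cdot]$ plus terms $a(h)(T-hTh^{-1})$, which are bounded and strongly continuous in $h$.

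\textbf{Main obstacle.} The expected difficulty is the adjoint identity $(T\rtimes G)^*=T^*\rtimes G$ for merely symmetric regular $T$, together with the domain mapping $b(\phi)\Dom(T^*\rtimes G)\subseteq\Dom(T\rtimes G)$ when $\phi$ is a convolution element rather than a pointwise one. The plan is to prove the adjoint identity from the graph-projection correspondence \eqref{graphdbibidkd}, using that $p_{T^*}$ is the complement of $p_T$ after the flip $(x,y)\mapsto(-y,x)$ and that this flip commutes with descent. For the domain mapping, we write the action of $\phi$ as $\int\phi(h)U_h\,\mathrm{d}h$ and handle the twisting unitaries $U_h$ via the relative boundedness from Definition \ref{gequicalalad}.
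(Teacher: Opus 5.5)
Your proposal is correct and follows the paper's route. The paper's proof is just the observation that, by Proposition \ref{descenfoscles}, descent sends $G$-equivariant cycles to cycles and $G$-equivariant bordisms to bordisms, compatibly with sums and opposites; your Steps 1--4 spell out the verification of the bordism axioms that the paper leaves implicit, and for the adjoint identity you flag as the main obstacle, a quick route is the isomorphism $\mathpzc{E}\rtimes G\cong \mathpzc{E}\otimes_B(B\rtimes G)$, $\xi\otimes f\mapsto (g\mapsto \xi f(g))$, under which $T\rtimes G$ becomes $T\otimes 1$, so $(T\rtimes G)^*=T^*\rtimes G$ is the standard adjoint formula for $T\otimes 1$.
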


\begin{proof}
The proposition follows from Proposition \ref{descenfoscles} which implies that descent construction maps $G$-equivariant $(\mathcal{A},B)$-cycles to $(\mathcal{A}\rtimes_c G,B\rtimes G)$-cycles and $G$-equivariant $(\mathcal{A},B)$-bordisms to $(\mathcal{A}\rtimes_c G,B\rtimes G)$-bordisms.
\end{proof}

Recall the construction of assembly from Subsection \ref{groupactionsex}. 

\begin{prop}
\label{assprop}
Let $X$ be a metric space with a proper, cocompact and isometric action of $G$ and $B$ a $G-C^*$-algebra. The map 
\begin{align*}
\mu_0:&\Omega_*^G(\Lip_0(X),B)\to \Omega_*(\Lip(X/G),B\rtimes G),\\
&(\mathpzc{E},D)\mapsto (p_X(\mathpzc{E}\rtimes G), p_X(D\rtimes G)p_X),
\end{align*}
where $p_X\in C_c(G,\Lip_c(X))=\Lip_c(X)\rtimes_c G$ is defined as in Equation \eqref{aksakakadldala}, is well defined and independent of the choice function used to define $p_X$. 
In particular, the assembly mapping is well defined:
$$\mu:=\iota^*\circ \mu_0:\Omega_*^G(\Lip_0(X),B)\to \Omega_*(\C,B\rtimes G),$$
where $\iota:\C\to \Lip(X/G)$ is the unital inclusion.
\end{prop}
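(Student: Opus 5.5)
The plan is to prove the statement in three steps. First, the compression is well defined on cycles. Second, it carries equivariant bordisms to bordisms. Third, it is independent of the support function $c$ up to bordism. Additivity is then immediate, since descent and compression by $p_X$ commute with direct sums.

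\emph{Step 1 (cycles).} Let $(\mathpzc{E},D)$ be a $G$-equivariant $(\Lip_0(X),B)$-cycle. By Proposition \ref{descenfoscles}, $(\mathpzc{E}\rtimes G,D\rtimes G)$ is a $(\Lip_0(X)\rtimes_cG,B\rtimes G)$-cycle, and $(1+(D\rtimes G)^2)^{-1}$ is $C_0(X)\rtimes G$-locally compact.
\begin{itemize}
\item Since $p_X\in \Lip_c(X)\rtimes_cG$, it preserves $\Dom(D\rtimes G)$ and $[D\rtimes G,p_X]$ is bounded. Exactly as in the proof of Proposition \ref{assemblfldoado}, the compression $p_X(D\rtimes G)p_X$ is then self-adjoint and regular on $p_X(\mathpzc{E}\rtimes G)$ by \cite{thebeastofmesland}.
\item Its resolvent is $B\rtimes G$-compact, because the domain inclusion factors through $p_X\in C_0(X)\rtimes G$ acting on $\Dom(D\rtimes G)$.
\item For the $\Lip(X/G)$-action, lift $f$ to a $G$-invariant bounded Lipschitz function $\tilde f$ on $X$. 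A direct computation gives $p_X\tilde f(g,x)=\sqrt{c(x)c(gx)}\,\tilde f(x)$, and $\tilde f$ commutes with $p_X$. Hence $p_X\tilde f=p_X\tilde f p_X$ lies in $\Lip_c(X)\rtimes_cG$. It therefore preserves the domain and has a bounded commutator with $D\rtimes G$, so also with the compression.
\end{itemize}
Consequently $\mu_0(\mathpzc{E},D)$ is a $(\Lip(X/G),B\rtimes G)$-cycle.

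\emph{Step 2 (bordisms).} Let $\mathfrak{X}=((\mathpzc{N},T),(\theta,p),(\mathpzc{E},D))$ be a $G$-equivariant bordism.
\begin{itemize}
\item By the descent proposition, $\mathfrak{X}\rtimes G$ is a bordism, with boundary data $(\theta\rtimes G,p\rtimes G)$.
\item The projection $p$ commutes with the $A$-action, so $p\rtimes G$ commutes with $p_X$. Moreover, $\theta\rtimes G$ intertwines the action of $p_X$ on $p\mathpzc{N}\rtimes G$ with $1\otimes p_X$ on $L^2[0,1]\hat\boxtimes(\mathpzc{E}\rtimes G)$.
\item Compressing everything by $p_X$ then gives candidate boundary data $(\theta\rtimes G|,\,p\rtimes G|)$ between $p_X(\mathpzc{N}\rtimes G)$ and $L^2[0,1]\hat\boxtimes p_X(\mathpzc{E}\rtimes G)$.
\item Regularity and symmetry of $p_X(T\rtimes G)p_X$, together with half-closedness for $C^\infty_c((0,1],\Lip(X/G))$, follow as in Step 1. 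The key inputs are that $b(\phi)$ commutes with $p_X$ and that $p_X$ and $p_Xb(\phi)$ lie in $\Lip_c(X)\rtimes_c G$, hence in $\mathrm{Lip}(T\rtimes G,(T\rtimes G)^*)$ at least after multiplication by $b(\phi)$ with $\phi$ supported in $(0,1]$.
\end{itemize}
The main obstacle is verifying Hilsum's compatibility conditions a)--c) for the compressed interior. The compressed operator differs from $p_X(T\rtimes G)$ restricted to $p_X\Dom(T\rtimes G)$ only by the bounded operator $p_X[T\rtimes G,p_X]$. The plan is to check a)--c) for $p_X(T\rtimes G)$ using $[p_X,b(\phi)]=0$, and then to absorb the bounded term. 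Condition b) is the delicate one, since there one needs $p_X$ to commute with $\Psi(D\rtimes G)$ up to exactly the same bounded term on the collar. This holds because $p_X$ is constant in the collar variable.

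\emph{Step 3 (independence of $c$).} Let $c_0,c_1$ be two support functions. Then $c_t:=(1-t)c_0+tc_1$ is again a support function, so $t\mapsto p_t$ is a smooth path of projections in $\Lip_c(X)\rtimes_cG$ for the $\Lip(D\rtimes G)$-norm. Standard arguments give a unitary $u$ in the unitization, with $[D\rtimes G,u]$ bounded, such that $up_0u^*=p_1$. Conjugation by $u$ is a unitary isomorphism between the cycle compressed by $p_0$ and $(p_1(\mathpzc{E}\rtimes G),\,p_1(D\rtimes G)p_1+V)$, where $V=p_1u[D\rtimes G,u^*]p_1$ is bounded. This isomorphism intertwines the $\Lip(X/G)$-actions, since lifted invariant functions commute with $u$. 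Corollary \ref{locbddperturcor} then gives the required bordism. Finally, $\mu=\iota^*\circ\mu_0$ is well defined because pullback along $\iota$ preserves cycles and bordisms.
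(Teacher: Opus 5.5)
Your Steps 1 and 2 agree with the paper, which only asserts that the arguments of Proposition~\ref{assemblfldoado} carry over to $G$-equivariant cycles and bordisms over $(\Lip_0(X),B)$. Your observation that $p_X\tilde f=\tilde f p_X$ already lies in $\Lip_c(X)\rtimes_cG$ is a clean way to handle the $\Lip(X/G)$-action for an abstract cycle. There one cannot argue, as for manifolds, that the invariant lift $\tilde f$ itself preserves $\Dom(D)$.

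Step~3 takes a genuinely different route. The paper chooses a path of support functions and forms the path of projections $p_{X,t}$ and the family $D(t)=p_{X,t}(D\rtimes G)p_{X,t}$. It then takes $(p_X(L^2[0,1]\hat{\boxtimes}(\mathpzc{E}\rtimes G)),\partial^{\rm min}\hat{\boxtimes}\overline{D})$ directly as the interior of the bordism, appealing to the argument of Lemma~\ref{lemmahomotopyresolven}. This gives a bordism read off directly from the path. However, it requires adapting that fixed-module lemma to a family living on the $t$-dependent submodules $p_{X,t}(\mathpzc{E}\rtimes G)$.

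You instead trivialize the path of projections by a unitary $u$ with $up_0u^*=p_1$ and $[D\rtimes G,u]$ bounded. This turns the problem into an isomorphism of cycles followed by the bounded perturbation $V=p_1u[D\rtimes G,u^*]p_1$ on a fixed module, so Corollary~\ref{locbddperturcor} applies verbatim. The price is twofold. First, $t\mapsto p_t$ must be continuous for the norm $\|a\|+\|[D\rtimes G,a]\|$, so that the usual small-step construction $v=p_tp_s+(1-p_t)(1-p_s)$, $w=v(v^*v)^{-1/2}$ stays in $\mathrm{Lip}(D\rtimes G)$. Second, $u$ must lie in $(C_0(X)\rtimes G)^\sim$, so that it commutes with the central invariant functions. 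Both requirements are available.

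One correction is needed in Step~3. The convex combination $c_t=(1-t)c_0+tc_1$ is indeed a support function, but $\sqrt{c_t}$ need not vary continuously in Lipschitz norm. Near a point where $\sqrt{c_0}$ vanishes to first order and $c_1>0$, the difference $\sqrt{c_t}-\sqrt{c_0}$ has Lipschitz constant bounded below as $t\to 0$. So your $p_t$ need not be continuous in the norm your construction of $u$ requires. Interpolate square roots instead:
\begin{itemize}
\item set $\phi_t=(1-t)\sqrt{c_0}+t\sqrt{c_1}$;
\item note that $\int_G\phi_t(g\,\cdot)^2\,\mathrm{d}g\geq(1-t)^2+t^2\geq 1/2$;
\item take $c_t$ to be $\phi_t^2$ divided by this $G$-invariant integral.
\end{itemize}
Then $t\mapsto p_t$ is smooth with values in $\Lip_c(X)\rtimes_cG$, and the rest of your argument goes through.
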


\begin{proof}
The arguments of Proposition \ref{assemblfldoado} are stated for manifolds, but extend ad verbatim to $G$-equivariant cycles and chains over $(\Lip_0(X),B)$. In particular, the mapping $\mu_0$ maps $G$-equivariant $(\Lip_0(X),B)$-cycles to $(\Lip(X/G),B\rtimes G)$-cycles and $G$-equivariant $(\Lip_0(X),B)$-bordisms to $(\Lip(X/G),B\rtimes G)$-bordisms. This shows that $\mu_0$ is well defined. It remains to prove that $\mu_0$ is independent of the choice function $c$ used to define the projection $p_X\in \Lip_c(X)\rtimes_c G$. 

Given a $G$-equivariant $(\Lip_0(X),B)$-cycle $(\mathpzc{E},D)$ we need to show that given two choice functions $c_0,c_1$ there is a $(\Lip(X/G),B\rtimes G)$-bordism 
\begin{equation}
\label{bodldllalaldadladal}
(p_{X,0}(\mathpzc{E}\rtimes G), p_{X,0}(D\rtimes G)p_{X,0})\sim_{\rm bor} (p_{X,1}(\mathpzc{E}\rtimes G), p_{X,1}(D\rtimes G)p_{X,1}),
\end{equation}
where $p_{X,j}$ is defined from $c_j$, $j=0,1$. The space of choice functions is path connected, so we can find a $c\in C^\infty([0,1],\Lip_c(X))$ such that $c(0)=c_0$, $c(1)=c_1$ and $c(t)$ is a choice function for all $t\in [0,1]$. We can assume that $c$ is constant near $t=0$ and $t=1$. We can therefore define a path $p_X\in C^\infty([0,1],\Lip_c(X)\rtimes_c G)$ of projections connecting the projection defined from $c_0$ with that defined from $c_1$. Consider the path $\overline{D}=(D(t))_{t\in [0,1]}$ defined by $D(t)=p_{X,t}(D\rtimes G)p_{X,t}$ of densely defined operators on $\mathpzc{E}\rtimes G$. By the argument of Lemma \ref{lemmahomotopyresolven}, the collection $(p_X(L^2[0,1]\hat{\boxtimes}(\mathpzc{E}\rtimes G)),\partial^{\rm min}\hat{\boxtimes} \overline{D})$ is a symmetric $(\Lip(X/G),B\rtimes G)$-chain being the interior chain in a bordism required in Equation \eqref{bodldllalaldadladal}.
\end{proof}

\subsection{Group actions}
\label{groupactionsexbord}
We now revisit the cycles and chains of Subsection \ref{groupactionsex} in light of $KK$-bordisms. Above, in Proposition \ref{assprop} we studied assembly of general $G$-equivariant cycles on manifolds. Now we turn to study more geometric origins of $KK$-bordisms.

\begin{lemma}
\label{generiddakladlalaldgaction}
Let $X$ be a metric space with a proper and isometric action of $G$ and $B$ a $G-C^*$-algebra. Assume that $W$ is an oriented complete Riemannian manifold with boundary and a proper, cocompact and isometric action of $G$, and $\mathcal{E}_B\to W$ a $G$-equivariant Clifford $B$-bundle with a Clifford connection with all structures being of product type near the boundary. Assume that $f:W\to X$ is a proper Lipschitz continuous mapping. Then the $G$-equivariant $(\Lip_0(X),B)$-cycle $(f|_{\partial W})^*(L^2(\partial W,\mathcal{E}_B^\partial), D_{\mathcal{E}}^\partial)$ is $G$-equivariantly nullbordant via a bordism whose interior chain is given by $f^*(L^2( W,\mathcal{E}_B), D_{\mathcal{E}}^{\rm min})$.

In particular, if the $G$-action on $X$ is cocompact then 
\begin{itemize}
\item $(f|_{\partial W})^*(L^2(\partial W,\mathcal{E}_B^\partial)\rtimes G, D_{\mathcal{E}}^\partial\rtimes G)\sim_{\rm bor}0$ in $\Omega_*(\Lip_0(X)\rtimes_c G, B)$;
\item $\mu_0\circ (f|_{\partial W})^*(L^2(\partial W,\mathcal{E}_B^\partial), D_{\mathcal{E}}^\partial))\sim_{\rm bor}0$ in $\Omega_*(\Lip(X/G), B\rtimes G)$;
\item $\mu(L^2(\partial W,\mathcal{E}_B^\partial), D_{\mathcal{E}}^\partial)\sim_{\rm bor}0$ in $\Omega_*(\C, B\rtimes G)$.
\end{itemize}
\end{lemma}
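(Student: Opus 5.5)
The plan is to run the argument of Lemma \ref{generiddakladlalald} in the $G$-equivariant category, and then obtain the three consequences by applying the equivariant operations (descent, $\mu_0$ and $\mu$) to the resulting equivariant bordism.

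\textbf{Step 1: the equivariant nullbordism.} By Proposition \ref{descenofbundlesoddnam}, $(L^2(W,\mathcal{E}_B),D_{\mathcal{E}}^{\rm min})$ is a symmetric $G$-equivariant $(C^\infty_c(\overline{W}),B)$-chain that restricts to a half-closed equivariant chain over $C^\infty_c(W^\circ)$. The same holds with Lipschitz functions in place of smooth ones, since first order operators have bounded commutators with Lipschitz functions. As in the proof of Lemma \ref{generiddakladlalald}, rescale so that the collar is $\partial W\times[0,1]$; if necessary, attach a cylinder equivariantly. Let $p$ be the characteristic function of this collar and let $\theta$ be the product identification.

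Two points make this data equivariant:
\begin{itemize}
\item Since $G$ acts isometrically and cocompactly, and the structures are of product type, the collar can be chosen $G$-invariant. One way is to use the normal geodesic flow from $\partial W$, which commutes with isometries. Then $p$ is $G$-invariant and $\theta$ is $G$-equivariant.
\item Conditions a)–c) of Definition \ref{HilBorDef} are the non-equivariant statements, and these were already verified in Lemma \ref{generiddakladlalald}.
\end{itemize}
Properness of $f$ means that $b(C^\infty_c((0,1],\Lip_0(X)))$ lands in $\Lip_0(W^\circ)$. This gives the half-closedness condition, so we obtain a $G$-equivariant bordism in the sense of Definition \ref{HilBorDefGequi} with boundary $(f|_{\partial W})^*(L^2(\partial W,\mathcal{E}^\partial_B),D^\partial_{\mathcal{E}})$.

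\textbf{Step 2: the three consequences, assuming the action on $X$ is cocompact.}
\begin{itemize}
\item \emph{Descent.} Apply the descent homomorphism $\rtimes G$. By Proposition \ref{descenfoscles}, descent maps equivariant bordisms to bordisms, because $p\rtimes G$ and $\theta\rtimes G$ give boundary data and $(1+T^*T)^{-1}\rtimes G$ is locally compact. This gives the first bullet.
\item \emph{$\mu_0$.} Proposition \ref{assprop} shows that $\mu_0$ is well defined on $\Omega^G_*(\Lip_0(X),B)$, and in particular that it sends equivariant bordisms to bordisms. The equivariant nullbordism of Step 1 therefore gives the second bullet.
\item \emph{$\mu$.} Apply $\iota^*$ to the second bullet; this gives the third.
\end{itemize}

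\textbf{Main obstacle.} I expect the hardest step to be checking that the compression by $p_X$ inside $\mu_0$ respects the boundary data. In that step one must verify three things:
\begin{itemize}
\item $p_X$ commutes with $p\rtimes G$, which holds because $p$ is $G$-invariant and is a function while $p_X$ is built from functions;
\item $p_X(\Psi(D)\rtimes G)p_X$ agrees with $\Psi(p_X(D\rtimes G)p_X)$ on the collar;
\item condition c) survives, which follows because $[p_X,b(\phi)]=0$.
\end{itemize}
To carry this out I would use that $p_X\in\Lip_c(X)\rtimes_cG$ acts through $f$ and commutes with $t$-dependent functions. I would then invoke the regularity results of \cite{thebeastofmesland}, exactly as in Proposition \ref{assemblfldoado}.
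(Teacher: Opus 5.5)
Your proposal is correct and follows the paper's route: the paper simply says that the argument of Lemma \ref{generiddakladlalald} goes through in the equivariant setting. The bullets then follow because descent and $\mu_0$ send equivariant bordisms to bordisms (the descent proposition of Subsection \ref{subsecgequicalaladbordism} and Proposition \ref{assprop}), and because $\iota^*$ preserves bordism. The compatibility of $p_X$ with the boundary data, which you single out as the main obstacle, is exactly what Proposition \ref{assprop} already provides, so citing it suffices, with $f$ tacitly $G$-equivariant and of product type on the collar, as in the paper.
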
 

The proof goes in the same way as in Subsection \ref{diraconcstarbundleexbordism}. We note the following corollary.

\begin{lemma}
\label{generiddakladlalaldgactioncor}
Let $M$ and $M'$ be a two oriented manifolds with a proper and cocompact action of $G$, $B$ a $G-C^*$-algebras, and $\mathcal{E}_B\to M$ and $\mathcal{E}_B'\to M'$ be $G$-equivariant Clifford $B$-bundles. Assume that $W$ is an oriented complete Riemannian manifold with boundary and a proper, cocompact and isometric action of $G$, and $\mathcal{F}_B\to W$ a $G$-equivariant Clifford $B$-bundle with $\partial W=M\dot{\cup}(-M')$ and $\mathcal{F}_B|_{\partial W}^\partial =\mathcal{E}_B\dot{\cup}(-\mathcal{E}_B')$. Then 
$$\mu(L^2(M,\mathcal{E}_B), D_{\mathcal{E}})\sim_{\rm bor}\mu(L^2(M',\mathcal{E}_B'), D_{\mathcal{E}'}),$$
in $\Omega_*(\C, B\rtimes G)$, for any invariant Riemannian metrics and Clifford connections.
\end{lemma}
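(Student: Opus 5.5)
The plan is to reduce the statement to Lemma \ref{generiddakladlalaldgaction}, following the non-equivariant argument of Subsection \ref{diraconcstarbundleexbordism}. We may assume the Riemannian metrics on $M$ and $M'$, as well as on $W$, are $G$-invariant. We may also assume that $W$, $\mathcal{F}_B$ and its Clifford connection are of product type near the boundary. To arrange this, we attach an equivariant collar $\partial W\times[0,1]$ to $W$. On the collar we interpolate, $G$-invariantly, between the given structures at one end and the product of the chosen boundary structures at the other. Averaging over $G$ keeps everything invariant, and properness makes this possible since a cutoff function exists. Cocompactness of the $G$-action on $W$ is unaffected by attaching the collar.

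Next we take $X:=W$, with the given proper isometric $G$-action, and $f:=\id_W$. This map is proper and Lipschitz, and $X/G$ is compact. Lemma \ref{generiddakladlalaldgaction} then gives $\mu(L^2(\partial W,\mathcal{F}^\partial_B),D^\partial_{\mathcal{F}})\sim_{\rm bor}0$ in $\Omega_*(\C,B\rtimes G)$. The boundary Dirac operator on $\partial W=M\dot\cup(-M')$ splits as $D_{\mathcal{E}}\oplus(-D_{\mathcal{E}'})$, with the opposite orientation accounted for by Definition \ref{oppKKCyc}. Descent and compression by $p_W$ both respect direct sums: we can choose the support function on $\partial W$ as the disjoint union of support functions on $M$ and $M'$, so $p_{\partial W}=p_M\oplus p_{M'}$. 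Moreover $\mu(-\mathfrak{Y})=-\mu(\mathfrak{Y})$. Together these give $\mu(M,\mathcal{E}_B)+(-\mu(M',\mathcal{E}'_B))\sim_{\rm bor}0$, which by Definition \ref{HilBorKKcycle} is exactly the bordism we want.

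The remaining point is independence of the Riemannian metrics and Clifford connections on $M$ and $M'$, together with the fact that the compressed projections are independent of the support function, which Proposition \ref{assprop} already provides. For two invariant metrics and connections on $M$, we apply the first step to the cylinder $W=M\times[0,1]$ with interpolated invariant structures, just as in Propositions \ref{generiddakladlalaldbb} and \ref{generiddakladlalaldcc}. This gives a bordism between the assembled cycles for the two choices, and transitivity (Proposition \ref{subsecglyinggluing}) combines it with the main bordism.

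The main obstacle is likely the orientation and parity bookkeeping in identifying $\partial(-)$ with $\mu(\mathcal{E})+(-\mu(\mathcal{E}'))$: we must check that assembly commutes with taking opposites. In the even case this is a regrading that commutes with $p_W$; in the odd case it is $D\mapsto-D$, and $p_W(-D\rtimes G)p_W=-p_W(D\rtimes G)p_W$. We also need the equivariant collar construction to preserve completeness, but this holds because $G$ acts cocompactly on $W$.
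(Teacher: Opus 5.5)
Your proposal is correct and follows the paper's route. The paper obtains this statement as a direct corollary of Lemma \ref{generiddakladlalaldgaction}, namely the nullbordism of the assembled boundary cycle of $W$. The reduction to product-type structures and the independence of the invariant metrics and Clifford connections are handled by the same interpolation and cylinder arguments as in Subsection \ref{diraconcstarbundleexbordism}. Your extra bookkeeping fills in details the paper leaves implicit: the support function splitting over $M\dot\cup M'$, and assembly commuting with direct sums and with taking opposites.
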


The argument of Theorem \ref{decomposingmoddir} extends also to the equvariant setting. The only non-triviality is that of verifying that when $V\to W$ is an oriented $G$-equivariant vector bundle of even rank with a $G$-invariant Riemannian structure, then $S_\mathpzc{E}$ defined from $1\otimes D_S$ in Theorem \ref{decomposingmoddir} is $G$-invariant. 

\begin{theorem}
\label{decomposingmoddirequi}
Let $W$ be a complete Riemannian manifold with boundary with a smooth isometric action of a group $G$. Assume that $V\to W$ is an oriented $G$-equivariant vector bundle of even rank with a $G$-invariant Riemannian structure, that $B$ a unital $G-C^*$-algebra and $D_\mathcal{E}$ a complete Dirac operator on a $G$-equivariant Clifford $B$-bundle $\mathcal{E}_B\to W$. Consider the $(\Lip_0(W),B)$-chain $(\mathpzc{E}_V,D_\mathpzc{E})$ constructed in Theorem \ref{decomposingmoddir}. Then the following holds:
\begin{enumerate}
\item The pair $(\mathpzc{E}_V,D_\mathpzc{E})$ is a $G$-equivariant symmetric $(\Lip_0(W),B)$-chain.
\item There is a $G$-equivariant unitary equivalence of symmetric $(\Lip_0(W),B)$-chains
$$(L^2(W^V,\mathcal{E}_B^V),D_{\mathcal{E},{\rm min}}^V)\cong (L^2(W,\mathcal{E}_B),D_{\mathcal{E},{\rm min}})+(\mathpzc{E}_V,D_\mathpzc{E}).$$
\item  The weakly degenerate decomposition 
$$D_\mathpzc{E}=D_{0,\mathpzc{E}}+S_\mathpzc{E},$$
constructed in Theorem \ref{decomposingmoddir} is an equivariant weakly degenerate decomposition.
\end{enumerate}
In particular, if $\partial W=\emptyset$, there is a $G$-equivariant bordism 
$$(L^2(W^V,\mathcal{E}_B^V),D_{\mathcal{E}}^V)\sim_{{\rm bor},G} (L^2(W,\mathcal{E}_B),D_{\mathcal{E}}).$$
\end{theorem}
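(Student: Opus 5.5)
The plan is to reduce the statement to the non-equivariant Theorem \ref{decomposingmoddir}, checking at each stage that the objects constructed there are $G$-equivariant. The basic input is that $G$ acts isometrically on $W$ and preserves the orientation and Riemannian structure of $V$. Consequently $G$ acts on the oriented frame bundle $P_V$, commuting with the right $SO(2k)$-action, and hence on $W^V=P_V\times_{SO(2k)}S^{2k}$, on $S_V$ and on $\mathcal{E}^V_B=\pi_V^*\mathcal{E}_B\hat{\otimes}S_V$. The vector bundle modified Dirac operator $D^V_\mathcal{E}$ is built from the $G$-equivariant $D_\mathcal{E}$ together with the fibrewise operator $D_S$. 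Since $D_S$ is $SO(2k)$-equivariant, it defines a fibrewise operator $1\hat{\otimes}D_S$ on $P_V\times_{SO(2k)}S^{2k}$ that is independent of frame choices. The $G$-action on $W^V$ moves fibres to fibres by $SO(2k)$-maps in local frames, so $1\hat{\otimes}D_S$ is $G$-invariant. For the same reason the smoothing projection $e_S$, which is $SO(2k)$-invariant, yields a $G$-invariant projection $e^V_S$. Using the identification \eqref{hsident} with an invariant Laplacian, I would record these invariance statements on $C^\infty(W^V,\mathcal{E}^V_B)$ and extend them by continuity to all Sobolev modules.

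Item (1) then follows. $\mathpzc{E}_V=(1-e_S^V)L^2(W^V,\mathcal{E}^V_B)$ is a $G$-invariant complemented submodule, and $G$ preserves $H^1_0$ because it acts by isometries that map the boundary to the boundary. The difference $D_\mathpzc{E}-gD_\mathpzc{E}g^{-1}$ is the compression by $1-e^V_S$ of $D^V_\mathcal{E}-gD^V_\mathcal{E}g^{-1}$. Since $1\hat{\otimes}D_S$ is invariant, this equals the compression of a zeroth order term $V(g)$ coming from $D_\mathcal{E}-gD_\mathcal{E}g^{-1}$, as in the proof of Proposition \ref{descenofbundlesoddnam}. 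The continuity conditions of Definition \ref{gequicalalad} then hold exactly as in that proof.

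Item (2) follows because the unitary equivalence of Theorem \ref{decomposingmoddir} is the splitting by $e^V_S$ and $1-e^V_S$ together with the isomorphism $e^V_SL^2\cong L^2(W,\mathcal{E}_B)$ from Proposition \ref{eqprop}. That isomorphism is given by tensoring with the invariant kernel section of $D_S^+$, which spans the trivial representation. It is therefore $G$-equivariant: the $G$-action on $e^V_S$-sections reduces to the $G$-action on $\mathcal{E}_B$.

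For item (3), $S_\mathpzc{E}$ is $G$-invariant by the first paragraph. It is invertible by the spectral gap estimate in Proposition \ref{eqprop}, and it commutes with the $\Lip_0(W)$-action. The common core $(1-e^V_S)$ applied to smooth compactly supported sections is $G$-invariant, and the anticommutation relation is inherited from Theorem \ref{decomposingmoddir}. This gives an equivariant weakly degenerate decomposition. Finally, when $\partial W=\emptyset$ the summand $(\mathpzc{E}_V,D_\mathpzc{E})$ is a $G$-equivariant cycle, so Theorem \ref{weakdegnullbordthmequi} provides a $G$-equivariant null-bordism of it. Adding the cylinder on $(L^2(W,\mathcal{E}_B),D_\mathcal{E})$ and composing with the isomorphism of item (2) yields the stated bordism.

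The main obstacle is the $G$-invariance of $1\hat{\otimes}D_S$ and $e^V_S$. The $G$-action on $P_V$ need not be a product action, so invariance has to come from the $SO(2k)$-equivariance of $D_S$ and from the fact that $\ker D_S^+$ is the trivial representation. I would verify this fibrewise through the associated bundle description rather than in local coordinates.
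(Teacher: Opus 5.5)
Your proposal follows the paper's own (very brief) argument. You reduce everything to Theorem \ref{decomposingmoddir} and isolate the $G$-invariance of $1\hat{\otimes}D_S$, and hence of $S_\mathpzc{E}$ and $e^V_S$, as the only real point; you obtain this from the associated-bundle description $W^V=P_V\times_{SO(2k)}S^{2k}$ together with the $SO(2k)$-invariance of $D_S$ and $e_S$, and you finish with Theorem \ref{weakdegnullbordthmequi} plus the cylinder. The one assumption that both you and the paper leave implicit is that the horizontal/vertical splitting of $T(W^V)$ used to build $D^V_\mathcal{E}$ comes from a $G$-invariant metric connection on $V$ (obtainable by averaging, e.g.\ for proper actions); without it $D^V_\mathcal{E}-gD^V_\mathcal{E}g^{-1}$ acquires a first-order vertical term on the range of $1-e^V_S$, and your item (1) claim that this difference is a zeroth-order $V(g)$ would fail.
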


\subsection{$KK$-bordism classes of wrong way cycles}
\label{kkwrongbor}

Let us return to the unbounded cycle constructed in Subsection \ref{kkwrong} to represent the wrong way map associated with a smooth $K$-oriented map $f:X\to Y$. We introduce the notation $\Omega_!(f)$ for the class in $\Omega_*(C^\infty_c(X),C_0(Y))$ defined from the cycle $(\mathpzc{E}_f,D_f)$ built in Theorem \ref{wrongwayconst}. From Theorem \ref{wrongwayconst}, we have that 
$$\beta(\Omega_!(f))=[f_!]\quad\mbox{in}\quad KK_*(C_0(X),C_0(Y)).$$
We start with our main result of this subsection.

\begin{theorem}
\label{adnadn}
The wrong way map in $KK$-bordisms $\Omega_!(f)\in \Omega_*(C^\infty_c(X),C_0(Y))$ depends only on $f$ and its $K$-orientation.
\end{theorem}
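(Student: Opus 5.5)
The cycle $(\mathpzc{E}_f,D_f)$ from Theorem \ref{wrongwayconst} is built from several choices. The geometric ones are the factorization $f=\pi\circ\iota\circ 0$ in \eqref{adlanladnfactor}, meaning the embedding $j:X\to Z_0$, the tubular neighbourhood $\iota$ and the normal bundle $N$. The analytic ones are the metric on $N$, the fibrewise Riemannian structure on $Z\to Y$, the spinor bundles $S_N$ and $S_Z$ inside their spin$^c$-classes (determined by the $K$-orientation), the Clifford connection $\nabla$ on $S_N$, and the fibrewise Dirac operator $D_Z$. I would show that changing each choice alters $(\mathpzc{E}_f,D_f)$ only within its $KK$-bordism class, working through the choices from the most analytic to the most geometric.

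\textbf{Step 1: fixed factorization.} Fix the factorization and vary the metrics, connections and Dirac operator. Any two sets of such data can be joined by a smooth path, obtained by convex interpolation of connections, metrics and fibrewise Clifford structures after identifying spinor bundles via the canonical isometries. Along this path the operators $D_f(t)$ are defined on a common module, or on modules identified unitarily in a $t$-dependent way, and the domains are controlled by the Sobolev-type description $\mathpzc{E}_f^1\cap(i+c\otimes 1)^{-1}\mathpzc{E}_f$ from the Proposition after Lemma \ref{adlknalknad}. I would then apply Proposition \ref{homotopywithstrongcond} with $\mathpzc{W}=\mathrm{Dom}(D_f)$, which is $C^\infty_c(X)$-locally compact because of the vertical ellipticity of $D_Z$ and the properness of $c$. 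This requires $D\in C^1([0,1],\Hom^*_B(\mathpzc{W},\mathpzc{E}_f))$. When only the connection changes, the differences are bounded perturbations and Corollary \ref{locbddperturcor} suffices. When the metric changes, I would absorb the change of $L^2$-inner product with a unitary $t$-dependent rescaling, in the manner of Proposition \ref{generiddakladlalaldcc}.

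\textbf{Step 2: independence of the factorization.} Given two factorizations through $Z_0\times Y$ and $Z_0'\times Y$, I would compare both with the factorization through $Z_0\times Z_0'\times Y$ using the diagonal embedding $j\times j'$. The new normal bundle is $N\oplus T_{Z_0'}$, restricted along $j'$. Passing from the factorization via $Z_0\times Y$ to the one via $Z_0\times Z_0'\times Y$ amounts to an additional Thom/Bott factor in the fibre direction. I would model this as a Kasparov product with the Bott cycle on the fibres of $Z_0'\times Y\to Y$, localized at $j'$. Concretely, the cycle for the larger factorization should decompose, as in Theorem \ref{decomposingmoddir}, into the cycle for the smaller factorization plus a weakly degenerate summand. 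This uses the harmonic oscillator/Bott kernel projection (the analogue of $e_S$, through a Shubin-type operator $c+D$ on $\R^m$). Theorem \ref{weakdegnullbordthm} then kills the complementary summand. Independence of the tubular neighbourhood $\iota$ follows from isotopy uniqueness of tubular neighbourhoods. The isotopy gives a smooth path of unitarily conjugated data, and Step 1 applies.

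\textbf{Main obstacle.} I expect Step 2 to be the hardest part, namely establishing the clean orthogonal decomposition into ``old cycle $\oplus$ weakly degenerate part''. The Bott kernel projection does not commute exactly with the horizontal part of $D_f$, because the connection on the extra normal directions is nontrivial. I would first use Step 1 to deform to product-type connections and metrics near $X$, so that the projection commutes exactly. Outside the tubular neighbourhood, properness of $c$ supplies the invertibility required of $S$ in Definition \ref{lknalkadn}. Verifying the anticommutation relation on a common core is where the product-type reduction is essential.
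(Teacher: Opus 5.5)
Your proposal is correct in outline and uses the same two ingredients as the paper's proof. The first is independence of the analytic data for a fixed factorization, via the homotopy and perturbation bordisms of Section \ref{exampleofboridmsmssec}. The second is comparison of two factorizations through a common refinement over the fibred product (your $Z_0\times Z_0'\times Y$; in general $Z\times_Y Z'$). There the extra normal directions $(j')^*TZ_0'$ produce a vector-bundle-modification-type splitting into the old cycle plus a weakly degenerate summand, which Theorem \ref{weakdegnullbordthm} removes.

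The difference is in how the comparison is organized. The paper works factor by factor in the diagram \eqref{comdododo}. The left triangle is an identity of Thom cycles, and the right triangle is a product of fibrewise Dirac operators up to locally bounded perturbation. The splitting is carried out only for the $(C_0(N),C_0(Z))$-cycle of the open embedding $\iota$, against its refactorization $\hat\pi\circ\tilde\iota\circ\hat 0$. This confines the new analysis to $(C_0(N),C_0(Z))$-cycles, one of which has zero operator. Reassembling this into a statement about $\Omega_!(f)$, however, tacitly requires that constructive unbounded products with the outer factors carry the quadrangle bordism to a bordism of composite cycles. The paper does not spell this out (compare Conjecture \ref{lknlknad}).

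You split the composite cycle of the refined factorization directly, so no product-of-bordisms statement is needed. The price is controlling the interaction between the old operator and the extra Bott factor in the full geometry, which is exactly the obstacle you flag. With the product metric on $Z_0\times Z_0'$ and the natural tubular map, the horizontal directions act on the extra fibre coordinates essentially by translations, because the centre $j'(p(n))$ moves with $n$. The kernel is not translation invariant. So the freedom of your Step 1 is genuinely needed, and it should be used to pass to a fibre metric of connection (Kaluza--Klein) type built from a metric connection on $(j')^*TZ_0'$, rather than a literal product. The rotation-invariant kernel projection then commutes with the horizontal part, as in Theorem \ref{decomposingmoddir}.

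Two details need adjusting.

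First, the extra fibre is not flat $\R^m$ with a Gaussian kernel. Through the tubular neighbourhood it is a ball in $Z_0'$ on which the potential $c$ blows up at the boundary, which is why the paper invokes an index computation on a ball. Take a radial tubular map and identify the spinor bundles with forms. The fibre operator is then the Witten-type deformation $\mathrm{e}^{-\phi}\mathrm{d}\,\mathrm{e}^{\phi}+(\mathrm{e}^{-\phi}\mathrm{d}\,\mathrm{e}^{\phi})^*$, where $\nabla\phi$ is the radial vector field defining the potential. Its kernel is spanned by $\mathrm{e}^{-\phi}$ in degree zero, and this is what replaces $e_S$.

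Second, an isotopy $\iota_t$ of tubular neighbourhoods does not give unitarily conjugate data. Since $f$ need not be a submersion, the tubular neighbourhoods cannot be fibred over $Y$. The $C_0(Y)$-module is supported on $\pi(\iota_t(N))$ and integrates over the fibres of $\pi\circ\iota_t$, and both of these change with $t$. The step itself is needed, because your two refinements give different tubular neighbourhoods of $X$ in $Z_0\times Z_0'\times Y$. Isotopy invariance should instead come from a cylinder bordism: build the same kind of chain over $Z\times[0,1]\to Y$ on the tubular neighbourhood $(n,t)\mapsto(\iota_t(n),t)$ of $X\times[0,1]$, with the isotopy constant near the ends, in the manner of Lemma \ref{generiddakladlalald}.
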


Before proving Theorem \ref{adnadn}, we state a lemma. The proof of the lemma is clear from the constructions of Section \ref{exampleofboridmsmssec}.

\begin{lemma}
Assume that $f:X\to Y$ is a $K$-oriented smooth map. Given a factorization 
$$f=\pi\circ \iota\circ 0,$$ 
as in Equation \eqref{adlanladnfactor}, the $KK$-bordism classes of $\Omega_!(f)$, $\Omega_!(\pi)$, $\Omega_!(\iota)$, and $\Omega_!(0)$ are independent of further choices.
\end{lemma}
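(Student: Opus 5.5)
The plan is to treat each of the three factors of $f=\pi\circ\iota\circ 0$ separately and check that every choice entering the corresponding unbounded cycle varies inside a family that produces a $KK$-bordism by the results of Section \ref{exampleofboridmsmssec}. Since the composite cycle $(\mathpzc{E}_f,D_f)$ of Lemma \ref{adlknalknad} depends on the factor choices only through the operator, the same deformations will then also be applied to the composite. Only choices that leave the factorization itself fixed need to be considered; comparing different factorizations is left to Theorem \ref{adnadn}.

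\emph{The factor $\iota$.} The cycle $(C_0(N)_{C_0(Z)},0)$ involves no choices, so $\Omega_!(\iota)$ is canonical.

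\emph{The factor $0$.} The cycle $(C_0(N,p^*S_N),c)$ depends on a metric on $N$ and on a spinor bundle $S_N$ realizing the fixed spin$^c$-structure. As observed in Subsection \ref{kkwrong}, changing these data within the same spin$^c$-structure does not alter the unitary equivalence class of the cycle. Its isomorphism class, and hence its bordism class $\Omega_!(0)$, is therefore already independent of the choices.

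\emph{The factor $\pi$.} The cycle $(L^2(Z/Y;S_Z),D_Z)$ depends on a fibrewise metric, on a fibrewise spinor bundle and on a fibrewise Dirac operator. If the metric is fixed and only the Clifford connection is changed, two Dirac operators $D_Z,D_Z'$ differ by a bounded endomorphism $V$ of $S_Z$. Corollary \ref{locbddperturcor} then gives a transgression bordism between them. If the metric is changed as well, one interpolates the metrics along a path $g_s$ and identifies all the modules $L^2_{g_s}(Z/Y;S_Z)$ by the unitary given by the square root of the ratio of volume densities. On a compact fibre bundle this conjugation produces a $C^1$-family in $\Hom^*(\mathpzc{W},\mathpzc{E})$, where $\mathpzc{W}$ is the first fibrewise Sobolev module, which is locally compactly included. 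Proposition \ref{homotopywithstrongcond} then supplies the bordism. Alternatively one can run the cylinder argument of Proposition \ref{generiddakladlalaldcc} fibrewise over $Z\times[0,1]\to Y$, using Lemma \ref{generiddakladlalald} in its fibred form.

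\emph{The composite $(\mathpzc{E}_f,D_f)$.} Here there is one additional choice, the Clifford connection $\nabla$ on $S_N$. Two such connections change $1\otimes_\nabla D_Z$ by a fibrewise zeroth-order term. By the Callias-type estimate used in the proof of Lemma \ref{adlknalknad}, this term is relatively bounded with respect to $D_f$ with bound $<1$ on the complete submanifold pieces, and Corollary \ref{locbddperturcor} applies again. I expect this last step to be the main obstacle. The difference term is not globally bounded near the incomplete boundary of $\iota(N)\subseteq Z$, so relative boundedness has to be obtained from the domain description $\mathrm{Dom}(D_f)=\mathpzc{E}_f^1\cap(i+c\otimes 1)^{-1}\mathpzc{E}_f$, which is independent of connections. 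The plan there is to show that the straight-line path $D_f+sV$ consists of self-adjoint regular operators with the common core $C_c^\infty$, and then to invoke Theorem \ref{perturbationgenerelthm}.
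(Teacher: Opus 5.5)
Your overall strategy is the paper's. Its proof of this lemma is only the remark that it is clear from the constructions of Section \ref{exampleofboridmsmssec}, and you use exactly those perturbation and homotopy bordisms, plus the cylinder bordism of Lemma \ref{generiddakladlalald}. Some steps, however, are wrong or incomplete as written.

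\textbf{The factor $0$.} A change of metric on $N$ does not preserve the unitary equivalence class of $(C_0(N,p^*S_N),c)$, despite the remark in Subsection \ref{kkwrong} that you quote. A $C_0(N)$-linear unitary is a continuous field of unitaries $U_{(x,v)}$ over $N$. Such a field cannot conjugate $c_g(v)$, whose spectrum is $\{\pm|v|_g\}$, into $c_{g'}(v)$, whose spectrum is $\{\pm|v|_{g'}\}$. Only a change of $S_N$ at fixed metric gives an isomorphism of cycles. The metric has to be treated like your other choices: take a path of metrics $g_s$, transport the resulting Clifford multiplications $c_{g_s}$ to a fixed module, and apply a homotopy bordism as in Subsection \ref{subsec:homotopyop}.

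\textbf{The factor $\pi$.} $Y$ need not be compact. Two fibrewise Dirac operators then in general differ only by a locally bounded endomorphism, not a bounded one. Likewise, fibrewise Sobolev norms for different metrics need not be uniformly equivalent. So Corollary \ref{locbddperturcor} and your $C^1$-family argument need not apply as stated. The fibred cylinder argument you list as an alternative is the one to use.

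\textbf{The composite.} The step you expect to be the main obstacle is in fact immediate. The relative bound $<1$ you attribute to the Callias estimate is neither justified nor needed. The operator $1\otimes_\nabla D_Z$ is affine in $\nabla$, and convex combinations of Clifford connections are again Clifford connections. Hence $D_f+sV$ is exactly the operator of Lemma \ref{adlknalknad} built from $(1-s)\nabla+s\nabla'$. So for every $s\in[0,1]$ it is self-adjoint and regular with core $C_c^\infty(N,p^*S_N\otimes S_Z|_{\iota(N)})$. By the domain description you quote, its domain is also independent of $s$. That is all Theorem \ref{perturbationgenerelthm} requires.

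What remains to be written out is how the metric deformations transfer to $(\mathpzc{E}_f,D_f)$. You claim the composite depends on the choices only through the operator, but the module $\mathpzc{E}_f$ is defined using the fibrewise volume densities and Hermitian structures, so it depends on them too. You must therefore also conjugate by the density-ratio unitaries on $\iota(N)$. Then check that the resulting family satisfies the hypotheses of Theorem \ref{homotopywithsweakcond}.
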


\begin{proof}[Proof of Theorem \ref{adnadn}]
By the preceeding lemma, it suffices to show that $\Omega_!(f)$ is independent of factorization $f=\pi\circ \iota\circ 0$ as in Equation \eqref{adlanladnfactor}. Assume that we have two such factorizations 
\[
\begin{CD}
N @>\iota>>  Z \\
@A 0 AA @ VV\pi V.\\
X @>f >> Y\\
@V0' VV @ AA \pi'A.\\
N' @>\iota' >> Z'
\end{CD} \]
From these two factorizations, we form the commuting diagram
\begin{equation}
\label{comdododo}
\xymatrix{
X\ar[rd]^{0}\ar[dd]^{\tilde{0}}   \ar[rrr]^{f} & && Y 
 \\ 
 &N\ar[r]^{\iota}\ar[dl]^{\hat{0}}& Z\ar[ur]^{\pi}&
 \\
N\oplus N'\ar[rrr]^{\tilde{\iota}}&&&Z_\pi\times_{\pi'}Z'\ar[ul]^{\hat{\pi}}\ar[uu]^{\tilde{\pi}}
}
\end{equation}
The idea in our proof will be to show that the associated wrong way elements in unbounded $KK$ produce a ``commuting diagram'' with respect to unbounded Kasparov products, up to canonical bordisms. The quotation marks are to highlight the fact that we at each step need to build the unbounded Kasparov product. 

It is an algebraic computation to show that for the left triangle in \eqref{comdododo}, one can form the constructive unbounded Kasparov product and that triangle commutes in the sense we are asking for, because we have an equality of unbounded cycles
\begin{align*}
(C_0(N\oplus N',p^*S_N&\hat{\otimes}(p')^*S_{N'}),c\otimes 1+1\otimes c')=\\
&(C_0(N,p^*S_N),c)\otimes_{C_0(N)} (C_0(N\oplus N',(p')^*S_{N'}),c').
\end{align*}
Similarly, combining the constructions of Section \ref{exampleofboridmsmssec} with well known computations of unbounded Kasparov products for Dirac operators (see details in \cite{kaadfactor}), the right triangle in \eqref{comdododo} commutes in the sense we are asking for up to locally bounded perturbations.

Therefore, it remains to prove commutativity of the bottom quadrangle in \eqref{comdododo}. It suffices to produce a canonical bordism of $(C_0(N),C_0(Z))$-cycles 
$$(C_0(N)_{C_0(Z)},0)\sim_{\rm bor} (\mathpzc{E}_\iota,D_\iota),$$
where $(\mathpzc{E}_\iota,D_\iota)$ is the cycle defined from the factorization of $\iota$ given by 
$$\begin{CD}
N @>\iota>>  Z \\
@V\hat{0} VV @ AA \hat{\pi}A.\\
N\oplus N' @>\tilde{\iota}>> Z_\pi\times_{\pi'}Z'
\end{CD} 
$$
To construct the bordism $(C_0(N)_{C_0(Z)},0)\sim_{\rm bor} (\mathpzc{E}_\iota,D_\iota)$ we can argue as for vector bundle modification in Subsection \ref{diraconcstarbundleexbordism}, producing $KK$-bordisms from an index computation via a splitting $(\mathpzc{E}_\iota,D_\iota)=(C_0(N)_{C_0(Z)},0)\oplus(\mathpzc{E}_{\iota,0},D_{\iota,0})$ where $(\mathpzc{E}_{\iota,0},D_{\iota,0})$ is weakly degenerate. In the case at hand, the result follows an index computation for a ball $B\subseteq \R^n$ just as the one in \cite[Lemma 5.1]{walterver}.
\end{proof}

\begin{conjecture}
\label{lknlknad}
The wrong way maps in $KK$-bordism theory are functorial in the sense that if $f:X\to Y$ and $g:Y\to Z$ are two $K$-oriented smooth maps, then $\Omega_!(g\circ f)\in \Omega_*(C^\infty_c(X),C_0(Z))$ will up to a \emph{canonical bordism} identify with the class of the constructive unbounded Kasparov product 
$$(\mathpzc{E}_f\otimes_{C_0(Y)}\mathpzc{E}_g,D_f\otimes 1_{\mathpzc{E}_g}+1\otimes_\nabla D_g),$$
for a geometrically defined connection $\nabla$. In other words, there is a canonical bordism 
$$(\mathpzc{E}_f\otimes_{C_0(Y)}\mathpzc{E}_g,D_f\otimes 1_{\mathpzc{E}_g}+1\otimes_\nabla D_g)\sim_{\rm bor} (\mathpzc{E}_{g\circ f},D_{g\circ f}).$$
\end{conjecture}

\begin{remark}
We remark that it follows from below, see Corollary \ref{closedinrcor}, that the bounded transform $\beta:\Omega_*(C^\infty_c(X),C_0(Y))\to KK_*(C_0(X),C_0(Y))$ is an isomorphism. So we can in the context of Conjecture \ref{lknlknad} deduce that as soon as there exists a constructive unbounded Kasparov product $(\mathpzc{E}_f\otimes_{C_0(Y)}\mathpzc{E}_g,D_f\otimes 1_{\mathpzc{E}_g}+1\otimes_\nabla D_g)$ it will be $KK$-bordant to $\Omega_!(g\circ f)$. The key point of Conjecture \ref{lknlknad} is that it conjectures the existence of a canonical bordism, i.e. one constructed from the geometric data. 
\end{remark}

We note that the proof of Theorem \ref{adnadn} implies the special case of Conjecture \ref{lknlknad} for the case that $f$ is an embedding and $Z$ is a point. Such a result extends the work \cite{walterver} to higher codimension but on the other hand provides a less detailed result.

\subsection{Proper quasicocycles on groupoids}
\label{onojknadprop}
Using $KK$-bordisms, we return to the cycle constructed in Subsection \ref{groupoidsex} from a proper quasicocycle on a groupoid to study its dependence modulo bordism. For a continuous field of Hilbert spaces $\mathfrak{H}\to \mathcal{G}^{(0)}$ as in Subsection \ref{groupoidsex}, we say that a cocycle $c:\mathcal{G}\to \mathfrak{H}$ is a quasi-coboundary if there is a global section $\xi$ of $\mathfrak{H}$ such that 
$$\mathcal{G}\to \mathfrak{H}, \quad g\mapsto c(g)-g.\xi,$$
is a bounded function. We define the set
$$H^1_{pq}(\mathcal{G},\mathfrak{H}):=\{\mbox{proper quasi-cocycles }\mathcal{G}\to \mathfrak{H}\}/\sim $$
where $c_1\sim c_2$ if $c_1-c_2$ is a quasi-coboundary $\mathcal{G}\to \mathfrak{H}$. The set $H^1_{pq}(\mathcal{G},\mathfrak{H})$ has little algebraic structure since $0=c-c$ even if $c$ and $-c$ are proper, but $H^1_{pq}(\mathcal{G},\mathfrak{H})$ forms a subset of the quotient space of quasi-cocycles $\mathcal{G}\to \mathfrak{H}$ by the space of quasi-coboundaries. The following result follows from Corollary \ref{locbddperturcor}.

\begin{prop}
Notations as in Example \ref{groupoidsex}. Suppose that $c_0$ and $c_1$ are two proper quasicocycles $\mathcal{G}\to \mathfrak{H}$ and that $c_0-c_1$ is a quasi-coboundary, so for some global section $\xi$ of $\mathfrak{H}$, the continuous function  
$$\mathcal{G}\to \mathfrak{H}, \quad g\mapsto c_1(g)-c_0(g)-g.\xi,$$
is uniformly bounded. Then $c_t(g):=c_0(g)-tg.\xi$, $t\in [0,1]$, defines a bordism 
$$(L^2(\mathcal{G},B),D_{c_0})\sim_{\rm bor} (L^2(\mathcal{G},B),D_{c_1}+V)\sim_{\rm bor} (L^2(\mathcal{G},B),D_{c_1}),$$
for a bounded $V$. In particular the data assumed in Example \ref{groupoidsex} and the map $c\mapsto (L^2(\mathcal{G},B),D_{c})$ defines a mapping of sets
$$H^1_{pq}(\mathcal{G},\mathfrak{H})\to \Omega_1(C_c(\mathcal{G}),B).$$
\end{prop}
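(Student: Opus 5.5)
We plan to obtain the chain of bordisms in two steps: a bounded perturbation that removes the bounded discrepancy between $c_1$ and $c_0-\cdot.\xi$, and a linear path of operators implementing the coboundary term. Write $\eta(g):=c_1(g)-c_0(g)-g.\xi$. By assumption $\eta$ is a uniformly bounded continuous section with $\eta(g)\in\mathfrak{H}_{r(g)}$. Multiplication by $\mathfrak{c}(\eta(g))$ therefore defines a bounded self-adjoint adjointable operator on $L^2(\mathcal{G},B)$, since $\|\mathfrak{c}(\eta(g))\|=\|\eta(g)\|$ and pointwise multiplication by a bounded continuous $B$-valued function is adjointable. Set $V:=-\mathfrak{c}(\eta)$. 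Linearity of $\mathfrak{c}$ gives $D_{c_1}+V=D_{c_0-\cdot.\xi}$ on the common domain. Indeed $\Dom(D_{c_1})=\Dom(D_{c_0-\cdot.\xi})$, because the two norm weights in the domain condition differ by a bounded amount. The bordism $(L^2(\mathcal{G},B),D_{c_1}+V)\sim_{\rm bor}(L^2(\mathcal{G},B),D_{c_1})$ then comes from Corollary \ref{locbddperturcor}, since a bounded operator is relatively bounded with relative bound $0$. In particular $(L^2(\mathcal{G},B),D_{c_1}+V)$ is again a cycle.

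The first bordism $(L^2(\mathcal{G},B),D_{c_0})\sim_{\rm bor}(L^2(\mathcal{G},B),D_{c_0-\cdot.\xi})$ we would get from Theorem \ref{perturbationgenerelthm} applied to $D=D_{c_0}$ and $D'=D_{c_1}+V$. For $s\in[0,1]$ the convex combination $sD+(1-s)D'$ equals $D_{c_t}$ with $t=1-s$. Each $c_t$ is again a proper quasi-cocycle. It is a quasi-cocycle because $g\mapsto g.\xi$ is a cocycle. It is proper because $c_t$ differs from the proper map $c_1$ (up to the constant-in-$t$ bounded error $\eta$) by $(1-t)$ times a term whose difference from $c_1-c_0$ is bounded. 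More precisely, $c_t=(1-t)c_0+t(c_1-\eta)$, and properness of a convex combination has to be checked. If this fails we would instead reparametrise so that the path stays within bounded distance of $c_0$ on $[0,1/2]$ and of $c_1$ on $[1/2,1]$. By the Proposition in Subsection \ref{groupoidsex}, each $D_{c_t}$ is self-adjoint and regular, with resolvent given by pointwise multiplication by $(i\pm\mathfrak{c}(c_t(g)))^{-1}$. For the common core we take $C_c(\mathcal{G},B)$, which lies in every domain and is a core because the operators are multiplication operators. Theorem \ref{perturbationgenerelthm} then produces the bordism. Gluing the two bordisms via Proposition \ref{subsecglyinggluing} gives $D_{c_0}\sim_{\rm bor}D_{c_1}$.

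For the map of sets, we send a class $[c]$ to the bordism class of $(L^2(\mathcal{G},B),D_c)$. Well-definedness is exactly the chain just constructed, applied to representatives $c_0\sim c_1$, together with the fact that each $D_c$ is an odd cycle by the earlier Proposition.

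The main obstacle is verifying properness of the intermediate quasi-cocycles $c_t$. Properness is not preserved under convex combinations in general, since $c_0$ and $-g.\xi$ could cancel. We would try first to use that $c_0$ and $c_1$ are both proper with $c_1-c_0$ within bounded distance of a coboundary, and show that $\|c_t(g)\|$ is bounded below by a proper function on compacts. If that is not possible, we would fall back on using only properness of the endpoints and a two-step path through the bounded perturbation. A second, smaller point is that $C_c(\mathcal{G})$ must be in $\mathrm{Lip}(D_{c_t})$ uniformly in $t$. This holds because the commutator only involves $c_t(gh)-c_t(h)$-type differences, which the quasi-cocycle property bounds exactly as in \cite{GRU}.
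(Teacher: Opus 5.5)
Your overall architecture (a path $c_t$ for one bordism, a bounded $V$ for the other, glued via Proposition \ref{subsecglyinggluing}) matches the paper's. As written, however, there is a gap. You leave properness of the intermediate $c_t$ open as the ``main obstacle'', and properness is exactly what makes each $D_{c_t}$ a cycle and the interior chain half-closed. Neither fallback closes it. In particular, reparametrising so that the path stays within bounded distance of $c_0$ on $[0,1/2]$ and of $c_1$ on $[1/2,1]$ is impossible unless $c_1-c_0$ is already bounded, because the point at $t=1/2$ would then be within bounded distance of both.

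The missing observation is that $g\mapsto g.\xi$ is itself uniformly bounded. The $\mathcal{G}$-action on $\mathfrak{H}$ is unitary, so $\|g.\xi\|=\|\xi(s(g))\|\leq\sup\|\xi\|$; for a group this is just $\|g\xi\|=\|\xi\|$. Several things follow:
\begin{itemize}
\item $\mathfrak{c}(\cdot.\xi)$ is a bounded, self-adjoint, adjointable multiplication operator, by the same reasoning you already used for $\eta$.
\item $D_{c_t}=D_{c_0}+t\,\mathfrak{c}(\cdot.\xi)$ is therefore a bounded perturbation of $D_{c_0}$.
\item The estimate $\|c_t(g)-c_0(g)\|\leq t\sup\|\xi\|$ makes every $c_t$ proper immediately, since $c_t^{-1}$ of a bounded set lies in $c_0^{-1}$ of a slightly larger bounded set.
\end{itemize}
The first bordism then comes straight from Corollary \ref{locbddperturcor} with relative bound $0$, exactly like the second. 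Theorem \ref{perturbationgenerelthm}, the common-core argument and the uniform Lipschitz discussion all become unnecessary; this is precisely the paper's one-line proof. If one genuinely allowed unbounded sections $\xi$, neither your convex-combination argument nor Corollary \ref{locbddperturcor} would apply without an additional relative bound $<1$. So bounded $\xi$ is the intended reading.

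Also watch the sign. With $V=-\mathfrak{c}(\eta)$ one gets $D_{c_1}+V=D_{c_0+\cdot.\xi}$, not $D_{c_0-\cdot.\xi}$, so the path must be $c_t=c_0+t\,g.\xi$. The minus sign in the statement is a typo, and your own formula $c_t=(1-t)c_0+t(c_1-\eta)$ already silently corrects it.
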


\subsection{Continuous trace algebras and Fell algebras}
In Subsection \ref{conttracealgexex} and \ref{fellalgebraex} we saw constructions of cycles on continuous trace algebras and more generally on Fell algebras. In the example above, we have seen ample examples of results of the form ``if a cycle defined on a manifold extends to a half-closed chain on a manifold with boundary, it is $KK$-nullbordant''. The same type of results can be formulated for continuous trace algebras and Fell algebras. But instead of beating a dead horse, we contain ourselves to giving a lofty general principle and exemplify this in a special case. 

The general principle can in lofty terms be stated for a $*$-algebra $\mathcal{A}_X$ (say a dense $*$-subalgebra of a continuous trace algebra or a Fell algebra), and some $(\mathcal{A}_X,\C)$-cycle $(L^2(M,E),D_E)$ for a complete Riemannian manifold $M$ and a Dirac type operator on the Clifford bundle $E\to M$. As seen in the Subsections \ref{diraconcstarbundleexbordism} and \ref{groupactionsexbord}, if $M$ is bound by a complete Riemannian manifold with boundary $W$ to which $E$ extends to $E_W\to W$, and there is a $*$-algebra $\mathcal{A}_W$ on which the algebra $\Lip(W)$ of uniformly Lipschitz functions on $W$ acts as multipliers and there is an $(\mathcal{A}_W,\C)$-chain $(L^2(W,E_W),D_{E_W})$ which is i) half-closed for elements of $\mathcal{A}_W$ vanishing on the boundary, and ii) compatible with the $\mathcal{A}_X$-action near the boundary for a $*$-homomorphism $\mathcal{A}_X\to \mathcal{A}_W$ then  $(L^2(M,E),D_E)$ is $KK$-nullbordant as $(\mathcal{A}_X,\C)$-cycles via a $KK$-bordism with interior chain $(L^2(W,E_W),D_{E_W})$. We now turn to the concrete example.

\begin{ex}
As in Subsection \ref{conttracealgexex} we consider a principal circle bundle $\pi:Z\to X$ where we assume $Z$ to be a closed manifold. We are interested in the Schwartz algebra $\mathcal{A}_X:=\mathcal{S}(\R\times Z)=\mathcal{S}(\R,C^\infty(Z))$ which is a dense $*$-subalgebra of the continuous trace algebra $C(Z)\rtimes \R$ over $X\times S^1$. Assume that $f:W\to Z$ is a Lipschitz continuous $U(1)$-equivariant map from a manifold with boundary, $E\to W$ is a Clifford bundle and that $D_E$ is a Dirac operator on $E$. By a gluing argument, we can assume that all structures on $W$ are of product type near the boundary. The pull-back $f$ defines $*$-homomorphisms 
$$f^*:C(Z)\rtimes \R\to C(\overline{W})\rtimes \R, \quad\mbox{and}\quad (f|_{\partial W})^*:C(Z)\rtimes \R\to C(\partial W)\rtimes \R.$$
Following the general principle above, or more precisely the arguments of the Subsections \ref{diraconcstarbundleexbordism} and \ref{groupactionsexbord}, we conclude that the $(\mathcal{S}(\R\times Z),\C)$-cycle 
$$(f|_{\partial W})^*(L^2(\partial W,E^\partial),D_E^\partial),$$
is $KK$-nullbordant via a $KK$-bordism with interior chain $f^*(L^2(W,E),D_E)$.

\end{ex}

\subsection{Cuntz-Krieger algebras}
\label{ckexbordism}

Let us proceed with the example on the Cuntz-Krieger algebras from Subsection \ref{ckex}. The spectral triples on Cuntz-Krieger algebras from \cite{GMCK,GMR}, and more generally on Smale space $C^*$-algebras from \cite{DGMW}, provide instances of truly noncommutative objects. As such, we believe there is milage in a truly noncommutative geometric equivalence relation among cycles. One can for instance ask if the equivalence relations between shift groupoids studied in \cite{carletal}, or more generally \cite{brimunren}, carry any geometric content in the language of $KK$-bordisms. The homotopy argument for the Cuntz algebra $O_N$ of Subsection \ref{ckex} can be phrased in terms of $KK$-bordisms as follows, capturing the fact that $K^1(O_N)\cong \Z/(N-1)\Z$.

\begin{prop}
Let $(L^2(O_N),D)$ be the $(O_N^\infty,\C)$-cycle constructed in Subsection \ref{ckex} and $\lambda(a):=\sum_{j=1}^n S_j aS_j^*$. Then the $*$-homomorphism $\pi:O_N\to C([0,1],O_N)$ such that $\mathrm{ev}_0\circ \pi=\id_{O_N}$, $\mathrm{ev}_1\circ \pi=\lambda$ (preserving the closure of $O_N^\infty$ under holomorphic functional calculus) and the bounded perturbation $B$ (from Subsection \ref{ckex}) implements a $KK$-bordism of $(O_N^\infty,\C)$-cycles
$$N(L^2(O_N),D)\sim_{\rm bor} (L^2(O_N),D).$$
In particular, $(N-1)(L^2(O_N),D)\sim_{\rm bor} 0$.
\end{prop}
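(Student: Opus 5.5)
The plan is to chain three bordisms using transitivity (Proposition \ref{subsecglyinggluing}). Write $\alpha:L^2(O_N)\otimes\C^N\to L^2(O_N)$, $\alpha(x_1,\ldots,x_N)=\sum_i S_ix_i$, for the unitary of Subsection \ref{ckex}. We have already computed that $\alpha$ implements an isomorphism of $(O_N^\infty,\C)$-cycles
$$N(L^2(O_N),D)\cong ({}_\lambda L^2(O_N),D+B),\qquad B=\sum_j[D,S_j]S_j^*.$$
Since bordism is defined on isomorphism classes, it suffices to produce two bordisms, $({}_\lambda L^2(O_N),D+B)\sim_{\rm bor}({}_\lambda L^2(O_N),D)$ and $({}_\lambda L^2(O_N),D)\sim_{\rm bor}(L^2(O_N),D)$, and then glue them.

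\textbf{First bordism.} The operator $B$ is bounded, because each $S_j\in O_N^\infty\subseteq\mathrm{Lip}(D)$. It is self-adjoint, since $\alpha^*(D+B)\alpha=D\otimes 1_{\C^N}$. A bounded symmetric operator is relatively bounded by $D$ with relative bound $0$. Corollary \ref{locbddperturcor} therefore applies to the cycle $({}_\lambda L^2(O_N),D)$ and gives the transgression bordism $\mathcal{T}({}_\lambda L^2(O_N),D,B)$. Before invoking it, I need to check that $({}_\lambda L^2(O_N),D)$ really is a cycle. For $a\in O_N^\infty$, the element $\lambda(a)=\sum_j S_jaS_j^*$ lies in $O_N^\infty$, so it is in $\mathrm{Lip}(D)$. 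Compactness of the resolvent is immediate, because $D$ has discrete spectrum with finite multiplicities.

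\textbf{Second bordism.} Here I use Theorem \ref{homotopyofrep} with the homotopy $\pi$ from Subsection \ref{ckex}: $\mathrm{ev}_0\circ\pi=\id$, $\mathrm{ev}_1\circ\pi=\lambda$, and $\pi(O_N^\infty)\subseteq C^\infty([0,1],\tilde O_N^\infty)$. After reparametrising $t$ by a smooth function that is constant near $0$ and $1$, this satisfies the constancy convention. Two things must be verified.
\begin{itemize}
\item Each $\pi_t(a)$ must lie in $\mathrm{Lip}(D)$. This follows from Theorem \ref{funccalcholothm}: since $O_N^\infty\subseteq\mathrm{Lip}(D)$, its holomorphic closure $\tilde O_N^\infty$ is also contained in $\mathrm{Lip}(D)$.
\item The map $t\mapsto\pi_t(a)$ must be $C^1$ in norm, and this is given.
\end{itemize}
The main obstacle is a subtlety in Theorem \ref{homotopyofrep}. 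That theorem implicitly needs $[\Psi(D),\Pi(a)]$ to be bounded on the suspension, which requires $[D,\pi_t(a)]$ to be uniformly bounded in $t$, possibly with some continuity in $t$. Pointwise membership of $\pi_t(a)$ in $\mathrm{Lip}(D)$ is not enough. I will obtain the uniform bound from the explicit form of the homotopy. The symmetry $u$ is homotoped to $1$ by $u_t=\exp(it\pi(1-u)/2)$, which is built by holomorphic calculus from $u\in O_N^\infty$. This makes $t\mapsto u_t$ continuous into the Banach algebra $\mathrm{Lip}(D)$ equipped with the norm $\|a\|+\|[D,a]\|$. The maps $\varphi_{u_t}$ then send generators to $u_tS_i$, and these are continuous in that same norm. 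The $t$-derivative term is harmless, because $\Psi(D)$ involves $\partial_t$ and $\Pi(a)$ is $C^1$ in norm.

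Finally, gluing the two bordisms via Proposition \ref{subsecglyinggluing} yields $N(L^2(O_N),D)\sim_{\rm bor}(L^2(O_N),D)$. Adding $-(L^2(O_N),D)$ to both sides and using the group structure of $\Omega_*$ gives $(N-1)(L^2(O_N),D)\sim_{\rm bor}0$.
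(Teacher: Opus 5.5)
Your proof is correct and follows the paper's route (the isomorphism via $\alpha$, a bounded-perturbation bordism, Theorem \ref{homotopyofrep} for the homotopy $\id\simeq\lambda$, then transitivity), apart from a harmless sign slip inherited from Subsection \ref{ckex}: in fact $\alpha^*(D-B)\alpha=D\otimes 1_{\C^N}$. Your deviations are refinements: Corollary \ref{locbddperturcor} handles the bounded $B$ without the extra domain conditions of the paper's cited Corollary \ref{locbddanticommcor}, and your explicit homotopy, which is simply $u_t=1+(\mathrm{e}^{i\pi t}-1)\frac{1-u}{2}\in O_N^\infty$, gives the uniform-in-$t$ bound on $[D,\pi_t(a)]$ that Theorem \ref{homotopyofrep} needs but does not state.
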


The proposition follows directly from Theorem \ref{homotopyofrep} and Corollary \ref{locbddanticommcor}.

\subsection{Cuntz-Pimsner algebras on manifolds with boundary}
We now come to full circle with the constructions of Example \ref{cpalgebronmanfdodoex}. Following the arguments of the Subsections \ref{diraconcstarbundleexbordism} and \ref{groupactionsexbord}, we conclude the following.

\begin{prop}
Let $W$ be a compact manifold with boundary and $X$ a compact metric space. Let $\slashed{D}_W$ be a Dirac operator of product type near the boundary acting on a Clifford bundle $S_W$. Suppose that $f:W\to X$ is a Lipschitz map and $E\to X$ is a vector bundle on $X$. Set $\mathpzc{V}:=C(X,E)$. Then, as $(\mathcal{O}^{\rm Lip}_{\mathpzc{V}},\C)$-cycles, we have that 
$$(f|_{\partial W})^*(L^2(\partial W,S_{\partial W}\otimes \Xi_{f^*\mathpzc{V}}), D_{f^*E})\sim_{\rm bor}0,$$
via the $(\mathcal{O}^{\rm Lip}_{\mathpzc{V}},\C)$-bordism with interior $f^*(L^2(W,S_{W}\otimes \Xi_{f^*\mathpzc{V}}), D_{f^*E})$.
\end{prop}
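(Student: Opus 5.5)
The plan is to reduce to the general mechanism of Lemma \ref{generiddakladlalald}: a manifold with boundary whose structures are of product type near the boundary yields a symmetric chain with boundary, half-closed away from the boundary. The new ingredient is that the coefficients are twisted by the Cuntz--Pimsner module $\Xi_{f^*\mathpzc{V}}=\bigoplus_{n,k}C(\overline{W};\mathcal{H}_{n,k}(f^*E))$.

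First, I will set up the interior chain. Pull back the frame projection: choose $p_E\in M_{N_p}(\Lip(X))$ defining $E$ (possible after a Lipschitz approximation of a continuous frame), so that $f^*p_E$ defines $f^*E$ and $\mathcal{H}_{n,k}(f^*p_E)$ defines $\mathcal{H}_{n,k}(f^*E)$. Then Grassmann connections $\nabla_{n,k}$ give the twisted operators $\slashed{D}_{n,k}$ and the operator $D_{f^*E}$ on $L^2(W;S_W\otimes\Xi_{f^*\mathpzc{V}})$, exactly as in Subsection \ref{cpalgebronmanfdodoex}. The proposition there shows that this is a symmetric $(\mathcal{O}^{\rm Lip}_{f^*\mathpzc{V}},\C)$-chain that restricts to a half-closed chain for $\mathcal{O}^{\rm Lip}_{f^*\mathpzc{V}}\cap\mathcal{O}^\circ_{f^*\mathpzc{V}}$. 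Pulling back along $f^*:\mathcal{O}^{\rm Lip}_{\mathpzc{V}}\to\mathcal{O}^{\rm Lip}_{f^*\mathpzc{V}}$, where $T_\xi\mapsto T_{f^*\xi}$ and Lipschitz sections pull back to Lipschitz sections, gives a symmetric $(\mathcal{O}^{\rm Lip}_{\mathpzc{V}},\C)$-chain. Arguing as in Subsection \ref{diraconcstarbundleexbordism}, I will assume after attaching a cylinder that $\slashed{D}_W$, $S_W$, the metric and $f$ are of product type on a collar $\partial W\times[0,1]$. Since $f$ is constant in the collar variable, $f^*p_E$ and hence $\nabla_{n,k}$ are also of product type there.

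Next, I will build the boundary data. Let $p$ be the characteristic function of the collar, and let $\theta$ be the canonical identification
$$pL^2(W;S_W\otimes\Xi)\cong L^2[0,1]\hat{\boxtimes}L^2(\partial W;S_{\partial W}\otimes\Xi|_{\partial W}).$$
Under $\theta$, on each summand $(n,k)$, the operator $\slashed{D}_{n,k}+\psi(n,k)\gamma_S$ becomes $\partial\hat{\boxtimes}(\slashed{D}^\partial_{n,k}+\psi(n,k)\cdot)$. This uses the standard product-type identities for Dirac operators and the fact that the potential term $\psi(n,k)\gamma$ respects the grading conventions of $\hat{\boxtimes}$; I expect this to need only bookkeeping with the odd/even cases. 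Conditions a)--c) of Definition \ref{HilBorDef} are local in the collar variable and act diagonally in $(n,k)$, so they follow as in the proof of Lemma \ref{generiddakladlalald}.

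The remaining step, and the main obstacle, is the half-closedness for $b(C^\infty_c((0,1],\mathcal{O}^{\rm Lip}_\mathpzc{V}))$: we need $b(\phi)(1+T^*T)^{-1}$ compact and $b(\phi)\Dom T^*\subseteq\Dom T$, uniformly over the infinite sum in $(n,k)$. The key points are the following.
\begin{itemize}
\item $b(\phi)$ acts through elements of $\mathcal{O}^{\rm Lip}_{f^*\mathpzc{V}}\cap\mathcal{O}^\circ_{f^*\mathpzc{V}}$, namely elements cut down by a function vanishing on $\partial W$.
\item The growth $|\psi(n,k)|\to\infty$, which is finite-to-one by Equation \eqref{definedodepsi}, gives compactness across summands.
\item Each summand has compact resolvent restricted to the interior.
\end{itemize}
All of this is already contained in the half-closedness statement of the proposition in Subsection \ref{cpalgebronmanfdodoex}, so I will invoke it rather than redo the estimate. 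The only subtlety I anticipate is checking that the commutators $[D_{f^*E},T_{f^*\xi}]$ are bounded uniformly in $(n,k)$. I would handle this by the diagonal structure of $D$ and the estimates of \cite{GMR,boundarypap}. With these in place, the collection is an $(\mathcal{O}^{\rm Lip}_\mathpzc{V},\C)$-bordism whose boundary is the stated cycle, which is therefore nullbordant.
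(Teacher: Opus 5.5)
Your proposal is correct and follows the paper's route. The paper defers to the argument of Lemma \ref{generiddakladlalald}, as reused in Subsections \ref{diraconcstarbundleexbordism} and \ref{groupactionsexbord}: boundary data comes from the product structure on the collar, and half-closedness for elements vanishing near $\partial W$ comes from the pulled-back Cuntz--Pimsner chain of Subsection \ref{cpalgebronmanfdodoex}, which is exactly what you do. The points you make explicit are the details the paper leaves implicit: a Lipschitz projection representing $E$, arranging $f$ (hence the Grassmann connections $\nabla_{n,k}$) to be of product type on the collar, and the parity bookkeeping for $\psi(n,k)\gamma$; one caveat, shared by the paper, is that modifying $f$ near $\partial W$ means the interior chain is strictly that of the modified map.
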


\subsection{Dirac-Schrödinger operators}
\label{diracschexbord}
In Subsection \ref{diracschex}, Dirac-Schrödinger operators of the form $\slashed{D}_{\mathcal{E},V}$ where discussed. Here $\slashed{D}_\mathcal{E}:C^\infty_c(W^\circ, \mathcal{E}_B)\to C^\infty_c(W^\circ, \mathcal{E}_B)$ is a complete $B$-linear Dirac operator and $V\in C^\infty(W^\circ, \End_B^*(\mathcal{E}_B))$ is self-adjoint and commutes with Clifford multiplication. In light of the techniques from Section \ref{exampleofboridmsmssec}, we can say more about such operators and their index theory. The reader can find references in Subsection \ref{diracschex}.

\begin{lemma}
\label{technicallemmafords}
We consider a complete Riemannian manifold with boundary $W$ and a Clifford-$B$-bundle $\mathcal{E}_B\to W$ equipped with a complete $B$-linear Dirac operator $\slashed{D}_\mathcal{E}:C^\infty_c(W^\circ, \mathcal{E}_B)\to C^\infty_c(W^\circ, \mathcal{E}_B)$ with minimal closure $D_{\mathcal{E}}^{\rm min}$ on $L^2(W,\mathcal{E}_B)$. Write $t\in C^\infty_b(W)\cap C_0(W^\circ)$ for a function nowhere zero in $W^\circ$ and defining the metric normal coordinate near the boundary. We also fix a Clifford linear self-adjoint element $V\in C^\infty(W^\circ, \End_B^*(\mathcal{E}_B))$ satisfying that for some $C>0$, and every $x\in W$, 
\begin{equation}
\label{estforvdirsch}
\|\nabla V(x)\|_{TW\otimes\End_B^*(\mathcal{E}_B)} \leq C(1+\|V(x)\|_{\End_B^*(\mathcal{E}_B)}).
\end{equation}
Then the operator $D_{\mathcal{E},V}^{\rm min}$ defined from $D_{\mathcal{E}}^{\rm min}$ and $V$ as in Equation \eqref{dirscalakdn} satisfies
\begin{enumerate}
\item $D_{\mathcal{E},V}^{\rm min}$ is regular and its adjoint is $D_{\mathcal{E},V}^{\rm max}$ -- defined from $D_{\mathcal{E}}^{\rm max}$ and $V$ as in Equation \eqref{dirscalakdn} . 
\item If $a\in C_b(W^\circ)$, satisfies that $a(i\pm V)^{-1}\in C_0(W;\End(\mathcal{E}_B))$ then the operator  
$$\Dom(D_{\mathcal{E},V}^{\rm min})\hookrightarrow L^2(W,\mathcal{E}_B)\xrightarrow{a}L^2(W,\mathcal{E}_B),$$
is $B$-compact.
\item If $(i\pm V)^{-1}\in tC_b(W^\circ;\End(\mathcal{E}_B))$ or $\partial W=\emptyset$ then 
$$(D_{\mathcal{E},V}^{\rm min})^*=D_{\mathcal{E},V}^{\rm min}.$$
\end{enumerate}
\end{lemma}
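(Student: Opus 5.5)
The plan is to treat $D_{\mathcal{E},V}$ as a sum of the Dirac part and the potential part, and apply the Kaad--Lesch type sum theorems (\cite[Theorem 1.3]{DGM}, \cite{leschkaad2}) exactly as in the proofs of Theorem \ref{bordismforodrdthm} and Lemma \ref{adlknalknad}. In the even-dimensional case write $\mathfrak{t}:=D^{\rm min}_{\mathcal{E}}$ and $\mathfrak{s}:=\gamma V$. In the odd case use the $2\times 2$ matrix form with $\mathfrak{t}$ the off-diagonal Dirac operator and $\mathfrak{s}$ the off-diagonal $\pm iV$. In both cases $\mathfrak{s}$ is a self-adjoint regular multiplication operator (its resolvent is fibrewise $(i\pm\gamma V)^{-1}$). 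Since $V$ commutes with Clifford multiplication, the anticommutator $\mathfrak{t}\mathfrak{s}+\mathfrak{s}\mathfrak{t}$, computed on $C^\infty_c(W^\circ,\mathcal{E}_B)$, is the zeroth order operator $c(\nabla V)$ (Clifford contraction of $\nabla V$). By \eqref{estforvdirsch}, $c(\nabla V)$ is relatively bounded by $1+|V|$, hence by $\mathfrak{s}$, with arbitrarily small relative bound after absorbing constants.

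\textbf{Step 1 (item 1).} Expanding $\langle(\mathfrak{s}+\mathfrak{t})\xi,(\mathfrak{s}+\mathfrak{t})\xi\rangle$ on the core gives the a priori estimate
$$\|(\mathfrak{s}+\mathfrak{t})\xi\|^2\geq (1-a)\|\mathfrak{s}\xi\|^2+\|\mathfrak{t}\xi\|^2-C\|\xi\|^2.$$
From this, closedness of $\mathfrak{s}+\mathfrak{t}$ on $\Dom(\mathfrak{s})\cap\Dom(D^{\rm min}_\mathcal{E})$ follows. For regularity and the identification of the adjoint, I would copy the approximate unit argument from Theorem \ref{bordismforodrdthm}, now with $u_{n,\pm}:=(i\pm \mathfrak{s}/n)^{-1}$. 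Fibrewise multiplication by $u_{n,\pm}$ preserves $\Dom(D^{\rm max}_\mathcal{E})$, and its commutator with the Dirac operator is $c(\nabla u_{n,\pm})$. The estimate \eqref{estforvdirsch} makes this commutator uniformly bounded and strictly convergent to $0$. Using this, $\Dom(D^{\rm min}_{\mathcal{E},V})$ is graph-dense in $\Dom((D^{\max}_{\mathcal{E}}+\mathfrak{s})^*)$, which gives $(D^{\rm min}_{\mathcal{E},V})^*=D^{\rm max}_{\mathcal{E},V}$. Regularity then follows from the local-global principle \cite[Theorem 1.1]{leschkaad2}. Completeness of $\slashed{D}_\mathcal{E}$ is used to know that $D^{\rm max}_\mathcal{E}$ is approximated by $C^\infty_c(\overline W)$-sections, so the manipulations only involve boundary terms. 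I expect this step to be the main obstacle: the commutator $[D_\mathcal{E},u_n]$ has to be controlled uniformly in $n$ using only the growth condition \eqref{estforvdirsch}, and $V$ may blow up at $\partial W$.

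\textbf{Step 2 (item 2).} By the a priori estimate, $\Dom(D^{\rm min}_{\mathcal{E},V})\hookrightarrow\Dom(D^{\rm min}_\mathcal{E})\cap\Dom(\mathfrak{s})$ continuously. I would factor $a$ as $a(i\pm V)^{-1}\cdot(i\pm V)$. Approximating $a(i\pm V)^{-1}\in C_0(W;\End(\mathcal{E}_B))$ in norm by compactly supported sections $\phi$ and using that $(i\pm V)$ is bounded from the domain to $L^2$, the operator is a norm limit of operators of the form $\phi\circ(\text{bounded})$ with $\phi$ compactly supported. Commuting $\phi$ past $(i\pm V)$ and using that $\phi\,\Dom(D^{\rm min}_\mathcal{E})\hookrightarrow L^2$ is $B$-compact (Subsection \ref{diraconcstarbundleex}, Rellich) yields compactness.

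\textbf{Step 3 (item 3).} If $\partial W=\emptyset$, then $D^{\rm min}_\mathcal{E}=D^{\max}_\mathcal{E}$ and item 1 gives self-adjointness. If instead $(i\pm V)^{-1}\in tC_b$, take $\xi\in\Dom(D^{\max}_{\mathcal{E},V})$. Then $\xi\in\Dom(\mathfrak{s})$, so $\xi/t$ is $L^2$ near the boundary. A Hardy-type argument in the collar then shows that the boundary trace vanishes. Concretely, I would cut off with $\chi(t/\epsilon)$ and show that $\xi\chi(t/\epsilon)\to 0$ in graph norm, since $\|\chi'(t/\epsilon)\xi/\epsilon\|\lesssim\|\xi/t\|_{L^2(t<\epsilon)}\to 0$. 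This places $\xi$ in $\Dom(D^{\rm min}_{\mathcal{E},V})$.
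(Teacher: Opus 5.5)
Your architecture is the paper's: item 1 by a Kaad--Lesch type sum theorem (the paper simply cites \cite[Theorem 1.3]{DGM}, whose proof you are redoing), item 2 by approximation in the graph norm, and item 3 by the identity $\Dom(D^{\rm max}_{\mathcal E})\cap\Dom(V)=\Dom(D^{\rm min}_{\mathcal E})\cap\Dom(V)$. However, Step 1 rests on a false claim.

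Since $V$ commutes with Clifford multiplication, $\mathfrak s$ \emph{anticommutes} with the principal symbol $\sigma(\mathfrak t)$ in both parities (through $\gamma$, resp.\ the off-diagonal matrix structure). That is precisely why $K:=\mathfrak t\mathfrak s+\mathfrak s\mathfrak t$ has order zero. Consequently $\sigma(\mathfrak t)\,u_{n,\pm}=u_{n,\mp}\,\sigma(\mathfrak t)$ for $u_{n,\pm}=(i\pm\mathfrak s/n)^{-1}$, and
\[
[\mathfrak t,u_{n,\pm}]=(u_{n,\mp}-u_{n,\pm})\,\mathfrak t+(\text{order zero})
\]
is a first order, unbounded operator wherever $V\neq 0$, not $c(\nabla u_{n,\pm})$. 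What does hold is the twisted identity $\mathfrak t^*u_{n,+}-u_{n,-}\mathfrak t^*=-\tfrac1n\,u_{n,-}Ku_{n,+}$. The proof of Theorem \ref{bordismforodrdthm} is built around exactly this sign swap (there $\mathfrak s_1$ anticommutes with $\mathfrak t$ exactly). So ``copying'' it does not give the uniformly bounded, strongly null commutator your density argument uses.

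The cleanest repair is an even approximate unit $u_n:=(1+\mathfrak s^2/n^2)^{-1}$. Since $\mathfrak s^2$ is $V^2$ (resp.\ $V^2\oplus V^2$), $u_n$ commutes with $\sigma(\mathfrak t)$ and with $\mathfrak s$, and it maps into $\Dom(\mathfrak s)$. Moreover $[\mathfrak t^*,u_n]=c(\nabla u_n)$ with $\nabla u_n=-n^{-2}u_n\bigl(V\nabla V+(\nabla V)V\bigr)u_n$, which is uniformly bounded and strongly null provided $\|(\nabla V)(1+|V|)^{-1}\|<\infty$.

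That operator-relative bound is also what your a priori estimate is derived from. For matrix-valued $V$ it is stronger than the pointwise norm inequality \eqref{estforvdirsch}. For example, take $V=\Lambda P$ with $\Lambda\to\infty$ and $P$ a rank one projection rotating at unit speed: \eqref{estforvdirsch} holds, yet $\nabla V$ maps $\ker P$ into $\mathrm{im}\,P$ with norm $\sim\Lambda$. So make this bound explicit; the paper's one-line appeal to \cite[Theorem 1.3]{DGM} passes over the same point.

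Two smaller repairs. In Step 2, an $\End$-valued $\phi$ need not commute with $V$. Take $\phi=\psi\,a(i\pm V)^{-1}$ with a scalar cut-off $\psi\in C_c(\overline W)$, so that $\phi(i\pm V)=\psi a$; compactness then follows from $B$-compactness of $\psi:\Dom(D^{\rm min}_{\mathcal E})\to L^2(W,\mathcal E_B)$. This spatial cut-off is a perfectly good alternative to the paper's route. The paper instead uses the multipliers $a(i+V/n)^{-1}\in C_0(W;\End(\mathcal E_B))$, which converge to $-ia$ in norm as maps out of the graph-normed domain.

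In Step 3, you may not assume that an arbitrary $\xi\in\Dom(D^{\rm max}_{\mathcal E,V})$ lies in $\Dom(\mathfrak s)$, because $D^{\rm max}_{\mathcal E,V}$ is only a closure. Instead, run your Hardy cut-off on the core $\Dom(D^{\rm max}_{\mathcal E})\cap\Dom(\mathfrak s)$ supplied by item 1, and conclude by closedness of $D^{\rm min}_{\mathcal E,V}$. That is exactly the paper's step $\overline{D^{\rm max}_{\mathcal E}-iV}=\overline{D^{\rm min}_{\mathcal E}-iV}$. Your estimate $\|\epsilon^{-1}\chi'(t/\epsilon)\xi\|\lesssim\|\xi/t\|_{L^2(\{t\lesssim\epsilon\})}\to0$ provides the justification of the domain identity that the paper leaves implicit.
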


\begin{proof}
The estimate \eqref{estforvdirsch} and \cite[Theorem 1.3]{DGM} implies item 1), because $V$ is Clifford linear and as such commutes with the principal symbol of $D$. We focus the argument around the case that $W$ is odd-dimensional and study the upper right corner $\slashed{D}_\mathcal{E}+iV$ in $\slashed{D}_{\mathcal{E},V}$. To prove item 2), we follow the argument in Remark \ref{commentoncprreosls}. Indeed, the operator 
$$\Dom(D_{\mathcal{E}}^{\rm min}+iV)\hookrightarrow L^2(W,\mathcal{E}_B)\xrightarrow{a}L^2(W,\mathcal{E}_B),$$
is the norm limit (in the graph norm of the domain) of the sequence of operators 
$$\Dom(D_{\mathcal{E}}^{\rm min}+V)\hookrightarrow L^2(W,\mathcal{E}_B)\xrightarrow{a(i+V/n)^{-1}}L^2(W,\mathcal{E}_B).$$
The latter operator is compact since $a(i\pm V)^{-1}:\Dom(D_{\mathcal{E}}^{\rm min})\to L^2(W,\mathcal{E}_B)$ is compact as soon as $a(i\pm V)\in C_0(W;\End(\mathcal{E}_B))$. To prove item 3) for $\partial W=\emptyset$, we note that $D_{\mathcal{E}}^{\rm max}=D_{\mathcal{E}}^{\rm min}$ is self-adjoint and item 3) follows from item 1). To prove item 3) for $(i\pm V)^{-1}\in tC_b(W^\circ;\End(\mathcal{E}_B))$, we note that this fact implies that
$$\Dom(D_{\mathcal{E}}^{\rm max})\cap\Dom(V)=\Dom(D_{\mathcal{E}}^{\rm min})\cap\Dom(V),$$
and it follows from item 1) that 
$$(D_{\mathcal{E}}^{\rm min}+iV)^*=\overline{D_{\mathcal{E}}^{\rm max}-iV}=\overline{D_{\mathcal{E}}^{\rm min}-iV}=D_{\mathcal{E}}^{\rm min}-iV.$$
\end{proof}

\begin{prop}
\label{lipcyclefords}
Consider a complete Riemannian manifold $M$, a metric space $X$ and a uniformly Lipschitz function $f:M\to X$. Assume that   $\mathcal{E}_B\to M$ is a Clifford-$B$-bundle equipped with a complete $B$-linear Dirac operator $\slashed{D}_\mathcal{E}$ with closure $D_{\mathcal{E}}$ on $L^2(M,\mathcal{E}_B)$. Assume that $V\in C^\infty(M, \End_B^*(\mathcal{E}_B))$ is a Clifford linear self-adjoint element satisfying 
\begin{itemize}
\item For some $C>0$, and every $x\in M$, the estimate \eqref{estforvdirsch} holds.
\item for any $a\in \Lip_0(X)$ it holds that 
$$f^*(a)(i\pm V)^{-1}\in C_0(M;\End(\mathcal{E}_B)).$$
\end{itemize}
Then $({}_{f^*}L^2(M;\mathcal{E}_B),D_{\mathcal{E},V})$ is a $(\Lip_0(X),B)$-cycle. 

Moreover, if there is a complete Riemannian manifold with boundary $W$ of product type near the boundary such that 
\begin{itemize}
\item $W$ bounds $M$ as a Riemannian manifold;
\item $\mathcal{E}_B\to M$ extends as a Clifford $B$-bundle to $\mathcal{F}_B\to W$;
\item there is a uniformly Lipschitz continuous map $f_W:W\to X$ such that $f_W|_{\partial W}=f$;
\item $V$ extends to a Clifford linear self-adjoint $V_W\in C^\infty(\overline{W}, \End_B^*(\mathcal{F}_B))$ of product type near the boundary satisfying the estimate \eqref{estforvdirsch} and for any $a\in \Lip_0(X)$ it holds that 
$$(f_W)^*(a)(i\pm V)^{-1}\in C_0(\overline{W};\End(\mathcal{F}_B)),$$
\end{itemize}
then $({}_{f^*}L^2(M;\mathcal{E}_B),D_{\mathcal{E},V})\sim_{\rm bor}0$ as $(\Lip_0(X),B)$-cycles.
\end{prop}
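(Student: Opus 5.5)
The proof has two parts. First we show that the closed Dirac–Schrödinger operator gives a cycle. Then, under the extension hypotheses, we show it is nullbordant via Lemma \ref{generiddakladlalald}-type reasoning, with Lemma \ref{technicallemmafords} supplying the analysis.

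\textbf{Cycle property.} Since $\partial M=\emptyset$, item 3) of Lemma \ref{technicallemmafords} gives that $D_{\mathcal{E},V}=D_{\mathcal{E},V}^{\rm min}$ is self-adjoint, and item 1) gives regularity. Local compactness is item 2) of Lemma \ref{technicallemmafords} applied with $a=f^*(b)$ for $b\in\Lip_0(X)$. The hypothesis $f^*(b)(i\pm V)^{-1}\in C_0$ is exactly what that item requires. It remains to check that $f^*(\Lip_0(X))\subseteq \mathrm{Lip}(D_{\mathcal{E},V})$.
\begin{itemize}
\item On the core $C^\infty_c$, the potential term $\gamma V$ (or the off-diagonal $\pm iV$ in the odd case) commutes with multiplication by the scalar function $f^*(b)$.
\item Hence $[D_{\mathcal{E},V},f^*(b)]=c(\mathrm{d}f^*(b))$ is Clifford multiplication by the gradient, which is bounded since $f$ is uniformly Lipschitz.
\item Preservation of the domain follows by approximating $f^*(b)$ by smooth Lipschitz functions with uniformly bounded gradients and using closedness. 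This step is routine, since $\Dom(D_{\mathcal{E},V})=\Dom(D_\mathcal{E}^{\rm min})\cap\Dom(V)$ by item 1) and the estimate \eqref{estforvdirsch}.
\end{itemize}

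\textbf{Bordism.} Take the interior chain to be $f_W^*(L^2(W,\mathcal{F}_B),D_{\mathcal{F},V_W}^{\rm min})$, of opposite parity via \eqref{dirscalakdn}. Take the boundary data $\Theta=(\theta,p)$ as in Lemma \ref{generiddakladlalald}: $p$ is the indicator of the collar $\partial W\times[0,1]$ (after attaching a cylinder if necessary), and $\theta$ is the product identification. Because $\mathcal{F}_B$, the metric, the Dirac operator and $V_W$ are all of product type on the collar, $\theta$ intertwines $D_{\mathcal{F},V_W}^{\rm min}$ restricted to the collar with $\Psi(D_{\mathcal{E},V})$ there. Conditions a)–c) of Definition \ref{HilBorDef} then follow as in the proof of Lemma \ref{generiddakladlalald}, since locality of the differential operator plus the potential gives c). Symmetry and regularity of the interior chain, with adjoint $D^{\rm max}_{\mathcal{F},V_W}$, come from item 1) of Lemma \ref{technicallemmafords}.

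The main obstacle is \emph{half-closedness} for $C^\infty_c((0,1],\Lip_0(X))$ acting through $b$. An element $b(\phi)$ acts by multiplication by a uniformly Lipschitz function $g$ that vanishes near $\partial W$ and has the form $f_W^*(a)$ away from the collar.
\begin{itemize}
\item \emph{Local compactness.} Use item 2) of Lemma \ref{technicallemmafords} with the hypothesis $f_W^*(a)(i\pm V_W)^{-1}\in C_0(\overline{W})$. This covers $g(1+T^*T)^{-1}$ after composing with the collar cutoff.
\item \emph{The condition $g\Dom(T^*)\subseteq\Dom(T)$.} Since $g$ vanishes near $\partial W$, for $\xi\in\Dom(D^{\rm max}_{\mathcal{F}})\cap\Dom(V_W)$ the product $g\xi$ lies in $\Dom(D^{\rm min}_\mathcal{F})$, by the standard interior-regularity fact used in Subsection \ref{diraconcstarbundleex}. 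It also lies in $\Dom(V_W)$ because $g$ is bounded and commutes with $V_W$. Item 1) then identifies this intersection with the minimal domain of $D_{\mathcal{F},V_W}$.
\end{itemize}
I expect the delicate point to be controlling $\Dom(T^*)$: one must ensure it really equals $\Dom(D^{\rm max})\cap\Dom(V_W)$, which item 1) of Lemma \ref{technicallemmafords} provides via \eqref{estforvdirsch}. This yields the bordism, so $({}_{f^*}L^2(M;\mathcal{E}_B),D_{\mathcal{E},V})\sim_{\rm bor}0$.
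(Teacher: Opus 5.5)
Your outline is the argument the paper intends. The paper omits the proof, saying only that the cycle property follows directly from Lemma~\ref{technicallemmafords} and that the nullbordism goes as in Lemma~\ref{generiddakladlalald}, with Lemma~\ref{technicallemmafords} supplying the analysis. Your first part (items 1--3 of the lemma together with $[D_{\mathcal{E},V},f^*(a)]=c(\mathrm{d}f^*(a))$) and your interior chain $f_W^*(L^2(W,\mathcal{F}_B),D^{\rm min}_{\mathcal{F},V_W})$ with collar boundary data are exactly that.

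One step needs correcting, and it is the one you flag as delicate. Item 1 of Lemma~\ref{technicallemmafords} does not give $\Dom(T^*)=\Dom(D^{\rm max}_{\mathcal{F}})\cap\Dom(V_W)$. It gives $T^*$ as the \emph{closure} of the operator built from $D^{\rm max}_{\mathcal{F}}$ and $V_W$ on that intersection (compare the proof of item 3), and the equality fails in general.
\begin{itemize}
\item On the collar, $T^*$ is $\Psi(D_{\mathcal{E},V})^*$. Its domain contains elements $\chi(t)\mathrm{e}^{-t|D_{\mathcal{E},V}|}\eta$, with $\eta$ in a spectral half-space of $D_{\mathcal{E},V}$ but not in $\Dom(|D_{\mathcal{E},V}|^{1/2})$.
\item If $\|V\zeta\|$ is comparable to $\|D_{\mathcal{E},V}\zeta\|$, these elements are not in $\Dom(V_W)$. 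An example is $M=\R$, $V(x)=x$, where $D_{\mathcal{E},V}$ is a harmonic oscillator. The estimate \eqref{estforvdirsch} closes the sum only for the minimal operator, because pairing with $D^{\rm max}_{\mathcal{F}}$ produces an uncontrolled boundary term.
\end{itemize}

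The repair is short, and it works the same way for condition a) of Definition~\ref{HilBorDef}.
\begin{itemize}
\item The intersection is a core for $T^*$.
\item For $\xi$ in the core, your argument gives $g\xi\in\Dom(T)$ with $T(g\xi)=gT^*\xi+c(\mathrm{d}g)\xi$.
\item Approximate a general $\xi\in\Dom(T^*)$ by core elements in graph norm. Since $c(\mathrm{d}g)$ is bounded and $T$ is closed, $g\xi\in\Dom(T)$.
\end{itemize}

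A smaller point: $f_W$ is only assumed to restrict to $f$ on $\partial W$, not to be of product type. So the cylinder you attach should carry $f\circ\mathrm{pr}_M$ and $V\circ\mathrm{pr}_M$. Then the $\Lip_0(X)$-action is of product type on the collar, and $\theta$ intertwines it with the action on $L^2[0,1]\hat{\boxtimes}L^2(M,\mathcal{E}_B)$.
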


We omit the proof of Proposition \ref{lipcyclefords}, as the first part is a direct consequence of Lemma \ref{technicallemmafords} and the second part goes along the same lines as that of Lemma \ref{generiddakladlalald} (using Lemma \ref{technicallemmafords}). An immediate corollary of Proposition \ref{lipcyclefords}, using the index isomorphism $\Omega_*(\C,B)\cong K_*(B)$ from \cite[Theorem 3.7]{DGM}, provides a vanishing result for Dirac-Schrödinger in the presence of a bounding manifold.

\begin{cor}
Let $W$ be a complete Riemannian manifold with boundary of product type near the boundary, $\mathcal{F}_B\to W$ a Clifford $B$-bundle and $V\in C^\infty(\overline{W}, \End_B^*(\mathcal{F}_B))$ a Clifford linear self-adjoint endomorphism of product type near the boundary such that the estimate \eqref{estforvdirsch} holds and that
$$(i\pm V)^{-1}\in C_0(\overline{W};\End(\mathcal{F}_B)).$$
Then $D_{\mathcal{F},V^\partial}^{\partial W}$ is a $B$-Fredholm operator on $L^2(\partial W, \mathcal{F}^\partial)$ with  
$$\ind(D_{\mathcal{F},V^\partial}^{\partial W})=0\in K_*(B).$$
\end{cor}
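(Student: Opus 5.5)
The plan is to apply Proposition \ref{lipcyclefords} with $X$ a point, which gives $\Lip_0(X)=\C$, together with the index isomorphism $\Omega_*(\C,B)\cong K_*(B)$ from \cite[Theorem 3.7]{DGM}. Write $M:=\partial W$, $\mathcal{E}_B:=\mathcal{F}^\partial$ and $V^\partial:=V|_{\partial W}$. Since everything is of product type near the boundary, $V^\partial$ is Clifford linear and self-adjoint. It satisfies the estimate \eqref{estforvdirsch} on $M$ because the covariant derivative of $V^\partial$ along $M$ is the tangential part of $\nabla V$ on the collar. Let $f:M\to \mathrm{pt}$ and $f_W:W\to \mathrm{pt}$ be the constant maps, which are trivially uniformly Lipschitz and satisfy $f_W|_{\partial W}=f$.

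The hypothesis $(i\pm V)^{-1}\in C_0(\overline{W};\End(\mathcal{F}_B))$ gives the condition for $f_W$ directly, with $a=1$ (and hence for all $a\in\C$). For $f$, I will check that $(i\pm V^\partial)^{-1}\in C_0(M;\End(\mathcal{E}_B))$. This is where product structure is needed. On the collar $[0,\epsilon)\times \partial W$ we have $V=V^\partial$ independent of the normal variable, so $(i\pm V^\partial)^{-1}$ is the restriction of $(i\pm V)^{-1}$ to $\partial W$. Restriction of a $C_0(\overline{W})$ section to the closed subset $\partial W$ is $C_0$ on $\partial W$. I do not expect this to be an obstacle; the only point to handle is the odd/even conventions of \eqref{dirscalakdn}, which is bookkeeping.

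With these hypotheses verified, the first part of Proposition \ref{lipcyclefords} shows that $(L^2(\partial W,\mathcal{F}^\partial),D^{\partial W}_{\mathcal{F},V^\partial})$ is a $(\C,B)$-cycle. For $B$ unital and $\C$ acting unitally, this means $(1+D^*D)^{-1}$ is $B$-compact and $D$ is self-adjoint, which is exactly what $B$-Fredholmness of $D^{\partial W}_{\mathcal{F},V^\partial}$ requires. Strictly, $B$-Fredholmness is a statement about the graded component, or about the bounded transform, and follows from the compact resolvent in the standard way. The second part of the proposition gives $(L^2(\partial W,\mathcal{F}^\partial),D^{\partial W}_{\mathcal{F},V^\partial})\sim_{\rm bor}0$ in $\Omega_*(\C,B)$.

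Finally, I will apply the isomorphism $\Omega_*(\C,B)\to K_*(B)$ of \cite[Theorem 3.7]{DGM}. It sends the class of a cycle with $B$-Fredholm operator to its index, and in particular it factors through the bounded transform into $KK_*(\C,B)\cong K_*(B)$. Under this map the null-bordant class goes to $0$, so $\ind(D^{\partial W}_{\mathcal{F},V^\partial})=0$. The step needing the most care is matching the index map of \cite{DGM} with the $B$-Fredholm index in the relevant parity. I would handle this by noting that both are computed from the bounded transform.
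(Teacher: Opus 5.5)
Your proposal is correct and matches the paper's argument: the paper also applies Proposition \ref{lipcyclefords} with $X$ a point, so that $\Lip_0(X)=\C$ and the constant maps play the roles of $f$ and $f_W$, and then pushes the resulting null-bordism through the index isomorphism $\Omega_*(\C,B)\cong K_*(B)$ of \cite[Theorem 3.7]{DGM}. One small correction: the identity $(i\pm V^\partial)^{-1}=(i\pm V)^{-1}|_{\partial W}$ holds pointwise and does not need the product structure; that structure is used instead for the collar data of the bordism and for the estimate on $\partial W$.
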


Consider the continuously differentiable function 
$$\mathrm{sgnlog}:\R\to \R, \quad \mathrm{sgnlog}(x):=
\begin{cases}
x|x|^{-1}\log(1+|x|), \; &x\neq 0\\
0, \; &x=0.\end{cases}.$$
Note that $\mathrm{sgnlog}'(x)=(1+|x|)^{-1}$ and that $\mathrm{sgnlog}$ is a proper function. For a self-adjoint regular operator $T$, we write $T_{\log}:=\mathrm{sgnlog}(T)$.

\begin{lemma}
\label{logdampfpotential}
Let $M$, $X$, $f:M\to X$, $\mathcal{E}_B\to M$, $D_{\mathcal{E}}$, and $V\in C^\infty(M, \End_B^*(\mathcal{E}_B))$ be as in Proposition \ref{lipcyclefords}. Then there is a constant $C>0$ such that $V_{\log}\in C^1(M, \End_B^*(\mathcal{E}_B))$ satisfies the estimate 
$$\|\nabla V_{\log}(x)\|_{TM\otimes\End_B^*(\mathcal{E}_B)} \leq C,$$
for all $x\in M$, and there is a $(\Lip_0(X),B)$-bordism
$$({}_{f^*}L^2(M;\mathcal{E}_B),D_{\mathcal{E},V})\sim_{\rm bor}({}_{f^*}L^2(M;\mathcal{E}_B),D_{\mathcal{E},V_{\log}}).$$
\end{lemma}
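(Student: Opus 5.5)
The lemma has two parts, and I would handle them in order: first the gradient estimate for $V_{\log}$, then the bordism.

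For the gradient estimate I would use functional calculus fibrewise. The potential $V$ is a self-adjoint section of $\End_B^*(\mathcal{E}_B)$, so $V_{\log}(x)=\mathrm{sgnlog}(V(x))$ is defined pointwise. To differentiate along a vector field I would write $\mathrm{sgnlog}$ as an integral of resolvents, or use the Daletskii--Krein type formula: for operator functions, the derivative of $\mathrm{sgnlog}(V)$ in direction $v$ is bounded by the divided difference norm of $\mathrm{sgnlog}$ applied to $\nabla_v V$. Since $\mathrm{sgnlog}'(x)=(1+|x|)^{-1}$, a resolvent representation of the form
$$\mathrm{sgnlog}(V)=\int (\ldots)(\lambda-V)^{-1}\ldots$$
should show that $\nabla_v V_{\log}$ is dominated by $\|(1+|V|)^{-1/2}\nabla_vV(1+|V|)^{-1/2}\|$. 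Combined with \eqref{estforvdirsch}, this gives a uniform bound $C$. In the noncommutative fibre this is the technical point; I would first try to reduce to the commuting case by noting that $\nabla V$ and $V$ sit in a $C^*$-algebra, and use the integral formula $\mathrm{sgnlog}$-type operator Lipschitz estimates. Some loss, e.g. a $C^1$ rather than smooth $V_{\log}$, is acceptable.

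For the bordism I would check that $V_{\log}$ again satisfies the hypotheses of Proposition \ref{lipcyclefords}. The growth estimate holds even more strongly, since the gradient is bounded. Properness is preserved: $(i\pm V_{\log})^{-1}=g(V)(i\pm V)^{-1}$-type relations, with $\mathrm{sgnlog}$ proper, give $f^*(a)(i\pm V_{\log})^{-1}\in C_0$. Thus both operators define cycles.

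I would then interpolate via $V_s:=(1-s)V+sV_{\log}=h_s(V)$, with $h_s(x)=(1-s)x+s\,\mathrm{sgnlog}(x)$. Each $h_s$ is proper, monotone, and odd, so each $V_s$ satisfies \eqref{estforvdirsch} uniformly in $s$ (its gradient is bounded by $\nabla V$ plus a constant) and the $C_0$ condition. Hence each $(L^2,D_{\mathcal{E},V_s})$ is a cycle by Lemma \ref{technicallemmafords}, and the resulting family is self-adjoint and regular.

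To get the bordism I would apply Theorem \ref{perturbationgenerelthm}. This needs a common core for $D_{\mathcal{E},V}$ and $D_{\mathcal{E},V_{\log}}$, namely $C^\infty_c(M,\mathcal{E}_B)$, which is a core for each by completeness and Lemma \ref{technicallemmafords}. It also needs $sD_{\mathcal{E},V}+(1-s)D_{\mathcal{E},V_{\log}}=D_{\mathcal{E},V_s}$ to be regular and self-adjoint, which holds by the previous paragraph. The subtle point is that Theorem \ref{perturbationgenerelthm} demands exactly the convex combination of operators with the same domain core, and that the closures agree. I expect the main obstacle is verifying that the closure of the convex combination on the core equals $D_{\mathcal{E},V_s}$. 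This follows since both are closures from $C^\infty_c$, which is a core for $D_{\mathcal{E},V_s}$ by Lemma \ref{technicallemmafords}(1) with $\partial M=\emptyset$.
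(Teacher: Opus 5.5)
The bordism step is where the proposal breaks, and not at the point you flag: identifying the closure of the convex combination on $C^\infty_c(M,\mathcal{E}_B)$ with $D_{\mathcal{E},V_s}$ is routine.

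Theorem \ref{perturbationgenerelthm} is proved by checking the hypotheses of Lemma \ref{lemmahomotopyresolven}, i.e.\ norm continuity of $t\mapsto D(t)R_\mu'(t)$. That requires $(D_{\mathcal{E},V}-D_{\mathcal{E},V_{\log}})(i\mu+D(t))^{-1}$ to be bounded uniformly along the path, as it is for the relatively bounded perturbations of Corollary \ref{locbddperturcor}. So even though the two listed hypotheses look satisfied, the mechanism does not apply here. The difference is built from $V-V_{\log}$, which is not relatively bounded by $D_{\mathcal{E},V_{\log}}$: for non-compact $M$ one has $\Dom(D_{\mathcal{E},V_{\log}})\not\subseteq\Dom(V)$. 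Testing on sections supported where $\chi|V|\gg\log(1+|V|)$ shows that along $D(t)=D_{\mathcal{E},\chi V+(1-\chi)V_{\log}}$ the norm of $D(t)R_\mu'(t)$ is at least of order $|\chi'(t)|/\chi(t)$. This is unbounded for every smooth cutoff that reaches $0$.

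This domain jump is the same phenomenon as in order reduction, where the paper likewise avoids Theorem \ref{perturbationgenerelthm}. The paper's proof of this lemma runs the argument of Theorem \ref{bordismforodrdthm} with $\mathfrak{t}=\partial+D_{\mathcal{E}}$ and $\mathfrak{s}$ the path of potentials. It verifies hypothesis 4) of Theorem \ref{homotopywithsweakcond} directly: closedness of $\mathfrak{t}+\mathfrak{s}$ on $\Dom(\mathfrak{t})\cap\Dom(\mathfrak{s})$ comes from a lower bound on $\|(\mathfrak{t}+\mathfrak{s})\xi\|^2$ in which the cross terms $\partial_t\mathfrak{s}$ and $c(\nabla\mathfrak{s})$ are absorbed, and the adjoint is handled by the approximate-unit argument.

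The estimate you are missing is therefore a form bound of $\partial_t\mathfrak{s}$ by $\mathfrak{s}^2$ with small relative constant, uniformly in $t$ (the analogue of Lemma \ref{relaraldknaldn}). For the straight-line path, $\partial_t\mathfrak{s}=\chi'(V-V_{\log})$ is comparable to $(\chi'/\chi)\,\mathfrak{s}$ where $\chi|V|\gg\log(1+|V|)$, so this uniformity degenerates near the zero of $\chi$. One fix is $\mathfrak{s}(t)=h_t(V)$ with $h_t$ odd and $(1+h_t(\lambda))^{-1}=\chi(t)(1+\lambda)^{-1}+(1-\chi(t))(1+\log(1+\lambda))^{-1}$ for $\lambda\geq 0$. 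This gives $\partial_t\mathfrak{s}=o(\mathfrak{s}^2)$ uniformly in $t$.

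The gradient estimate also has a gap. Grant the double-operator-integral bound by $\|(1+|V|)^{-1/2}\nabla_vV(1+|V|)^{-1/2}\|$. Hypothesis \eqref{estforvdirsch} only compares $\|\nabla V(x)\|$ with $1+\|V(x)\|$, the top of the spectrum of $V(x)$. The sandwich, however, damps each matrix entry of $\nabla V(x)$ only by the spectral values it connects. The two therefore combine into a uniform bound only if $V$ is scalar, or more generally if $1+\|V(x)\|$ stays comparable to $1+\min|\mathrm{spec}\,V(x)|$.

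Without such an assumption the bound is false. Take $M=\R$ with the trivial bundle $\C^2$ and scalar Clifford multiplication, and $V(x)=R_x\,\mathrm{diag}(\sqrt{1+x^2},\,1+x^2)\,R_x^*$ with $R_x$ the rotation by the angle $x$. This $V$ satisfies \eqref{estforvdirsch} and $(i\pm V)^{-1}\in C_0(\R;M_2(\C))$. Yet $R_x^*\nabla V_{\log}R_x$ has off-diagonal entries of modulus $\log\frac{2+x^2}{1+\sqrt{1+x^2}}\to\infty$. The paper's proof does not address this part separately; for scalar $V$ it is just $\nabla V_{\log}=(1+|V|)^{-1}\nabla V$.
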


\begin{proof}
The proof goes as that of Theorem \ref{bordismforodrdthm} but now using $s(t)=\chi(t)V+(1-\chi(t))V_{\log}=V|V|^{-1}(\chi(t)|V|+(1-\chi(t))\log(1+|V[))$ and $t=\partial+D_{\mathcal{E}}$. 
\end{proof}

\begin{theorem}
\label{calliasindxtheoremem}
Let $M$ be an odd-dimensional complete Riemannian manifold such that for some compact $K\subseteq M$ and a compact manifold $N$ such that 
$$M\setminus K\cong [1,\infty)\times N,$$
and that the metric takes the form $g|_{M\setminus K}=\mathrm{d} t^2+h(t)^2g_N$ for some smooth function $h$ and metric $g_N$ on $N$. Let $\mathcal{E}_B\to M$ be a $B$-bundle with connection $\nabla_\mathcal{E}$ and $V\in C^\infty(M, \End_B^*(\mathcal{E}_B))$ a self-adjoint endomorphism such that the estimate \eqref{estforvdirsch} holds and that
$$(i\pm V)^{-1}\in C_0(M;\End(\mathcal{E}_B)).$$
For a Dirac operator $D_M$ on a Clifford bundle $S\to M$ of product type at infinity, with associated operator $D_N$ on the link, $D_M\otimes_{\nabla_\mathcal{E}}1+iV$ is a $B$-Fredholm operator on $L^2(M, S_M\otimes \mathcal{E}_B)$ with  
$$\ind(D_M\otimes_{\nabla_\mathcal{E}}1+iV)=\ind(\chi^+(V|_N)(D_N\otimes 1)\chi^+(V|_N))\in K_0(B),$$
where $\chi^+(V|_N)(D_N\otimes_{\nabla_\mathcal{E}}1)\chi^+(V|_N))$ denotes the compression of $D_N$ to  $\chi^+(V|_N)\mathcal{E}_B|_N\to N$.
\end{theorem}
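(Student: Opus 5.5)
The plan is to reduce the statement to an identity in $\Omega_*(\C,B)\cong K_*(B)$ (the index isomorphism of \cite[Theorem 3.7]{DGM}) by building an explicit $KK$-bordism between the Callias-type cycle on $M$ and a cycle concentrated on the link $N$. Fredholmness is handled first. Lemma \ref{technicallemmafords} with $\partial M=\emptyset$ makes $D_{M,V}:=D_M\otimes_{\nabla_\mathcal{E}}1+iV$, in the odd-dimensional doubled form of Equation \eqref{dirscalakdn}, self-adjoint and regular. The assumption $(i\pm V)^{-1}\in C_0(M;\End(\mathcal{E}_B))$ applied with $a=1$ in item 2) gives compact resolvent. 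Consequently $(L^2(M,S_M\otimes\mathcal{E}_B)\otimes\C^2,D_{M,V})$ is an even $(\C,B)$-cycle, and its class in $K_0(B)$ is $\ind(D_M\otimes_{\nabla_\mathcal{E}}1+iV)$.

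Next I would simplify the potential using bordisms. By Lemma \ref{logdampfpotential} we may replace $V$ with $V_{\log}$, whose gradient is bounded; this lets $V$ be treated as a Lipschitz, bounded-commutator perturbation along the cylinder. Using Corollary \ref{locbddanticommcor}, together with a homotopy of operators as in Theorem \ref{homotopywithsweakcond} and Lemma \ref{lemmahomotopyresolven}, I would deform $V$ on the end $[1,\infty)\times N$ in two ways. First, make it independent of $t$ up to a radial scaling, so that $V=\phi(t)V|_N$ with $\phi\to\infty$. Second, make $V|_N$ invertible with spectral projection $\chi^+(V|_N)$. Likewise I would deform the warping $h$ to be constant for large $t$, in the spirit of Proposition \ref{generiddakladlalaldcc}; all of these steps should be locally bounded perturbations. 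On the end, $D_{M,V}$ then splits according to $\chi^\pm(V|_N)$ as $\partial_t\hat{\boxtimes}(D_N\otimes 1)\pm i\phi(t)|V|_N|$.

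The core step is to cut $M$ at a large $t=R$ and view the compact piece $M_R$ as an interior chain. Its boundary cycle is the compression $(\chi^+(V|_N)L^2(N),\chi^+D_N\chi^+)\oplus(\chi^-(V|_N)L^2(N),-\chi^-D_N\chi^-)$, modulo a weakly degenerate remainder. The intended mechanism is the Shubin-type null-bordism of Theorem \ref{weakdegnullbordthm}: on the infinite end, the potential $\phi(t)|V|_N|$ anticommutes with $\Psi(D_N)$ off-diagonally in the same way $X(0\hat{\boxtimes}S)$ does for $D_{sh}$. This should make the end a bordism from the boundary data to $0$ in the off-diagonal part, leaving the diagonal compressions. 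Gluing along the common boundary via Proposition \ref{subsecglyinggluing} and Hilsum's gluing \eqref{glueToGetClosed} would then show that the class of $D_{M,V}$ equals the class obtained by closing off the interior piece. Its index is in turn the APS-type contribution of the compressed link operator, and the $\chi^-$ part cancels against the $\chi^+$ part up to the sign convention. The outcome should be $[D_{M,V}]=[\chi^+(V|_N)(D_N\otimes1)\chi^+(V|_N)]$ in $\Omega_0(\C,B)\cong K_0(B)$.

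I expect the main obstacle to be this splitting of the end. $D_N\otimes 1$ does not commute with $\chi^\pm(V|_N)$, so the compressions differ from $D_N$ by off-diagonal terms $\chi^+[D_N,\chi^+]\chi^-$. These must be shown to be locally bounded, and their effect must be absorbed into a bordism. To do this I would exploit the bounded gradient of $V_{\log}$ from Lemma \ref{logdampfpotential} to bound $[D_N,\chi^\pm(V|_N)]$. With that bound, Corollary \ref{locbddperturcor} should remove the off-diagonal terms before applying Theorem \ref{weakdegnullbordthm}.
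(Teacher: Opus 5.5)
Your first step, replacing $V$ by $V_{\log}$ via Lemma \ref{logdampfpotential}, is exactly the paper's. The off-diagonal terms $\chi^+[D_N,\chi^+]\chi^-$ that you flag as the main obstacle are harmless: they are bounded, since $\chi^+(V|_N)$ is a smooth projection over the compact $N$.

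\textbf{Where the gap is.} The problem is the step you pass over. The Callias cycle on odd-dimensional $M$ is even, so by Definition \ref{HilBorDef} the boundary of the interior chain on $M_R$ is an \emph{odd}, ungraded cycle on $L^2(N,S_N\otimes\mathcal{E}_B)$. Up to conventions it is $A_N+\gamma_NV|_N$, with $A_N$ the tangential operator and $\gamma_N$ the grading of $S_N$. After your deformations it is, up to bounded terms, $D_0+S$ with $D_0=\chi^+A_N\chi^++\chi^-A_N\chi^-$ and $S=\gamma_N(\chi^+-\chi^-)$. These anticommute, so up to a bounded perturbation the whole boundary cycle is weakly degenerate; it is not ``the compression modulo a weakly degenerate remainder''. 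Your end is then a Shubin-type null-bordism of it, as in Theorem \ref{weakdegnullbordthm}.

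Gluing that end back onto $M_R$ simply returns $D_{M,V}$. All you learn is that $\ind(D_{M,V})$ is an APS-type index of the compact piece $M_R$ (compare Theorem \ref{shubinada}). The graded operator $\chi^+(D_N\otimes1)\chi^+$, whose index in $K_0(B)$ is the right-hand side, never appears. As an ungraded odd cycle it has the Clifford symmetry $\gamma_N$ and is null-bordant by Proposition \ref{cliffsymlemprop}.

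So the sentence ``its index is the APS-type contribution of the compressed link operator, and the $\chi^-$ part cancels against the $\chi^+$ part'' is where the whole Callias theorem is hidden. As written it also points the wrong way. Since $\mathcal{E}_B|_N$ extends over $M_R$, Lemma \ref{generiddakladlalald} gives $\ind(\chi^+D_N\chi^+)+\ind(\chi^-D_N\chi^-)=\ind(D_N\otimes 1)=0$, so a cancellation would produce $0$.

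\textbf{What is needed.} You need an actual computation of that index. The paper's proof is one line: log-damp, then proceed as in \cite{kucerovskycallias}.
\begin{enumerate}
\item Since $(i\pm V)^{-1}\in C_0(M;\End(\mathcal{E}_B))$, the potential defines a $K$-theory class over $C_0(M)\otimes B$.
\item The bounded commutator $[D_M,V]$, supplied by Lemma \ref{logdampfpotential}, is exactly what Kucerovsky's criterion needs to identify $D_M\otimes_{\nabla_\mathcal{E}}1+iV$ with the Kasparov product of that class with the Dirac class of $M$.
\item The formula then follows from naturality of boundary maps. The class of $V$ is the image of $[\chi^+(V|_N)]$ under the boundary map of the radial compactification, and dually the Dirac class of $M$ is sent to that of $N$.
\end{enumerate}
To stay within bordisms you would need a substitute. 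One option is the model on $\R\times N$ with potential $t\,(\chi^+-\chi^-)$. There, separation of variables (harmonic oscillator tensored with $D_0$ graded by $S$) gives $\ind(\chi^+D_N\chi^+)-\ind(\chi^-D_N\chi^-)$ up to sign conventions. Combine this with a Bunke-type gluing argument as in Theorem \ref{gluingapsthememe}, and use the vanishing above to isolate the $\chi^+$ term.

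\textbf{Smaller technical points.} Putting $V$ into the form $\phi(t)V|_N$ and straightening the warping $h$ are non-compactly supported changes, not locally bounded perturbations. Proposition \ref{generiddakladlalaldcc} only covers compact manifolds. Both steps need their own homotopy arguments, via Theorem \ref{homotopywithsweakcond}.
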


\begin{proof}
We can reduce $|\nabla V|\lesssim 1$ with Lemma \ref{logdampfpotential} and the proof the proceeds as in \cite{kucerovskycallias}.
\end{proof}

From Theorem \ref{calliasindxtheoremem} and the Atiyah-Singer index theorem for families, we deduce the following. 

\begin{cor}
Let $M$ be an odd-dimensional complete Riemannian spin$^c$-manifold such that for some compact $K\subseteq M$ and a compact manifold $N$ such that 
$$M\setminus K\cong [1,\infty)\times N,$$
and that the metric takes the form $g|_{M\setminus K}=\mathrm{d} t^2+h(t)^2g_N$ for some smooth function $h$ and metric $g_N$ on $N$. Assume that $B=C(X)$ for a closed manifold $X$ and that $\mathcal{E}^\infty_X\to M$ is a locally trivial bundle of finitely generated $C^\infty(X)$-modules and $\mathcal{E}_B:=\mathcal{E}^\infty_X\otimes_{C^\infty(X)}\otimes C(X)$ with connection $\nabla_\mathcal{E}$ and total connection $\pmb{\nabla}_\mathcal{E}$. Assume that $V\in C^\infty(M, \End_{C^\infty(X)}^*(\mathcal{E}_X^\infty))$ is a self-adjoint endomorphism such that the estimate \eqref{estforvdirsch} holds and that
$$(i\pm V)^{-1}\in C_0(M;\End(\mathcal{E}_B)).$$
For the spin$^c$-Dirac operator $D_M$, with associated operator $D_N$ on the link, $D_M\otimes_{\nabla_\mathcal{E}}1+iV$ is a $B$-Fredholm operator on $L^2(M, S_M\otimes \mathcal{E}_B)$. The Chern character of the index $\ind(D_M\otimes_{\nabla_\mathcal{E}}1+iV)\in K_0(B)=K^0(X)$ is computed as  
$$\ch(\ind(D_M\otimes_{\nabla_\mathcal{E}}1+iV))=\int_N\ch(\chi^+(V|_N)\pmb{\nabla}_\mathcal{E}\chi^+(V|_N))\wedge \mathrm{Td}(N)\in H^{\rm ev}_{dR}(X),$$
\end{cor}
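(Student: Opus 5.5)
The plan is to reduce the Chern character computation to the index formula of Theorem \ref{calliasindxtheoremem} and then apply the Atiyah--Singer index theorem for families over $X$ to the resulting compressed operator on the closed link $N$.

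First, Theorem \ref{calliasindxtheoremem} applies verbatim with $B=C(X)$ and $S=S_M$ the spin$^c$-spinor bundle, which is of product type at infinity. It gives that $D_M\otimes_{\nabla_\mathcal{E}}1+iV$ is $B$-Fredholm and that
$$\ind(D_M\otimes_{\nabla_\mathcal{E}}1+iV)=\ind\big(\chi^+(V|_N)(D_N\otimes_{\nabla_\mathcal{E}}1)\chi^+(V|_N)\big)\in K_0(C(X))=K^0(X).$$
So it remains to compute the Chern character of the right hand side.

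Second, I would reinterpret the right hand side geometrically. The condition $(i\pm V)^{-1}\in C_0(M;\End(\mathcal{E}_B))$ ensures that $V|_N$ is invertible for $t$ large; we restrict to such a slice. Since $V$ is smooth in the $C^\infty(X)$-structure, $\chi^+(V|_N)$ is a smooth projection in $C^\infty(N,\End_{C^\infty(X)}(\mathcal{E}^\infty_X|_N))$. Its range $\mathcal{P}:=\chi^+(V|_N)\mathcal{E}^\infty_X|_N$ is therefore a locally trivial bundle of finitely generated projective $C^\infty(X)$-modules over $N$. By the Serre--Swan theorem it is equivalently a smooth vector bundle over $N\times X$. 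The compression $\chi^+(V|_N)(D_N\otimes_{\nabla_\mathcal{E}}1)\chi^+(V|_N)$ is the $N$-Dirac operator twisted by $\mathcal{P}$ with the compressed connection, up to a zeroth order term. Such a term is bounded, so by Corollary \ref{locbddperturcor} it changes neither the $KK$-bordism class nor the index. We are thus looking at a family of twisted spin$^c$-Dirac operators on the closed manifold $N$, parametrised by $X$. The grading makes sense because $N$ is even-dimensional, being the boundary of the odd-dimensional $M$.

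Third, the Atiyah--Singer families index theorem for the trivial fibration $N\times X\to X$, in its Bismut superconnection form, gives
$$\ch(\ind)=\int_N \ch(\mathcal{P},\pmb{\nabla}_{\mathcal{P}})\wedge\mathrm{Td}(N)\in H^{\rm ev}_{dR}(X).$$
Here $\pmb{\nabla}_\mathcal{P}=\chi^+(V|_N)\pmb{\nabla}_\mathcal{E}\chi^+(V|_N)$ is the compressed total connection in the $N$ and $X$ directions. The de Rham class on the right does not depend on the choice of connection, which matches the stated formula.

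The main obstacle is the identification in the second step. One must check that the Hilbert $C(X)$-module index of the compression agrees with the families index in $K^0(X)$ of the corresponding bundle over $N\times X$. This comes down to the standard equivalence between $C(X)$-linear Fredholm operators on $C(X)$-Hilbert modules and continuous families of Fredholm operators, in the sense of Mishchenko--Fomenko, applied after the smoothing just described.
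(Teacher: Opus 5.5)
Your proposal is correct and follows the paper's own argument: Theorem \ref{calliasindxtheoremem} reduces the index to that of the compression $\chi^+(V|_N)(D_N\otimes_{\nabla_\mathcal{E}}1)\chi^+(V|_N)$ on the closed link, and the Atiyah--Singer index theorem for families over $X$ then computes its Chern character. Your extra steps (smoothness of $\chi^+(V|_N)$, Serre--Swan, and identifying the Hilbert-module index with the families index) only make explicit what the paper leaves implicit. The ``zeroth order term'' you absorb via Corollary \ref{locbddperturcor} is in fact zero, since compressing a twisted Dirac operator by a projection gives exactly the Dirac operator twisted by the compressed connection.
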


\subsection{Dirac operators on spin-manifolds with positive scalar curvature}
\label{pscdiracexII}

Returning to Dirac operators on spin-manifolds from Subsection \ref{pscdiracex}, we shall prove that positive scalar curvature metrics gives rise to $KK$-nullbordisms. The following result shows that a uniformly positive scalar curvature give rise to a canonical null-$KK$-bordism; the reader can compare this result to the work on secondary invariants in \cite{PSrhoInd,xieyu}.

\begin{theorem}
\label{somepsosslresult}
Let $W$ be a complete Riemannian manifold with uniformly positive scalar curvature with a smooth, isometric action of a second countable group $G$. Assume that $B$ is a $C^*$-algebra with trivial $G$-action and that $\mathcal{E}_B\to W$ is a hermitean flat $B$-bundle (of bounded geometry). Denote the closure of the twisted spin-Dirac operator $\slashed{D}_{W,\mathcal{E}}$ by $D_{W,\mathcal{E}}$. Then $(L^2(W,S_W\otimes \mathcal{E}_B),D_{W,\mathcal{E}})$ is a $G$-equivariant symmetric $(\C,B)$-chain with $D_{W,\mathcal{E}}$ invertible and self-adjoint. 

Moreover, if $G$ acts properly and cocompactly on $W$, then $\mu(L^2(W,S_W \otimes \mathcal{E}_B),D_{W,\mathcal{E}})$ is a $(\C,B\rtimes G)$-cycle which is nullbordant via the bordism 
$$\mu\left(\mathrm{Sh}(L^2(W,S_W\otimes \mathcal{E}_B),D=0+D_{W,\mathcal{E}})\right),$$
which is the assembly of the bordism from Theorem \ref{weakdegnullbordthm} (with $D_0=0$ and $S=D_{M,\mathcal{E}}$).
\end{theorem}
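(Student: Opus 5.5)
The plan has three stages. First, establish the chain properties and invertibility of $D_{W,\mathcal{E}}$. Second, verify that $(0,D_{W,\mathcal{E}})$ is an equivariant weakly degenerate decomposition. Third, push the resulting Shubin bordism through the assembly construction.

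For the first part, I will use Subsection \ref{pscdiracex}, item 2. Completeness and bounded geometry of the flat bundle give essential self-adjointness of $\slashed{D}_{W,\mathcal{E}}$, and regularity of its closure $D_{W,\mathcal{E}}$ on $L^2(W,S_W\otimes\mathcal{E}_B)$ as in \cite{hankpapsch}. The Schrödinger–Lichnerowicz formula $D_{W,\mathcal{E}}^2=\nabla_\mathcal{E}^*\nabla_\mathcal{E}+s_g/4\geq \inf s_g/4>0$ holds on the core $C^\infty_c$, and hence extends to the domain. It gives $\langle D\xi,D\xi\rangle\geq c\langle\xi,\xi\rangle$ for some $c>0$, so the self-adjoint regular operator $D_{W,\mathcal{E}}$ is invertible with bounded inverse.

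The chain conditions over $\C$ are trivial: $\C$ acts by scalars and commutes with everything. For $G$-equivariance in the sense of Definition \ref{gequicalalad}, $G$ acts isometrically. I will assume, as is implicit, that the action lifts to the spin structure and to $\mathcal{E}_B$ preserving the flat connection. Then $g D_{W,\mathcal{E}} g^{-1}=D_{W,\mathcal{E}}$, so conditions 2)–4) hold with $D-gDg^{-1}=0$.

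For the second part, with $D_0=0$ and $S=D_{W,\mathcal{E}}$, I will check the conditions of the equivariant weakly degenerate decomposition. $S$ is $G$-invariant, self-adjoint and boundedly invertible, and it commutes with $\C$. Taking $X_0=\Dom(S)$, the anticommutation $SD_0+D_0S=0$ is trivial. Finally, $\C\subseteq\mathrm{Lip}(0)$. Theorem \ref{weakdegnullbordthmequi} then yields the $G$-equivariant Shubin bordism $\mathrm{Sh}(L^2(W,S_W\otimes\mathcal{E}_B),0+D_{W,\mathcal{E}})$ with boundary the given chain.

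The main obstacle is the third part. For a proper cocompact action, the assembly $\mu$ must be applied to this bordism and shown to remain a bordism of $(\C,B\rtimes G)$-chains with boundary $\mu(L^2(W,S_W\otimes\mathcal{E}_B),D_{W,\mathcal{E}})$. I will follow Proposition \ref{assemblfldoado} and Proposition \ref{assprop}. First I view everything as $G$-equivariant over $C^\infty_c(W)$, which acts by multiplication and has bounded commutators with $D_{W,\mathcal{E}}$. Next I descend via Proposition \ref{descenfoscles}, which preserves symmetric chains, half-closedness and the boundary data, since $p$ and $\theta$ are $G$-invariant. Then I compress by $p_W\in C_c(G,C^\infty_c(W))$ and pull back along $\iota:\C\to C^\infty(W/G)$.

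Two points need checking. The first is that compression by $p_W$ commutes with the collar structure $(\id,\chi_{[0,1]})$; this holds because $p_W$ acts only in the $W$ direction and the Shubin variable is a separate tensor factor. The second is that the compressed Shubin operator is regular, with the locally compact resolvent required for half-closedness over $C^\infty_c((0,1],\C)$. Regularity should follow from \cite{thebeastofmesland}, since $[D_{sh}\rtimes G,p_W]$ is bounded. Compactness should follow from the factorization argument in Proposition \ref{assemblfldoado}, combined with the compactness in the proof of Theorem \ref{weakdegnullbordthm}, which is where the growth of $X$ in the Shubin variable enters. The boundary cycle is closed because $\partial W=\emptyset$ makes $D_{W,\mathcal{E}}$ self-adjoint; hence $\mu$ of the chain is a cycle, and it is nullbordant.
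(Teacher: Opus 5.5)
Your first two stages are fine, and the invertibility argument via the Schr\"odinger--Lichnerowicz estimate is exactly the paper's. The third stage, however, has a genuine gap at precisely the step you flagged.

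Write $S:=D_{W,\mathcal{E}}\rtimes G$ and $S_{sh}=\partial^{\rm min}_{[0,\infty)}\hat{\boxtimes}0+X(0\hat{\boxtimes}S)$. Since $p_W$ commutes with the $\partial$-term and with multiplication by $X$, you get $[S_{sh},p_W]=X\,(0\hat{\boxtimes}[S,p_W])$, where $[S,p_W]$ is Clifford multiplication by $\mathrm{d}p_W$. Because $X(t)\sim\sqrt{1+t^2}$, this commutator is \emph{unbounded}, so \cite{thebeastofmesland} cannot be invoked.

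The same issue undermines your plan to build the Shubin bordism equivariantly and then push it through Propositions \ref{descenfoscles} and \ref{assprop}. Over $C^\infty_c(W)$ this fails twice. The pair $(0,D_{W,\mathcal{E}})$ is not a weakly degenerate decomposition, since the algebra must commute with $S$. Moreover $[D_{sh},f]=X(0\hat{\boxtimes}[D_{W,\mathcal{E}},f])$ is unbounded, so the Shubin object is not even a chain over $C^\infty_c(W)$. Over $\C$, $(L^2(W,S_W\otimes\mathcal{E}_B),D_{W,\mathcal{E}})$ is only a symmetric chain (no compact resolvent on non-compact $W$), so Theorem \ref{weakdegnullbordthmequi} does not apply either. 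There is no equivariant bordism to assemble; the bordism property has to be verified after descent and compression.

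That is what the paper does. It applies \cite[Theorem 3.4]{DGM} to the uncompressed, invertible $S$ to get $S_{sh}$ symmetric and regular. It then proves that $p_WS_{sh}p_W$ is regular, with adjoint the closure of $p_WS_{sh}^*p_W$, via the identity
\[
p_WS_{sh}p_W+(1-p_W)S_{sh}(1-p_W)=S_{sh}+V,\qquad V:=-(2p_W-1)X\bigl(0\hat{\boxtimes}c_{\mathcal{E}}(\mathrm{d}p_W)\bigr).
\]
The unbounded perturbation $V$ is handled by \cite[Theorem 1.3]{DGM}, not by a bounded-commutator result.

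Half-closedness then follows in two parts. Local compactness comes from $p_W\in C^\infty_c(W)\rtimes G$ together with the local compactness and invertibility of the uncompressed $S$. The inclusion $\varphi\Dom((p_WS_{sh}p_W)^*)\subseteq\Dom(p_WS_{sh}p_W)$, for $\varphi\in C^\infty_b(0,\infty)$ vanishing near $0$, comes from the adjoint description. Your compactness sketch is in the right spirit, but without this regularity and adjoint identification the argument does not close.
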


\begin{proof}
The theorem can be deduced from the discussion in Section \ref{subsecgequicalaladbordism}, but we carry out the proof in more detail for the convenience of the reader. We start by noting that the bordism $\mu\left(\mathrm{Sh}(L^2(W,S_W\otimes \mathcal{E}_B),D=0+D_{W,\mathcal{E}})\right)$ is a bordism with interior 
\begin{equation}
\label{shubincyclforpscass}
(L^2[0,\infty)\hat{\boxtimes}p_WL^2(W,S_W\otimes \mathcal{E}_B)\rtimes G,p_W((D_{W,\mathcal{E}}\rtimes G)_{sh})p_W),
\end{equation}
where the bordism is constructed as in Subsection \ref{subsecweakdegff} (with $D_0=0$ and $S=p_W(D_{W,\mathcal{E}}\rtimes G)p_W$).

The operator $D_{W,\mathcal{E}}$ is self-adjoint by the discussion in Subsection \ref{diraconcstarbundleex}, because $W$ is complete. Since $\mathcal{E}$ is hermitean flat, the Schrödinger-Lichnerowicz formula (cf. Subsection \ref{pscdiracex}) implies that 
\begin{equation}
\label{formest} 
\langle D_{W,\mathcal{E}}\xi,D_{W,\mathcal{E}}\xi\rangle \geq \langle s_g\xi,\xi\rangle,
\end{equation}
on the core $C^\infty_c(W,\mathcal{E}_B)$. By continuity, the form estimate \eqref{formest} extends to $\Dom(D_{W,\mathcal{E}})$. If $W$ has uniformly positive scalar curvature, $D_{W,\mathcal{E}}$ is invertible with 
$$\|D_{W,\mathcal{E}}^{-1}\|_{\End_B^*(L^2(W,\mathcal{E}_B))}\leq \sup_{x\in W}(s_g(x))^{-1/2}. $$

It remains to prove that the expression in Equation \eqref{shubincyclforpscass} is a symmetric chain that fits into the interior of a bordism with boundary $\mu(L^2(W,S_W \otimes \mathcal{E}_B),D_{W,\mathcal{E}})$. To shorten notation, we write 
$$S:=D_{W,\mathcal{E}}\rtimes G,$$
which is a self-adjoint and regular operator on the $B\rtimes G$-Hilbert $C^*$-module $\mathpzc{E}:=L^2(W,S_W \otimes \mathcal{E}_B)\rtimes G$. It follows from the discussion above that $S$ is invertible. Recall the construction from Subsection \ref{subsecweakdegff}: we have that
$$(D_{W,\mathcal{E}}\rtimes G)_{sh}=(S)_{sh}=\Psi_{[0,\infty)}(0)+X(0\hat{\boxtimes}S)=\partial_{[0,\infty)}^{\rm min}\hat{\boxtimes} 0+X(0\hat{\boxtimes}S).$$
By \cite[Theorem 3.4]{DGM}, the operator $S_{sh}$ is symmetric and regular with domain consisting of elements $\xi\in (H^1_0[0,\infty)\hat{\boxtimes}\mathpzc{E})\cap (L^2[0,\infty)\hat{\boxtimes}\Dom(S))\subseteq \Psi_{[0,\infty)}(\mathpzc{E})$ satisfying the condition $\xi(0)=0$ and 
$$\int_0^\infty\left(\|\xi'(t)\|_{\mathpzc{E}}^2+X(t)^2\|S\xi(t)\|_\mathpzc{E}^2\right)\mathrm{d}t<\infty.$$

To finish the proof, it suffices to show that $p_WS_{sh}p_W$ is regular on $p_W\mathpzc{E}$ with adjoint being the closure of $p_WS_{sh}^*p_W$. Indeed, if this is the case then
\begin{enumerate}
\item by a similar argument as in  \cite[Theorem 3.4]{DGM}, $p_WSp_W$ has compact domain inclusion because $S$ has $C^\infty_c(W)\rtimes G$-locally compact domain inclusion and since $p_W\in C^\infty_c(W)\rtimes G$ so does also $S_{sh}$;
\item for any function $\varphi\in C^\infty_b(0,\infty)$ with $\varphi=0$ near $0$, $\varphi\Dom((p_WS_{sh}p_W)^*)\subseteq \Dom(p_WS_{sh}p_W))$ since $\varphi\Dom(p_WS_{sh}^*p_W)\subseteq \Dom(p_WS_{sh}p_W))$ and $\Dom(p_WS_{sh}^*p_W)$ is a core;
\end{enumerate}
and so $(p_W\mathpzc{E},p_WS_{sh}p_W)$ is half-closed for the left action of the algebra of functions $\varphi\in C^\infty_b(0,\infty)$ with $\varphi=0$ near $0$. 

To show that $p_WS_{sh}p_W$ is regular with adjoint $p_WS_{sh}^*p_W$, it suffices to show that the operator $p_WS_{sh}p_W+(1-p_W)S_{sh}(1-p_W)$ is regular with adjoint being the closure of $p_WS_{sh}^*p_W+(1-p_W)S_{sh}^*(1-p_W)$. We note that 
\begin{align*}
p_WS_{sh}p_W+(1-p_W)S_{sh}&(1-p_W)-S_{sh}=-p_WS_{sh}(1-p_W)-(1-p_W)S_{sh}p_W=\\
=&-(2p_W-1)[S_{sh},p_W]=-(2p_W-1)X(0\hat{\boxtimes}c_\mathcal{E}(\mathrm{d}p_W))=:V,
\end{align*}
using that $S$ is the descent of a Dirac operator with principal symbol being Clifford multiplication $c_{\mathcal{E}}$. By \cite[Theorem 1.3]{DGM}, the operator $S_{sh}+V=p_WS_{sh}p_W+(1-p_W)S_{sh}(1-p_W)$ is regular and symmetric with adjoint being the closure of $S_{sh}^*+V=p_WS_{sh}^*p_W+(1-p_W)S_{sh}^*(1-p_W)$.
\end{proof}

\begin{cor}
Let $M$ be a compact Riemannian manifold with uniformly positive scalar curvature, and $\mathcal{E}_B\to M$ is a hermitean flat $B$-bundle (of bounded geometry). Denote the closure of the twisted spin-Dirac operator $\slashed{D}_{M,\mathcal{E}}$ by $D_{M,\mathcal{E}}$. Then $(L^2(M,S_M\otimes \mathcal{E}_B),D_{M,\mathcal{E}})$ is a nullbordant $(\C,B)$-cycle which is nullbordant via the bordism 
$$\mathrm{Sh}(L^2(M,S_M\otimes \mathcal{E}_B),D=0+D_{M,\mathcal{E}}),$$
from Theorem \ref{weakdegnullbordthm} (with $D_0=0$ and $S=D_{M,\mathcal{E}}$).
\end{cor}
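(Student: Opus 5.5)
This corollary is the special case of Theorem \ref{somepsosslresult} with $G$ the trivial group. I would therefore argue directly, carrying out the proof of that theorem with the compressions by $p_W$ and the descent removed. Since the trivial group acts properly and cocompactly on the compact manifold $M$, we may take $c\equiv 1$, so that $p_M=1$ and the assembly $\mu$ reduces to the identity on $(\C,B)$-cycles.

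\textbf{Step 1: the boundary is an invertible cycle.} Compactness of $M$ gives completeness, so by Subsection \ref{diraconcstarbundleex} the operator $D_{M,\mathcal{E}}$ is self-adjoint and regular. Compactness also gives compact resolvent: the domain is $H^1(M,S_M\otimes\mathcal{E}_B)$, which embeds $B$-compactly into $L^2$ since $\mathcal{E}_B$ is a bundle of finitely generated projective $B$-modules. Hence $(L^2(M,S_M\otimes\mathcal{E}_B),D_{M,\mathcal{E}})$ is a $(\C,B)$-cycle.

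Flatness of $\mathcal{E}_B$ lets us apply the Schr\"odinger-Lichnerowicz formula of Subsection \ref{pscdiracex}. It gives the form estimate $\langle D\xi,D\xi\rangle\geq \inf s_g/4\,\langle\xi,\xi\rangle$ on the core $C^\infty(M,S_M\otimes\mathcal{E}_B)$, and this extends to the domain by closure. Since $\inf s_g>0$, the operator $D_{M,\mathcal{E}}$ is bounded below and self-adjoint, hence invertible.

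\textbf{Step 2: the trivial weakly degenerate decomposition.} We take $D_0=0$ and $S=D_{M,\mathcal{E}}$ and check Definition \ref{lknalkadn}:
\begin{itemize}
\item $S$ is self-adjoint with bounded inverse by Step 1.
\item $A=\C$ acts by scalars, so it preserves $\Dom(S)$ and commutes with $S$.
\item The anticommutation relation $SD_0+D_0S=0$ holds trivially on the core $X_0=\Dom(S)$.
\item $\C\subseteq\mathrm{Lip}(0)$ holds trivially.
\end{itemize}
Theorem \ref{weakdegnullbordthm} then shows that $\mathrm{Sh}(L^2(M,S_M\otimes\mathcal{E}_B),D=0+D_{M,\mathcal{E}})$ is a nullbordism.

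\textbf{Expected obstacle.} The only genuinely analytic point is the half-closedness of the Shubin chain. Its operator is $D_{sh}=\partial^{\rm min}_{[0,\infty)}\hat{\boxtimes}0+X(0\hat{\boxtimes}S)$, and we need its domain to be locally compact for $C^\infty_c((0,1],\C)$ under $b$. This is the step I would check most carefully. It is supplied by \cite[Theorem 3.4]{DGM}, which is invoked inside Theorem \ref{weakdegnullbordthm}. The key input there is that $X\geq 1$ grows like $\sqrt{1+x^2}$ and $S$ has compact resolvent. Together these make the Shubin-type operator have compact domain inclusion on bounded $t$-intervals, as in the harmonic oscillator argument of Subsection \ref{shubinbyhalf}.

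No compression or perturbation $V$ is needed, unlike in Theorem \ref{somepsosslresult}, because $p_M=1$. The corollary then follows.
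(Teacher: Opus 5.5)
Your proposal is correct and is essentially the paper's argument. The paper just specializes Theorem \ref{somepsosslresult} to the trivial group (where $c\equiv 1$, $p_W=1$ and $\mu$ only restricts to $\C$), and your Steps 1 and 2 are that proof unrolled, with the added check that compactness of $M$ makes the chain a cycle.

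One small correction to your \emph{expected obstacle} remark: half-closedness under $b$ needs compactness on the whole tail $[\epsilon,\infty)$, not just on bounded $t$-intervals, because $b(\phi)$ acts by $\phi(1)$ beyond the collar (cf.\ Example \ref{halflineex}). It is on that tail that the growth of $X$, together with the invertibility of $S$, is used, and this is exactly what \cite[Theorem 3.4]{DGM} supplies.
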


\begin{proof}
The corollary follows directly from Theorem \ref{somepsosslresult} by taking $G$ to be the trivial group.
\end{proof}

\subsection{Topological- and Lipschitz manifolds}
\label{bordimadinklm}

We can now study the signature operator on Lipschitz manifold from Subsection \ref{topolmfsex} further. This operator has been studied from the perspective of $KK$-theory in \cite{hilsum83,hilsum89,PSsignInd,zenobi}, and we here rephrase these results in terms of the $KK$-bordism groups. The advantage of this approach, that will be utilize later for secondary invariants, is that we all equivalences can be made explicit and geometric. 

If $W$ is a Lipschitz manifold with boundary, and we have a Lipschitz Riemannian metric $g_W$, on $W$, we say that $g_W$ is of product type if there is a collar neighborhood of $\partial W$ isometrically bi-Lipschitz homeomorphic to $[0,\epsilon]\times \partial W$ for some $\epsilon>0$. Similarly, we define the notion of product type for connections and Dirac operators.

\begin{lemma}
\label{technicallammaforlipschitzcyc}
Let $W$ be an oriented Lipschitz manifold with boundary which is either compact or equipped with a complete metric which is of product type at the boundary. Assume that $\mathcal{E}_B\to W$ is a hermitean $B$-bundle with a hermitean Lipschitz connection $\nabla_\mathcal{E}$ of product type at the boundary. Write 
$$D_{{\rm sign},\mathcal{E}}:=D_{\rm sign}\otimes_{\nabla_\mathcal{E}}1,$$
and $D_{{\rm sign},\mathcal{E}}^{\partial W}:=D_{\rm sign}^{\partial W}\otimes_{\nabla_\mathcal{E}}1$ for the associated boundary operator. Assume that $X$ is a metric space and that $f:W\to X$ is a proper uniformly Lipschitz continuous map. Then the $(\Lip_0(X),B)$-cycle 
$$(f|_{\partial W})^*(L^2(\partial W,\wedge^*\otimes \mathcal{E}_B),D_{{\rm sign},\mathcal{E}}^{\partial W}),$$
is $KK$-nullbordant via a bordism with interior chain coinciding with $f^*(L^2(W,\wedge^*\otimes \mathcal{E}_B),D_{{\rm sign},\mathcal{E}}^{W})$ if $\dim(W)$ is odd.
\end{lemma}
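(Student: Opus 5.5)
The plan is to follow the proof of Lemma \ref{generiddakladlalald}, with Proposition \ref{technicalpropforlipschitz} playing the role of the analytic input from Subsection \ref{diraconcstarbundleex}. By Proposition \ref{technicalpropforlipschitz}, the pair $(L^2(W,\wedge^*\otimes\mathcal{E}_B),D_{{\rm sign},\mathcal{E}})$ is a symmetric $(\Lip_0(\overline{W}),B)$-chain that restricts to a half-closed $(\Lip_0(W^\circ),B)$-chain. Its boundary counterpart $(L^2(\partial W,\wedge^*\otimes\mathcal{E}_B),D^{\partial W}_{{\rm sign},\mathcal{E}})$ is a closed cycle. After rescaling (or attaching a cylinder $[0,1]\times\partial W$, which stays of product type), I may assume the collar has width $\epsilon=1$. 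I then take $p$ to be the characteristic function of the collar $\{0\leq t\leq 1\}$. Since $p$ is a multiplication operator, it commutes with the $\Lip_0(X)$-action pulled back along $f$.

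The main step is to build the isomorphism $\theta:pL^2(W,\wedge^*\otimes\mathcal{E}_B)\to L^2[0,1]\hat{\boxtimes}L^2(\partial W,\wedge^*\otimes\mathcal{E}_B)$ intertwining $D_{{\rm sign},\mathcal{E}}$ with $\Psi(D^{\partial W}_{{\rm sign},\mathcal{E}})=\partial^{\rm min}\hat{\boxtimes}D^{\partial W}_{{\rm sign},\mathcal{E}}$ on the collar. This is where the parity restriction enters. For $\dim W=n$ odd, the interior chain is even: the grading is $\tau_g$ on $L^2(W,\wedge^*)$ when $n$ is even, but here $\partial W$ has even dimension $n-1$, so the boundary cycle is even and the interior is odd, acting on $L^2(W,\wedge_+)$. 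I would write a form on $[0,1]\times\partial W$ as $\omega=\omega_1(t)+\mathrm{d}t\wedge\omega_2(t)$ with $\omega_i$ forms on $\partial W$. Using $*_g$ for the product metric, restricting to $\wedge_+$ identifies $\wedge_+([0,1]\times\partial W)$ with $\wedge^*(\partial W)$ via $\omega\mapsto\omega_1$. Under this identification $\mathrm{d}+\delta$ becomes, up to a unitary, $\pm(\tau_{\partial W}\partial_t+D_{{\rm sign}}^{\partial W})$, or more precisely $\sigma(\partial_t+D^{\partial W})$ with $\sigma$ the Clifford multiplication by $\mathrm{d}t$. This matches the graded exterior product convention $T_1\hat{\boxtimes}T_2$ of Section \ref{backgroundsec} for a graded boundary and an ungraded $L^2[0,1]$. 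Since $\nabla_\mathcal{E}$ is of product type, the twist passes through this computation unchanged. Lipschitz regularity only affects measurability and plays no role, because the computation is pointwise in the product chart. I expect this sign and grading bookkeeping to be the main obstacle. If the identification produces $-D^{\partial W}$ or the opposite grading, I would absorb this by reversing the orientation convention of the boundary, in the same way as is implicit in Lemma \ref{generiddakladlalald}.

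With $\Theta=(\theta,p)$ in hand, I verify conditions a)--c) of Definition \ref{HilBorDef}. Condition c) holds because the operator is local, being a first-order differential operator. Condition b) follows from the collar identification, since the minimal domain localized to the open collar is $\theta^{-1}$ of the $H^1_0$-type domain of $\Psi$. Condition a) follows from Proposition \ref{technicalpropforlipschitz}(3): for $\phi\in C^\infty_c(0,1]$, the function $b(\phi)$ is Lipschitz and vanishes near $\partial W$, so it maps $\Dom(D^*)$ into $\Dom(D)$. Finally, since $f$ is proper and uniformly Lipschitz, $b$ maps $C^\infty_c((0,1],\Lip_0(X))$ into $\Lip_0(W^\circ)$. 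Proposition \ref{technicalpropforlipschitz}(1) and (3) then give the half-closedness, so the data form a bordism whose boundary is $(f|_{\partial W})^*$ of the boundary signature cycle.
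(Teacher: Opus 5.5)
Your proposal follows the paper's route: the paper omits this proof, saying it goes along the same lines as Lemma~\ref{generiddakladlalald} with Proposition~\ref{technicalpropforlipschitz} as the analytic input. Your identification of the odd signature operator on $\wedge_+$ over the collar with $\Psi(D^{\partial W}_{{\rm sign},\mathcal{E}})$ is exactly the parity-dependent step the paper leaves implicit; note that the interior chain is odd, not even as your first clause says.

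Two points need fixing. First, take the attached-cylinder option, not rescaling: rescaling turns $\partial_t$ into $\epsilon^{-1}\partial_x$, so condition b) of Definition~\ref{HilBorDef} fails. Second, extend $f$ over the cylinder by $f|_{\partial W}\circ{\rm pr}_{\partial W}$. Your final claim that $b$ maps $C^\infty_c((0,1],\Lip_0(X))$ into $\Lip_0(W^\circ)$, and the compatibility of $\theta$ with the $\Lip_0(X)$-actions, both require $f$ to be constant in the collar variable, which properness and the Lipschitz property alone do not give (the paper's proof of Lemma~\ref{generiddakladlalald} is silent on this too).
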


We omit the proof of Lemma \ref{technicallammaforlipschitzcyc}, as it goes along the same lines as that of Lemma \ref{generiddakladlalald} (using Proposition \ref{technicalpropforlipschitz}). We note that Lemma \ref{technicallammaforlipschitzcyc} only prescribes the precise null bordism $(f|_{\partial W})^*(L^2(\partial W,\wedge^*\otimes \mathcal{E}_B),D_{{\rm sign},\mathcal{E}}^{\partial W})\sim 0$ when $\dim(W)$ is odd. If $\dim(W)$ is even, then we only have that
$$2(f|_{\partial W})^*(L^2(\partial W,\wedge^*\otimes \mathcal{E}_B),D_{{\rm sign},\mathcal{E}}^{\partial W})=\partial f^*(L^2(W,\wedge^*\otimes \mathcal{E}_B),D_{{\rm sign},\mathcal{E}}^{W})$$
This issue can be addressed in the same way as in the proof of \cite[Theorem 2]{roswein}.

\begin{prop}
Consider a metric space $X$. Let $M$ be an oriented Lipschitz manifold which is either compact or admits a complete metric, that $\mathcal{E}_B\to W$ is a $B$-bundle and $f:M\to X$ a proper uniformly Lipschitz continuous mapping. Then there is a unique class 
$$[(L^2(M,\wedge^*\otimes \mathcal{E}_B),D_{{\rm sign},\mathcal{E}})]\in \Omega_*(\Lip_0(X),B),$$ 
independent of choice of complete Riemannian Lipschitz metric on $M$, hermitean structure on $\mathcal{E}_B$ and hermitean Lipschitz connection $\nabla_\mathcal{E}$. 
\end{prop}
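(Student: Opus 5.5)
The plan is to imitate the smooth case, Propositions \ref{generiddakladlalaldbb} and \ref{generiddakladlalaldcc}, and get the bordism from a cylinder $W:=M\times[0,1]$ via Lemma \ref{technicallammaforlipschitzcyc}. First note that Proposition \ref{technicalpropforlipschitz}, applied with $\partial W=\emptyset$, shows that $(L^2(M,\wedge^*\otimes \mathcal{E}_B),D_{{\rm sign},\mathcal{E}})$ pulled back along $f$ is a $(\Lip_0(X),B)$-cycle. So it only remains to show that its class does not depend on the choices.

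Fix two sets of data $(g_0,h_0,\nabla_0)$ and $(g_1,h_1,\nabla_1)$: complete Lipschitz metrics, hermitean structures and hermitean Lipschitz connections. Take a cutoff $\chi\in C^\infty([0,1],[0,1])$ that is $1$ near $0$ and $0$ near $1$, and define on $M\times[0,1]$ the following data.
\begin{itemize}
\item The metric $g:=\mathrm{d}t^2+\chi(t)g_0+(1-\chi(t))g_1$. This is a Lipschitz Riemannian metric, of product type near both ends, and complete: a convex combination of complete metrics dominates a multiple of the smaller one, and the $[0,1]$-factor is compact.
\item The hermitean structure $h:=\chi h_0+(1-\chi)h_1$.
\item A hermitean Lipschitz connection that equals $\nabla_0$ near $t=0$ and $\nabla_1$ near $t=1$. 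I would take $\chi\nabla_0+(1-\chi)\nabla_1$ and correct it by adding $\tfrac12(\nabla_h^*-\nabla_h)$ relative to $h$. The curvature stays bounded because all the pieces are $L^\infty$.
\end{itemize}
The map $F:=f\circ \mathrm{pr}_M$ is proper and uniformly Lipschitz. By the standard identification of the boundary of an oriented cylinder, $\partial W=M\dot{\cup}(-M)$, and the boundary operator is $D^{(0)}_{{\rm sign},\mathcal{E}}\oplus(-D^{(1)}_{{\rm sign},\mathcal{E}})$. One subtlety has to be handled here: the Hilbert spaces $L^2_{g_i,h_i}$ differ, but they are identified via the unitary $\omega\mapsto$ (Jacobian correction)$\cdot\omega$. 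This is harmless since the bordism relation is on isomorphism classes.

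If $\dim M$ is even, then $\dim W$ is odd, and Lemma \ref{technicallammaforlipschitzcyc} directly gives a null-bordism of $F|_{\partial W}^*(\cdots)$, i.e. $[D^{(0)}]=[D^{(1)}]$. If $\dim M$ is odd, the lemma only gives that $2([D^{(0)}]-[D^{(1)}])$ is a boundary. This parity issue is the main obstacle. I would first try to circumvent it by a direct homotopy instead of a bordism. After conjugating by the unitary above, all operators live on $L^2_{g_0}(M,\wedge_+\otimes\mathcal{E}_B)$, and $t\mapsto D_t$ is a path with the common Lipschitz-local domain $\Lip_c\Dom$. Then Lemma \ref{lemmahomotopyresolven} and Theorem \ref{homotopywithsweakcond} apply once the resolvent derivative estimates are verified; these are first-order, $L^\infty$-coefficient perturbations, so $D_tR_\mu'(t)$ is bounded.

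If that verification fails in the Lipschitz setting, the fallback is the trick of \cite[Theorem 2]{roswein}. One replaces the odd signature operator on $M$ by its even counterpart on the full form bundle and uses the sign involution to split, so that the odd-dimensional cylinder case applies to a cycle whose class is a single copy.

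Combining the two cases gives independence of all choices, and uniqueness of the class is then immediate.
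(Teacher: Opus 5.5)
Your proposal follows essentially the same route as the paper, whose proof runs as in Subsection \ref{diraconcstarbundleexbordism}: take the cylinder $M\times[0,1]$ with metric, hermitean structure and connection interpolated and of product type at both ends, apply Lemma \ref{technicallammaforlipschitzcyc}, and handle the factor $2$ for even-dimensional cylinders (odd $\dim M$) by the trick of \cite[Theorem 2]{roswein}, as the remark after that lemma indicates. One caveat: your preferred homotopy workaround via Lemma \ref{lemmahomotopyresolven} is unlikely to work for Lipschitz metrics, because $\Dom(\mathrm{d}+\delta_g)$ and the splitting $\wedge_\pm$ depend on $g$ through merely $L^\infty$ coefficients with no $H^1$-regularity, so there is neither a common core nor a relative bound on $\partial_t D_t$; hence the odd case genuinely rests on the \cite{roswein} fallback (and, incidentally, no unitary identification of the two $L^2$-spaces is needed, since the boundary of the cylinder is directly the sum of the two cycles on their own modules).
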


The proof goes as in Subsection \ref{diraconcstarbundleexbordism} using Lemma \ref{technicallammaforlipschitzcyc} and is omitted. The next result follows from Theorem \ref{hilsumskandforbla} due to Higson-Skandalis \cite{hilsumskand}. It forms an analogue of the results of Subsection \ref{pscdiracexII}, and proves that a homotopy equivalence of oriented Lipschitz manifolds gives rise to an essentially canonical null-$KK$-bordism. This result can be compared to the work on secondary invariants in \cite{PSsignInd,wxy,zenobi}.

\begin{theorem}
\label{hilsumskandforblaass}
Let $M$ and $N$ be two compact oriented Lipschitz manifolds of dimension $n$ and set $Z:=M\dot{\cup}-N$. Assume that $\mathcal{E}_B\to N$ is a flat hermitean $B$-bundle, and write $\mathcal{F}_B:=f^*\mathcal{E}_B\dot{\cup}\mathcal{E}_B\to Z$ and pick a flat hermitean Lipschitz connection $\nabla_\mathcal{F}$. 

If $f:M\to N$ is a homotopy equivalence, then there is a canonical bordism of $(\C,B)$-cycles
$$(L^2(M,\wedge^*\otimes f^*\mathcal{E}_B),D_{{\rm sign},f^*\mathcal{E}})\sim_{\rm bor} (L^2(M,\wedge^*\otimes \mathcal{E}_B),D_{{\rm sign},\mathcal{E}}),$$
constructed from bordism
$$(L^2(Z,\wedge^*\otimes \mathcal{F}_B),D_{{\rm sign},\mathcal{F}})\sim_{\rm bor} (L^2(Z,\wedge^*\otimes \mathcal{F}_B),D_{{\rm sign},\mathcal{F}}+T_F),$$
where $T_f\in \End_B^*(L^2(Z,\wedge^*Z\otimes \mathcal{F}_B))$ is the operator from Theorem \ref{hilsumskandforbla} and the bordism is that defined from bounded perturbations as in Corollary \ref{locbddperturcor}, and the $KK$-nullbordism 
$$ (L^2(Z,\wedge^*\otimes \mathcal{F}_B),D_{{\rm sign},\mathcal{F}}+T_F)\sim_{\rm bor} 0,$$
explicitly constructed from Theorem \ref{weakdegnullbordthm} using that the operator $D_{{\rm sign},\mathcal{F}}+T_f$ is invertible. 
\end{theorem}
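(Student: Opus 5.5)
The plan is to assemble the canonical bordism by gluing two bordisms constructed earlier, following the pattern of Proposition \ref{cliffsymlemprop} and Theorem \ref{somepsosslresult}.

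First, since $Z=M\dot{\cup}(-N)$ and $\mathcal{F}_B=f^*\mathcal{E}_B\dot{\cup}\mathcal{E}_B$, the cycle $(L^2(Z,\wedge^*\otimes\mathcal{F}_B),D_{{\rm sign},\mathcal{F}})$ is isomorphic to the direct sum of the cycle on $M$ and the opposite of the cycle on $N$. Orientation reversal on $N$ changes the Hodge grading $\tau_g$, respectively the sign of the odd operator, so this is consistent with Definition \ref{oppKKCyc}. Both summands are $(\C,B)$-cycles by Proposition \ref{technicalpropforlipschitz}, because $M$ and $N$ are compact and closed. A nullbordism of the $Z$-cycle is therefore, by Definition \ref{HilBorKKcycle}, exactly a bordism between the $M$-cycle and the $N$-cycle; the $N$-cycle is what the right-hand side of the statement should read, with $\mathcal{E}_B$ living over $N$.

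Second, I will produce the nullbordism in two steps.
\begin{enumerate}
\item $T_f$ from Theorem \ref{hilsumskandforbla} is adjointable. I first replace it by its self-adjoint part, or note that the Hilsum--Skandalis construction already gives a self-adjoint $T_f$ (odd in the even case). A bounded self-adjoint operator is relatively bounded with bound $0<1$, so Corollary \ref{locbddperturcor} gives the transgression bordism $\mathcal{T}(L^2(Z,\wedge^*\otimes\mathcal{F}_B),D_{{\rm sign},\mathcal{F}},T_f)$ of Definition \ref{transgres}. This connects the $Z$-cycle to the cycle with operator $D_{{\rm sign},\mathcal{F}}+T_f$, which is again a $(\C,B)$-cycle by that corollary.
\item Since $D_{{\rm sign},\mathcal{F}}+T_f$ is self-adjoint, regular and invertible, and $\C$ acts by scalars, the pair $(D_0,S)=(0,D_{{\rm sign},\mathcal{F}}+T_f)$ is a weakly degenerate decomposition in the sense of Definition \ref{lknalkadn}. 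The anticommutation condition is vacuous, the commutation of $A=\C$ with $S$ is trivial, and $\C\subseteq\mathrm{Lip}(0)$. Theorem \ref{weakdegnullbordthm} then provides the Shubin bordism $\mathrm{Sh}$ to $0$, exactly as in the corollary following Theorem \ref{somepsosslresult}.
\end{enumerate}
Gluing these two bordisms along the common boundary summand via Proposition \ref{subsecglyinggluing} and \cite[Theorem 2.20]{DGM} yields a nullbordism of the $Z$-cycle. Reading it through the decomposition in the first step gives the bordism between the $M$- and $N$-cycles. Canonicity holds because every ingredient, namely $T_f$, the cut-off $\chi$ and the Shubin function $X$, is an explicit choice, and different choices of $\chi$ and $X$ are related by homotopies via Theorem \ref{homotopywithsweakcond}.

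I expect the main obstacle to be the parity and grading bookkeeping. In even dimension the operator $T_f$ must be odd for the $\tau_g$-grading, and in odd dimension the signature operator is only defined on $L^2(W,\wedge_+)$ while the Hilsum--Skandalis operator acts on all forms. I would first check the odd-dimensional case by restricting, or by passing to the doubled ungraded picture, before invoking Corollary \ref{locbddperturcor}. Apart from this, self-adjointness of the perturbed operator and the identification of $-N$ with the opposite cycle are the only points needing verification, and everything else is a direct citation of the results above.
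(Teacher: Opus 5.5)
Your proposal is correct and follows exactly the construction the paper intends: identify the $Z$-cycle with the $M$-cycle plus the opposite of the $N$-cycle, and glue the transgression bordism of Corollary \ref{locbddperturcor} for $T_f$ to the Shubin nullbordism of Theorem \ref{weakdegnullbordthm} for the decomposition $(0,D_{{\rm sign},\mathcal{F}}+T_f)$, i.e.\ the transgressed Shubin bordism of Definition \ref{defsinadjonaojn}. The only thing to drop is your fallback of replacing $T_f$ by its self-adjoint part, since that need not preserve invertibility; the argument needs (and Hilsum--Skandalis supply) a self-adjoint $T_f$, odd in the even-dimensional case.
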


\part{Geometry and structure in $KK$-bordisms}
\label{part:geocons}

\section{Geometric constructions with $KK$-bordisms}
\label{sec:prelbord}

$KK$-bordisms have many similarities with manifolds with boundary and classical bordisms. We will make use of some constructions for $KK$-bordisms whose counterpart for manifolds with boundary are geometric in their nature. For instance, the straightening-the-angle technique (see for example \cite[Appendix A]{Rav} and \cite{stong}) that smoothens out low-dimensional singularities allows one to define products of manifolds with boundaries is important in showing that the double of a manifold is nullbordant (and in extension that the set of bordism classes of manifolds forms a group). We provide analogues of these techniques in the theory of $KK$-bordisms and in the later sections \ref{sec:propoered} and \ref{highihereinda} we apply said techniques to study structural properties of $KK$-bordism groups and higher Atiyah-Patodi-Singer index theory, respectively. A number of these constructions were first considered and/or are inspired by work of Hilsum \cite{hilsumcmodbun, hilsumlip, hilsumfol, hilsumbordism}.

\subsection{Straightening the angle} 
\label{strAngSubSec} 
The classical straightening-the-angle is a technique we will need a ``$KK$-version" of in some special cases. In classical bordism theory, it takes the following form. The reader can find its proof in for instance \cite[Appendix A]{Rav}.

\begin{theorem}
Suppose that $W$ is a topological manifold of dimension $n$ and $M \subset W$ is a closed submanifold, which is smooth, with empty boundary, and of dimension $n-2$. In addition, suppose that
\begin{enumerate}
\item $W\setminus M$ has a differentiable structure;
\item there exists a neighbourhood U of M in W and a homeomorphism
$$ \varphi: U \rightarrow M \times \R^+ \times \R^{+} $$ that identifies $M$ with $M \times \{0\}\times \{0\}$ and is a diffeomorphism on $U \setminus M$.
\end{enumerate}
Then there is a differentiable structure on $W$ which extends that on $W \setminus M$ and makes W a smooth manifold.
\end{theorem}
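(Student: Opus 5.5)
The plan is to build the smooth structure locally near $M$ by a homeomorphism of the quadrant that straightens the right angle into a half-plane, and then patch this with the given structure on $W\setminus M$.

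\textbf{Step 1: straighten the quadrant.} Use polar coordinates on the closed quadrant $\R^+\times\R^+$, written $(r,\theta)$ with $\theta\in[0,\pi/2]$. Define
$$\sigma:\R^+\times\R^+\to \R\times\R^+,\qquad (r,\theta)\mapsto (r,2\theta),$$
equivalently $z\mapsto z^2/|z|$ in complex notation. This map is a homeomorphism onto the closed upper half-plane. It fixes the origin and is a diffeomorphism away from the origin. The important point is that $\sigma$ is a diffeomorphism of the quadrant minus the origin, \emph{as a manifold with corners}, onto the half-plane minus the origin. So away from $0$ it respects the given smooth structure, and the only nonsmooth point, $0$, corresponds exactly to $M$.

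\textbf{Step 2: define charts near $M$.} Set $\psi:=(\id_M\times\sigma)\circ\varphi:U\to M\times\R\times\R^+$. This is a homeomorphism onto an open subset of the product, so composing with charts of $M$ gives topological charts of $U$ modelled on $\R^{n-1}\times\R^+$. Declare these charts to be smooth. Their transition maps are then smooth, since they are built from charts of $M$ and the identity in the last two variables.

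\textbf{Step 3: compatibility with $W\setminus M$.} For a chart of $W\setminus M$ overlapping $U\setminus M$, the transition map with $\psi$ factors as $(\id\times\sigma)\circ\varphi\circ(\text{chart})^{-1}$. The factor $\varphi$ is a diffeomorphism on $U\setminus M$ by hypothesis, and $\sigma$ is a diffeomorphism away from the origin by Step 1. Hence the charts of Step 2 together with the given atlas of $W\setminus M$ form a smooth atlas on $W$, and this atlas extends the structure on $W\setminus M$.

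\textbf{Main obstacle.} The delicate step is Step 1's claim that $\sigma$ is smooth, with smooth inverse, in the sense of manifolds with corners, up to the edges $\theta=0$ and $\theta=\pi/2$ away from the origin. Concretely, smoothness at boundary points away from $0$ must match with the smooth structure that $W\setminus M$ already has there, which is a structure with boundary. One has to check that the boundary faces of the quadrant are sent to the boundary line $\R\times\{0\}$ diffeomorphically, including one-sided derivatives. The polar-coordinate formula makes this visible: $\sigma$ is the identity in $r$ and linear in $\theta$, and polar coordinates are smooth away from $0$.

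If the hypotheses do not give compatibility of the corner structure at $\partial W$, the remedy is to replace the linear angle map $\theta\mapsto 2\theta$ by a map $\theta\mapsto h(\theta)$, with $h$ a diffeomorphism of $[0,\pi/2]$ onto $[0,\pi]$ that is affine near the endpoints. This preserves collars, and the rest of the argument is unchanged.
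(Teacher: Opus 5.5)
Your proposal is correct and is the standard angle-doubling proof that the paper defers to via \cite[Appendix A]{Rav}: you pull back the smooth structure of $M\times\R\times\R^+$ along $(\id_M\times\sigma)\circ\varphi$ with $\sigma(re^{i\theta})=re^{2i\theta}$, and you check compatibility with the given structure on $U\setminus M$. The step you flag as the main obstacle is routine. The formula $z\mapsto z^2/|z|$ is smooth on all of $\C\setminus\{0\}$, and its inverse $re^{i\phi}\mapsto re^{i\phi/2}$ extends smoothly past the boundary line away from $0$, so $\sigma$ is already a diffeomorphism of manifolds with boundary off the origin and the fallback with $h$ is unnecessary.
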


\begin{figure}[hbt!]
   \centering
  \includegraphics[height=7.35cm]{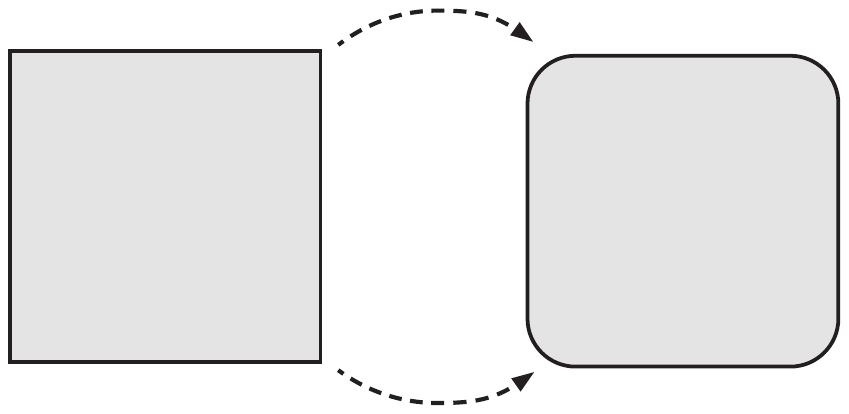}
  \caption{Straightening the angles on the unit square.}
  \label{straighteningangle3}
\end{figure}

\begin{ex}
\label{doublingmfd}
Let $W$ be a smooth manifold with boundary. Then we can straighten the angle on $W \times [0,1]$ to obtain a smooth manifold $\hat{W}$ which is homeomorphic to $W \times [0,1]$ by a map smooth outside of $\partial W\times \{0,1\}$. In particular, the boundary of $\hat{W}$ is homeomorphic to the double $2W$ of $W$ and is obtained as the union
$$W\times \{0\} \cup_{\partial W} \partial W \times [0,1] \cup_{\partial W} -W \times \{1\}. $$
For example, if $W=[0,1]$, then $W\times [0,1]$ is the unit cube, $\hat{W}$ is up to diffeomorphism the closed unit disk and $\partial \hat{W}=S^1$ is realized as  the union of four subintervals of $S^1$ that $\partial\left(W\times [0,1]\right)$ consists of. 
\end{ex}

\begin{ex}
\label{gluingalongjointpiece}
Let $W_1$, $W_2$ and $W_3$ be smooth manifolds with the same boundary $M$. Moreover, suppose that 
$$ W_1 \cup_{M} W_2 = \partial Z_{12} \hbox{ and } W_2 \cup_{M} W_3 = \partial Z_{23} $$
for some smooth manifolds with boundary $Z_{12}$ and $Z_{23}$. Then we can straighten the angle on $Z_{13}:=Z_{12}\cup_{W_2} Z_{23}$ to obtain a smooth manifold with boundary. It holds that
$$\partial Z_{13}=W_1 \cup_{M} W_3$$
is a boundary. In other words, an explicit bounding manifold can be obtained from $Z_{12}$ and $Z_{23}$. It is at first not smooth, but can be ``straightened". This explicitly realized transitivity property will be relevant in the upcoming work \cite{monographtwo} building a relative homology theory where relative cycles are formed by the relations of the theory. In our case the relations come from $KK$-bordisms and we will need similar properties.
\end{ex}

\subsection{The double of a bordism}

In the same way that we can double a manifold (see Example \ref{doublingmfd}) and obtain a boundary, one can do the same for a $KK$-bordism. We will now double a $KK$-bordism and realize it as the boundary of the product with the interval $[0,1]$ after ``straightening the angle". Recall the suspension of a chain by the unit interval from Definition \ref{suspbyinte} and its properties given in Proposition \ref{suspandprop}.

\begin{figure}[hbt!]
   \centering
  \includegraphics[height=5.35cm]{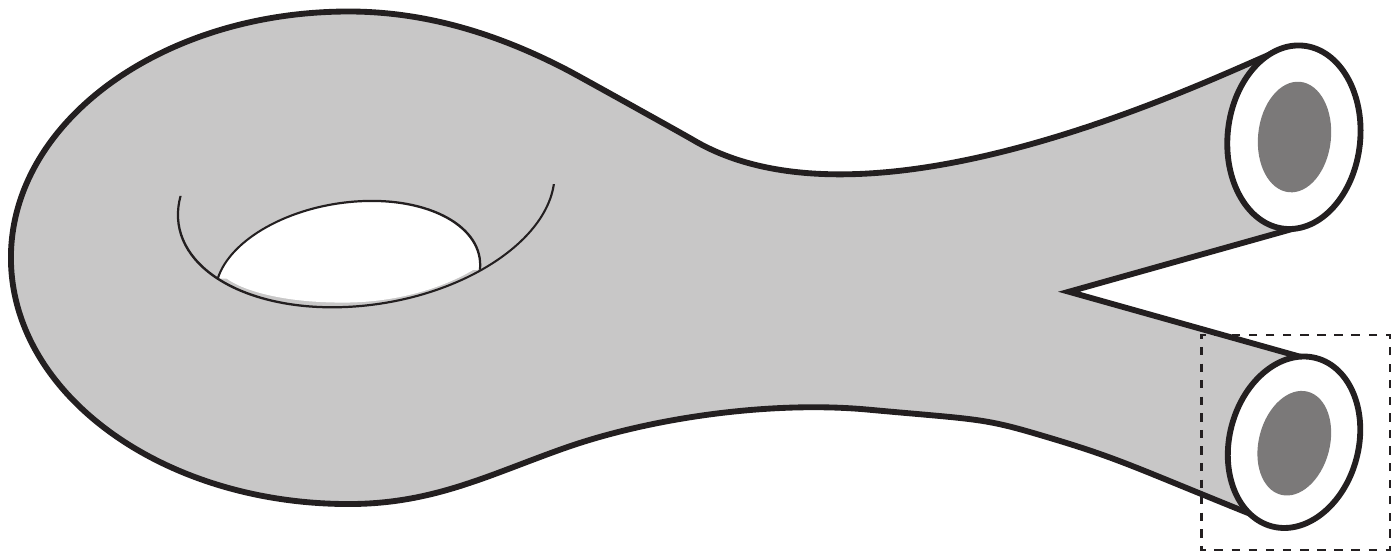}
  \caption{Picture of $W\times [0,1]$, for $W$ the grey surface with boundary, representing $\Psi(\mathpzc{N},T)$ as appearing in Lemma \ref{restsups}.}
  \label{straighteningangle1}
\end{figure}

The suspension operation does not map a bordism to a bordism. There are two obvious issues. Firstly, the suspension of a boundary is not a cycle as is seen classically in Figure \ref{straighteningangle1}. This issue is due to the fact that the boundary of a suspension changes, and the $KK$-theoretic suspension should change the boundary accordingly. Secondly, the suspension operation destroys the structure of the boundary data $\Theta$ that geometrically corresponds to a collar neighborhood. The last issue is of a more technical nature and will be fixed by ``straightening the angle" at a $KK$-theoretical level.

Define the piecewise smooth domain $\Omega_0:=[1,2]\times [0,1]\subseteq \R^2$. We will fix a choice of a piecewise smooth compact convex domain $\Omega\subseteq [1,2]\times [0,1]\subseteq \R^2$ satisfying 
\begin{enumerate}
\item[$\Omega1$] $[1,4/3]\times [0,1]\subseteq \Omega$;
\item[$\Omega2$] $[5/3,2]\times [0,1]\cap \Omega=\emptyset$; and, 
\item[$\Omega3$] $(-\infty,1]\times [0,1]\cup\Omega$ is a domain with smooth boundary. 
\end{enumerate}

\begin{figure}
   \centering
  \includegraphics[height=5.35cm]{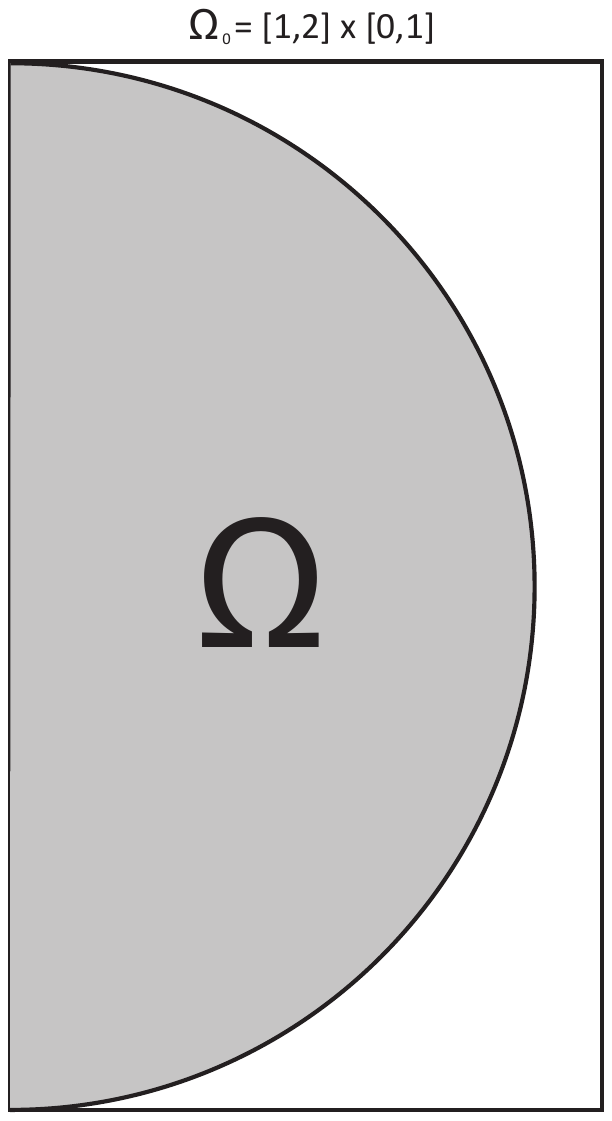}
  \caption{The domain $\Omega$.}
\label{section42omega}
\end{figure}

A picture of a choice of $\Omega$ can be found in Figure \ref{section42omega}. The way that we will use $\Omega$ can be seen in Figure \ref{straighteningangle2}. If we have a manifold with boundary $W$, we will instead of straightening the angle on $W\times [0,1]$ as in Example \ref{doublingmfd}), glue on the piece $\partial W\times \Omega$ onto $W\times [0,1]$ along $\partial W\times \{1\}\times [0,1]$ when identifying a collar neighborhood of $W$ with $\partial W\times [0,1]$. The manifold 
$$\Sigma_\Omega W:=(W\times [0,1])\cup_{W\times \{1\}\times [0,1]}\partial W\times \Omega,$$ 
already has a smooth boundary by condition $\Omega3$ and its own collar neighborhood structure.

We let $S_\Omega=\Omega\times \C^2\to \Omega$ denote the complex spinor bundle. It is graded by $1\oplus (-1)$. We will use a metric on $\Omega$ which is of product type near the boundary. We consider the differential expression $\slashed{D}_\Omega$ giving the Dirac operator on $\Omega$. We write $D_\Omega$ for the minimal extension of $\slashed{D}_\Omega$, in other words, the closure of $\slashed{D}_\Omega$ with the domain $C^\infty_c(\Omega^\circ,S_\Omega)$. The operator $D_\Omega$ is symmetric and $D_\Omega^*$ is the differential expression $\slashed{D}_\Omega$ equipped with its maximal domain in $L^2(\Omega^\circ,S_\Omega)$ consisting of those $L^2$-elements $u$ such that $\slashed{D}_\Omega u\in L^2(\Omega^\circ,S_\Omega)$ in a distributional sense.

The goal now is to use $\Omega$ to construct a restricted suspension of symmetric chains. We take a symmetric $(\mathcal{A},B)$-chain with boundary 
$$\mathfrak{X}=(\mathfrak{X}^\circ,\Theta,\mathfrak{X}^\partial)=((\mathpzc{N},T),(\theta,p),(\mathpzc{E},D)).$$
We define the $B$-module $\Sigma_\Omega\mathpzc{N}\subseteq L^2[0,1]\hat{\boxtimes}\mathpzc{N}\oplus L^2([0,1]^2\cup \Omega,S_\Omega)\hat{\boxtimes}\mathpzc{E}$ from the pullback diagram
$$\begin{CD}
\Sigma_\Omega\mathpzc{N} @>>> L^2[0,1]\hat{\boxtimes}\mathpzc{N} \\
@VVV @V\id_{L^2[0,1]}\hat{\boxtimes}\theta \circ pVV \\
L^2([0,1]^2\cup \Omega,S_\Omega)\hat{\boxtimes}\mathpzc{E}@>>> L^2([0,1]^2,S_\Omega)\hat{\boxtimes}\mathpzc{E}.
\end{CD}$$
Here the bottom horizontal map is the restriction mapping. We think of $\Sigma_\Omega\mathpzc{N}$ as a non-orthogonal sum
$$L^2([0,1]^2\cup \Omega,S_\Omega)\hat{\boxtimes}\mathpzc{E}+L^2[0,1]\hat{\boxtimes}\mathpzc{N}.$$

The $B$-module $\Sigma_\Omega\mathpzc{N}$ has a left action $b_\Omega:C^\infty([0,1]^2\cup\Omega,\mathcal{A})\to\End^*_B(\Sigma_\Omega\mathpzc{N})$ defined from the actions 
$$C^\infty([0,1]^2,\mathcal{A})=C^\infty([0,1],C^\infty([0,1],\mathcal{A}))\to\End^*_B(L^2[0,1]\hat{\boxtimes}\mathpzc{N}),$$ 
and $C^\infty([0,1]^2\cup\Omega,\mathcal{A})\to\End^*_B(L^2([0,1]^2\cup \Omega,S_\Omega)\hat{\boxtimes}\mathpzc{E})$.

We define the operator $\Sigma_\Omega T$ densely on $\Sigma_\Omega\mathpzc{N}$ as follows. Take a $\chi=\chi(x,y)\in C^\infty([0,1]^2\cup\Omega)$ only depending on $x$ and satisfying that $\chi=1$ for $x$ near $0$ and $\chi=0$ for $x>2/3$. For 
$$x_1+x_2\in b_\Omega(\chi)\Dom(\Psi(T))+b_\Omega(1-\chi)\Dom(D_\Omega\hat{\boxtimes} D),$$
define 
$$\Sigma_\Omega T(x_1+x_2):=\Psi(T)x_1+(D_\Omega\hat{\boxtimes} D)(x_2).$$
It follows from Condition b) in Definition \ref{HilBorDef} that $\Sigma_\Omega T$ is well defined on the dense submodule $b_\Omega(\chi)\Dom(\Psi(T))+b_\Omega(1-\chi)\Dom(D_\Omega\hat{\boxtimes} D)\subseteq \Sigma_\Omega\mathpzc{N}$.

\begin{figure}
   \centering
  \includegraphics[height=5.35cm]{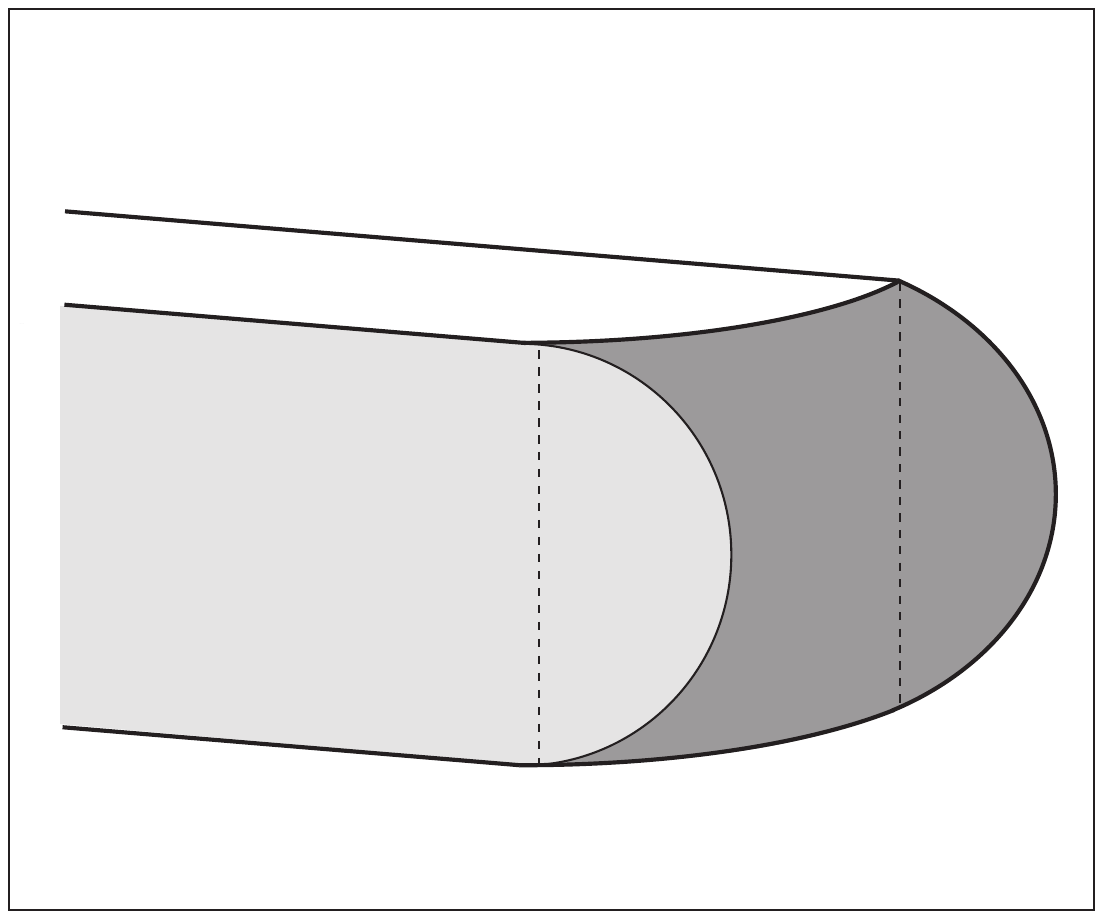}
  \caption{How straightening the angle on a bordism, resulting in $(\Sigma_\Omega \mathpzc{N},\Sigma_\Omega T)$ as in Lemma \ref{restsups}, would appear locally. The cross section is $\Omega$ from Figure \ref{section42omega}.}
  \label{straighteningangle2}
\end{figure}

Define the sets
$$\Gamma_0:=\partial([0,1]^2\cup\Omega)\quad\mbox{and}\quad \Gamma_1:=\Gamma_0\setminus \{0\}\times [0,1].$$

\begin{lemma}
\label{restsups}
Let $\mathfrak{X}=((\mathpzc{N},T),(\theta,p),(\mathpzc{E},D))$ be a symmetric $(\mathcal{A},B)$-chain with boundary. The pair $(\Sigma_\Omega \mathpzc{N},\Sigma_\Omega T)$ is a symmetric $(C^\infty([0,1]^2\cup\Omega,\mathcal{A}),B)$-chain. Moreover, $(\Sigma_\Omega \mathpzc{N},\Sigma_\Omega T)$ is half-closed as a $(C^\infty_c([0,1]^2\cup\Omega\setminus \Gamma_0,\mathcal{A}),B)$-chain, and if $\mathfrak{X}$ is a bordism then also as a $(C^\infty_c([0,1]^2\cup\Omega\setminus \Gamma_1,\mathcal{A}),B)$-chain.
\end{lemma}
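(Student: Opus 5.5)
The proof is a localization argument: cut $\Sigma_\Omega\mathpzc{N}$ into two regions, one governed by the suspension $\Psi(\mathfrak{X}^\circ)$ and one by the exterior product of the two-dimensional Dirac operator $D_\Omega$ with the boundary cycle. Regularity and half-closedness are then checked on each piece using earlier results, and the pieces are recombined with a partition of unity.

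Fix the cutoff $\chi$ from the definition of $\Sigma_\Omega T$, depending only on $x$, and a second function $\tilde\chi$ of $x$ with $\tilde\chi=1$ on $\mathrm{supp}\,\chi$ and $\mathrm{supp}\,\tilde\chi\subseteq\{x<3/4\}$.

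1. Show that $\Sigma_\Omega T$ is well defined. On the overlap region $x\in(0,2/3)$, condition b) of Definition \ref{HilBorDef} identifies $\id\hat{\boxtimes}\theta\circ p$ with an isomorphism that intertwines $\Psi(T)$ with $\partial^{\rm min}\hat{\boxtimes}\Psi(D)$. On $[0,1]^2$ the latter is $D_\Omega\hat{\boxtimes}D$ with the grading conventions of Definition \ref{suspbyinte}, the metric being of product type there. Hence the two formulas agree on the overlap.

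2. Show symmetry and identify the adjoint. Symmetry follows because each summand is symmetric, and cross terms vanish by locality condition c) together with its analogue for differential operators on $\Omega$. For the adjoint, I expect
$$(\Sigma_\Omega T)^*\bigl(b_\Omega(\chi)y_1+b_\Omega(1-\chi)y_2\bigr)=\Psi(T)^*y_1+\overline{D_\Omega^*\hat{\boxtimes}D}\,y_2,$$
up to commutator terms $b_\Omega(\chi')\otimes$(Clifford multiplication), which are bounded.

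3. Prove regularity; this is the main obstacle. The plan is to glue resolvents. On the $\Psi$-region, Proposition \ref{suspandprop} and \cite[Theorem 1.18]{DGM} give regularity of $\Psi(T)$ with $\Psi(T)^*=\overline{\partial^{\rm max}\hat{\boxtimes}T^*}$. On the $\Omega$-region, $D_\Omega$ is symmetric and regular and $D$ is self-adjoint, so \cite[Proposition 1.14]{DGM} gives regularity of $D_\Omega\hat{\boxtimes}D$. Using these two resolvents, form the parametrix
$$R_\pm:=b_\Omega(\tilde\chi)(\Psi(T)\pm i\mu)^{-1}b_\Omega(\chi)+b_\Omega(1-\tilde\chi')(D_\Omega\hat{\boxtimes}D\pm i\mu)^{-1}b_\Omega(1-\chi),$$
with $\tilde\chi'$ chosen analogously. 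Then $(\Sigma_\Omega T\pm i\mu)R_\pm=1+K_\mu$, where $K_\mu$ is built from the commutators $[\Sigma_\Omega T,b_\Omega(\tilde\chi)]$. These commutators are bounded and localized, so $\|K_\mu\|\to0$ as $\mu\to\infty$. Applying the same construction to $(\Sigma_\Omega T)^*\mp i\mu$ shows that $1+T^*T$ has dense range, i.e.\ regularity, in the spirit of the local-global principle \cite{leschkaad2}. The Lipschitz property for $C^\infty([0,1]^2\cup\Omega,\mathcal{A})$ holds because $\mathcal{A}\subseteq\mathrm{Lip}(T)$ on the $\Psi$-piece, $\mathcal{A}\subseteq\mathrm{Lip}(D)$ on the $\Omega$-piece, and derivatives in $(x,y)$ are bounded.

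4. Prove half-closedness. Take $f\in C^\infty_c([0,1]^2\cup\Omega\setminus\Gamma_0,\mathcal{A})$ and split $f=f\chi+f(1-\chi)$.
\begin{enumerate}
\item For the piece $f\chi$, the support avoids $\{0,1\}$ in the suspension variable. Then Proposition \ref{suspandprop} together with Remark \ref{interohc} (interior of $\mathfrak{X}$ half-closed over $C^\infty_c((0,1),\mathcal{A})$) gives both the domain mapping $f\chi\,\Dom(\Sigma_\Omega T)^*\subseteq\Dom(\Sigma_\Omega T)$ and compactness of $f\chi(1+T^*T)^{-1}$.
\item For the piece $f(1-\chi)$, it is supported in $\Omega^\circ$. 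Interior elliptic regularity of $D_\Omega$, combined with $D$ having $\mathcal{A}$-locally compact resolvent, gives the same two properties via \cite[Theorem 1.18]{DGM}-type arguments, using compact support in $\Omega^\circ$.
\end{enumerate}
If $\mathfrak{X}$ is a bordism, half-closedness of $\mathfrak{X}^\circ$ over $C^\infty_c((0,1],\mathcal{A})$ allows $f\chi$ to be nonzero at the edge $x$ of the $T$-variable equal to the free end. That edge is exactly the part of $\Gamma_0\setminus\Gamma_1$, which gives half-closedness over $C^\infty_c([0,1]^2\cup\Omega\setminus\Gamma_1,\mathcal{A})$.
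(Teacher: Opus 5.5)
\textbf{The gap is in step 3.} Your steps 1, 2 (symmetry) and 4 are broadly in line with the paper, but step 3, which you rightly single out as the crux, does not work as written.

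\emph{The local resolvents do not exist.} The local operators $\Psi(T)=\partial^{\rm min}\hat{\boxtimes}T$ and $D_\Omega\hat{\boxtimes}D$ (with $D_\Omega$ the minimal realization on $\Omega$) are regular and symmetric, but not self-adjoint. So $\Psi(T)\pm i\mu$ and $D_\Omega\hat{\boxtimes}D\pm i\mu$ are not surjective, and the inverses appearing in $R_\pm$ are not adjointable operators.

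\emph{The target identity is false.} Suppose $(\Sigma_\Omega T\pm i\mu)R_\pm=1+K_\mu$ with $\|K_\mu\|<1$ for both signs. Then $\Sigma_\Omega T\pm i\mu$ would be surjective, and the closed symmetric operator $\Sigma_\Omega T$ would be self-adjoint. It is not. Already in the situation of Remark \ref{doublingcycle} ($p=0$) one has $\Sigma_\Omega T=\Psi(T)$, whose adjoint $\overline{\partial^{\rm max}\hat{\boxtimes}T^*}$ is strictly larger as soon as $\mathpzc{N}\neq 0$. In general, the minimal conditions along $\Gamma_1$ produce a genuine deficiency.

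For a symmetric operator with nontrivial deficiency, regularity (surjectivity of $1+(\Sigma_\Omega T)^*\Sigma_\Omega T$) is a statement about the adjoint. It cannot be obtained by inverting $\Sigma_\Omega T\pm i\mu$. Your remark about ``applying the same construction to $(\Sigma_\Omega T)^*\mp i\mu$'' does not supply this. The adjoint formula of step 2, which is the real content, is only stated as an expectation.

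\textbf{How the paper gets the adjoint.} It embeds everything into a self-adjoint model.
\begin{enumerate}
\item Glue the half-infinite cylinder over $\partial\mathfrak{X}$ onto $\mathfrak{X}$. By \cite[Theorem 2.20]{DGM} this gives a self-adjoint regular $T_\infty$ on $\mathpzc{N}_\infty$.
\item Form the self-adjoint regular operator $S=\partial_x\hat{\boxtimes}T_\infty$ on $L^2(\R)\hat{\boxtimes}\mathpzc{N}_\infty$.
\item View $\Sigma_\Omega\mathpzc{N}$ as $p_\Omega(L^2(\R)\hat{\boxtimes}\mathpzc{N}_\infty)$, and $\Sigma_\Omega T$ as the closure of $S$ on $C^\infty_c([0,1]^2\cup\Omega\setminus\Gamma_1)\Dom(S)$.
\item Prove that $(\Sigma_\Omega T)^*$ is the closure of $Q(p_\Omega x):=p_\Omega Sx$, $x\in\Dom(S)$. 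Self-adjointness of $S$ gives $Q\subseteq(\Sigma_\Omega T)^*$. \cite[Theorem 1.18]{DGM} together with mollification in the $\Omega$-direction gives the reverse inclusion.
\end{enumerate}
This yields $C^\infty_c([0,1]^2\cup\Omega\setminus\Gamma_1,\mathcal{A})\Dom((\Sigma_\Omega T)^*)\subseteq\Dom(\Sigma_\Omega T)$. Since the argument persists in every localization, regularity follows from the local-global principle.

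\textbf{A possible repair of your route.} You can keep a parametrix argument if you apply it to the symmetric operator $\begin{pmatrix}0&(\Sigma_\Omega T)^*\\ \Sigma_\Omega T&0\end{pmatrix}$ instead of $\Sigma_\Omega T$. Use the resolvents of the self-adjoint regular local operators $\begin{pmatrix}0&\Psi(T)^*\\ \Psi(T)&0\end{pmatrix}$ and its $\Omega$-analogue. Surjectivity of this doubled operator $\pm i\mu$ does give surjectivity of $1+(\Sigma_\Omega T)^*\Sigma_\Omega T$. This route still requires showing that cut-offs of $\Dom(\Psi(T)^*)$ and $\Dom((D_\Omega\hat{\boxtimes}D)^*)$ lie in $\Dom((\Sigma_\Omega T)^*)$, with $(\Sigma_\Omega T)^*$ acting there as those adjoints. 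That is exactly a proof of your step 2.
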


\begin{proof}
The operator $ \Sigma_\Omega T$ is readily verified to be symmetric on a suitable core so it is therefore symmetric. An elementary computation shows that $\Sigma_\Omega T$ has bounded commutators with $C^\infty_c([0,1]^2\cup\Omega\setminus \Gamma_0,\mathcal{A})$. It follows from Remark \ref{interohc} and Proposition \ref{suspandprop} that the inclusions 
$$\Dom(\Psi(T))\hookrightarrow L^2[0,1]\hat{\boxtimes}\mathpzc{N}\quad\mbox{and}\quad \Dom(D_\Omega\hat{\boxtimes}D)\hookrightarrow L^2([0,1]^2\cup \Omega,S_\Omega),$$
are locally compact for the $C^\infty_c([0,1]^2\cup\Omega\setminus \Gamma_0,\mathcal{A})$-action. Therefore, if $\Sigma_\Omega T$ was regular, it would follow that
 $$b_\Omega(\phi)\left(1+(\Sigma_\Omega T)^*\Sigma_\Omega T\right)^{-1}\in \mathbb{K}_B(\Sigma_\Omega \mathpzc{N}),\quad\forall\phi\in C^\infty_c([0,1]^2\cup\Omega\setminus \Gamma_0,\mathcal{A}).$$ 
The same argument works for $\phi\in C^\infty_c([0,1]^2\cup\Omega\setminus \Gamma_1,\mathcal{A})$ when $\mathfrak{X}$ is a bordism.

That leaves us to show that $\Sigma_\Omega T$ is regular and 
\begin{equation}
\label{inclusioofdomn}
C^\infty_c([0,1]^2\cup\Omega\setminus \Gamma_1,\mathcal{A})\Dom(\Sigma_\Omega T)^*\subseteq \Dom(\Sigma_\Omega T).
\end{equation}
To do so we will use the symmetric chain
$$(\mathpzc{N}_\infty,T_\infty):=\mathfrak{X}\#_{(\mathpzc{E},D)}((L^2([0, \infty)\hat{\boxtimes}\mathpzc{E}, \partial_x^{\min}\hat{\boxtimes}D), (\id_{L^2([0, 1]\hat{\boxtimes}\mathpzc{E}}, \chi_{[0,1]}),(\mathpzc{E},D))$$
The symmetric chain with boundary 
$$((L^2([0, \infty)\hat{\boxtimes}\mathpzc{E}, \partial_x^{\min}\hat{\boxtimes}D), (\id_{L^2([0, 1]\hat{\boxtimes}\mathpzc{E}}, \chi_{[0,1]}),(\mathpzc{E},D))$$ 
was considered also in Example \ref{halflineex}. Since we are gluing along a joint boundary, $T_\infty$ is a self-adjoint and regular extension of $T$ to the glued on infinite cylinder (see \cite[Proof of Theorem 2.20]{DGM}). Define the operator 
$$S:=\partial_x\hat{\boxtimes}T_\infty,$$
on $L^2(\R)\hat{\boxtimes}\mathpzc{N}_\infty$. This is a self-adjoint and regular operator by \cite[Theorem 1.3]{DGM}. We identify $\Sigma_\Omega\mathpzc{N}$ with a complemented submodule of $L^2(\R)\hat{\boxtimes}\mathpzc{N}_\infty$ using that $[0,1]^2\cup \Omega\subseteq [0,\infty)\times \R$. Let $p_\Omega\in \End_B^*(L^2(\R)\hat{\boxtimes}\mathpzc{N}_\infty)$ denote the projection onto $\Sigma_\Omega\mathpzc{N}$. 

By construction, we can identify 
$$\Sigma_\Omega T=\overline{S|_{C^\infty_c([0,1]^2\cup \Omega\setminus \Gamma_1)\Dom(S)}}.$$
We claim that $(\Sigma_\Omega T)^*$ is the closure of the operator $Q$ defined by $\Dom(Q):=p_\Omega\Dom(S)$ and $Q(p_\Omega x):=p_\Omega Sx$. The operator $Q$ is well defined since $p_\Omega x=0$ implies that $p_\Omega Sx=0$ for $x\in \Dom(S)$. Indeed, if $x=p_\Omega x_0\in \Dom(Q)$, for an $x_0\in \Dom(S)$, and $y\in C^\infty_c([0,1]^2\cup \Omega\setminus \Gamma_1)\Dom(S)\subseteq \Dom(\Sigma_\Omega T)$ we can use that $S$ is self-adjoint to show that 
$$\langle x,(\Sigma_\Omega T)y\rangle=\langle x,Sy\rangle=\langle x_0,Sy\rangle=\langle Sx_0,y\rangle=\langle p_\Omega Sx_0,y\rangle=\langle Qx,y\rangle.$$
This shows that $Q\subseteq (\Sigma_\Omega T)^*$. To prove the reverse inclusion, we take $\xi=\xi_1+\xi_2\in \Dom((\Sigma_\Omega T)^*)\subseteq L^2[0,1]\hat{\boxtimes}\mathpzc{N}+L^2([0,1]^2\cup \Omega,S_\Omega)\hat{\boxtimes}\mathpzc{E}$. Using \cite[Theorem 1.18]{DGM}, we can assume that $\xi_1=0$ and that $\xi_2=0$ near $[0,1]^2\subseteq [0,1]^2\cup \Omega$. Using classical techniques of mollifiers in the $\Omega$-direction (see for instance \cite[Theorem 6.7]{BaerBall}), we can assume that $\xi_2\in C^\infty_c(\Omega\setminus \{1\}\times[0,1],\Dom(D))$ and $\xi_2$ will therefore take the form $\xi_2\in p_\Omega\Dom(S)$. The inclusion \eqref{inclusioofdomn} follows. Moreover, the argument above holds in any localization and the local/global principle \cite[Theorem 1.18]{Pierrot}, \cite[Theorem 4.2]{leschkaad2} shows that $\Sigma_\Omega T$ is regular.
\end{proof}

Let $U$ denote a collar neighborhood of $\Gamma_1$ and define $p_U\in \End_B^*(\Sigma_\Omega\mathpzc{N})$ as multiplication by the characteristic function of $U$ using the representation $b_\Omega$. Using that $U$ is a collar neighborhood, we can find a unitary isomorphism 
$$\theta_U:p_U\Sigma_\Omega\mathpzc{N}\to \Psi(\mathpzc{N}\oplus\Psi(\mathpzc{E})\oplus \mathpzc{N}),$$
and since we assume the metric is of product type, $\Sigma_\Omega(T)$ acts on $p_U\Sigma_\Omega\mathpzc{N}$ as of product type as in Definition \ref{HilBorDef}.b. We can therefore make the following definition using Lemma \ref{restsups}.

\begin{define}
\label{straightofdoubl}
Let $\mathfrak{X}=(\mathfrak{X}^\circ,\Theta,\mathfrak{X}^\partial)=((\mathpzc{N},T),(\theta,p),(\mathpzc{E},D))$ be a symmetric chain with boundary. We define the restricted suspension as the symmetric chain with boundary
$$\Sigma_\Omega\mathfrak{X}:=(\Sigma_\Omega\mathfrak{X}^\circ,\Sigma_\Omega\Theta,\mathfrak{X}\#_{\partial \mathfrak{X}}\Psi(\partial \mathfrak{X})\#_{\partial \mathfrak{X}}-\mathfrak{X}),$$
where $\Sigma_\Omega\mathfrak{X}^\circ:=(\Sigma_\Omega\mathpzc{N},\Sigma_\Omega T)$ and $\Sigma_\Omega\Theta=(\theta_U,p_U)$.
\end{define}

The following theorem holds in the case of a bordism using Lemma \ref{restsups}.

\begin{theorem}
\label{doublingup}
Let $\mathfrak{X}$ be an $(\mathcal{A},B)$-bordism. The restricted suspension is a well defined $(\mathcal{A},B)$-bordism satisfying that
$$\partial  \Sigma_\Omega\mathfrak{X}=\mathfrak{X}\#_{\partial \mathfrak{X}}\Psi(\partial \mathfrak{X})\#_{\partial \mathfrak{X}}-\mathfrak{X}.$$
\end{theorem}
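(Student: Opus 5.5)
The plan is to check the three parts of Definition \ref{HilBorDef} for $\Sigma_\Omega\mathfrak{X}$ in turn: the boundary is a cycle, the boundary data satisfy a)--c), and the interior is half-closed over $C^\infty_c((0,1],\mathcal{A})$ via $b$. Nearly all analytic input comes from Lemma \ref{restsups}, so the work is bookkeeping of the collar $U$ of $\Gamma_1$.

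First, the boundary. Since $\mathfrak{X}$ is a bordism, $\Psi(\partial\mathfrak{X})$ is a bordism $\partial\mathfrak{X}\sim_{\rm bor}\partial\mathfrak{X}$ by reflexivity. Gluing twice along the common boundary $\partial\mathfrak{X}$ (Hilsum's construction \eqref{glueToGetClosed}, in the iterated form of Proposition \ref{subsecglyinggluing}) therefore gives an honest $(\mathcal{A},B)$-cycle $\mathfrak{X}\#_{\partial\mathfrak{X}}\Psi(\partial\mathfrak{X})\#_{\partial\mathfrak{X}}-\mathfrak{X}$. Its module is $\mathpzc{N}\oplus\Psi(\mathpzc{E})\oplus\mathpzc{N}$ with operator $T$, $\Psi(D)$, $T$ matched on the collars. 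The next step is to identify this cycle with the geometric boundary $\Gamma_1$ of $[0,1]^2\cup\Omega$. By $\Omega1$--$\Omega3$, $\Gamma_1$ is a smooth curve consisting of $[0,1]\times\{0\}$, the smooth arc $\partial\Omega\setminus(\{1\}\times[0,1])$, and $[0,1]\times\{1\}$. On the two straight pieces $\Sigma_\Omega\mathpzc{N}$ restricts to copies of $\mathpzc{N}$; this uses the pullback description of $\Sigma_\Omega\mathpzc{N}$ and condition b) for $\mathfrak{X}$ near $x=1$. Along the arc the module is $L^2(\text{arc})\hat{\boxtimes}\mathpzc{E}$, which after reparametrizing the arc by arclength becomes $\Psi(\mathpzc{E})$ up to unitary equivalence. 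This matching is what defines $\theta_U$.

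Second, the boundary data. Because the metric on $[0,1]^2\cup\Omega$ is of product type near $\Gamma_1$, the operator $D_\Omega$ in normal coordinate $s$ takes the form $\partial_s\hat{\boxtimes}1+\partial_{\Gamma_1}$-type tangential operator. On the straight edges, the $y$-variable of $\Psi(T)=\partial^{\rm min}\hat{\boxtimes}T$ already plays the role of the normal variable. Hence on $p_U\Sigma_\Omega\mathpzc{N}$ we have $\Sigma_\Omega T=\theta_U^{-1}\Psi(\text{boundary operator})\theta_U$ on the appropriate localized domains, which is condition b). Condition c) holds because $\Sigma_\Omega T$ is a first-order (local) operator built from $\Psi(T)$ and $D_\Omega\hat{\boxtimes}D$, each of which satisfies locality by Proposition \ref{suspandprop} and the corresponding property of $\mathfrak{X}$. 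Condition a) reduces, under $b$, to the inclusion \eqref{inclusioofdomn} for functions supported away from $\Gamma_1$ together with the product structure in the collar, where it is the corresponding statement for $\Psi$ of a self-adjoint operator.

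Third, half-closedness. Under $b$, an element $\phi\in C^\infty_c((0,1],\mathcal{A})$ acts as $b_\Omega$ of a function on $[0,1]^2\cup\Omega$. That function is constant (equal to $\phi(1)$) off $U$ and vanishes near $\Gamma_1$, so it lies in $C^\infty_c([0,1]^2\cup\Omega\setminus\Gamma_1,\mathcal{A})$ up to a multiplier. By the final statement of Lemma \ref{restsups}, $b(\phi)(1+(\Sigma_\Omega T)^*\Sigma_\Omega T)^{-1}$ is therefore compact. The main obstacle is the second step: producing $\theta_U$ on the corner-adjacent pieces, where the straight edges meet the arc at the points $(1,0)$ and $(1,1)$. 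There the pullback module glues $L^2[0,1]\hat{\boxtimes}\mathpzc{N}$ to $L^2(\Omega)\hat{\boxtimes}\mathpzc{E}$ through $\theta$. I would handle these points using $\Omega1$, which says that near them everything is a product of $\mathpzc{E}$ with a flat region, and then apply the gluing isomorphism from \cite[Theorem 2.20]{DGM}.
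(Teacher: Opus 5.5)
Your proposal is correct and follows essentially the same route as the paper, which obtains the theorem directly from Lemma \ref{restsups} together with the product-type collar $U$ of $\Gamma_1$ that defines $\Sigma_\Omega\Theta=(\theta_U,p_U)$. Lemma \ref{restsups} supplies regularity and symmetry of $\Sigma_\Omega T$, the domain inclusion \eqref{inclusioofdomn}, and half-closedness over $C^\infty_c([0,1]^2\cup\Omega\setminus\Gamma_1,\mathcal{A})$ when $\mathfrak{X}$ is a bordism. The corner points $(1,0)$ and $(1,1)$ that you single out are not a real obstacle: by $\Omega1$ and $\Omega3$ the boundary is flat near them, and the matching of $\Psi(T)$ with $D_\Omega\hat{\boxtimes}D$ there is already built into the well-definedness of $\Sigma_\Omega T$ via condition b) for $\mathfrak{X}$.
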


\begin{cor}
Let $\mathfrak{X}$ be an $(\mathcal{A},B)$-bordism. The doubled cycle 
$$2\mathfrak{X}:=\mathfrak{X}\#_{\partial \mathfrak{X}}-\mathfrak{X},$$
is null-bordant.
\end{cor}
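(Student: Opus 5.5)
The plan is to obtain the null-bordism of $2\mathfrak{X}$ by combining Theorem \ref{doublingup} with the gluing construction of Proposition \ref{subsecglyinggluing} and the reflexivity bordism given by the cylinder $\Psi$.

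Theorem \ref{doublingup} provides a bordism $\Sigma_\Omega\mathfrak{X}$ whose boundary is
$$\mathfrak{Y}':=\mathfrak{X}\#_{\partial \mathfrak{X}}\Psi(\partial \mathfrak{X})\#_{\partial \mathfrak{X}}-\mathfrak{X}.$$
So it suffices to show that $\mathfrak{Y}'$ is bordant to $2\mathfrak{X}=\mathfrak{X}\#_{\partial\mathfrak{X}}-\mathfrak{X}$. Transitivity (Proposition \ref{subsecglyinggluing}) would then glue this bordism onto $\Sigma_\Omega\mathfrak{X}$ along $\mathfrak{Y}'$, giving $2\mathfrak{X}\sim_{\rm bor}0$.

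The key step is to see that gluing in the finite cylinder $\Psi(\partial\mathfrak{X})$ does not change the bordism class. I would argue it in two stages.

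\begin{enumerate}
\item First, $\mathfrak{X}\#_{\partial\mathfrak{X}}\Psi(\partial\mathfrak{X})$ is itself a bordism with boundary $\partial\mathfrak{X}$, by \cite[Theorem 2.20]{DGM}. Its interior is $\mathfrak{X}^\circ$ with a longer collar $L^2[0,2]\hat{\boxtimes}\mathpzc{E}$, on which the operator is of product type $\partial\hat{\boxtimes}D$.
\item Second, rescaling the collar coordinate $[0,2]\to[0,1]$ identifies this longer collar with the original one, up to a factor in front of $\partial$. Concretely, I would build a unitary $U$ from the module of the lengthened bordism to $\mathpzc{N}$: it is the identity on $(1-p)\mathpzc{N}$ and rescales the collar variable on the collar. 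Under $U$, the interior operator becomes $T$ with the collar derivative term multiplied by a smooth positive function $\rho(x)$ that equals $1$ near both ends of the collar.
\end{enumerate}

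Applying the same rescaling on both sides gives $\mathfrak{Y}'\cong(\mathpzc{N}_2,T_2+V)$, where $(\mathpzc{N}_2,T_2)$ is the closed cycle $2\mathfrak{X}$. Here $V=(\rho-1)\partial\hat\boxtimes 0$ is supported in the collar, where everything is of product form. I would connect $\rho$ to $1$ by the path $\rho_s=s\rho+(1-s)$. The resulting path of operators is, on the collar, $\rho_s\partial\hat{\boxtimes}1+1\hat\boxtimes D$ with $\rho_s$ uniformly bounded above and below. This family is handled exactly as in Theorem \ref{perturbationgenerelthm} via Lemma \ref{lemmahomotopyresolven}: the derivative of the resolvent involves $\rho'$ and $\partial$, which are controlled by $\Dom$ of the operator, so $D(t)R'_\mu(t)$ is norm continuous. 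Theorem \ref{homotopywithsweakcond} then yields $\mathfrak{Y}'\sim_{\rm bor}2\mathfrak{X}$.

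The main obstacle is this last step. It requires checking that each $T_2+V_s$ is self-adjoint and regular with a common core, and that the perturbation $V_s$ is controlled relative to the varying operator rather than bounded. I would verify this locally using the local–global principle \cite[Theorem 4.2]{leschkaad2}. Away from the collar nothing changes. On the collar, \cite[Theorem 1.3]{DGM} applies to $\rho_s\partial\hat\boxtimes D$, since the anticommutator with $1\hat\boxtimes D$ vanishes and $\rho_s\partial$ is self-adjoint in the weighted $L^2$.

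An alternative route avoids rescaling entirely. One chooses $\Omega$ in the definition of $\Sigma_\Omega$ with collar width absorbing the cylinder, which would give $\partial\Sigma_\Omega\mathfrak{X}\cong 2\mathfrak{X}$ directly. I would try that first if the construction in Lemma \ref{restsups} permits it.
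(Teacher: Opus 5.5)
Your proposal is correct and takes the same route as the paper, which states the corollary as an immediate consequence of Theorem \ref{doublingup}. The paper tacitly identifies the boundary $\mathfrak{X}\#_{\partial \mathfrak{X}}\Psi(\partial \mathfrak{X})\#_{\partial \mathfrak{X}}-\mathfrak{X}$ with $2\mathfrak{X}$; your collar-rescaling homotopy combined with transitivity is a sensible way to make that identification explicit. One small correction: a unitary rescaling of the collar (Jacobian factor included) turns $\partial$ into the symmetric operator $\tfrac12(\rho\partial+\partial\rho)=\rho\partial+\tfrac{i}{2}\rho'$ on the unweighted $L^2$. So the perturbation should be $\tfrac12(\rho\partial+\partial\rho)-\partial$ rather than $(\rho-1)\partial$; this is still linear in $\rho$, so your path $\rho_s$ fits Theorem \ref{perturbationgenerelthm} unchanged.
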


\begin{remark}
\label{doublingcycle}
Suppose that $\mathfrak{X}=(\mathfrak{X}^\circ,0,0)$ is a bordism. By \cite[Proposition 2.10]{DGM}, $\mathfrak{X}^\circ=(\mathpzc{E},D)$ is a cycle. It follows from the construction, that 
$$\Sigma_\Omega\mathfrak{X}=(\Psi(\mathfrak{X}^\circ),\theta,\mathfrak{X}^\circ\oplus -\mathfrak{X}^\circ),$$
for suitable boundary data, e.g. $\Theta=(\theta,\chi_{[0,1/4]\cup[3/4,1]})$ where $\theta$ is an isomorphism $L^2([0,1/4]\cup[3/4,1])\hat{\boxtimes}\mathpzc{E}\cong L^2[0,1]\hat{\boxtimes}(\mathpzc{E}\oplus -\mathpzc{E})$. 
\end{remark}

\subsection{Cutting and pasting}

It is also possible to cut-and-paste $KK$-bordisms almost as in Example \ref{gluingalongjointpiece}, up to canonical null-bordant cycles. A suitable rephrasing is as follows: when $\mathfrak{X}_1$, $\mathfrak{X}_2$ and $\mathfrak{X}_3$ are $KK$-bordisms with the same boundary $\mathfrak{Y}$, and $\mathfrak{X}_1\#_{\mathfrak{Y}}\mathfrak{X}_2$ and $\mathfrak{X}_2\#_{\mathfrak{Y}}\mathfrak{X}_3$ are nullbordant then via a canonical bordism, the cycle $\mathfrak{X}_1\#_{\mathfrak{Y}}\mathfrak{X}_3$ is also nullbordant.  

The next lemma is formulated using the ideas appearing in Bunke's proof of his relative index theorem \cite{bunkerelative}. The reader can find similar constructions in \cite[Section 10]{hilsumbordism}.

\begin{lemma}
\label{cliffsymlem}
Suppose that $\mathfrak{X}_1$, $\mathfrak{X}_2$ and $\mathfrak{X}_3$ are $KK$-bordisms with the same boundary $\mathfrak{Y}$. Then the cycle 
$$\mathfrak{X}_1\#_{\mathfrak{Y}}\mathfrak{X}_2+(-\mathfrak{X}_2)\#_{\mathfrak{Y}}\mathfrak{X}_3+\mathfrak{X}_1\#_{\mathfrak{Y}}\mathfrak{X}_3-\mathfrak{X}_2\#_{\mathfrak{Y}}\mathfrak{X}_2,$$
admits a Clifford symmetry (as defined in Definition \ref{cliffsymdef}).
\end{lemma}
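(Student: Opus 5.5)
The plan is to follow Bunke's proof of the relative index theorem and realize the four-term cycle as a single operator on one Hilbert $C^*$-module. We then build the Clifford symmetry from a ``swap'' of interior pieces, cut off in the collar.

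First write $\mathfrak{X}_i=((\mathpzc{N}_i,T_i),(\theta_i,p_i),\mathfrak{Y})$ with $\mathfrak{Y}=(\mathpzc{E},D)$. Recall Hilsum's gluing \cite[Section 10]{hilsumbordism}: the module of $\mathfrak{X}_i\#_{\mathfrak{Y}}\mathfrak{X}_j$ is obtained from $(1-p_i)\mathpzc{N}_i\oplus(1-p_j)\mathpzc{N}_j$ together with a cylinder $L^2[-1,1]\hat{\boxtimes}\mathpzc{E}$, identified via $\theta_i$, $\theta_j$ (the second with the orientation reversal $t\mapsto -t$). The operator is $T_i$, $\Psi(D)$ on the cylinder, and $T_j$. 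Carrying this out for all four summands, the total module of the cycle in the lemma decomposes as
$$\bigoplus_{k=1}^{4} L^2[-1,1]\hat{\boxtimes}\mathpzc{E}_{\pm}\;\oplus\;\text{(two copies each of the interior parts of $\mathpzc{N}_1,\mathpzc{N}_2,\mathpzc{N}_3$)}.$$
Each interior piece $(1-p_i)\mathpzc{N}_i$ occurs exactly twice: once with $+$ and once with $-$ orientation/grading, because $\mathfrak{X}_2$ appears in $\mathfrak{X}_1\#\mathfrak{X}_2$, $-\mathfrak{X}_2\#\mathfrak{X}_3$ and twice in $-\mathfrak{X}_2\#\mathfrak{X}_2$, with the signs cancelling in pairs. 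The sign bookkeeping, which follows Definition \ref{oppKKCyc}, will be checked first.

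Second, we define $J$ as follows.
\begin{itemize}
\item On the interior pieces, $J$ is the off-diagonal matrix $\begin{pmatrix}0&\epsilon\\ \epsilon^*&0\end{pmatrix}$ that swaps the two copies of each $(1-p_i)\mathpzc{N}_i$. Here $\epsilon$ is $i\gamma$ or $1$ depending on parity, chosen so that $J$ anticommutes with $T_i\oplus(-T_i)$ (for odd chains, $-\mathfrak{X}$ has operator $-T$, so the plain swap anticommutes; in the even case, use the grading-reversing identity on $\mathpzc{N}\oplus-\mathpzc{N}$ composed with $i$ times a grading twist).
\item On the four cylinders over $\mathfrak{Y}$, interpolate with $\cos(\phi(t))$ and $\sin(\phi(t))$ matrices. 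Here $\phi$ is a step-like function on $[-1,1]$ that moves from swapping the ``left'' halves to swapping the ``right'' halves; this is exactly Bunke's rotation trick. It is possible because on the cylinders the left parts pair up as $(1,2),(2,3),(1,3),(2,2)$, and the right parts pair up compatibly after a permutation.
\end{itemize}
One then checks that $J$ is a self-adjoint unitary, that it commutes with the $A$-action (the action factors through $\phi(1)$ on interiors and is constant along cylinders), and that it is odd in the even case.

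The main obstacle is the anticommutation $JD_{\rm tot}=-D_{\rm tot}J$. A rotation depending on $t$ produces the commutator term $\phi'(t)\cdot(\text{Clifford generator of }\partial)$, so $J$ only anticommutes up to a bounded, locally supported error. I would handle this in one of two ways:
\begin{enumerate}
\item Choose the rotation in the Clifford direction, so that the $\partial_t$ term anticommutes exactly. Concretely, write the cylinder operator as $\partial\hat{\boxtimes}D$ and use $J=\cos\phi\,J_0+\sin\phi\,J_1$, where $J_0$, $J_1$ anticommute with both $\partial\hat{\boxtimes}1$ and $\gamma\hat{\boxtimes}D$ and with each other. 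The derivative term is then $\phi'(\,\cdot\,)$ times $J_1J_0$-type products, which should cancel against the grading.
\item Failing that, absorb the error: since the definition of Clifford symmetry is exact, replace the glued operator by a locally bounded perturbation (Corollary \ref{locbddperturcor}), which is allowed because the perturbation is compactly supported in the cylinder.
\end{enumerate}
I expect route (1) to work, after doubling the cylinder module by $\C^2$ if needed. Finally, the domain compatibility, namely that $J$ preserves $\Dom$, follows from condition b) of Definition \ref{HilBorDef} and the smoothness of $\phi$.
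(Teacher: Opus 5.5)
Your construction is the same as the paper's: Bunke's trick, with $\chi_1,\chi_2$ on the gluing cylinder satisfying $\chi_1^2+\chi_2^2=1$ (your $\cos\phi,\sin\phi$). The paper builds partial isometries $a,c,d,e$ from them that pair the $\mathfrak{X}_1$-, $\mathfrak{X}_3$- and $\mathfrak{X}_2$-pieces exactly as your swaps do, and sets $J=i\gamma M$ with $M$ the resulting $4\times 4$ matrix. The gap is in your plan for \emph{exact} anticommutation.

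Route (1) fails. Write the cylinder operator as $i\sigma\partial_t$ plus terms that $J_0,J_1$ anticommute with, where $\sigma$ is the Clifford generator of the normal direction and $J_0,J_1$ anticommute with $\sigma$. Then
\[
\{J,i\sigma\partial_t\}=i\sigma\,\partial_tJ=i\phi'(t)\,\sigma\bigl(-\sin\phi(t)\,J_0+\cos\phi(t)\,J_1\bigr).
\]
This is $\phi'$ times a unitary, so it is nonzero wherever $\phi'\neq 0$. Nothing in the grading or the $D$-part cancels it. If instead $J$ does not anticommute with $\sigma$, the error is even first order. Since $J$ must pass from pairing the left pieces to pairing the right pieces, $\phi'\neq0$ somewhere.

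In fact no $J$ can give exact anticommutation in general. In the odd case $JDJ=-D$ forces $\mathrm{Spec}(D)=-\mathrm{Spec}(D)$. Now take $\mathcal{A}=B=\C$ and let each $\mathfrak{X}_k$ be an interval with $i\frac{d}{dx}$ (minimal domain), whose boundary identification $\theta_k$ carries phases. The four glued cycles are then circles of equal length $L$ with flat twists. The total spectrum is the union of $\frac{1}{L}(2\pi\Z+s_\omega\omega)$ over $\omega\in\{\omega_{12},\omega_{23},\omega_{12}+\omega_{23},0\}$, with fixed signs $s_\omega=\pm1$. For generic phases this is not symmetric: there are three nonzero classes mod $2\pi$, none fixed by negation, so negation cannot permute them.

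The paper's own $J$ has the same defect. Its anticommutator $\{J,D\}=i\gamma[M,D]$ consists of the terms $[D,b(\chi_k)]$, i.e. Clifford multiplication by $\chi_k'$ on the collar.

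Route (2) is the right repair, and it is all that Theorem \ref{cangluing} uses. First check, via condition b) of Definition \ref{HilBorDef}, that $J$ preserves $\Dom(D)$ and that $K:=\{J,D\}$ extends to a bounded operator. Then:
\begin{itemize}
\item $\frac12 JK=\frac12(D+JDJ)$ is bounded and self-adjoint;
\item $D_-:=D-\frac12 JK=\frac12(D-JDJ)$ satisfies $JD_-=-D_-J$ exactly;
\item $(\mathpzc{E},D_-)$ is again an $(\mathcal{A},B)$-cycle;
\item Corollary \ref{locbddperturcor} gives a bordism from the four-term cycle to $(\mathpzc{E},D_-)$.
\end{itemize}
So what you actually prove is that the cycle is bordant, via a transgression bordism, to a cycle admitting a Clifford symmetry. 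You should state the result in that form, not as a Clifford symmetry for the cycle itself.
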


\begin{proof}
Let $\mathpzc{E}_{ij}$ denote the module appearing in $\mathfrak{X}_i\#_{\mathfrak{Y}}\mathfrak{X}_j$ and define representations $b_{ij}:C([0,1],A)\to \End_B^*(\mathpzc{E}_{ij})$ by the action on the joint cylinder over which the gluing takes place. Take a $\chi_1,\chi_2\in C^\infty([0,1],[0,1])$ such that $\chi_1^2+\chi_2^2=1$ and $\chi_1\in C^\infty_c((0,1])$ while $\chi_2\in C^\infty_c([0,1))$. We can identify $b_{ij}(\chi_1)\mathpzc{E}_{ij}=b_{kj}(\chi_1)\mathpzc{E}_{kj}$ and $b_{ij}(\chi_2)\mathpzc{E}_{ij}=b_{ik}(\chi_2)\mathpzc{E}_{ik}$ for any $i,j,k$. We define $a:\mathpzc{E}_{12}\to \mathpzc{E}_{13}$ by the composition of $b(\chi_1)$ with this identification. The operators $c:\mathpzc{E}_{23}\to \mathpzc{E}_{13}$, $d:\mathpzc{E}_{23}\to \mathpzc{E}_{22}$, and $e:\mathpzc{E}_{12}\to \mathpzc{E}_{22}$ are defined similarly. Define the symmetry 
$$J:=i\gamma
\begin{pmatrix}
0&0& -a^*&-e^*\\
0&0&-c^*& d^*\\
a&c&0&0\\
e& -d&0&0\end{pmatrix}\in \End_B^*(\mathpzc{E}_{12}\oplus \mathpzc{E}_{23}\oplus \mathpzc{E}_{13}\oplus \mathpzc{E}_{22}).$$
A lengthier computation using the local structure of the operators coming from Definition \ref{HilBorDef}.b, shows that $J$ is a Clifford symmetry for the prescribed cycle (cf. Definition \ref{cliffsymdef}).
\end{proof}

\begin{theorem}
\label{cangluing}
Suppose that $\mathfrak{X}_1$, $\mathfrak{X}_2$ and $\mathfrak{X}_3$ are $KK$-bordisms with the same boundary $\mathfrak{Y}$ and that there are $KK$-bordisms $\mathfrak{X}_{12}$ and $\mathfrak{X}_{23}$ with 
$$\partial \mathfrak{X}_{12}=\mathfrak{X}_1\#_{\mathfrak{Y}}\mathfrak{X}_2\quad \mbox{and}\quad \partial \mathfrak{X}_{23}=\mathfrak{X}_2\#_{\mathfrak{Y}}\mathfrak{X}_3,$$
then there is a canonically associated bordism $\mathfrak{X}_{13}$ with $\partial \mathfrak{X}_{13}=\mathfrak{X}_1\#_{\mathfrak{Y}}\mathfrak{X}_3$.
\end{theorem}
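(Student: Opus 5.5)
The plan is to assemble $\mathfrak{X}_{13}$ by gluing finitely many canonical bordisms along common boundary summands, using Proposition \ref{subsecglyinggluing} and the more general gluing \cite[Theorem 2.20]{DGM} for bordisms with several boundary components. The input is Lemma \ref{cliffsymlem}: the cycle
$$\mathfrak{C}:=\mathfrak{X}_1\#_{\mathfrak{Y}}\mathfrak{X}_2+(-\mathfrak{X}_2)\#_{\mathfrak{Y}}\mathfrak{X}_3+\mathfrak{X}_1\#_{\mathfrak{Y}}\mathfrak{X}_3-\mathfrak{X}_2\#_{\mathfrak{Y}}\mathfrak{X}_2$$
carries an explicit Clifford symmetry $J$. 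By Proposition \ref{cliffsymlemprop} we therefore get a canonical null-bordism $\mathfrak{X}_{\mathfrak{C},J}$, namely the transgression bordism of Corollary \ref{locbddperturcor} for $\mathfrak{C}\sim_{\rm bor}(\mathfrak{C},D+J)$ glued to the Shubin bordism $\mathrm{Sh}$ of Theorem \ref{weakdegnullbordthm}. Then $\partial\mathfrak{X}_{\mathfrak{C},J}=\mathfrak{C}$. The remaining boundary pieces other than $\mathfrak{X}_1\#\mathfrak{X}_3$ are then capped off with canonical fillings.

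Concretely, I would proceed as follows.

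\begin{enumerate}
\item Identify $(-\mathfrak{X}_2)\#_{\mathfrak{Y}}\mathfrak{X}_3$ with $-(\mathfrak{X}_2\#_{\mathfrak{Y}}\mathfrak{X}_3)$ up to the canonical isomorphism coming from reversing the collar coordinate. In the same way, identify $\mathfrak{X}_2\#_{\mathfrak{Y}}\mathfrak{X}_2$ with the double $2\mathfrak{X}_2$, up to sign conventions. These are routine checks of the gluing in \cite[Section 10]{hilsumbordism}.
\item Cap off the summands. By Theorem \ref{doublingup} and its corollary, the restricted suspension $\Sigma_\Omega\mathfrak{X}_2$ is a canonical filling of $2\mathfrak{X}_2$. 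The given bordisms $\mathfrak{X}_{12}$ and $\mathfrak{X}_{23}$ fill $\mathfrak{X}_1\#\mathfrak{X}_2$ and $\mathfrak{X}_2\#\mathfrak{X}_3$.
\item Glue $\mathfrak{X}_{\mathfrak{C},J}$ successively to $-\mathfrak{X}_{12}$, to $\mathfrak{X}_{23}$ (with the sign fixed by step 1), and to $\Sigma_\Omega\mathfrak{X}_2$. Each gluing is along the corresponding boundary summand, via \cite[Theorem 2.20]{DGM} in the form \eqref{glueToGetBordism}.
\end{enumerate}

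After these three gluings only the summand $\mathfrak{X}_1\#_{\mathfrak{Y}}\mathfrak{X}_3$ remains in the boundary, and the result is $\mathfrak{X}_{13}$. Every ingredient is determined by the data $\mathfrak{X}_i,\mathfrak{X}_{12},\mathfrak{X}_{23}$ together with fixed auxiliary choices: the cut-off functions $\chi_1,\chi_2$ in $J$, the cut-off $\chi$ in the transgression, and the domain $\Omega$. This is the sense in which $\mathfrak{X}_{13}$ is canonical.

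The main obstacle is the content of Lemma \ref{cliffsymlem}, namely verifying that $J$ anticommutes with the glued operator, preserves its domain, commutes with the $A$-action, and squares to one. If that is taken as given, the delicate point in this proof is the sign and orientation bookkeeping in step 1. One must match $(-\mathfrak{X}_2)\#\mathfrak{X}_3$ and $\mathfrak{X}_2\#\mathfrak{X}_2$ exactly, at the level of isomorphism of cycles, with boundaries of the available fillings; the gluing theorem requires literally equal boundary summands. I would first try to realise these identifications as explicit unitaries that are the identity away from the collar and a flip $t\mapsto 1-t$ on it. Where an exact match fails, I would absorb the discrepancy with a cylinder bordism $\Psi$ or with a locally bounded perturbation bordism from Corollary \ref{locbddperturcor}, both of which are canonical.
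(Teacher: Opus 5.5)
Your proposal is correct and follows the paper's proof: the Clifford symmetry of Lemma \ref{cliffsymlem} combined with Proposition \ref{cliffsymlemprop} gives a canonical null-bordism of the four-term cycle. The paper then concludes by ``transitivity'' and the group structure of $\Omega_*(\mathcal{A},B)$, which amounts to your explicit gluing against $\mathfrak{X}_{12}$, $\mathfrak{X}_{23}$ and a filling of $\mathfrak{X}_2\#_{\mathfrak{Y}}\mathfrak{X}_2$; you are in fact more explicit than the paper in naming $\Sigma_\Omega\mathfrak{X}_2$ from Theorem \ref{doublingup} as that filling, which the paper leaves implicit.
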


\begin{proof}
It follows from Proposition \ref{cliffsymlemprop}, Lemma \ref{cliffsymlem} and transitivity of the bordism relation that there is a canonical bordism
$$\mathfrak{X}_1\#_{\mathfrak{Y}}\mathfrak{X}_3\sim \mathfrak{X}_1\#_{\mathfrak{Y}}\mathfrak{X}_2+\mathfrak{X}_2\#_{\mathfrak{Y}}\mathfrak{X}_3-\mathfrak{X}_2\#_{\mathfrak{Y}}\mathfrak{X}_2.$$
The theorem follows from that $\Omega_*(\mathcal{A},B)$ is a group.
\end{proof}

\subsection{Gluing on an infinite cylinder}
\label{subsecgluinginfinite}

A $KK$-bordism should be thought of as a noncommutative analogue of a manifold with boundary whose geometric structures near the boundary are of product type. As such, we should be able to glue on an infinite cylinder onto the boundary component. This idea is found already in the work of Hilsum \cite{hilsumcmodbun}. Recall the suspension of a chain by the half interval $[0,\infty)$ from Definition \ref{suspbyintehalf} and its properties given in Proposition \ref{suspandprophalf}.

\begin{prop}
If $\mathfrak{Y}=(\mathpzc{E},D)$ is a closed $(\mathcal{A},B)$-cycle, then $\Psi_{(0,\infty)}(\mathfrak{Y})$
is a well defined half-closed $(C^\infty_c((0,\infty),\mathcal{A}),B)$-chain and the data $\Theta=(\id, \chi_{[0,1]}\hat{\boxtimes}1)$ makes $\Psi_{(0,\infty)}(\mathfrak{Y})$ into a symmetric chain with boundary.
\end{prop}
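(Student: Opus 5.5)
The first claim, that $\Psi_{(0,\infty)}(\mathfrak{Y})$ is a half-closed $(C^\infty_c((0,\infty),\mathcal{A}),B)$-chain, follows directly from Proposition \ref{suspandprophalf}. A closed cycle is in particular half-closed, and that proposition gives exactly this conclusion. Regularity and symmetry of $\partial_{(0,\infty)}\hat{\boxtimes}D$, together with the description of its adjoint as $\overline{\partial^*_{(0,\infty)}\hat{\boxtimes}D}$, are recorded just before Definition \ref{suspbyintehalf}. So the real work is to check that $\Theta=(\id,\chi_{[0,1]}\hat{\boxtimes}1)$ satisfies Definition \ref{HilBorDef}. This is essentially the content of Example \ref{halflineex}, and I would follow that example.

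First I verify the boundary data. $p=\chi_{[0,1]}\hat{\boxtimes}1$ is an even projection commuting with the $A$-action, since $A$ acts only on the $\mathpzc{E}$ factor. Moreover $p\Psi_{(0,\infty)}(\mathpzc{E})=L^2[0,1]\hat{\boxtimes}\mathpzc{E}$, so $\theta=\id$ is a graded unitary. The parity shift between interior and boundary is built into $\hat{\boxtimes}$. The representation becomes $b(\phi)=\phi(1)\chi_{(1,\infty)}+\chi_{[0,1]}\phi$, which is multiplication by a function $\tilde\phi$ on $[0,\infty)$ that is constant for $x\ge1$.

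Next I check conditions a)--c). For c): if $\phi_1\phi_2=0$ then $\tilde\phi_1\tilde\phi_2=0$. Since $T$ is a first-order operator in $x$ that is local in $x$, multiplication by $\tilde\phi_1$ kills $T\tilde\phi_2\xi$ on a core such as $C^\infty_c((0,\infty),\Dom D)$, and hence everywhere by closedness. For b): for $\phi\in C^\infty_c(0,1)$, the function $\tilde\phi$ is supported in $(0,1)$. Elements of $\Dom T$ multiplied by $\tilde\phi$ are exactly $\phi\Dom\Psi(D)$, because both domains are locally $H^1\hat{\boxtimes}\mathpzc{E}\cap L^2\hat{\boxtimes}\Dom D$ away from the endpoints, and the differential expressions agree there. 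This locality I would justify via the domain characterisation of \cite[Theorem 1.18]{DGM}.

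Condition a) is the main obstacle. For $\phi\in C^\infty_c(0,1]$, the function $\tilde\phi$ vanishes near $0$ but equals $\phi(1)$ on $[1,\infty)$, so it is not compactly supported. I must show $\tilde\phi\Dom T^*\subseteq\Dom T$. Domains of $\partial^{\max}$ and $\partial^{\min}$ differ only by the boundary value at $x=0$, and $D$ is self-adjoint. So I would use the adjoint description $T^*=\overline{\partial^*_{(0,\infty)}\hat{\boxtimes}D}$ and approximate $\xi\in\Dom T^*$ by elements of the algebraic core $C^\infty_c([0,\infty))\otimes\Dom D$. Multiplying by $\tilde\phi$ lands in the minimal domain, and the commutator $[T,\tilde\phi]=i\tilde\phi'\hat{\boxtimes}1$ is bounded, which allows passage to the limit in graph norm. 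The identity $T^*\tilde\phi=T\tilde\phi$ on $\Dom T^*$ then follows from $T\subseteq T^*$. This is the same argument as in \cite[Lemma 2.5]{DGM}, to which I would defer the approximation details.
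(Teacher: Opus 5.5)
Your proposal is correct and takes the same route as the paper. The paper reads off half-closedness from Proposition~\ref{suspandprophalf} and defers the boundary axioms to Example~\ref{halflineex} and \cite[Lemma 2.5]{DGM}; your explicit treatment of condition a), where $b(\phi)$ is not compactly supported, is exactly the point that reference handles. One minor remark: when $\phi'(1)\neq 0$ the extension $\tilde\phi$ is only Lipschitz at $x=1$, so for $f$ in your core $\tilde\phi f$ lies in $H^1_0[0,\infty)$ rather than $C^\infty_c(0,\infty)$, which is harmless because $\tilde\phi'$ is bounded and your commutator argument goes through unchanged.
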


We remark that $\Psi_{(0,\infty)}(\mathfrak{Y})$ is never closed unless $\mathfrak{Y}=0$. We omit the proof as it follows directly from Proposition \ref{suspandprophalf}.

Assume that $\mathfrak{X}=(\mathfrak{X}^\circ,\Theta,\mathfrak{X}^\partial)$ is a symmetric chain with boundary for $(\mathcal{A},B)$. We define a half-closed $(C^\infty_c((0,\infty),\mathcal{A}),B)$-chain $\mathfrak{X}_{(0,\infty)}$ by setting 
$$\mathfrak{X}_{(0,\infty)}^\circ:=\mathfrak{X}\#_{\mathfrak{X}^\partial}\Psi_{(0,\infty)}(\mathfrak{X}^\partial).$$
We say that $\mathfrak{X}_{(0,\infty)}^\circ$ is $\mathfrak{X}$ with an infinite cylinder glued on. For notational purposes, we write $\mathfrak{X}_{(0,\infty)}=(\mathpzc{N}_{(0,\infty)},T_{(0,\infty)})$. Note that as Hilbert $C^*$-modules, $\mathpzc{N}_{(0,\infty)}=\mathpzc{N}\oplus L^2[1,\infty)\hat{\boxtimes}\mathpzc{M}$ for $\mathfrak{X}^\partial=(\mathpzc{M},S)$.

\begin{prop}
\label{gluinginfcyleprop}
Let $\mathfrak{X}$ be a bordism. The chain $\mathfrak{X}_{(0,\infty)}^\circ=(\mathpzc{N}_{(0,\infty)},T_{(0,\infty)})$ defines a half-closed $(C^\infty_c([0,\infty),\mathcal{A}),B)$-chain. This half-closed chain satisfies the following:
\begin{enumerate}
\item $T_{(0,\infty)}$ is self-adjoint.
\item Equipped with the boundary data $\Theta=(\id, \chi_{[0,1]}\hat{\boxtimes}1)$, we have that $\mathfrak{X}^{\rm s.a.}:=(\mathfrak{X}^\circ_{(0,\infty)},\Theta, \mathfrak{X}^\partial)$ is a bordism for $(\mathcal{A},B)$ whose interior operator is self-adjoint and 
$$\partial \mathfrak{X}=\partial \mathfrak{X}^{\rm s.a.}.$$
\end{enumerate}
\end{prop}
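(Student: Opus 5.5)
The plan is to build $\mathfrak{X}^\circ_{(0,\infty)}$ from Hilsum's gluing construction, as in the proof of Lemma \ref{restsups}. There $\mathfrak{X}\#_{(\mathpzc{E},D)}\Psi_{(0,\infty)}(\mathfrak{X}^\partial)$ already appears, and $T_\infty=T_{(0,\infty)}$ is noted to be a self-adjoint regular extension (cf. \cite[Proof of Theorem 2.20]{DGM}). We decompose
$$\mathpzc{N}_{(0,\infty)}=\mathpzc{N}\oplus L^2[1,\infty)\hat{\boxtimes}\mathpzc{M}$$
and use the partition of unity $b(\chi)$, $1-b(\chi)$ subordinate to the collar, where $\chi\in C^\infty([0,\infty))$ equals $1$ near $[0,1/3]$ and vanishes for $x\ge 2/3$. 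The operator is defined on
$$b(\chi)\Dom T+(1-b(\chi))\Dom\big(\partial_{(0,\infty)}\hat{\boxtimes}S\big),$$
and condition b) of Definition \ref{HilBorDef} makes it well defined. The steps are as follows.

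\textbf{Regularity and the adjoint.} We first show that $T_{(0,\infty)}$ is regular and compute its adjoint. By the local-global principle \cite[Theorem 4.2]{leschkaad2}, it suffices to do this after localizing in any state, i.e. in Hilbert spaces. There the adjoint is computed by cutting off: if $\xi\in\Dom(T_{(0,\infty)}^*)$, then $b(\chi)\xi\in\Dom T^*$, and condition a) gives $b(\phi)\Dom T^*\subseteq\Dom T$ for $\phi\in C^\infty_c(0,1]$. The remaining collar piece $(1-b(\chi))\xi$ lies in the maximal domain of $\partial\hat{\boxtimes}S$ on $[1/3,\infty)\times\mathpzc{M}$ with support away from $1/3$. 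Since $\partial_\infty\hat{\boxtimes}S$ is self-adjoint on $\R$ (Proposition \ref{suspandpropR}, via \cite[Theorem 1.3]{DGM}), this piece lies in the minimal domain. Condition c) ensures the commutator terms $[T,b(\chi)]$ are bounded and localized in the collar, so they do not obstruct this.

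\textbf{Self-adjointness, item (1).} The step above shows $\Dom T^*_{(0,\infty)}\subseteq\Dom T_{(0,\infty)}$ only up to the part of $\xi$ near the ``interior boundary'' at $x=0$ of the collar, and this is where the bordism hypothesis enters. The interior of a bordism has no boundary left other than $\mathfrak{X}^\partial$: condition a) for $\phi\equiv 1$ near $1$ gives $b(\phi)\Dom T^*\subseteq\Dom T$. We therefore choose the partition so that $b(\chi)$ is of the form $b(\phi)$ with $\phi\in C^\infty_c(0,1]$ away from the collar end. Every piece of $\xi$ then lands in the minimal domain, giving $T_{(0,\infty)}^*=T_{(0,\infty)}$. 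I expect this domain bookkeeping to be the main obstacle, in particular making sure the cutoff identities $T^*b(\phi)=Tb(\phi)$ and the product form b) glue consistently on the overlap $[1/3,2/3]$. I would handle it by reducing to the statement of \cite[Proof of Theorem 2.20]{DGM} for gluing along a joint boundary.

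\textbf{Half-closedness and item (2).} Half-closedness for $C^\infty_c([0,\infty),\mathcal{A})$ means local compactness, since for a self-adjoint operator the condition $\Dom T^*\to\Dom T$ is automatic. For $\phi$ with compact support we split $\phi=\phi b(\chi)+\phi(1-b(\chi))$. The first term is compact by the bordism assumption ($C^\infty_c((0,1],\mathcal{A})$ acts locally compactly). The second is compact by Proposition \ref{suspandprophalf} and Proposition \ref{suspandpropR}, as $\phi(1-\chi)$ has compact support in $(0,\infty)$; the bounded commutators handle the passage between domains. Item (2) is then immediate: with $\Theta=(\id,\chi_{[0,1]}\hat{\boxtimes}1)$ on the far copy of the collar, conditions a)--c) are inherited from the product structure of $\Psi_{(0,\infty)}(\mathfrak{X}^\partial)$ near $x=0$ of the cylinder. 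Self-adjointness makes a) trivial, local compactness gives the bordism property, and the boundary cycle is unchanged, so $\partial\mathfrak{X}^{\rm s.a.}=\partial\mathfrak{X}$.
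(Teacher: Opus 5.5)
Your proposal follows the paper's route: the paper's proof defers self-adjointness of $T_{(0,\infty)}$ and local compactness for $C^\infty_c([0,\infty),\mathcal{A})$ to the gluing argument in the proof of \cite[Theorem 2.20]{DGM} and reads off item (2) from the definitions, and your cut-off argument is a correct unpacking of this, using condition a) of Definition \ref{HilBorDef} on the $\mathfrak{X}$-side, self-adjointness of $\partial_\infty\hat{\boxtimes}S$ on the cylinder side and the local-global principle. Two small corrections: first, self-adjointness needs only condition a), which every symmetric chain with boundary satisfies (this is how $T_\infty$ is already used in the proof of Lemma \ref{restsups}), so there is no leftover piece near $x=0$ and the bordism hypothesis (local compactness) enters only in half-closedness; second, item (2) should be read as a bordism with cylindrical ends in the sense of Definition \ref{HilBorDefInf}, with $\Theta_\infty=(\id,\chi_{[0,\infty)}\hat{\boxtimes}1)$ as in Theorem \ref{infcylvsord}, because with the finite collar $\chi_{[0,1]}$ the infinite tail lies in $1-p$, where $b(\phi)$ acts by the constant $\phi(1)$, and local compactness then fails exactly as in Example \ref{halflineex}.
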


\begin{proof}
It follows directly from the proof of \cite[Theorem 2.20]{DGM} that  $\mathfrak{X}_{(0,\infty)}$ is a half-closed $(C^\infty_c([0,\infty),\mathcal{A}),B)$-chain with $T_{(0,\infty)}$ is self-adjoint. It is therefore immediate from the definition of $KK$-bordism that $\mathfrak{X}^{\rm s.a.}$ is a bordism.
\end{proof}

\begin{lemma}
\label{invertinginfsusp}
Let $\mathpzc{E}$ be a (graded) $B$-Hilbert $C^*$-module and $D:\mathpzc{E}\dashrightarrow \mathpzc{E}$ be a self-adjoint regular operator (which is odd if $\mathpzc{E}$ is graded). Then the operator 
$$\Psi_\infty(D):=i\frac{\partial}{\partial x}\hat{\boxtimes}D:L^2(\R)\hat{\boxtimes}\mathpzc{E}\dashrightarrow L^2(\R)\hat{\boxtimes}\mathpzc{E},$$
is self-adjoint and regular. If $D$ is invertible, then $\Psi_\infty(D)$ is invertible. 
\end{lemma}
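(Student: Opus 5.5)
The first claim is an instance of the results already quoted. The operator $i\frac{\partial}{\partial x}=\partial_\infty$ on $L^2(\R)$ is self-adjoint, with domain $H^1(\R)$. So, exactly as in the discussion preceding Definition \ref{suspbyinteR}, \cite[Theorem 1.3]{DGM} shows that $\Psi_\infty(D)=\partial_\infty\hat{\boxtimes}D$ is closed, regular and symmetric, and that $\Psi_\infty(D)^*=\overline{\partial_\infty\hat{\boxtimes}D^*}$. Since $D=D^*$, this gives $\Psi_\infty(D)^*=\Psi_\infty(D)$.

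For invertibility, I would write $\Psi_\infty(D)$ as a sum $\mathfrak{t}+\mathfrak{s}$ of two anticommuting self-adjoint regular operators. Here $\mathfrak{t}=\partial_\infty\hat{\boxtimes}0$ and $\mathfrak{s}=0\hat{\boxtimes}D$; in the ungraded cases one uses the matrix or grading conventions in the definition of $\hat{\boxtimes}$. Both operators are self-adjoint and regular, and they anticommute on the common core $C^\infty_c(\R)\otimes^{\rm alg}\Dom(D^2)$.

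\textbf{Key step.} On that core,
$$\langle \Psi_\infty(D)\xi,\Psi_\infty(D)\xi\rangle=\langle\mathfrak{t}\xi,\mathfrak{t}\xi\rangle+\langle\mathfrak{s}\xi,\mathfrak{s}\xi\rangle\geq \langle \mathfrak{s}\xi,\mathfrak{s}\xi\rangle\geq \|D^{-1}\|^{-2}\langle\xi,\xi\rangle,$$
because the cross terms cancel by anticommutation. By continuity in the graph norm the estimate extends to all of $\Dom(\Psi_\infty(D))$. This is the same cross-term computation used in the proof of Theorem \ref{bordismforodrdthm}, but here with no error term.

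\textbf{Conclusion.} The lower bound says $\Psi_\infty(D)^2\geq c>0$ in the sense of forms on the Hilbert module, where $c=\|D^{-1}\|^{-2}$. A self-adjoint regular operator with this property is invertible: by functional calculus, $\Psi_\infty(D)^2$ is invertible, and then $\Psi_\infty(D)^{-1}=\Psi_\infty(D)\left(\Psi_\infty(D)^2\right)^{-1}$ is adjointable.

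Alternatively, in the graded case $\Psi_\infty(D)^2=\left(-\partial_x^2\right)\hat{\boxtimes}1+1\hat{\boxtimes}D^2$. Using the Fourier transform in $x$, this is unitarily equivalent to multiplication by $\xi^2+D^2\geq \|D^{-1}\|^{-2}$, which makes the inverse explicit.

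\textbf{Main obstacle.} The only delicate point is passing from the form estimate on the core to invertibility in the Hilbert $C^*$-module setting, since there are no spectral-theoretic shortcuts through vectors there. I would handle it via regularity and functional calculus for the self-adjoint regular operator $\Psi_\infty(D)$: the inequality $\Psi_\infty(D)^2\geq c$ gives spectrum contained in $\R\setminus(-\sqrt{c},\sqrt{c})$, which can be checked in every localization $\mathpzc{E}\otimes_B H_\pi$ by the local-global principle \cite[Theorem 4.2]{leschkaad2}.
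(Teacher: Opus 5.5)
Your proof is correct. The self-adjointness and regularity part is the same as the paper's: an appeal to \cite[Theorem 1.3]{DGM}. For invertibility you take a genuinely different route.

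\textbf{The paper's route.} It works in the ungraded picture and reduces to the off-diagonal entry $\frac{\partial}{\partial x}+D$. It inverts that entry explicitly with the Fourier transform in $x$, turning it into multiplication by $(i\xi+D)^{-1}$, which is uniformly bounded because $D$ is self-adjoint with a spectral gap. This also yields a concrete formula for the inverse in terms of $\chi_{[0,\infty)}(D)$ and the exponentials $\mathrm{e}^{\mp tD}$. Your alternative via the Fourier transform of $\Psi_\infty(D)^2$ is essentially this argument, applied to the square instead of the off-diagonal entry.

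\textbf{Your route.} You split $\Psi_\infty(D)=\mathfrak{t}+\mathfrak{s}$ into anticommuting self-adjoint pieces and derive the coercive bound $\langle\Psi_\infty(D)\xi,\Psi_\infty(D)\xi\rangle\geq\|D^{-1}\|^{-2}\langle\xi,\xi\rangle$. Your anticommutation claims check out in both conventions for $\hat{\boxtimes}$. Both routes rest on the same inequality $\xi^2+D^2\geq\|D^{-1}\|^{-2}$, but they buy different things:
\begin{itemize}
\item Yours avoids Fourier analysis of module-valued functions and gives the norm bound $\|\Psi_\infty(D)^{-1}\|\leq\|D^{-1}\|$.
\item Yours applies verbatim to $T_1\hat{\boxtimes}D$ for any self-adjoint regular $T_1$, for instance $\partial_{Sh}$.
\item The paper's gives an explicit inverse, of the kind reused when treating half-line operators with APS boundary conditions.
\end{itemize}

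\textbf{Your ``main obstacle''.} It is easier than you make it, and the Kaad--Lesch local-global principle is not the right tool, since it detects self-adjointness and regularity, not spectra. Set $T=\Psi_\infty(D)$ and $c=\|D^{-1}\|^{-2}$, and substitute $\xi=(1+T^2)^{-1/2}\eta$ into your estimate. Because $(1+T^2)^{-1}=1-F_T^2$, this gives $F_T^2\geq c(1-F_T^2)$ in the $C^*$-algebra $\End_B^*(\mathpzc{E})$. Hence $F_T^2\geq c/(1+c)$, so the self-adjoint $F_T$ is invertible there, and $T^{-1}=(1+T^2)^{-1/2}F_T^{-1}$ is an adjointable inverse. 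If you prefer to localize anyway, what you need is spectral permanence for the injective $*$-homomorphism $S\mapsto S\otimes 1$ into $\mathbb{B}(\mathpzc{E}\otimes_\pi H_\pi)$ with $\pi$ faithful, not \cite[Theorem 4.2]{leschkaad2}.
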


\begin{proof}
It follows from \cite{DGM} that $\Psi_\infty(D)$ is regular and self-adjoint, as it is the exterior product of two regular and self-adjoint operators. The reader should note that in particular, $\Psi_\infty(D)$ is closed. 

Assume now that $D$ is invertible. We shall explicitly invert $\Psi_\infty(D)$ in the case that $\mathpzc{E}$ is not graded, and in which case 
$$\Psi_\infty(D)=\begin{pmatrix}0& \frac{\partial}{\partial x}+D\\ -\frac{\partial}{\partial x}+D&0\end{pmatrix}.$$
The case that $\mathpzc{E}$ is graded (and $D$ is odd) is proven analogously. It clearly suffices to prove that $\frac{\partial}{\partial x}+D:L^2(\R)\boxtimes\mathpzc{E}\dashrightarrow L^2(\R)\boxtimes\mathpzc{E}$ is invertible. Let $\mathcal{F}:L^2(\R)\boxtimes\mathpzc{E}\to L^2(\R)\boxtimes\mathpzc{E}$ denote the unitary Fouriertransform in the first leg. For $f\in L^2(\R,\mathpzc{E})=L^2(\R)\boxtimes\mathpzc{E}$, we formally write 
\begin{equation}
\label{foruicc}
\mathcal{F}\left(\frac{\partial}{\partial x}+D\right)^{-1}\mathcal{F}^*f(\xi)=(i\xi+D)^{-1}f(\xi).
\end{equation}
Since $D$ is invertible, it has a spectral gap at $0$. Combining this fact with self-adjointness of $D$ it follows that the operator in \eqref{foruicc} is a bounded operator on $L^2(\R)\boxtimes\mathpzc{E}$. This shows that $\Psi_\infty(D)$ is invertible. For completeness, we note that an elementary Fourier transform computation shows
$$\left(\frac{\partial}{\partial x}+D\right)^{-1}f(x)=\int_x^\infty P\mathrm{e}^{-(x-y)D}f(y)\mathrm{d}y+\int_{-\infty}^x(P- 1)\mathrm{e}^{-(y-x)D}f(y)\mathrm{d}y$$
where $P:=\chi_{[0,\infty)}(D)\in \End_B^*(\mathpzc{E})$ is well defined since $D$ is invertible.
\end{proof}

\begin{lemma}
\label{infcylfred}
Let $\mathfrak{X}$ be a bordism with $\mathfrak{X}^\circ=(\mathpzc{N},T)$ and $\mathfrak{X}^\partial=(\mathpzc{M},S)$. If $S$ is invertible, the operator $T_{(0,\infty)}$ constructed above is $B$-Fredholm.
\end{lemma}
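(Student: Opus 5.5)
We will show that $T_{(0,\infty)}$ is invertible modulo $B$-compact operators by gluing a parametrix from two pieces: an interior piece coming from the bordism, and a cylinder piece coming from Lemma \ref{invertinginfsusp}.

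\emph{Interior.} Proposition \ref{gluinginfcyleprop} tells us that $T_{(0,\infty)}$ is self-adjoint and regular. It also says that $(\mathpzc{N}_{(0,\infty)},T_{(0,\infty)})$ is half-closed for $C^\infty_c([0,\infty),\mathcal{A})$. We have to be careful about what acts here. For a $(\C,B)$-setting, or more generally whenever the unit of $C([0,\infty))$ acting through the bordism is compactly supported on the $\mathfrak{X}$-side, the function $\phi\in C^\infty_c([0,R))$ with $\phi=1$ on $[0,R-1]$ acts by an operator $b(\phi)$. Bordism half-closedness then gives that $b(\phi)(i\pm T_{(0,\infty)})^{-1}$ is $B$-compact. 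To get this, we interpret the statement as concerning the $(\C,B)$ case, or the case of unital $A$ acting unitally, so that $\phi(1)(1-p)$ is the identity on $(1-p)\mathpzc{N}$.

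\emph{Cylinder.} Let $S$ be invertible. Lemma \ref{invertinginfsusp} shows that $\Psi_\infty(S)$ on $L^2(\R)\hat{\boxtimes}\mathpzc{M}$ has a bounded inverse $Q_\infty$, given explicitly by the kernel $P\mathrm{e}^{-(x-y)S}$, $(P-1)\mathrm{e}^{-(y-x)S}$. On the cylindrical end $[1,\infty)\times\mathpzc{M}$, the operator $T_{(0,\infty)}$ coincides with $\Psi_{(0,\infty)}(S)$, which in turn is the restriction of $\Psi_\infty(S)$.

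\emph{Gluing.} Choose $\psi_1,\psi_2\in C^\infty([0,\infty))$ forming a partition of unity, with $\psi_1$ supported in $[0,3)$ and $\psi_2$ supported in $(2,\infty)$. Choose cutoffs $\tilde\psi_j$ with $\tilde\psi_j\psi_j=\psi_j$, again supported respectively in $[0,4)$ and in $(1,\infty)$. Define
$$Q:=\tilde\psi_1(i+T_{(0,\infty)})^{-1}\psi_1+\tilde\psi_2 Q_\infty\psi_2,$$
where $\tilde\psi_2 Q_\infty\psi_2$ is regarded as acting on the cylindrical part via the embedding $L^2(1,\infty)\hat{\boxtimes}\mathpzc{M}\subseteq L^2(\R)\hat{\boxtimes}\mathpzc{M}$. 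Both terms map into $\Dom(T_{(0,\infty)})$: the second because $\tilde\psi_2$ vanishes near $1$, and Definition \ref{HilBorDef}.b identifies the domain there.

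Now compute
$$T_{(0,\infty)}Q=1-i\tilde\psi_1(i+T)^{-1}\psi_1+[T,\tilde\psi_1](i+T)^{-1}\psi_1+[\Psi_\infty(S),\tilde\psi_2]Q_\infty\psi_2.$$
The commutators $[T,\tilde\psi_j]$ are $\tilde\psi_j'$ tensored with Clifford multiplication, hence bounded and compactly supported in the collar region. The terms $\tilde\psi_1(i+T)^{-1}$ and $[T,\tilde\psi_1](i+T)^{-1}$ are then compact by the interior step.

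The remaining term is $\tilde\psi_2'Q_\infty\psi_2$. This is where we expect the main obstacle: it is compactly cut off in $x$ but not compact in the $\mathpzc{M}$-direction unless $S$ has compact resolvent. That holds because $\mathfrak{Y}$ is a closed cycle for $\mathcal{A}$, with the unit acting, which is again the $(\C,B)$ case. Given this, $\tilde\psi_2'Q_\infty$ factors through $\Dom(\Psi_\infty(S))$ localized to a compact $x$-interval. This space embeds compactly via $H^1$ in $x$ together with the compact resolvent of $S$, which is exactly the compactness of Proposition \ref{suspandprop}.

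Hence $T_{(0,\infty)}Q-1$ is compact. Applying the same argument to the adjoint, and using self-adjointness, gives $QT_{(0,\infty)}-1$ compact on the domain. Therefore $T_{(0,\infty)}$ is $B$-Fredholm.
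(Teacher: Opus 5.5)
Your argument is correct and is essentially the paper's proof. Both build a parametrix glued from an interior piece and $\Psi_\infty(S)^{-1}$ on the cylindrical end (Lemma \ref{invertinginfsusp}), with the cutoff error terms compact by half-closedness and by the compact resolvent of $S$. The differences are minor: you use the resolvent of the self-adjoint glued operator $T_{(0,\infty)}$ where the paper uses $T(1+T^*T)^{-1}$, and you use two families of cutoffs instead of square roots of a partition of unity. Your explicit restriction to the unital $(\C,B)$ setting is well taken, since the paper's step ``$\chi_1$ and $\chi_2'$ are compactly supported, hence the remainder is compact'' silently relies on that same hypothesis.
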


We remark that the resolvent of the self-adjoint operator $T_{(0,\infty)}$ is compact if and only if $\mathfrak{X}^\partial=0$ in which case $T_{(0,\infty)}=T=T^*$.

\begin{proof}
The operator $T_{(0,\infty)}$ acts on $\mathpzc{N}_{(0,\infty)}$ which in turn carries an action of $C_b[0,\infty)$. Take $\chi_1\in C^\infty_c[0,\infty),[0,1])$ satisfying $\chi_1(t)=1$ near $t=1$ and $\chi_1(t)=0$ for $t>1/2$. Set $\chi_2:=1-\chi\in C^\infty_b[0,\infty)$. We can view $\Psi_{(0,\infty)}(\mathpzc{M})$ as a summand in $\Psi_{\infty}(\mathpzc{M})$, and by Lemma \ref{invertinginfsusp}, the operator
$$r:=b(\sqrt{\chi_1})T(1+T^*T)^{-1}b(\sqrt{\chi_1})+b(\sqrt{\chi_2})\Psi_\infty(S)^{-1}b(\sqrt{\chi_2})\in \End_B^*(\mathpzc{N}_{(0,\infty)},$$
is well defined. 
We compute that 
\begin{align*}
T_{(0,\infty)}r=&b(\sqrt{\chi_1}')T(1+T^*T)^{-1}b(\sqrt{\chi_1})+b(\sqrt{\chi_2}')\Psi_\infty(S)^{-1}b(\sqrt{\chi_2})+\\
&+b(\sqrt{\chi_1})T^*T(1+T^*T)^{-1}b(\sqrt{\chi_1})+b(\chi_2)=\\
&=b(\sqrt{\chi_1}')T(1+T^*T)^{-1}b(\sqrt{\chi_1})+b(\sqrt{\chi_2}')\Psi_\infty(S)^{-1}b(\sqrt{\chi_2})-\\
&-b(\sqrt{\chi_1})(1+T^*T)^{-1}b(\sqrt{\chi_1})+1
\end{align*}
Since $\chi_1$ and $\chi_2'$ are compactly supported, we conclude that $T_{(0,\infty)}r-1$ is compact.

In the same way see that $rT_{(0,\infty)}-1$ is compact as it suffices to prove that $T_{(0,\infty)}r^*-1$ is compact which follows from the computation above upon noting that 
\begin{align*}
r^*:&=b(\sqrt{\chi_1})(1+T^*T)^{-1}T^*b(\sqrt{\chi_1})+b(\sqrt{\chi_2})\Psi_\infty(S)^{-1}b(\sqrt{\chi_2})=\\
&=b(\sqrt{\chi_1})T^*(1+TT^*)^{-1}b(\sqrt{\chi_1})+b(\sqrt{\chi_2})\Psi_\infty(S)^{-1}b(\sqrt{\chi_2}).
\end{align*}

In conclusion, $T_{(0,\infty)}$ has an inverse modulo $B$-compacts in $r$. We conclude that $T_{(0,\infty)}$ is $B$-Fredholm.

\end{proof}

For the purpose of constructions, it is sometimes convenient to work with a different type of $KK$-bordisms, $KK$-bordisms with infinite cylindrical ends. This notion should be viewed in the same way as one attaches infinite cylinders to the boundary of a manifold with boundary; this procedure does not alter the homeomorphism type of the interior but can rather be viewed as a change of metric to a complete metric on the interior. We formalize the notion of a bordism with infinite cylindrical ends in the following definition.

\begin{define} 
\label{HilBorDefInf} 
A symmetric chain with cylindrical ends is a collection $\mathfrak{X}=(\mathfrak{Z},\Theta,\mathfrak{Y})$ as in Definition \ref{HilBorDef} par the following conditions:
\begin{enumerate}
\item The operator $T$ is self-adjoint
\item The boundary data $\Theta=(\theta,p)$ satisfies that
\begin{itemize}
\item $p$ is a projection in $\End_B^*(\mathpzc{N})$ that commutes with the $A$-action ($p$ is even in the graded situation);
\item $\theta: p\mathpzc{N} \rightarrow L^2[0,\infty)\hat{\boxtimes} \mathpzc{E}$ is an isomorphism.
\end{itemize}
\end{enumerate}
These objects are compatible in the following sense described using the representation 
$$b:C_0([0,\infty),A)\to \End_B^*(\mathpzc{N}), \quad b(\phi):=\phi(0)(1-p)+p\theta^{-1} \phi\theta.$$
We require that the following conditions to hold:
\begin{enumerate}
\item[a)] For $\phi\in C^\infty_b(0,\infty)$ with $\phi=0$ near $0$,  
$$b(\phi)\Dom T^*\subseteq \Dom T\quad\mbox{and}\quad T^*b(\phi)=Tb(\phi)\quad\mbox{on}\quad \Dom T^*.$$
\item[b)] For $\phi\in C^\infty_b(0,\infty)$ with $\phi=0$ near $0$ 
$$\phi \Dom \Psi_{(0,\infty)}(D)=\theta b(\phi)\Dom T \quad \mbox{and} \quad T=\theta^{-1}\Psi_{(0,\infty)}(D)\theta\quad \mbox{on}\quad b(\phi)\Dom T.$$
\item[c)] For $\phi_1,\phi_2\in C^\infty_b[0,\infty)$ satisfying $\phi_1\phi_2=0$,  
$$b(\phi_1) T b(\phi_2)=0.$$
\end{enumerate}

A symmetric chain with cylindrical ends $\mathfrak{X}$ is said to be a bordism with cylindrical ends if additionally $\mathfrak{Z}$ is a half-closed $(C^\infty_c([0,\infty),\mathcal{A}),B)$-chain under $b$. In this case we say that the $(\mathcal{A}, B)$-cycle $\partial \mathfrak{X}:=\mathfrak{Y}$ is the boundary of $\mathfrak{X}$ and that the half-closed $(C^\infty_c([0,\infty),\mathcal{A}),B)$-chain $\mathfrak{X}^\circ:=\mathfrak{Z}$ is the interior of $\mathfrak{X}$. Isomorphisms of $KK$-bordisms with infinite cylindrical ends is defined as in Definition \ref{isoofsymwbound}.
\end{define}

\begin{prop}
\label{trunactiopiad}
Let $\mathfrak{X}=(\mathfrak{Z},\Theta,\mathfrak{Y})$ be an $(\mathcal{A},B)$-bordism with cylindrical ends, with $\mathfrak{Z}=(\mathpzc{N},T)$, $\Theta=(\theta,p)$ and $\mathfrak{Y}=(\mathpzc{E},D)$. Define the $B$-Hilbert $C^*$-module 
$$\mathpzc{N}|_{[0,1]}:=\overline{b(C_0[0,1))\mathpzc{N}}=b(C[0,1])\mathpzc{N},$$
and the densely defined operator $T|_{[0,1]}$ on $\mathpzc{N}|_{[0,1]}$ as the closure of $T$ restricted to $b(C^\infty_c[0,1))\Dom(T)$. Write $\mathfrak{Z}|_{[0,1]}:=(\mathpzc{N}|_{[0,1]},T|_{[0,1]})$. Then it holds that the collection
$$\mathfrak{X}|_{[0,1]}=(\mathfrak{Z}|_{[0,1]},\Theta,\mathfrak{Y}),$$
is an $(\mathcal{A},B)$-bordism. We call $\mathfrak{X}|_{[0,1]}$ the truncation of $\mathfrak{X}$ to a collar neighborhood.
\end{prop}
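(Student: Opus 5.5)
The plan is to verify the conditions of Definition \ref{HilBorDef} for $\mathfrak{X}|_{[0,1]}$ directly, using that near the truncation $T$ is of product type, so the truncated operator looks like $\Psi(D)$ near $t=1$ and like $T$ elsewhere. First, I will note that $\mathpzc{N}|_{[0,1]}=(1-p)\mathpzc{N}\oplus \theta^{-1}(L^2[0,1]\hat{\boxtimes}\mathpzc{E})$ is complemented in $\mathpzc{N}$. It is the image of the projection $q:=(1-p)+p\theta^{-1}(\chi_{[0,1]}\hat{\boxtimes}1)\theta$. The boundary data $\Theta$ restricts to $(\theta|,p|)$ with $\theta|:p\mathpzc{N}|_{[0,1]}\to L^2[0,1]\hat{\boxtimes}\mathpzc{E}$. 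After reparametrizing $[0,1]$ so that the boundary sits at $t=1$ in the convention of Definition \ref{HilBorDef}, the representation $b$ restricts to the one required there.

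Second, I will show that $T|_{[0,1]}$ is symmetric and regular, and identify its adjoint. To do this I take $\chi_1,\chi_2\in C^\infty[0,1]$ with $\chi_1^2+\chi_2^2=1$, $\chi_1$ supported in $[0,2/3)$ and $\chi_2$ supported in $(1/3,1]$. I will then decompose
$$T|_{[0,1]}=b(\chi_1)Tb(\chi_1)+b(\chi_2)\theta^{-1}\big(\partial^{\min}_{(1/3,1)}\hat{\boxtimes}D\big)\theta b(\chi_2)+\text{bounded}.$$
The bounded error comes from the commutators $[T,b(\chi_j)]$, which condition c) and the product structure b) identify with Clifford multiplication by $\chi_j'$. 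The first piece is regular and symmetric because $T$ is self-adjoint. The second piece is regular and symmetric by \cite[Theorem 1.18]{DGM} (Proposition \ref{suspandprop}), and its adjoint is governed by $\partial^{\max}$. Gluing these with the local-global principle \cite[Theorem 4.2]{leschkaad2}, exactly as in the proof of Lemma \ref{restsups} and \cite[Theorem 2.20]{DGM}, I expect $T|_{[0,1]}$ to be regular with adjoint $b(\chi_1)Tb(\chi_1)+b(\chi_2)\theta^{-1}(\partial^{\max}\hat{\boxtimes}D)\theta b(\chi_2)$ up to the same bounded terms. Conditions a), b) and c) then follow immediately. For a), functions vanishing near the cut map $\Dom(\partial^{\max})$ into $\Dom(\partial^{\min})$, and vanishing near $t=0$ lands in the cylindrical piece. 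Conditions b) and c) are inherited from the corresponding conditions for $\mathfrak{X}$.

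Third, I will check half-closedness for $C^\infty_c((0,1],\mathcal{A})$ under $b$. For $\phi$ supported away from the cut, the inclusion $\Dom(T|_{[0,1]})\hookrightarrow \Dom(T)$ is continuous (this is extension by zero), and $\mathfrak{Z}$ is half-closed for $C^\infty_c([0,\infty),\mathcal{A})$. Hence $b(\phi)(1+T|_{[0,1]}^*T|_{[0,1]})^{-1}$ is compact on the support of $\chi_1$. On the cylindrical part, compactness follows from Proposition \ref{suspandprop} applied to the cycle $(\mathpzc{E},D)$.

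I expect the main obstacle to be the regularity and adjoint identification in step two. Pinning down $\Dom(T|_{[0,1]}^*)$ requires the localization argument to be carried out carefully, since $T$ itself is self-adjoint but the truncated operator is not. My first attempt would be to mimic the proof of Lemma \ref{restsups}: embed $T|_{[0,1]}$ into the self-adjoint $T$, and identify the adjoint as the compression $qTq$ closed on $q\Dom(T)$ plus the maximal cylinder domain, using mollifiers in the $t$-variable.
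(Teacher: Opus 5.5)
Your proposal is correct, and its decisive step (the Lemma \ref{restsups}-style fallback in your last paragraph) is the paper's own argument. The paper shows by mollifiers that $\mathfrak{D}:=\{b(\chi_{[0,1]})\eta:\eta\in\Dom(T)\}$ is a core for $(T|_{[0,1]})^*$ with $(T|_{[0,1]})^*b(\chi_{[0,1]})\eta=b(\chi_{[0,1]})T\eta$, i.e.\ the adjoint is $q\eta\mapsto qT\eta$ as with the operator $Q$ in Lemma \ref{restsups}, not a literal compression $qTq$; it then obtains regularity from the local-global principle and reads off condition a) from $\phi\mathfrak{D}\subseteq\Dom(T|_{[0,1]})$ for $\phi\in C^\infty_c[0,1)$, while your partition-of-unity bookkeeping and your compactness check via extension by zero into $\Dom(T)$ only make explicit what the paper leaves implicit.
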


\begin{proof}
To prove the proposition it suffices to prove that $T|_{[0,1]}$ is a regular operator and that $\phi\Dom (T|_{[0,1]})^*\subseteq \Dom T|_{[0,1]}$ for $\phi\in C^\infty_c[0,1)$. Indeed, $T|_{[0,1]}$ is clearly symmetric and all the compatibility conditions of Definition \ref{HilBorDef}. A standard argument with mollifiers shows that the space 
$$\mathfrak{D}:=\{\xi\in \mathpzc{N}|_{[0,1]}: \exists \eta\in \Dom(T), b(\chi_{[0,1]})\eta=\xi\},$$
is a core for $(T|_{[0,1]})^*$ and that for such a $\xi$, $(T|_{[0,1]})^*\xi=b(\chi_{[0,1]})T\eta$. By using the local-global principle and the fact that $\mathfrak{D}$ localizes to a core in all representation of $B$, we deduce that $T|_{[0,1]}$ is regular. Moreover, it is clear that $\phi\mathfrak{D}\subseteq \Dom T|_{[0,1]}$ for $\phi\in C^\infty_c[0,1)$, so $\phi\Dom (T|_{[0,1]})^*\subseteq \Dom T|_{[0,1]}$ for $\phi\in C^\infty_c[0,1)$.
\end{proof}

To compare the notion of bordism from Definition \ref{HilBorDef} to that from Definition \ref{HilBorDefInf}, we need to introduce a weaker notion of isomorphism of $KK$-bordisms than that appearing in Definition \ref{isoofsymwbound}. 

\begin{define}
\label{isoofsymwboundinf}
A weak isomorphism $\pmb{\alpha}:\mathfrak{X}_1\xrightarrow{\sim}^w \mathfrak{X}_2$ of two symmetric $(\mathcal{A},B)$-chains with boundary $\mathfrak{X}_i=(\mathfrak{X}^\circ_i,\Theta_i,\mathfrak{X}^\partial_i)$, $i=1,2$,  is a collection $\pmb{\alpha}=(\alpha^\circ,\alpha^\partial)$ where $\alpha^\partial:\mathfrak{X}^\partial_1\to \mathfrak{X}^\partial_2$ is an isomorphism (see Definition \ref{UnbKKcycDef}) and $\alpha^\circ:\mathpzc{N}_1\to \mathpzc{N}_2$ is a unitary isomorphism such that 
\begin{itemize}
\item It holds that $\alpha^\circ p_1=p_2\alpha^\circ$ and the following diagram commutes:
$$\begin{CD}
p_1\mathpzc{N}_1 @>\alpha^\circ|>\cong > p_2\mathpzc{N} \\
@V\theta_1V\cong V @V\cong V\theta_2 V \\
L^2[0,1]\hat{\boxtimes}\mathpzc{E}_1 @>\id_{L^2[0,1]}\boxtimes\alpha^\partial>\cong > L^2[0,1]\hat{\boxtimes}\mathpzc{E}_2.
\end{CD}$$
\item The map $\alpha^\circ$ restricts to a bijection
$$C^\infty_c[0,1)\Dom(T_1)\to C^\infty_c[0,1)\Dom(T_2),$$
and on this space 
$$T_2\alpha^\circ=\alpha^\circ T_1.$$
\end{itemize}
Here $\mathfrak{X}^\circ_i=(\mathpzc{N}_i,T_i)$, $\Theta_i=(\theta_i,p_i)$ and $\mathfrak{X}^\partial_i=(\mathpzc{E}_i,D_i)$.
\end{define}

The reader should interpret the notion of weak isomorphism of $KK$-bordisms identifying $KK$-bordisms that are isomorphic ``up to a choice of boundary condition''. For all intents and purposes, weakly isomorphic $KK$-bordisms are indistinguishable because the ``boundary conditions'' are auxiliary. It is clear from the construction that weak isomorphisms forms an equivalence relation. The following result follows from the constructions in  Proposition \ref{trunactiopiad}.

\begin{theorem}
\label{infcylvsord}
The process of attaching an infinite cylinder 
$$\mathfrak{X}=(\mathfrak{X}^\circ,\Theta,\mathfrak{X}^\partial)\mapsto (\mathfrak{X}^\circ_{(0,\infty)},\Theta_{(0,\infty)},\mathfrak{X}^\partial),$$
for the boundary data $\Theta_\infty=(\id, \chi_{[0,\infty)}\hat{\boxtimes}1)$, cf. Proposition \ref{gluinginfcyleprop}, defines a bijection from the set of weak isomorphism classes of $(\mathcal{A},B)$-bordisms to the set of isomorphism classes of $(\mathcal{A},B)$-bordisms with infinite cylindrical ends. The inverse to this bijection maps a bordism with cylindrical ends to its truncation to a collar neighborhood as in Proposition \ref{trunactiopiad}.
\end{theorem}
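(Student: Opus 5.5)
We have to show that attaching an infinite cylinder and truncating to a collar neighbourhood are well defined maps between the two sets of classes, and that they are mutually inverse. The argument has four steps.

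\emph{Step 1: attaching a cylinder gives a bordism with cylindrical ends.} Let $\mathfrak{X}=((\mathpzc{N},T),(\theta,p),(\mathpzc{E},D))$ be a bordism. By Proposition \ref{gluinginfcyleprop}, $\mathfrak{X}^\circ_{(0,\infty)}=(\mathpzc{N}\oplus L^2[1,\infty)\hat{\boxtimes}\mathpzc{E},T_{(0,\infty)})$ is a half-closed $(C^\infty_c([0,\infty),\mathcal{A}),B)$-chain, and $T_{(0,\infty)}$ is self-adjoint. The new boundary data is obtained by concatenation: $p_\infty:=p\oplus 1$, and $\theta_\infty$ is $\theta$ on $p\mathpzc{N}$ followed by the identity on $L^2[1,\infty)\hat{\boxtimes}\mathpzc{E}$, which identifies $p_\infty\mathpzc{N}_{(0,\infty)}$ with $L^2[0,\infty)\hat{\boxtimes}\mathpzc{E}$. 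Conditions a)–c) of Definition \ref{HilBorDefInf} then follow from conditions a)–c) of Definition \ref{HilBorDef} on $[0,1]$, together with the explicit product form of $\Psi_{(0,\infty)}(D)$ on the glued cylinder, using \cite[Theorem 2.20]{DGM}.

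\emph{Step 2: the map descends to weak isomorphism classes.} A weak isomorphism $\pmb\alpha=(\alpha^\circ,\alpha^\partial)$ intertwines $T_1$ and $T_2$ on $C^\infty_c[0,1)\Dom(T_i)$ and respects the collars. Extend it by $\alpha^\circ\oplus(\id\boxtimes\alpha^\partial)$ on the attached cylinder. This extension intertwines the two operators on a core: compactly supported elements near the old boundary are mapped correctly by $\alpha^\circ$, while the part on the cylinder is handled by $\id\boxtimes\alpha^\partial$. Since the glued operators are self-adjoint, they are determined by their action on this core, so the extension is an honest isomorphism of bordisms with cylindrical ends.

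\emph{Step 3: truncating after attaching gives back the original up to weak isomorphism.} For $\mathfrak{X}$ as above, $(\mathfrak{X}_{(0,\infty)})|_{[0,1]}$ has module $b(C[0,1])\mathpzc{N}_{(0,\infty)}$. I will reparametrize the collar coordinate: the collar $[0,1]$ of the truncation corresponds to the collar of $\mathfrak{X}$ together with an attached $[1,2]$-piece, and this is rescaled back onto $[0,1]$. The identity map in the interior, combined with a unitary reparametrization $\alpha$ on $L^2[0,2]\hat{\boxtimes}\mathpzc{E}$, then gives a unitary $\mathpzc{N}\to(\mathpzc{N}_{(0,\infty)})|_{[0,1]}$. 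Both operators are closures of their restrictions to the relevant $C^\infty_c[0,1)\Dom$, and they agree there, so this is a weak isomorphism.

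\emph{Step 4: attaching after truncating gives back a bordism with cylindrical ends up to isomorphism.} Start from a bordism with cylindrical ends $\mathfrak{X}$ and form $(\mathfrak{X}|_{[0,1]})_{(0,\infty)}$. Its module is unitarily equal to the original $\mathpzc{N}$: split $\mathpzc{N}=b(C[0,1])\mathpzc{N}\oplus\theta^{-1}(L^2[1,\infty)\hat{\boxtimes}\mathpzc{E})$. Under this identification the two self-adjoint operators agree on $b(C^\infty_c[0,1))\Dom T$ and on the cylinder, by condition b) of Definition \ref{HilBorDefInf}. Both operators are self-adjoint and coincide on a common core, so they are equal.

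I expect the main obstacle to be this core/domain matching across the gluing seam, in Steps 2 and 4. Concretely, one has to show that elements $\xi$ with $b(\chi)\xi$ in the domain of one operator and $b(1-\chi)\xi$ in the domain of the other form a core. I would take this from the partition-of-unity argument in the proof of \cite[Theorem 2.20]{DGM}.
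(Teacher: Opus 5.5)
Your four-step structure is the natural one and matches what the paper intends; the paper itself only says the result follows from the constructions in Propositions \ref{gluinginfcyleprop} and \ref{trunactiopiad}. Steps 1, 2 and 4 are fine. Step 3, however, fails as written.

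\emph{Why the rescaling fails.} A dilation of the collar variable from $[0,2]$ onto $[0,1]$ conjugates $i\frac{\mathrm{d}}{\mathrm{d}t}$ to $\tfrac12 i\frac{\mathrm{d}}{\mathrm{d}s}$. A non-affine reparametrization is worse: it adds variable coefficients and a zeroth-order term. So the transported operator is not $\theta^{-1}\Psi(D)\theta$ on the collar, and your claim that the two operators ``agree there'' is false.

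There is a second, independent obstruction. Definition \ref{isoofsymwboundinf} requires $\theta_2\alpha^\circ=(\id_{L^2[0,1]}\boxtimes\alpha^\partial)\theta_1$ on $p_1\mathpzc{N}_1$. This forces $\alpha^\circ$ to be the identity in the collar variable, so no reparametrization of the collar is allowed. If you instead absorb the dilation into the boundary data of the truncation, condition b) of Definition \ref{HilBorDef} fails for the truncation.

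\emph{The premise is also wrong.} By your own Step 1, $p_\infty\mathpzc{N}_{(0,\infty)}\cong L^2[0,\infty)\hat{\boxtimes}\mathpzc{E}$, with $[0,1]$ being exactly the old collar. Hence $b(C[0,1])\mathpzc{N}_{(0,\infty)}=(1-p)\mathpzc{N}\oplus p\mathpzc{N}=\mathpzc{N}$ on the nose. Truncation simply discards the attached $L^2[1,\infty)\hat{\boxtimes}\mathpzc{E}$; there is no extra $[1,2]$-piece.

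\emph{The correct map.} Use the identity, with $\alpha^\partial=\id$. Fix $\phi$ vanishing near the boundary. You must check that $b(\phi)\Dom(T)=b(\phi)\Dom\bigl((T_{(0,\infty)})|_{[0,1]}\bigr)$, with the two operators equal on this space.
\begin{itemize}
\item The truncated operator is the closure of $T$ restricted to such elements, since gluing is local. As $T$ is closed, its domain is contained in $\Dom(T)$.
\item Conversely, pick $\psi$ vanishing near the boundary with $\psi\phi=\phi$. Then $b(\phi)\xi=b(\psi)b(\phi)\xi$, and $b(\phi)\Dom(T)\subseteq\Dom(T)$ by condition a) of Definition \ref{HilBorDef}, because $T\subseteq T^*$. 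So $b(\phi)\Dom(T)$ lies in the domain of the truncated operator.
\end{itemize}

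Your remark that both operators are closures of these restrictions is false for $T$ in general. $T$ may carry a non-minimal boundary condition, and that is exactly why only a \emph{weak} isomorphism results. The definition does not need the remark.

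\emph{On the core issue in Steps 2 and 4.} You can avoid it entirely. Using the cut-off description of the glued domain, one inclusion of operators suffices: $U\,\Dom(T_{1,(0,\infty)})\subseteq\Dom(T_{2,(0,\infty)})$ in Step 2, or $\Dom(T)\subseteq\Dom\bigl((T|_{[0,1]})_{(0,\infty)}\bigr)$ via condition b) of Definition \ref{HilBorDefInf} in Step 4. Since both operators are self-adjoint and a self-adjoint operator has no proper self-adjoint extension, the inclusion forces equality.
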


\begin{remark}
Let us make a slightly technical remark that comes in handy in several constructions below. Proposition \ref{trunactiopiad} shows that if $\mathfrak{X}=(\mathfrak{X}^\circ,\Theta,\mathfrak{X}^\partial)$ is an $(\mathcal{A},B)$-bordism with 
$$\phi \Dom \Psi(D)=\theta b(\phi)\Dom T \quad \mbox{and} \quad T=\theta^{-1}\Psi(D)\theta\quad \mbox{on}\quad b(\phi)\Dom T,$$
for all $\phi\in C^\infty_c(0,1]$, then truncating back from having attached infinite cylindrical ends on $\mathfrak{X}$ reproduces $\mathfrak{X}$. In particular, for constructions of $KK$-bordisms it often suffices to produce a $KK$-bordism with infinite cylindrical ends.
\end{remark}

\subsection{Order reduction of $KK$-bordisms}
\label{subsec:orderred}

In Subsection \ref{orderredusubsec} (see page \pageref{orderredusubsec}) we studied order reduction of cycles, and saw that it did not change the bordism class. Building on Theorem \ref{interpolationthm} (see page \pageref{interpolationthm}) we shall extend the procedure of order reduction to bordisms in order to conclude that a bordism of cycles induces a bordism of the order reduced cycles. Before stating the main result of this subsection, we shall need some preliminary constructions. 

Take an $\alpha\in (0,1]$. Consider an $(\mathcal{A},B)$-bordism $\mathfrak{X}=((\mathpzc{N},T), (\theta,p),(\mathpzc{E},D))$. Pick a gluing function $\chi\in C^\infty_c(0,1]$ (i.e. $\chi'\in C^\infty_c(0,1)$ is positive and $\chi(1)=1$). We shall from gluing and order reduction (as in Subsection \ref{orderreducsubsec}) define a collection
$$\mathfrak{X}_\alpha:=((\mathpzc{N},T_\alpha), (\id, \chi_{[0,1]}),(\mathpzc{E},D_\alpha)),$$
that we will say is the order reduction of $\mathfrak{X}$, and in the next theorem we prove that this is indeed a bordism that extends to a complex interpolation space of $\mathcal{A}$. The construction is an instance of gluing, but of two quite different objects and will as such neither be canonical nor immediately well defined. We shall suppress the boundary data in our construction, as it is clear from context and notationally will just blur the construction. 

Recall the construction of gluing on an infinite cylinder $\mathfrak{X}_{(0,\infty)}=(\mathpzc{N}_{(0,\infty)},T_{(0,\infty)})$ to a bordism from Proposition \ref{gluinginfcyleprop} and order reduction of chains with self-adjoint operators from Subsection \ref{orderreducsubsec}.
We will start by defining the operator $T_\alpha$ which is densely defined on $\mathpzc{N}$, and obtained as 
the closure
\begin{equation}
\label{talpadada}
T_\alpha:=\overline{\sqrt{1-\chi}T_{(0,\infty),\alpha}\sqrt{1-\chi}+\sqrt{\chi}\Psi(D_\alpha)\sqrt{\chi}}.
\end{equation}
acting on the domain $(1-\chi)\Dom(T_{(0,\infty),\alpha})+\chi \Dom(\Psi(D_\alpha))$. We also define $T_{\alpha,\infty}$ as the closure 
\begin{equation}
\label{talpadinada}
T_{\alpha,\infty}:=\overline{\sqrt{1-\chi}T_{(0,\infty),\alpha}\sqrt{1-\chi}+\sqrt{\chi}\Psi_{\infty}(D_\alpha)\sqrt{\chi}},
\end{equation}
as an operator on $\mathpzc{N}_{(0,\infty)}$ by extending $\chi$ to a constant function on $[1,\infty)$. At this stage, we know little of both operators $T_\alpha$ (from Equation \eqref{talpadada}) and $T_{\alpha,\infty}$ (from Equation \eqref{talpadinada}) but after the fact we shall see that $T_\alpha$ fits into a bordism and that $T_{\alpha,\infty}=(T_\alpha)_{(0,\infty)}$. 

\begin{define}
Let $\mathcal{A}$ be a $*$-algebra, $B$ a $C^*$-algebra, and $(\mathpzc{N}_B,T)$ and $(\mathpzc{N}_B,T')$ two half-closed $(\mathcal{A},B)$-chains. Assume that $\mathcal{A}_0\subseteq \mathcal{M}(A)$ is a $*$-subalgebra such that $\mathcal{A}_0\mathcal{A}=\mathcal{A}\mathcal{A}_0=\mathcal{A}$. 

We say that $(\mathpzc{N}_B,T)$ and $(\mathpzc{N}_B,T')$ are locally the same with respect to $\mathcal{A}_0$ if 
\begin{itemize}
\item For all $a\in \mathcal{A}_0$, 
$$a\Dom(T)=a\Dom(T'), \quad \mbox{with}\quad aT=aT' \quad\mbox{and}\quad Ta=T'a\quad \mbox{on $\Dom(T)$.}$$
\item For all $a\in \mathcal{A}_0$, 
\begin{align*}
a\Dom(T^*T)&=a\Dom(T'^*T'), \quad \mbox{with}\\ 
aT^*T&=aT'^*T' \quad\mbox{and}\quad T^*Ta=T'^*T'a\quad \mbox{on $\Dom(T^*T)$.}
\end{align*}
\item For all $a\in \mathcal{A}_0$, 
$$[T,a]\Dom(T)\subseteq \Dom(T), \quad\mbox{and}\quad[T',a]\Dom(T')\subseteq \Dom(T').$$
\end{itemize}
\end{define}

\begin{lemma}
\label{locallythesame}
Let $\mathcal{A}$ be a $*$-algebra dense in the $C^*$-algebra $A$, $B$ a $C^*$-algebra, and $(\mathpzc{N}_B,T)$ and $(\mathpzc{N}_B,T')$ two half-closed $(\mathcal{A},B)$-chains that are locally the same for a $*$-subalgebra $\mathcal{A}_0\subseteq \mathcal{M}(A)$ such that $\mathcal{A}_0\mathcal{A}=\mathcal{A}\mathcal{A}_0=\mathcal{A}$. Then any $a\in \mathcal{A}$ and $\alpha\in (0,1]$, the operator
$$aT(1 + T^*T)^{\frac{\alpha-1}{2}}-aT'(1+T'^*T')^{\frac{\alpha-1}{2}},$$
is bounded and the operator 
$$aT(1 + T^*T)^{-\frac{1}{2}}-aT'(1+T'^*T')^{-\frac{1}{2}},$$
is compact.
\end{lemma}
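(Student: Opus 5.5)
The plan is to express both fractional powers through the same resolvent integral and show that the difference of resolvents, once multiplied by $a$, is controlled locally. Write $\Delta:=1+T^*T$ and $\Delta':=1+T'^*T'$. For $\beta:=(1-\alpha)/2\in[0,1/2)$ and $\beta>0$ we use
$$\Delta^{-\beta}=\frac{\sin(\pi\beta)}{\pi}\int_0^\infty\lambda^{-\beta}(\lambda+\Delta)^{-1}\mathrm{d}\lambda,$$
so that
$$aT\Delta^{-\beta}-aT'\Delta'^{-\beta}=\frac{\sin(\pi\beta)}{\pi}\int_0^\infty \lambda^{-\beta}\bigl(aT(\lambda+\Delta)^{-1}-aT'(\lambda+\Delta')^{-1}\bigr)\mathrm{d}\lambda.$$
The case $\beta=0$ ($\alpha=1$) is direct: $aT-aT'$ vanishes on $\Dom(T)$ by local sameness and extends by zero. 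Since $\mathcal{A}=\mathcal{A}_0\mathcal{A}$, it suffices to treat $a=a_0a_1$ with $a_0\in\mathcal{A}_0$, $a_1\in\mathcal{A}$; more conveniently, pick $a_0\in\mathcal{A}_0$ with $a=a_0a$ (or a finite sum of such products) and work with $a_0$.

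The key step is a resolvent identity. Using $a_0T=a_0T'$ and $a_0\Delta=a_0\Delta'$ on the appropriate domains, I would write
$$a_0T(\lambda+\Delta)^{-1}-a_0T'(\lambda+\Delta')^{-1}=T'\bigl(a_0(\lambda+\Delta)^{-1}-(\lambda+\Delta')^{-1}a_0\bigr)+[a_0,T'](\dots),$$
and
$$a_0(\lambda+\Delta)^{-1}-(\lambda+\Delta')^{-1}a_0=(\lambda+\Delta')^{-1}[\Delta,a_0](\lambda+\Delta)^{-1},$$
where $[\Delta,a_0]=T^*[T,a_0]+[T^*,a_0]T$ is first order. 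Here the third condition of local sameness ($[T,a_0]$ preserving $\Dom(T)$) is used to make sense of $[\Delta,a_0]$ on $\Dom(T^*T)$. The bounded commutators come from $\mathcal{A}_0$ acting with bounded commutators; this is implicit from the chains being over $\mathcal{A}$ with $\mathcal{A}_0\mathcal{A}\subseteq\mathcal{A}$. Granting that, standard estimates $\|T(\lambda+\Delta)^{-1/2}\|\le1$ and $\|(\lambda+\Delta)^{-1/2}\|\le(1+\lambda)^{-1/2}$ bound the integrand by $C\lambda^{-\beta}(1+\lambda)^{-1}$. This is integrable at infinity precisely because $\beta>0$, so we get boundedness for $\alpha<1$.

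For compactness at $\alpha=0$ ($\beta=1/2$), the same integral representation applies, and each integrand term contains a factor of the form $(\lambda+\Delta')^{-1/2}$ or $(\lambda+\Delta)^{-1/2}$ multiplied by $a$. By half-closedness, $a(1+T^*T)^{-1/2}$ and $a(1+T'^*T')^{-1/2}$ are compact. I would arrange the ordering so that an element of $\mathcal{A}$, not merely $\mathcal{A}_0$, sits next to a resolvent; this is possible by factoring $a=a_0a_1a_2$ three times. Each integrand is then compact and the integral converges in norm, with decay $\lambda^{-1/2}(1+\lambda)^{-1}$. Hence the difference is compact.

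The main obstacle is the domain bookkeeping. We need to justify the commutator identity on $\Dom(T^*T)$, and we need to verify that moving $a_0$ past $T'$ and $T^{\prime *}$ produces only bounded (respectively compact after multiplying by $a_1$) terms. In particular, we must confirm that $[T^*,a_0]$ is bounded, which uses symmetry and $a_0\in\mathrm{Lip}$. I would handle this first on the core $\Dom(T^*T)$ and extend by density.
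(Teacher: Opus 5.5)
Your proposal is essentially the paper's proof. Both write the fractional power as the resolvent integral, use local sameness to reduce the difference to commutator terms in $[T,a_0]$ and $[T^*,a_0]$ of size $O\bigl((1+\lambda)^{-1}\bigr)$ (integrable against $\lambda^{(\alpha-1)/2}$ because $\alpha<1$), and get compactness at $\alpha=0$ from a norm-convergent integral of compact integrands. The only cosmetic difference is that the paper first integrates $b(\lambda)=a\bigl((1+\lambda+T^*T)^{-1}-(1+\lambda+T'^*T')^{-1}\bigr)$ and then writes the difference as $Tb_\alpha$ plus commutator terms, whereas you keep $T$ inside the integrand.

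Three small bookkeeping points:
\begin{itemize}
\item Factor $a=a_1a_0$, which is what $\mathcal{A}\mathcal{A}_0=\mathcal{A}$ gives, so that $a_0\in\mathcal{A}_0$ sits next to $T$. Neither $a=a_0a_1$ nor $a=a_0a$ lets you apply local sameness directly.
\item In $T'(\lambda+\Delta')^{-1}[\Delta,a_0](\lambda+\Delta)^{-1}$, use the mixed form $[\Delta,a_0]=T'^*[T,a_0]+[T^*,a_0]T$, obtained from $T^*a_0=T'^*a_0$ (the adjoint of local sameness for $a_0^*$). With your form, the factor $T'(\lambda+\Delta')^{-1}T^*$ is not covered by the estimates you cite.
\item Boundedness of $[T,a_0]$ is a tacit hypothesis, exactly as in the paper's reduction to $\mathcal{A}=\mathcal{A}_0$. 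It does not follow from $\mathcal{A}_0\mathcal{A}\subseteq\mathcal{A}$, which only shows that $[T,a_0]a$ is bounded for $a\in\mathcal{A}$.
\end{itemize}
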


\begin{proof}
The case $\alpha=1$ is trivial. We first prove the boundedness claim for $\alpha\in (0,1)$. It is clearly no restriction to assume that $\mathcal{A}=\mathcal{A}_0$ and that 
$$aT(1 + T^*T)^{\frac{\alpha-1}{2}}-aT'(1+T'^*T')^{\frac{\alpha-1}{2}},$$
is bounded for $a\in \mathcal{A}_0$. Consider the operator valued holomorphic function in $\mathrm{Re}(\lambda)>-1$
$$b(\lambda):=a(1+\lambda+T^*T)^{-1}-a(1+\lambda+T'^*T')^{-1}.$$
Since $(\mathpzc{N}_B,T)$ and $(\mathpzc{N}_B,T')$ are locally the same for  $\mathcal{A}_0$, we have the mapping property $b(\lambda):\mathpzc{E}\to \Dom(T^*T)\cap \Dom(T'^*T')$. Using that $(\mathpzc{N}_B,T)$ and $(\mathpzc{N}_B,T')$ are locally the same we compute that 
\begin{align*}
(1+&\lambda+T^*T)b(\lambda)=\\
=&(1+\lambda+T^*T)a(1+\lambda+T^*T)^{-1}-(1+\lambda+T'^*T')a(1+\lambda+T'^*T')^{-1}=\\
=&\left([T^*,a]T+T^*[T,a]\right)(1+\lambda+T^*T)^{-1}-\left([T'^*,a]T'+T'^*[T',a]\right)(1+\lambda+T'^*T')^{-1}.
\end{align*}
As a consequence, the operator $b(\lambda):\mathpzc{E}\to \Dom(T)\cap \Dom(T')$ is bounded with norm bound 
$$\|b(\lambda)\|_{\mathpzc{E}\to \Dom(T)\cap \Dom(T')}=O(\lambda^{-1}),\quad\mbox{ as $\mathrm{Re}(\lambda)\to \infty$ in sectors around $[0,\infty)$.}$$
Therefore the operator 
$$b_\alpha:=a(1 + T^*T)^{\frac{\alpha-1}{2}}-a(1+T'^*T')^{\frac{\alpha-1}{2}}=-\frac{1}{\pi}\int_0^\infty \lambda^{\frac{\alpha-1}{2}} b(\lambda)\mathrm{d} \lambda,$$
is bounded $b_\alpha:\mathpzc{E}\to \Dom(T)\cap \Dom(T')$. We conclude that
$$aT(1 + T^*T)^{\frac{\alpha-1}{2}}-aT'(1+T'^*T')^{\frac{\alpha-1}{2}}=Tb_\alpha+[a,T](1 + T^*T)^{\frac{\alpha-1}{2}}-[a,T'](1+T'^*T')^{\frac{\alpha-1}{2}},$$
is bounded.

For the case $\alpha=0$, we can not reduce to $\mathcal{A}_0=\mathcal{A}$. But the same argument as above implies that for $a=ba_0$ for $a_0\in \mathcal{A}_0$, the operator 
$$b_0:=a(1 + T^*T)^{-\frac{1}{2}}-a(1+T'^*T')^{-\frac{1}{2}}=-\frac{1}{\pi}\int_0^\infty \lambda^{-\frac{1}{2}} b(\lambda)\mathrm{d} \lambda,$$
is compact $b_0:\mathpzc{E}\to \Dom(T)\cap \Dom(T')$. The argument then proceeds as in the $\alpha>0$ case to show that $aT(1 + T^*T)^{-\frac{1}{2}}-aT'(1+T'^*T')^{-\frac{1}{2}}$ is compact.
\end{proof}

\begin{lemma}
\label{comparingpowers}
In the setting above at the beginning of the subsection (see page \pageref{subsec:orderred}), it holds for any $\phi_1,\phi_2\in C^\infty_b(0,\infty)$ of which at least one vanish near $0$, that 
$$\phi_1T_{(0,\infty),\alpha}\phi_2-\phi_1\Psi_\infty(D)_{\alpha}\phi_2,$$
is a bounded operator on $\mathpzc{N}_{(0,\infty)}$.
\end{lemma}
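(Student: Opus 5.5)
The plan is to deduce the claim from Lemma \ref{locallythesame}, applied on $\mathpzc{N}_{(0,\infty)}$ to the two self-adjoint operators $T_{(0,\infty)}$ and $\Psi_\infty(D)$. Here $\Psi_\infty(D)$ is regarded as acting on the cylindrical part: we extend the cylinder $L^2[0,\infty)\hat{\boxtimes}\mathpzc{E}$ to $L^2(\R)\hat{\boxtimes}\mathpzc{E}$, or equivalently we only ever use it after multiplying by functions supported in the cylinder. The relevant algebra $\mathcal{A}_0$ consists of the functions $\phi\in C^\infty_b(0,\infty)$ vanishing near $0$, acting through $b$. By symmetry, and by taking adjoints (both order-reduced operators are self-adjoint), it suffices to treat the case where $\phi_1$ vanishes near $0$.

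The first step is to check that $T_{(0,\infty)}$ and $\Psi_\infty(D)$ are locally the same with respect to $\mathcal{A}_0$.
\begin{itemize}
\item On the support of $\phi$, condition b) of Definition \ref{HilBorDef}, transported to the infinite cylinder as in Proposition \ref{gluinginfcyleprop}, gives $\phi\Dom(T_{(0,\infty)})=\phi\Dom(\Psi_\infty(D))$ and $T_{(0,\infty)}\phi=\Psi_\infty(D)\phi$.
\item The commutators $[T,\phi]$ are $\phi'$ times the Clifford-type grading factor. These preserve both domains, since $\phi'$ is again supported away from $0$ and commutes with $0\hat{\boxtimes}D$ there.
\item For the squares, on the support of $\phi$ both $T_{(0,\infty)}^2$ and $\Psi_\infty(D)^2$ equal $-\partial_x^2\hat{\boxtimes}1+1\hat{\boxtimes}D^2$. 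Equality of $\phi\Dom(T^2)$ then follows by applying the first-order statement twice, using $[T,\phi]\Dom\subseteq\Dom$.
\end{itemize}

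The second step is the boundedness argument. Lemma \ref{locallythesame} as stated requires half-closed chains, but its boundedness proof only uses the local sameness. Following that proof, set $b(\lambda)=\phi(1+\lambda+T^2)^{-1}-\phi(1+\lambda+\Psi_\infty(D)^2)^{-1}$. This maps into $\Dom(T)\cap\Dom(\Psi_\infty(D))$ with norm $O(\lambda^{-1})$ there, so the Balakrishnan integral for the power $\frac{\alpha-1}{2}$ converges. Hence
$$\phi_1T_{(0,\infty),\alpha}-\phi_1\Psi_\infty(D)_\alpha$$
is bounded, and multiplying on the right by the bounded $\phi_2$ gives the claim. For the case where only $\phi_2$ vanishes near $0$, we take adjoints, using $(\phi_1T_\alpha\phi_2)^*=\bar\phi_2T_\alpha\bar\phi_1$.

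The main obstacle is making the square identity rigorous, i.e.\ showing $\phi\Dom(T^*T)=\phi\Dom(\Psi_\infty(D)^2)$. I would first verify it on the core $C^\infty_c((0,\infty),\Dom(D^2))$ and then pass to closures using the resolvent identity
$$(1+\lambda+T^2)\phi(1+\lambda+T^2)^{-1}=\phi+\bigl([T,\phi]T+T[T,\phi]\bigr)(1+\lambda+T^2)^{-1},$$
which stays within the cylinder region because $\phi$ and $\phi'$ are supported there.
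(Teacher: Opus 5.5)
Your proposal is correct and follows the paper's route. The paper simply notes that $\phi_1T_{(0,\infty)}\phi_2=\phi_1\Psi_\infty(D)\phi_2$ whenever one of the $\phi_i$ vanishes near $0$, so the two operators are locally the same for the functions vanishing near $0$, and then invokes Lemma \ref{locallythesame}. Your verification of the three local-sameness conditions, your remark that only the boundedness half of that lemma (which does not use half-closedness) is needed, and your adjoint reduction for the case where only $\phi_2$ vanishes near $0$ just spell out what the paper leaves implicit. One harmless slip: $[T,\phi]$ (a multiple of $\phi'$ times the grading/Clifford factor) anticommutes rather than commutes with $0\hat{\boxtimes}D$; it still preserves the domain, since that factor preserves $\Dom(D)$.
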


\begin{proof}
We note that for any $\phi_1,\phi_2\in C^\infty_b(0,\infty)$ of which at least one vanish near $0$, we have that 
$$
\phi_1T_{(0,\infty)}\phi_2=\phi_1\Psi_\infty(D)\phi_2.
$$
In particular, $T_{(0,\infty)}$ and $\Psi_\infty(D)$ are locally the same with respect to $\mathcal{A}_0=C^\infty_c(0,1]$. The result now follows from Lemma \ref{locallythesame}. 
\end{proof}

\begin{lemma}
\label{comparingdifforder}
Let $D$ be a self-adjoint and regular operator on a $B$-Hilbert $C^*$-module $\mathpzc{E}$ and $\chi\in C^\infty_b(\R)$ a positive increasing function with $\chi'\in C^\infty_c(\R)$. Then the operator 
$$\overline{\sqrt{1-\chi}\Psi_\infty(D)_{\alpha}\sqrt{1-\chi}+\sqrt{\chi}\Psi_{\infty}(D_\alpha)\sqrt{\chi}},$$
is self-adjoint and regular on $L^2(\R)\hat{\boxtimes}\mathpzc{E}$.
\end{lemma}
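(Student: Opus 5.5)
We treat the operator
$$R:=\sqrt{1-\chi}\,\Psi_\infty(D)_{\alpha}\sqrt{1-\chi}+\sqrt{\chi}\,\Psi_{\infty}(D_\alpha)\sqrt{\chi}$$
as a locally bounded perturbation of a single self-adjoint operator built by functional calculus, and then invoke a Kato--Rellich type argument. We work on $L^2(\R)\hat{\boxtimes}\mathpzc{E}$ and write $Q_1:=\Psi_\infty(D)_\alpha=\Psi_\infty(D)(1+\Psi_\infty(D)^2)^{(\alpha-1)/2}$ and $Q_2:=\Psi_\infty(D_\alpha)$. Both are self-adjoint and regular: $Q_1$ by functional calculus of the self-adjoint regular operator $\Psi_\infty(D)$ (Lemma \ref{invertinginfsusp}), and $Q_2$ by Lemma \ref{invertinginfsusp} applied to the self-adjoint regular $D_\alpha$ (Theorem \ref{interpolationthm}, item 1). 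The key observation is that $Q_1$ and $Q_2$ are both of order $\alpha$ in the $\mathpzc{E}$-direction, but differ in the $x$-direction. On the support of $\chi'$ (compact), multiplication by $\sqrt{\chi}$, $\sqrt{1-\chi}$ are Lipschitz functions of $x$, so they have bounded commutators with $\partial_x$-type operators of order $\le 1$.

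The first step is to show that $\sqrt{\chi}$ and $\sqrt{1-\chi}$ (smooth where they are nonzero; if not, replace them by a smooth partition $\psi_1^2+\psi_2^2=1$ with the same supports, which only changes the statement by bounded terms) lie in $\mathrm{Lip}(Q_1)\cap\mathrm{Lip}(Q_2)$ and preserve $\Dom(Q_1)$ and $\Dom(Q_2)$. For $Q_2=i\partial_x\hat{\boxtimes}D_\alpha$ this is immediate since functions of $x$ commute with $0\hat\boxtimes D_\alpha$ and have bounded commutator with $\partial_x$. For $Q_1$ we use Lemma \ref{lipforpower} and the integral-formula argument of Lemma \ref{cpxpowerlemma} (with $\Delta=(1+\Psi_\infty(D)^2)^{1/2}$, i.e. Theorem \ref{interpolationthm}, item 2, applied to the chain $(L^2(\R)\hat\boxtimes\mathpzc{E},\Psi_\infty(D))$ and the algebra of such functions, which lie in $\mathrm{Lip}(\Psi_\infty(D))$). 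We only need $\beta=1$ and $\alpha<1$ here, and $\alpha=1$ is trivial.

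Next, we identify a natural domain. We aim to show that $\Dom(Q_1)$ and $\Dom(Q_2)$ coincide locally away from $x$ in a compact set. In fact we will prove the global inclusion is not needed: we set $\mathcal{D}:=\sqrt{1-\chi}\Dom(Q_1)+\sqrt{\chi}\Dom(Q_2)$, on which $R$ is symmetric. We then construct a parametrix-type inverse exactly as in Lemma \ref{infcylfred}:
$$r_\pm:=\sqrt{1-\chi}(Q_1\pm i)^{-1}\sqrt{1-\chi}+\sqrt{\chi}(Q_2\pm i)^{-1}\sqrt{\chi}.$$
Computing $(R\pm i)r_\pm=1+K_\pm$, the error terms are of the form $[Q_j,\sqrt{\cdot}](Q_j\pm i)^{-1}$ (bounded by step one) together with cross terms $\sqrt{1-\chi}Q_1\sqrt{1-\chi}\sqrt{\chi}(Q_2\pm i)^{-1}\sqrt{\chi}$. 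These cross terms live on the compact $x$-support of $\chi(1-\chi)$, and there we must compare $Q_1$ with $Q_2$. This is the \textbf{main obstacle}: on a compact $x$-interval we need $\Dom(Q_2)\subseteq\Dom(Q_1)$ locally, i.e. that $|\partial_x|^\alpha$-regularity is controlled by $\partial_x$-regularity on compact sets (true since $\alpha\le1$: $(1+\partial_x^2+D^2)^{\alpha/2}\lesssim 1+|\partial_x|+(1+D^2)^{\alpha/2}$ as a form inequality via the spectral theorem of the commuting pair $\partial_x$, $D$). We check it by joint functional calculus in $(\xi,\lambda)$ after Fourier transform in $x$, where both operators are multiplication operators. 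Replacing $\pm i$ by $\pm i\mu$ with $\mu$ large makes $\|K_\pm\|<1$ (the commutator terms gain a factor of $(Q_j\pm i\mu)^{-1}$ in norm, decaying in $\mu$, by the same scaling as above), so $R\pm i\mu$ is surjective from $\mathcal{D}$. Combined with symmetry and closability, the closure $\overline{R}$ has $\overline{R}\pm i\mu$ surjective, hence it is self-adjoint, and regularity follows since surjectivity of $\overline{R}\pm i\mu$ as module maps implies regularity (or via the local-global principle \cite[Theorem 4.2]{leschkaad2}, since all estimates are uniform under localization).
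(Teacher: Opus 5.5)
Your first step is fine: the cutoffs have the required Lipschitz properties, and $Q_1,Q_2$ are self-adjoint and regular. The parametrix step fails, however, and for a structural reason. Patching resolvents with a partition of unity works in Lemma~\ref{infcylfred} and Proposition~\ref{regularapspsps} because there the two model operators coincide on the overlap of the cutoffs. Here, on $\{0<\chi<1\}$, $Q_1=\Psi_\infty(D)_\alpha$ and $Q_2=\Psi_\infty(D_\alpha)$ differ at top order in the $x$-direction: in the Fourier variable $\xi$ of $x$, at a fixed spectral value of $D$, $Q_1$ grows like $|\xi|^\alpha$ and $Q_2$ like $|\xi|$.

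\emph{The parametrix does not map into the domain.} The cross term $\sqrt{\chi}Q_2\sqrt{\chi}\cdot\sqrt{1-\chi}(Q_1\pm i\mu)^{-1}\sqrt{1-\chi}$ requires $\sqrt{\chi(1-\chi)}\,\Dom(Q_1)\subseteq\Dom(Q_2)$. This is false: for $\mathpzc{E}=\C$ and $D=0$ these domains are $H^\alpha(\R)$ and $H^1(\R)$ up to the $\C^2$-factor, and a cutoff in $x$ does not upgrade one to the other. For the same reason your $\mathcal{D}=\sqrt{1-\chi}\Dom(Q_1)+\sqrt{\chi}\Dom(Q_2)$ is not contained in the domain of $R$. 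The inclusion $\Dom(Q_2)\subseteq\Dom(Q_1)$ that you single out as the main obstacle only handles the other cross term.

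\emph{The error is not small, even formally.} Modulo commutators, $(R\pm i\mu)r_\pm=(1-\chi)^2+\chi^2+(\text{cross terms})$. For this to be close to $1$, the cross terms must be close to $2\chi(1-\chi)$, i.e.\ $(Q_1\pm i\mu)(Q_2\pm i\mu)^{-1}$ and its inverse must be close to $1$ on the overlap. But $(Q_1-Q_2)(Q_2\pm i\mu)^{-1}$ has norm bounded below uniformly in $\mu$ (let $|\xi|\to\infty$). Enlarging $\mu$ therefore only controls the commutator terms, and $\|K_\pm\|<1$ is not obtained.

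What is needed is a comparison with a single operator rather than a patching of two resolvents. The paper writes the operator as $\Psi_\infty(D_\alpha)+V_1+V_2$ with $V_1=\sqrt{1-\chi}\Psi_\infty(D)_\alpha\sqrt{1-\chi}$ and $V_2=\sqrt{\chi}\Psi_\infty(D_\alpha)\sqrt{\chi}-\Psi_\infty(D_\alpha)$. It then uses your domination inequality, in the form $\mu^2+\lambda^{2\alpha}\geq(\mu^2+\lambda^2)^\alpha$ for large $(\mu,\lambda)$, to get relative bounds equal to $1$, and concludes with W\"ust's theorem \cite[Theorem 4.6]{leschkaad2}. The borderline bound is forced in the relevant case where $\chi=0$ on a half-line, and it is why the lemma is stated for the closure. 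Any $\eta\in\Dom(Q_1)$ supported where $\chi=0$ lies in $\Dom(R^*)$, with $R^*\eta=\sqrt{1-\chi}Q_1\eta$. So the self-adjoint closure has a domain strictly larger than $\Dom(\Psi_\infty(D_\alpha))$, which rules out a Kato--Rellich argument.

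Two smaller points. Replacing $\sqrt{\chi},\sqrt{1-\chi}$ by another smooth partition does not change $R$ by a bounded term, since the difference contains $Q_j$ multiplied by a non-zero function. The replacement is also unnecessary: $|f'|^2\leq 2\|f''\|_\infty f$ for $0\leq f\in C^2_b(\R)$ shows that $\sqrt{\chi}$ and $\sqrt{1-\chi}$ are Lipschitz.
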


\begin{proof}
We write 
$$\sqrt{1-\chi}\Psi_\infty(D)_{\alpha}\sqrt{1-\chi}+\sqrt{\chi}\Psi_{\infty}(D_\alpha)\sqrt{\chi}=\Psi_{\infty}(D_\alpha)+V_1+V_2,$$
where $V_1=\sqrt{1-\chi}\Psi_\infty(D)_{\alpha}\sqrt{1-\chi}$ and 
$$V_2=\sqrt{\chi}\Psi_{\infty}(D_\alpha)\sqrt{\chi}-\Psi_{\infty}(D_\alpha)=-\frac{1}{2}\chi'\hat{\boxtimes}0-\Psi_{\infty}(D_\alpha)\chi.$$
The inequality $\mu^2+\lambda^{2\alpha}\geq (\mu^2+\lambda^2)^\alpha$ for large enough real $(\mu,\lambda)$ shows that $V_1$ is relatively bounded by $\Psi_{\infty}(D_\alpha)$ with norm bound $1$. Similarly, $V_2$ is relatively bounded by $\Psi_{\infty}(D_\alpha)+V_1$ with norm bound $1$. The operator $\Psi_{\infty}(D_\alpha)$ is self-adjoint and regular, and $V_1$ and $V_2$ are symmetric so the lemma now follows from Wüst's extension theorem on Hilbert $C^*$-modules (see \cite[Theorem 4.6]{leschkaad2}).
\end{proof}

\begin{lemma}
\label{orderreducingbordismslemmainfty}
The operator $T_{\alpha,\infty}$ constructed in Equation \eqref{talpadinada} is a well defined self-adjoint and regular operator on $\mathpzc{N}_{(0,\infty)}$. Moreover, it holds that 
$$T_\alpha=(T_{\alpha,\infty})|_{[0,1]},$$
for notation see Proposition \ref{trunactiopiad} and Theorem \ref{infcylvsord}, and that $T_\alpha$ is a symmetric and regular operator.
\end{lemma}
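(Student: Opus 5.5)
The plan is to view $T_{\alpha,\infty}$ as a locally bounded (in fact bounded) perturbation of the operator treated in Lemma \ref{comparingdifforder}, and then to obtain $T_\alpha$ by truncation, using Proposition \ref{trunactiopiad}.

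First I rewrite the expression in \eqref{talpadinada}. Since $T_{(0,\infty)}$ is self-adjoint and regular by Proposition \ref{gluinginfcyleprop}, functional calculus makes $T_{(0,\infty),\alpha}$ self-adjoint and regular. Put $\phi:=\sqrt{1-\chi}$, which equals $1$ near $0$ and $0$ on $[1,\infty)$. I then decompose
$$\phi T_{(0,\infty),\alpha}\phi=\phi\,\Psi_\infty(D)_\alpha\,\phi+\phi\bigl(T_{(0,\infty),\alpha}-\Psi_\infty(D)_\alpha\bigr)\phi .$$
This has to be handled with some care: near $0$ the module $\mathpzc{N}_{(0,\infty)}$ is not a summand of $L^2(\R)\hat{\boxtimes}\mathpzc{E}$. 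I therefore split further with a partition $\phi=\phi\psi_1+\phi\psi_2$, where $\psi_1$ is supported in $[0,1/4)$ and $\psi_2$ vanishes near $0$. On terms carrying a factor vanishing near $0$, Lemma \ref{comparingpowers} shows the difference is bounded. The term $\psi_1 T_{(0,\infty),\alpha}\psi_1$ lives on the interior and is left untouched.

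The key step is self-adjointness. Here I would argue directly with Wüst's theorem \cite[Theorem 4.6]{leschkaad2}, as in the proof of Lemma \ref{comparingdifforder}. I start from the self-adjoint regular operator $T_{(0,\infty),\alpha}$, and I treat the region where $\chi\neq0$ as a relatively bounded perturbation. That perturbation is $\sqrt{\chi}\Psi_\infty(D_\alpha)\sqrt{\chi}-\chi T_{(0,\infty),\alpha}$ plus commutator terms. The commutator terms are bounded by Lemma \ref{locallythesame}/Lemma \ref{comparingpowers} and by $[T_{(0,\infty),\alpha},\phi]$ being bounded, the latter via Lemma \ref{cpxpowerlemma}-type estimates. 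On the cylinder, where everything is localized away from $0$, I reduce by Lemma \ref{comparingpowers} to the model on $L^2(\R)\hat{\boxtimes}\mathpzc{E}$. There Lemma \ref{comparingdifforder} already supplies self-adjointness and regularity, and the inequality $\mu^2+\lambda^{2\alpha}\ge(\mu^2+\lambda^2)^\alpha$ gives relative bound $\le1$. A local-global/partition argument gluing the interior piece (self-adjoint $T_{(0,\infty),\alpha}$) with the cylinder piece (Lemma \ref{comparingdifforder}) then yields that $T_{\alpha,\infty}$ is self-adjoint and regular. This uses the same $\Dom(T_{\alpha,\infty}^*)$ localization as in the proof of Lemma \ref{restsups}: commutators with the cutoffs are bounded, so an adjoint-domain element cut by $\phi$ or $\sqrt{\chi}$ lands in the respective domains. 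I expect this localization to be the main obstacle. The non-commutativity of $T_{(0,\infty),\alpha}$ with the cutoffs must be controlled uniformly, and I would use the integral representation of fractional powers from Lemma \ref{locallythesame} to do so.

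For the final claim, $\chi=1$ beyond $1$ means that on $b(C_c^\infty(0,\infty))$, away from the collar, $T_{\alpha,\infty}$ agrees with $\Psi_{(0,\infty)}(D_\alpha)$. Restricting $T_{\alpha,\infty}$ to $b(C^\infty_c[0,1))\Dom(T_{\alpha,\infty})$ therefore coincides with the defining expression \eqref{talpadada} on its domain $(1-\chi)\Dom(T_{(0,\infty),\alpha})+\chi\Dom(\Psi(D_\alpha))$. Taking closures gives $T_\alpha=(T_{\alpha,\infty})|_{[0,1]}$. The truncation argument in Proposition \ref{trunactiopiad} applies verbatim, using only self-adjointness and regularity of the operator with cylindrical ends, the product structure on the end, and the mollifier/local-global argument there. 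It then shows that $T_\alpha$ is symmetric and regular.
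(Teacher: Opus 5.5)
Your plan is essentially the paper's proof. Lemma \ref{comparingpowers} rewrites $T_{\alpha,\infty}$ as $T_{(0,\infty),\alpha}$ glued along a new cutoff to the model operator $\overline{\sqrt{1-\chi}\Psi_\infty(D)_\alpha\sqrt{1-\chi}+\sqrt{\chi}\Psi_\infty(D_\alpha)\sqrt{\chi}}$ of Lemma \ref{comparingdifforder}, plus a bounded self-adjoint $V$. An approximate-resolvent gluing as in \cite[Theorem 2.20]{DGM} then gives self-adjointness and regularity, and the truncation claim is read off from Proposition \ref{trunactiopiad} and Theorem \ref{infcylvsord}.

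The one thing to discard is your opening framing of Wüst's theorem with $T_{(0,\infty),\alpha}$ as the base operator. For $\alpha<1$, the term $\sqrt{\chi}\Psi_\infty(D_\alpha)\sqrt{\chi}$ is first order in the cylinder variable, while $T_{(0,\infty),\alpha}$ is only of order $\alpha$ there. So that term is not even defined on $\Dom(T_{(0,\infty),\alpha})$; indeed $\Dom(T_{\alpha,\infty})$ is strictly smaller than $\Dom(T_{(0,\infty),\alpha})$ on the cylinder. Wüst's theorem is therefore only usable inside the model, as in Lemma \ref{comparingdifforder}. The global step has to be the cutoff/approximate-resolvent gluing you describe afterwards.
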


\begin{proof}
The final conclusion of the lemma follows from Proposition \ref{infcylvsord}, so it suffices to prove that $T_{\alpha,\infty}$ is self-adjoint and regular. To shorten the notation, we write $\#_\chi$ for the closure of gluing operators together along $\chi$, so that $T_{\alpha,\infty}:=T_{(0,\infty),\alpha}\#_\chi\Psi_{\infty}(D_\alpha)$. By Lemma \ref{comparingpowers}, we can write 
$$T_{\alpha,\infty}=T_{(0,\infty),\alpha}\#_{\tilde{\chi}}(\Psi_\infty(D)_{\alpha}\#_\chi\Psi_{\infty}(D_\alpha))+V,$$
for a suitable $\tilde{\chi}$ and a bounded self-adjoint operator $V$. By Lemma \ref{comparingdifforder}, the operator $\Psi_\infty(D)_{\alpha}\#_\chi\Psi_{\infty}(D_\alpha)$ is self-adjoint and regular. Therefore a cumbersome construction of approximate resolvents, as in as in the construction of gluings of $KK$-bordisms in \cite[Theorem 2.20]{DGM}, shows that the gluing over $\tilde{\chi}$ produces a self-adjoint and regular operator proving that $T_{\alpha,\infty}$ is self-adjoint and regular.
\end{proof}

We remark that not only does the operator $T_\alpha$ depend on the choice of $\chi$, even its domain depends on $\chi$. This can readily be seen from a simple example on the unit disc\footnote{To obtain product structure near the boundary, we can use a torpedo metric.}. On the unit disc, elements of $\Dom(T_{(0,\infty),\alpha})$ will have interior Sobolev regularity $\alpha$ and therefore the regularity in the radial direction of an element in $\Dom(T_\alpha)$ goes from radial Sobolev regularity $\alpha$ on the subset where $\chi=0$ to radial Sobolev regularity $1$ on the subset where $\chi=1$.

\begin{theorem}
\label{orderreducingbordismsthm}
Assume that $\mathcal{A}$ is a Banach $*$-algebra and that the inclusion $\mathcal{A}\hookrightarrow A$ is continuous. Let $\alpha\in (0,1]$, $\beta\in (\alpha,1]$ and let $\mathfrak{X}$ be an $(\mathcal{A},B)$-bordism. The order reduction $\mathfrak{X}_\alpha$ of $\mathfrak{X}$ is defined as the collection 
$$\mathfrak{X}_\alpha:=((\mathpzc{N},T_\alpha), (\id, \chi_{[1-\epsilon,1]}),(\mathpzc{E},D_\alpha)),$$
where $\epsilon$ is small enough so that the $\chi$ from Equation \eqref{talpadada} satisfies $\chi=1$ on a neighborhood of $[1-\epsilon,1]$. The order reduction $\mathfrak{X}_\alpha$ is a well defined $(\mathcal{A}_\beta,B)$-bordism with 
$$\partial (\mathfrak{X}_\alpha)=(\partial \mathfrak{X})_\alpha.$$
\end{theorem}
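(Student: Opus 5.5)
The plan is to reduce everything to statements about the self-adjoint operator $T_{\alpha,\infty}$ on $\mathpzc{N}_{(0,\infty)}$ and then truncate. By Lemma \ref{orderreducingbordismslemmainfty}, $T_{\alpha,\infty}$ is self-adjoint and regular and $T_\alpha=(T_{\alpha,\infty})|_{[0,1]}$ is symmetric and regular. By Theorem \ref{infcylvsord} and Proposition \ref{trunactiopiad}, it therefore suffices to show that $(\mathfrak{X}_{\alpha,\infty}^\circ,\Theta_\infty,(\mathpzc{E},D_\alpha))$, with interior $(\mathpzc{N}_{(0,\infty)},T_{\alpha,\infty})$, is an $(\mathcal{A}_\beta,B)$-bordism with cylindrical ends in the sense of Definition \ref{HilBorDefInf}.

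First, the boundary. By Theorem \ref{bordismforodrdthm} and Theorem \ref{interpolationthm}, $(\mathpzc{E},D_\alpha)$ is an $(\mathcal{A}_\beta,B)$-cycle, so the identity $\partial(\mathfrak{X}_\alpha)=(\partial\mathfrak{X})_\alpha$ is built into the construction. Second, the compatibility conditions a)--c) of Definition \ref{HilBorDefInf}. Where $\chi=1$, the operator in \eqref{talpadinada} is literally $\Psi_\infty(D_\alpha)$ restricted to the half line. Hence for $\phi$ supported in the region $\chi=1$, condition b) holds for $\Psi_{(0,\infty)}(D_\alpha)$. Condition c) is locality: it follows since both $T_{(0,\infty),\alpha}$ and $\Psi_\infty(D_\alpha)$ are glued with the cutoffs $\sqrt{\chi}$ and $\sqrt{1-\chi}$, and off-diagonal terms only involve commutators supported where $\chi'\neq0$. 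This uses Lemma \ref{comparingpowers} to see that these commutators are bounded. Condition a) follows since $T_{\alpha,\infty}$ is self-adjoint.

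The analytic core is then to show that $(\mathpzc{N}_{(0,\infty)},T_{\alpha,\infty})$ is a half-closed $(C^\infty_c([0,\infty),\mathcal{A}_\beta),B)$-chain.
\begin{itemize}
\item \emph{Compactness.} Write $T_{\alpha,\infty}=T_{(0,\infty),\alpha}\#_{\tilde\chi}(\Psi_\infty(D)_\alpha\#_\chi\Psi_\infty(D_\alpha))+V$ with $V$ bounded, as in the proof of Lemma \ref{orderreducingbordismslemmainfty}. For $\phi$ compactly supported, $\phi(1+T_{\alpha,\infty}^2)^{-1}$ is compact because each glued piece has locally compact resolvent. For $T_{(0,\infty),\alpha}$ this holds since its resolvent is a power of that of $T_{(0,\infty)}$, which is locally compact by Proposition \ref{gluinginfcyleprop}, as in item (3) of Theorem \ref{interpolationthm}. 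For $\Psi_\infty(D_\alpha)$ it follows from Proposition \ref{suspandpropR}, and for the middle piece from a relative-boundedness comparison as in Lemma \ref{comparingdifforder}. These facts then combine through approximate resolvents.
\item \emph{Lipschitz property.} For $a\in\mathcal{A}$ and $\phi\in C^\infty_c[0,\infty)$, bound $[T_{\alpha,\infty},\phi a]$ piece by piece. On the interior piece, Theorem \ref{interpolationthm}(2) applied to the self-adjoint $T_{(0,\infty)}$ gives $\|[T_{(0,\infty),\alpha},a]\|\le C\|a\|_{\mathcal{A}_\beta}$. On the cylinder piece the same theorem applies to $D$, and $\phi'$ contributes a bounded term. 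The cutoff commutators are bounded by Lemma \ref{comparingpowers}. Then extend from $\mathcal{A}$ to $\mathcal{A}_\beta$ by density and these estimates, as in the proof of Theorem \ref{interpolationthm}.
\end{itemize}

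The main obstacle I expect is the Lipschitz estimate on the transition region where $\chi'\neq0$. There the two operators have different orders in the cylinder direction: $\Psi_\infty(D)_\alpha$ reduces the order of $\partial_x$ too, while $\Psi_\infty(D_\alpha)$ does not. Commutators with $a$ mixing them are not obviously controlled by $\|a\|_{\mathcal{A}_\beta}$. I would first try to handle this by the complex interpolation argument of Theorem \ref{interpolationthm}, applied to the holomorphic family $z\mapsto T_{(0,\infty),z}\#_\chi\Psi_\infty(D_z)$. On $\mathrm{Re}(z)=0$ I would use the trivial bound, and on $\mathrm{Re}(z)=1-\epsilon$ I would use Lemmas \ref{cpxpowerlemma} and \ref{lipforpower} together with Lemma \ref{locallythesame}, to obtain uniform bounds on both boundary lines.
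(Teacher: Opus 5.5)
Your overall route is the paper's: work with the self-adjoint $T_{\alpha,\infty}$ of Lemma \ref{orderreducingbordismslemmainfty}, truncate via Proposition \ref{trunactiopiad}, and get local compactness and the $\mathcal{A}_\beta$-Lipschitz property piece by piece from Theorem \ref{interpolationthm}. The paper gets compactness more directly, from the decomposition $\Dom(T_\alpha)=(1-\chi)\Dom(T_{(0,\infty),\alpha})+\chi\Dom(\Psi(D_\alpha))$, without approximate resolvents. However, your verification of the collar axioms b) and c) fails as written.

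You use the boundary data $\Theta_\infty$ of Theorem \ref{infcylvsord}, whose collar contains the transition region $\{0<\chi<1\}$. There the operator contains the piece $\sqrt{1-\chi}\,T_{(0,\infty),\alpha}\sqrt{1-\chi}$. The operator $T_{(0,\infty),\alpha}=T_{(0,\infty)}(1+T_{(0,\infty)}^2)^{(\alpha-1)/2}$ is a \emph{nonlocal} function of $T_{(0,\infty)}$; already for $i\,\mathrm{d}/\mathrm{d}x$ on $L^2(\R)$ it is a Fourier multiplier whose kernel is not supported on the diagonal. Moreover, near the collar it is $\Psi_\infty(D)_\alpha$ up to bounded terms, not $\Psi_\infty(D_\alpha)$. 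Hence condition c), which demands $b(\phi_1)Tb(\phi_2)=0$ \emph{exactly}, fails for $\phi_1,\phi_2$ with disjoint supports inside $\{\chi<1\}$. Condition b) fails for $\phi$ supported there. The boundedness statement of Lemma \ref{comparingpowers} is irrelevant to c).

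The repair is exactly what the $\epsilon$ in the statement encodes: take the collar projection inside $\{\chi\equiv 1\}$. If $\phi_1\phi_2=0$, one of $b(\phi_1),b(\phi_2)$ is supported in the collar, where $\sqrt{1-\chi}$ vanishes. So only the local, product-type piece $\sqrt{\chi}\,\Psi(D_\alpha)\sqrt{\chi}$ is seen there, and b) and c) follow. Truncating then yields precisely the data $(\id,\chi_{[1-\epsilon,1]})$.

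On the other hand, the transition-region obstacle you anticipate for the Lipschitz estimate is not there. The holomorphic family $z\mapsto T_{(0,\infty),z}\#_\chi\Psi_\infty(D_z)$, whose regularity you would first have to establish, is unnecessary. For $a\in\mathcal{A}_\beta$ and $\phi$ a function of the collar variable, $b(\phi a)$ commutes with $\sqrt{\chi}$ and $\sqrt{1-\chi}$. So on the natural domain of the glued sum
\[
[T_{\alpha,\infty},b(\phi a)]=\sqrt{1-\chi}\,[T_{(0,\infty),\alpha},b(\phi a)]\sqrt{1-\chi}+\sqrt{\chi}\,[\Psi_\infty(D_\alpha),b(\phi a)]\sqrt{\chi},
\]
with no mixed terms at all. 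Both commutators are bounded by Theorem \ref{interpolationthm}(2), applied to the self-adjoint $T_{(0,\infty)}$ and to $D$. For the scalar factor $\phi$, run the same interpolation argument on the single element $\phi\in\mathrm{Lip}(T_{(0,\infty)})$. Since $b(\phi a)$ preserves this core, closedness gives $b(\phi a)\Dom(T_{\alpha,\infty})\subseteq\Dom(T_{\alpha,\infty})$.

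Finally, Theorem \ref{bordismforodrdthm} is not needed for the boundary, and it would require $D$ invertible. Theorem \ref{interpolationthm}(3) alone shows that $(\mathpzc{E},D_\alpha)$ is an $(\mathcal{A}_\beta,B)$-cycle.
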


\begin{proof}
It follows from Theorem \ref{interpolationthm} (see page \pageref{interpolationthm}), Proposition \ref{trunactiopiad} and Lemma \ref{orderreducingbordismslemmainfty} that $\mathfrak{X}_\alpha$ is a symmetric chain with boundary. To prove the theorem it therefore remains to prove that $\mathfrak{X}_\alpha$ is half-closed as a $(C^\infty_c((0,1],\mathcal{A}_\alpha),B)$-chain. In other words, we need to prove that for $a\in C^\infty_c((0,1],\mathcal{A}_\alpha)$, the operator 
\begin{equation}
\label{cptdomaorder}
\Dom(T_\alpha)=(1-\chi)\Dom(T_{\infty,\alpha})+\chi \Dom(\Psi(D_\alpha))\xrightarrow{a}\mathpzc{N},
\end{equation}
is compact. Compactness of the operator in Equation \eqref{cptdomaorder} now follows from that $\Dom(T_{(0,\infty),\alpha})\xrightarrow{(1-\chi)a}\mathpzc{N}$ is compact by Theorem \ref{interpolationthm} and that $\Dom(\Psi(D_\alpha))\xrightarrow{\chi a}\mathpzc{N}$ is compact by Proposition \ref{suspandprop} and Theorem \ref{interpolationthm}.
\end{proof}

The following corollary is an immediate consequence of Theorem \ref{orderreducingbordismsthm}.

\begin{cor}
\label{injofrestcomplx}
Assume that $\mathcal{A}$ is a Banach $*$-algebra and that the inclusion $\mathcal{A}\hookrightarrow A$ is continuous. Let $\alpha\in (0,1]$. For any two $(\mathcal{A},B)$-cycles $(\mathpzc{E},D)$ and $(\mathpzc{E}',D')$, it holds that $(\mathpzc{E},D)\sim_{\rm bor}(\mathpzc{E}',D')$ as $(\mathcal{A},B)$-cycles if and only if $(\mathpzc{E},D_\alpha)\sim_{\rm bor}(\mathpzc{E}',D'_\alpha)$ as $(\mathcal{A}_\alpha,B)$-cycles.

\end{cor}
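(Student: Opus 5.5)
The plan is to prove the two implications separately: the forward direction by applying Theorem \ref{orderreducingbordismsthm} directly, and the reverse direction by passing through the unreduced cycles using Theorem \ref{bordismforodrdthm}.

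For the forward direction, suppose $\mathfrak{X}$ is an $(\mathcal{A},B)$-bordism with $\partial\mathfrak{X}=(\mathpzc{E},D)+(-(\mathpzc{E}',D'))$. Applying Theorem \ref{orderreducingbordismsthm} with some $\beta\in(\alpha,1]$ gives an $(\mathcal{A}_\beta,B)$-bordism $\mathfrak{X}_\alpha$ with $\partial(\mathfrak{X}_\alpha)=(\partial\mathfrak{X})_\alpha$. Order reduction is done by functional calculus, so it commutes with direct sums. It also commutes with taking opposites: in the odd case $(-D)_\alpha=-D_\alpha$, and in the even case only the grading changes. Hence $(\partial\mathfrak{X})_\alpha=(\mathpzc{E},D_\alpha)+(-(\mathpzc{E}',D'_\alpha))$.

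This only yields a bordism over $\mathcal{A}_\beta$ with $\beta>\alpha$, whereas the statement asks for one over $\mathcal{A}_\alpha$. I expect matching this exponent to be the main obstacle. My first attempt would be to reduce the order of $\mathfrak{X}$ by a slightly smaller exponent $\alpha'<\alpha$, using the reiteration theorem for complex interpolation to relate the resulting spaces $\mathcal{A}_\beta$ to $\mathcal{A}_\alpha$. This produces cycles $D_{\alpha'}$ rather than $D_\alpha$. To return to $D_\alpha$, I would invoke Theorem \ref{bordismforodrdthm} again, now for the cycles $(\mathpzc{E},D_\alpha)$ viewed over $\mathcal{A}_\alpha$. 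Since $(D_\alpha)_{\alpha'/\alpha}$ agrees with $D_{\alpha'}$ up to a bounded perturbation, Corollary \ref{locbddperturcor} handles the difference. Alternatively, I would read the statement as concerning $\mathcal{A}_\beta$-cycles for $\beta>\alpha$, in the same way that Theorem \ref{interpolationthm} is phrased.

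For the converse, suppose $(\mathpzc{E},D_\alpha)\sim_{\rm bor}(\mathpzc{E}',D'_\alpha)$ over $\mathcal{A}_\alpha$. Because $\mathcal{A}\subseteq\mathcal{A}_\alpha$ continuously, pulling the bordism back along this inclusion (Definition \ref{pushpull}, applied to chains with boundary) gives the same bordism over $\mathcal{A}$. Theorem \ref{bordismforodrdthm} supplies the bordisms $(\mathpzc{E},D)\sim_{\rm bor}(\mathpzc{E},D_\alpha)$ and $(\mathpzc{E}',D')\sim_{\rm bor}(\mathpzc{E}',D'_\alpha)$ over $\mathcal{A}$. Gluing these via the transitivity in Proposition \ref{subsecglyinggluing} gives $(\mathpzc{E},D)\sim_{\rm bor}(\mathpzc{E}',D')$.

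Theorem \ref{bordismforodrdthm} assumes invertibility, so I would first replace each cycle by its invertible double (Corollary \ref{invertibleldodcor}). The doubles are bordant to the original cycles, so this replacement does not affect either side of the equivalence.
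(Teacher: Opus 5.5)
Your proposal is correct and is the paper's argument. The paper presents the corollary as an immediate consequence of Theorem \ref{orderreducingbordismsthm}, which is your forward direction. It leaves implicit the converse, which you supply via restriction along $\mathcal{A}\subseteq\mathcal{A}_\alpha$, Theorem \ref{bordismforodrdthm} and transitivity.

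Your concern about the exponent is legitimate, and the paper glosses over it. Theorem \ref{interpolationthm} only gives $\mathcal{A}_\beta\subseteq\mathrm{Lip}(|D_\alpha|)$ for $\beta>\alpha$. So the statement tacitly assumes that $(\mathpzc{E},D_\alpha)$ is an $(\mathcal{A}_\alpha,B)$-cycle, and your reading with $\mathcal{A}_\beta$, $\beta>\alpha$, is what the paper's tools actually prove.

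Your first route uses that tacit assumption when it applies Theorem \ref{bordismforodrdthm} to $D_\alpha$ over $\mathcal{A}_\alpha$, so you should state it explicitly. On the other hand, that route needs no reiteration theorem. Reducing by $\alpha'<\alpha$ and taking $\beta=\alpha$ in Theorem \ref{orderreducingbordismsthm} gives an $(\mathcal{A}_\alpha,B)$-bordism directly. Also, $(D_\alpha)_{\alpha'/\alpha}-D_{\alpha'}$ is indeed bounded: the corresponding function of $D$ is $O(|x|^{\alpha'-2\alpha})$ as $|x|\to\infty$.

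One step needs a line of justification. After doubling, the order-reduced side requires $(\mathpzc{E},D_\alpha)\sim_{\rm bor}(\tilde{\mathpzc{E}},\tilde{D}_\alpha)$, which is not the same statement as $(\mathpzc{E},D)\sim_{\rm bor}(\tilde{\mathpzc{E}},\tilde{D})$. You can supply it in either of two ways:
\begin{itemize}
\item In the $\beta>\alpha$ reading, apply Theorem \ref{orderreducingbordismsthm}, which needs no invertibility, to the bordism of Corollary \ref{invertibleldodcor}.
\item In general, use $\tilde{D}^2=(D^2+(1+D^2)^{-1})\oplus(D^2+(1+D^2)^{-1})$. This shows that $\tilde{D}_\alpha$ is a bounded perturbation of $D_\alpha\oplus\bigl(-D(1+D^2+(1+D^2)^{-1})^{(\alpha-1)/2}\bigr)$. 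The second summand carries the zero action and is therefore nullbordant; for instance the half-line chain of Example \ref{halflineex} works, since its half-closedness condition becomes vacuous.
\end{itemize}
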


\section{Properties of the $KK$-bordism group}
\label{sec:propoered}

In this section we will utilize the $KK$-bordisms considered in Sections \ref{exampleofboridmsmssec} and \ref{sec:prelbord} to deduce structural results about the $KK$-bordism groups. We conclude invariance under functional calculus, Morita invariance under finitely generated projective modules as well as under complex interpolation. We continue by discussing Hilsum's work \cite{hilsumbordism} on the bounded transform and a variety of technical simplifications.

\subsection{Invariance properties}
\label{invariancesubsec}

We start by summarizing some immediate functoriality properties of the $KK$-bordism group in a theorem.

\begin{theorem}

\begin{enumerate}
\item Let $\varphi:\mathcal{A}_1\to \mathcal{A}_2$ be a $*$-homomorphism of $*$-algebras which is continuous in the $C^*$-topologies. For any $C^*$-algebra $B$, the mapping 
$$\varphi^*:\Omega_*(\mathcal{A}_2,B)\to  \Omega_*(\mathcal{A}_1,B), \quad (\mathpzc{E},D)\mapsto (\varphi\mathpzc{E},D),$$
is a well defined graded group homomorphism. If $\varphi$ is an inclusion, the map $\varphi^*$ is surjective if any $(\mathcal{A}_1,B)$-cycles is bordant to a cycle extending to $(\mathcal{A}_2,B)$ and the map $\varphi^*$ is injective if any nullbordant $(\mathcal{A}_1,B)$-cycle extending to $(\mathcal{A}_2,B)$ is nullbordant as an $(\mathcal{A}_2,B)$-cycle.
\item Let $\varphi:B_1\to B_2$ be a $*$-homomorphism of $C^*$-algebras. For any $*$-algebra $\mathcal{A}$, the mapping 
$$\varphi_*:\Omega_*(\mathcal{A},B_1)\to  \Omega_*(\mathcal{A},B_2), \quad (\mathpzc{E},D)\mapsto (\mathpzc{E}\otimes_\varphi B_2,D\otimes 1_{B_2}),$$
is a well defined graded group homomorphism. 
\end{enumerate}

\end{theorem}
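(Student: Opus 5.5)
The plan is to check each construction at the level of cycles and bordisms, and show that it preserves the bordism relation and commutes with direct sums and opposites. Well-definedness and the homomorphism property then follow formally from Definition \ref{defofbordgroup}.

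For item (1), the pullback $\varphi^*(\mathpzc{E},D)=({}_\varphi\mathpzc{E},D)$ from Definition \ref{pushpull} sends closed cycles to closed cycles. It visibly preserves isomorphisms, direct sums and opposites.
\begin{enumerate}
\item Given an $(\mathcal{A}_2,B)$-bordism $\mathfrak{X}=((\mathpzc{N},T),(\theta,p),(\mathpzc{E},D))$, take $\varphi^*\mathfrak{X}$ to be the same data with the left action pulled back along $\varphi$, which extends to the $C^*$-closures by continuity.
\item The projection $p$ still commutes with the pulled-back action.
\item The representation $b$ for $\varphi^*\mathfrak{X}$ is $b_{\mathfrak{X}}\circ(\id_{C[0,1]}\otimes\varphi)$. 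Hence conditions a)--c) of Definition \ref{HilBorDef} are inherited: they involve only scalar functions $\phi\in C^\infty[0,1]$, for which $b(\phi)$ is unchanged.
\item For the half-closedness condition, $\id\otimes\varphi$ maps $C^\infty_c((0,1],\mathcal{A}_1)$ into $C^\infty_c((0,1],\mathcal{A}_2)$. So both the Lipschitz condition and the local compactness $b(\phi)(1+T^*T)^{-1}\in\mathbb{K}_B$ are inherited.
\end{enumerate}
Thus bordant cycles map to bordant cycles. The surjectivity and injectivity criteria in the inclusion case are then just restatements of surjectivity and injectivity of $\varphi^*$ on equivalence classes: given the stated hypotheses, a preimage or a nullbordism is produced directly.

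For item (2), use the interior tensor product $\mathpzc{E}\otimes_\varphi B_2$. The first step is operator-theoretic: if $T$ is regular on $\mathpzc{E}$, then $T\otimes 1$ is a well defined closed regular operator with $(T\otimes1)^*=T^*\otimes1$, and $(1+(T\otimes1)^*(T\otimes1))^{-1}=(1+T^*T)^{-1}\otimes1$. I would get this from the graph projection correspondence \eqref{graphdbibidkd}, since $p_T\otimes 1$ is a projection with the required properties, or alternatively cite \cite{lancesbook}. From this the remaining properties follow:
\begin{enumerate}
\item symmetry and self-adjointness are preserved;
\item $[T\otimes1,a\otimes1]=[T,a]\otimes1$ is bounded;
\item $K\otimes 1$ is $B_2$-compact whenever $K$ is $B_1$-compact, which gives local compactness;
\item $a\Dom(T^*)\subseteq\Dom(T)$ passes to $a\Dom(T^*\otimes1)\subseteq\Dom(T\otimes1)$, via a core argument on the algebraic tensor product $\Dom(T^*)\odot B_2$ together with the graph norm.
\end{enumerate}
The boundary data become $(\theta\otimes1,p\otimes1)$, using the identification $(L^2[0,1]\hat{\boxtimes}\mathpzc{E})\otimes_\varphi B_2\cong L^2[0,1]\hat{\boxtimes}(\mathpzc{E}\otimes_\varphi B_2)$. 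This identification intertwines $\Psi(D)\otimes1$ with $\Psi(D\otimes1)$; I would check that by comparing both on the core $C^\infty_c(0,1)\odot\Dom(D)\odot B_2$.

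I expect the main obstacle to be the adjoint and domain bookkeeping in item (2), specifically two identities:
\begin{itemize}
\item $(T\otimes1)^*=T^*\otimes1$ for a merely symmetric regular $T$, which is needed to transfer condition a);
\item the domain equalities in condition b).
\end{itemize}
I would first try to prove both by reducing everything to the graph projections, which tensor cleanly, rather than by working with cores directly.
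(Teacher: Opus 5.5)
Your proposal is correct and is exactly the routine verification at the level of cycles and bordisms that the paper leaves implicit. The paper states this theorem without proof, treating it as immediate from the remarks in Definition \ref{pushpull} that pullback and interior tensoring preserve symmetric, half-closed and closed chains.

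The adjoint and domain bookkeeping you flag in item (2) works as you suggest. The identity $p_{T\otimes 1}=p_T\otimes 1$ gives $(T\otimes 1)^*=T^*\otimes 1$ for any regular $T$, not only symmetric ones. The inclusions in conditions a) and b) transfer because the maps involved, such as $b(\phi):\Dom(T^*)\to\Dom(T)$ and $\theta^{-1}\psi:\Dom\Psi(D)\to\Dom(T)$, are bounded between the graph-normed modules by the closed graph theorem, and bounded module maps of this kind tensor with $1_{B_2}$.
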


\begin{theorem}
\label{cpxinerpbofodthm}
Let $B$ be a $C^*$-algebra. Assume that $\mathcal{A}$ has a Banach $*$-algebra norm making the inclusion $\mathcal{A}\subseteq A$ continuous. Let $\mathcal{A}_\alpha:=[A,\mathcal{A}]_\alpha$ denote the complex interpolation space in $\alpha\in (0,1]$. It holds that the restriction 
$$\Omega_*(\mathcal{A}_\alpha,B)\to \Omega_*(\mathcal{A},B),$$
along the inclusion $\mathcal{A}\hookrightarrow \mathcal{A}_\alpha$, is an isomorphism for all $\alpha\in (0,1]$. In other words, the $KK$-bordism group is invariant under complex interpolation along the inclusion $\mathcal{A}\subseteq A$. 

More generally, for any $*$-subalgebra $\mathcal{A}'\subseteq A$ with $\mathcal{A}\subseteq \mathcal{A}'\subseteq \mathcal{A}_\alpha$, for some $\alpha\in (0,1]$, we have that the restriction along $\mathcal{A}\hookrightarrow\mathcal{A}'$ induces an isomorphism 
$$\Omega_*(\mathcal{A}',B)\to \Omega_*(\mathcal{A},B),$$
\end{theorem}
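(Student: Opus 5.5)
The plan is to combine Corollary \ref{surofrestcomplx} (surjectivity) with Corollary \ref{injofrestcomplx} (injectivity via order reduction of bordisms), and then deduce the general statement for intermediate algebras $\mathcal{A}\subseteq\mathcal{A}'\subseteq\mathcal{A}_\alpha$ by a factorization argument.

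For the first claim, surjectivity of $\Omega_*(\mathcal{A}_\alpha,B)\to\Omega_*(\mathcal{A},B)$ is exactly Corollary \ref{surofrestcomplx}. For injectivity, take an $(\mathcal{A}_\alpha,B)$-cycle $\mathfrak{Y}=(\mathpzc{E},D)$ whose restriction to $\mathcal{A}$ is nullbordant via an $(\mathcal{A},B)$-bordism $\mathfrak{X}$. Using Corollary \ref{invertibleldodcor} (the invertible double, which is an $(\mathcal{A}_\alpha,B)$-bordism since the perturbation is locally bounded and does not involve the algebra), we may assume $D$ is invertible. Pick $\gamma\in(0,1)$ small. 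By Theorem \ref{bordismforodrdthm}, applied now over $\mathcal{A}_\alpha$, we have $(\mathpzc{E},D)\sim_{\rm bor}(\mathpzc{E},D_\gamma)$ as $(\mathcal{A}_\alpha,B)$-cycles. On the other hand, Theorem \ref{orderreducingbordismsthm} applied to $\mathfrak{X}$ gives an $(\mathcal{A}_\beta,B)$-bordism $\mathfrak{X}_\gamma$ with boundary $(\mathpzc{E},D_\gamma)$ for every $\beta\in(\gamma,1]$. Choosing $\gamma<\alpha$ and $\beta=\alpha$, $(\mathpzc{E},D_\gamma)$ is nullbordant over $\mathcal{A}_\alpha$, so $\mathfrak{Y}$ is as well. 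One subtlety is that the Banach structure on $\mathcal{A}_\alpha$ must be compatible: the interpolation scale is reiterative, $[A,\mathcal{A}]_\beta\supseteq\mathcal{A}_\alpha$ continuously for $\beta\le\alpha$, so cycles and bordisms over $\mathcal{A}_\beta$ with $\beta\leq\alpha$ restrict to $\mathcal{A}_\alpha$.

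For the general statement, the restriction $\Omega_*(\mathcal{A}_\alpha,B)\to\Omega_*(\mathcal{A},B)$ factors as $\Omega_*(\mathcal{A}_\alpha,B)\to\Omega_*(\mathcal{A}',B)\to\Omega_*(\mathcal{A},B)$. Since the composite is bijective, the second map is surjective. For injectivity of the second map, I run the same argument directly. Given an $(\mathcal{A}',B)$-cycle nullbordant over $\mathcal{A}$, first reduce it to an invertible cycle, then order-reduce to $D_\gamma$ with $\gamma<\alpha$ via Theorem \ref{bordismforodrdthm} over $\mathcal{A}'$. The nullbordism over $\mathcal{A}$ then order-reduces to one over $\mathcal{A}_\alpha\supseteq\mathcal{A}'$, and restricting it gives an $(\mathcal{A}',B)$-nullbordism.

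The main obstacle is the bookkeeping of exponents. Theorem \ref{interpolationthm} and Theorem \ref{orderreducingbordismsthm} only give extension to $\mathcal{A}_\beta$ for $\beta$ strictly larger than the reduction exponent, so the reduction exponent must be chosen strictly below $\alpha$. It also has to be checked that Theorem \ref{bordismforodrdthm} needs no Banach structure, which its statement allows. The case $\alpha=1$ is trivial.
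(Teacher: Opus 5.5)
Your proposal is correct and follows essentially the paper's route. Surjectivity comes from Corollary~\ref{surofrestcomplx}. Injectivity comes from order-reducing the cycle via Theorem~\ref{bordismforodrdthm} and the $\mathcal{A}$-nullbordism via Theorem~\ref{orderreducingbordismsthm}, which the paper packages as Corollary~\ref{injofrestcomplx}, and then restricting to $\mathcal{A}'$. Your extra care fills in points the paper's short proof glosses over: you first pass to the invertible double (Theorem~\ref{bordismforodrdthm} assumes invertibility), and you take the reduction exponent strictly below $\alpha$ so that the reduced cycle and bordism genuinely extend to $\mathcal{A}_\alpha$.
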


\begin{proof}
We will prove that $\Omega_*(\mathcal{A}',B)\to \Omega_*(\mathcal{A},B)$ is an isomorphism. Surjectivity of $\Omega_*(\mathcal{A}_\alpha,B)\to \Omega_*(\mathcal{A},B)$ follows from Corollary \ref{surofrestcomplx} which in turn implies surjectivity of $\Omega_*(\mathcal{A}',B)\to \Omega_*(\mathcal{A},B)$. To show that the map is injective, assume that $(\mathpzc{E},D)$ is a cycle for $(\mathcal{A}',B)$ which is nullbordant as an $(\mathcal{A},B)$-cycle. By Therem \ref{interpolationthm}, $(\mathpzc{E},D_\alpha)$ is an $(\mathcal{A}_\alpha,B)$-cycle which is nullbordant as an $(\mathcal{A}_\alpha,B)$-cycle by Corollary \ref{injofrestcomplx}. By Theorem \ref{bordismforodrdthm} we conclude that 
$$(\mathpzc{E},D)\sim_{\rm bor}(\mathpzc{E},D_\alpha)\sim_{\rm bor} 0,$$ 
as $(\mathcal{A}_\alpha,B)$-cycles. In particular, $(\mathpzc{E},D)\sim_{\rm bor} 0$ as $(\mathcal{A}',B)$-cycles.
\end{proof}

\begin{theorem}
\label{caalpainandthm}
Let $B$ be a $C^*$-algebra and $\mathcal{A}$ is a $*$-algebra with $C^*$-closure $A$. For $\alpha>1$, let $\overline{\mathcal{A}}^{(\alpha)}$ denote its $C^\alpha$-functional calculus closure. It holds that the restriction 
$$\Omega_*(\overline{\mathcal{A}}^{(\alpha)},B)\to \Omega_*(\mathcal{A},B),$$
along the inclusion $\mathcal{A}\hookrightarrow \overline{\mathcal{A}}^{(\alpha)}$, is an isomorphism for all $\alpha>1$. In other words, the $KK$-bordism group is invariant under $C^\alpha$-functional calculus. 

More generally, for any $*$-subalgebra $\mathcal{A}'\subseteq A$ with $\mathcal{A}\subseteq \mathcal{A}'\subseteq \overline{\mathcal{A}}^{(\alpha)}$, for some $\alpha>1$, we have that the restriction along $\mathcal{A}\hookrightarrow\mathcal{A}'$ induces an isomorphism 
$$\Omega_*(\mathcal{A}',B)\to \Omega_*(\mathcal{A},B),$$
\end{theorem}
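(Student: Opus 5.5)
The argument has the same structure as the proof of Theorem \ref{cpxinerpbofodthm}, with Theorem \ref{funccalccompthm} replacing the order reduction step. The key point is that $C^\alpha$-functional calculus does not change any of the cycle or bordism conditions. By Theorem \ref{funccalccompthm}, a symmetric, half-closed or closed chain for $(\mathcal{A},B)$ is automatically a chain of the same type for $(\overline{\mathcal{A}}^{(\alpha)},B)$. So no deformation of the cycle is needed; the only thing to verify is that bordisms extend too.

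Surjectivity is immediate. Take an $(\mathcal{A},B)$-cycle $(\mathpzc{E},D)$. By Theorem \ref{funccalccompthm} it is also an $(\overline{\mathcal{A}}^{(\alpha)},B)$-cycle, since $\overline{\mathcal{A}}^{(\alpha)}\subseteq\mathrm{Lip}_0(D)$. It is then also an $(\mathcal{A}',B)$-cycle for any $\mathcal{A}\subseteq\mathcal{A}'\subseteq \overline{\mathcal{A}}^{(\alpha)}$, and it restricts to the original cycle.

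For injectivity, suppose $(\mathpzc{E},D)$ is an $(\mathcal{A}',B)$-cycle that is nullbordant as an $(\mathcal{A},B)$-cycle via a bordism $\mathfrak{X}=((\mathpzc{N},T),(\theta,p),(\mathpzc{E},D))$. I would show that $\mathfrak{X}$ is already an $(\overline{\mathcal{A}}^{(\alpha)},B)$-bordism, and hence, by restriction, an $(\mathcal{A}',B)$-bordism.
\begin{itemize}
\item The boundary $(\mathpzc{E},D)$ is handled by Theorem \ref{funccalccompthm}.
\item $(\mathpzc{N},T)$ is a symmetric $(\mathcal{A},B)$-chain, so item 1 of Theorem \ref{funccalccompthm} gives $\overline{\mathcal{A}}^{(\alpha)}\subseteq\mathrm{Lip}(T)$.
\item The compatibility conditions a)--c) of Definition \ref{HilBorDef} involve only $b(\phi)$ with scalar $\phi\in C^\infty[0,1]$, which does not depend on the algebra, and the $A$-action, which commutes with $p$.
\end{itemize}

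The remaining condition is half-closedness of $(\mathpzc{N},T)$ for $C^\infty_c((0,1],\overline{\mathcal{A}}^{(\alpha)})$ acting through $b$. Compactness of $b(\phi a)(1+T^*T)^{-1}$ extends from $\mathcal{A}$ to all of $A$ by norm continuity, as in the proof of item 2 of Theorem \ref{funccalccompthm}. The domain condition $b(\phi a)\Dom T^*\subseteq\Dom T$ holds because $b(\phi)\Dom T^*\subseteq \Dom T$ for $\phi\in C^\infty_c(0,1]$, and because $a\in\mathrm{Lip}(T)$ preserves $\Dom T$ and commutes with $b(\phi)$.

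The one point I expect to need care is that the boundary action is $b(\phi a)=b(\phi)a$ with $a\in\overline{\mathcal{A}}^{(\alpha)}$ acting on all of $\mathpzc{N}$. This requires the $\mathrm{Lip}$ property of $T$ rather than only the conditions for the boundary cycle. It is exactly what item 1 of Theorem \ref{funccalccompthm} applied to the regular operator $T$ supplies; the mixed $C^\infty_c((0,1],\cdot)$-algebra never needs to be closed under functional calculus itself.

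Putting this together, $\mathfrak{X}$ is an $(\mathcal{A}',B)$-nullbordism, so the restriction map $\Omega_*(\mathcal{A}',B)\to \Omega_*(\mathcal{A},B)$ is injective. Combined with surjectivity, it is an isomorphism, and the case $\mathcal{A}'=\overline{\mathcal{A}}^{(\alpha)}$ gives the first statement.
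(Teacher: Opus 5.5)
Your argument is the paper's own proof spelled out. The paper just notes that, by Theorem~\ref{funccalccompthm}, $(\mathcal{A},B)$-cycles and bordisms are automatically $(\overline{\mathcal{A}}^{(\alpha)},B)$-cycles and bordisms. Your checks (item 1 applied to $T$, the scalar nature of conditions a)--c), and norm continuity for the compactness condition singled out in the footnote to Definition~\ref{HilBorDef}) are exactly what that one line needs.

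One small caveat concerns the Lipschitz claim for the collar action. Knowing $a\in\mathrm{Lip}(T)$ gives $b(\phi)a\in\mathrm{Lip}(T)$ only if you also know $b(\phi)\in\mathrm{Lip}(T)$, which the bordism hypothesis supplies when $\mathcal{A}$ is unital. For non-unital $\mathcal{A}$ you should instead propagate boundedness of $[T,b(\phi)]a_0$ from $a_0\in\mathcal{A}$ to $\overline{\mathcal{A}}^{(\alpha)}$. This can be done by inserting $\mathrm{e}^{ita}-1=it\int_0^1 a\,\mathrm{e}^{ista}\,\mathrm{d}s$ into the integral formula for $f(a_1,\ldots,a_n)$.
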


\begin{proof}
By Theorem \ref{funccalccompthm} (see page \pageref{funccalccompthm}), the class of $(\mathcal{A},B)$-cycles/bordisms is the same as the class of $(\overline{\mathcal{A}}^{(\alpha)},B)$-cycles/bordisms and the theorem follows. 
\end{proof}

By the same argument, the following theorem holds.

\begin{theorem}
Let $B$ be a $C^*$-algebra and $\mathcal{A}$ is a $*$-algebra with $C^*$-closure $A$. Let $\overline{\mathcal{A}}^{\mathcal{O}}$ denote its holomorphic functional calculus closure. It holds that the restriction 
$$\Omega_*(\overline{\mathcal{A}}^{\mathcal{O}},B)\to \Omega_*(\mathcal{A},B),$$
along the inclusion $\mathcal{A}\hookrightarrow \overline{\mathcal{A}}^{\mathcal{O}}$, is an isomorphism. In other words, the $KK$-bordism group is invariant under holomorphic functional calculus. 
\end{theorem}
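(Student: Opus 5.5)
The plan is to copy the proof of Theorem \ref{caalpainandthm} verbatim, substituting Theorem \ref{funccalcholothm} for Theorem \ref{funccalccompthm}. The point is that restriction along $\mathcal{A}\hookrightarrow \overline{\mathcal{A}}^{\mathcal{O}}$ becomes a bijection already at the level of cycles and bordisms. Then both surjectivity and injectivity of $\Omega_*(\overline{\mathcal{A}}^{\mathcal{O}},B)\to \Omega_*(\mathcal{A},B)$ follow formally from the criteria in the functoriality theorem at the start of Subsection \ref{invariancesubsec}.

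\textbf{Cycles.} Let $(\mathpzc{E},D)$ be an $(\mathcal{A},B)$-cycle. Theorem \ref{funccalcholothm}, applied to the image of $\mathcal{A}$ in $\End_B^*(\mathpzc{E})$, gives $\overline{\mathcal{A}}^{\mathcal{O}}\subseteq \mathrm{Lip}_0(D)$, with $D$ self-adjoint. Holomorphic functional calculus is compatible with $*$-homomorphisms, so the image of $\overline{\mathcal{A}}^{\mathcal{O}}$ under the left action is contained in $\overline{\pi(\mathcal{A})}^{\mathcal{O}}$. One has to be careful about spectra here: $\mathrm{Spec}(\pi(a))\subseteq \mathrm{Spec}_A(a)$, so a function holomorphic near $\mathrm{Spec}_A(a)$ is also holomorphic near $\mathrm{Spec}(\pi(a))$. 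In the non-unital case the normalisation $f(0)=0$ is preserved. Hence every $(\mathcal{A},B)$-cycle is an $(\overline{\mathcal{A}}^{\mathcal{O}},B)$-cycle, and surjectivity follows.

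\textbf{Bordisms.} Let $\mathfrak{X}=((\mathpzc{N},T),(\theta,p),(\mathpzc{E},D))$ be an $(\mathcal{A},B)$-bordism whose boundary extends to $\overline{\mathcal{A}}^{\mathcal{O}}$. I need to know that $\mathfrak{X}$ is also an $(\overline{\mathcal{A}}^{\mathcal{O}},B)$-bordism. There are three things to check.
\begin{enumerate}
\item The interior chain is symmetric for $\overline{\mathcal{A}}^{\mathcal{O}}$. This follows from item 1) of Theorem \ref{funccalcholothm} applied to $T$.
\item The boundary compatibility conditions a)--c) of Definition \ref{HilBorDef}. These involve only $C^\infty$ functions on $[0,1]$ and the projection $p$. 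Since $p$ commutes with all of $A$, the representation $b$ extends to $C([0,1],A)$ and nothing changes.
\item Half-closedness for $C^\infty_c((0,1],\overline{\mathcal{A}}^{\mathcal{O}})$ under $b$.
\end{enumerate}

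\textbf{Main obstacle: item 3.} Items 2) and 3) of Theorem \ref{funccalcholothm} give this for the algebra $\overline{C^\infty_c((0,1],\mathcal{A})}^{\mathcal{O}}$. What remains is to show that this closure contains $C^\infty_c((0,1],\overline{\mathcal{A}}^{\mathcal{O}})$, or at least the algebraic tensor products $\phi\otimes f(a)$, which suffice after the usual density and locality arguments.

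I would first try writing $\phi\, f(a)=\phi\cdot g(\psi a)$, where $\psi\in C^\infty_c(0,1]$ equals $1$ on $\mathrm{supp}\,\phi$. Here $g$ is chosen holomorphic near $\mathrm{Spec}(\psi a)\subseteq \bigcup_{s\in[0,1]} s\,\mathrm{Spec}(a)$ with $g(0)=0$. Such a $g$ exists when $f$ extends holomorphically along the rays joining $0$ to $\mathrm{Spec}(a)$; this covers the non-unital case after reducing to the unital setting.

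If that fails, I would instead argue directly from the contour-integral formula. The commutator and domain estimates in the proof of Theorem \ref{funccalcholothm} are local in the $(0,1]$-variable, because $b(\phi)$ commutes with the resolvents of $\pi(a)$. Multiplying by $b(\phi)$ therefore preserves $\mathrm{Lip}(T,T^*)$ and local compactness. This is the same mechanism as in item 3) of Theorem \ref{funccalccompthm}.

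Once item 3 is established, every nullbordism over $\mathcal{A}$ of a cycle extending to $\overline{\mathcal{A}}^{\mathcal{O}}$ is a nullbordism over $\overline{\mathcal{A}}^{\mathcal{O}}$. This gives injectivity, and the restriction map is an isomorphism.
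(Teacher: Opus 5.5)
Your overall strategy is the paper's. Its proof is the one line ``by the same argument'' as Theorem~\ref{caalpainandthm}: by Theorem~\ref{funccalcholothm}, $(\mathcal{A},B)$-cycles and bordisms coincide with $(\overline{\mathcal{A}}^{\mathcal{O}},B)$-cycles and bordisms. The paper says nothing about the collar algebra $C^\infty_c((0,1],\cdot)$, so you are right to isolate your item 3. However, your first way of handling it fails in general, because the star-hull $\bigcup_{s\in[0,1]}s\,\mathrm{Spec}(a)$ can fill in a hole of the domain of $f$. Take $a$ to be the coordinate unitary $u\in C(S^1)$ (or $u\oplus 0$ in a non-unital algebra, with $f=0$ near $0$) and $f(\lambda)=\lambda^{-1}$ near the circle. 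The hull is then the closed disc, and $g(a)=f(a)$ forces $g=\lambda^{-1}$ on the circle. This contradicts $\oint_{|\lambda|=1}g\,\mathrm{d}\lambda=0$ for $g$ holomorphic near the disc. In particular, your claim that the ray condition covers the non-unital case is wrong.

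Your fallback is closer, but as stated it needs a commutator bound for the scalar operators $b(\phi)$, which Definition~\ref{HilBorDef} does not state. It is cleaner to avoid both $\overline{C^\infty_c((0,1],\mathcal{A})}^{\mathcal{O}}$ and scalar $b(\phi)$, and instead factor through elements you already control; recall $b$ is multiplicative with $b(1\otimes y)=\pi(y)$.
\begin{itemize}
\item Unital case: $b(\phi\otimes x)=b(\phi\otimes 1)\,\pi(x)$.
\item Non-unital case, for a generator $x=f(a)$ with $f(0)=0$: write $f(\lambda)=\lambda g(\lambda)$ with $g$ holomorphic near $\mathrm{Spec}(a)$, and set $h:=g-g(0)$. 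Then $b(\phi\otimes f(a))=g(0)\,b(\phi\otimes a)+b(\phi\otimes a)\,\pi(h(a))$, with $h(a)\in\mathcal{A}^{\mathcal{O}}$.
\end{itemize}
Each term is an element of $b(C^\infty_c((0,1],\mathcal{A}))\subseteq\mathrm{Lip}_0(T,T^*)$ times an element of $\pi(\mathcal{A}^{\mathcal{O}})\subseteq\mathrm{Lip}(T)$, the latter by item 1 of Theorem~\ref{funccalcholothm} applied to $T$. Elements of $\pi(\mathcal{A}^{\mathcal{O}})$ preserve $\Dom(T^*)$, since this set is $*$-closed inside $\mathrm{Lip}(T)$. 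Hence the product maps $\Dom(T^*)$ into $\Dom(T)$ and lies in $\mathrm{Lip}(T,T^*)$. Products of generators, and the passage from $\mathcal{A}^{\mathcal{O}}_k$ to $\mathcal{A}^{\mathcal{O}}_{k+1}$, go the same way. Local compactness needs no argument: $y\mapsto b(y)(1+T^*T)^{-1}$ is norm continuous and compact-valued on the dense subalgebra $C^\infty_c((0,1],\mathcal{A})$ of $C_0((0,1],A)$, hence compact-valued on all of it.
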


\begin{theorem}
Let $\mathcal{A}$ be a $*$-algebra. 
\begin{itemize}
\item A Morita equivalence $B_1\sim_M B_2$ of $C^*$-algebras induces an isomorphism 
$$\Omega_*(\mathcal{A},B_1)\cong \Omega_*(\mathcal{A},B_2).$$
\item If $\mathcal{A}$ is unital and $\mathcal{E}$ is a full finitely generated projective hermitean $\mathcal{A}$-module, the Morita equivalence $\mathcal{A}\sim_M \End_\mathcal{A}(\mathcal{E})$ induces an isomorphism 
$$\Omega_*(\mathcal{A},B)\cong \Omega_*(\End_\mathcal{A}(\mathcal{E}),B),$$
for any $C^*$-algebra $B$.
\end{itemize}
\end{theorem}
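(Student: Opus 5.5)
For both items the plan is to define mutually inverse maps on cycles given by interior tensor products with the implementing bimodules, and to check that these maps carry bordisms to bordisms. Then both directions descend to $\Omega_*$, and one shows the composites are bordant to the identity.

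\emph{Morita equivalence in $B$.} Let ${}_{B_1}\mathpzc{F}_{B_2}$ be an imprimitivity bimodule with inverse $\overline{\mathpzc{F}}$. For an $(\mathcal{A},B_1)$-cycle $(\mathpzc{E},D)$ set $(\mathpzc{E}\otimes_{B_1}\mathpzc{F},D\otimes 1)$. This is the same recipe as the pushforward $\phi_*$ in Definition \ref{pushpull}, with the module $\mathpzc{F}$ in place of $B_2$. Because $B_1$ acts on $\mathpzc{F}$ by compacts, the tensor product preserves regularity, self-adjointness, symmetry and adjoints, so $(D\otimes1)^*=D^*\otimes 1$. It also preserves the Lipschitz conditions, and it sends $\K_{B_1}(\mathpzc{E})$ into $\K_{B_2}(\mathpzc{E}\otimes\mathpzc{F})$. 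The boundary data $(\theta,p)$ tensors to $(\theta\otimes 1,p\otimes 1)$, and conditions a)--c) of Definition \ref{HilBorDef} are preserved. Hence bordisms map to bordisms and direct sums are respected, so we get a homomorphism $\Omega_*(\mathcal{A},B_1)\to\Omega_*(\mathcal{A},B_2)$. The composite with $\overline{\mathpzc{F}}$ gives $\mathpzc{E}\otimes_{B_1}\mathpzc{F}\otimes_{B_2}\overline{\mathpzc{F}}\cong \mathpzc{E}\otimes_{B_1}B_1\cong\mathpzc{E}$. Under this unitary $D\otimes1\otimes1$ corresponds to $D$, so the composite is an isomorphism of cycles and in particular the identity on $\Omega_*$.

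\emph{FGP Morita equivalence in $\mathcal{A}$.} Write $\mathcal{E}=q\mathcal{A}^N$ with $q=q^*=q^2\in M_N(\mathcal{A})$ full, so that $\End_\mathcal{A}(\mathcal{E})=qM_N(\mathcal{A})q$. The forward map sends an $(\mathcal{A},B)$-cycle $(\mathpzc{E},D)$ to $(q\mathpzc{E}^N,q(D\otimes 1_N)q)$, the compression used for assembly in Proposition \ref{assemblfldoado}. Since $q\in M_N(\mathcal{A})\subseteq\mathrm{Lip}(D\otimes1_N)$, the arguments there (using \cite{thebeastofmesland}) show that the compression is regular and self-adjoint, with the correct compact resolvent for $qM_N(\mathcal{A})q$. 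The same argument applies to bordisms, since $q$ commutes with the boundary projection $p$ and with $\theta$. To see the map is independent of the choice of $q$ up to bordism, note that two representing projections are related by a partial isometry in some $M_{N'}(\mathcal{A})$, which yields an isomorphism of cycles. The inverse direction uses the dual module $\mathcal{E}^*=\mathcal{A}^Nq$, a finitely generated projective $\End_\mathcal{A}(\mathcal{E})$-module; fullness of $q$ gives $1=\sum_j v_j^*qv_j$ for suitable $v_j\in M_{1,N}(\mathcal{A})$. The round trip sends $(\mathpzc{E},D)$ to $(\mathpzc{E}',D')$, where $\mathpzc{E}'$ is unitarily isomorphic to $\mathpzc{E}$ via a map built from the $v_j$. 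Under that unitary, $D'$ becomes $D$ plus terms of the form $v_j^*[D,v_j]$, i.e.\ a bounded perturbation. The cycles are therefore bordant by Corollary \ref{locbddperturcor}, and the other composite is handled symmetrically.

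I expect the main obstacle to be this last identification: showing that the round-trip operator really differs from $D$ by a bounded, or at least locally bounded and relatively small, perturbation. The domains $q\Dom(D^N)$ must match, which uses $q\in\mathrm{Lip}(D\otimes1_N)$. If $A$ acts non-unitally, one additionally has to split off the degenerate part $(1-e)\mathpzc{E}$, where $e$ is the essential projection. That degenerate summand I would dispose of with a Clifford symmetry and Proposition \ref{cliffsymlemprop}, or else restrict to unital actions after an invertible doubling as in Corollary \ref{invertibleldodcor}.
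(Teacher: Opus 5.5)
Your proposal is correct and follows the paper's proof: on the $B$-side you tensor with the imprimitivity bimodule and its conjugate, and on the $\mathcal{A}$-side you compress $D\otimes 1_N$ by $q$ and invert via $\mathcal{E}^*$, so that the round trip differs from the original cycle only by a bounded perturbation, which Corollary \ref{locbddperturcor} absorbs. Two small corrections to your asides: changing the representing projection by a partial isometry $v$ gives an isomorphism only up to the bounded term $v[D,v^*]$ (again absorbed by Corollary \ref{locbddperturcor}); and the zero-action summand $(1-e)\mathpzc{E}$ need not admit a Clifford symmetry, but it is null-bordant directly via the half-infinite cylinder of Example \ref{halflineex}, since $b\equiv 0$ there makes the half-closedness condition vacuous.
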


\begin{proof}
If ${}_{B_1}\mathpzc{M}_{B_2}$ is a Morita equivalence $B_1\to B_2$, the mapping 
$$\Omega_*(\mathcal{A},B_1)\cong \Omega_*(\mathcal{A},B_2), \quad (\mathpzc{E},D)\mapsto (\mathpzc{E}\otimes_{B_1}\mathpzc{M}_{B_2},D\otimes 1_{\mathcal{M}}),$$
is a well defined group homomorphism. It is readily verified that its inverse is given by 
$$\Omega_*(\mathcal{A},B_2)\cong \Omega_*(\mathcal{A},B_1), \quad (\mathpzc{E},D)\mapsto (\mathpzc{E}\otimes_{B_2}\mathpzc{M}_{B_1}^*,D\otimes 1_{\mathcal{M}^*}),$$

To prove (fgp) Morita invariance in the first leg, we write $\mathcal{E}=p\mathcal{A}^N$ for some hermitean projection $p\in M_N(\mathcal{A})$. Consider the mapping 
$$\Omega_*(\mathcal{A},B)\cong \Omega_*(\End_\mathcal{A}(\mathcal{E}),B), \quad (\mathpzc{E},D)\mapsto (\mathcal{E}\otimes_\mathcal{A}\mathpzc{E},p(1_{\C^N}\otimes D)p).$$
By \cite[Subsection 1.4]{DGM}, this is a well defined mapping. The inverse to this mapping is defined analogously but using that $\mathcal{E}^*$ defines a Morita equivalence $ \End_\mathcal{A}(\mathcal{E})\sim_M\mathcal{A}$. We remark that at the level of cycles, the composition of these two morphisms produces a bounded perturbation which does not affect bordism classes by Corollary \ref{locbddperturcor} (see page \pageref{locbddperturcor}).
\end{proof}

\subsection{The bounded transform}
\label{subsecbddtrans}

An important tool in the study of the $KK$-bordism groups is the bounded transform, which defines a mapping to the ordinary $KK$-groups. Indeed, the $KK$-bordism groups find much of its relevance through its relation to the $KK$-groups implemented by the bounded transform. We shall later in this monograph see that for large classes of $*$-algebras $\mathcal{A}$, the bounded transform is an isomorphism, but in some cases it is neither injective nor surjective. A geometric analogue for the reader to keep in their mind is the mapping $\Omega^{{\rm Spin}^c}_*(X)\to K_*(X)$ mapping a spin$^c$-bordism class to its fundamental class; this analogue is only partial as the $KK$-bordisms encode all of the Baum-Douglas relations as discussed in Remark \ref{discusdinbd} below.

\begin{define}
Let $\mathcal{A}\subseteq A$ be a dense $*$-subalgebra of the $C^*$-algebra $A$ and let $B$ be a $C^*$-algebra. For an unbounded $(\mathcal{A},B)$-cycle $(\mathpzc{E},D)$, define the bounded transform by 
$$\beta(\mathpzc{E},D):=(\mathpzc{E},D(1+D^2)^{-1/2}).$$
\end{define}

\begin{remark}
It is well known that for an unbounded $(\mathcal{A},B)$-cycle $(\mathpzc{E},D)$, the bounded transform $\beta(\mathpzc{E},D)$ is an $(A,B)$-Kasparov cycle. This was proven by Baaj-Julg \cite{baajjulg}. In fact, Hilsum  \cite{hilsumbordism} proved that for a half-closed $(\mathcal{A},B)$-chain $(\mathpzc{E},D)$, the bounded transform $\beta(\mathpzc{E},D):=(\mathpzc{E},D(1+D^*D)^{-1/2})$ is a well defined $(A,B)$-Kasparov cycle.
\end{remark}

\begin{theorem}[Hilsum's theorem on bordism invariance, \cite{hilsumbordism}]
\label{alknlkanda}
Let $\mathcal{A}\subseteq A$ be a dense $*$-subalgebra of the $C^*$-algebra $A$ and let $B$ be a $C^*$-algebra. If an unbounded $(\mathcal{A},B)$-cycle $(\mathpzc{E},D)$ is null bordant, then 
$$[\beta(\mathpzc{E},D)]=0 \quad\mbox{in $KK_*(A,B)$}.$$
In particular, the bounded transform induces a well defined homomorphism
$$\beta:\Omega_*(\mathcal{A},B)\to KK_*(A,B).$$
\end{theorem}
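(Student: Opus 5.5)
The plan is to reduce the statement to the vanishing of a $KK$-class for the boundary of a single bordism, and then to prove that vanishing by interpreting the interior chain as a class over a cone. First, the well-definedness claim follows from the null-bordism claim. If $\partial\mathfrak{X}=\mathfrak{Y}_1+(-\mathfrak{Y}_2)$, then $[\beta(\mathfrak{Y}_1)]-[\beta(\mathfrak{Y}_2)]=[\beta(\mathfrak{Y}_1+(-\mathfrak{Y}_2))]$. This holds because the bounded transform commutes with direct sums, and $\beta(-\mathfrak{Y})$ is the Kasparov inverse of $\beta(\mathfrak{Y})$ (opposite grading, or $-F$ in the odd case). Additivity of $\beta$ on classes is then immediate. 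Isomorphic cycles have unitarily equivalent bounded transforms, so $\beta$ descends to isomorphism classes. It therefore suffices to show that the boundary $(\mathpzc{E},D)$ of a bordism $\mathfrak{X}=((\mathpzc{N},T),(\theta,p),(\mathpzc{E},D))$ satisfies $[\beta(\mathpzc{E},D)]=0$.

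For this, I use the interior chain. By Remark \ref{interohc}, $(\mathpzc{N},T)$ is a half-closed $(C^\infty_c((0,1),\mathcal{A}),B)$-chain under $b$. By the bordism assumption, it is also half-closed as a $(C^\infty_c((0,1],\mathcal{A}),B)$-chain. Hilsum's result on half-closed chains (recalled in the remark above) then gives that $F_T=T(1+T^*T)^{-1/2}$ defines Kasparov cycles for $(C_0((0,1],A),B)$ and, by restriction, for $(C_0((0,1),A),B)$. Since $C_0((0,1],A)$ is contractible, the class over the cone vanishes. Hence the restricted class $x\in KK_{*+1}(C_0((0,1),A),B)$ is zero, as it is the pullback along the ideal inclusion $C_0((0,1),A)\hookrightarrow C_0((0,1],A)$.

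It remains to identify $x$ with the suspension of $[\beta(\mathpzc{E},D)]$ under $KK_*(A,B)\cong KK_{*+1}(C_0((0,1),A),B)$. The suspension class is represented by the bounded transform of the half-closed chain $\Psi(\mathpzc{E},D)$ from Proposition \ref{suspandprop}, restricted to $C^\infty_c((0,1),\mathcal{A})$. This is the standard exterior product of $[\beta(\mathpzc{E},D)]$ with the Dirac class of $(0,1)$, where the minimal domain of $\partial^{\rm min}$ gives the Bott/Dirac generator of $KK_1(C_0(0,1),\C)$.

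To compare the two classes, note that the action of $C_0((0,1),A)$ through $b$ factors through $p$ and is supported on $p\mathpzc{N}$. Hence the Kasparov module $(\mathpzc{N},F_T)$ over $C_0((0,1),A)$ differs from $(p\mathpzc{N},\,pF_Tp)$ by a degenerate piece. By condition b) of Definition \ref{HilBorDef}, $T$ and $\theta^{-1}\Psi(D)\theta$ agree on $b(\phi)\Dom T$ for $\phi\in C^\infty_c(0,1)$. Together with condition c), this says they are locally the same with respect to $C^\infty_c(0,1)$. Lemma \ref{locallythesame} then shows that $a(F_T-\theta^{-1}F_{\Psi(D)}\theta)$ is compact for $a$ in the cone algebra. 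Kasparov's compact perturbation (or operator homotopy) therefore identifies $x$ with the suspension of $[\beta(\mathpzc{E},D)]$, and since the suspension map is an isomorphism, $[\beta(\mathpzc{E},D)]=0$.

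The main obstacle is this comparison step, namely handling the compact-perturbation argument when $T$ is only half-closed rather than self-adjoint. In that case $F_T$ is not self-adjoint, so I must check that $a(F_T-F_T^*)$ is compact and that the cutting-down to $p\mathpzc{N}$ is legitimate. I would first try to verify these compactness statements directly from the resolvent integral formula used in Lemma \ref{locallythesame}, combined with condition a).
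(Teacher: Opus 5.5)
Your proposal is correct and follows the same route as the paper's sketch of Hilsum's argument. The interior chain gives a Kasparov cycle over the contractible cone $C_0((0,1],A)$. Its restriction to $C_0((0,1),A)$ is identified with the suspension of $\beta(\mathpzc{E},D)$, which forces $[\beta(\mathpzc{E},D)]=0$.

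The paper only says this comparison step requires ``some work''. Your way of filling it in is reasonable: cut down by $p$, using $b(\phi)(1-p)=0$ and compactness of $[b(\phi),F_T]$, then apply Lemma \ref{locallythesame}. For the lemma you need to place both operators on the common module $\mathpzc{N}$, for instance $\theta^{-1}\Psi(D)\theta\oplus 0$. You also need condition a) of the bordism definition, not only b) and c), to get its second-order hypothesis.

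The obstacle you name, compactness of $a(F_T-F_T^*)$, is not an extra step. It is already part of Hilsum's theorem that the bounded transform of a half-closed chain is a Kasparov cycle.
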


For the purposes of utilizing $KK$-bordism in constructing relative $KK$-bordism groups, let us briefly sketch the main ideas in the proof of this theorem. The proof is based on Hilsum's proof that $\beta(\mathpzc{N},T):=(\mathpzc{N},T(1+T^*T)^{-1/2})$ is a well defined Kasparov cycle for any half-closed chain $(\mathpzc{N},T)$. In particular, if $(\mathpzc{N},T)$ is the interior chain in a bordism with boundary $(\mathpzc{E},D)$, then $\beta(\mathpzc{N},T)$ is a $(C_0((0,1]A),B)$-Kasparov cycle which with some work one can show restricts to a $(C_0((0,1)A),B)$-Kasparov cycle homotopic to $\beta\circ \Psi(\mathpzc{E},D)$. The class of the Kasparov cycle $\beta\circ \Psi(\mathpzc{E},D)$ is in turn given by the image of $\beta(\mathpzc{E},D)$ under the suspension mapping $KK_*(A,B)\to KK_{*+1}(C_0((0,1)A),B)$. As such, $\beta(\mathpzc{E},D)$ is in the image of $0=KK_{*+1}(C_0((0,1]A),B)\to KK_*(A,B)$ and must be $0$. Unraveling the homotopies in this proof is fundamental in the context of relative $KK$-groups and more details will be provided in \cite{monographtwo}.

\begin{define}
Let $\mathcal{A}$ be a dense $*$-subalgebra of a $C^*$-algebra $A$. We say that $\mathcal{A}$ is $\Omega_*$-admissible if the bounded transform 
$$\beta:\Omega_*(\mathcal{A},B)\to KK_*(A,B),$$
is an isomorphism for all $C^*$-algebras $B$.
\end{define}

\begin{remark}
In a previous paper by the authors \cite{DGM}, some further properties of the bounded transform was proven:
\begin{itemize}
\item The $*$-algebra $\mathcal{A}=\C$ is $\Omega_*$-admissible, i.e. $\beta:\Omega_*(\C,B)\to KK_*(\C,B)$ is an isomorphism for all $C^*$-algebras $B$.
\item If $M$ is a closed manifold and $C^\infty(M)\subseteq \mathcal{A}\subseteq \mathrm{Lip}(M)$, then $\beta:\Omega_*(\mathcal{A},B)\to KK_*(C(M),B)$ is a split surjection.
\end{itemize}
We also note that the lifting result of Baaj-Julg \cite{baajjulg} implies that if $\mathcal{A}$ is a countably generated $*$-algebra, the bounded transform $\beta:\Omega_*(\mathcal{A},B)\to KK_*(A,B)$ is a surjection. From this surjectivity statement, and the invariance results of Subsection \ref{invariancesubsec} we see that there is an abundance of examples where surjectivity of the bounded transform holds. In Part \ref{partoniso} below the reader can find ample amounts of examples of $\Omega_*$-admissible $*$-algebras. We shall below in Section \ref{bddtransformisosec} see that the lifting method of Baaj-Julg \cite{baajjulg} can be use to show that the bounded transform is an isomorphism if $\mathcal{A}$ is a countably generated $*$-algebra. The next proposition will be used to prove that.
\end{remark}

\begin{prop}
Let $(\mathcal{E},D)$ be an $(\mathcal{A},B)$-cycle, and set 
$$\tilde{\mathpzc{E}}:=\mathpzc{E}\oplus \mathpzc{E}\quad\mbox{and}\quad \tilde{D}:=
\begin{pmatrix}
D&(1+D^2)^{-1/2}\\
(1+D^2)^{-1/2}& -D
\end{pmatrix},$$
with the left $A$-action defined by $a.(\xi_1\oplus \xi_2):=a\xi_1\oplus 0$, as in Corollary \ref{invertibleldodcor} (see page \pageref{invertibleldodcor}) and $\hat{D}:=\tilde{D}|\tilde{D}|^{-1/2}$. Then the following holds 
\begin{enumerate}
\item The cycle $(\tilde{\mathpzc{E}},\hat{D})$ is Lipschitz and invertible, and there is a bordism $(\mathpzc{E},D)\sim_{\rm bor} (\tilde{\mathpzc{E}},\hat{D})$.
\item There exists an operator homotopy $\beta(\tilde{\mathpzc{E}},\hat{D})\sim_{\rm op} \beta(\tilde{\mathpzc{E}},D\oplus 0)$.
\end{enumerate}
In particular, since $\beta(\tilde{\mathpzc{E}},D\oplus 0)=\beta(\mathpzc{E},D)+({}_0\mathpzc{E},0)$ for the degenerate Kasparov cycle $({}_0\mathpzc{E},0)$, the bounded transform of $(\mathpzc{E},D)$ is up to a degenerate Kasparov cycle operator homotopic to the bounded transform of an explicit, Lipschitz, and invertible cycle which is bordant to $(\mathpzc{E},D)$.
\end{prop}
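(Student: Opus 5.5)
Item (1) is a composition of earlier bordisms. Corollary \ref{invertibleldodcor} gives $(\mathpzc{E},D)\sim_{\rm bor}(\tilde{\mathpzc{E}},\tilde{D})$, and $\tilde{D}$ is invertible because $\tilde{D}^2=(D^2+(1+D^2)^{-1})\oplus(D^2+(1+D^2)^{-1})$ on the natural domain, which is bounded below by $\min_{x}(x^2+(1+x^2)^{-1})>0$. Since $\tilde{D}$ is invertible, $(1+\tilde{D}^2)^{-1/4}$ and $|\tilde{D}|^{-1/2}$ differ by an invertible bounded function of $\tilde{D}$. Consequently $\hat{D}=\tilde{D}|\tilde{D}|^{-1/2}$ agrees with the order reduction $\tilde{D}_{1/2}=\tilde{D}(1+\tilde{D}^2)^{-1/4}$ up to composition with $g(\tilde{D})$, where $g(x)=((1+x^2)/x^2)^{1/4}$ on the spectrum. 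Rather than compare these two operators, the plan is to run the argument of Theorem \ref{bordismforodrdthm} directly with the path $\tilde{D}|\tilde{D}|^{-(1-\chi(t))/2}$. Invertibility of $\tilde{D}$ makes $\log|\tilde{D}|$ well defined and relatively bounded by $\tilde{D}$ with arbitrary bound, as in Lemma \ref{relaraldknaldn}. This produces $(\tilde{\mathpzc{E}},\tilde{D})\sim_{\rm bor}(\tilde{\mathpzc{E}},\hat{D})$, and transitivity (Proposition \ref{subsecglyinggluing}) gives (1).

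The Lipschitz property comes from the interpolation argument in Theorem \ref{interpolationthm}, with $\Delta=|\tilde{D}|$ in place of $(1+D^2)^{1/2}$. First complete $\mathcal{A}$ in $\|a\|_A+\|[\tilde{D},a]\|$, as in Corollary \ref{ononaod}, and observe that $[\tilde{D},a]$ is bounded because the off-diagonal perturbation is bounded. Then write $[|\hat{D}|,a]=[|\tilde{D}|^{1/2},a]$. By Lemma \ref{lipforpower}, adapted to the invertible operator $|\tilde{D}|$ via the integral formula for fractional powers, $a\in\mathrm{Lip}(|\tilde{D}|^{1-\epsilon})$ for every $\epsilon>0$. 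The commutator with the power $1/2$ is then bounded directly by the resolvent integral
$$|\tilde{D}|^{1/2}=\pi^{-1}\int_0^\infty\lambda^{-1/2}\,|\tilde{D}|\,(\lambda+|\tilde{D}|)^{-1}\,\mathrm{d}\lambda,$$
whose integrand commutators decay like $\lambda^{-1/2}(1+\lambda)^{-1/2}\cdot\lambda^{-1/2}$ after using $[|\tilde{D}|,a]$ bounded. I expect this step to be the main obstacle: one must check that $[|\tilde{D}|,a]$ is bounded for all $a\in\mathcal{A}$, which does not follow from $[\tilde{D},a]$ bounded. I would first try to use the order $1/2$ gain. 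The commutator $[\tilde{D},a]$ being bounded implies $[|\tilde{D}|^{s},a]$ bounded for every $s<1$, by the Lemma \ref{cpxpowerlemma}-type estimates that combine the resolvent formula for $\mathrm{sgn}(\tilde{D})$ with the split $|\tilde{D}|^s=\mathrm{sgn}(\tilde{D})\,\tilde{D}^{s}$. Taking $s=1/2$ is then enough, so the Lipschitz condition holds without any assumption on $|\tilde{D}|$.

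For item (2), the plan is to concatenate two operator homotopies. The first is $t\mapsto\beta(\tilde{D}|\tilde{D}|^{-t/2})$ for $t\in[0,1]$. Each bounded transform is $f_t(\tilde{D})$ with $f_t(x)=x|x|^{-t/2}(1+x^2|x|^{-t})^{-1/2}$, and since the spectrum stays away from $0$ the map $t\mapsto f_t$ is norm continuous on it. The second homotopy is $s\mapsto\beta(\tilde{D}_s)$ with $\tilde{D}_s=D\oplus 0+s\,V$, where $V$ is the bounded off-diagonal matrix with entries $(1+D^2)^{-1/2}$. For this one I would use the standard fact that $F_{D+sV}-F_{D}$ depends norm continuously on $s$ for bounded $V$, and check that the $A$-compactness conditions hold along the path. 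These conditions follow because $a$ acts only on the first summand and $a(1+D^2)^{-1/2}$ is compact. The degenerate summand $({}_0\mathpzc{E},0)$ is then read off directly.
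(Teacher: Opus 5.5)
Your item (1) is essentially fine. Rerunning Theorem~\ref{bordismforodrdthm} with $|\tilde D|$ in place of $(1+\tilde D^2)^{1/2}$ works. A cheaper route is to note that $\hat D-\tilde D_{1/2}=h(\tilde D)$ with $h(x)=x(|x|^{-1/2}-(1+x^2)^{-1/4})$ bounded on $|x|\geq 1$, so Corollary~\ref{locbddperturcor} connects the two. The ``main obstacle'' you flag is not one: Lemma~\ref{lipforpower}, applied with exponent $1/2$ as you eventually do, already gives $[|\tilde D|^{1/2},a]$ bounded, and $|\hat D|=|\tilde D|^{1/2}$ needs nothing more.

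The genuine gap is in item (2). Your second homotopy rests on writing $\tilde D=D\oplus 0+V$ with $V$ bounded and off-diagonal. But
\[
\tilde D-(D\oplus 0)=\begin{pmatrix}0&(1+D^2)^{-1/2}\\(1+D^2)^{-1/2}&-D\end{pmatrix}
\]
contains the unbounded entry $-D$. It is only $\mathcal{A}$-locally bounded, which is exactly what Corollary~\ref{invertibleldodcor} asserts. Consequently your path $D\oplus 0+sV$ ends at $\begin{pmatrix}D&R\\R&0\end{pmatrix}$, with $R=(1+D^2)^{-1/2}$, and not at $\tilde D$, so your two homotopies do not concatenate. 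You also cannot simply switch on the missing corner linearly. On the second summand $s\mapsto\beta(-sD)$ is not norm continuous at $s=0$ when $D$ is unbounded, because $\|\beta(-sD)\|=1$ for every $s>0$.

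The missing idea is that no norm-continuous path of unbounded operators is needed; this is how the paper argues. Set $F_1:=\beta(\hat D)$ and $F_2:=\beta(D)\oplus 0$. Since $A$ acts only on the first summand, $a(F_1-F_2)$ and $(F_1-F_2)a$ lie in $\K_B(\tilde{\mathpzc{E}})$ for all $a\in A$. To see this, first note $\beta(\hat D)-\beta(\tilde D)=k(\tilde D)$ with $k\in C_0$ on the spectrum. Next,
\[
a\beta(\tilde D)=\begin{pmatrix}aD(1+D^2+R^2)^{-1/2}&aR(1+D^2+R^2)^{-1/2}\\0&0\end{pmatrix},
\]
and these entries differ from $aD(1+D^2)^{-1/2}$ and $0$ by compacts. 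This uses only that $a(1+D^2)^{-1/2}$ is compact, the input you already identified. For two Kasparov modules that are compact perturbations of each other in this sense, the straight line $(1-s)F_1+sF_2$ is an operator homotopy. That gives (2) directly, and your first homotopy $t\mapsto f_t(\tilde D)$, though correct, becomes superfluous.

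If you prefer to keep a concatenation, take the genuinely bounded-perturbation path $\begin{pmatrix}D&sR\\sR&-D\end{pmatrix}$ from $\tilde D$ to $D\oplus(-D)$. Follow it by the straight line from $\beta(-D)$ to $0$ on the second summand, where $A$ acts by zero.
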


\begin{proof}
There is a bordism $(\mathpzc{E},D)\sim_{\rm bor} (\tilde{\mathpzc{E}},\tilde{D})$ by Corollary \ref{invertibleldodcor}. There is a bordism $(\tilde{\mathpzc{E}},\tilde{D})\sim_{\rm bor} (\tilde{\mathpzc{E}},\hat{D})$ by Theorem \ref{bordismforodrdthm}. Therefore we have a bordism $(\mathpzc{E},D)\sim_{\rm bor} (\tilde{\mathpzc{E}},\hat{D})$ proving item 1). To prove item 2), we note that a short computation gives that $a(\hat{D}(1+\hat{D}^2)^{-1/2}-D(1+D^2)^{-1/2}\oplus 0)$ is compact for all $a\in A$. 
\end{proof}

In relation to the notion of weakly degenerate cycles from Subsection \ref{subsecweakdegff}, we have the following result. Recall that a Kasparov cycle $(\mathpzc{E},F)$ is said to be degenerate if for all $a\in A$, 
$$[F,a]=a(F^2-1)=a(F^*-F)=0.$$

\begin{prop}
Suppose that $(\mathpzc{E},D)$ is an $(\mathcal{A},B)$-cycle with $D$ having closed range and whose bounded transform $\beta (\mathpzc{E},D)$ satisfies that its operator homotopic cycle $(\mathpzc{E},\mathrm{sign}(\beta(D)))$ is degenerate, then the bounded perturbation $(\mathpzc{E},D+i\gamma\mathrm{sign}(\beta(D)))$ of $(\mathpzc{E},D)$ is weakly degenerate. In particular, if sign of the bounded transform of an $(\mathcal{A},B)$-cycle is degenerate, the cycle is weakly degenerate.
\end{prop}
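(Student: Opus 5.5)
Write $F:=\mathrm{sign}(\beta(D))=\mathrm{sign}(D)$. Since $D$ is self-adjoint with closed range, $0$ is isolated in its spectrum (or absent from it), so $F=D|D|^{-1}$ on $\ker(D)^\perp$, extended by $0$ on $\ker D$, is a well defined self-adjoint adjointable operator obtained by functional calculus. The plan is to check the conditions of Definition \ref{lknalkadn} for the decomposition
$$D_0:=D,\qquad S:=i\gamma F,$$
after replacing $F$ by a genuine odd symmetry where necessary.

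The first step is to unpack degeneracy of $(\mathpzc{E},F)$: for all $a\in A$ we have $[F,a]=0$ and $a(F^2-1)=0$. By the first identity $F$ commutes with $A$. By the second, $F^2=1$ on $\overline{A\mathpzc{E}}$. On the complement, i.e. on $\ker D$, which $A$ annihilates up to this essential-part issue, I will redefine $F$ by choosing a symmetry, or more simply absorb that summand as a degenerate piece. So I may assume $F$ is a symmetry.

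Next I take $S=i\gamma F$ in the even case, and in the odd case work on the doubled module with the corresponding Clifford construction. Then $S$ is self-adjoint, bounded, and invertible: indeed $S^2=\gamma F\gamma F$ up to sign, which equals $F^2=1$ because $F$ is odd. The operator $S$ commutes with $A$, since both $\gamma$ and $F$ do. Its domain is all of $\mathpzc{E}$, so any core works. The Lipschitz condition $\mathcal{A}\subseteq \mathrm{Lip}(D_0)$ holds by hypothesis.

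The main step is the anticommutation $SD+DS=0$ on $\Dom(D)$. Here $F$ commutes with $D$, while $\gamma$ anticommutes with $D$ because $D$ is odd. Hence $i\gamma F D=i\gamma DF=-iD\gamma F$, which gives $SD_0+D_0S=0$ on $\Dom(D)$, and $S$ preserves $\Dom(D)$. Then $D+S$ is a bounded perturbation of $D$, so $(\mathpzc{E},D+S)$ is a cycle, and it is weakly degenerate with decomposition $(D,S)$. The final sentence of the statement follows by the same argument.

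The hard point is the kernel/nonunital issue: degeneracy only gives $F^2=1$ after multiplication by $a$, so invertibility of $S$ on all of $\mathpzc{E}$ needs care. I would handle this by adding a symmetry on $\ker D$ (for example, using the closed-range assumption to split off $\ker D$, on which $A$ acts degenerately). I would justify this modification via a bounded perturbation, as in Corollary \ref{locbddperturcor}.
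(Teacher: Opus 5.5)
Your main verification is the paper's argument. Once $F=\mathrm{sign}(\beta(D))$ is an odd symmetry commuting with $A$, the operator $S=i\gamma F$ is a Clifford symmetry in the sense of Definition \ref{cliffsymdef}. The paper then concludes via Proposition \ref{cliffsymlemprop}, or equivalently via the spectral decomposition $P=\tfrac12(1+\mathrm{sign}(D))$ of Corollary \ref{specedmopmad}; both amount to the check of Definition \ref{lknalkadn} for $(D,S)$ that you carry out.

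The gap is your repair of $\ker D$. Degeneracy only gives $aP_{\ker D}=0$ for $a\in A$, so $F^2=1-P_{\ker D}$ and $S$ is not invertible when $\ker D\neq 0$. None of your fixes works:
\begin{itemize}
\item An odd symmetry on the graded module $\ker D$ need not exist; if $\ker D$ sits in even degree, there is no nonzero odd operator on it.
\item Even when one exists, replacing $F$ by $F'$ shows weak degeneracy of $(\mathpzc{E},D+i\gamma F')$, not of the cycle in the statement. Corollary \ref{locbddperturcor} only gives a bordism between the two, and weak degeneracy is a property of the operator itself; it is not transported along bordisms.
\item Splitting off $(\ker D,0)$ leaves a summand that is itself not weakly degenerate (see below).
\end{itemize}

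In fact no repair can exist, because the conclusion is false when $\ker D\neq 0$. Take $\mathpzc{E}\neq 0$ with $A$ acting by zero and $D=0$. Every hypothesis holds and $\mathrm{sign}(\beta(D))=0$. But a decomposition $0=D_0+S$ with $\Dom(D_0)\cap\Dom(S)=\mathpzc{E}$ forces $D_0=-S$ with $S$ bounded. Then $SD_0+D_0S=-2S^2$ cannot vanish on a dense subspace if $S$ is invertible.

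The paper's proof tacitly assumes $\ker D=0$: Corollary \ref{specedmopmad} assumes $D$ invertible, and $i\gamma\,\mathrm{sign}(\beta(D))$ squares to $1$ only if $\mathrm{sign}(\beta(D))^2=1$. The step you are missing is that this is automatic when $A$ acts essentially, for example for unital cycles. Indeed, $aP_{\ker D}=0$ for all $a\in A$ means $P_{\ker D}$ vanishes on $\overline{A\mathpzc{E}}=\mathpzc{E}$. A self-adjoint regular $D$ with closed range and trivial kernel is invertible, so $F$ is already an odd symmetry and your argument goes through unmodified.

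Without that assumption, the correct conclusion is a splitting. Since also $P_{\ker D}a=(a^*P_{\ker D})^*=0$, the cycle decomposes as $(\ker(D)^\perp,(D+i\gamma F)|_{\ker(D)^\perp})\oplus(\ker D,0)$. Your argument shows the first summand is weakly degenerate. The second carries the zero action, so it is nullbordant (e.g. via the half-line chain of Example \ref{halflineex}) but not weakly degenerate.
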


\begin{proof}
The result is immediate from Corollary \ref{specedmopmad}. Alternatively, the proposition follows from Proposition \ref{cliffsymlemprop} and the fact that $i\gamma\mathrm{sign}(\beta(D))$ is a Clifford symmetry.
\end{proof}

We note that several of the results in this monograph holds also in the equivariant setting. Recall the terminology from the Subsections \ref{subsecgequicalalad} and \ref{subsecgequicalaladbordism}. For a $G$-equivariant cycle $(\mathpzc{E},D)$, we again write 
$$\beta(\mathpzc{E},D):=(\mathpzc{E},D(1+D^2)^{-1/2}),$$
which is considered as a $G$-equivariant Kasparov cycle. We have the following result by the same proof as in the non-equivariant case.

\begin{theorem}
Let $G$ be a second countable locally compact group, $\mathcal{A}\subseteq A$ be a $G$-invariant dense $*$-subalgebra of the $G-C^*$-algebra $A$ and let $B$ be a $G-C^*$-algebra. If a $G$-equviariant unbounded $(\mathcal{A},B)$-cycle $(\mathpzc{E},D)$ is $G$-equivariantly null bordant, then 
$$[\beta(\mathpzc{E},D)]=0 \quad\mbox{in $KK_*^G(A,B)$}.$$
In particular, the bounded transform induces a well defined homomorphism
$$\beta:\Omega_*^G(\mathcal{A},B)\to KK_*^G(A,B).$$
\end{theorem}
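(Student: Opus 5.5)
The plan is to repeat the non-equivariant argument, Theorem \ref{alknlkanda}, step by step, and to check at each step that $G$-equivariance is preserved. There are two parts. First, for a $G$-equivariant cycle $(\mathpzc{E},D)$, the bounded transform $\beta(\mathpzc{E},D)$ must be a $G$-equivariant Kasparov cycle. Second, Hilsum's bordism argument must run inside $KK^G$.

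For the first part, the non-equivariant statement (Baaj–Julg) already gives the compactness conditions for $F_D=D(1+D^2)^{-1/2}$. The new point is continuity of $g\mapsto a(gF_Dg^{-1}-F_D)$ and its compactness. I would write $F_D$ through the resolvent integral
$$F_D=\frac{1}{\pi}\int_0^\infty \lambda^{-1/2}D(1+\lambda+D^2)^{-1}\,\mathrm{d}\lambda.$$
Then $gF_Dg^{-1}-F_D$ is expressed via resolvent differences of $D$ and $gDg^{-1}$, and these involve $D-gDg^{-1}$, which is locally bounded by Definition \ref{gequicalalad}. The factor $a$ combined with the resolvents gives local compactness, as in the standard Kucerovsky/Kaad–Lesch commutator estimates. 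The $*$-strong continuity assumed in item 4 of Definition \ref{gequicalalad} then yields norm continuity after multiplying by $a$ and a resolvent. This is the step I expect to be the main technical obstacle: obtaining norm-continuity, rather than just strict continuity, of $g\mapsto a(gF_Dg^{-1}-F_D)$. I would handle it by splitting the integral into $\lambda\le R$ and $\lambda>R$. On $\lambda\le R$, compactness and dominated continuity apply. On $\lambda>R$, the tail is bounded uniformly by $R^{-1/2}$ times $\sup_g\|a(D-gDg^{-1})\|$ over compact sets of $g$.

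For bordism invariance, let $\mathfrak{X}=((\mathpzc{N},T),\Theta,(\mathpzc{E},D))$ be a $G$-equivariant null-bordism. By Definition \ref{HilBorDefGequi}, $T$ is a $G$-equivariant half-closed $(C^\infty_c((0,1],\mathcal{A}),B)$-chain under $b$, and $b$ is $G$-equivariant because $p$ and $\theta$ are equivariant. The same resolvent argument as above, now in Hilsum's half-closed setting, shows that $\beta(\mathpzc{N},T)=(\mathpzc{N},T(1+T^*T)^{-1/2})$ is a $G$-equivariant Kasparov $(C_0((0,1],A),B)$-cycle.

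Next, restrict $\beta(\mathpzc{N},T)$ to $C_0((0,1),A)$. Hilsum's homotopy identifies this restriction with $\beta(\Psi(\mathpzc{E},D))$. This homotopy is built only from cut-off functions in the collar variable and from $\theta$, and these commute with the $G$-action, so it is an equivariant homotopy. Finally, I apply the $G$-equivariant suspension isomorphism $KK^G_*(A,B)\cong KK^G_{*+1}(C_0((0,1),A),B)$ and use $KK^G_{*+1}(C_0((0,1],A),B)=0$, which holds because the cone is equivariantly contractible with trivial action on the interval. Together these give $[\beta(\mathpzc{E},D)]=0$. Since $\beta$ is additive on direct sums and compatible with opposites, it descends to a homomorphism $\Omega^G_*(\mathcal{A},B)\to KK^G_*(A,B)$.
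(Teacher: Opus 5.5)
Your proposal is correct and follows essentially the same route as the paper, which only remarks that Hilsum's non-equivariant argument carries over verbatim to $KK^G$. That argument runs as follows: the half-closed interior chain gives a Kasparov cycle over $C_0((0,1],A)$, its restriction to $C_0((0,1),A)$ is homotopic to $\beta\circ\Psi(\mathpzc{E},D)$, and the cone group vanishes. Your additional checks are exactly the points that ``the same proof'' leaves implicit: equivariance of the bounded transform via the resolvent integral, $G$-invariance of the collar cut-offs, of $p$ and of $\theta$, and equivariant contractibility of the cone when $G$ acts trivially on the interval.
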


\section{Higher Atiyah-Patodi-Singer theory for $KK$-bordisms}
\label{highihereinda}

As is the case for a manifold with boundary, one can equip the operator appearing in a $KK$-bordism with Fredholm boundary conditions (of APS-type) and obtain an index associated with a $KK$-bordism. For manifolds, these boundary conditions were first introduced by Atiyah-Patodi-Singer and our approach follows the subsequent developments for Dirac operators on manifolds with boundary twisted by a $C^*$-bundle. The idea of studying APS-realizations of $KK$-bordisms was introduced and developed by Hilsum \cite{hilsumcmodbun}. The purpose of this section is to set Hilsum's ideas of APS-realizations of $KK$-bordisms in the foundations developed in the preceding parts of this monograph.

The main technical issue concerns the existence of the boundary conditions, but under a suitable technical assumption that we call \emph{very full} (see Definition \ref{veryfodldleed}), the theory can be developed just as for manifolds with boundary and a $C^*$-bundle. To overcome this issue we follow the methodology of \cite{LP}. The main difference to \cite{LP} is that there a considerable effort was needed to prove that manifolds twisted by a $C^*$-bundle are very full, that we in the case of $KK$-bordisms deem to be a property forced only by assumption. We restrict to the case of unital cycles (see Definition \ref{UnbKKcycDef}); this restriction is not that important for some of the preliminary results but will make a difference for the index theoretical results.

Throughout this section we let $B$ denote a unital $C^*$-algebra and consider $KK$-bordisms and cycles for $(\C,B)$. 

\subsection{Spectral sections of cycles}
\label{subsec:specsec}

Let $\mathfrak{Y}=(\mathpzc{E},D)$ be a cycle for $(\C,B)$. 

\begin{define}
Let $\mathfrak{Y}=(\mathpzc{E},D)$ be a cycle for $(\C,B)$. 
\begin{itemize}
\item A spectral cut for $\mathfrak{Y}$ is an operator of the form $\chi(D)\in \End^*_B(\mathpzc{E})$ where $\chi\in C^\infty(\R,[0,1])$ satisfies that $\chi(t)=0$ for $t\ll 0$ and $\chi(t)=1$ for $t\gg 0$.
\item A spectral section for an odd cycle $\mathfrak{Y}$ is a projection $P\in \End_B^*(\mathpzc{E})$ such that $\chi_1(D)\mathpzc{E}\subseteq P\mathpzc{E}\subseteq \chi_2(D)\mathpzc{E}$ for two spectral cuts $\chi_1(D)$ and $\chi_2(D)$. If $\mathfrak{Y}$ is an even cycle, a spectral section is a projection $P\in \End_B^*(\mathpzc{E})$ satisfying the condition for odd cycles and additionally $P-1/2$ is an odd operator.
\end{itemize}
\end{define}

The main technical problem in this subsection is to find a sufficient condition for the existence of spectral sections. The case of a general cycle is quite similar to the situation of a Dirac operator on a closed manifold twisted by a $C^*$-bundle, see Subsection \ref{diraconcstarbundleex}, and in this situation the existence of spectral sections is determined by the index $\ind_B(\mathfrak{Y})\in K_*(B)$. For closed manifolds, these results were proven by Wu \cite{Wu} and a gap in Wu's proof was later filled by Leichtnam-Piazza \cite{LP}. 

We use the notation $\mathcal{Q}_B(\mathpzc{E}):=\End^*_B(\mathpzc{E})/\K_B(\mathpzc{E})$ for the corona algebra of $\mathpzc{E}$ and $q_\mathpzc{E}:\End^*_B(\mathpzc{E})\to \mathcal{Q}_B(\mathpzc{E})$ for the quotient mapping. 

\begin{define}
\label{veryfodldleed}
Let $\mathfrak{Y}=(\mathpzc{E},D)$ be a unital cycle for $(\C,B)$. We say that $\mathfrak{Y}$ is very full if there exists an isometry $u\in \mathcal{Q}_B(\mathpzc{E})$ such that 
$$uu^*\leq \frac{1}{2}\left(1-q_\mathpzc{E}\left(D(1+D^2)^{-1/2}\right)\right).$$
\end{define}

The reader should note that $\frac{1}{2}\left(1-q_\mathpzc{E}\left(D(1+D^2)^{-1/2}\right)\right)$ is a projection in the corona algebra $\mathcal{Q}_B(\mathpzc{E})$. By definition, $\mathfrak{Y}=(\mathpzc{E},D)$ is very full if and only if the projection $\frac{1}{2}\left(1-q_\mathpzc{E}\left(D(1+D^2)^{-1/2}\right)\right)$ is very full (see \cite[Definition 1]{LP}). Here we use that $(\mathpzc{E},D)$ is a unital cycle to ensure that $\frac{1}{2}\left(1-q_\mathpzc{E}\left(D(1+D^2)^{-1/2}\right)\right)$ is a projection. A cycle defined from a Dirac operator on a closed manifold twisted by a full $C^*$-bundle is very full by \cite[Theorem 2]{LP}

\begin{theorem}
\label{existsppse}
Let $\mathfrak{Y}=(\mathpzc{E},D)$ be a  very full cycle for $(\C,B)$. Then $\mathfrak{Y}$ admits a spectral section if and only if $\ind_B(\mathfrak{Y})=0$ in $K_*(B)$.
\end{theorem}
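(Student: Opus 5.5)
The plan is to follow the Wu / Leichtnam--Piazza strategy, formulated for the bounded transform $F:=D(1+D^2)^{-1/2}$ and the corona projection $\pi:=\frac{1}{2}(1-q_\mathpzc{E}(F))\in \mathcal{Q}_B(\mathpzc{E})$. Since the cycle is unital, $F^2-1$ and $F-F^*$ are $B$-compact. Hence $q_\mathpzc{E}(F)$ is a self-adjoint unitary when $\mathfrak{Y}$ is odd, and an odd self-adjoint unitary in the graded corona when $\mathfrak{Y}$ is even. We identify $\ind_B(\mathfrak{Y})$ with the image of the relevant class under the boundary map of $0\to\K_B(\mathpzc{E})\to\End_B^*(\mathpzc{E})\to\mathcal{Q}_B(\mathpzc{E})\to 0$, via $K_*(\K_B(\mathpzc{E}))\to K_*(B)$. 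In the odd case the class is $[\pi]\in K_0(\mathcal{Q}_B(\mathpzc{E}))$, and its boundary is the obstruction to lifting $\pi$ to a projection in $\End_B^*(\mathpzc{E})$.

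\emph{Necessity.} Suppose $P$ is a spectral section, so $\chi_1(D)\mathpzc{E}\subseteq P\mathpzc{E}\subseteq\chi_2(D)\mathpzc{E}$. The differences $\chi_i(D)-\chi_{[0,\infty)}$-type cuts are $B$-compact, because $\chi_1-\chi_2\in C_c(\R)$ and $D$ has compact resolvent. It follows that $q_\mathpzc{E}(P)=q_\mathpzc{E}(\chi_i(D))=1-\pi$, so $1-P$ is a projection lift of $\pi$. Therefore $\partial[\pi]=0$, that is, $\ind_B(\mathfrak{Y})=0$. In the even case we use the extra condition that $P-1/2$ is odd: then $2P-1$ is an odd symmetry which agrees with $q(F)$ modulo compacts, and this makes the graded index vanish.

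\emph{Sufficiency.} This is where the very full hypothesis is used. By the Leichtnam--Piazza lemma \cite{LP}, if a very full projection $\pi$ in a corona algebra has $\partial[\pi]=0$, then $\pi$ lifts to an honest projection $Q\in\End_B^*(\mathpzc{E})$. The idea is to absorb the $K$-theoretic obstruction into the infinite subprojection $uu^*\leq\pi$, using Kasparov stabilisation and a Kuiper/Mingo-type argument; I would invoke this result directly, applied to $\pi$ in $\mathcal{Q}_B(\mathpzc{E})$. Once the lift exists, $P_0:=1-Q$ satisfies $P_0-\chi_{\geq}(D)\in\K_B(\mathpzc{E})$.

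The remaining step, which I expect to be the main obstacle, is to upgrade a projection that differs from a spectral cut by a compact to a genuine spectral section with the sandwich property. For this I would follow \cite[Proposition 1]{LP}. First approximate the compact difference $K=P_0-\chi(D)$ in norm by $\phi(D)K\phi(D)$ with $\phi\in C_c(\R)$, which is possible because $(1+D^2)^{-1}$ is compact. Then perturb $P_0$ to a nearby projection that equals $\chi(D)$ on the range of $\chi_{[R,\infty)}(D)$ and vanishes on $\chi_{(-\infty,-R]}(D)$, using holomorphic functional calculus on the perturbed element to restore idempotence. The difficulty is to control the ranges exactly: the projection obtained by functional calculus must commute with the far-spectral projections. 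This is handled by working in the corner $\chi_{[-R,R]}(D)$ and its complement separately, using smooth cuts whose products with the compact correction vanish. In the even case I would carry out the same construction with the grading-compatible pair, enforcing oddness of $P-1/2$ by symmetrising with $\gamma$, namely replacing a half-projection by $P\oplus\gamma(1-P)\gamma$ on the positive/negative parts.
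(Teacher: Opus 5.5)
Your proposal is correct and follows the same route as the paper's proof. Necessity comes from the observation that a spectral section gives a projection lift of the corona class, so its image under the boundary map to $K_*(\K_B(\mathpzc{E}))$ vanishes. Sufficiency combines very fullness with Meyer's lifting theorem \cite[Theorem 1]{LP} to obtain a projection $P_0\in\End_B^*(\mathpzc{E})$ that is a compact perturbation of a spectral cut, followed by Wu's functional-calculus construction \cite[pp.~364--366]{LP}, which upgrades $P_0$ to a genuine spectral section. Like the paper, you only sketch the graded case, where the vanishing of $\partial[\pi]$ is not the relevant lifting criterion; one needs an odd symmetry lifting $q_\mathpzc{E}(F)$, equivalently a unitary lift of its off-diagonal part.
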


The proof of this theorem follows the proof of \cite[Theorem 3]{LP}. Let us discuss some of the details. Firstly, we note that if $\mathfrak{Y}$ admits a spectral section, then $\ind_B(\mathfrak{Y})=0$ in $K_*(B)$. Here is a short proof of this fact. By definition, $-\ind_B(\mathfrak{Y})\in K_*(B)$ is the image of 
\begin{equation}
\label{fudnalcls}
\left[\frac{1}{2}-q_\mathpzc{E}\left(D(1+D^2)^{-1/2}\right)\right]\in K_{*+1}(\mathcal{Q}_B(\mathpzc{E})),
\end{equation}
under the composition $K_{*+1}(\mathcal{Q}_B(\mathpzc{E}))\to K_*(\K_B(\mathpzc{E}))\to K_*(B)$. Since $P-(1/2-D(1+D^2)^{-1/2})\in \K_B(\mathpzc{E})$, $[P]\in K_{*+1}(\End_B^*(\mathpzc{E}))$ is a preimage of the class \eqref{fudnalcls} under $(q_\mathpzc{E})_*:K_{*+1}(\End_B^*(\mathpzc{E}))\to K_{*+1}(\mathcal{Q}_B(\mathpzc{E}))$. But the composition 
$$K_{*+1}(\End_B^*(\mathpzc{E}))\to K_{*+1}(\mathcal{Q}_B(\mathpzc{E}))\to K_*(\K_B(\mathpzc{E})),$$ 
is zero, so $\ind_B(\mathfrak{Y})=0$.

Secondly, we note that if $\mathfrak{Y}=(\mathpzc{E},D)$ is very full, there is a projection $P_0\in \End_B^*(\mathpzc{E})$ with $P_0-(1/2-D(1+D^2)^{-1/2})\in \K_B(\mathpzc{E})$ by \cite[Theorem 1]{LP} (a result attributed to Meyer). The spectral section is then constructed from $P_0$ using functional calculus arguments as in \cite[Page 364-366]{LP} (a method attributed to Wu \cite{Wu}).

\begin{ex}
\label{prototypveryfull}
For a unital $C^*$-algebra $B$, we define
$$\mathfrak{Y}_{S^1,B}:=\left(\begin{matrix}L^2(S^1,B)\\\oplus\\ L^2(S^1,B)\end{matrix}, \begin{pmatrix}
(i\partial_x+\frac{1}{2})\otimes 1_B&0\\0&  -(i\partial_x+\frac{1}{2})\otimes 1_B\end{pmatrix}\right).$$
Morally, this is the Dirac operator on $S^1$ twisted by $B$. We can consider $\mathfrak{Y}_{S^1}$ as an odd $(\C,B)$-cycle or as an even $(\C,B)$-cycle for the grading 
$$\gamma:=\begin{pmatrix} 0& 1\\ 1&0\end{pmatrix}.$$
Denote the operator in $\mathfrak{Y}_{S^1,B}$ by $D_{S^1,B}$. Since $D_{S^1,B}$ is invertible, it admits the spectral section $\chi_{[0,\infty)}(D_{S^1,B})$. We note that $P_\pm:=\chi_{[0,\infty)}(\pm D_{S^1,B})$ satisfy that $P_{\pm} L^2(S^1,B)\oplus L^2(S^1,B)\cong L^2(S^1,B)$ and is a free infinitely generated $B$-module. 

The cycle $\mathfrak{Y}_{S^1,B}$ is explicitly nullbordant via filling in $S^1$ to a disk. We denote this bordism by $\mathfrak{X}_{\mathbb{D},B}$.
\end{ex}

Before proceeding with our study of spectral sections, let us stop and prove that it is always possible to perturb a cycle by a very full cycle satisfying an additional hypothesis, to ensure very fullness. 

\begin{prop}
\label{veryfullperturb}
Let $\mathfrak{Y}$ and $\mathfrak{Y}'$ be unital cycles for $(\C,B)$. If $\mathfrak{Y}'=(\mathpzc{E}',D')$ satisfies that $D'$ is invertible and the range of $\chi_{(-\infty,0]}( D')$ is a free infinitely generated $B$-module, then $\mathfrak{Y}+\mathfrak{Y}'$ is very full. In particular, for any cycle $\mathfrak{Y}$ the cycle $\mathfrak{Y}\oplus \mathfrak{Y}_{S^1,B}$ is very full and bordant to $\mathfrak{Y}$.
\end{prop}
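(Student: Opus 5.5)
The plan is to verify Definition \ref{veryfodldleed} directly for the direct sum. Write $\mathfrak{Y}=(\mathpzc{E},D)$ and $\mathfrak{Y}'=(\mathpzc{E}',D')$. Then $\mathfrak{Y}+\mathfrak{Y}'=(\mathpzc{E}\oplus\mathpzc{E}',D\oplus D')$, and the corona algebra $\mathcal{Q}_B(\mathpzc{E}\oplus\mathpzc{E}')$ contains $\mathcal{Q}_B(\mathpzc{E})\oplus\mathcal{Q}_B(\mathpzc{E}')$ block-diagonally. The projection $\frac12(1-q(F_{D\oplus D'}))$ decomposes as the direct sum of the corresponding projections for $D$ and $D'$. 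It therefore suffices to find an isometry $u$ in $\mathcal{Q}_B(\mathpzc{E}\oplus\mathpzc{E}')$ whose range projection lies under the $\mathpzc{E}'$-summand projection $\frac12(1-q(F_{D'}))$, extended by zero on $\mathpzc{E}$. Note that this $u$ must be an isometry of the whole corona algebra, which is the real content of the statement.

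First, since $D'$ is invertible, $F_{D'}=D'(1+D'^2)^{-1/2}$ differs from $2\chi_{[0,\infty)}(D')-1$ by a function of $D'$ that vanishes at infinity. Because the cycle is unital over $\C$, $(1+D'^2)^{-1/2}$ is compact, so $\frac12(1-q(F_{D'}))=q(Q')$ with $Q':=\chi_{(-\infty,0]}(D')$. By hypothesis $Q'\mathpzc{E}'\cong \ell^2(\N)\otimes B=:H_B$.

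Second, I build a genuine isometry $V\in\End^*_B(\mathpzc{E}\oplus\mathpzc{E}')$ with $VV^*\le 0\oplus Q'$, and set $u=q(V)$. I would use Kasparov's stabilization theorem: $\mathpzc{E}\oplus\mathpzc{E}'$ is countably generated (it is a domain of a cycle over a separable situation, or we assume countable generation as usual), so $(\mathpzc{E}\oplus\mathpzc{E}')\oplus H_B\cong H_B$. This gives an adjointable isometry $\mathpzc{E}\oplus\mathpzc{E}'\hookrightarrow H_B\cong Q'\mathpzc{E}'$. Composing with the inclusion $Q'\mathpzc{E}'\subseteq 0\oplus\mathpzc{E}'$ gives $V$ with $V^*V=1$ and $VV^*\le 0\oplus Q'$. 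Passing to the corona gives the required $u$, so $\mathfrak{Y}+\mathfrak{Y}'$ is very full. If countable generation is not automatic, I would instead use only the fact that $\mathpzc{E}\oplus\mathpzc{E}'$ embeds in $H_B\oplus \mathpzc{E}$-type modules. I expect this stabilization step, which needs countable generation, to be the only delicate point.

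For the final claim, Example \ref{prototypveryfull} shows that $D_{S^1,B}$ is invertible and that $\chi_{(-\infty,0]}(D_{S^1,B})=\chi_{[0,\infty)}(-D_{S^1,B})$ (up to the kernel, which is zero) has range $\cong L^2(S^1,B)\cong H_B$, a free infinitely generated module. Hence $\mathfrak{Y}\oplus\mathfrak{Y}_{S^1,B}$ is very full by the first part. The same example gives a null-bordism $\mathfrak{X}_{\mathbb{D},B}$ of $\mathfrak{Y}_{S^1,B}$. Taking its disjoint union with the cylinder $\Psi(\mathfrak{Y})$, which is the reflexivity bordism, yields a bordism $\mathfrak{Y}+\mathfrak{Y}_{S^1,B}\sim_{\rm bor}\mathfrak{Y}$. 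Equivalently, this follows in $\Omega_*(\C,B)$ from $[\mathfrak{Y}_{S^1,B}]=0$.
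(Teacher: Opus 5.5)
Your proposal is correct and follows essentially the paper's route. You identify $\frac12(1-q(F_{D'}))$ with $q(\chi_{(-\infty,0]}(D'))$, use Kasparov stabilization to embed $\mathpzc{E}\oplus\mathpzc{E}'$ isometrically into $\chi_{(-\infty,0]}(D')\mathpzc{E}'\cong H_B$, pass to the corona, and obtain the second claim from the disc null-bordism $\mathfrak{X}_{\mathbb{D},B}$ of Example \ref{prototypveryfull}. The paper gets a unitary onto that range, but your isometry suffices. The countable generation you hedge about is in fact automatic: for a unital $(\C,B)$-cycle, $(1+D^2)^{-1}$ is a strictly positive element of $\K_B(\mathpzc{E})$, so $\K_B(\mathpzc{E})$ is $\sigma$-unital and $\mathpzc{E}$ is countably generated.
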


We remark that a cycle with invertible operator always admits a spectral section, the positive spectral projection.

\begin{proof}
If $D'$ is invertible, we can define the projection $P:=\chi_{(-\infty,0]}( D')\in \End_B^*(\mathpzc{E}')$. Note that 
$$\frac{1}{2}\left(1-q_{\mathpzc{E}'}\left(D'(1+D'^2)^{-1/2}\right)\right)=q_{\mathpzc{E}'}(P).$$
By Kasparov's stabilization theorem and the assumption that $P\mathpzc{E}'$ is a free infinitely generated $B$-module, there exists a unitary isomorphism $v:P\mathpzc{E}'\to \mathpzc{E}\oplus \mathpzc{E}'$. Let $u_0$ denote the isometry 
$$\mathpzc{E}\oplus \mathpzc{E}'\xrightarrow{v^*} P\mathpzc{E}'\hookrightarrow \mathpzc{E}\oplus \mathpzc{E}'.$$
We have that $u_0$ satisfies that $u_0u_0^*=0\oplus P$. We define the isometry $u:=q_{\mathpzc{E}\oplus \mathpzc{E}'}(u_0)$ which satisfies that 
\begin{align*}
uu^*&=0\oplus q_{\mathpzc{E}'}(P)=0\oplus \frac{1}{2}\left(1-q_{\mathpzc{E}'}\left(D'(1+D'^2)^{-1/2}\right)\right)\leq \\
&\leq \frac{1}{2}\left(1-q_{\mathpzc{E}}\left(D(1+D^2)^{-1/2}\right)\right)\oplus \frac{1}{2}\left(1-q_{\mathpzc{E}'}\left(D'(1+D'^2)^{-1/2}\right)\right)=\\
&=\frac{1}{2}\left(1-q_{\mathpzc{E}\oplus \mathpzc{E}'}\left((D\oplus D')(1+(D\oplus D')^2)^{-1/2}\right)\right).
\end{align*}
\end{proof}

The following result shows that if there exists a spectral section, there is an abundance of spectral sections. The proof goes as in \cite[Section 2.5]{LP}, for more details, see also \cite{Wu}.

\begin{lemma}
\label{biggerspecseclem}
Let $\mathfrak{Y}=(\mathpzc{E},D)$ be a cycle admitting a spectral section. 
\begin{enumerate}
\item Let $\chi_1(D)$ be a spectral cut. Then there is a spectral cut $\chi_2(D)$ such that $\chi_1\chi_2=\chi_1$ and a spectral section $P$ such that $\chi_1(D)\mathpzc{E}\subseteq P\mathpzc{E}\subseteq \chi_2(D)\mathpzc{E}$.
\item Let $P_1$ and $P_2$ be spectral sections. There there exists spectral sections $Q_1$ and $Q_2$ such that for $j=1,2$
$$P_jQ_1=Q_1P_j=P_j\quad\mbox{and}\quad P_jQ_2=Q_2P_j=Q_2.$$
\end{enumerate}
\end{lemma}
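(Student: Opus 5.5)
The plan is to mimic the Wu / Leichtnam--Piazza arguments, working entirely with functional calculus of the self-adjoint regular operator $D$ and with the fact that any two spectral sections differ from $\frac{1}{2}(1-D(1+D^2)^{-1/2})$-type projections by compacts.

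For item 1), fix the given spectral section $P_0$, so that $\chi_a(D)\mathpzc{E}\subseteq P_0\mathpzc{E}\subseteq \chi_b(D)\mathpzc{E}$ for two spectral cuts. Given $\chi_1$, I first choose a spectral cut $\chi_3$ with $\chi_3\chi_1=\chi_1$ and $\chi_3\chi_a=\chi_a$. I then try to produce a projection between $\chi_1(D)$ and a slightly bigger cut by ``gluing'' $P_0$ below with spectral projections above. Concretely, pick a cut $\rho$ with $\rho=1$ on the support of $\chi_3$ and $\rho=0$ far below the supports of $\chi_a,\chi_b$. The operator $P_0$ commutes with $\chi(D)$ modulo nothing in general, so instead I follow Wu: the compression $(1-\chi_3(D))P_0(1-\chi_3(D))$ on the range of the finite-spectral-window operator is $B$-compact (since $D$ has $B$-compact resolvent in a unital cycle, $\psi(D)$ is compact for $\psi\in C_c(\R)$). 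Using the index-zero argument of \cite[Section 2.5]{LP} (the difference class $[P_0]-[\text{spectral cut}]$ in $K_0(B)$ vanishes in the relevant corona sense), one perturbs $P_0$ by a compact, finite-rank-type correction supported in the window $\{\chi_3<1\}\cap\{\rho>0\}$, to get a projection $P$ with $\chi_1(D)\le P\le \chi_2(D)$, where $\chi_2$ is a cut equal to $1$ on $\operatorname{supp}\rho$. Grading (for even cycles, $P-1/2$ odd) is handled by building $P$ from its restriction to the positive part using $\gamma$, as in \cite{LP}.

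For item 2), take cuts $\chi_{1,j},\chi_{2,j}$ sandwiching $P_j$. Choose a cut $\chi$ lying below both lower cuts with enough room, and a cut $\chi'$ above both upper cuts. Apply item 1) to the cut $\chi_{2,1}+\chi_{2,2}$-dominating cut to get $Q_1$ with $\chi_{2,j}(D)\mathpzc{E}\subseteq Q_1\mathpzc{E}$; then $P_j\le \chi_{2,j}(D)\le Q_1$, giving $P_jQ_1=Q_1P_j=P_j$. For $Q_2$, apply item 1) to the opposite cycle $-\mathfrak{Y}$ (which also admits a spectral section, namely $1-P_0$ suitably adjusted) with a cut dominating $1-\chi_{1,j}(-\cdot)$, obtaining $R$ with $1-\chi_{1,j}(D)\le R$, and set $Q_2:=1-R$; then $Q_2\le \chi_{1,j}(D)\le P_j$.

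The main obstacle is item 1): ensuring the compact correction can be made to land exactly between two spectral cuts, which requires the vanishing of the relative index class of the window (a $K_0(B)$-valued obstruction) and a careful use of that $B$ is unital so that finitely generated projective submodules of the window can be stabilised; I would follow \cite[pp.\ 364--366]{LP} verbatim here.
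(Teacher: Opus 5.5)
Your reduction of (2) to (1) is sound, and it is more explicit than the paper, which refers both items to \cite[Section 2.5]{LP}. You get $Q_1$ from (1) applied to a cut $\tilde\chi$ with $\tilde\chi\chi_{2,j}=\chi_{2,j}$. You get $Q_2:=1-R$ from (1) applied to $-D$, using that $1-P_0$ is a spectral section for $-D$. The identities $P_jQ_1=Q_1P_j=P_j$ and $P_jQ_2=Q_2P_j=Q_2$ then follow from range inclusions and adjoints. One correction: for even cycles you must apply (1) to $(\mathpzc{E},-D)$ with the same grading. The paper's $-\mathfrak{Y}=(-\mathpzc{E},D)$ does not change $D$, so it gives you nothing new.

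Item (1) is the whole content of the lemma, and your sketch has no step that actually produces it.
\begin{itemize}
\item There are no spectral projections $\chi_{[c,\infty)}(D)$ to glue with. They are adjointable only when $c$ lies in a spectral gap, which is why cuts are taken smooth.
\item The index/corona remark gives nothing. $P_0$ and every cut have the same image in $\mathcal{Q}_B(\mathpzc{E})$, and the index already vanishes because $P_0$ exists. No $K$-theoretic statement yields the \emph{exact} relation $P\chi_1(D)=\chi_1(D)$.
\item The functional-calculus passage you cite (\cite[pp.~364--366]{LP}) does not help either. Replacing a compact error by $\psi(D)(\cdot)\psi(D)$ with $\psi\in C_c(\R)$ and taking $\chi_{[1/2,1]}$ gives exact containment only of cuts supported where nothing was perturbed, i.e.\ to the right of $\operatorname{supp}\psi$. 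Applied to $P_0$, it cannot force a prescribed low cut into the range.
\end{itemize}

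What your route (perturbing $P_0$ inside the window) actually needs is the following. Take a cut $\chi_3$ with $\chi_3\chi_1=\chi_1$ and $\chi_3\chi_a=\chi_a$, and set $g:=\chi_3(1-\chi_a)\in C_c(\R)$. Then $(1-P_0)\chi_3(D)=(1-P_0)g(D)$ is compact. You need a \emph{compact} projection $E\le 1-P_0$ with $E(1-P_0)g(D)=(1-P_0)g(D)$.

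Given such an $E$, the rest is formal, at least in the odd case. Set $Q':=P_0+E$. Then $Q'\chi_1(D)=\chi_1(D)$ and $Q'-\chi_3(D)\in\K_B(\mathpzc{E})$. For a sufficiently low cut $g'$ with $g'\chi_3=\chi_3$, the projection $\chi_{[1/2,1]}(g'(D)Q'g'(D))$ is a spectral section containing $\chi_1(D)\mathpzc{E}$. Exactness survives because $g'=1$ on $\operatorname{supp}\chi_1$.

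But $E$ is not automatic. The range of a compact operator on a Hilbert $C^*$-module need not lie in any finitely generated projective complemented submodule. For example, take $\theta_{\xi,\xi}$ on $\ell^2(\N)\otimes C[0,1]$ with $\xi(x)=f_k(x)e_k$, where the $f_k$ are bumps near $1/k$ with $\|f_k\|_\infty\to0$; it has no such envelope. Constructing $E$ from the specific structure of $D$ and $P_0$ is the non-formal step the cited argument has to supply. Your phrase ``compact, finite-rank-type correction'' assumes it rather than proving it.
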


Spectral sections in fact always arises as positive spectral projections of certain perturbations of $D$. 

\begin{define}
Let $\mathfrak{Y}=(\mathpzc{E},D)$ be a cycle. A trivializing operator for $\mathfrak{Y}$ is a self-adjoint operator $A\in \End_B^*(\mathpzc{E})$, which is odd if $\mathfrak{Y}$ is even, such that 
\begin{itemize}
\item for some $R>0$, $\psi(D)A=A\psi(D)=0$ for all bounded $\psi\in C^\infty(\R)$ with $\mathrm{supp}(\psi)\cap[-R,R]=\emptyset$, 
\item $D+A$ is invertible.
\end{itemize}
\end{define}

The reader should note that if $A$ is a trivializing operator for $\mathfrak{Y}$, then 
$$A\mathpzc{E}\subseteq \cap_{k\in \N}\Dom(D^k).$$
We also note that for any self-adjoint $A\in \End_B^*(\mathpzc{E})$ (which is odd if $\mathfrak{Y}$ is even), we have $\ind_B(D)=\ind_B(D+A)$ so $\ind_B(\mathfrak{Y})$ obstructs the existence of a trivializing operator. Finally, we note that if $D+A$ is invertible then there is an $\epsilon>0$ such that $\mathrm{Spec}(D+A)\cap (-\epsilon,\epsilon)=\emptyset$ and $\psi(D+A):\mathpzc{E}\dashrightarrow \mathpzc{E}$ is a well defined regular operator for any continuous function $\psi:\R\setminus (-\epsilon,\epsilon)\to \C$.

The following result is proven just as in \cite[Proposition 2.10]{LPGAFA}. 

\begin{prop}
\label{trivopprprop}
Let $\mathfrak{Y}=(\mathpzc{E},D)$ be a cycle admitting a spectral section $P$. Then there exists a trivializing operator $A\in \End_B^*(\mathpzc{E})$, such that 
$$P=\frac{1}{2}+\frac{D+A}{2|D+A|}=\chi_{[0,\infty)}(D+A).$$
In particular, if $\mathfrak{Y}$ is very full, $\mathfrak{Y}$ admits a trivializing operator if and only if $\ind_B(\mathfrak{Y})=0$ in $K_*(B)$.
\end{prop}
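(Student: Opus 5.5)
We follow the strategy of \cite[Proposition 2.10]{LPGAFA}, using the spectral cuts that sandwich $P$ to build $A$ supported in a bounded spectral window of $D$. By definition there are spectral cuts $\chi_1(D)$, $\chi_2(D)$ with $\chi_1(D)\mathpzc{E}\subseteq P\mathpzc{E}\subseteq \chi_2(D)\mathpzc{E}$. By Lemma \ref{biggerspecseclem} we may shrink and enlarge these cuts. Concretely, we choose $R>0$ and smooth cutoffs $\chi_\pm$ such that $\chi_+(t)=1$ for $t\geq R/2$, $\chi_-(t)=1$ for $t\geq -R/2$, with $\chi_+\chi_-=\chi_+$, and such that
\[
\chi_{[R/2,\infty)}(D)\mathpzc{E}\subseteq P\mathpzc{E}\subseteq \chi_{[-R/2,\infty)}(D)\mathpzc{E}.
\]
Here the spectral projections are understood via smooth functional calculus, using the fact that $\chi_+(D)$ acts as the identity on $P\mathpzc{E}$'s complement. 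More precisely, $P$ commutes with every $\psi(D)$ for $\psi$ vanishing on $[-R,R]$: on such spectral subspaces $P$ acts as $1$ (positive side) or $0$ (negative side), since $P\mathpzc{E}$ contains the high positive spectral part and is orthogonal to the low negative part.

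Next we set $Q:=P$ and decompose $\mathpzc{E}$ by the spectral window. Let $\phi\in C_c^\infty(\R,[0,1])$ with $\phi=1$ on $[-R,R]$ and $\mathrm{supp}\,\phi\subseteq[-2R,2R]$, and let $E_0:=\phi(D)$. The key observation is that $P-\chi_{[0,\infty)}$-type behaviour differs from that of $D$ only inside the window. Accordingly we define
\[
A:=(2P-1)\,c\,E_0' - D\,E_0'',
\]
where $E_0'$ and $E_0''$ are compactly spectrally supported operators. The intended normalization is that, on the window range, $D+A$ equals $c(2P-1)$ for a constant $c>0$, while outside the window $D+A=D$. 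Because $2P-1$ is $\pm1$ outside the window, consistently with $\mathrm{sign}(D)$, the operator $D+A$ is invertible with sign $2P-1$.

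This motivates a cleaner choice: take a smooth bump $\rho\in C_c^\infty((-3R,3R),[0,1])$ with $\rho=1$ on $[-2R,2R]$, and set
\[
A:=\rho(D)\bigl(2R(2P-1)-D\bigr)\rho(D).
\]
Since $P$ commutes with $\rho(D)$, $D$ commutes with $\rho(D)$, and $(2P-1)$ commutes with $D$ off $[-R,R]$, we obtain
\[
D+A=(1-\rho^2)(D)\,D+\rho^2(D)\,2R(2P-1),
\]
which is a sum of commuting pieces. On $\chi_{[0,\infty)}$-related parts $2P-1$ and $D$ have the same sign wherever $1-\rho^2\neq 0$, so $(D+A)(2P-1)\geq \min(R,2R)>0$. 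This yields invertibility and $\chi_{[0,\infty)}(D+A)=P$.

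Several routine properties then follow. $A$ is bounded and self-adjoint; it is odd in the even case, since $P-1/2$ is odd and $D$ is odd. Finally, $\psi(D)A=A\psi(D)=0$ whenever $\mathrm{supp}\,\psi\cap[-3R,3R]=\emptyset$.

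\textbf{Main obstacle.} The delicate step is justifying that $P$ commutes with $D$-functional calculus outside the window, equivalently that $2P-1$ agrees with $\mathrm{sign}(D)$ there. This follows only from the sandwich inclusions, by showing $P\chi(D)=\chi(D)$ and $P(1-\chi')(D)=0$ for suitable cuts. A subtlety is that smooth cuts are not projections, so we would pass to the range projections $\chi_{[a,\infty)}(D)$. These exist only when $D$ has spectral gaps, so instead we work with $\psi(D)$ for $\psi$ supported where $\chi_1=1$ or $\chi_2=0$ and verify commutation there; this is the argument of Wu and Leichtnam--Piazza.

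The final claim, that a very full cycle admits a trivializing operator iff $\ind_B(\mathfrak{Y})=0$, combines the result above with Theorem \ref{existsppse}. The converse direction uses the remark that $\ind_B(D)=\ind_B(D+A)=0$ for an invertible $D+A$.
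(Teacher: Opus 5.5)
Your final construction $A=\rho(D)\bigl(2R(2P-1)-D\bigr)\rho(D)$ is essentially the Melrose--Piazza/Leichtnam--Piazza argument of \cite[Proposition 2.10]{LPGAFA} that the paper invokes. Here the commutation of $P$ with $\rho(D)$ and the identity $(1-\rho^2)(D)D(2P-1)=(1-\rho^2)(D)|D|$ are checked using smooth functions supported where $\chi_1=1$ or $\chi_2=0$, and your treatment of the final claim via Theorem \ref{existsppse} and $\ind_B(D)=\ind_B(D+A)$ is the intended one. Discard the abandoned first attempt and the characteristic-function projections, which need not exist on a Hilbert $C^*$-module, and choose $\rho$ even so that $A$ is odd in the graded case.
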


\begin{cor}
\label{existenceofveryfoidleled}
Assume that $(\mathpzc{E},D)$ is a unital very full $(\mathcal{A},B)$-cycle. Then the following are equivalent:
\begin{enumerate}
\item There exists a bounded selfadjoint operator $A\in \End_B^*(\mathpzc{E})$, which can be taken odd if  $(\mathpzc{E},D)$ is even, such that $D+A$ is invertible.
\item $\ind_B(D)=0$
\item $(\mathpzc{E},D)$ is null-bordant.
\end{enumerate}
\end{cor}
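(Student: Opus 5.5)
I would prove the cycle of implications (1)$\Rightarrow$(2)$\Rightarrow$(1), and (1)$\Rightarrow$(3)$\Rightarrow$(2). Here $\ind_B(D)\in K_*(B)$ is the index of the cycle restricted to $(\C,B)$ along the unital inclusion $\C\to\mathcal{A}$; unitality of the cycle is what makes this restriction a genuine $(\C,B)$-cycle. Very fullness is likewise a statement only about the $(\C,B)$-cycle $(\mathpzc{E},D)$.

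\textbf{(1)$\Rightarrow$(2).} Since $A$ is bounded and self-adjoint (and odd in the even case), $D+tA$, $t\in[0,1]$, is a norm-continuous path of $B$-Fredholm operators, so it does not change the index. Thus $\ind_B(D)=\ind_B(D+A)=0$, because $D+A$ is invertible.

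\textbf{(2)$\Rightarrow$(1).} This follows from Proposition \ref{trivopprprop}, via Theorem \ref{existsppse}. Very fullness together with vanishing index gives a spectral section, hence a trivializing operator $A$. By definition $A$ is bounded, self-adjoint and odd when needed, and $D+A$ is invertible.

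\textbf{(1)$\Rightarrow$(3).} We must show that $(\mathpzc{E},D+A)$ is an $(\mathcal{A},B)$-cycle bordant to $(\mathpzc{E},D)$ and that it is null-bordant. A bounded self-adjoint perturbation has relative bound $0$, so Corollary \ref{locbddperturcor} gives $(\mathpzc{E},D)\sim_{\rm bor}(\mathpzc{E},D+A)$ as $(\mathcal{A},B)$-cycles.

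For null-bordance of $(\mathpzc{E},D+A)$, the natural tool is Corollary \ref{specedmopmad} with $P=\chi_{[0,\infty)}(D+A)$. That corollary, however, requires $P$ to commute with $\mathcal{A}$, which fails in general. This is the main obstacle.

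To get around it, I would reduce to the unital subalgebra. First, by Corollary \ref{invertibleldodcor} and Lemma \ref{prop62kaad}, replace the cycle by a Kaad-type decomposition using the symmetry $F=\mathrm{sign}(D+A)$. Lemma \ref{prop62kaad} requires Lipschitz regularity, so I would first apply the order reduction of Theorem \ref{bordismforodrdthm}, as in Corollary \ref{ononaod}, and check that $[F,a]\mathpzc{E}\subseteq \Dom$ holds. With that in place, the homotopy of representations (Theorem \ref{homotopyofrep}) moves the action onto a spectrally decomposable cycle. Theorem \ref{specdecompnulbord} then gives null-bordance. I expect this step to be the delicate one and would try to import Kaad's argument verbatim.

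\textbf{(3)$\Rightarrow$(2).} By Hilsum's Theorem \ref{alknlkanda}, null-bordance of the $(\mathcal{A},B)$-cycle gives $[\beta(\mathpzc{E},D)]=0$ in $KK_*(A,B)$. Pulling back along the unital map $\C\to A$ and identifying $KK_*(\C,B)\cong K_*(B)$, the image of this class is $\ind_B(D)$, so $\ind_B(D)=0$.
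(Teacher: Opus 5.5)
Your implication (1)$\Rightarrow$(3) has a genuine gap, and it cannot be closed in the generality you work in: for a general $\mathcal{A}$ the implication is false. Take $\mathcal{A}=C^\infty(S^1)$, $B=\C$ and the odd cycle $(L^2(S^1),D)$ with $D=i\frac{\mathrm{d}}{\mathrm{d}\theta}$.

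\begin{itemize}
\item It is unital.
\item It is very full, since its nonpositive spectral subspace is infinite-dimensional.
\item $\ind_\C(D)\in K_1(\C)=0$.
\item The bounded operator $A=\frac12$ makes $D+A$ invertible.
\end{itemize}

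Yet $\beta$ of this cycle generates $KK_1(C(S^1),\C)\cong\Z$. By Theorem \ref{alknlkanda} it is therefore not null-bordant as a $(C^\infty(S^1),\C)$-cycle. Invertibility of $D+A$ says nothing about the class in $KK_*(A,B)$.

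This is also why your Kaad-type detour cannot work. Lemma \ref{prop62kaad} only compares two cycles $(\mathpzc{E},D)$ and $(\mathpzc{E},D')$ that share the symmetry $F$. To get a null-bordism from it you would need a second cycle with phase $\mathrm{sign}(D+A)$ that is already known to be null-bordant, for instance one whose phase commutes with $\mathcal{A}$. That is exactly what is missing. Likewise, Theorem \ref{homotopyofrep} would need a smooth homotopy of representations onto a spectrally decomposable cycle, and none exists when the class is nonzero. The obstacle you flagged, that $\chi_{[0,\infty)}(D+A)$ fails to commute with $\mathcal{A}$, is the actual obstruction, not a technicality. Order reduction or Lipschitz regularity will not remove it.

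The statement has to be read with $\mathcal{A}=\C$, which is the standing hypothesis of that section. Very fullness and $\ind_B$ only involve the $(\C,B)$-structure, and the paper proves (2)$\Leftrightarrow$(3) by the index isomorphism $\Omega_*(\C,B)\cong K_*(B)$ of \cite[Theorem 3.7]{DGM}. With $\C$ acting unitally, your obstacle disappears, because $\chi_{[0,\infty)}(D+A)$ trivially commutes with the action. Then Corollary \ref{locbddperturcor} followed by Corollary \ref{specedmopmad} gives $(\mathpzc{E},D)\sim_{\rm bor}(\mathpzc{E},D+A)\sim_{\rm bor}0$. Alternatively, use Theorem \ref{weakdegnullbordthm} with $D_0=0$ and $S=D+A$. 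Neither route needs order reduction or Lemma \ref{prop62kaad}.

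Your remaining implications are fine. With this correction, your chain (1)$\Rightarrow$(3)$\Rightarrow$(2), using Hilsum's theorem and pulling back along $\C\to A$, is a hands-on replacement for the paper's appeal to \cite[Theorem 3.7]{DGM}. Your step (2)$\Rightarrow$(1) via Theorem \ref{existsppse} and Proposition \ref{trivopprprop} is exactly the paper's argument.
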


\begin{proof}
It follows from \cite[Theorem 3.7]{DGM}  that (2) and (3) are equivalent. Clearly, (1) implies (2). That (2) implies (1) follows from Theorem \ref{existsppse} and Proposition \ref{trivopprprop}.
\end{proof}

\subsection{Atiyah-Patodi-Singer theory for $KK$-bordisms}
\label{subsec:apsforborddad}

Let $\mathfrak{X}$ be a bordism for $(\C,B)$. Recall that the boundary $\mathfrak{X}^\partial=(\mathpzc{M},S)$ of $\mathfrak{X}$ is a cycle for $(\C,B)$. 

\begin{define}
Let $\mathfrak{X}$ be a bordism for $(\C,B)$ with boundary $\mathfrak{X}^\partial$. 
\begin{itemize}
\item We say that $\mathfrak{X}$ admits a spectral section if $\mathfrak{X}^\partial$ admits a spectral section. In this case, a spectral section for $\mathfrak{X}$ will refer to a spectral section for $\mathfrak{X}^\partial$ and a trivializing operator for $\mathfrak{X}$ will refer to a trivializing operator for $\mathfrak{X}^\partial$.
\item We say that $\mathfrak{X}$ is very full if $\mathfrak{X}^\partial$ is very full.
\end{itemize}
\end{define}

It follows from bordism invariance of the index that $\ind_B(\mathfrak{X}^\partial)=0$. By Corollary \ref{existenceofveryfoidleled}, a very full $(\C,B)$-bordism $\mathfrak{X}$ admits a spectral section.

We shall now give two index theoretically equivalent ways of constructing a $B$-Fredholm operator from a bordism $\mathfrak{X}$ and a choice of trivializing operator (assuming it exists). The two approaches are to glue on an infinite cylinder (as in Subsection \ref{subsecgluinginfinite}) or to equip the interior operator in the bordism with APS-boundary conditions. Fix a bordism $\mathfrak{X}$ and a trivializing operator $A$.

First up is constructing the $B$-Fredholm operator from gluing on an infinite cylinder onto $\mathfrak{X}$. We let $T_{(0,\infty)}$ denote the operator constructed in Subsection \ref{subsecgluinginfinite}. For a trivializing operator $A$ of $\mathfrak{X}$ and a function $\chi\in C^\infty([0,\infty),[0,1])$ with $\chi(t)=0$ near $t=0$ and $1-\chi\in C^\infty_c[0,\infty)$, we let $A_\infty:=\chi\hat{\boxtimes}A\in \End_B^*(\mathpzc{N}_{(0,\infty)})$. Define 
$$T_\infty(A):=T_{(0,\infty)}+A_\infty.$$

\begin{prop}
Let $\mathfrak{X}$ be a bordism with interior chain $\mathfrak{X}^\circ=(\mathpzc{N},T)$ and assume that $A$ is a trivializing operator for $\mathfrak{X}$. The operator $T_\infty(A):\mathpzc{N}_{(0,\infty)}\dashrightarrow\mathpzc{N}_{(0,\infty)}$ constructed in the preceding paragraph is $B$-Fredholm.
\end{prop}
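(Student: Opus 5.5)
The plan is to build a parametrix for $T_\infty(A)$ by the same two–piece construction as in the proof of Lemma \ref{infcylfred}, using $D+A$ in place of $D$ on the cylinder. Write $\mathfrak{X}^\partial=(\mathpzc{M},S)$. The first step is to observe that $S+A$ is self-adjoint, regular and invertible on $\mathpzc{M}$, since $A$ is bounded, self-adjoint and trivializing. By Lemma \ref{invertinginfsusp}, $\Psi_\infty(S+A)=i\partial_x\hat{\boxtimes}(S+A)$ is then self-adjoint, regular and invertible on $L^2(\R)\hat{\boxtimes}\mathpzc{M}$. We also record that
$$\Psi_\infty(S+A)=\Psi_\infty(S)+0\hat{\boxtimes}A,$$
where $0\hat{\boxtimes}A$ is bounded.

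Next we choose cutoffs $\chi_1,\chi_2=1-\chi_1$ on $[0,\infty)$ as in Lemma \ref{infcylfred}, with $\chi_1$ compactly supported. The support of $\chi_2$ must lie inside the region where the cutoff $\chi$ defining $A_\infty$ equals $1$, and also in the cylindrical part, where $T_{(0,\infty)}=\Psi_\infty(S)$ by the product structure (Definition \ref{HilBorDef}.b and Proposition \ref{gluinginfcyleprop}). On $\mathrm{supp}(\chi_2)$ we then have $T_\infty(A)=\Psi_\infty(S+A)$. We set
$$r:=b(\sqrt{\chi_1})\,T_{(0,\infty)}^*\big(1+T_{(0,\infty)}^2\big)^{-1}b(\sqrt{\chi_1})+b(\sqrt{\chi_2})\,\Psi_\infty(S+A)^{-1}b(\sqrt{\chi_2}).$$
Since $T_{(0,\infty)}$ is self-adjoint, one could instead use the resolvent $(i+T_{(0,\infty)})^{-1}$ in the first term. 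We then compute $T_\infty(A)r-1$ exactly as in Lemma \ref{infcylfred}. The resulting error consists of the following terms:
\begin{enumerate}
\item commutator terms $b(\sqrt{\chi_j}')(\cdots)$, which are compactly supported in the $t$-variable;
\item the term $b(\sqrt{\chi_1})(1+T^2)^{-1}b(\sqrt{\chi_1})$;
\item the term $b(\sqrt{\chi_1})A_\infty(\cdots)$, coming from the bounded perturbation $A_\infty$ on the compact region.
\end{enumerate}

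Each error term is compact because it factors through the domain of $T_{(0,\infty)}$ or of $\Psi_\infty(S+A)$ multiplied by a compactly supported function. By Proposition \ref{gluinginfcyleprop}, $\mathfrak{X}^\circ_{(0,\infty)}$ is half-closed for $C^\infty_c([0,\infty))$, so this multiplication is $B$-compact. For the cylindrical piece, compactly supported functions in $t$ times $(1+\Psi_\infty(S+A)^2)^{-1/2}$ are compact, because $S$ has compact resolvent (the boundary is a unital $(\C,B)$-cycle) and a compactly supported function of $t$ times an $H^1$-resolvent in $t$ is compact. The same argument applied to $r^*$ gives compactness of $rT_\infty(A)-1$, so $r$ is an inverse of $T_\infty(A)$ modulo compacts and $T_\infty(A)$ is $B$-Fredholm.

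The main obstacle is the bookkeeping of the term involving $A_\infty$ and the commutators. In particular, $A$ does not commute with the cutoffs in $t$ in a trivial way; it does commute, because $A$ acts only in the $\mathpzc{M}$-leg. One also has to check that the transition region of $\chi$ lies in the compactly supported part, so that $A_\infty-0\hat{\boxtimes}A$ there is a bounded operator supported at finite $t$. Since the cylinder has non-compact $t$-direction, a bounded operator supported at finite $t$ is not compact by itself. Such terms must therefore only appear composed with a resolvent factor, which the parametrix ensures. A simpler alternative would be to note that $T_\infty(A)-T_{(0,\infty)}$ differs from $0\hat{\boxtimes}A$ at infinity, and use Lemma \ref{infcylfred} together with Corollary \ref{locbddperturcor}-type invariance. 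That route fails because $S$ need not be invertible, so the direct parametrix is the approach to take.
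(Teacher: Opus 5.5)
Your direct parametrix argument is correct, but the paper takes a different route. The paper never builds a new parametrix. It picks $R$ with $\chi=1$ for $t>R-1$ and glues onto $\mathfrak{X}$ the finite cylinder bordism $(L^2[0,R]\hat{\boxtimes}\mathpzc{M},\partial^{\rm min}\hat{\boxtimes}S+\chi|_{[0,R]}\hat{\boxtimes}A)$, whose boundary is $\mathfrak{X}^\partial+(-(\mathpzc{M},S+A))$. The glued bordism has invertible boundary operator $S+A$, and its infinite-cylinder operator coincides with $T_\infty(A)$, so the statement is Lemma \ref{infcylfred} with $A=0$.

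Your closing remark that the reduction to Lemma \ref{infcylfred} ``fails because $S$ need not be invertible'' is therefore too pessimistic. It fails only if you apply the lemma to $\mathfrak{X}$ itself. Absorbing the interpolation from $S$ to $S+A$ into the bordism removes the obstruction, and that is exactly the paper's argument.

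Your approach buys self-containedness. You avoid the gluing machinery: that the perturbed finite cylinder is a bordism, and that gluing it on is compatible with attaching the infinite end. The cost is redoing the proof of Lemma \ref{infcylfred} with $\Psi_\infty(S+A)^{-1}$ on the end. You must also track the extra term $A_\infty b(\sqrt{\chi_1})T_{(0,\infty)}(1+T_{(0,\infty)}^2)^{-1}b(\sqrt{\chi_1})$, which is compact because $b(\sqrt{\chi_1})T_{(0,\infty)}(1+T_{(0,\infty)}^2)^{-1}$ is. Using the self-adjoint $T_{(0,\infty)}$ in the first piece of $r$ gives $r=r^*$, so the left parametrix follows at once from the right one. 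The paper's route is shorter and shows directly that $T_\infty(A)$ is the infinite-cylinder operator of an honest bordism with invertible boundary.
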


\begin{proof}
Pick an $R>0$ such that $\chi(t)=1$ for $t>R-1$. Write $\mathfrak{X}^\partial=(\mathpzc{M},S)$ The symmetric chain $(L^2[0,R]\hat{\boxtimes}\mathpzc{M},\partial^{\rm min}\hat{\boxtimes}S+\chi|_{[0,R]}\hat{\boxtimes}A)$ fits into a bordism with boundary $\mathfrak{X}^\partial+(-(\mathpzc{M},S+A))$. Upon gluing on the bordism constructed in the previous sentence onto $\mathfrak{X}$, we can assume that $S$ is invertible, $A=0$, and the proposition follows from Lemma \ref{infcylfred}.
\end{proof}

Second up is constructing the $B$-Fredholm operator from equipping the interior chain of $\mathfrak{X}$ with suitable boundary conditions. We introduce the notation 
$$\mathpzc{N}^{H^1}\subseteq (1-p)\mathpzc{N}\oplus H^1[0,1]\hat{\boxtimes} \mathpzc{M},$$
for the submodule consisting of elements $f$ for which $b(\chi)f\in \Dom(T)$ and $(1-b(\chi))f\in L^2[0,1]\hat{\boxtimes}\Dom(D)$ for all $\chi\in C^\infty_c(0,1]$ with $\chi(t)=1$ near $t=1$. We can consider $\mathpzc{N}^{H^1}$ to be a $B$-Hilbert $C^*$-module by declaring the embedding 
$$\mathpzc{N}^{H^1}\xrightarrow{b(\chi)\oplus (1-b(\chi))}\Dom(T)\oplus \left(H^1[0,1]\hat{\boxtimes} \mathpzc{M}\cap  L^2[0,1]\hat{\boxtimes}\Dom(D)\right),$$
to be an isometry for some $\chi\in C^\infty_c(0,1]$ with $\chi(t)=1$ near $t=1$. If $\mathfrak{X}$ is even, the grading of $\mathpzc{N}$ induces a grading on $\mathpzc{N}^{H^1}$. The following result is immediate from the definition and the fact that $b(\chi)(1+T^*T)^{-1}$ is compact for $\chi\in C^\infty_c(0,1]$.

\begin{prop}
\label{compachone}
The inclusion $\mathpzc{N}^{H^1}\hookrightarrow \mathpzc{N}$ is adjointable and compact.
\end{prop}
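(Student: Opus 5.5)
We prove the two claims separately: first that the inclusion is adjointable, then that it is compact, using a partition of unity adapted to the collar.

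Fix $\chi\in C^\infty_c(0,1]$ with $\chi=1$ near $t=1$, the one used to define the Hilbert module structure on $\mathpzc{N}^{H^1}$. Writing $\iota$ for the inclusion $\mathpzc{N}^{H^1}\hookrightarrow \mathpzc{N}$, we decompose
$$\iota = b(\chi)\iota + (1-b(\chi))\iota.$$
By definition of the inner product, the isometric embedding $\mathpzc{N}^{H^1}\to \Dom(T)\oplus \big(H^1[0,1]\hat{\boxtimes}\mathpzc{M}\cap L^2[0,1]\hat{\boxtimes}\Dom(S)\big)$ is adjointable. It is a complemented isometry, because its image is described by the closed compatibility condition on the overlap of the supports. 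So $\iota$ factors as this isometry followed by the maps
$$\Dom(T)\to \mathpzc{N}\quad\mbox{and}\quad H^1[0,1]\hat{\boxtimes}\mathpzc{M}\cap L^2[0,1]\hat{\boxtimes}\Dom(S)\to L^2[0,1]\hat{\boxtimes}\mathpzc{M}\xrightarrow{\theta^{-1}} p\mathpzc{N}\subseteq \mathpzc{N}.$$
It therefore suffices to show that each of these two maps is adjointable and compact.

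For the first map, $T$ is regular, so $\Dom(T)\hookrightarrow\mathpzc{N}$ is adjointable. Precomposed with multiplication by $b(\chi)$, which is what the embedding actually feeds in, it becomes $b(\chi)(1+T^*T)^{-1/2}$ up to unitary identification. This operator is compact because $\mathfrak{X}$ is a bordism. That condition is stated for $(1+T^*T)^{-1}$, and compactness of $b(\chi)(1+T^*T)^{-1/2}$ follows from the standard $C^*$-identity $\|xx^*\|=\|x^*x\|$ argument applied to $\bar\chi\chi$.

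For the second map, the inclusion $H^1[0,1]\cap \Dom$-type space $\to L^2[0,1]\hat{\boxtimes}\mathpzc{M}$ is the exterior tensor product of the compact Rellich inclusion $H^1[0,1]\hookrightarrow L^2[0,1]$ with the inclusion $\Dom(S)\hookrightarrow\mathpzc{M}$. The latter is compact since $\mathfrak{X}^\partial$ is a unital $(\C,B)$-cycle, so $(1+S^2)^{-1/2}$ is $B$-compact. Its domain is the graph-norm intersection, which is the domain of the regular operator $\Psi$-type sum $\partial\hat{\boxtimes}S$ with a self-adjoint realization of $i\partial_x$ on $[0,1]$, for instance a periodic one. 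By \cite[Theorem 1.3]{DGM}, the resolvent of that sum is compact, being a product of compact resolvents. This gives both adjointability and compactness.

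The only mildly delicate point is that the embedding really is adjointable and that the factorization is compatible with the overlap region where both pieces contribute. Multiplication by $b(\chi)$ and by $1-b(\chi)$ are bounded adjointable on each factor, which handles it. The result is then immediate.
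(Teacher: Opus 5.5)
Your strategy is the argument the paper has in mind when it calls the statement immediate from the definition and compactness of $b(\chi)(1+T^*T)^{-1}$: split $f\in\mathpzc{N}^{H^1}$ into an interior part controlled by that compactness and a collar part controlled by the compact resolvent of $S$.

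\textbf{The interior piece.} This is fine once you insert a second cutoff $\chi'\in C^\infty_c(0,1]$ with $\chi'\chi=\chi$. Then $b(\chi)f=b(\chi')b(\chi)f$, and the relevant map is $b(\chi')\circ(\Dom(T)\hookrightarrow\mathpzc{N})\cong b(\chi')(1+T^*T)^{-1/2}$. The bordism axioms say nothing about the bare inclusion $\Dom(T)\hookrightarrow\mathpzc{N}$.

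\textbf{The collar piece fails as written.} Set $\mathcal{W}:=H^1[0,1]\hat{\boxtimes}\mathpzc{M}\cap L^2[0,1]\hat{\boxtimes}\Dom(S)$.
\begin{itemize}
\item $\mathcal{W}$ is not the domain of the exterior tensor product of the Rellich inclusion with $\Dom(S)\hookrightarrow\mathpzc{M}$. That domain is $H^1[0,1]\boxtimes\Dom(S)$, which is in general strictly smaller.
\item $\mathcal{W}$ is not the domain of $\partial_{\rm per}\hat{\boxtimes}S$ either. Every self-adjoint realization of $i\partial_x$ on $[0,1]$ imposes $f(1)=e^{i\alpha}f(0)$. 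Your collar components $\theta(1-b(\chi))f$ vanish near $t=1$ but in general have nonzero trace $\gamma_0f$ at $t=0$; these are exactly the traces the APS condition later constrains.
\end{itemize}
Compactness of $\mathcal{W}\hookrightarrow L^2[0,1]\hat{\boxtimes}\mathpzc{M}$ is true, but it needs a second-order operator. $H^1[0,1]$ is the form domain of the Neumann Laplacian $\Delta_N$. So with $Q:=(1+\Delta_N\otimes 1+1\otimes S^2)^{1/2}$, the map $Q:\mathcal{W}\to L^2[0,1]\hat{\boxtimes}\mathpzc{M}$ is unitary, and the inclusion is unitarily $Q^{-1}$. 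This is adjointable and $B$-compact, because $\Delta_N$ has compact resolvent and $(1+S^2)^{-1}\in\K_B(\mathpzc{M})$. Alternatively, since your collar piece vanishes on some $[1-\delta,1]$, reflect it evenly across $t=0$. It then lies in $H^1_0[-1,1]\hat{\boxtimes}\mathpzc{M}\cap L^2[-1,1]\hat{\boxtimes}\Dom(S)$, which does sit inside the domain of a periodic realization on $[-1,1]$.

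\textbf{Adjointability is not justified.} Your whole reduction needs the isometry $J:f\mapsto(b(\chi)f,(1-b(\chi))f)$ into $\Dom(T)\oplus\mathcal{W}$ to be adjointable. Your reason, that its image is cut out by a closed condition, is not valid: in Hilbert $C^*$-modules closed submodules need not be complemented, and isometries need not be adjointable. Without $J^*$ you cannot conclude $\iota\in\K_B(\mathpzc{N}^{H^1},\mathpzc{N})$. Indeed $\theta_{x,y}\circ J$ is $f\mapsto x\langle y,Jf\rangle$, which is rank one on $\mathpzc{N}^{H^1}$ only if $\langle y,J\,\cdot\,\rangle$ is representable there. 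Your closing remark does not help either, since adjointability of $b(\chi)$ on $\mathpzc{N}$ says nothing about the graph inner products on $\Dom(T)$ and $\mathcal{W}$.

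One way to close this:
\begin{itemize}
\item Take $\chi'$ as above and $\psi\in C^\infty_c[0,1)$ with $\psi(1-\chi)=1-\chi$.
\item Define $L(u,v):=b(\chi')u+b(\psi)v$. By condition b) it maps $\Dom(T)\oplus\mathcal{W}$ into $\mathpzc{N}^{H^1}$, and $LJ=1$, so $J(\mathpzc{N}^{H^1})$ is the range of the idempotent $JL$.
\item One checks that $JL$ is adjointable. On $\Dom(T)$, multiplication by $b(\chi)$ is the compression by the graph projection $p_T$ of $\begin{pmatrix}b(\chi)&0\\ {[T,b(\chi)]}&b(\chi)\end{pmatrix}$, and similarly on $\mathcal{W}$. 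The off-diagonal entries are supported in the open collar, where condition b) identifies the graph inner product of $T$ with that of $\mathcal{W}$.
\item Ranges of adjointable idempotents are complemented, so $J$ is adjointable.
\end{itemize}
With this and the corrected collar step, your factorization proves the proposition.
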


Since the trace mapping $H^1[0,1]\to \C$, $f\mapsto f(0)$ is continuous there is a continuous adjointable trace mapping 
$$\gamma_0:\mathpzc{N}^{H^1}\to \C\hat{\boxtimes}\mathpzc{M}, \quad f\mapsto [(1-b(\chi))f](0),$$
for some $\chi\in C^\infty_c(0,1]$ with $\chi(t)=1$ near $t=1$. Here we consider $\C$ as a trivially graded Hilbert space, so $\C\hat{\boxtimes}\mathpzc{M}=\mathpzc{M}$ with trivial grading if $\mathfrak{N}$ is odd and $\C\hat{\boxtimes}\mathpzc{M}=\mathpzc{M}\oplus \mathpzc{M}$ graded by $1\oplus (-1)$ if $\mathfrak{X}$ is even. The map $\gamma_0$ is even if $\mathfrak{X}$ is even.

For a trivializing operator $A$, we introduce the notation 
\begin{align}
\label{adefeq}
P_A:&=\chi_{[0,\infty)}(0_\C\hat{\boxtimes}(S+A))=\\
\nonumber
&=\begin{cases}
\chi_{[0,\infty)}(S+A), \quad\mbox{if $\mathfrak{X}$ is even,}\\
\chi_{[0,\infty)}(S+A)\oplus \chi_{(-\infty,0]}(S+A), \quad\mbox{if $\mathfrak{X}$ is odd.}\end{cases}
\end{align}

For a bordism $\mathfrak{X}$ and a trivializing operator $A$, we define an operator $T_{\rm APS}(A)$ as
$$\Dom(T_{\rm APS}(A)):=\{f\in \mathpzc{N}^{H^1}: \; P_A(\gamma_0f)=0\},$$
and for $f\in \Dom(T_{\rm APS}(A))$ we set
$$T_{\rm APS}(A)f:=T(b(\chi)f)+(\partial\hat{\boxtimes}S)((1-b(\chi))f),$$
for a $\chi\in C^\infty_c(0,1]$ with $\chi(t)=1$ near $t=0$. A short computation using the axioms of a bordism shows that $T_{\rm APS}(A)$ does not depend on the choice of $\chi$. 

\begin{prop}
\label{regularapspsps}
The densely defined operator $T_{\rm APS}(A):\mathpzc{N}\dashrightarrow \mathpzc{N}$ is regular, self-adjoint, $B$-Fredholm and has $B$-compact resolvent.
\end{prop}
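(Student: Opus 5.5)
The plan is to reduce to the infinite-cylinder picture and to a model computation on the half-cylinder $L^2[0,\infty)\hat{\boxtimes}\mathpzc{M}$. First, write $S_A:=S+A$, which is self-adjoint, regular and invertible with a spectral gap $(-\epsilon,\epsilon)$. Since $A$ is bounded and $\partial\hat{\boxtimes}(0\hat{\boxtimes}A)$-type perturbations are bounded, I would replace $T_{\rm APS}(A)$ by the operator $T'_{\rm APS}(A)$ where, in the collar, $\partial\hat{\boxtimes}S$ is replaced by $\partial\hat{\boxtimes}S+\chi\hat{\boxtimes}A$ with $\chi=1$ near $t=0$. This changes the operator by a bounded self-adjoint perturbation, so regularity, self-adjointness, Fredholmness and compactness of the resolvent transfer back by Kato–Rellich (\cite[Theorem 4.4]{leschkaad2}). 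Thus, near the boundary, we may assume the operator is $\partial\hat{\boxtimes}S_A$ with the APS condition $P_A\gamma_0f=0$, where $P_A$ is the positive spectral projection of $0_\C\hat{\boxtimes}S_A$.

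Second, I would treat the model operator on the collar. I would decompose $\mathpzc{M}=P\mathpzc{M}\oplus(1-P)\mathpzc{M}$ with $P=\chi_{[0,\infty)}(S_A)$, which is complemented since $S_A$ is invertible. On each summand $\partial\hat{\boxtimes}S_A$ becomes, in the ungraded $2\times 2$ form, a pair $\pm\frac{d}{dt}+|S_A|$ acting between half-line spaces. The APS condition kills exactly the boundary values that would produce growing modes. The explicit resolvent on $L^2[0,\infty)\hat{\boxtimes}\mathpzc{M}$ is then given by a half-line analogue of the kernel formula in Lemma~\ref{invertinginfsusp}, namely $e^{-(t-s)|S_A|}$-type kernels plus a reflected term $e^{-(t+s)|S_A|}$ on the appropriate spectral subspace. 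This shows that the model APS operator $\partial\hat{\boxtimes}S_A$ on $[0,\infty)$ is self-adjoint, regular and invertible. Self-adjointness is the usual integration-by-parts check: the boundary form $\langle\gamma_0 f,\sigma\gamma_0 g\rangle$ vanishes exactly when both traces lie in $\ker P_A$, since the Clifford/grading symbol $\sigma$ swaps the ranges of $P_A$ and $1-P_A$.

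Third, I would glue. Let $\chi_1,\chi_2$ be a partition of unity ($\chi_1^2+\chi_2^2=1$) with $\chi_2$ supported in the collar near $t=0$. The interior piece $T_{(0,\infty)}$, made self-adjoint on $\mathfrak{X}^{\rm s.a.}$ as in Proposition~\ref{gluinginfcyleprop}, and the model APS operator are both self-adjoint and regular, and they agree on the overlap by Definition~\ref{HilBorDef}.b. I would build the parametrix $r=\chi_1(i\mu+T_{(0,\infty)})^{-1}\chi_1+\chi_2(i\mu+T^{\rm mod})^{-1}\chi_2$ exactly as in Lemma~\ref{infcylfred} and \cite[Theorem 2.20]{DGM}. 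The errors are of the form $\chi_j'(\cdots)^{-1}$, and they have norm $O(|\mu|^{-1})$. For large $|\mu|$ the map $1+\text{error}$ is therefore invertible, so $i\mu+T_{\rm APS}(A)$ is surjective for $\pm$, which gives self-adjointness and regularity via the local–global principle. The symmetric part follows from the boundary-form computation above together with condition a) of Definition~\ref{HilBorDef}.

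Finally, since $\Dom(T_{\rm APS}(A))\subseteq\mathpzc{N}^{H^1}$ continuously (a closed-graph argument), Proposition~\ref{compachone} shows that the resolvent factors through a compact inclusion. A self-adjoint regular operator with compact resolvent is $B$-Fredholm, which finishes the argument. The main obstacle I expect is the second and third steps: showing that the domain defined via $\mathpzc{N}^{H^1}$ and $P_A$ is exactly the adjoint domain, and not merely contained in it. This requires showing that $T_{\rm APS}(A)^*$-elements automatically have $H^1$-regularity in the collar. I would try to obtain this from the explicit half-line resolvent, which maps into $H^1[0,\infty)\hat{\boxtimes}\mathpzc{M}\cap L^2\hat{\boxtimes}\Dom(S)$.
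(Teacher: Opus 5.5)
Your proposal is correct and follows essentially the same route as the paper. Both arguments use an invertible self-adjoint half-cylinder model operator with the APS condition, glued to the self-adjoint $T_{(0,\infty)}$ through a partition-of-unity parametrix with $O(|\mu|^{-1})$ errors, and both get compactness of the resolvent from Proposition~\ref{compachone}.

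Your preliminary bounded perturbation, which makes the collar model $\partial\hat{\boxtimes}(S+A)$, is a sensible tidying, since that is what makes the model genuinely invertible. The adjoint-domain worry you end with is already settled by your third step: symmetry together with surjectivity of $T_{\rm APS}(A)\pm i\mu$ gives $\ker(T_{\rm APS}(A)^*\mp i\mu)=0$, hence $T_{\rm APS}(A)^*=T_{\rm APS}(A)$ without any separate regularity argument.
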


Before proving this proposition, we will need a lemma.

\begin{lemma}
Let $\mathfrak{Y}=(\mathpzc{E},D)$ be a cycle and $A$ a trivializing operator for $\mathfrak{Y}$. Define $D_A:=\partial\hat{\boxtimes}D$ on $L^2[0,\infty)\hat{\boxtimes}\mathpzc{E}$ with the domain
$$\Dom(D_A):=\{f\in H^1[0,\infty)\hat{\boxtimes}\mathpzc{E}\cap L^2[0,\infty)\hat{\boxtimes}\Dom(D): P_Af(0)=0\}.$$
Then $D_A$ is self-adjoint, regular and invertible.
\end{lemma}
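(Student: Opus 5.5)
The plan is to compare $D_A$ with the model operator
$$D_A':=\partial\hat{\boxtimes}(D+A),\qquad \Dom(D_A'):=\Dom(D_A),$$
which has the same domain and boundary condition as $D_A$. The two differ by $D_A-D_A'=-(0\hat{\boxtimes}A)$, a bounded self-adjoint operator, odd when the cycle is even. Self-adjointness and regularity of $D_A$ then follow from those of $D_A'$ by the Kato--Rellich theorem on Hilbert $C^*$-modules \cite[Theorem 4.4]{leschkaad2}. The proof therefore splits into three steps: analysing $D_A'$ explicitly, transferring the analytic properties to $D_A$, and proving invertibility of $D_A$ itself.

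\textbf{Step 1: the model operator.} Set $D':=D+A$. It is self-adjoint, regular and invertible, so $P:=\chi_{[0,\infty)}(D')$ is an honest projection commuting with $D'$. Write $\mathpzc{E}=P\mathpzc{E}\oplus(1-P)\mathpzc{E}$, so that $\pm D'$ is positive and invertible on the respective summands. I treat the ungraded case; the graded case is the same computation with the matrix conventions of Lemma \ref{invertinginfsusp}. Here $D_A'$ is the off-diagonal matrix with entries $\pm\partial_x+D'$. By \eqref{adefeq}, the boundary condition $P_Af(0)=0$ kills $Pf_1(0)$ in the first component and $(1-P)f_2(0)$ in the second.

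On each spectral summand, $\partial_x+D'$ on $[0,\infty)$ has an explicit inverse given by half-line semigroup integrals. These are the formulas at the end of the proof of Lemma \ref{invertinginfsusp}, restricted to $[0,\infty)$, with the half carrying the boundary condition given by $\int_0^x$ and the other half by $\int_x^\infty$. Checking the mapping properties of these integrals proves that $D_A'$ is closed and bijective with bounded inverse. An integration by parts shows that the boundary term $\langle f(0),\gamma\,g(0)\rangle$ vanishes exactly under the complementary APS conditions, which gives $D_A'^*=D_A'$. Regularity follows because the inverse is adjointable.

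\textbf{Step 2: transfer to $D_A$.} Since $0\hat{\boxtimes}A$ is bounded and symmetric, $D_A=D_A'-0\hat{\boxtimes}A$ is self-adjoint and regular on the same domain.

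\textbf{Step 3: invertibility of $D_A$.} This is the main obstacle. A bounded perturbation of an invertible operator need not be invertible, and $\|A\|$ is not assumed small compared with the gap of $D+A$. My first attempt uses the structure of a trivializing operator. Because $\psi(D)A=A\psi(D)=0$ for $\psi$ supported outside $[-R,R]$, the operators $D$ and $A$ commute, so they generate a commutative functional calculus. Choose $\psi\in C^\infty_c(\R,[0,1])$ with $\psi=1$ on $[-R-1,R+1]$.

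High spectral part. On the range of $1-\psi(D)$ we have $A=0$ and $|D|\geq R$. There $D_A$ coincides with $D_A'$, and Step 1 yields a parametrix.

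Low spectral part. For this part I would form the Green's identity
$$\|D_Af\|^2=\|f'\|^2+\|Df\|^2\pm\langle f(0),\gamma Df(0)\rangle.$$
Using $P_Af(0)=0$, I rewrite $\langle f(0),Df(0)\rangle=\langle f(0),(D+A)f(0)\rangle-\langle f(0),Af(0)\rangle$. The first term has a definite sign, bounded away from zero by the gap of $D+A$. The second term is localized by $\psi(D)$ and can be controlled by the trace estimate $\|\psi(D)f(0)\|^2\leq 2\|\psi(D)f\|\,\|\psi(D)f'\|$.

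The point where I expect the argument to fail is the spectral region where $D$ itself is small, for example $\ker D$. There the scalar model $\pm i\,\mathrm{d}/\mathrm{d}x$ on a half-line with a one-sided boundary condition is not invertible. This makes the coercivity estimate in the low spectral part delicate: it yields a G\aa rding inequality, hence $B$-Fredholmness, but not a bounded inverse. If that region cannot be excluded, then invertibility must be established for $D_A'$, and the statement should be read with $D+A$ in place of $D$ in the operator. I would test this fibrewise over the joint spectrum of $(D,A)$ before committing to the low-frequency estimate.
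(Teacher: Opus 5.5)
Your Steps 1 and 2 are correct, and Step 1 is the paper's own argument. The paper takes invertibility from the explicit half-cylinder inverse in \cite[Proof of Proposition 2.1]{WahlProductAPS}. It then reads off self-adjointness and regularity from $(i+D_A)^{-1}=D_A^{-1}(iD_A^{-1}+1)^{-1}$. That inverse is your semigroup formula in $\mathrm{e}^{\mp x(D+A)}$, so it inverts your $D_A'=\partial\hat{\boxtimes}(D+A)$, not $\partial\hat{\boxtimes}D$. For the operator as written, self-adjointness and regularity therefore need your Step 2, the bounded perturbation by $0\hat{\boxtimes}A$. This is also all that is used later. In the proof of Proposition \ref{regularapspsps} the operator must coincide with $T_{\rm APS}(A)$ on the collar, so it is the literal $\partial\hat{\boxtimes}S$. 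Only its resolvents $(i\lambda+D_A)^{-1}$, $\lambda\neq 0$, are used there.

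Your doubt in Step 3 is justified, and you can drop the hedging: invertibility of $\partial\hat{\boxtimes}D$ is false in general. Take $0\neq v\in\ker D$ and $\phi\in C^\infty_c(0,\infty)$, and set $f_n:=\phi(\cdot/n)\otimes v$ (placed in one summand in the ungraded case). Then $f_n$ vanishes at $0$, so $f_n\in\Dom(D_A)$ whatever $P_A$ is. Since $Df_n=0$, one gets $\|D_Af_n\|=n^{-1}\|\phi'\|\,\|\phi\|^{-1}\,\|f_n\|$, so $D_A$ is not bounded below. The odd $(\C,\C)$-cycle $(\C,0)$ with trivializing operator $A=1$ is an instance. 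So the lemma is correct only with $D+A$ in the operator, and for that version your Step 1 is a complete proof.

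One correction to your Step 3 reasoning, although the step is now moot: the trivializing condition does not make $A$ commute with $D$. It only gives $A=\phi(D)A=A\phi(D)$ for $\phi$ equal to $1$ near $[-R,R]$, and $A$ can mix eigenspaces of $D$ inside $[-R,R]$. It must do so in general. By Proposition \ref{trivopprprop}, every spectral section is $\chi_{[0,\infty)}(D+A)$ for some trivializing $A$, while spectral sections need not commute with $D$. So a fibrewise analysis over a joint spectrum of $(D,A)$ is not available.
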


\begin{proof}
It follows from \cite[Proof of Proposition 2.1]{WahlProductAPS} that $D_A$ is invertible (see the formula for the operator $D_{Z_l}(A)^{-1}$ in \cite[Proof of Proposition 2.1]{WahlProductAPS}). By the construction of \cite[Proof of Proposition 2.1]{WahlProductAPS}, $D_A^{-1}$ is self-adjoint and adjointable, so it follows that $(i+D_A)^{-1}=D_A^{-1}(iD_A^{-1}+1)^{-1}$ exists so $D_A$ is self-adjoint and regular.
\end{proof}

\begin{proof}[Proof of Proposition \ref{regularapspsps}]
It suffices to prove that $T_{\rm APS}(A)$ is closed, regular and selfadjoint. Indeed, if this is the case, $T_{\rm APS}(A)$ has $B$-compact resolvent (and is therefore $B$-Fredholm) by Proposition \ref{compachone}. The operator $T_{\rm APS}(A)$ is closed since $T$ and $D_A$ is. Indeed, if $(f_j)_{j\in \N}\subseteq \Dom(T_{\rm APS}(A))$ is Cauchy in the graph norm then for a cutoff function $\chi$, $(b(\chi)f_j)_{j\in \N}\subseteq \Dom(T)$ and $((1-b(\chi))f_j)_{j\in \N}\subseteq \Dom(D_A)$ are Cauchy so closedness of $T$ and $D_A$ implies that the limit of $(f_j)_{j\in \N}$ belongs to $\Dom(T_{\rm APS}(A))$

To finish the proof, we show that $T_{\rm APS}(A)$ is self-adjoint and regular by constructing an approximate resolvent. Take a $\chi_1\in C^\infty_c([0,\infty),[0,1])$ satisfying $\chi_1(t)=1$ near $t=1$ and $\chi_1(t)=0$ for $t>1/2$. Set $\chi_2:=1-\chi\in C^\infty_b[0,\infty)$. For $\lambda\in \R\setminus \{0\}$, define 
$$r_\lambda:=b(\sqrt{\chi_1})(i\lambda+T_{(0,\infty)})^{-1}b(\sqrt{\chi_1})+b(\sqrt{\chi_2})(i\lambda+D_A)^{-1}b(\sqrt{\chi_2})\in \End_B^*(\mathpzc{N}),$$
is well defined. Since $(i\lambda+D_A)^{-1}$ satisfies the APS-boundary condition, it follows that $r_\lambda$ maps $\mathpzc{N}$ into $\Dom(T_{\rm APS}(A)$.
We compute that 
\begin{align*}
(i\lambda+T_{\rm APS}(A))r_\lambda=&b(\sqrt{\chi_1}')(i\lambda+T_{(0,\infty)})^{-1}b(\sqrt{\chi_1})+\\
&+b(\sqrt{\chi_2}')(i\lambda+D_A)^{-1}b(\sqrt{\chi_2})+b(\chi_1)+b(\chi_2)=\\
=&b(\sqrt{\chi_1}')(i\lambda+T_{(0,\infty)})^{-1}b(\sqrt{\chi_1})+\\
&+b(\sqrt{\chi_2}')(i\lambda+D_A)^{-1}b(\sqrt{\chi_2})+1
\end{align*}
Since $\chi_1$ and $\chi_2'$ are compactly supported, we conclude that $(i\lambda+T_{\rm APS}(A))r_\lambda-1$ is compact and extends to an adjointable operator on $\mathpzc{N}$. The computation also shows that $\|(i\lambda+T_{\rm APS}(A))r_\lambda-1\|_{\End_B^*(\mathpzc{N})}=O(\lambda^{-1})$ as $\lambda\to \infty$. Since $r_\lambda^*=r_{-\lambda}$ and $T_{\rm APS}(A)$ is closed, one readily verifies that $r_\lambda(i\lambda+T_{\rm APS}(A))-1$ also extends to a compact operator with $\|(i\lambda+T_{\rm APS}(A))r_\lambda-1\|_{\End_B^*(\mathpzc{N})}=O(\lambda^{-1})$ as $\lambda\to \infty$. This shows that $T_{\rm APS}(A)$ is self-adjoint and regular. The reader should note that this also gives a direct proof that $T_{\rm APS}(A)$ is $B$-Fredholm.
\end{proof}

\begin{remark}
We note that if $\mathfrak{X}$ is even, then 
$$T_{\rm APS}(A):=\begin{pmatrix}0&T_{\rm APS}(A)^+\\T_{\rm APS}(A)^-& 0\end{pmatrix},$$
where 
$$\Dom(T_{\rm APS}(A)^\pm):=\{f\in \mathpzc{N}^{H^1\mp}: \; \chi_{[0,\infty)}(\pm(S+A))(\gamma_0f)=0\}.$$
\end{remark}

\begin{theorem}
\label{apsthemams}
Let $\mathfrak{X}$ be a bordism with a trivializing operator $A$. Then the following equality holds in the group $K_*(B)$:  
$$\ind_B(T_{\rm APS}(A))=\ind_B(T_\infty(A)).$$
\end{theorem}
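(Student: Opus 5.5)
We prove the equality $\ind_B(T_{\rm APS}(A))=\ind_B(T_\infty(A))$ by a relative index (cut-and-paste) argument in the spirit of Bunke, reducing everything to an explicit computation on the half-infinite cylinder over the boundary.

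The first step is a reduction. Write $\mathfrak{X}^\partial=(\mathpzc{M},S)$. As in the proof that $T_\infty(A)$ is $B$-Fredholm, we glue onto $\mathfrak{X}$ the finite cylinder $(L^2[0,R]\hat{\boxtimes}\mathpzc{M},\partial^{\rm min}\hat{\boxtimes}S+\chi|_{[0,R]}\hat{\boxtimes}A)$. This replaces $\mathfrak{X}$ by a bordism $\mathfrak{X}'$ with boundary $(\mathpzc{M},S+A)$, which is invertible, and with trivializing operator $0$. Two checks are needed:
\begin{itemize}
\item Since $A$ is bounded, the APS side changes by a bounded perturbation supported on the glued collar. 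On the infinite-cylinder side the two operators agree up to a compactly supported bounded term.
\item The straight-line path $tA$ on the collar gives a norm-continuous path of $B$-Fredholm operators, once the boundary projection is held fixed at $P_A$. Hence the index is unchanged on both sides.
\end{itemize}
So we may assume $A=0$ and $S$ invertible, with $P_0=\chi_{[0,\infty)}(0_\C\hat{\boxtimes}S)$.

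The second step is the relative index argument. We compare two operators on $\mathpzc{N}_{(0,\infty)}$ and $\mathpzc{N}$ respectively: $T_{(0,\infty)}$, and $T_{\rm APS}(0)$ direct-summed with the half-cylinder operator $D_0$ from the lemma preceding Proposition~\ref{regularapspsps}. That lemma gives that $D_0$ is invertible, so it contributes zero index. Both operators agree with $T$ on $b(\chi_1)$-localized pieces. They agree with $\Psi_\infty(S)$, respectively $D_0$, far out on the cylinder.

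Concretely, we cut $\mathpzc{N}_{(0,\infty)}$ at $t=1/2$ into the interior piece and the half-infinite cylinder $L^2[1/2,\infty)\hat{\boxtimes}\mathpzc{M}$. Impose the APS condition $P_0$ at the cut on both pieces. On the half-infinite cylinder with this condition, the operator is $D_0$ shifted, hence invertible. Cutting and imposing complementary APS conditions is then realized as a deformation. We use the parametrices $r$ from Lemma~\ref{infcylfred} and $r_\lambda$ from the proof of Proposition~\ref{regularapspsps}, which have identical interior parts and whose cylinder parts $\Psi_\infty(S)^{-1}$ and $D_0^{-1}$ are both inverses modulo compacts.

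The main obstacle is making this rigorous in $K_*(B)$. There is no Hilbert-space Fredholm index via kernel dimensions, so the index must be handled through classes of parametrices. Our plan is Bunke's relative index theorem in $KK$ form, by the following chain of equalities:
\begin{enumerate}
\item Form the direct sum $T_{(0,\infty)}\oplus(-\Psi_\infty(S)\text{ restricted suitably})$.
\item Compare it with $T_{\rm APS}(0)\oplus D_0\oplus(\text{doubled cylinder})$.
\item Use the Clifford-symmetry trick of Lemma~\ref{cliffsymlem}, built from the partition $\chi_1^2+\chi_2^2=1$, to show the two sums differ by a cycle admitting a Clifford symmetry. Such a cycle has zero index.
\end{enumerate}
The auxiliary operators $\Psi_\infty(S)$ and $D_0$ are invertible, by Lemma~\ref{invertinginfsusp} and the lemma preceding Proposition~\ref{regularapspsps}. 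Their indices therefore vanish, and the equality $\ind_B(T_{\rm APS}(A))=\ind_B(T_\infty(A))$ follows.

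The delicate point is verifying that the unitary swap of the cylinder ends respects the APS boundary domains. Since $P_0$ commutes with $S$ and $\gamma_0$ acts by trace at $t=0$, this should reduce to the explicit Fourier formula in Lemma~\ref{invertinginfsusp}.
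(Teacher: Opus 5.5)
Your strategy is the right one. It matches the argument the paper imports from Wahl, which is where the invertible half-cylinder operator $D_0$ comes from: once the boundary operator is invertible, a Bunke-type swap compares $T_{(0,\infty)}$ with $T_{\rm APS}(0)$, using that the auxiliary operators $\Psi_\infty(S)$ and $D_0$ are invertible. The decisive step, however, fails as you set it up, because a swap built from $\chi_1^2+\chi_2^2=1$ only exists between direct sums assembled from the same pieces. Cut at an interior collar point, say $t=1/2$, and take $\chi_2\equiv 1$ near $t=0$. There are four pieces:
\begin{itemize}
\item the interior part $W$ of $\mathfrak{X}$, where $t\geq 1/2$;
\item the inward half-cylinder $W_{\rm cyl}$;
\item the outward end $V_{\rm out}$, namely the collar part $[0,1/2]$ together with the attached infinite cylinder;
\item the APS piece $V_{\rm APS}$, namely $[0,1/2]$ with the condition $P_0\gamma_0 f=0$.
\end{itemize}
Then $T_{(0,\infty)}=W\cup V_{\rm out}$, $D_0=W_{\rm cyl}\cup V_{\rm APS}$, $T_{\rm APS}(0)=W\cup V_{\rm APS}$ and $\Psi_\infty(S)=W_{\rm cyl}\cup V_{\rm out}$. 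So the operator carrying the symmetry is $\tilde T:=T_{(0,\infty)}\oplus D_0\oplus(-T_{\rm APS}(0))\oplus(-\Psi_\infty(S))$. Your grouping, $T_{(0,\infty)}\oplus(-\Psi_\infty(S))$ against $T_{\rm APS}(0)\oplus D_0\oplus(\text{doubled cylinder})$, contains $V_{\rm out}$ twice on one side and $V_{\rm APS}$ twice on the other. No swap between them exists, and neither ``restricted suitably'' nor the doubled cylinder fixes this. In the correct configuration the ``delicate point'' you flag disappears: the swap is the identity between the two copies of $V_{\rm APS}$, so it preserves the APS domains automatically. The Fourier formula of Lemma \ref{invertinginfsusp} has nothing to do with domain compatibility; it only gives $\ind\Psi_\infty(S)=0$.

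Two further steps are missing.

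First, Lemma \ref{cliffsymlem}, and the vanishing of the index of a cycle with a Clifford symmetry, concern closed cycles with compact resolvent. Here the operators are only Fredholm, and the swap symmetry $J$ anticommutes with $\tilde T$ only up to Clifford multiplication by $\chi_j'$. You need the Fredholm version. The anticommutator $\{\tilde T,J\}$ is bounded and supported in the collar, so it is relatively compact with respect to $\tilde T$ by local compactness of the resolvents. From $(\tilde T+sJ)^2=\tilde T^2+s^2+s\{\tilde T,J\}$, the operators $\tilde T+sJ$ form a path of $B$-Fredholm operators on a fixed domain, invertible once $s>\|\{\tilde T,J\}\|$. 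Hence $\ind\tilde T=0$, and invertibility of $D_0$ and $\Psi_\infty(S)$ gives the theorem.

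Second, in your reduction the APS side is not merely a bounded perturbation. Gluing on $[0,R]$ replaces $\mathpzc{N}$ by a larger module. Switching off the collar perturbation therefore leaves the APS realization of $\mathfrak{X}$ with a lengthened collar, and you never show that collar length does not affect the APS index. It is simpler not to glue at all:
\begin{itemize}
\item Add $\phi\hat\boxtimes A$ to $T_{\rm APS}(A)$ on $\mathpzc{N}$, with $\phi$ supported in $[0,\epsilon)$ and equal to $1$ near $t=0$. This is a bounded perturbation with the same domain.
\item Change $T_\infty(A)$ by the corresponding compactly supported term. It is relatively compact, since $A=\varphi(S)A$ for some $\varphi\in C_c(\R)$ and so $A$ is compact.
\item Perform the swap inside the region where all four operators equal $\partial\hat\boxtimes(S+A)$, with $S+A$ invertible.
\end{itemize}
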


The proof of this theorem goes ad verbatim to \cite[Proof of Proposition 2.1]{WahlProductAPS}.

We shall need to deal with $KK$-bordisms that are not necessarily very full. This will be done by means of a very full modification, constructed in the spirit of Proposition \ref{veryfullperturb}. Recall the construction of $\mathfrak{X}_{\mathbb{D},B}$ and $\mathfrak{Y}_{S^1,B}$ from Example \ref{prototypveryfull} (see page \pageref{prototypveryfull}).

\begin{define}
\label{verefubeubd}
Let $\mathfrak{X}$ be a $(\C,B)$-bordism. Define its very full modification as 
$$\mathfrak{X}_{\rm vf}:=\mathfrak{X}+\mathfrak{X}_{\mathbb{D},B}.$$
\end{define}

\begin{theorem}
\label{veryfullthemaadn}
Let $\mathfrak{X}$ be a $(\C,B)$-bordism. Its very full modification $\mathfrak{X}_{\rm vf}$ satisfies the following:
\begin{enumerate}
\item $\partial \mathfrak{X}_{\rm vf}=\partial \mathfrak{X}+\mathfrak{Y}_{S^1,B}$ and $\mathfrak{X}_{\rm vf}$ is very full.
\item If $\mathfrak{X}$ is very full with trivializing operator $A$, then $A\oplus 0$ is a trivializing operator for $\mathfrak{X}_{\rm vf}$ and 
$$\ind_B((T\oplus T_{\mathbb{D}})_{\rm APS}(A\oplus 0))=\ind_B(T_{\rm APS}(A)),$$
where $T$ and $T_{\mathbb{D}}$ are the interior operator for $\mathfrak{X}$ and $\mathfrak{X}_{\mathbb{D},B}$, respectively.
\end{enumerate}
\end{theorem}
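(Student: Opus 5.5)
Part (1) follows from the definitions once Definition \ref{oppKKBor} is unpacked. The boundary of the direct sum is $\partial\mathfrak{X}_{\rm vf}=\partial\mathfrak{X}+\partial\mathfrak{X}_{\mathbb{D},B}=\partial\mathfrak{X}+\mathfrak{Y}_{S^1,B}$, using Example \ref{prototypveryfull}. For very fullness we apply Proposition \ref{veryfullperturb} with $\mathfrak{Y}=\partial\mathfrak{X}$ and $\mathfrak{Y}'=\mathfrak{Y}_{S^1,B}$. The operator $D_{S^1,B}$ is invertible, since $i\partial_x+\tfrac12$ has spectrum $\Z+\tfrac12$. Moreover $\chi_{(-\infty,0]}(D_{S^1,B})=P_-$ has range isomorphic to $L^2(S^1,B)$, which is a free, infinitely generated $B$-module because $L^2(S^1)\otimes B\cong \ell^2(\Z)\otimes B=\mathcal{H}_B$. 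Hence $\partial\mathfrak{X}+\mathfrak{Y}_{S^1,B}$ is very full, and by definition so is $\mathfrak{X}_{\rm vf}$.

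For part (2), we first check that $A\oplus 0$ is trivializing. Self-adjointness and oddness are inherited. The spectral localization condition holds with the same $R$, because $\psi(S\oplus D_{S^1,B})(A\oplus 0)=\psi(S)A\oplus 0$. Invertibility follows because $(S+A)\oplus D_{S^1,B}$ is a direct sum of invertibles. Next, the APS-realization decomposes. The module $\mathpzc{N}^{H^1}$ of the sum is the direct sum of the $H^1$-modules, the trace map is $\gamma_0\oplus\gamma_0^{\mathbb{D}}$, and the projection in \eqref{adefeq} splits as $P_{A\oplus 0}=P_A\oplus P_0^{S^1}$. Therefore
$$(T\oplus T_{\mathbb{D}})_{\rm APS}(A\oplus 0)=T_{\rm APS}(A)\oplus (T_{\mathbb{D}})_{\rm APS}(0).$$
By additivity of the $B$-Fredholm index, it then remains to show $\ind_B((T_{\mathbb{D}})_{\rm APS}(0))=0$.

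The main obstacle is this vanishing. The plan is to reduce to the scalar case: the bordism $\mathfrak{X}_{\mathbb{D},B}$ is the exterior tensor product of the scalar disk bordism $\mathfrak{X}_{\mathbb{D},\C}$ with $B$. All operators are of the form $(\cdot)\otimes 1_B$ and the boundary projection is $\chi_{[0,\infty)}(D_{S^1,\C})\otimes 1_B$. Consequently the index equals $\iota_*\ind((T_{\mathbb{D},\C})_{\rm APS}(0))$, where $\iota:\C\to B$ is the unital inclusion, and it suffices to compute a scalar APS index on the disk.

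For the scalar computation, invertibility of $D_{S^1}$ removes any kernel correction on the boundary. The APS theorem on the flat disk (with the torpedo metric) then gives the index as $\int$ of a local density, which vanishes for the flat spin$^c$ structure, minus $\eta/2$, which vanishes by the symmetry of the spectrum $\Z+\tfrac12$ of $i\partial_x+\tfrac12$. The combined operator $\mathrm{diag}(D_{S^1},-D_{S^1})$ also pairs the two halves symmetrically. Hence the scalar index is $0$. As a fallback, one can use Theorem \ref{apsthemams} to pass to $T_\infty(0)$ on the disk with a cylindrical end and argue via a symmetry interchanging the two summands, which anticommutes with the operator and exchanges the $\pm$ parts, so the index equals its own negative and is therefore $0$.
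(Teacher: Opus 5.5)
Your proposal is correct and follows the paper's proof: item (1) comes from Proposition \ref{veryfullperturb}, and item (2) comes from splitting off $(T_{\mathbb{D}})_{\rm APS}(0)$ as a direct summand and reducing its vanishing to the scalar disc by functoriality along $\C\to B$; at that last step the paper simply asserts that the APS-index of the disc is zero, which you justify further. That extra justification via the APS index theorem is fine, with one small correction: a torpedo metric is not flat. The interior term vanishes because the degree-two part of $\hat{A}$ is zero for the spin$^c$ structure with trivial determinant line, while $\eta=0$ and $h=0$ follow from the symmetric, kernel-free boundary spectrum $\Z+\tfrac12$.
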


\begin{proof}
The first item follows from Proposition \ref{veryfullperturb}. For the second item, we first note that the boundary operator of $\mathfrak{X}_{\mathbb{D},B}$ is invertible, so $0$ is a trivializing operator for $\mathfrak{X}_{\mathbb{D},B}$. Therefore, $A\oplus 0$ is a trivializing operator for $\mathfrak{X}_{\rm vf}$ and the second item is equivalent to 
$$\ind_B((T_{\mathbb{D}})_{\rm APS}(0))=0.$$
This follows from functoriality ensuring that $\ind_B((T_{\mathbb{D}})_{\rm APS}(0))=\ind_\C((T_{\mathbb{D}})_{\rm APS}(0))\cdot [1_B]=0$ since the APS-index of the disc is zero. 
\end{proof}

The first item of Theorem \ref{veryfullthemaadn}, together with Theorem \ref{existsppse} (see page \pageref{existsppse}) and Proposition \ref{trivopprprop} (see page \pageref{trivopprprop}), shows that the very full modification of a bordism always admits a trivializing operator.

The second item of Theorem \ref{veryfullthemaadn} shows that the APS-index of a bordism is independent of very full modification. The reader should take this statement with a grain of salt, since the APS-index still depends on trivializing operators and there are more trivializing operators for the very full modification. However, it allows us to make the following definition.

\begin{define}
Let $\mathfrak{X}$ be a $(\C,B)$-bordism with trivializing operator $A$, we define 
$$\ind_{\rm APS}(\mathfrak{X},A):=\ind_B(T_{\rm APS}(A))\equiv \ind_B(T_{\infty}(A)).$$
More generally, if $\mathfrak{X}$ is a $(\C,B)$-bordism and $A$ is a trivializing operator for the very full modification, we set 
$$\ind_{\rm APS}(\mathfrak{X},A):=\ind_{\rm APS}(\mathfrak{X}_{\rm vf},A).$$
\end{define}

\begin{ex}[APS-theory for proper actions]
Let us study some further details for the chains studied in Example \ref{diraconcstarbundleex} and \ref{groupactionsex}. Consider a (second countable) locally compact group $G$ acting properly and cocompactly by smooth isometries on a Riemannian manifold with boundary $W$. Consider a unital $G-C^*$-algebra $B$ and a $G$-equivariant Clifford $B$-bundle $\mathcal{E}_B\to \overline{W}$. Pick a complete Dirac operator $\slashed{D}_\mathcal{E}$. By the arguments of Example \ref{groupactionsexbord}, $(L^2(W,\mathcal{E}_B), D_{\mathcal{E},{\rm min}})$ fits into a $G$-equivariant $(\C,B)$-bordism. Applying assembly, we arrive at a $(\C,B\rtimes G)$-bordism that we by an abuse of notation write as $\mu(L^2(W,\mathcal{E}_B), D_{\mathcal{E},{\rm min}})$. The constructions above therefore gives meaning to APS-index theory for $G$-equivariant Clifford $B$-bundle $\mathcal{E}_B\to \overline{W}$ over a proper cocompact manifold with boundary $W$, and for a trivializing operator $A$ we arrive at a higher APS-index 
$$\ind_{\rm APS}(\mu(L^2(W,\mathcal{E}_B), D_{\mathcal{E},{\rm min}}),A)\in K_*(B\rtimes G).$$
APS-index theory for proper cocompact actions have been studied in \cite{guohochs,hochswang} for invertible boundary operators. 

We remark that it is an open problem to show that in this context, the nullbordant cycle $\partial \mu(L^2(W,\mathcal{E}_B), D_{\mathcal{E},{\rm min}})$ is very full. The direct method of proof from \cite{LP}, see in particular \cite[page 358]{LP}, shows that $\partial \mu(L^2(W,\mathcal{E}_B), D_{\mathcal{E},{\rm min}})$ is very full as soon as there exists a collection $(e_k)_{k\in \N}\subseteq \mu(L^2(\partial W,\mathcal{E}_B^\partial))\otimes_{B\rtimes G}\mathcal{M}(B\rtimes G)$ with $\langle e_k,e_j\rangle=\delta_{jk}1$ and $\chi(\mu(D_{\mathcal{E}}^\partial))e_k=e_k$ for a spectral cut.
\end{ex}

\begin{ex}[APS-theory for Lipschitz manifolds with boundary]

We continue to study Lipschitz manifolds as in Subsection \ref{topolmfsex} and \ref{bordimadinklm}. Consider a unital $C^*$-algebra $B$ and a hermitean $B$-bundle $\mathcal{E}_B\to W$ with connection over a compact Riemannian Lipschitz manifold with boundary $W$. Form the twisted signature operator $D_{{\rm sign},\mathcal{E}}$ on $L^2(W;\wedge^*\otimes \mathcal{E}_B)$ as in Subsection \ref{topolmfsex}.The constructions above therefore gives meaning to APS-index theory for twisted signature operators on Lipschitz manifolds with boundary $W$, and for a trivializing operator $A$ we arrive at a higher APS-index 
$$\ind_{\rm APS}((L^2(W;\wedge^*\otimes \mathcal{E}_B), D_{{\rm sign},\mathcal{E}}),A)\in K_*(B).$$
APS-index theory for Lipschits manifolds have been studied in \cite{hilsumlip} for the scalar case $B=\C$ where there is no need for trivializing operators.

\end{ex}

\subsection{Gluing, relative indices and spectral flows}
\label{subsec:glue}

We saw how to define the APS-index of a bordism equipped with a trivializing operator in the last subsection. It is at this stage unclear how the APS-index depends on the trivializing operator. The reader can rest easy, its dependence on the trivializing operator will behave in the same way as it does on a manifold. We shall make this analogy mathematically specific in this subsection by proving gluing formulas and spectral flow formulas for the APS-index. The results of this subsection are well known (see e.g. \cite{LP}) in the case of manifolds with boundary, and most of the proofs extend ad verbatim from that case.

\begin{theorem}[Gluing formula for the APS-index]
\label{gluingapsthememe}
Let $\mathfrak{X}_1$ and $\mathfrak{X}_2$ be $(\C,B)$-bordisms with 
$$\partial \mathfrak{X}_1=\mathfrak{Y}_1+(-\mathfrak{Y}_2)\quad\mbox{and}\quad \mathfrak{X}_2=\mathfrak{Y}_2+(-\mathfrak{Y}_3)$$ 
for very full cycles $\mathfrak{Y}_1$, $\mathfrak{Y}_2$ and $\mathfrak{Y}_3$. For any trivializing operators $A_1$, $A_2$, and $A_3$ of  $\mathfrak{Y}_1$, $\mathfrak{Y}_2$ and $\mathfrak{Y}_3$, respectively, it holds that 
\begin{align}
\label{gluingformeq}
\ind_{\rm APS}(\mathfrak{X}_1\#_{\mathfrak{Y}_2}\mathfrak{X}_2,A_1\oplus -A_3)=\ind_{\rm APS}&(\mathfrak{X}_1,A_1\oplus -A_2)+\\
\nonumber &+\ind_{\rm APS}(\mathfrak{X}_2,A_2\oplus -A_3).
\end{align}
More generally, if $\mathfrak{Y}_1$, $\mathfrak{Y}_2$ and $\mathfrak{Y}_3$ are not necessarily very full and $A_1$, $A_2$, and $A_3$ are trivializing operators for the very full modifications of $\mathfrak{Y}_1$, $\mathfrak{Y}_2$ and $\mathfrak{Y}_3$ the equality \eqref{gluingformeq} still holds. 
\end{theorem}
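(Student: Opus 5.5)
We follow the classical proof for manifolds with boundary (cf.\ \cite{LP,bunkerelative}), working throughout with the infinite-cylinder realization $T_\infty(A)$. By Theorem \ref{apsthemams}, each APS-index in \eqref{gluingformeq} equals the index of the corresponding operator with infinite cylindrical ends, perturbed by the trivializing operator along the ends. For the general case we first reduce to very full boundaries. We pass to very full modifications; by Definition \ref{verefubeubd} this adds copies of $\mathfrak{X}_{\mathbb{D},B}$ to each bordism. The extra disc bordisms contribute zero index, by the argument in Theorem \ref{veryfullthemaadn}(2). When gluing along $\mathfrak{Y}_2+\mathfrak{Y}_{S^1,B}$, the added disc pieces glue to a cycle bordant to zero, and they carry invertible boundary operators with trivializing operator $0$. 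Hence the general statement reduces to the very full one.

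For the very full case we set up a Bunke-type relative index argument. We consider four cycles. The first is $\mathfrak{Z}_{12}$, obtained from $\mathfrak{X}_1$ by attaching an infinite cylinder $\Psi_{(0,\infty)}$ on the $\mathfrak{Y}_1$ and $\mathfrak{Y}_2$ ends, with operator $T_{1,\infty}(A_1\oplus -A_2)$. The second is $\mathfrak{Z}_{23}$, built in the same way from $\mathfrak{X}_2$. The third is $\mathfrak{Z}_{13}$, built from $\mathfrak{X}_1\#_{\mathfrak{Y}_2}\mathfrak{X}_2$ with its two cylindrical ends. The fourth is the model cylinder $\Psi_\infty(\mathfrak{Y}_2)$ over $\R$, with operator $\partial_\infty\hat{\boxtimes}D_2+\chi_\R\hat{\boxtimes}A_2$ suitably sign-adjusted, where $\chi_\R$ equals $1$ near $\pm\infty$. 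By Lemma \ref{infcylfred} and its perturbed analogue, each of these operators is $B$-Fredholm.

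For the model cylinder, the operator is $\partial\hat{\boxtimes}(D_2+A_2)$ up to a compactly supported bounded perturbation, and $D_2+A_2$ is invertible. Lemma \ref{invertinginfsusp} then shows that this operator is invertible after a homotopy of the cutoff, so its index is $0$. Next we cut $\mathfrak{Z}_{12}\sqcup\mathfrak{Z}_{23}$ and $\mathfrak{Z}_{13}\sqcup\Psi_\infty(\mathfrak{Y}_2)$ along the $\mathfrak{Y}_2$-collar and exchange the halves, as in Lemma \ref{cliffsymlem}. This produces a unitary $U$ between the underlying modules, defined with a partition of unity $\chi_1^2+\chi_2^2=1$ on the collar. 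Under $U$ the two direct-sum operators differ only by terms of the form $b(\chi_j')$, which are compactly supported and therefore relatively compact. The relative index theorem then gives
$$\ind(\mathfrak{Z}_{12})+\ind(\mathfrak{Z}_{23})=\ind(\mathfrak{Z}_{13})+\ind(\Psi_\infty(\mathfrak{Y}_2)).$$
Combined with the vanishing of the cylinder index, this yields \eqref{gluingformeq}. To carry out this step at the level of $K_*(B)$, we write the indices as classes of the bounded transforms $F=T(1+T^2)^{-1/2}$. Lemma \ref{locallythesame} shows that $UF_{12\oplus 23}U^*-F_{13\oplus\mathrm{cyl}}$ is compact. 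The two $(\C,B)$-Kasparov cycles are then compact perturbations of each other, so their classes agree.

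The main obstacle is making this compact-perturbation claim rigorous, since Lemma \ref{locallythesame} is stated for half-closed chains on a single module and requires the operators to be locally the same. We handle it by choosing the cut so that $U$ intertwines the actions of $b(C^\infty_c)$ away from the cut. The difference of the operators then lies in the algebra generated by $b(\chi_j)$ and their derivatives. Locality of the difference follows from condition b) of Definition \ref{HilBorDef}, and compactness follows from the half-closedness of the interiors; the remainder is bookkeeping as in the proof of Lemma \ref{infcylfred}. A secondary point is sign conventions: the $-\mathfrak{Y}_2$ orientation changes $A_2$ to $-A_2$. We expect this to match $P_{A}$ in \eqref{adefeq}, so that the same projection appears on both sides of the cut.
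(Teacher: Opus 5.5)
Your argument is essentially the paper's: Bunke's relative index theorem applied to $(T_1)_\infty(A_1\oplus -A_2)$, $(T_2)_\infty(A_2\oplus -A_3)$ and $(T_1\# T_2)_\infty(A_1\oplus -A_3)$, with the cut-and-swap done as in Lemma \ref{cliffsymlem}. The fourth piece is the model cylinder over $\mathfrak{Y}_2$, which is invertible up to a relatively compact change by Lemma \ref{invertinginfsusp} and so has index zero.

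One small correction to your last step. On infinite cylinders $(1+T^2)^{-1}$ is not $B$-compact, so $(\mathpzc{N},F_T)$ is not a Kasparov $(\C,B)$-cycle. Also, Lemma \ref{locallythesame} does not apply verbatim with $\mathcal{A}=\C$. It is cleaner to observe that $V:=UT_{12\oplus 23}U^*-T_{13\oplus{\rm cyl}}$ is bounded and relatively compact, so $s\mapsto T_{13\oplus{\rm cyl}}+sV$ is a path of self-adjoint $B$-Fredholm operators with constant index.
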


\begin{proof}
The theorem follows using Bunke's proof of his relative index theorem \cite{bunkerelative}. The details are analogous to  Lemma \ref{cliffsymlem} (see page \pageref{cliffsymlem}) using the operators $(T_1\# T_2)_\infty(A_1\oplus -A_3)$, $(T_1)_\infty(A_1\oplus -A_2)$ and $(T_2)_\infty(A_2\oplus -A_3)$.
\end{proof}

To study the dependence of the APS-index on the trivializing operator, we will introduce the spectral flow of spectral sections. This notion is well studied in the case of manifolds \cite{LP} and for $KK$-bordisms it was studied in \cite{hilsum??}. Recall from Lemma \ref{biggerspecseclem} (see page \pageref{biggerspecseclem}) that if $P$ and $P'$ are two spectral sections of a $(\C,B)$-cycle $\mathfrak{Y}=(\mathpzc{E},D)$, there is a spectral section $Q$ commuting with $P$ and $P'$ and $QP=P$ and $QP'=P'$. In particular, $Q-P$ and $Q-P'$ are $B$-compact projections on $\mathpzc{E}$.

\begin{define}
Let $P$ and $P'$ be two spectral sections of an odd $(\C,B)$-cycle $\mathfrak{Y}=(\mathpzc{E},D)$. For a choice of $Q$ as in the preceeding paragraph, we define 
$$[P-P']:=[Q-P']-[Q-P]\in K_0(B).$$
\end{define}

A short $K$-theoretical argument shows that $[P-P']$ does not depend on the choice of $Q$. In the even case, we need a slightly more convoluted definition constructed following \cite[Definition 8]{LP}. 

If $P$ is a spectral section of an even $(\C,B)$-cycle $\mathfrak{Y}=(\mathpzc{E},D)$, where the grading of $\mathpzc{E}$ will be denoted by $\gamma$, we define the projection valued function $P_0:[0,1]\to \End_B^*(\mathpzc{E})$ by 
$$P_0(t):=\frac{1}{2}\left(\cos(\pi t)\gamma+1\right)+\sin(\pi t) (2P-1).$$
If we decompose in terms of the grading $\mathpzc{E}=\mathpzc{E}_+\oplus \mathpzc{E}_-$, we can write 
$$P=\frac{1}{2}
\begin{pmatrix}
1& u_P\\
u_P^*& 1
\end{pmatrix},$$
for a unitary $u:\mathpzc{E}_-\to \mathpzc{E}_+$. In this decomposition, 
$$P_0(t)=\frac{1}{2}
\begin{pmatrix}
\cos(\pi t)+1& u_P\sin(\pi t)\\
u_P^*\sin(\pi t)& -\cos(\pi t)+1
\end{pmatrix}.$$
In fact, if $P=\chi_{[0,\infty)}(D+A)$ for a trivializing operator $A$, we have that 
$$P_0(t)=\chi_{[0,\infty)}\left(\gamma|D+A|\cos(\pi t)+(D+A)\sin(\pi t)\right).$$
It follows that $P_0$ is a spectral section for the odd $(\C,B\otimes C_0(0,1))$-cycle $(\mathpzc{E}\otimes C_0(0,1),D_0)$ where 
$$D_0(t)=\frac{\gamma|D+A|\cos(\pi t)+(D+A)\sin(\pi t)}{t(1-t)}.$$

The paragraph above and Lemma \ref{biggerspecseclem} (see page \pageref{biggerspecseclem}) allow us to conclude that if $P$ and $P'$ are two spectral sections for an even $(\C,B)$-cycle $\mathfrak{Y}=(\mathpzc{E},D)$, then $P_0-P_0'\in C_0((0,1),\mathbb{K}_B(\mathpzc{E}))$ and that there exists a spectral section $Q_0$ for $(\mathpzc{E}\otimes C_0(0,1),D_0)$ commuting with $P_0$ and $P_0'$ with $Q_0-P_0,Q_0-P_0'\in C_0((0,1),\mathbb{K}_B(\mathpzc{E}))$ being projections. 

\begin{define}
Let $P$ and $P'$ be two spectral sections of an even $(\C,B)$-cycle $\mathfrak{Y}=(\mathpzc{E},D)$. For a choice of $Q_0$ as in the preceeding paragraph, we define 
$$[P-P']:=[Q_0-P_0']-[Q_0-P_0]\in K_1(B).$$
\end{define}

A short $K$-theoretical argument shows that $[P-P']$ does not depend on the choice of $Q_0$. Following the terminology of \cite{LP}, we make the following definition.

\begin{define}
Let $\mathfrak{Y}=(\mathpzc{E},D)$ be a $(\C,B)$-cycle of parity $*$ admitting a trivializing operator. If $A$ and $A'$ are trivializing operators of $\mathfrak{Y}$, we define 
$$\mathrm{sf}(\mathfrak{Y};A,A'):=[\chi_{[0,\infty)}(D+A)-\chi_{[0,\infty)}(D+A')]\in K_{*+1}(B).$$
\end{define}

\begin{theorem}[APS-index of a cylinder]
\label{apscylinderthm}
Let $\mathfrak{Y}$ be a $(\C,B)$-bordism admitting a trivializing operator. For two trivializing operators $A$ and $A'$ of $\mathfrak{Y}$, it holds that 
$$\ind_{\rm APS}(\Psi(\mathfrak{Y}),-A\oplus A' )=\mathrm{sf}(\mathfrak{Y};A,A'),$$
where we have used the short hand notation $\Psi(\mathfrak{Y})$ for the cylinder bordism 
$$(\Psi(\mathfrak{Y}),(\id,\chi_{[0,1/4],[3/4,1]}),-\mathfrak{Y}\oplus \mathfrak{Y}).$$
\end{theorem}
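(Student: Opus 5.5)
Via Theorem \ref{apsthemams}, we compute $\ind_B(T_\infty)$ instead of $\ind_B(T_{\rm APS})$. For the cylinder $\Psi(\mathfrak{Y})$ with trivializing operator $-A\oplus A'$ on the two ends, attaching infinite cylinders on both ends gives (up to the weak isomorphism of Theorem \ref{infcylvsord}) the line $\R$ with module $L^2(\R)\hat{\boxtimes}\mathpzc{E}$. The operator is
$$\mathcal{D}:=\partial_\infty\hat{\boxtimes}D+\mathcal{A}, \qquad \mathcal{A}:=\chi_-\hat{\boxtimes}A+\chi_+\hat{\boxtimes}A',$$
with $\chi_\pm$ cutoffs supported near $\pm\infty$. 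It is enough to show $\ind_B(\mathcal{D})=\mathrm{sf}(\mathfrak{Y};A,A')$. The sign conventions for $-\mathfrak{Y}$ should make the end at $-\infty$ carry the operator $D+A$, so we will check orientations against the definition of $P_A$ in \eqref{adefeq}.

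First, we normalize using the gluing formula, Theorem \ref{gluingapsthememe}, applied to a cylinder cut into pieces. This makes $\ind_{\rm APS}(\Psi(\mathfrak{Y}),-A\oplus A')$ additive in the sense of a cocycle: the index for $(A,A'')$ equals the index for $(A,A')$ plus the index for $(A',A'')$. The spectral flow has the same additivity, since $[P-P'']=[P-P']+[P'-P'']$ follows from choosing a common dominating $Q$ via Lemma \ref{biggerspecseclem}. Also, if $A$ and $A'$ are joined by a norm continuous path of trivializing operators, both sides are unchanged. The index is homotopy invariant because $\mathcal{D}$ varies as a norm continuous family of Fredholm operators, and $\mathrm{sf}$ is invariant because the spectral projections vary continuously. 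By Proposition \ref{trivopprprop} every spectral section is $\chi_{[0,\infty)}(D+A)$ for some trivializing $A$. It therefore suffices to treat a reference operator $A_0$ together with $A'$ whose spectral section $P'=\chi_{[0,\infty)}(D+A')$ satisfies $P'\le P_0$, with $P_0-P'$ a $B$-compact (finitely generated) projection. A general pair reduces to this by passing through a dominating $Q$.

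In the odd case with $P'\le P$, we deform within trivializing operators to a split model. Take $A'=A+2\lambda(P-P')$-type modifications commuting with $D+A$, arranged so that $D+A$ and $D+A'$ agree on the complement of the finitely generated module $\mathpzc{F}:=(P-P')\mathpzc{E}$. Then $\mathcal{D}$ splits as an invertible operator, which has index zero by Lemma \ref{invertinginfsusp} after compressing to the complement of $\mathpzc{F}$, plus an operator on $L^2(\R)\otimes\mathpzc{F}$ of the form $\partial_x+f(x)$. Here $f$ is a scalar function that changes sign from positive to negative. This is the standard one-dimensional computation $\ind(\partial_x+f)=\pm[\mathpzc{F}]$, with explicit kernel $e^{-\int f}\otimes\mathpzc{F}$ and trivial cokernel. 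Matching signs gives $[Q-P']-[Q-P]=[\mathpzc{F}]$. In the even case, we reduce to the odd one with the suspension picture used above to define $[P-P']$. The cycle $(\mathpzc{E}\otimes C_0(0,1),D_0)$ is odd over $B\otimes C_0(0,1)$, the spectral sections are $P_0,P_0'$, and Bott periodicity identifies the $K_0(SB)$ odd-case identity with the $K_1(B)$ statement. The odd APS index of the cylinder of $(\mathpzc{E},D)$ must be matched with the index of the suspended cylinder, using the exterior product compatibility of $\Psi_\infty$.

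The main obstacle is the reduction step: realizing an arbitrary pair of spectral sections by trivializing operators that differ only on a finitely generated module while keeping invertibility along the deformation. I would first try Lemma \ref{biggerspecseclem} together with the explicit form $A=\lambda(2P-1)\psi(D)-D\psi(D)$ from the proof of Proposition \ref{trivopprprop}, and deform linearly. The even-case sign bookkeeping is the secondary difficulty.
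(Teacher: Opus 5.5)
Your route differs from the paper's, and the step you flagged as the main obstacle does not work as you describe it.

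\textbf{The paper's route.} The paper never passes to the line. It keeps the APS realization on $[0,1]$ and absorbs $A'$ into $D$, so that $D$ is invertible and commutes with $P_{A'}$. Via Lemma \ref{biggerspecseclem}, Proposition \ref{trivopprprop} and Theorem \ref{gluingapsthememe} it reduces to $P_A\geq P_{A'}$. It then solves $(\partial_t+D)\xi=0$ explicitly: $\xi(t)=\mathrm{e}^{-tD}\xi(0)$, and the two boundary conditions force $\xi(0)\in P_A\mathpzc{E}\cap(1-P_{A'})\mathpzc{E}=(P_A-P_{A'})\mathpzc{E}$. The adjoint problem has trivial kernel. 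In $T_{\rm APS}$ the operator $A$ enters only through the projection $P_A$. So the index is visibly a function of the spectral sections, and one never needs an operator with spectral projection $P_A$ that is compatible with $P_{A'}$. In your line picture $A$ enters the operator itself, which is exactly why you hit the splitting problem.

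\textbf{The gap.} You want modifications ``commuting with $D+A$'' so that $D+A$ and $D+A'$ agree off $\mathpzc{F}=(P-P')\mathpzc{E}$. For arbitrary spectral sections $P'\leq P$, nothing makes $D+A$ preserve $\mathpzc{F}$, so this cannot be arranged by perturbing $A$. It can be repaired by compressing the operator attached to the smaller section.

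Put $R:=P-P'$. The spectral-cut conditions give $\psi(D)R=R\psi(D)=0$ for bounded $\psi$ supported far out, so $R\mathpzc{E}\subseteq\bigcap_k\Dom(D^k)$, exactly as for trivializing operators. Set
\[
D_\pm:=(1-R)(D+A')(1-R)\pm\lambda R .
\]
Since $D+A'$ commutes with $P'$ and $P'R=0$, the compression $(1-R)(D+A')(1-R)$ equals $P'(D+A')P'\oplus(1-P)(D+A')(1-P)$. The first block is positive invertible. The second is the compression of $(D+A')|_{(1-P')\mathpzc{E}}\leq-\epsilon$ to $(1-P)\mathpzc{E}$, hence negative invertible. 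Therefore:
\begin{itemize}
\item $D_\pm-D$ are trivializing operators;
\item $\chi_{[0,\infty)}(D_+)=P$ and $\chi_{[0,\infty)}(D_-)=P'$;
\item $D_+$ and $D_-$ commute with $R$ and agree on $(1-R)\mathpzc{E}$.
\end{itemize}

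You also need that the line index depends only on the spectral sections. This follows from your homotopy argument. If $\chi_{[0,\infty)}(D+A)=\chi_{[0,\infty)}(D+\tilde A)$, then every $D+(1-s)A+s\tilde A$ commutes with this projection. It is $\geq\epsilon$ on its range and $\leq-\epsilon$ on its complement, so it stays invertible. With these two points, and after making $\chi_-+\chi_+=1$ by a compactly supported bounded perturbation, your splitting and Lemma \ref{invertinginfsusp} go through.

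\textbf{The sign.} Your one-dimensional computation is self-contradictory as written. If $f\to+\lambda$ at $-\infty$ and $f\to-\lambda$ at $+\infty$, then $\mathrm{e}^{-\int f}$ grows at both ends. So $\partial_x+f$ has kernel $0$ and cokernel $\mathpzc{F}$; the kernel is $\mathpzc{F}$ only when $f$ goes from negative to positive. The theorem is a sign-sensitive identity, so you must actually fix the orientation of the ends rather than leave it as $\pm$. The paper's finite-cylinder computation does this, yielding $\ker T^+\cong(P_A-P_{A'})\mathpzc{E}$ and $\ker T^-=0$.

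\textbf{The even case.} The paper only writes out the odd case. Your suspension reduction still needs a genuine identification of the even cylinder index in $K_1(B)$ with an odd index over $C_0(0,1)\otimes B$; the sketch does not supply it.
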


\begin{proof}
This result is found in \cite[Section 3, Theorem 6 and Theorem 7]{LP} and the proof extends to the setting of $KK$-bordisms. For clarity, we will give a direct proof in the case that $\mathfrak{Y}$ is odd. 

First, we make some reductions. Write $\mathfrak{Y}=(\mathpzc{E},D)$. Recall the notation for $P_A$ from Equation \eqref{adefeq} (see page \pageref{adefeq}). After replacing $D$ with $D+A'$, we can assume that $A'=0$ and that $D$ is invertible and commutes with $P_{A'}$. By Lemma \ref{biggerspecseclem} (see page \pageref{biggerspecseclem}), in combination with Proposition \ref{trivopprprop} (see page \pageref{trivopprprop}), and Theorem \ref{gluingapsthememe} (see page \pageref{gluingapsthememe}) we can restrict our attention to the case that $A$ satisfies that $P_A\geq P_{A'}$ and that $P_{A'}$ commutes with $P_{A}$. 

We need to compute the index of $T_{\rm APS}(-A\oplus A')^+$ which we will write $T^+$ for notational simplicity. The operator $T^+$ takes the form
$$T^+=\frac{\partial}{\partial t}+D,$$
with the domain given by 
$$\Dom(T^+)=\{\xi\in H^1([0,1],\mathpzc{E})\cap L^2([0,1],\Dom(D)):(1-P_A)\xi(0)=P_{A'}\xi(1)=0\}.$$ 
For any $\xi\in \Dom(T^+)$, it holds that 
\begin{align*}
\xi(0)&=P_A\xi(0)\in P_A\mathpzc{E}\subseteq \Dom(\mathrm{e}^{-D})\quad\mbox{and}\\
\xi(1)&=(1-P_{A'})\xi(1)\in (1-P_{A'})\mathpzc{E}\subseteq \Dom(\mathrm{e}^{D}).
\end{align*}
If $\xi\in \ker(T^+)$, then solving $T^+\xi=0$ as a differential equation gives us that 
$$\xi(t)=\mathrm{e}^{-tD}\xi(0)=\mathrm{e}^{(1-t)D}\xi(1).$$ 
As such, for $\xi\in \ker(T^+)$, we have that $\xi(0)=\mathrm{e}^{D}\xi(1)$. Using that $D$ commutes with $P_{A'}$ we conclude that, $\xi\in \ker(T^+)$ if and only if $\xi(t)=\mathrm{e}^{-tD}\xi(0)$ for a 
$$\xi(0)\in P_A\mathpzc{E}\cap P_{A'}\mathpzc{E}= (P_A-P_{A'})\mathpzc{E},$$ 
because $P_A$ commutes with $P_{A'}$. It follows that restriction to $t=0$ defines an isomorphism of $B$-modules
$$\ker(T^+)\cong (P_A-P_{A'})\mathpzc{E}.$$
We conclude that $\ker(T^+)$ is a finitely generated and projective module with 
$$[\ker(T^+)]=[P_A-P_{A'}]\in K_0(B).$$ 
By a similar argument, $\ker(T^-)=\ker((T^+)^*)=0$. Since $T^+$ is a priori known to be Fredholm, 
$$\ind(T^+)=[P_A-P_{A'}].$$
\end{proof}

\begin{theorem}[Spectral flow formula for the APS-index]
\label{sffofmdmd}
Let $\mathfrak{X}$ be a $(\C,B)$-bordism admitting a trivializing operator. For two trivializing operators $A$ and $A'$ of $\mathfrak{X}$, it holds that 
$$\ind_{\rm APS}(\mathfrak{X},A)-\ind_{\rm APS}(\mathfrak{X},A')=\mathrm{sf}(\partial \mathfrak{X};A,A').$$
\end{theorem}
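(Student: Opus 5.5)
The plan is to deduce the formula from the gluing formula (Theorem \ref{gluingapsthememe}) combined with the computation of the APS-index of a cylinder (Theorem \ref{apscylinderthm}). After very full modification we may assume $\partial\mathfrak{X}$ is very full, since by Theorem \ref{veryfullthemaadn} this does not change the indices for trivializing operators of the form $A\oplus 0$. The same reduction should apply to spectral flows, because $\mathfrak{Y}_{S^1,B}$ contributes the same spectral section on both sides and therefore contributes no flow.

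Write $\mathfrak{Y}:=\partial\mathfrak{X}$. The key geometric observation is that gluing a collar onto the boundary does not change the bordism. Concretely, I would view $\mathfrak{X}$ as a bordism from $\emptyset$ to $\mathfrak{Y}$, i.e. $\partial\mathfrak{X}=0+(-(-\mathfrak{Y}))$ up to the sign conventions of Theorem \ref{gluingapsthememe}. I would then glue on the cylinder bordism $\Psi(\mathfrak{Y})$ with boundary $-\mathfrak{Y}\oplus\mathfrak{Y}$. The first step is to check that $\mathfrak{X}\#_{\mathfrak{Y}}\Psi(\mathfrak{Y})$ is weakly isomorphic (Definition \ref{isoofsymwboundinf}) to $\mathfrak{X}$, in fact isomorphic after reparametrizing the collar. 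By Theorem \ref{infcylvsord}, weakly isomorphic bordisms give the same operator $T_\infty(A)$ after attaching infinite cylinders. By Theorem \ref{apsthemams}, they therefore have the same APS-index for any trivializing operator on the outer boundary.

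The second step is to apply Theorem \ref{gluingapsthememe} with $\mathfrak{X}_1=\mathfrak{X}$ and $\mathfrak{X}_2=\Psi(\mathfrak{Y})$, with $A'$ on the glued interface and $A$ on the outer boundary. This gives
\[
\ind_{\rm APS}(\mathfrak{X},A)=\ind_{\rm APS}(\mathfrak{X}\#_{\mathfrak{Y}}\Psi(\mathfrak{Y}),A)=\ind_{\rm APS}(\mathfrak{X},A')+\ind_{\rm APS}(\Psi(\mathfrak{Y}),-A'\oplus A).
\]
By Theorem \ref{apscylinderthm}, the last term equals $\mathrm{sf}(\mathfrak{Y};A',A)$.

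The third step is to reconcile orientation and sign conventions. I must check that this last term is $\mathrm{sf}(\mathfrak{Y};A,A')$ as the statement requires, and not its negative. The check uses the cylinder's orientation, meaning which end carries $-\mathfrak{Y}$ in Theorem \ref{apscylinderthm}, together with the antisymmetry $[P-P']=-[P'-P]$, which is immediate from the definition via a common $Q$. If the signs come out reversed, I would instead glue the cylinder with the opposite orientation, which amounts to exchanging the roles of $A$ and $A'$ in the cylinder.

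I expect this sign bookkeeping to be the main obstacle, since the conventions for $-\mathfrak{Y}$ differ in the even and odd cases. The even case, handled via $P_0$ and $K_1(B)$, would be treated by the same argument, because Theorem \ref{apscylinderthm} covers both parities.
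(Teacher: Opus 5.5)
Your route differs from the paper's. The paper runs Bunke's cut-and-paste argument on the four operators $T\#(-T)$, $T_\infty(A)$, $-T_\infty(-A')$ and $\Psi(D)_\infty(A\oplus -A')$. It discards the double $\mathfrak{X}\#_{\partial\mathfrak{X}}-\mathfrak{X}$ because it is a null-bordant closed cycle and so has zero index. It then passes from the infinite to the finite cylinder with Theorem \ref{apsthemams} and applies Theorem \ref{apscylinderthm}. You instead attach a collar and apply the gluing formula, Theorem \ref{gluingapsthememe}, with $\mathfrak{Y}_1=0$ and $\mathfrak{Y}_2=\mathfrak{Y}_3=-\partial\mathfrak{X}$. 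This avoids both the double and bordism invariance of the index, at the cost of a collar-invariance step. That step holds, but not quite as you phrase it. Rescaling the collar rescales $\partial$, so $\mathfrak{X}\#_{\mathfrak{Y}}\Psi(\mathfrak{Y})$ is not literally (weakly) isomorphic to $\mathfrak{X}$. What is true is that after attaching infinite cylinders the two become isomorphic by a translation along the cylinder. Since the index of $T_\infty(A)$ does not depend on the cutoff defining $A_\infty$, Theorem \ref{apsthemams} then gives equal APS-indices.

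The genuine problem is Step 3, and it cannot be closed the way you plan. The identity you display already determines the sign. With Theorems \ref{gluingapsthememe} and \ref{apscylinderthm} exactly as stated, and using $[P-P']=-[P'-P]$, it reads
\[
\ind_{\rm APS}(\mathfrak{X},A)-\ind_{\rm APS}(\mathfrak{X},A')=\mathrm{sf}(\partial\mathfrak{X};A',A)=-\mathrm{sf}(\partial\mathfrak{X};A,A').
\]
There is no other orientation to glue with. Which trivializing operator sits on the interface is forced. Putting $A$ on the interface and $A'$ outside gives $\ind_{\rm APS}(\mathfrak{X},A')=\ind_{\rm APS}(\mathfrak{X},A)+\mathrm{sf}(\partial\mathfrak{X};A,A')$, which is the same identity, and reflecting the cylinder only relabels its ends.

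An independent check confirms your sign for even $\mathfrak{X}$. Suppose $P_A\geq P_{A'}$ and the index is computed by the component whose domain carries the condition $P_A\gamma_0f=0$; this is the convention in the proof of Theorem \ref{apscylinderthm}. Then the domain for $A$ is the kernel of $(P_A-P_{A'})\gamma_0$ on the domain for $A'$, with complement $\cong (P_A-P_{A'})\mathpzc{M}$. So the index drops by $[P_A-P_{A'}]=\mathrm{sf}(\partial\mathfrak{X};A,A')$. The formula as displayed therefore needs the opposite convention, e.g.\ $\mathrm{sf}(\mathfrak{Y};A,A'):=[P_{A'}-P_A]$.

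The paper's proof does not settle this either. It identifies the index of $\Psi(D)_\infty(A\oplus -A')$ with $\mathrm{sf}(\partial\mathfrak{X};A,A')$ without tracking which end carries which operator. You should fix the convention explicitly and state the sign your computation produces, rather than plan to adjust the gluing.
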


\begin{proof}
Again using Bunke's proof of his relative index theorem \cite{bunkerelative} (for the operators $T\# (-T)$, $T_\infty(A)$, $-T_\infty(-A')$, and $\Psi(D)_\infty(A\oplus -A')$) we have that 
$$\ind_{\rm APS}(\mathfrak{X},A)-\ind_{\rm APS}(\mathfrak{X},A')=\ind_B(\Psi(D)_\infty(A\oplus -A')).$$ 
We are here using that $\mathfrak{X}\#-\mathfrak{X}$ is a nullbordant closed cycle whose index therefore vanishes. By Theorem \ref{apsthemams}, the index of $\Psi(D)_\infty(A\oplus -A')$ is the APS-index of a cylinder as in Theorem \ref{apscylinderthm}. 
\end{proof}

Let us introduce yet another description of the APS-index of a bordism, it will be useful for relative constructions. Recall the notion of  the transgression bordism from Definition \ref{transgres} and the Shubin bordism from Theorem \ref{weakdegnullbordthm}. 

\begin{define}
\label{defsinadjonaojn}
Assume that $\mathcal{Y}=(\mathpzc{E},D)$ is a $(\C,B)$-cycle with trivializing operator $A$. Then, by definition, $(\mathpzc{E},D+A)$ is weakly degenerate (for the weakly degenerate decomposition $D_0=0$ and $S=D+A$) and we define the transgressed Shubin bordism 
$$\mathrm{Sh}(\mathpzc{E},D,A):=\mathcal{T}(\mathpzc{E},D,A)\#_{(\mathpzc{E},D+A)}\mathrm{Sh}(\mathpzc{E},D+A).$$
\end{define}

\begin{lemma}
\label{sinadjonaojn}
Assume that $\mathcal{Y}$ is a $(\C,B)$-cycle with trivializing operator $A$. Then the transgressed Shubin bordism $\mathrm{Sh}(\mathcal{Y},A)$ is a well defined $(\C,B)$-bordism with boundary $\mathcal{Y}$. Moreover, we have that 
$$\ind_{\rm APS}(\mathrm{Sh}(\mathcal{Y},A),A)=0.$$
\end{lemma}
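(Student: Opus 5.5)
The argument has two parts. First we show that $\mathrm{Sh}(\mathcal{Y},A)$ is a bordism with boundary $\mathcal{Y}$. Then we show that its APS-index vanishes, by reducing to a statement about the Shubin piece alone.

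\textbf{Well-definedness.} Since $A\in\End_B^*(\mathpzc{E})$ is bounded and self-adjoint, it is relatively bounded by $D$ with relative bound $0$, and it is odd when $\mathcal{Y}$ is even. Corollary \ref{locbddperturcor} therefore produces the transgression bordism $\mathcal{T}(\mathpzc{E},D,A)$, with boundary $(\mathpzc{E},D)+(-(\mathpzc{E},D+A))$.

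The pair $(D_0,S)=(0,D+A)$ is a weakly degenerate decomposition in the sense of Definition \ref{lknalkadn}:
\begin{itemize}
\item $S$ is self-adjoint and invertible;
\item $S$ commutes with the $\C$-action, trivially;
\item the anticommutation condition is vacuous because $D_0=0$;
\item the Lipschitz condition for $\C$ is trivial.
\end{itemize}
Theorem \ref{weakdegnullbordthm} then gives the bordism $\mathrm{Sh}(\mathpzc{E},D+A)$ with boundary $(\mathpzc{E},D+A)$. Gluing the two bordisms along the common summand $(\mathpzc{E},D+A)$, using Proposition \ref{subsecglyinggluing} and \cite[Theorem 2.20]{DGM}, yields a bordism with boundary $\mathcal{Y}$.

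\textbf{Vanishing of the index.} We apply the gluing formula, Theorem \ref{gluingapsthememe}, with trivializing operator $A$ on $\mathcal{Y}$ and trivializing operator $0$ on the invertible cycle $(\mathpzc{E},D+A)$. This writes the index as
$$\ind_{\rm APS}(\mathcal{T}(\mathpzc{E},D,A),A\oplus 0)+\ind_{\rm APS}(\mathrm{Sh}(\mathpzc{E},D+A),0).$$
The formula as stated requires very full cycles. If $\mathcal{Y}$ is not very full, we pass to very full modifications via Theorem \ref{veryfullthemaadn}, which preserves the indices because the disc contributes zero.

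\emph{First term.} The transgression bordism is the cylinder $\Psi(\mathcal{Y})$, with operator interpolated from $D$ to $D+A$. A bounded perturbation of the interior operator does not change the index of the $B$-Fredholm operator $T_\infty$ (Theorem \ref{apsthemams}). After such a deformation the first term becomes the APS-index of a cylinder with boundary operators $D+A$ at both ends, where the trivializing operators are $A-A=0$ relative to $D+A$ on one end and $0$ on the other. Theorem \ref{apscylinderthm} identifies this index with $\mathrm{sf}(\mathcal{Y};A,A)=0$.

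\emph{Second term.} We must show that $\mathrm{Sh}(\mathpzc{E},S)$, with invertible boundary operator $S$ and trivializing operator $0$, has vanishing APS-index. We would attach the infinite cylinder and compute the index of $T_\infty(0)$ directly. On $[0,\infty)$ the operator is $\partial\hat{\boxtimes}0+X(0\hat{\boxtimes}S)$ with Dirichlet condition at $0$, which after gluing becomes a problem on $\R$ of the form $\frac{d}{dt}\pm X(t)S$ with $X\geq 1$ and $S$ invertible. Conjugating by the unitary $\mathcal{F}$-type decomposition is not available, but $\mathrm{e}^{\mp\int X\,S}$ solves the kernel equation. This solution is never square-integrable in both directions on $\R$, because $P=\chi_{[0,\infty)}(S)$ splits it into a part growing as $t\to+\infty$ and a part growing as $t\to-\infty$, exactly as in Lemma \ref{invertinginfsusp}. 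Hence the kernel and cokernel vanish, the operator is invertible, and its index is zero.

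\textbf{Main obstacle.} The difficult step is making the second-term argument rigorous, since $X$ is not constant. We would try first to deform $X$ to the constant $1$. The path $X_s=(1-s)X+s$ stays $\geq1$, so each member of the family is invertible by the explicit integral formula for the inverse (as in Lemma \ref{invertinginfsusp}), and the index is locally constant along the path. Alternatively, once the whole operator is invertible, self-adjointness combined with the positivity estimate $\|T\xi\|^2\geq\|S\xi\|^2\geq c\|\xi\|^2$ shows invertibility directly. We would check that the cross term vanishes here: it equals $\langle X'\xi,\xi\rangle$-type boundary-free terms that cancel in the graded product.
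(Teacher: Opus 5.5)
Your argument is correct in its main line. The well-definedness part coincides with the paper's, but you organize the index computation differently.

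\textbf{Comparison with the paper.} The paper does not split $\mathrm{Sh}(\mathcal{Y},A)$. It attaches the infinite cylinder to the glued bordism as a whole, and argues that a homogeneous solution of the resulting operator-valued ODE on $\R$ must satisfy $f(t_0)\in \mathrm{im}\,\chi_{(0,\infty)}(D+A)\cap \mathrm{im}\,\chi_{(-\infty,0)}(D+A)=0$. You proceed in three steps instead:
\begin{enumerate}
\item You apply the gluing formula (Theorem \ref{gluingapsthememe}) with trivializing operator $0$ on the invertible cycle $(\mathpzc{E},D+A)$.
\item You reduce the transgression piece to the constant cylinder over $D+A$, whose index vanishes by Theorem \ref{apscylinderthm}, or directly by Lemma \ref{invertinginfsusp}.
\item You run the ODE argument only on the Shubin piece, where the operator is $\frac{d}{dt}+X(t)S$ with a single fixed invertible $S=D+A$.
\end{enumerate}
The cost is extra machinery: the gluing and cylinder theorems, plus very-full bookkeeping that the direct argument does not need. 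The gain is twofold. You never propagate solutions through the region where the coefficient $D+\chi(t)A$ is not a multiple of a fixed operator, which the paper's ``for some $t_0\gg0$'' leaves implicit. And by exhibiting an inverse you get invertibility rather than merely trivial kernel, which is what vanishing of the index in $K_*(B)$ actually requires over a Hilbert $C^*$-module.

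\textbf{Three side remarks to correct.}
\begin{itemize}
\item A bounded perturbation does not in general preserve the index of $T_\infty$. Yours does only because $(1-\chi(t))A$ is supported in the compact part of the cylinder, hence is relatively compact. Alternatively, work with $T_{\rm APS}$, which has compact resolvent by Proposition \ref{regularapspsps}; the APS condition $\chi_{[0,\infty)}(D+A)$ is unchanged by the perturbation.
\item The proposed positivity estimate is wrong. For non-constant $X$, the square of the operator contains a term $\pm X'S$ coming from $[\frac{d}{dt},X]=X'$. This term does not cancel, and $\|T\xi\|\geq\|S\xi\|$ can fail when $X'$ is large compared with the spectral gap of $S$.
\item The homotopy $X_s$ is superfluous. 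For the block $\frac{d}{dt}+XS$, write $Y'=X\geq 1$, $P=\chi_{[0,\infty)}(S)$, and let $c>0$ be the spectral gap of $S$. Consider
$$g\mapsto \int_{-\infty}^t \mathrm{e}^{-(Y(t)-Y(s))S}Pg(s)\,\mathrm{d}s-\int_t^\infty \mathrm{e}^{(Y(s)-Y(t))S}(1-P)g(s)\,\mathrm{d}s.$$
Its integral kernel has norm at most $\mathrm{e}^{-c|t-s|}$, so this operator is bounded and inverts the given operator directly. Its range lies in the domain by closedness of the operator, after testing against a core of the adjoint.
\end{itemize}
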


\begin{proof}
The collection $\mathrm{Sh}(\mathcal{Y},A)$ is a bordism since it is a gluing of $KK$-bordisms. That $\ind_{\rm APS}(\mathrm{Sh}(\mathcal{Y},A),A)=0$ follows from the fact that the associated operator has vanishing kernel. Indeed, the kernel is found from solving an explicit operator valued ODE on $\R$ and any homogeneous solution $f$ thereof must fulfil 
$$f(t_0)\in \mathrm{im} \chi_{(0,\infty)}(D+A)\cap \mathrm{im}\chi_{(-\infty,0)}(D+A)=0,$$
for some $t_0\gg0$. Therefore $f=0$. 
\end{proof}

From Lemma \ref{sinadjonaojn}, and the gluing formula for APS-indices (Theorem \ref{gluingapsthememe}) we deduce the following theorem.

\begin{theorem}[APS-index from a Shubin suspension]
\label{shubinada}
Let $\mathfrak{X}$ be a $(\C,B)$-bordism with a trivializing operator $A$. Then $\mathfrak{X}\#_{\partial \mathfrak{X}} \mathrm{Sh}(\partial \mathfrak{X},A)$ is a $(\C,B)$-cycle with index computed from 
$$\ind_B(\mathfrak{X}\#_{\partial \mathfrak{X}} \mathrm{Sh}(\partial \mathfrak{X},A))=\ind_{\rm APS}(\mathfrak{X},A).$$
In other words, the APS-index of $(\mathfrak{X},A)$ can be computed either from equipping the interior cycle with APS boundary conditions $T_{\rm APS}(A)$, or attaching an infinite cylinder and perturbing by $A$ arriving at $T_\infty(A)$ or finally by attaching an infinite cone (by means of a linearly growing perturbation defined from $A$).
\end{theorem}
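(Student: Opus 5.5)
The plan is to glue the transgressed Shubin bordism onto $\mathfrak{X}$ along $\partial\mathfrak{X}$, observe that the result is a closed cycle, and then compute its index with the APS gluing formula.

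\textbf{Step 1: the glued object is a cycle.} By Lemma \ref{sinadjonaojn}, $\mathrm{Sh}(\partial\mathfrak{X},A)$ is a $(\C,B)$-bordism with boundary $\partial\mathfrak{X}$. After reversing orientation as needed, so that the boundaries match as $\mathfrak{Y}$ and $-\mathfrak{Y}$, Hilsum's gluing construction \eqref{glueToGetClosed} produces an honest $(\C,B)$-cycle $\mathfrak{X}\#_{\partial \mathfrak{X}}\mathrm{Sh}(\partial\mathfrak{X},A)$. Its index $\ind_B\in K_*(B)$ is therefore well defined.

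\textbf{Step 2: express the closed index through the gluing formula.} I will view the closed cycle as a bordism with empty boundary, that is, with boundary data $0$ as in Remark \ref{doublingcycle}. For such a bordism, the APS-realization with the zero trivializing operator is just the self-adjoint interior operator, so its APS-index equals $\ind_B$. I then apply Theorem \ref{gluingapsthememe} with $\mathfrak{Y}_1=\mathfrak{Y}_3=0$ and $\mathfrak{Y}_2=\partial\mathfrak{X}$, using the same trivializing operator $A$ on both sides of the cut (with the sign $-A$ on the reversed side). This gives
$$\ind_B(\mathfrak{X}\#\mathrm{Sh})=\ind_{\rm APS}(\mathfrak{X},A)+\ind_{\rm APS}(\mathrm{Sh}(\partial\mathfrak{X},A),A).$$
The second term vanishes by Lemma \ref{sinadjonaojn}, which yields the claimed equality.

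\textbf{Main obstacle: very fullness and signs.} Theorem \ref{gluingapsthememe} is stated for very full cycles, or for trivializing operators of the very full modifications. If $\partial\mathfrak{X}$ is not very full, I will pass to $\mathfrak{X}_{\rm vf}$ and add $\mathfrak{X}_{\mathbb{D},B}$ on the Shubin side as well. On that side, $\mathfrak{Y}_{S^1,B}$ has invertible operator, so $0$ is a trivializing operator for it. Gluing the two disk pieces gives a doubled disk, whose index is zero by the argument in Theorem \ref{veryfullthemaadn}; this is where I need the gluing formula again. Beyond that, I have to check that the orientation and sign conventions ($-A$ on the opposite boundary) make the Shubin term appear with trivializing operator $A$, as in the lemma. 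This bookkeeping is the step I expect to require the most care.

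\textbf{The three equivalent computations.} The final sentence of the statement follows by combining this result with Theorem \ref{apsthemams}, which identifies $T_{\rm APS}(A)$ with $T_\infty(A)$. The "infinite cone" description is exactly the glued Shubin end, whose linearly growing term $X(0\hat{\boxtimes}(S+A))$ realizes it.
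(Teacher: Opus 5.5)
Your proposal is correct and is essentially the paper's argument: glue the orientation-reversed transgressed Shubin bordism onto $\mathfrak{X}$, apply the APS gluing formula of Theorem \ref{gluingapsthememe} together with the vanishing $\ind_{\rm APS}(\mathrm{Sh}(\partial\mathfrak{X},A),A)=0$ from Lemma \ref{sinadjonaojn}, and then invoke Theorem \ref{apsthemams} for the reformulations in the final sentence. Your detour through very full modifications and the doubled disk is harmless but not needed, since Theorem \ref{gluingapsthememe} already covers boundaries that are not very full via trivializing operators of their very full modifications (take $A\oplus 0$, and use Theorem \ref{veryfullthemaadn} to identify the indices).
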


The last statement follows from the first and Theorem \ref{apsthemams}.

\subsection{Cayley- and Higson transforms}
\label{cayleandihigsoin}

The main usage of APS index theory in this monograph will be when constructing secondary invariants for various relative constructions using $KK$-bordisms. We will now discuss a construction relating to correcting for the dependence on the choice of trivializing operator. The two constructions, the Cayley transform of odd cycles and the Higson transform of even cycles, can be seen as implementing an isomorphism $\Omega_*(\C,B)\to K_*(B)$. We consider only $\mathcal{A}=\C$ and $B$ unital in this subsection. We tacitly assume that all cycles are unital, in which case a $(\C,B)$-cycle is a pair $(\mathpzc{E},D)$ where $D$ is self-adjoint, regular with compact resolvent (and odd if $\mathpzc{E}$ is graded).

\begin{define} 
\label{casandlknad}
Let $D$ be a self-adjoint regular operator on a Hilbert $C^*$-module $\mathpzc{E}$. We define its Cayley transform as 
$$c(D):=(iD+1)(iD-1)^{-1}=1-2i(D+i)^{-1}.$$
If $\mathpzc{E}$ is graded by a grading $\gamma$ making $D$ odd, we define the Higson transform of $D$ as
$$h(D):=(1+D^2)^{-1}\gamma-D(1+D^2)^{-1}+\frac{1-\gamma}{2}.$$
\end{define} 

If $\mathpzc{E}$ is graded by a grading $\gamma$, we set $Q_-:=\frac{1-\gamma}{2}$ and $Q_+=1-Q_1$. We define 
$$\tilde{\mathbb{K}}_B(\mathpzc{E}):=\widetilde{\mathbb{K}_B(\mathpzc{E})}+\widetilde{\mathbb{K}_B(\mathpzc{E})}\gamma=\widetilde{\mathbb{K}_B(\mathpzc{E})}Q_++\widetilde{\mathbb{K}_B(\mathpzc{E})}Q_-,$$ 
which is a split extension of $\C\ell_1$ by $\mathbb{K}_B(\mathpzc{E})$.

\begin{prop}
\label{candhareproj}
Let $D$ be a self-adjoint regular operator on a Hilbert $C^*$-module $\mathpzc{E}$. Then $c(D)$ is a unitary. If $\mathpzc{E}$ is graded and $D$ is odd, then $h(D)$ is a projection. Moreover, 
\begin{itemize}
\item If $(\mathpzc{E},D)$ is an odd $(\C,B)$-cycle then $c(D)\in \widetilde{\mathbb{K}_B(\mathpzc{E})}$. 
\item If $(\mathpzc{E},D)$ is an even $(\C,B)$-cycle then $h(D)-Q_-\in \mathbb{K}_B(\mathpzc{E})$, in particular $h(D)\in \tilde{\mathbb{K}}_B(\mathpzc{E})$
\end{itemize}
\end{prop}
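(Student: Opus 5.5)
The plan is to use functional calculus for the self-adjoint regular operator $D$ together with a direct algebraic check.

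\emph{Unitarity of $c(D)$.} Take the function $f(x)=(ix+1)(ix-1)^{-1}$. It is continuous on $\R$, has modulus one, and tends to $1$ at $\pm\infty$. By functional calculus for self-adjoint regular operators, $c(D)=f(D)$ is adjointable and $c(D)^*c(D)=|f|^2(D)=1$, so $c(D)$ is a unitary. Writing $f(x)=1-2i(x+i)^{-1}$ gives $c(D)-1=-2i(D+i)^{-1}$. For an odd $(\C,B)$-cycle with unital action the resolvent $(D+i)^{-1}$ is $B$-compact, and therefore $c(D)\in 1+\mathbb{K}_B(\mathpzc{E})\subseteq\widetilde{\mathbb{K}_B(\mathpzc{E})}$.

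\emph{Projection property of $h(D)$.} Set $R:=(1+D^2)^{-1}$ and $F:=DR$; both are bounded and self-adjoint, and $R$, $F$ commute. Since $D$ is odd, $\gamma$ commutes with $R$ and anticommutes with $F$. We then have $h(D)=R\gamma-F+Q_-$, and self-adjointness is immediate. The one step that needs care is bookkeeping the grading signs when squaring, so we expand
$$h(D)^2=(R\gamma-F)^2+(R\gamma-F)Q_-+Q_-(R\gamma-F)+Q_-.$$

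For the first term, $(R\gamma)^2=R^2$, $F^2=D^2R^2$, and the cross terms $R\gamma F+FR\gamma=R(\gamma F+F\gamma)=0$. Hence $(R\gamma-F)^2=R^2(1+D^2)=R$.

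For the middle terms, use $Q_-=\tfrac12(1-\gamma)$ together with $\gamma Q_-=-Q_-$. This gives
$$R\gamma Q_-+Q_-R\gamma=2R\gamma Q_-=-2RQ_-=-R(1-\gamma)=-R+R\gamma.$$
Similarly,
$$-(FQ_-+Q_-F)=-\tfrac12(2F-F\gamma-\gamma F)=-F.$$

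Summing the pieces gives $h(D)^2=R-R+R\gamma-F+Q_-=h(D)$, so $h(D)$ is a projection.

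\emph{Compactness in the even case.} We have $h(D)-Q_-=R\gamma-F$. For an even cycle, $R=(1+D^2)^{-1}$ is $B$-compact, hence so is $R\gamma$. Likewise $F=D(1+D^2)^{-1/2}\cdot(1+D^2)^{-1/2}$ is a bounded operator times a compact one, so it is compact. Therefore $h(D)\in Q_-+\mathbb{K}_B(\mathpzc{E})\subseteq\tilde{\mathbb{K}}_B(\mathpzc{E})$.
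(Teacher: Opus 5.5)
Your proof is correct. For the unitarity of $c(D)$ and for both compactness statements you argue as the paper does. The paper cites Lance for the Cayley transform, and your functional-calculus argument with $f(x)=1-2i(x+i)^{-1}$ is the same fact made explicit.

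The genuine difference is in showing that $h(D)$ is a projection. You verify $h(D)^2=h(D)$ directly from the relations $\gamma R=R\gamma$, $\gamma F=-F\gamma$, $RF=FR$ and $R^2+F^2=R$. The paper instead follows Higson: it exhibits $h(D)$ as a unitary conjugate of a grading projection, using $U:=(1+D^2)^{-1/2}+D(1+D^2)^{-1/2}\gamma$.

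What each route buys:
\begin{itemize}
\item The conjugation argument gives more structure. It shows $h(D)$ is unitarily equivalent in $\End_B^*(\mathpzc{E})$ to a grading projection, via a unitary built from $D$ by functional calculus. This explains where the formula for $h$ comes from: it rotates $Q_+$ by an angle determined by $D$.
\item Your computation is self-contained and tracks the grading signs explicitly.
\end{itemize}

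That explicitness matters here. With the paper's $U$ the correct identity is $h(D)=U^*Q_+U$, not $h(D)=UQ_-U^*$ as written. You can see this already at $D=0$: there $U=1$, so $UQ_-U^*=Q_-$, while $h(0)=\gamma+Q_-=Q_+$. The paper's conclusion that $h(D)$ is a projection is unaffected, and your direct verification avoids the issue entirely.
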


\begin{proof}
It is well known that the Cayley transform of a self-adjoint regular operator is unitary, it follows from \cite[Lemma 9.8 and Theorem 10.5]{lancesbook}. To prove that $h(D)$ is a projection, we follow the construction in \cite[page 191]{Hig}. The operator $U:=(1+D^2)^{-1/2}+D(1+D^2)^{-1/2}\gamma$ is a unitary and by construction $h(D)=UQ_-U^*$ so it is a projection.

If $(\mathpzc{E},D)$ is a $(\C,B)$-cycle, then $1-c(D)=2i(D+i)^{-1}\in \mathbb{K}_B(\mathpzc{E})$ since $D$ has compact resolvent. If $(\mathpzc{E},D)$ is an even $(\C,B)$-cycle, then $h(D)-Q_-=(1+D^2)^{-1}\gamma-D(1+D^2)^{-1}\in \mathbb{K}_B(\mathpzc{E})$ since $D$ has compact resolvent. 
\end{proof}

\begin{prop}
\label{limitsofcandh}
Let $D$ be an invertible self-adjoint regular operator on a Hilbert $C^*$-module (graded) $\mathpzc{E}$. Then 
$$c(t^{-1}D)-1=O(t), \quad\mbox{in norm as $t\to 0$}.$$
If $D$ is odd for a grading on $\mathpzc{E}$, then 
$$h(t^{-1}D)-Q_-=O(t), \quad\mbox{in norm as $t\to 0$}.$$
\end{prop}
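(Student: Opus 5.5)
The plan is to compute both differences explicitly in functional calculus, and then turn an invertibility assumption into a spectral gap that gives a uniform norm bound.

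Since $D$ is invertible and self-adjoint regular, there is $\epsilon>0$ with $\mathrm{Spec}(D)\cap(-\epsilon,\epsilon)=\emptyset$. Hence for any bounded Borel function $g$ on $\R\setminus(-\epsilon,\epsilon)$ we have
$$\|g(D)\|\leq \sup_{|x|\geq\epsilon}|g(x)|,$$
by functional calculus of self-adjoint regular operators on Hilbert $C^*$-modules.

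\emph{Cayley transform.} Using the formula $c(D)=1-2i(D+i)^{-1}$ from Definition \ref{casandlknad}, we get
$$c(t^{-1}D)-1=-2i(t^{-1}D+i)^{-1}=-2it(D+it)^{-1}.$$
For $|x|\geq\epsilon$ one has $|x+it|\geq|x|\geq\epsilon$, so $\|(D+it)^{-1}\|\leq\epsilon^{-1}$ uniformly in $t$. Therefore $\|c(t^{-1}D)-1\|\leq 2t/\epsilon=O(t)$.

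\emph{Higson transform.} Here $D$ is odd and $\gamma$ is the grading. From Definition \ref{casandlknad},
$$h(t^{-1}D)-Q_-=(1+t^{-2}D^2)^{-1}\gamma-t^{-1}D(1+t^{-2}D^2)^{-1}.$$
The first term equals $t^2(t^2+D^2)^{-1}\gamma$. Since $\gamma$ is unitary and $t^2+x^2\geq\epsilon^2$ on the spectrum, its norm is at most $t^2\epsilon^{-2}$. The second term equals $tD(t^2+D^2)^{-1}$. It is the function $g_t(x)=tx/(t^2+x^2)$ applied to $D$. For $|x|\geq\epsilon$ we have $|g_t(x)|\leq t|x|/x^2\leq t/\epsilon$. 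Together these give $\|h(t^{-1}D)-Q_-\|\leq t^2\epsilon^{-2}+t\epsilon^{-1}=O(t)$ as $t\to0$.

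The only point needing care is that functional calculus norm bounds depend only on the spectrum of $D$, which we have restricted to $|x|\geq\epsilon$. This is standard for self-adjoint regular operators (see \cite{lancesbook}), so I expect no real obstacle. The functions involved are continuous and bounded on the spectrum, so the bounds hold in operator norm and not merely strongly.
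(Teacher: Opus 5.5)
Your proposal is correct and follows the paper's proof: the same rewritings $c(t^{-1}D)-1=-2it(D+it)^{-1}$ and $h(t^{-1}D)-Q_-=t^2(t^2+D^2)^{-1}\gamma-tD(t^2+D^2)^{-1}$. The paper only asserts the uniform bound on $\|(D+it)^{-1}\|$, and you supply it (and the bounds for the Higson transform) from the spectral gap of $D$. One small correction: on Hilbert $C^*$-modules only continuous functional calculus is available, so state your norm estimate for bounded continuous $g$ rather than bounded Borel $g$, which is all you actually use.
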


\begin{proof}
The first statement follows from the computation 
$$c(t^{-1}D)=1-2it(D+it)^{-1},$$
and the fact that $\sup_{t\in \R} \|(D+it)^{-1}\|<\infty$ if $D$ is invertible. The second statement follows by the similar computation 
$$h(t^{-1}D):=t^2(t^2+D^2)^{-1}\gamma-tD(t^2+D^2)^{-1}+Q_-.$$
\end{proof}

We shall fix a function $\chi\in C^\infty((0,1],\mathbb{R})$ such that $\chi\geq 1$ and 
$$
\chi(t)=\begin{cases} 
t^{-1}, \; &\mbox{near $t=0$}\\ 
1, \; &\mbox{near $t=1$}
\end{cases}.
$$
The set of such $\chi$:s is path connected because it forms a convex subset of $C^\infty(0,1]$. The following proposition follows from Proposition \ref{candhareproj} and \ref{limitsofcandh}.

\begin{prop}
\label{pathrslsdsdsdl}
Assume that $(\mathpzc{E},D)$ is a cycle for $(\C,B)$ and that $A:[0,1]\to \mathrm{End}_B^*(\mathpzc{E})$ is a self-adjoint norm-continuous path such that $A$ is constant near $t=0$ and $A(0)$ is a trivializing operator for $(\mathpzc{E},D)$. Then 
$$t\mapsto c(\chi(t)(D+A(t))),$$
defines an element of $\widetilde{C_0((0,1],\mathbb{K}_B(\mathpzc{E}))}$. 

If additionally $\mathpzc{E}$ has a grading in which $D$ and $A$ are odd, then 
$$t\mapsto h(\chi(t)(D+A(t))),$$
defines an element of $Q_-+C_0((0,1],\mathbb{K}_B(\mathpzc{E}))$.
\end{prop}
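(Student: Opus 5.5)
The plan is to verify the two membership claims separately. For each one we handle a neighbourhood of $t=0$ using Proposition \ref{limitsofcandh}, and a compact subinterval bounded away from $0$ using Proposition \ref{candhareproj} together with norm continuity of the transforms.

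First, membership of a fixed $t$. For each $t\in(0,1]$ the operator $D+A(t)$ is a bounded self-adjoint (odd, in the graded case) perturbation of $D$. So it is self-adjoint and regular with $\Dom(D+A(t))=\Dom(D)$, and it still has compact resolvent, since $(i+D+A(t))^{-1}=(i+D)^{-1}(1-A(t)(i+D+A(t))^{-1})$. Multiplying by the scalar $\chi(t)\geq 1$ preserves all of these properties. Hence $(\mathpzc{E},\chi(t)(D+A(t)))$ is a $(\C,B)$-cycle. Proposition \ref{candhareproj} then gives that $c(\chi(t)(D+A(t)))-1\in\mathbb{K}_B(\mathpzc{E})$, and in the graded case that $h(\chi(t)(D+A(t)))-Q_-\in\mathbb{K}_B(\mathpzc{E})$.

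Second, continuity on $(0,1]$. Write $c(T)-1=-2i(T+i)^{-1}$. The second resolvent identity gives
$$(T_s+i)^{-1}-(T_t+i)^{-1}=(T_s+i)^{-1}(T_t-T_s)(T_t+i)^{-1}.$$
Take $T_t=\chi(t)(D+A(t))$. Then $T_t-T_s=(\chi(t)-\chi(s))(D+A(t))+\chi(s)(A(t)-A(s))$. The factor $(D+A(t))(T_t+i)^{-1}=\chi(t)^{-1}T_t(T_t+i)^{-1}$ is bounded by $1$. Since $\chi$ is continuous on $(0,1]$ and $A$ is norm continuous, this yields norm continuity of $t\mapsto c(T_t)$ on $(0,1]$. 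For $h$, the same argument applies to $(1+T^2)^{-1}$ and $T(1+T^2)^{-1}$, which are bounded continuous functions of the resolvents $(T\pm i)^{-1}$.

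Third, the limit at $t=0$, which I expect to be the only real point. Near $t=0$ we have $A(t)=A(0)$ and $\chi(t)=t^{-1}$, so $T_t=t^{-1}(D+A(0))$. Here $D+A(0)$ is invertible because $A(0)$ is trivializing. Proposition \ref{limitsofcandh} then gives $\|c(T_t)-1\|=O(t)$ and $\|h(T_t)-Q_-\|=O(t)$. So $t\mapsto c(T_t)-1$ is a norm continuous function on $(0,1]$ with values in $\mathbb{K}_B(\mathpzc{E})$ that tends to $0$ at $0$, that is, an element of $C_0((0,1],\mathbb{K}_B(\mathpzc{E}))$. Hence $c(T_\cdot)\in\widetilde{C_0((0,1],\mathbb{K}_B(\mathpzc{E}))}$. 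Likewise $h(T_\cdot)\in Q_-+C_0((0,1],\mathbb{K}_B(\mathpzc{E}))$.
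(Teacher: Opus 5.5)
Your proof is correct and follows the paper's route. The paper simply states that the proposition follows from Proposition \ref{candhareproj} (pointwise membership) and Proposition \ref{limitsofcandh} (the $O(t)$ decay near $t=0$, where $\chi(t)=t^{-1}$ and $D+A(0)$ is invertible). You additionally supply the details the paper leaves implicit: that $\chi(t)(D+A(t))$ still has compact resolvent, and that $t\mapsto c(T_t)$ and $t\mapsto h(T_t)$ are norm continuous on $(0,1]$, which you obtain from the resolvent identity.
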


\begin{theorem}
Let $B$ be a unital $C^*$-algebra. The mappings 
\begin{align*}
(\mathpzc{E},D)&\mapsto [c(D)]\in K_1(B), \; *=1,\\
(\mathpzc{E},D)&\mapsto [h(D)-Q_-]\in K_0(B), \; *=0,
\end{align*}
defines an isomorphism $\Omega_*(\C,B)\cong K_*(B)$.
\end{theorem}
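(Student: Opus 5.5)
The plan is to show that the maps are well defined on bordism classes, that they are additive, and that they agree with the index isomorphism $\Omega_*(\C,B)\cong K_*(B)$ of \cite[Theorem 3.7]{DGM}. Bijectivity is then inherited from that isomorphism rather than proved directly.

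\emph{Step 1: the maps are defined on cycles.} By Proposition \ref{candhareproj}, $c(D)$ is a unitary in $\widetilde{\mathbb{K}_B(\mathpzc{E})}$. Its class in $K_1(\mathbb{K}_B(\mathpzc{E}))$ is pushed to $K_1(B)$ along the Morita/stabilization map $\mathbb{K}_B(\mathpzc{E})\to \mathbb{K}_B(\ell^2(B))$, using Kasparov stabilization. In the even case $h(D)-Q_-$ is a formal difference of projections in $\widetilde{\mathbb{K}_B(\mathpzc{E})}$ with scalar parts agreeing. It therefore defines a class in $K_0(\mathbb{K}_B(\mathpzc{E}))\to K_0(B)$.

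Isomorphism invariance is clear, and additivity under direct sums holds because both transforms are diagonal on $D_1\oplus D_2$. Note that $c$ and $h$ are exactly the standard Cayley/Higson representatives of the index class of $\beta(\mathpzc{E},D)\in KK_*(\C,B)$ (see \cite{Hig}). Hence $[c(D)]$, respectively $[h(D)-Q_-]$, coincides with $\ind_B(D)$ up to a universal sign convention.

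\emph{Step 2: bordism invariance.} The shortest route is Theorem \ref{alknlkanda}: bordant cycles have equal bounded transforms in $KK_*(\C,B)=K_*(B)$, and by Step 1 the maps factor through $\beta$. For a more intrinsic argument in the spirit of this section, take a null-bordism $\mathfrak{X}$ of $(\mathpzc{E},D)$. Pass to its very full modification, which does not change the boundary class, since $\mathfrak{Y}_{S^1,B}$ has invertible operator and contributes trivially by Proposition \ref{limitsofcandh}. Choose a trivializing operator $A$ by Corollary \ref{existenceofveryfoidleled}. The path $A(t)=\varphi(t)A$ with $\varphi(0)=1$, $\varphi(1)=0$, combined with Proposition \ref{pathrslsdsdsdl}, yields a homotopy in $\widetilde{C_0((0,1],\mathbb{K}_B(\mathpzc{E}))}$ from $c(D)$ to $1$ (respectively from $h(D)$ to $Q_-$). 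The class therefore vanishes, since $C_0((0,1],\cdot)$ is contractible.

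\emph{Step 3: bijectivity.} Composing with the bounded transform identifies the maps with $\beta:\Omega_*(\C,B)\to KK_*(\C,B)\cong K_*(B)$, which is an isomorphism by \cite[Theorem 3.7]{DGM}, that is, by $\Omega_*$-admissibility of $\C$. As an independent check of surjectivity, a projection $p\in M_N(B)$ is realised in the even case by $(pB^N\oplus 0,0)$, and a unitary is realised via the spectral flow examples.

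The main obstacle is pinning down that $[c(D)]$ equals the image of $\beta(\mathpzc{E},D)$ under $KK_1(\C,B)\cong K_1(B)$ with consistent signs, and similarly $[h(D)-Q_-]$ for $KK_0$. I would first try to settle this via the operator homotopy $F_s=D(s+D^2)^{-1/2}$ together with the standard identification of odd Kasparov cycles with unitaries $\exp(\pi i(F+1))$.
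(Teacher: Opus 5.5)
Your proposal is correct and follows essentially the paper's route: bijectivity is inherited from \cite[Theorem 3.7]{DGM}, and bordism invariance is proved by passing to the very full modification, choosing a trivializing operator via Corollary \ref{existenceofveryfoidleled}, and contracting with Proposition \ref{pathrslsdsdsdl}. Your explicit identification of $[c(D)]$ and $[h(D)-Q_-]$ with the index class of $\beta(\mathpzc{E},D)$ fills in a step the paper leaves implicit when it reduces the statement to well-definedness; once that identification is in place, your shortcut via Theorem \ref{alknlkanda} makes the intrinsic argument optional.
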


\begin{proof}
By \cite[Theorem 3.7]{DGM} (see also Theorem \ref{bddisothm} below), the bounded transform defines an isomorphism $\Omega_*(\C,B)\cong K_*(B)$, so the proof is readily reduced to showing that the indicated mapping is well defined (i.e. respects bordism).

Assume that $(\mathpzc{E},D)$ is nullbordant. It follows from Theorem \ref{veryfullthemaadn}  and Proposition \ref{pathrslsdsdsdl} that we can replace $(\mathpzc{E},D)$ by its very full modification without altering the image in $K$-theory. If $(\mathpzc{E},D)$ is very full and nullbordant, it admits a trivializing operator by Corollary \ref{existenceofveryfoidleled}. By homotopy invariance of $K$-theory we can assume that $D$ is invertible, and again by homotopy invariance of $K$-theory the theorem can be concluded from Proposition \ref{pathrslsdsdsdl}.
\end{proof}

\begin{lemma}
\label{sfandcayhgidid}
Assume that $(\mathpzc{E},D)$ is a cycle for $(\C,B)$ and that $A:[0,1]\to \mathrm{End}_B^*(\mathpzc{E})$ is a self-adjoint norm-continuous path such that $A$ is constant near $t=0$ and $t=2$ and $A(0)$ and $A(1)$ are trivializing operators for $(\mathpzc{E},D)$. Take a function $\chi_2\in C^\infty((0,1],\mathbb{R})$ such that $\chi\geq 1$ with $\chi_2(t)=\chi(t)$ near $t=0$ and $\chi_2(t)=\chi(1-t)$ near $t=1$. Then 
$$t\mapsto c(\chi_1(t)(D+A(t))),$$
defines an element of $\widetilde{C_0((0,1),\mathbb{K}_B(\mathpzc{E}))}$ and it holds that 
$$[c(\chi_1(\cdot)(D+A(\cdot)))]=[P_{A(0)}-P_{A(1)}],$$
under the Bott isomorphism $K_1(SB)\cong K_0(B)$.

If additionally $\mathpzc{E}$ has a grading in which $D$ and $A$ are odd, then 
$$t\mapsto h(\chi_1(t)(D+A(t))),$$
defines an element of $Q_-+C_0((0,1),\mathbb{K}_B(\mathpzc{E}))$ and it holds that 
$$[h(\chi_1(\cdot)(D+A(\cdot)))]=[P_{A(0)}-P_{A(1)}],$$
under the Bott isomorphism $K_0(SB)\cong K_1(B)$.

\end{lemma}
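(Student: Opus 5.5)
The plan is to reduce to the case where the path $A$ is as simple as possible, and then compute the resulting loop explicitly. The resulting class will be identified with a spectral-flow-type difference of spectral sections via the Bott isomorphism.

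First, well-definedness. Near $t=0$ the operator $\chi_1(t)(D+A(0))=t^{-1}(D+A(0))$ has $D+A(0)$ invertible, so Proposition \ref{limitsofcandh} gives $c(\chi_1(t)(D+A(t)))-1=O(t)$, and symmetrically near $t=1$. At interior points, Proposition \ref{candhareproj} together with the compact resolvent of $D+A(t)$ (a bounded perturbation of $D$) shows the difference from $1$ (resp. $Q_-$) is $B$-compact. Norm continuity in $t$ follows from resolvent identities, since $A$ is norm continuous. Hence we get an element of $\widetilde{C_0((0,1),\mathbb{K}_B(\mathpzc{E}))}$, respectively of $Q_-+C_0((0,1),\mathbb{K}_B(\mathpzc{E}))$.

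Second, invariance and reduction. The class depends only on $A(0),A(1)$, since the set of admissible paths with fixed endpoints is convex and a linear homotopy stays within the allowed class; the same homotopy argument removes the dependence on $\chi_1$. The class is also additive under concatenation of paths, via the usual Bott-loop concatenation. By Lemma \ref{biggerspecseclem} and Proposition \ref{trivopprprop} we may pass through an intermediate trivializing operator $A''$ with $P_{A''}\geq P_{A(0)},P_{A(1)}$. Since $[P-P']$ is defined as such a difference, it suffices to treat $P_{A(0)}\geq P_{A(1)}$ with the two projections commuting. After a further homotopy we may also assume $D+A(0)$ and $D+A(1)$ commute and agree off the finitely generated projective module $F:=(P_{A(0)}-P_{A(1)})\mathpzc{E}$. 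On $F$ they are $+1$ and $-1$, and we take $A(t)$ to interpolate linearly there. On $F^\perp$ the loop is then homotopic to the constant $1$ (resp. $Q_-$), because the operator stays invertible and we can use the trivializing path.

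Third, the explicit computation, which is the main obstacle. In the odd case, on $F$ the loop is
$$t\mapsto c(\chi_1(t)(1-2t))\,1_F=\frac{i\chi_1(t)(1-2t)+1}{i\chi_1(t)(1-2t)-1}\,1_F.$$
This is a scalar loop in $U(1)$ of winding number $\pm 1$ times the projection $1_F$, so it is the Bott image of $\pm[F]=\pm[P_{A(0)}-P_{A(1)}]$. The real work is fixing the orientation and sign conventions against the Bott map, which we do by checking the case $B=\C$, $\mathpzc{E}=\C$.

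In the even case we use the unitary $U$ from the proof of Proposition \ref{candhareproj}, which gives $h(D)=UQ_-U^*$. The restricted loop is then the standard projection loop over the graded module $F$, built as in the construction of $P_0(t)$ preceding the even definition of $[P-P']$. Comparing it with $Q_0-P_0$ there identifies the class with $[P_{A(0)}-P_{A(1)}]\in K_1(B)$ under $K_0(SB)\cong K_1(B)$. Here too the sign is fixed by comparison with the one-dimensional model.
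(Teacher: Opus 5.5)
Your even case has a genuine gap. Your reduction uses a dominating trivializing operator $A''$ with $P_{A''}\geq P_{A(0)},P_{A(1)}$, and then nested sections $P_{A(0)}\geq P_{A(1)}$ whose difference cuts out a finitely generated projective module $F$ on which the two operators are $\pm1$. None of this exists for even cycles. There every spectral section has $P-\tfrac12$ odd, so $P\geq P'$ makes $P-P'$ a positive odd operator. Conjugating by the grading gives $-(P-P')\geq 0$, hence $P=P'$. So there is no dominating section and no module $F$, and ``the standard projection loop over the graded module $F$'' refers to nothing.

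The even case needs two things you do not supply. First, the even data must be converted into odd data, for instance through the rotated projections $P_0(t)$ over $B\otimes C_0(0,1)$ and a dominating $Q_0$ for \emph{that} odd family; this is how $[P-P']\in K_1(B)$ is defined. Second, you need an argument identifying the class of the loop $t\mapsto h(\chi_1(t)(D+A(t)))-Q_-$ in $K_0(SB)$ with $[Q_0-P_0']-[Q_0-P_0]$. (The paper's proof also only treats the odd case and calls the even one analogous.)

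In the odd case your route is valid but genuinely different from the paper's, and its crux is the step you call ``a further homotopy''. Set $S=D+A(0)$, $P_j=P_{A(j)}$, $E=P_0-P_1$, and take $S_0=(1-E)S(1-E)+E$ and $S_1=(1-E)S(1-E)-E$. To use these you need three inputs:
\begin{itemize}
\item $E\mathpzc{E}\subseteq\Dom(D)$, so that $S_j-D$ is bounded;
\item the observation that $S$ and $S_0$ are both positive definite on $P_0\mathpzc{E}$ and negative definite on its complement, and likewise $D+A(1)$ and $S_1$ relative to $P_1$; then the linear paths between them consist of invertible operators with fixed positive spectral projection;
\item invariance of the loop class under such endpoint deformations, which follows from your additivity plus triviality of loops along paths of invertibles (scale $\chi_1$ to infinity).
\end{itemize}
So you must work with bounded perturbations having invertible endpoints, not with trivializing operators in the strict sense.

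The paper avoids all of this. It writes $c=\mathrm{e}^{2\pi i f}$ with $f=\frac1\pi\tan^{-1}+\frac12$ and notes that $f(\check D)$ is a projection modulo $C_0(0,1)\otimes\mathbb{K}_B(\mathpzc{E})$ with limits $P_{A(0)}$, $P_{A(1)}$. It replaces $f(\check D)$ by the piecewise linear path $P_{A(0)}\to Q_0\to P_{A(1)}$ through a dominating spectral section $Q_0$. The resulting exponential is the concatenation of $\mathrm{e}^{2\pi i t(Q_0-P_{A(0)})}$ and $\mathrm{e}^{-2\pi i t(Q_0-P_{A(1)})}$, i.e.\ the Bott image of the expression defining the spectral-flow class. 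Your approach gives a transparent scalar winding-number picture. The paper's works with the given operators and needs no additivity, endpoint deformation or domain regularity of $E$.

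Finally, your one-dimensional sign check will not confirm the stated sign. Take $\mathpzc{E}=\C$, $D=0$ and $A$ running from $1$ to $-1$. The loop $c(\chi_1(t)(1-2t))=-\mathrm{e}^{2i\tan^{-1}(\chi_1(t)(1-2t))}$ has winding number $-1$, while $[P_{A(0)}-P_{A(1)}]=[1]$. With the convention $[p]\mapsto[\mathrm{e}^{2\pi i tp}]$ used in the paper's proof, the loop therefore represents $[P_{A(1)}-P_{A(0)}]$. That is also what the paper's computation actually produces, namely $[P_{A'}-P_A]$. The formula as stated holds only for the opposite Bott convention.
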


\begin{proof}
We only consider the odd case. The even case is proven analogously. Our argument follows the proof of \cite[Lemma 4.8]{DGrelI}. Consider the choice function $f(x):=\frac{1}{\pi}\tan^{-1}(x)+\frac{1}{2}$. The function $f\in C^\infty(\field{R})$ satisfies $f(+\infty)=1$, $f(-\infty)=0$ and $c(x)=\mathrm{e}^{2\pi if(x)}$. Write $\check{D}$ for the resolvent continuous path of self-adjoint regular operators $(0,1)\ni t\mapsto \chi_1(t)(D+A(t))$ and note that $\check{D}(t)$ is invertible for $t$ near $0$ or $1$.  We recall the notation 
$$P_A:=\chi_{[0,\infty)}(D+A)\equiv\lim_{t\to 0} f(\check{D})(t)\quad\mbox{and}\quad P_{A'}:=\chi_{[0,\infty)}(D+A')\equiv \lim_{t\to 1}f(\check{D})(t).$$ 
By Lemma \ref{biggerspecseclem} we can pick a spectral section $Q_0$ for $D_E^\partial$ such that $Q_0-P_A$ and $Q_0-P_{A'}$ are projections in $\mathbbm{K}_{B}(\mathpzc{E})$. Consider the continuous path
$$p(t)=\begin{cases}
(1-2t)P_A+2tQ_0, \;& t\in [0,\frac{1}{2}),\\
(2t-1)P_{A'}+2(1-t)Q_0, \;& t\in [\frac{1}{2},1].
\end{cases}$$
In fact, $p$ is a constant projection modulo $C[0,1]\otimes \mathbbm{K}_{B}(\mathpzc{E})$ satisfying $p(0)=P_A$ and $p(1)=P_{A'}$ in the end-points. Since $f(\check{D})-p\in C_0(0,1)\otimes \mathbbm{K}_{B}(\mathpzc{E})$ we conclude the identity
$$[c(\check{D})]=[\mathrm{e}^{2\pi if(\check{D})}]=[\mathrm{e}^{2\pi ip}],$$
as classes in $K_1(C_0(0,1)\otimes B)$. 

Consider the function $q:[0,1]\to  \mathbbm{K}_{B}(\mathpzc{E})$ defined by 
$$q(t)=\begin{cases}
2t(Q_0-P_A), \;& t\in [0,\frac{1}{2}),\\
(2t-1)(P_{A'}-Q_0), \;& t\in [\frac{1}{2},1].
\end{cases}$$
The path $q$ is not continuous, but $\mathrm{e}^{2\pi i q}$ is. Since 
$$p(t)-q(t)=\begin{cases}
P_A, \;& t\in [0,\frac{1}{2}),\\
Q_0, \;& t\in [\frac{1}{2},1].
\end{cases}$$ 
is a projection that commutes with $p(t)$ and $q(t)$ for each $t$, we have the identity $\mathrm{e}^{2\pi i q(t)}=\mathrm{e}^{2\pi i p(t)}$ for each $t$. Therefore we have the equality $[c(\check{D})]=[\mathrm{e}^{2\pi ip}]=[\mathrm{e}^{2\pi iq}]$.

On the other hand, by construction we have the identity 
$$[P_{A'}-P_A]\equiv [Q_0-P_A]-[Q_0-P_{A'}].$$
Under the Bott isomorphism, we can identify
$$[P_{A'}-P_A]=[\mathrm{e}^{2\pi i t(Q_0-P_A)}]-[\mathrm{e}^{2\pi i t(Q_0-P_{A'})}]=[\mathrm{e}^{2\pi i t(Q_0-P_A)}\#\mathrm{e}^{2\pi i t(P_{A'}-Q_0)}].$$
We note that $\mathrm{e}^{2\pi i t(Q_0-P_A)}\#\mathrm{e}^{2\pi i t(P_{A'}-Q_0)}=\mathrm{e}^{2\pi i q}$ which proves the desired equality $[c(\chi_1(\cdot)(D+A(\cdot)))]=[P_{A(0)}-P_{A(1)}]$.
\end{proof}

We shall fix another function $\tilde{\chi}\in C^\infty((0,1],\mathbb{R})$ such that 
$$\tilde{\chi}(t)=\begin{cases} t^{-1}, \; &\mbox{near $t=0$}\\ 0, \; &\mbox{near $t=1$}\end{cases}.$$
The set of such $\tilde{\chi}$:s is path connected because it forms a convex subset of $C^\infty(0,1]$. For a cycle $\mathfrak{Y}=(\mathpzc{E},D)$ for $(\C,B)$ and a trivializing operator $A$, we define the following families of operators 
\begin{equation}
\label{pathesospsdp}
\hat{A}_t:=\tilde{\chi}(t)A\quad\mbox{and}\quad \hat{D}_{t}:=\chi(t)D,\quad \mbox{for}\;\; t\in (0,1].
\end{equation}
If $\mathfrak{Y}$ is odd, we write $c(\mathfrak{Y},A)\in \widetilde{C_0((0,1],\mathbb{K}_B(\mathpzc{E}))}$ for the mapping $t\mapsto c(\hat{D}_{t}+\hat{A}_t))$ (it is well defined by Proposition \ref{pathrslsdsdsdl}). If $\mathfrak{Y}$ is even, we write $h(\mathfrak{Y},A)\in Q_-+C_0((0,1],\mathbb{K}_B(\mathpzc{E}))$ for the mapping $t\mapsto h(\hat{D}_{t}+\hat{A}_t)$ (it is well defined by Proposition \ref{pathrslsdsdsdl}).

The following result is immediate from the spectral flow formula (see Theorem \ref{sffofmdmd} on page \pageref{sffofmdmd}) and Lemma \ref{sfandcayhgidid}. For a continuous path $f:[0,1]\to X$ into a topological space $X$, we write $f^{\rm op}$ for $t\mapsto f(1-t)$, and for two paths $f_1,f_2:[0,1]\to X$ with $f_1(1)=f_2(0)$ we define the continuous path
$$f_1\#f_2(t):=
\begin{cases}
f_1(2t), \; &t\in [0,1/2],\\
f_2(2t-1), \; &t\in (1/2,1].
\end{cases}$$

\begin{cor}
Let $\mathfrak{X}$ be a $(\C,B)$-bordism of degree $*$ with two trivializing operators $A$ and $A'$. Then under the Bott isomorphism it holds that
$$\ind_{\rm APS}(\mathfrak{X},A)-\ind_{\rm APS}(\mathfrak{X},A')=
\begin{cases}
\left[c(\partial \mathfrak{X},A)\#c(\partial \mathfrak{X},A')^{\rm op}\right], \; &*=1,\\
{}\\
[h(\partial \mathfrak{X},A)\#h(\partial \mathfrak{X},A')^{\rm op}-Q_-], \; &*=0.
\end{cases}$$
\end{cor}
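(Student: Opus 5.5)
The plan is to combine the spectral flow formula, Theorem \ref{sffofmdmd}, with Lemma \ref{sfandcayhgidid}. By Theorem \ref{sffofmdmd} the left hand side equals $\mathrm{sf}(\partial\mathfrak{X};A,A')=[P_A-P_{A'}]\in K_{*+1}(B)$, where $P_A=\chi_{[0,\infty)}(D+A)$ and $D$ is the boundary operator. So it suffices to identify the class of the concatenated path on the right hand side, under Bott periodicity, with $[P_A-P_{A'}]$. When $\mathfrak{X}$ is not very full, we first pass to the very full modification; the APS-indices are unchanged by Theorem \ref{veryfullthemaadn}, and the Cayley and Higson paths change only by a constant summand coming from $\mathfrak{Y}_{S^1,B}$ with trivializing operator $0$, which contributes trivially.

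Next, reparametrize the concatenation $c(\partial\mathfrak{X},A)\#c(\partial\mathfrak{X},A')^{\rm op}$ (and the same with $h$) as a single path $t\mapsto c(\chi_2(t)(D+\mathcal{A}(t)))$ of the form in Lemma \ref{sfandcayhgidid}. On the first half the operator is $\chi(2t)D+\tilde\chi(2t)A$. Near $t=0$ this equals $t^{-1}(D+A)$ up to rescaling, and at $t=1/2$ it equals $D$. The second half is the mirror image with $A'$. Now deform linearly $\tilde\chi(s)A\rightsquigarrow\chi(s)\varphi(s)A$, where $\varphi\in C^\infty([0,1],[0,1])$ equals $1$ near $0$ and $0$ near $1$. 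Both $\chi$ and $\tilde\chi$ agree with $s^{-1}$ near $s=0$, so along this deformation the operators are invertible near the endpoints. Hence, by Proposition \ref{limitsofcandh} and Proposition \ref{pathrslsdsdsdl}, the deformation stays inside $\widetilde{C_0((0,1),\mathbb{K}_B(\mathpzc{E}))}$, respectively $Q_-+C_0((0,1),\mathbb{K}_B(\mathpzc{E}))$, and is norm continuous. Homotopy invariance of $K$-theory then shows the class is unchanged.

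After this deformation the path is exactly $c(\chi_2(t)(D+\mathcal{A}(t)))$, where $\mathcal{A}$ interpolates from $A$ (constant near $0$) through $0$ to $A'$ (constant near $1$), and $\chi_2$ is as in Lemma \ref{sfandcayhgidid}. That lemma gives the class $[P_{A}-P_{A'}]$ under Bott, which equals $\mathrm{sf}(\partial\mathfrak{X};A,A')$, and the claim follows. In the even case we subtract $Q_-$ so that we land in $K_0(SB)$, and argue identically with $h$.

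The main obstacle is the deformation step. The interpolation produces operators $\chi(s)(D+\varphi(s)A)$ that need not be invertible in the middle, yet they must still give norm-continuous families compact modulo the unit. Resolvent continuity plus compactness of resolvents handles the interior. Near the endpoints we need uniform invertibility, and there the operators are exactly $s^{-1}(D+A)$, so Proposition \ref{limitsofcandh} supplies the required uniform $O(s)$ estimate. A secondary bookkeeping point is the orientation and sign convention in the Bott map, i.e. whether the result is $[P_A-P_{A'}]$ or its negative; this is fixed by matching with Lemma \ref{sfandcayhgidid}'s convention, which uses the same ordering.
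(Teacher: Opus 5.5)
Your proposal is correct and is essentially the paper's argument: the corollary is deduced by combining the spectral flow formula of Theorem \ref{sffofmdmd} with Lemma \ref{sfandcayhgidid}. Your reparametrization of the concatenated path into the form $c(\chi_2(t)(D+\mathcal{A}(t)))$ is exactly the step the paper leaves implicit, and in fact no deformation is needed: since $\chi\geq 1$ one can write $\chi(s)D+\tilde\chi(s)A=\chi(s)\bigl(D+(\tilde\chi/\chi)(s)A\bigr)$ directly. The detour through the very full modification is harmless but unnecessary, as neither Theorem \ref{sffofmdmd} nor Lemma \ref{sfandcayhgidid} assumes very fullness.
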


\part{Isomorphism results for $KK$-bordism groups}
\label{partoniso}

In this part of the monograph we study the qualitative properties of the bounded transform
$$\beta:\Omega_*(\mathcal{A},B)\to KK_*(A,B).$$
Recall the bounded transform discussed in Subsection \ref{subsecbddtrans}. The bounded transform is well defined by work of Hilsum \cite{hilsumbordism}. We are interested in finding a generically applicable sufficient condition for $\beta$ to be an isomorphism. For countably generated $\mathcal{A}$, surjectivity of $\beta$ follows from Baaj-Julg's lifting theorem \cite{baajjulg}. Injectivity of $\beta$ is essentially a surjectivity statement at the level of equivalences of cycles, and Baaj-Julg's lifting argument was applied in this context independently by Kaad \cite{Kaadunbdd} and Mesland-van Dungen \cite{meslandvandung} to homotopy classes of unbounded Kasparov cycles. We follow Kaad's argument \cite{Kaadunbdd} to lift operator homotopies to $KK$-bordisms proving $\beta$ to be an isomorphism for countably generated $\mathcal{A}$. Using the technology of Part \ref{part:prelim} and \ref{part:geocons}, we arrive at a plethora of geometric examples when $\beta$ is an isomorphism. To complement this picture, we show that when $\mathcal{A}$ is an amenable Banach algebra, then injectivity and surjectivity of $\beta$ fails under additional mild assumptions on $A$.

\section{The bounded transform for countably generated $\mathcal{A}$}
\label{bddtransformisosec}

The goal of this section is to prove the following theorem.

\begin{theorem}
\label{bddisothm}
Let $\mathcal{A}$ be a countably generated $*$-algebra with $C^*$-closure $A$. For any $C^*$-algebra $B$, the bounded transform 
$$\beta:\Omega_*(\mathcal{A},B)\to KK_*(A,B),$$
is an isomorphism.
\end{theorem}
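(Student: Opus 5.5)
We need to show that $\beta$ is surjective and injective. Surjectivity is Baaj--Julg's lifting theorem \cite{baajjulg}: for countably generated $\mathcal{A}$, every class in $KK_*(A,B)$ is represented by the bounded transform of an unbounded $(\mathcal{A},B)$-cycle. So the work is in injectivity, which we treat as a lifting statement for equivalences rather than for cycles, following Kaad \cite{Kaadunbdd}.

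Let $\mathfrak{Y}=(\mathpzc{E},D)$ and $\mathfrak{Y}'=(\mathpzc{E}',D')$ be $(\mathcal{A},B)$-cycles with $[\beta(\mathfrak{Y})]=[\beta(\mathfrak{Y}')]$ in $KK_*(A,B)$.

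\textbf{Normalisation.} First we put both cycles into a good form, using the proposition in Subsection \ref{subsecbddtrans}. Replacing each cycle by its invertible double $(\tilde{\mathpzc{E}},\hat{D})$, we may assume, up to bordism, that both cycles are Lipschitz and invertible. This replacement changes the bounded transforms only by an operator homotopy and the addition of a degenerate cycle.

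Adding the trivially bordant cycles $\mathfrak{Y}_{S^1,B}\otimes \ell^2$-type summands (or stabilising with $(\mathpzc{E}\oplus \mathpzc{E}, D\oplus -D)$, which is nullbordant via the cylinder $\Psi$), and using Kasparov stabilisation, we may further assume $\mathpzc{E}=\mathpzc{E}'$. With these simplifications, equality of $KK$-classes means that the phases $F=D|D|^{-1}$ and $F'=D'|D'|^{-1}$ are connected, after adding degenerate cycles, by a norm-continuous operator homotopy $(F_t)_{t\in[0,1]}$ of bounded Kasparov cycles for $(A,B)$.

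\textbf{Lifting the homotopy.} Following \cite[Section 6]{Kaadunbdd}, we subdivide $[0,1]$ so finely that consecutive $F_{t_j}$, $F_{t_{j+1}}$ are close in norm modulo compacts. We then build unbounded lifts $D_j$ with common countable dense data. Concretely, we apply Baaj--Julg's lifting with a single strictly positive compact operator $K$, chosen, using countable generation of $\mathcal{A}$ and of the relevant compacts, to be adapted simultaneously to all the $F_{t_j}$ and to the generators of $\mathcal{A}$. The lifts $D_j$ then have phase $F_{t_j}$ up to compact perturbation and satisfy the hypotheses of Lemma \ref{prop62kaad} pairwise. That lemma yields bordisms $(\mathpzc{E},D_j)\sim_{\rm bor}(\mathpzc{E},D_{j+1})$, and the compact or locally bounded discrepancies between a lift and the normalised cycle are absorbed by Corollary \ref{locbddperturcor} and the corollary following Lemma \ref{prop62kaad}, which handles cycles with equal phase.

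\textbf{Conclusion.} Degenerate summands are handled via Corollary \ref{specedmopmad} and Proposition \ref{cliffsymlemprop}: an unbounded lift of a degenerate cycle can be chosen spectrally decomposable, hence nullbordant. Transitivity, Proposition \ref{subsecglyinggluing}, then gives $\mathfrak{Y}\sim_{\rm bor}\mathfrak{Y}'$.

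\textbf{Main obstacle.} The hardest step is producing the lifts $D_j$ so that the technical conditions (2) and (3) of Lemma \ref{prop62kaad} hold for consecutive pairs. These require positive regular $\Delta$ with $\Delta-DF$ bounded, and $[F,a]\mathpzc{E}\subseteq\Dom(D)\cap\Dom(D')$. My first attempt would be to take $D_j=F_{t_j}^{1/2}K^{-1}F_{t_j}^{1/2}$-type constructions, built from one common Baaj--Julg weight $K$ that commutes approximately with every $F_{t_j}$ and with the generators of $\mathcal{A}$. This is exactly where countable generation of $\mathcal{A}$ is essential.
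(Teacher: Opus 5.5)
\textbf{The gap: Lemma \ref{prop62kaad} cannot link consecutive lifts.} You apply the lemma between $D_j$ and $D_{j+1}$, but the lemma only relates two cycles that share a phase. It needs a \emph{single} symmetry $F$ such that:
\begin{itemize}
\item $[D_j,F]$ and $[D_{j+1},F]$ are bounded (hypothesis (1));
\item $D_jF$ and $D_{j+1}F$ are positive up to a bounded error (hypothesis (2)).
\end{itemize}
So $F$ must be the phase of both operators up to lower-order, essentially compact, terms.

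Your lifts are built from a common compact weight $K$, with $D_j\approx F_{t_j}K^{-1}$ and phases $F_{t_j}\neq F_{t_{j+1}}$. Take $F=F_{t_{j+1}}$; the other choices fail symmetrically. Then $[D_j,F]$ contains the term $[F_{t_j},F_{t_{j+1}}]K^{-1}$. Since $K$ is compact, this can only be bounded if $[F_{t_j},F_{t_{j+1}}]$ is compact. In an operator homotopy of Kasparov modules the increments $F_{t_{j+1}}-F_{t_j}$ are small in norm but in general not compact. Norm-smallness cannot be traded for boundedness against an unbounded weight, so no refinement of the subdivision makes (1)--(2) hold pairwise.

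Your proposed remedy does not repair this either:
\begin{itemize}
\item A symmetry with $-1$ in its spectrum has no self-adjoint square root, so $F_{t_j}^{1/2}K^{-1}F_{t_j}^{1/2}$ is not a meaningful choice.
\item With only approximate commutation, $F_{t_j}K^{-1}$ is not even self-adjoint.
\end{itemize}
As written, the passage along the homotopy, which is the heart of injectivity, is unproved. This is exactly the step you flagged as the main obstacle.

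\textbf{The missing ingredient.} You need to transport along the homotopy by a \emph{continuous unbounded homotopy on a fixed domain}, not by Lemma \ref{prop62kaad}. The paper does this in four steps.
\begin{itemize}
\item It smooths the operator homotopy to a norm-smooth path of symmetries $\tilde F$ (Lemma \ref{lemmaa}).
\item It applies Baaj--Julg once, to the countably generated algebra generated by $\mathcal{A}$ and the whole path $\tilde F$ inside $\End^*_{B[0,1]}(\mathpzc{E}[0,1])$. The quasicentral approximate unit is taken from the constant compacts $\mathbb{K}_B(\mathpzc{E})$. This produces one $t$-independent weight $\Delta$ (Lemma \ref{lemmae}), and it is where countable generation is really used.
\item The family $t\mapsto \tilde F(t)\Delta^{-1}$ is then $C^1$ in $\Hom^*_B(\Delta\mathpzc{E},\mathpzc{E})$. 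Proposition \ref{homotopywithstrongcond} turns it into a bordism between $\tilde F(0)\Delta^{-1}$ and $\tilde F(1)\Delta^{-1}$.
\item Lemma \ref{prop62kaad} is used only at the two endpoints. There $D$ and $\tilde F(0)\Delta^{-1}$, respectively $D'$ and $\tilde F(1)\Delta^{-1}$, genuinely have the same phase.
\end{itemize}

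If you want to keep your discretization, consecutive lifts must be joined by such an unbounded homotopy on the common domain, for example linear interpolation via Theorem \ref{perturbationgenerelthm}. That again requires one weight adapted to all $F_{t_j}$ at once. It also requires self-adjoint lifts: either exact commutation with the weight, or a symmetrization such as $\tfrac12(F_{t_j}K^{-1}+K^{-1}F_{t_j})$.
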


With minor modifications, our proof of Theorem \ref{bddisothm} extends to show that the equivariant bounded transform
$$\beta:\Omega_*^G(\mathcal{A},B)\to KK_*^G(A,B),$$
is an isomorphism as soon as $\mathcal{A}$ is countably generated and $G$ is either compact or discrete and acting trivially on $B$. We prove the theorem by a sequence of lemmas.

\begin{lemma}
\label{lemmaa}
Let $A$ and $B$ be $C^*$-algebras, $\mathpzc{E}$ a $B$-Hilbert $C^*$-module and $F\in C([0,1],\End^*_B(\mathpzc{E}))$ an operator homotopy of $(A,B)$-Kasparov modules on $\mathpzc{E}$. Assume that $F^2=1$ and $F=F^*$. Then there exists a norm-smooth operator homotopy of $(A,B)$-Kasparov modules $\tilde{F}\in C^\infty([0,1],\End^*_B(\mathpzc{E}))$ constant near the endpoints and satisfying
$$F(0)=\tilde{F}(0)\quad\mbox{and}\quad F(1)=\tilde{F}(1),$$
and $\tilde{F}^2=1$ and $\tilde{F}=\tilde{F}^*$
\end{lemma}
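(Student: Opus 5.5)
The plan is to reparametrize and smooth $F$, then use holomorphic functional calculus to restore the symmetry condition. Since $F$ is a norm-continuous path of self-adjoint unitaries, first reparametrize time so that the path becomes constant near the endpoints. Concretely, precompose with a smooth nondecreasing map $\rho:[0,1]\to[0,1]$ that equals $0$ near $0$ and $1$ near $1$. The path $F\circ\rho$ is still an operator homotopy of Kasparov modules with the same endpoints, and it is constant near $t=0$ and $t=1$.

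Next, smooth $F\circ\rho$ in norm. I would use a partition of unity or mollification in the $t$-variable. For instance, take a fine partition $0=t_0<\dots<t_N=1$ and a smooth partition of unity $(\psi_j)$ on $[0,1]$ subordinate to small intervals around the $t_j$. Then set
$$G(t):=\sum_j\psi_j(t)F(\rho(t_j)).$$
Arrange that $\psi_0\equiv 1$ near $0$ and $\psi_N\equiv 1$ near $1$, so that $G$ agrees with $F\circ\rho$ near the endpoints. By uniform continuity, $\|G(t)-F(\rho(t))\|<\epsilon$ for all $t$. Each $G(t)$ is a convex combination of self-adjoint operators, so $G$ is norm-smooth and self-adjoint. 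Moreover, for $a\in A$ the expressions $a(G(t)^2-1)$ and $[G(t),a]$ stay compact. This holds because $[F(s),a]$ and $a(F(s)F(s')-1)$ are compact: the latter equals $a(F(s)-F(s'))F(s')$ up to compacts, and $F(s)-F(s')$ is not compact in general. So this last point needs care.

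To avoid that issue, I will keep the whole smoothing inside a neighborhood of the unitaries, where the compactness conditions are automatic modulo a controlled error. Observe that if $\epsilon<1$, then $\|G(t)^2-1\|<3\epsilon<1$, so the spectrum of $G(t)$ avoids a neighborhood of $0$. We can then define
$$\tilde F(t):=\mathrm{sign}(G(t))=G(t)|G(t)|^{-1}$$
by holomorphic functional calculus. This $\tilde F$ is norm-smooth in $t$, self-adjoint, satisfies $\tilde F^2=1$, and equals $F(0)$ and $F(1)$ near the endpoints, since there $G$ is already a symmetry.

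The main obstacle is checking that $\tilde F(t)$ is still a Kasparov module, that is, that $[\tilde F(t),a]$ is compact. For this step, replace the convex combination by a construction that respects the compactness conditions. Near each $t_j$, write $F(\rho(t))=F(\rho(t_j))+K_j(t)$. Here the difference of two Kasparov operators at nearby times is not compact, but it is small in norm, and the Kasparov conditions are closed under small norm perturbations only modulo the ideal $\{T: Ta,\,aT\in\K\ \forall a\}$. The cleaner route is therefore to work in the $C^*$-algebra
$$\mathcal{D}:=\{T\in\End_B^*(\mathpzc{E}): [T,a]\in\K_B(\mathpzc{E})\ \forall a\in A\},$$
which is norm-closed. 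Each $F(s)$ lies in $\mathcal{D}$, so $G(t)\in\mathcal{D}$ as a finite convex combination. Since $\mathcal{D}$ is a $C^*$-algebra, $\tilde F(t)\in\mathcal{D}$ by functional calculus. This gives compact commutators, and $a(\tilde F^2-1)=0$ together with self-adjointness handle the remaining conditions. Smoothness of $t\mapsto \tilde F(t)$ follows from the contour integral representation $\tilde F(t)=\frac{1}{2\pi i}\oint \mathrm{sign}(\lambda)(\lambda-G(t))^{-1}\,\mathrm{d}\lambda$ with a contour avoiding a fixed neighborhood of $0$.
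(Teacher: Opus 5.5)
Your proposal is correct and follows the paper's proof: reparametrize so the path is constant near the endpoints, smooth it in norm (your partition-of-unity average is a concrete form of the paper's mollification), and pass to $\tilde F:=G|G|^{-1}$ once $\|G(t)^2-1\|<1$. Your observation that $\{T\in\End_B^*(\mathpzc{E}):[T,a]\in\mathbb{K}_B(\mathpzc{E})\ \forall a\in A\}$ is a unital $C^*$-algebra makes explicit why $[\tilde F(t),a]$ stays compact, a step the paper leaves implicit. Two small fixes: you need $\epsilon<1/3$ rather than merely $\epsilon<1$ so that $2\epsilon+\epsilon^2<1$, and the abandoned remarks about $a(G(t)^2-1)$ can simply be deleted.
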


\begin{proof}
We can assume that $F$ is constant near the endpoints after possibly reparametrizing the unit interval. By mollifying $F$, we can for any $\epsilon>0$ construct a self-adjoint $F_\epsilon \in C^\infty([0,1],\End^*_B(\mathpzc{E}))$ such that $[F_\epsilon,a]\in C^\infty([0,1],\mathbb{K}_B(\mathpzc{E}))$, $\|F_\epsilon^2-1\|<\epsilon$ and finally $F(0)=F_\epsilon(0)$ and $F(1)=F_\epsilon(1)$. For $\epsilon<1/4$, $F_\epsilon$ is invertible in $C^\infty([0,1],\End^*_B(\mathpzc{E}))$ and we can take 
$$\tilde{F}:=F_\epsilon|F_\epsilon|^{-1}.$$
\end{proof}

\begin{lemma}
\label{lemmab}
Let $A$ and $B$ be $C^*$-algebras, and $\mathcal{A}\subseteq A$ a countably generated, dense $*$-subalgebra. The bounded transform
$$\beta:\Omega_*(\mathcal{A},B)\to KK_*(A,B),$$
is injective if and only if whenever $(\mathpzc{E},D)$ and $(\mathpzc{E},D')$ are $(\mathcal{A},B)$-cycles such that 
\begin{enumerate}
\item $(\mathpzc{E},D)$ and $(\mathpzc{E},D')$ are Lipschitz and both $D$ and $D'$ are invertible;
\item there is a self-adjoint $\tilde{F}\in C^\infty([0,1],\End^*_B(\mathpzc{E}))$ such that $\tilde{F}^2=1$ and 
$$\tilde{F}(0)=D|D|^{-1}\quad\mbox{and}\quad \tilde{F}(0)=D'|D'|^{-1},$$
\end{enumerate}
then $(\mathpzc{E},D)\sim_{\rm bor} (\mathpzc{E},D')$.
\end{lemma}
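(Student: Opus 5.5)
The "only if" direction is immediate. If $\beta$ is injective and $(\mathpzc{E},D)$, $(\mathpzc{E},D')$ are as in (1)–(2), then $\tilde{F}$ is a norm-continuous path of self-adjoint symmetries. Each $\tilde F(t)$ is a Kasparov module once we check that $[\tilde F(t),a]$ is compact; this holds at the endpoints and we read it into hypothesis (2) as part of "operator homotopy", as in Lemma \ref{lemmaa}. So $\tilde F$ is an operator homotopy between $D|D|^{-1}$ and $D'|D'|^{-1}$. Since $D$ is invertible, $D(1+D^2)^{-1/2}$ and $D|D|^{-1}$ differ by a compact operator, because their difference is $f(D)$ with $f\in C_0(\R)$ on the spectrum. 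Hence $\beta(\mathpzc{E},D)=\beta(\mathpzc{E},D')$ in $KK_*(A,B)$, and injectivity gives $(\mathpzc{E},D)\sim_{\rm bor}(\mathpzc{E},D')$.

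For the "if" direction, let $\mathfrak{Y}_1,\mathfrak{Y}_2$ be cycles with $\beta(\mathfrak{Y}_1)=\beta(\mathfrak{Y}_2)$. Since $\Omega_*$ is a group, it suffices to show that $\mathfrak{Y}:=\mathfrak{Y}_1+(-\mathfrak{Y}_2)$ is nullbordant when $\beta(\mathfrak{Y})=0$, but it is convenient to keep two cycles. The plan has four steps.

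\begin{enumerate}
\item Apply the proposition in Subsection \ref{subsecbddtrans} (invertible double followed by order reduction $\hat D=\tilde D|\tilde D|^{-1/2}$). This replaces each $\mathfrak{Y}_i$ by a bordant Lipschitz invertible cycle $(\tilde{\mathpzc{E}}_i,\hat D_i)$ whose bounded transform is operator homotopic to $\beta(\mathfrak{Y}_i)$ plus a degenerate module.
\item Use the standard description of equality in $KK$: two Kasparov modules with the same class become operator homotopic after adding degenerate modules and applying unitary equivalences. Kasparov stabilization lets us absorb everything into a common module $\mathpzc{E}$. The degenerate summands must be realised by unbounded cycles. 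For a degenerate module we can arrange $F$ to be a symmetry commuting with $A$; then $(\mathpzc{E}_0,D_0)$ with $D_0=F$ is a Lipschitz invertible cycle only when the module is $A$-locally compact, which fails in general. Instead we double it, $(\mathpzc{E}_0\oplus\mathpzc{E}_0, F\oplus -F)$, and compose with a spectrally decomposable unbounded lift. By Theorem \ref{specdecompnulbord} and Corollary \ref{specedmopmad}, adding these summands does not change bordism classes.
\item Pass from $D(1+D^2)^{-1/2}$ to the sign $D|D|^{-1}$ by a straight-line homotopy, which is legitimate since $D$ is invertible. This yields a continuous homotopy of symmetries between $D|D|^{-1}$ and $D'|D'|^{-1}$ on the same module. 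Lemma \ref{lemmaa} then smooths it to $\tilde F\in C^\infty$, producing exactly the situation of (1)–(2).
\item The hypothesis now gives the bordism, and transitivity (Proposition \ref{subsecglyinggluing}) chains the bordisms together.
\end{enumerate}

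The main obstacle is step 2: producing unbounded Lipschitz invertible cycles, over the countably generated $\mathcal{A}$, whose signs are the given bounded operators on the stabilized module. This is where countable generation enters, through Baaj–Julg lifting. Given a bounded symmetry $F$ with $[F,a]$ compact, one chooses a strictly positive compact $K$, built from an approximate unit quasi-central for the countable generating set and for $F$, such that $D:=F K^{-1}$ has bounded commutators with $\mathcal{A}$ and locally compact resolvent. The lift must also be made Lipschitz, which we would get from the order reduction theorem (Theorem \ref{interpolationthm} with Corollary \ref{ononaod}), while keeping $D|D|^{-1}=F$ exactly. If matching the sign exactly fails, the fallback is to match it up to a compact perturbation, and then absorb the difference by a further short straight-line homotopy of symmetries.
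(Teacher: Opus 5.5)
Your overall plan matches the paper's: reduce equality in $KK_*(A,B)$ to additions of degenerate modules and operator homotopies, remove the degenerate pieces with spectrally decomposable lifts (Theorem \ref{specdecompnulbord}), pass to Lipschitz invertible representatives (Corollary \ref{ononaod}), and feed a smoothed path of symmetries from Lemma \ref{lemmaa} into the hypothesis. You are right to read compactness of $[\tilde F(t),a]$ into hypothesis (2), and to read the second endpoint condition as $\tilde F(1)=D'|D'|^{-1}$. Your ``only if'' direction is fine. One correction there: $D(1+D^2)^{-1/2}-D|D|^{-1}$ is only $A$-locally compact, not compact, but that is all you need.

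The gap is in step 3.
\begin{itemize}
\item The operator homotopy $(F_t)_{t\in[0,1]}$ you get from the $KK$-equivalence in step 2 only satisfies $a(F_t^2-1)\in\K_B(\mathpzc{E})$. For interior $t$, $F_t$ need not be a symmetry, or even invertible.
\item Concatenating with straight lines from $D(1+D^2)^{-1/2}$ to $D|D|^{-1}$ does not help. Those are paths of invertible self-adjoint functions of $D$, not of symmetries.
\item You cannot repair the middle by taking signs, because $F_t$ may have $0$ in its spectrum.
\end{itemize}
So you never produce the continuous path of symmetries that Lemma \ref{lemmaa} requires. Your fallback has the same defect: the straight line between two symmetries that differ by a compact operator can pass through non-invertible operators.

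The missing step, which the paper uses, is to apply the doubling trick of Corollary \ref{invertibleldodcor} to the whole homotopy. After normalizing the $F_t$ to self-adjoint contractions, replace them by
\[
G_t:=\begin{pmatrix} F_t & (1-F_t^2)^{1/2}\\ (1-F_t^2)^{1/2} & -F_t\end{pmatrix}
\]
on $\mathpzc{E}\oplus\mathpzc{E}$, with the opposite grading on the second copy in the even case and $A$ acting on the first summand only. Then $G_t^2=1$, $[G_t,a]$ is compact, and $t\mapsto G_t$ is norm continuous.

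At the endpoints, where $F_t$ is the bounded transform of an invertible operator $D$, $G_t$ can be joined within such symmetries to the sign of an invertible Lipschitz cycle bordant to the original one. For instance, take $D\oplus(-D)$: its second summand carries the zero action and is spectrally decomposable. Only after this normalization does Lemma \ref{lemmaa} give the smooth $\tilde F$ the hypothesis needs.

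Relatedly, the step you flag as the main obstacle is milder than you fear. Baaj--Julg lifting is only needed for the degenerate summands. There a lift $F\Delta^{-1}$ with $\Delta$ commuting with $F$, as in Lemma \ref{lemmae}, has sign exactly $F$, so no fallback is required.
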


\begin{proof}
We only prove the if-direction, the other direction is immediate. Since $\mathcal{A}\subseteq A$ is a countably generated, dense $*$-subalgebra $\beta$ is surjective and the lemma consists of showing that the equivalence relation on the set of $(\mathcal{A},B)$-cycles induced from $KK_*(A,B)$ and $\beta$ coincides with the bordism relation under the stated assumption.

Assume that  $(\mathpzc{E}_0,D_0)$ and $(\mathpzc{E}_0',D'_0)$ are $(\mathcal{A},B)$-cycles such that $[\beta(\mathpzc{E}_0,D_0)]=[\beta(\mathpzc{E}_0',D'_0)]$ in $KK_*(A,B)$. The equivalence between $\beta(\mathpzc{E}_0,D_0)$ and $\beta(\mathpzc{E}_0',D'_0)$ in $KK_*(A,B)$ comes from a finite sequence of degenerate equivalences and operator homotopies. We therefore need to prove that degenerate equivalences and operator homotopies in $KK_*(A,B)$ lift to $KK$-bordisms of cycles in $\Omega_*(\mathcal{A},B)$. By Theorem \ref{specdecompnulbord}, degenerate equivalence lifts to $KK$-bordisms. So it suffices to show $(\mathpzc{E},D)$ and $(\mathpzc{E},D')$ are $(\mathcal{A},B)$-cycles admitting an operator homotopy $\beta(D)\sim_{\rm oh}\beta(D')$ then $(\mathpzc{E},D)\sim_{\rm bor} (\mathpzc{E},D')$. By Corollary \ref{ononaod}, we can assume that $(\mathpzc{E},D)$ and $(\mathpzc{E},D')$ are Lipschitz and both $D$ and $D'$ are invertible. Let $F_0$ denote the operator homotopy $\beta(D)\sim_{\rm oh}\beta(D')$. We can assume that $F_0$ is self-adjoint. By performing a standard doubling trick (as in Corollary \ref{invertibleldodcor}), we can assume that $F_0$ is self-adjoint and invertible, so $F:=F_0|F_0|^{-1}$ satisfies the assumptions of Lemma \ref{lemmaa}. Therefore, the required $\tilde{F}\in C^\infty([0,1],\End^*_B(\mathpzc{E}))$ exists and $(\mathpzc{E},D)\sim_{\rm bor} (\mathpzc{E},D')$. In conclusion, $[\beta(\mathpzc{E}_0,D_0)]=[\beta(\mathpzc{E}_0',D'_0)]$ in $KK_*(A,B)$ implies $(\mathpzc{E},D)\sim_{\rm bor} (\mathpzc{E},D')$ and so $\beta$ is injective.
\end{proof}

\begin{lemma}
\label{lemmae}
Let $A$ and $B$ be $C^*$-algebras, and $\mathcal{A}\subseteq A$ a countably generated, dense $*$-subalgebra. Consider two $(\mathcal{A},B)$-cycles $(\mathpzc{E},D)$ and $(\mathpzc{E},D')$ such that 
\begin{enumerate}
\item Both cycles $(\mathpzc{E},D)$ and $(\mathpzc{E},D')$ satisfy that $D$ and $D'$ are invertible.
\item There is a self-adjoint $\tilde{F}\in C^\infty([0,1],\End^*_B(\mathpzc{E}))$ such that $\tilde{F}^2=1$, $[\tilde{F}(t),a]\in \mathbb{K}_B(\mathpzc{E})$ for any $a\in \mathcal{A}$, and 
$$\tilde{F}(0)=D|D|^{-1}\quad\mbox{and}\quad \tilde{F}(0)=D'|D'|^{-1}.$$
\end{enumerate}
Then there exists a positive $\Delta\in \mathbb{K}_B(\mathpzc{E})$ with dense range such that 
\begin{itemize}
\item For any $t\in [0,1]$, $\tilde{F}(t)\Delta=\Delta \tilde{F}(t)$.
\item For any $a\in \mathcal{A}$ and $t\in [0,1]$, the densely defined operators 
$$[a,\Delta^{-1}], \quad\mbox{and}\quad [a,\tilde{F}(t)]\Delta^{-1},$$
are norm bounded and their continuous extensions depend continuously on $t\in [0,1]$. 
\end{itemize}

\end{lemma}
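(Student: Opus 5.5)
This is a Baaj--Julg/Kaad type lifting: we build $\Delta$ from a quasicentral approximate unit adapted to countably many commutators, and make it commute with the whole path $\tilde F$ by using a commuting reduction.

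\textbf{Step 1: reduce to a single symmetry.} First I would trivialize the path of symmetries. Since $\tilde F$ is a norm-smooth path of self-adjoint unitaries, the standard construction produces a unitary path $U\in C^\infty([0,1],\End_B^*(\mathpzc{E}))$ with $U(0)=1$ and $\tilde F(t)=U(t)\tilde F(0)U(t)^*$. One takes $U$ as the solution of $U'=\frac12[\tilde F',\tilde F]U$.

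\textbf{Step 2: collect the compacts to be controlled.} Let $a_1,a_2,\ldots$ be self-adjoint generators of $\mathcal{A}$. The maps $t\mapsto[\tilde F(t),a_j]$ are continuous into $\mathbb{K}_B(\mathpzc{E})$, so each has compact image. Hence the set
$$\mathcal{K}:=\{[\tilde F(t),a_j]:t\in[0,1],\ j\in\N\}$$
is covered by countably many norm-compact subsets of $\mathbb{K}_B(\mathpzc{E})$. Together with $(1+D^2)^{-1/2}$, these generate a separable $C^*$-subalgebra of $\mathbb{K}_B(\mathpzc{E})$.

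\textbf{Step 3: build $\Delta$ from a quasicentral approximate unit.} I would follow Baaj--Julg and Kaad \cite{Kaadunbdd}. Choose a sequence $u_n$ in the separable algebra from Step 2 that is quasicentral for the separable family
$$\{U(t),\ \tilde F(t),\ a_j:t\in[0,1],\ j\in\N\}.$$
Quasicentrality for the parametrized families is available uniformly in $t$ by compactness of $[0,1]$. Require moreover $\|[u_n,a_j]\|\le 4^{-n}$ and $\|(1-u_n)k\|\le 4^{-n}$ uniformly over $k\in\mathcal{K}$ with $j\le n$. To force commutation with $\tilde F(t)$, average over the commutant: replace $u_n$ by
$$\tfrac12\bigl(u_n+\tilde F(0)u_n\tilde F(0)\bigr),$$
which commutes with $\tilde F(0)$. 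Then set
$$\Delta:=\Bigl(\sum_n 2^{-n}u_n\Bigr)_{\!U},$$
meaning: define $\Delta_0=\sum_n 2^{-n}u_n$ commuting with $\tilde F(0)$, and compare it with $U(t)\Delta_0U(t)^*$.

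\textbf{Step 4: the obstacle.} The main obstacle is the requirement that a single $\Delta$ commute exactly with every $\tilde F(t)$. Generically only $\tilde F(t)$-invariant operators in $C^*(\tilde F(t):t)'$ qualify, so commutation can only be exact on the commutant of the whole family. I would try to obtain it by choosing the $u_n$ inside the commutant of the $C^*$-algebra generated by $\{\tilde F(t)\}$. This is possible after a stabilization $\mathpzc{E}\oplus\mathpzc{E}$, using the doubling trick of Corollary \ref{invertibleldodcor} so that the path becomes a norm-continuous unitary conjugate of a constant. For a constant symmetry, averaging gives exact commutation.

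\textbf{Step 5: check the two estimates.} The estimates on $[a,\Delta^{-1}]$ and $[a,\tilde F(t)]\Delta^{-1}$ then follow exactly as in Baaj--Julg. We have
$$[a,\Delta^{-1}]=-\Delta^{-1}[a,\Delta]\Delta^{-1}.$$
Both this and $[a,\tilde F]\Delta^{-1}$ are bounded by the geometric decay, with $\|(1-u_n)k\|\le 4^{-n}$ beating the weights $2^{-n}$. Continuity in $t$ comes from uniformity over the compact sets in $\mathcal{K}$.
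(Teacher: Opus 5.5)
Your Step 4 is a genuine gap, and it cannot be closed the way you propose. Averaging with $\tilde F(0)$ gives commutation with $\tilde F(0)$ only. Choosing the $u_n$ in the commutant of $\{\tilde F(t):t\in[0,1]\}$ is impossible in general, because that commutant may contain no nonzero compact operator at all. Stabilizing changes nothing either: Step 1 already gives $\tilde F(t)=U(t)\tilde F(0)U(t)^*$, and exact commutation then means $U(t)^*\Delta U(t)\in\{\tilde F(0)\}'$ for every $t$, which is the original problem.

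Here is a concrete instance. Take $\mathcal A=A=B=\C$ and $\mathpzc{E}=\ell^2(\Z)$ with basis $(e_n)$. Let $De_n=(n+\tfrac12)e_n$ and let $P$ be the projection onto $\overline{\mathrm{span}}\{e_n:n\geq 0\}$, so $D|D|^{-1}=2P-1$. Let $K=\tfrac12(S+S^*)$ with $S$ the bilateral shift, and set $\tilde F(t)=e^{itK}(2P-1)e^{-itK}$ and $D'=e^{iK}De^{-iK}$. All hypotheses hold.

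By analyticity in $t$, anything commuting with $\tilde F(t)$ for $t\in[0,1]$ commutes with the $C^*$-algebra generated by $e^{itK}Pe^{-itK}$, $t\in\R$. This algebra contains $i[K,P]$, which is supported on $\mathrm{span}\{e_0,e_{-1}\}$. Compressing $[K,P]^2$ by $P$ and by $1-P$ shows the algebra contains the rank-one projections onto $e_0$ and $e_{-1}$. By $e^{isK}$-conjugation invariance it then contains the projections onto $e^{isK}e_0$ and $e^{isK}e_{-1}$ for all $s$. A Fourier-transform check shows that its commutant is $\C 1$. So no positive compact $\Delta$ with dense range commutes with every $\tilde F(t)$: the single-$\Delta$ conclusion is not attainable in general, by any method.

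What the paper's argument actually produces is a $t$-dependent $\Delta$. This is also what Theorem \ref{thmf} really needs, namely a $t$-independent domain for $\tilde F(t)\Delta^{-1}$.

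1. Run Baaj--Julg once, over $C([0,1],B)$, for the countably generated algebra generated by $\mathcal A$ and the whole path $\tilde F$. Take the quasicentral approximate unit from the constant compacts $\mathbb{K}_B(\mathpzc{E})$.
2. This yields a single $\Delta_0$ such that $[a,\Delta_0^{-1}]$, $[a,\tilde F(t)]\Delta_0^{-1}$ and $[\tilde F(t),\Delta_0^{-1}]$ are bounded and norm-continuous in $t$. Your Step 3, which makes the $u_n$ quasicentral for $\tilde F(t)$ uniformly in $t$, is the right ingredient for this.
3. Symmetrize pointwise in $t$: $\Delta(t)=(C+\Delta_0^{-1}+\tilde F(t)\Delta_0^{-1}\tilde F(t))^{-1}$. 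The paper writes $\tilde F\Delta_0^{-1}+\Delta_0^{-1}\tilde F$, which commutes with $\tilde F$ but is not bounded below; the conjugated average is the positive choice.

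This $\Delta(t)$ commutes with $\tilde F(t)$. Moreover $\tilde F\Delta_0^{-1}\tilde F-\Delta_0^{-1}=[\tilde F,\Delta_0^{-1}]\tilde F$ is bounded and continuous in $t$, so $\Dom(\Delta(t)^{-1})=\Delta_0\mathpzc{E}$ does not depend on $t$.

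Two smaller points. The Kato generator is $\tfrac14[\tilde F',\tilde F]$, not $\tfrac12[\tilde F',\tilde F]$. And $\sum_n 2^{-n}u_n$ is not the operator Baaj--Julg invert, so Step 5 still needs a genuine argument controlling $\Delta^{-1}$.
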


\begin{proof}
Consider the countably generated $*$-algebra $\tilde{\mathcal{A}}\subseteq \End_{B[0,1]}^*(\mathpzc{E}[0,1])$ generated by $\mathcal{A}$ and $\tilde{F}$. By taking a quasicentral approximate unit from the subalgebra $\mathbb{K}_B(\mathpzc{E})\subseteq \mathbb{K}_{B[0,1]}^*(\mathpzc{E}[0,1])$, Baaj-Julg's lifting argument \cite{baajjulg} produces a $\Delta_0\in \mathbb{K}_B(\mathpzc{E})$ such that $[b,\Delta_0^{-1}]$ and $[b,\tilde{F}]\Delta_0^{-1}$ are norm bounded and their continuous extensions belong to $\End_{B[0,1]}^*(\mathpzc{E}[0,1])$, for any $b\in \tilde{\mathcal{A}}$. We can now take $\Delta=(C+\tilde{F}\Delta_0^{-1}+\Delta_0^{-1}\tilde{F})^{-1}$ for a sufficiently large $C>0$.
\end{proof}

\begin{theorem}
\label{thmf}
Let $A$ and $B$ be $C^*$-algebras, and $\mathcal{A}\subseteq A$ a countably generated, dense $*$-subalgebra. Whenever $(\mathpzc{E},D)$ and $(\mathpzc{E},D')$ are $(\mathcal{A},B)$-cycles such that 
\begin{enumerate}
\item $(\mathpzc{E},D)$ and $(\mathpzc{E},D')$ are Lipschitz and both $D$ and $D'$ are invertible;
\item there is a self-adjoint $\tilde{F}\in C^\infty([0,1],\End^*_B(\mathpzc{E}))$ such that $\tilde{F}^2=1$ and 
$$\tilde{F}(0)=D|D|^{-1}\quad\mbox{and}\quad \tilde{F}(0)=D'|D'|^{-1},$$
\end{enumerate}
then $(\mathpzc{E},D)\sim_{\rm bor} (\mathpzc{E},D')$. In particular, whenever $(\mathpzc{E},D_1)$ and $(\mathpzc{E},D_2)$ are two $(\mathcal{A},B)$-cycles admitting an operator homotopy $(\mathpzc{E},\beta(D_1))\sim_{\rm oh} (\mathpzc{E},\beta(D_2))$, there is a $KK$-bordism $(\mathpzc{E},D_1)\sim_{\rm bor} (\mathpzc{E},D_2)$.
\end{theorem}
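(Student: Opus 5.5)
The plan is to follow Kaad's strategy \cite{Kaadunbdd} and reduce everything to Lemma \ref{prop62kaad} applied not to a single symmetry but to a chain of symmetries along the path $\tilde{F}$. The natural tool for the unbounded side is the positive compact $\Delta$ from Lemma \ref{lemmae}, which commutes with every $\tilde{F}(t)$ and regularises all commutators with $\mathcal{A}$. Note first that the hypotheses of Lemma \ref{lemmae} hold: $[\tilde F(t),a]$ is compact because $\tilde F$ is a path of Kasparov operators (in the application, via Lemma \ref{lemmaa} starting from $D|D|^{-1}$ and $D'|D'|^{-1}$, which are Kasparov modules by the Lipschitz assumption).

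\textbf{Step 1: the path of unbounded cycles.} With $\Delta$ in hand, define
$$D_t:=\tilde F(t)\Delta^{-1},\qquad t\in[0,1].$$
Since $\tilde F(t)$ commutes with $\Delta$ and $\tilde F(t)^2=1$, each $D_t$ is self-adjoint and regular with $|D_t|=\Delta^{-1}$. It therefore has compact resolvent, because $\Delta$ is compact. The commutator
$$[D_t,a]=[\tilde F(t),a]\Delta^{-1}+\tilde F(t)[\Delta^{-1},a]$$
is bounded and norm continuous in $t$ by Lemma \ref{lemmae}. Hence $(\mathpzc{E},D_t)$ is a Lipschitz $(\mathcal{A},B)$-cycle for every $t$, and all of them share the domain $\Dom(\Delta^{-1})$. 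Smoothness of $\tilde F$ gives $D\in C^1([0,1],\Hom_B^*(\Dom(\Delta^{-1}),\mathpzc{E}))$. Proposition \ref{homotopywithstrongcond}, with $\mathpzc{W}=\Dom(\Delta^{-1})$ (locally compact inclusion, since $\Delta$ is compact), then yields
$$(\mathpzc{E},D_0)\sim_{\rm bor}(\mathpzc{E},D_1).$$
If continuity of $[D_t,a]$ only holds strongly, I would instead verify Lemma \ref{lemmahomotopyresolven} directly. There $R_\mu'(t)$ is computable because $\tilde F'(t)$ commutes with $\Delta$ (differentiate the commutation relation), and $D(t)R'_\mu(t)$ is bounded.

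\textbf{Step 2: connecting the endpoints.} It remains to show $(\mathpzc{E},D)\sim_{\rm bor}(\mathpzc{E},D_0)$, and likewise for $D'$ and $D_1$. Here $D$ and $D_0=D|D|^{-1}\Delta^{-1}$ have the same phase $F:=D|D|^{-1}$, both are invertible, and both cycles are Lipschitz: $D$ by assumption, $D_0$ by Step 1. The corollary following Lemma \ref{prop62kaad}, which treats invertible Lipschitz cycles with equal phase, applies verbatim. Transitivity via Proposition \ref{subsecglyinggluing} then gives
$$(\mathpzc{E},D)\sim_{\rm bor}(\mathpzc{E},D_0)\sim_{\rm bor}(\mathpzc{E},D_1)\sim_{\rm bor}(\mathpzc{E},D').$$

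\textbf{Step 3: the ``in particular''.} Given an operator homotopy $\beta(D_1)\sim_{\rm oh}\beta(D_2)$, I first use Corollary \ref{ononaod}, applied through Corollary \ref{invertibleldodcor} and Theorem \ref{bordismforodrdthm}, together with the operator-homotopy proposition of Subsection \ref{subsecbddtrans}, to replace both cycles by bordant invertible Lipschitz cycles on a common doubled module. Their bounded transforms remain operator homotopic modulo degenerate summands. I then make the homotopy self-adjoint and invertible by the doubling trick, normalise it to a symmetry, and smooth it with Lemma \ref{lemmaa}. This brings us to the main statement.

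\textbf{Main obstacle.} I expect the crux to be Lemma \ref{lemmae} together with the bookkeeping in Step 1: Baaj--Julg's quasicentral approximate unit must be chosen uniformly in $t$ so that $\Delta$ commutes exactly with every $\tilde F(t)$. Lemma \ref{lemmae} is stated as providing this. The remaining care is in checking the $C^1$ hypothesis, where I would fall back on Lemma \ref{lemmahomotopyresolven} if needed.
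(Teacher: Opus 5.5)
Your proposal is correct and follows the paper's proof essentially step for step. The paper also takes $\Delta$ from Lemma \ref{lemmae} and sets $\hat D(t)=\tilde F(t)\Delta^{-1}$ on the constant domain $\Delta\mathpzc{E}$. It connects $D$ and $D'$ to $\hat D(0)$ and $\hat D(1)$ via Lemma \ref{prop62kaad} with $F=\tilde F(0)$ and $F=\tilde F(1)$, which is exactly what the corollary you cite does. It handles the middle of the path with Proposition \ref{homotopywithstrongcond}, and obtains the ``in particular'' by the reduction in Lemma \ref{lemmab}. As you anticipated, the real weight rests on Lemma \ref{lemmae} supplying a single $t$-independent $\Delta$ that commutes with every $\tilde F(t)$, and the paper's argument leans on that lemma in exactly the same way.
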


\begin{proof}
By the argument in Lemma \ref{lemmab}, we can assume that the assumptions of Lemma \ref{lemmae} hold and that $\tilde{F}$ and $\Delta$ exists. We also note that invertiblity and Lipschitz regularity, assumed in Lemma \ref{lemmae}, ensures that $[\tilde{F}(0),a]\mathpzc{E}\subseteq \mathrm{Dom}(D)$ and $[\tilde{F}(1),a]\mathpzc{E}\subseteq \mathrm{Dom}(D')$ for any $a\in \mathcal{A}$. Consider the family $\hat{D}(t):=\tilde{F}(t)\Delta^{-1}$, $t\in [0,1]$, with constant domain $\Delta\mathpzc{E}$. By construction, $\tilde{F}(0)$ satisfies the assumptions of Lemma \ref{prop62kaad} and implies that $(\mathpzc{E},D)\sim_{\rm bor}(\mathpzc{E},\hat{D}(0))$. Similarly, $(\mathpzc{E},D')\sim_{\rm bor}(\mathpzc{E},\hat{D}(1))$. It remains to show that $(\mathpzc{E},\hat{D}(0))\sim_{\rm bor}(\mathpzc{E},\hat{D}(1))$. This is immediate from Proposition \ref{homotopywithstrongcond} using that $\tilde{F}$ is norm smooth and commutes with $\Delta$. The final conclusion of the theorem follows from arguing as in Lemma \ref{lemmab}.
\end{proof}

The proof of Theorem \ref{thmf} closely follows Kaad's argument \cite{Kaadunbdd}. Theorem \ref{bddisothm} now follows from combining Lemma \ref{lemmab} with Theorem \ref{thmf}. 

\section{Homotopy relations and universal cycles}
\label{sec:universal}

We can now relate the $KK$-bordism group to the homotopy constructions of Kaad \cite{Kaadunbdd} and Mesland-van Dungen \cite{meslandvandung}. We do again emphasize how the $KK$-bordism group and the homotopy groups are constructed with different aims. The $KK$-bordism group comes with an abundance of geometric exemples but also with additional technicalities. The homotopy groups has a conceptual elegance but can prove unwieldely in geometric examples, for instance in realizing geometric bordisms. Mesland-van Dungen \cite{meslandvandung} considers a universal homotopy group $\overline{UKK}_*(A,B)$ which by definition is independent of the dense subalgebra, so we first formulate the result in terms of Kaad's unbounded $KK$-group $UK^{\rm top}(\mathcal{A},B)$. The group  $UK^{\rm top}(\mathcal{A},B)$ is defined as the set of equivalence classes of $(\mathcal{A},B)$-cycles under the equivalence relation generated by degenerate equivalence and unbounded operator homotopy, for details see \cite[Section 2]{Kaadunbdd}. For clarity, let us also mention that there are other approaches to refining $KK$-theory that are nicely overviewed in \cite{cumero}.

\begin{theorem}
Let $\mathcal{A}$ be a countably generated $*$-algebra with $C^*$-closure $A$. For any $C^*$-algebra $B$, we have a commuting diagram
$$\xymatrix{
\Omega_*(\mathcal{A}, B) \ar[rdd]^{\beta}   \ar[rr] & & UK^{\rm top}_*(\mathcal{A},B) \ar[ldd]_{\mathcal{F}}  
 \\ \\
&KK_*(A,B)&
}$$
where all the maps are isomorphisms, the horizontal map is induced from the identity map on cycles and $\mathcal{F}$ denotes the bounded transform on homotopy classes.
\end{theorem}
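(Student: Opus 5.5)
The plan is to prove the three pieces separately: the diagram commutes, the horizontal map is well defined, and the diagonal maps are isomorphisms. Bijectivity of the horizontal map then follows formally.

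\emph{Commutativity.} Both $\beta$ and $\mathcal{F}$ send a cycle $(\mathpzc{E},D)$ to the Kasparov module $(\mathpzc{E},D(1+D^2)^{-1/2})$. Once the horizontal map is known to be well defined on classes, the triangle commutes at the level of cycles, hence on classes.

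\emph{Isomorphisms on the diagonal.} By Theorem \ref{bddisothm}, $\beta$ is an isomorphism since $\mathcal{A}$ is countably generated. For $\mathcal{F}$, I will invoke Kaad's result \cite{Kaadunbdd} that the bounded transform $UK^{\rm top}_*(\mathcal{A},B)\to KK_*(A,B)$ is an isomorphism for countably generated (separable) $\mathcal{A}$. The argument there is the same Baaj--Julg lifting that underlies Lemma \ref{lemmae} and Theorem \ref{thmf}.

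\emph{Well-definedness of the horizontal map.} This is the main step. The relation on $UK^{\rm top}$ is generated by degenerate equivalence and unbounded operator homotopy, and it is not obvious that bordism implies this relation. Rather than compare the relations directly, I will argue through the diagonal maps. Suppose $(\mathpzc{E},D)\sim_{\rm bor}(\mathpzc{E}',D')$. Theorem \ref{alknlkanda} (Hilsum) gives $[\beta(\mathpzc{E},D)]=[\beta(\mathpzc{E}',D')]$ in $KK_*(A,B)$. Since $\mathcal{F}$ is injective, the two cycles then define the same class in $UK^{\rm top}_*(\mathcal{A},B)$. So the identity on cycles descends to a map $\Omega_*\to UK^{\rm top}_*$ satisfying $\mathcal{F}\circ(\text{horizontal})=\beta$. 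It is additive because both groups use the direct sum of cycles.

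\emph{Conclusion.} The horizontal map equals $\mathcal{F}^{-1}\circ\beta$, a composition of isomorphisms, so it is an isomorphism. Everything therefore reduces to injectivity of $\mathcal{F}$, for which I rely on \cite{Kaadunbdd}.

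As a sanity check, there is also a direct route to the inverse relation. Using Proposition \ref{homotopywithstrongcond}, Theorem \ref{specdecompnulbord} and Corollary \ref{ononaod}, unbounded operator homotopies, after reduction to differentiable ones, and degenerate equivalences lift to bordisms, exactly as in the proof of Lemma \ref{lemmab}. This is consistent with the inverse being induced by the identity on cycles, but it is not needed for the argument above.
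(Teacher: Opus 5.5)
Your proposal is correct and is essentially the paper's proof. The paper likewise derives the statement from Hilsum's Theorem \ref{alknlkanda}, Theorem \ref{bddisothm} and Kaad's \cite[Theorem 7.1]{Kaadunbdd}, with the horizontal map obtained as $\mathcal{F}^{-1}\circ\beta$ (the identity on cycles), exactly as you do.

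Your closing ``sanity check'' is not needed. If you keep it, note that reducing an unbounded operator homotopy to a differentiable one is not known in general (the paper remarks this after Theorem \ref{homotopywithsweakcond}). The lifting should instead pass to the operator homotopy of the bounded transforms and then apply Theorem \ref{thmf}.
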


The theorem is immediate from Theorem \ref{alknlkanda} and \ref{bddisothm}, and \cite[Theorem 7.1]{Kaadunbdd}.

A new approach to unbounded $KK$-theory was found \cite{meslandvandung} that does not require fixing a dense subalgebra $\mathcal{A}\subseteq A$. In \cite{meslandvandung}, universal unbounded $KK$-cycles for a pair of $C^*$-algebras $(A,B)$ were introduced and the corresponding set of homotopy classes form an abelian group $\overline{UKK}_*(A,B)$ naturally isomorphic to $KK_*(A,B)$ via the bounded transform. We will in future work \cite{monographtwo} make use of the universal picture of unbounded $KK$-theory below to prove independence of dense subalgebra $\mathcal{A}\subseteq A$ in certain relative constructions.

\begin{define}
Let $A$ and $B$ be two $C^*$-algebras. 
\begin{itemize}
\item A universal $A-B$-chain is a triple $(\mathpzc{E},D,i)$ where $i:A\to \pmb{A}$ is an inclusion of $C^*$-algebras and $(\mathpzc{E},D)$ is an $(\pmb{\mathcal{A}},B)$-chain for some dense $\pmb{\mathcal{A}}\subseteq \pmb{A}$. 
\item A universal $A-B$-chain is said to be symmetric, half-closed or closed if $(\mathpzc{E},D)$ is so. A closed universal chain is called a universal cycle.
\item A universal $A-B$-bordism consists of $(\mathfrak{X},i)$ where $i:A\to \pmb{A}$ is an inclusion of $C^*$-algebras and $\mathfrak{X}=(\mathfrak{X}^\circ,\Theta,\mathfrak{X}^\partial)$ is an $(\pmb{\mathcal{A}},B)$-bordism for some dense $\pmb{\mathcal{A}}\subseteq \pmb{A}$. In this case, the universal $A-B$-cycle $\mathfrak{X}^\partial$ is said to be nullbordant which is written as 
$$\mathfrak{X}^\partial\sim_{\rm un-bor} 0.$$
\item An isomorphism of two universal $A-B$-chains $(\mathpzc{E}_1,D_1,i_1)$ and $(\mathpzc{E}_2,D_2,i_2)$ is a unitary isomorphism $U:\mathpzc{E}_1\xrightarrow{\sim} \mathpzc{E}_2$ such that $D_1=U^*D_2U$ and such that the induced $A$-actions on $\mathpzc{E}_1$ and $\mathpzc{E}_2$ coincide under $U$.
\end{itemize}
We write $Z^{\rm un}_*(A,B)$ for the set of isomorphism classes of universal $A-B$-cycles.
\end{define}

\begin{remark}
\label{remarnknad}
We note that a universal $A-B$-cycle is a minor variation of the unbounded $A-B$-cycles in \cite{meslandvandung}, the latter consists of triples $(\pi,\mathpzc{E},D)$ where $\pi: A\to \overline{\mathrm{Lip}_0(D)}$. We include the inclusion $i$ in the definition to simplify the notation in limiting arguments. 
\end{remark}

It is clear that $Z^{\rm un}_*(A,B)$ forms an abelian semigroup under direct sum 
$$(\mathpzc{E}_1,D_1,i_1)+(\mathpzc{E}_2,D_2,i_2):=(\mathpzc{E}_1\oplus \mathpzc{E}_2,D_1\oplus D_2,i_1\oplus i_2),$$
where $i_1\oplus i_2:A\to \pmb{A}_1\oplus \pmb{A}_2$ denotes $a\mapsto i_1(a)\oplus i_2(a)$. If we have an equality of universal cycles
$$(\mathpzc{E}_1,D_1,i_1)+(-(\mathpzc{E}_2,D_2,i_2))=\mathfrak{X}^\partial,$$
for a universal $A-B$-bordism $(\mathfrak{X},i)$ we write $(\mathpzc{E}_1,D_1,i_1)\sim_{\rm un-bor}(\mathpzc{E}_2,D_2,i_2)$ and say that the two cycles are universally bordant.

\begin{prop}
Bordism of universal $A-B$-cycles $\sim_{\rm un-bor}$ defines an equivalence relation on $Z^{\rm un}_*(A,B)$. The set $\Omega_*^{\rm un}(A,B)$ of bordism classes forms a $\Z/2$-graded abelian group under direct sum of cycles that depends covariantly on $B$ and contravariantly on $A$.
\end{prop}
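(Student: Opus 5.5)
The strategy is to reduce every required property to the corresponding statement for ordinary $KK$-bordisms over a fixed dense subalgebra, established in Section \ref{subsecglying} (following \cite{DGM}). The one new ingredient is that two universal cycles may live over different ambient algebras, so each argument must start by enlarging to a common ambient algebra.

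The key observation concerns two universal cycles $(\mathpzc{E}_j,D_j,i_j)$ over $(\pmb{\mathcal{A}}_j,B)$, $j=1,2$. Their sum is a $(\pmb{\mathcal{A}}_1\oplus\pmb{\mathcal{A}}_2,B)$-cycle for the inclusion $i_1\oplus i_2:A\to\pmb{A}_1\oplus\pmb{A}_2$. Moreover, pulling back along the coordinate projections $\pmb{A}_1\oplus \pmb{A}_2\to \pmb{A}_j$, which are $*$-homomorphisms continuous in the $C^*$-topologies, maps cycles and bordisms to cycles and bordisms by Definition \ref{pushpull}. Hence any finite collection of universal cycles and bordisms can be regarded as living over one common pair $(\pmb{\mathcal{A}},B)$. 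This common pair is the direct sum of the ambient algebras, with $A$ embedded diagonally, and it induces the same $A$-action on every module involved.

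The equivalence relation axioms then follow from this observation.

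\begin{itemize}
\item \emph{Reflexivity.} For a universal cycle $\mathfrak{Y}$ over $\pmb{\mathcal{A}}$, the cylinder $\Psi(\mathfrak{Y})$ with boundary data $(\id,\chi_{[0,1/4]\cup[3/4,1]})$ is an $(\pmb{\mathcal{A}},B)$-bordism with boundary $\mathfrak{Y}+(-\mathfrak{Y})$. Together with the same $i$, it is therefore a universal bordism.
\item \emph{Symmetry.} Symmetry comes from replacing $\mathfrak{X}$ by $-\mathfrak{X}$.
\item \emph{Transitivity.} This is the main point. Given universal bordisms $(\mathfrak{X},i)$ over $\pmb{A}$ and $(\mathfrak{X}',i')$ over $\pmb{A}'$ with $\partial\mathfrak{X}=\mathfrak{Y}_1-\mathfrak{Y}_2$ and $\partial\mathfrak{X}'=\mathfrak{Y}_2-\mathfrak{Y}_3$, the copies of $\mathfrak{Y}_2$ may a priori carry different ambient inclusions.
\end{itemize}

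The gluing in Proposition \ref{subsecglyinggluing} only requires the boundary components to be isomorphic as chains over a common algebra. By the definition of isomorphism of universal chains, the two copies of $\mathfrak{Y}_2$ carry the same $A$-action and the same operator. They may, however, differ in how the larger algebras act. I therefore pull both bordisms back to the algebra $\pmb{A}''$. Take $\pmb{A}''$ to be the $C^*$-subalgebra of $\pmb{A}\oplus\pmb{A}'$ generated by the diagonal image of $A$, and take the dense subalgebra $\pmb{\mathcal{A}}''$ generated by $(i\oplus i')(A)\cap(\pmb{\mathcal{A}}\oplus\pmb{\mathcal{A}}')$.

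The obstacle is that the dense subalgebra must still act with bounded commutators, which may fail if that intersection is too small. To handle this, I would instead use the pullback of $\pmb{\mathcal{A}}\oplus\pmb{\mathcal{A}}'$ along the two projections. Let $\pmb{\mathcal{A}}\oplus\pmb{\mathcal{A}}'$ act on $\mathfrak{X}$ through the first factor and on $\mathfrak{X}'$ through the second. The gluing region, a collar of $\mathfrak{Y}_2$, then needs both factors to act compatibly. This is arranged by restricting to the subalgebra of pairs $(x,x')$ that agree on $\mathfrak{Y}_2$, which contains $(i\oplus i')(\mathcal{A})$ for the relevant elements. I expect that verifying density and that this subalgebra contains the image of $A$ suitably will be the delicate step. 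If it fails, I will fall back to gluing over $\pmb{\mathcal{A}}\oplus\pmb{\mathcal{A}}'$ after first passing to the universal cycles $\mathfrak{Y}_2\oplus 0$, using reflexivity cylinders to change the ambient algebra of $\mathfrak{Y}_2$.

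The group structure follows as in Definition \ref{defofbordgroup}. Direct sum respects bordism because sums of universal bordisms are universal bordisms. The class of the zero cycle is a unit, and reflexivity gives $-[\mathfrak{Y}]=[-\mathfrak{Y}]$.

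Covariance in $B$ comes from the push-forward $\phi_*$ of Definition \ref{pushpull}, applied to cycles and bordisms. Contravariance in $A$ works as follows: given a $*$-homomorphism $\varphi:A_1\to A$, replace $i:A\to\pmb{A}$ by an injective map $A_1\to\pmb{A}\oplus A_1$ given by $a\mapsto i(\varphi(a))\oplus a$, with $A_1$ acting on the module only through the first summand. Both constructions commute with sums and preserve bordism, so they induce graded group homomorphisms.
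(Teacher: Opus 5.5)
Everything apart from transitivity is fine and agrees with what the paper treats as immediate. That includes reflexivity via $\Psi(\mathfrak{Y})$, symmetry via $-\mathfrak{X}$, the group structure, push-forward in $B$, and your device $a\mapsto i(\varphi(a))\oplus a$ for non-injective $\varphi$. You are also right that Proposition \ref{subsecglyinggluing} does not apply literally: the two copies of $\mathfrak{Y}_2$ agree in $Z^{\rm un}_*(A,B)$, which only remembers the $A$-action, but not as cycles over any common dense subalgebra you produce. The paper's one-line ``ad verbatim'' proof does not spell this out. The gap is that you never construct the glued universal bordism, and neither of your routes works in general.

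\begin{itemize}
\item \emph{First route.} The subalgebra of pairs in $\pmb{\mathcal{A}}\oplus\pmb{\mathcal{A}}'$ acting identically on $\mathfrak{Y}_2$ can be far too small. Nothing forces the images of $\pmb{\mathcal{A}}$ and $\pmb{\mathcal{A}}'$ in $\End_B^*(\mathpzc{E}_2)$ to share a dense part of the image of $A$. For example, $\C[t]$ and $\C[\phi(t)]$ in $C[0,1]$, with $\phi$ a strictly increasing singular homeomorphism, meet only in the constants. So the closure of that subalgebra need not contain the diagonal copy of $A$.
\item \emph{Fallback.} This fails for a $KK$-reason. Let $\pi_1,\pi_2$ be the coordinate projections and $\beta,\beta'$ the bounded transforms with respect to $\pmb{A}$ and $\pmb{A}'$. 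The transforms of $\pi_1^*\mathfrak{Y}_2$ and $\pi_2^*\mathfrak{Y}_2$ lie in different summands of $KK_*(\pmb{A},B)\oplus KK_*(\pmb{A}',B)$. By Theorem \ref{alknlkanda} they are therefore not bordant over $\pmb{\mathcal{A}}\oplus\pmb{\mathcal{A}}'$ unless both vanish. Likewise, no bordism over the direct sum can have boundary $\pi_1^*\mathfrak{Y}_1-\pi_2^*\mathfrak{Y}_3$, since its transform is $(\beta(\mathfrak{Y}_2),-\beta'(\mathfrak{Y}_2))$. A cylinder $\Psi(\mathfrak{Y}_2)$ lives over a single algebra and cannot move $\mathfrak{Y}_2$ between summands.
\end{itemize}

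The missing idea is to build a new ambient algebra on the glued module $\mathpzc{N}''$ that interpolates the two actions along the collar instead of identifying them. One way to do this:
\begin{itemize}
\item Take $\chi_1^2+\chi_2^2=1$ on the glued cylinder, with $\chi_1=1$ on the rest of $\mathfrak{X}$ and $\chi_2=1$ on the rest of $\mathfrak{X}'$.
\item Let $\pmb{\mathcal{A}}''\subseteq\End_B^*(\mathpzc{N}'')$ be the $*$-algebra generated by the operators $\chi_1x\chi_1+\chi_2x'\chi_2$ with $x\in\pmb{\mathcal{A}}$ and $x'\in\pmb{\mathcal{A}}'$.
\item These generators have bounded commutators with the glued operator because $[T'',\chi_j]$ is bounded. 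Local compactness is checked as in the ordinary gluing.
\item Since $A$ acts in product form on the collar, it commutes with the $\chi_j$. Hence $a=\chi_1a\chi_1+\chi_2a\chi_2$ is a norm limit of generators, obtained by approximating $i(a)$ and $i'(a)$ separately.
\item Let $\pmb{A}''$ be the closure. If needed, add a summand on which nothing acts, to keep $A\to\pmb{A}''$ injective.
\end{itemize}
This construction presupposes that $\pmb{\mathcal{A}}$ and $\pmb{\mathcal{A}}'$ act diagonally on the splittings of $\partial\mathfrak{X}$ and $\partial\mathfrak{X}'$. Equality in $Z^{\rm un}_*(A,B)$ only guarantees this for $A$, so it must be arranged first. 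One way is a collar cutoff that replaces $x$ near the boundary by $PxP+(1-P)x(1-P)$, where $P$ is the projection onto the $\mathfrak{Y}_2$-part of the collar.
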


\begin{proof}
The discussion in Section \ref{subsecglying} applies ad verbatim showing $\sim_{\rm un-bor}$ is an equivalence relation on $Z^{\rm un}_*(A,B)$. It can be proven as in \cite[Theorem 2.24]{DGM} that $\Omega_*^{\rm un}(A,B)$ forms a $\Z/2$-graded abelian group under direct sum of cycles; indeed $(\mathpzc{E},D,i)+(-(\mathpzc{E},D,i))$ is nullbordant. It is immediate from the definition that $\Omega_*^{\rm un}(A,B)$ depends covariantly on $B$ and contravariantly on $A$.
\end{proof}

\begin{theorem}
\label{lngjdjkddkj}
Let $A$ and $B$ be two $C^*$-algebras. The bounded transform 
\begin{align*}
\beta_{\rm un}:\Omega_*^{\rm un}(A,B)&\to KK_*(A,B), \\ 
(\mathpzc{E},D,i)&\mapsto i^*\beta(\mathpzc{E},D)=(i^*\mathpzc{E},D(1+D^2)^{-1/2}),
\end{align*}
is a well defined map of $\Z/2$-graded abelian groups.
\end{theorem}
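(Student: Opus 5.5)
The plan is to reduce everything to Hilsum's theorem on bordism invariance (Theorem \ref{alknlkanda}) applied to the auxiliary dense subalgebras $\pmb{\mathcal{A}}\subseteq\pmb{A}$, and then pull back along the inclusions $i$. The statement has three parts: $\beta_{\rm un}$ is defined on individual cycles, it is constant on universal bordism classes, and it is additive and preserves the $\Z/2$-grading.

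First, the map on cycles. Let $(\mathpzc{E},D,i)$ be a universal $A-B$-cycle with $(\mathpzc{E},D)$ an $(\pmb{\mathcal{A}},B)$-cycle. By Baaj--Julg (see the remark after the definition of the bounded transform), $\beta(\mathpzc{E},D)$ is an $(\pmb{A},B)$-Kasparov cycle. Since $i:A\to\pmb{A}$ is a $*$-homomorphism, pulling back the left action gives an $(A,B)$-Kasparov cycle $i^*\beta(\mathpzc{E},D)$; the Kasparov conditions for $a\in A$ follow from those for $i(a)\in\pmb{A}$. At the level of classes this is just $i^*[\beta(\mathpzc{E},D)]$ under the functorial map $i^*:KK_*(\pmb{A},B)\to KK_*(A,B)$. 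An isomorphism of universal cycles is a unitary $U$ intertwining $D_1$, $D_2$ and the induced $A$-actions. Conjugating by $U$ then gives a unitary equivalence of the pulled-back Kasparov cycles, so the classes agree, even though the ambient algebras $\pmb{A}_1$ and $\pmb{A}_2$ may differ. The parity is unchanged, since the grading of $\mathpzc{E}$ is untouched.

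Second, additivity. For a sum we have $(\mathpzc{E}_1\oplus\mathpzc{E}_2,D_1\oplus D_2,i_1\oplus i_2)$. Its bounded transform is $\beta(D_1)\oplus\beta(D_2)$, and pulling back along $a\mapsto i_1(a)\oplus i_2(a)$ gives exactly the direct sum $i_1^*\beta(\mathpzc{E}_1,D_1)+i_2^*\beta(\mathpzc{E}_2,D_2)$. So additivity holds at the level of cycles.

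Third, bordism invariance, which I expect to be the main point. Suppose $(\mathfrak{X},i)$ is a universal bordism with $\mathfrak{X}$ an $(\pmb{\mathcal{A}},B)$-bordism. Theorem \ref{alknlkanda} applies verbatim to the dense $*$-subalgebra $\pmb{\mathcal{A}}\subseteq\pmb{A}$ and gives $[\beta(\mathfrak{X}^\partial)]=0$ in $KK_*(\pmb{A},B)$, hence $i^*[\beta(\mathfrak{X}^\partial)]=0$ in $KK_*(A,B)$. Now suppose $\mathfrak{Y}_1+(-\mathfrak{Y}_2)=\mathfrak{X}^\partial$ for universal cycles $\mathfrak{Y}_j=(\mathpzc{E}_j,D_j,i_j)$. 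The subtle step is that the identification is as universal cycles, with inclusion $i$ for the boundary and $i_1\oplus i_2$ for the sum. These agree after pullback to $A$, which is all the definition of $\beta_{\rm un}$ sees. Using additivity and $[\beta(-\mathfrak{Y})]=-[\beta(\mathfrak{Y})]$, where the opposite cycle corresponds to the opposite Kasparov class, we get $\beta_{\rm un}(\mathfrak{Y}_1)=\beta_{\rm un}(\mathfrak{Y}_2)$. Hence $\beta_{\rm un}$ descends to a graded group homomorphism on $\Omega^{\rm un}_*(A,B)$.
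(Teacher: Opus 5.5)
Your argument is correct and is essentially the paper's: the paper deduces the statement from Hilsum's bordism invariance (Theorem~\ref{alknlkanda}) applied to each auxiliary pair $\pmb{\mathcal{A}}\subseteq\pmb{A}$ and then pulls back along $i$. It does this either directly on cycles, or by first identifying $\Omega_*^{\rm un}(A,B)$ with $\varinjlim_{\mathfrak{Inc}(A)}\Omega_*(\pmb{\mathcal{A}},B)$ via Proposition~\ref{dirneirnedin}; your cycle-level version, including the remark that identifications of universal cycles only see the induced $A$-action, is the first of these routes.
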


Theorem \ref{lngjdjkddkj} can be proven directly at the level of cycles using the idea underlying Theorem \ref{alknlkanda}. Alternatively, it is a consequence of the next theorem in combination with Theorem \ref{alknlkanda}. We write $\mathfrak{Inc}(A)$ for the set of unitary equivalence classes of pairs $(i,\pmb{\mathcal{A}})$ where $i$ is an inclusion of $C^*$-algebras $i:A\to \pmb{A}$ and $\pmb{A}\subseteq A$ is dense. The set $\mathfrak{Inc}(A)$ becomes a directed set by declaring $(i_1,\pmb{\mathcal{A}}_1)\leq (i_2,\pmb{\mathcal{A}}_2)$ if there is an inclusion $j:\pmb{A}_2\to \pmb{A}_1$ such that $i_2=j\circ i_1$. If $A$ is separable, we write $\mathfrak{SInc}(A)\subseteq \mathfrak{Inc}(A)$ for the directed subset of pairs $(i,\pmb{\mathcal{A}})$ where $\pmb{\mathcal{A}}$ is countably generated. We conclude the following description of the universal $KK$-bordism group.

\begin{prop}
\label{dirneirnedin}
Let $A$ and $B$ be two $C^*$-algebras. The map 
$$\Omega_*^{\rm un}(A,B)\to \varinjlim_{\mathfrak{Inc}(A)} \Omega_*(\pmb{\mathcal{A}},B),$$
induced from $(\mathpzc{E},D,i)\mapsto (\mathpzc{E},D)$, is a well defined isomorphism. If $A$ is separable, the similarly defined map defines an isomorphism 
$$\Omega_*^{\rm un}(A,B)\cong \varinjlim_{\mathfrak{SInc}(A)} \Omega_*(\pmb{\mathcal{A}},B).$$
\end{prop}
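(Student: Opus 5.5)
The plan is to first make the directed system and the colimit explicit, and then check well-definedness, surjectivity and injectivity in turn. For $(i_1,\pmb{\mathcal{A}}_1)\leq (i_2,\pmb{\mathcal{A}}_2)$, witnessed by $j:\pmb{A}_2\to\pmb{A}_1$ with $i_2=j\circ i_1$, I take the connecting map $\Omega_*(\pmb{\mathcal{A}}_1,B)\to\Omega_*(\pmb{\mathcal{A}}_2,B)$ to be the pullback $j^*$. For this to make sense I need $j(\pmb{\mathcal{A}}_2)\subseteq\pmb{\mathcal{A}}_1$. So I will read the order so that it also demands compatibility of the dense subalgebras, and if necessary refine the directed set by this condition; this does not change the colimit. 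The natural map $\Omega_*(\pmb{\mathcal{A}},B)\to\Omega_*^{\rm un}(A,B)$ sends $(\mathpzc{E},D)$ to $(\mathpzc{E},D,i)$. These maps are compatible with the connecting maps, since a pulled-back cycle $j^*(\mathpzc{E},D)$ together with $i_2$ yields a universal cycle isomorphic to $(\mathpzc{E},D,i_1)$: the induced $A$-actions agree because $i_2=j\circ i_1$. Hence they induce a map $\Phi:\varinjlim\Omega_*(\pmb{\mathcal{A}},B)\to\Omega_*^{\rm un}(A,B)$, and the map in the statement is its inverse.

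\emph{Well-definedness of the stated map.} Each universal cycle $(\mathpzc{E},D,i)$ defines a class in $\Omega_*(\pmb{\mathcal{A}},B)$ for its own index $(i,\pmb{\mathcal{A}})$. If $(\mathpzc{E},D,i)\sim_{\rm un-bor}(\mathpzc{E}',D',i')$ via a universal bordism $(\mathfrak{X},i'')$, the three indices differ. I will pass to a common upper bound. The key construction is the direct sum/fibre product type index. For two indices I take $\pmb{A}_1\oplus\pmb{A}_2$ with the diagonal inclusion $i_1\oplus i_2$ and the subalgebra $\pmb{\mathcal{A}}_1\oplus\pmb{\mathcal{A}}_2$. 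This is an upper bound only if "$\leq$" is read in the direction matching the contravariance of $\Omega_*$. Here I must be careful: pulling back along the projection $\pmb{A}_1\oplus\pmb{A}_2\to\pmb{A}_1$ goes the right way. Under that pullback the relevant cycles and bordisms become cycles and bordisms over the common algebra, and the bordism relation then holds there. This also shows that the sum of universal cycles, which is defined using $i_1\oplus i_2$, corresponds to the sum in the colimit, so $\Phi$ is a group homomorphism.

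\emph{Surjectivity and injectivity.} Surjectivity of $\Phi$ is tautological, since every universal cycle comes from its own index. For injectivity, suppose $(\mathpzc{E},D)$ is an $(\pmb{\mathcal{A}},B)$-cycle whose universal cycle is universally nullbordant via $(\mathfrak{X},i'')$. Then, by the common-bound argument above, $(\mathpzc{E},D)$ becomes nullbordant after pulling back to an index dominating both $(i,\pmb{\mathcal{A}})$ and $(i'',\pmb{\mathcal{A}}'')$. Therefore its class vanishes in the colimit.

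\emph{The separable statement.} I will repeat the argument over $\mathfrak{SInc}(A)$. The one extra point is that the common upper bounds stay countably generated, because a direct sum of two countably generated algebras is countably generated. Cofinality is not required, since every universal cycle still comes from some index. The main obstacle is the careful bookkeeping of the direction of the order and of the compatibility of the dense subalgebras along the connecting maps. I expect to fix this by defining the order through maps that carry dense subalgebras into dense subalgebras.
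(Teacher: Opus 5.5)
Your argument breaks at the common-upper-bound step, where you claim that after pulling back to the direct-sum index ``the bordism relation then holds there''. Suppose $(\mathpzc{E},D,i)$ is universally nullbordant via $(\mathfrak{X},i'')$, with $\mathfrak{X}$ an $(\pmb{\mathcal{A}}'',B)$-bordism and $U\colon\mathpzc{E}\to\mathpzc{E}''$ the isomorphism onto $\mathfrak{X}^\partial$. An isomorphism of universal cycles intertwines only the $A$-actions, $U\pi(i(a))U^*=\pi''(i''(a))$. It relates nothing else in $\pmb{A}$ and $\pmb{A}''$.

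Over $\pmb{\mathcal{A}}\oplus\pmb{\mathcal{A}}''$, the cycle $p_1^*(\mathpzc{E},D)$ has $(x,y)$ acting by $\pi(x)$, while $p_2^*\mathfrak{X}^\partial$ has $(x,y)$ acting by $\pi''(y)$. These agree only on $(i\oplus i'')(A)$. So $U$ is not an isomorphism of $(\pmb{\mathcal{A}}\oplus\pmb{\mathcal{A}}'',B)$-cycles, and $p_2^*\mathfrak{X}$ is not a bordism with boundary $p_1^*(\mathpzc{E},D)$.

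Worse, the direct-sum index can never create an identification. Put $\iota_1(x)=(x,0)$. Then $\iota_1^*p_1^*=\mathrm{id}$ and $\iota_1^*p_2^*=0^*$. Cycles with zero left action are nullbordant, since the half-line $\Psi_{(0,\infty)}$ fills them. Hence $p_1^*\xi=p_2^*\eta$ in $\Omega_*(\pmb{\mathcal{A}}\oplus\pmb{\mathcal{A}}'',B)$ already forces $\xi=0$. In particular, your argument cannot show that $(\mathpzc{E},D,i)$ and $(\mathpzc{E},D,i')$, with the same $A$-action but different extensions to $\pmb{A}$ and $\pmb{A}'$, have the same image in the colimit. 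They are equal in $Z^{\rm un}_*(A,B)$, and this is exactly what well-definedness of the map and injectivity of your $\Phi$ require.

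A correct argument needs an index on which the two extended actions agree. An example is the fibre product $\{(x,y)\in\pmb{A}\oplus\pmb{A}'':\pi(x)=U^*\pi''(y)U\}$, which does contain $(i\oplus i'')(A)$. You would also need a $*$-subalgebra of it, mapping into $\pmb{\mathcal{A}}\oplus\pmb{\mathcal{A}}''$, whose closure contains $(i\oplus i'')(A)$. That density is not automatic (already $i^{-1}(\pmb{\mathcal{A}})$ need not be dense in $A$), and you give no argument for it. Some smaller points:
\begin{itemize}
\item Your assertion that refining the order by $j(\pmb{\mathcal{A}}_2)\subseteq\pmb{\mathcal{A}}_1$ ``does not change the colimit'' is unsupported: two dense subalgebras of $A$ can intersect non-densely.
\item The compatibility condition should read $i_1=j\circ i_2$.
\item Projections are not inclusions, so your direct sum is not an upper bound in $\mathfrak{Inc}(A)$ as defined.
\end{itemize}

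For the separable statement the obstruction can be bypassed, but only through $KK$. The steps are as follows.
\begin{itemize}
\item Replace $\pmb{\mathcal{A}}$ by a countably generated $\pmb{\mathcal{A}}_0\subseteq\pmb{\mathcal{A}}$ with $i(A)\subseteq\overline{\pmb{\mathcal{A}}_0}$. Your claim that ``every universal cycle comes from some index'' in $\mathfrak{SInc}(A)$ needs this step.
\item By Theorem \ref{bddisothm}, the system $\Omega_*(\pmb{\mathcal{A}},B)$ over $\mathfrak{SInc}(A)$ is naturally isomorphic to $KK_*(\pmb{A},B)$.
\item Fix a countably generated dense $\mathcal{A}_0\subseteq A$ and let $\pmb{\mathcal{A}}^+$ be generated by $\pmb{\mathcal{A}}\cup i(\mathcal{A}_0)$. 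Compare through $(i,\pmb{\mathcal{A}}^+)$, using the restriction $\Omega_*(\pmb{\mathcal{A}}^+,B)\to\Omega_*(\pmb{\mathcal{A}},B)$ (an isomorphism by Theorem \ref{bddisothm}) and $i^*\colon\Omega_*(\pmb{\mathcal{A}}^+,B)\to\Omega_*(\mathcal{A}_0,B)$. This identifies the colimit with $KK_*(A,B)$.
\item Hilsum's Theorem \ref{alknlkanda} then shows that $(\mathpzc{E},D,i)\mapsto i^*\beta(\mathpzc{E},D)$ depends only on the $A$-action and respects universal bordism.
\end{itemize}
This is a genuinely different argument from the formal bookkeeping you propose, and it says nothing about the non-separable statement over $\mathfrak{Inc}(A)$.
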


\begin{theorem}
\label{bladalnl}
Let $A$ and $B$ be two $C^*$-algebras. If $A$ is separable, the bounded transform $\beta_{\rm un}:\Omega_*^{\rm un}(A,B)\to KK_*(A,B)$ is an isomorphism.
\end{theorem}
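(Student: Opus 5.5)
The plan is to reduce to the countably generated case via Proposition \ref{dirneirnedin}, and then to pass to the direct limit using Theorem \ref{bddisothm}. Since $A$ is separable, Proposition \ref{dirneirnedin} identifies
$$\Omega_*^{\rm un}(A,B)\cong \varinjlim_{\mathfrak{SInc}(A)} \Omega_*(\pmb{\mathcal{A}},B).$$
Under this identification, $\beta_{\rm un}$ becomes the map induced by the family of compositions
$$i^*\circ\beta:\Omega_*(\pmb{\mathcal{A}},B)\to KK_*(\pmb{A},B)\to KK_*(A,B).$$
These maps are compatible along the directed system, because the bounded transform commutes with pullback along inclusions $j:\pmb{A}_2\to\pmb{A}_1$. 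By Theorem \ref{bddisothm}, each $\beta:\Omega_*(\pmb{\mathcal{A}},B)\to KK_*(\pmb{A},B)$ is an isomorphism, since $\pmb{\mathcal{A}}$ is countably generated. It therefore suffices to show that
$$\varinjlim_{\mathfrak{SInc}(A)} KK_*(\pmb{A},B)\to KK_*(A,B),$$
induced by the maps $i^*$, is an isomorphism.

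For surjectivity, I take the trivial element $(\id_A,\mathcal{A}_0)$ with $\mathcal{A}_0\subseteq A$ a countably generated dense $*$-subalgebra, which exists by separability. Then $i^*=\id$ on $KK_*(A,B)$ at this index.

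For injectivity, suppose $x\in KK_*(\pmb{A},B)$ satisfies $i^*x=0$. I would like to find a larger index $(i',\pmb{\mathcal{A}}')\geq(i,\pmb{\mathcal{A}})$ at which $x$ dies. The natural choice is $(\id_A,\mathcal{A}_0)$ itself. This needs an inclusion $j:\pmb{A}\to A$ with $\id_A=j\circ i$. The ordering convention in the definition of $\mathfrak{Inc}(A)$ is ambiguous on this point. If the order is read so that $(\id_A,\mathcal{A}_0)$ is cofinal, in the sense that every $(i,\pmb{\mathcal{A}})$ maps to it, then the limit is simply $KK_*(A,B)$ and injectivity is immediate.

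Otherwise I would argue at the level of cycles. Represent a class $[(\mathpzc{E},D,i)]\in\Omega^{\rm un}_*(A,B)$ whose image under $\beta_{\rm un}$ vanishes. Choose a countably generated $*$-subalgebra $\mathcal{A}_1\subseteq A$ with $i(\mathcal{A}_1)\subseteq \pmb{\mathcal{A}}$; for instance, generate $\mathcal{A}_1$ by a countable dense subset of $i^{-1}(\pmb{\mathcal{A}})$, which is itself dense in $A$ when $i$ is taken to have dense image. Restricting along $i$ gives an $(\mathcal{A}_1,B)$-cycle $i^*(\mathpzc{E},D)$, whose bounded transform vanishes in $KK_*(A,B)$. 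Theorem \ref{bddisothm} then makes it nullbordant as an $(\mathcal{A}_1,B)$-cycle, and hence universally nullbordant through $(\id,\mathcal{A}_1)$.

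The remaining task is to relate $(\mathpzc{E},D,i)$ to $(\mathpzc{E},D,\id)$ viewed over $\mathcal{A}_1$ within $\Omega_*^{\rm un}$. I expect this comparison of the same cycle under two different inclusions to be the main obstacle. My first attempt would use the cylinder $\Psi(\mathpzc{E},D)$ as a universal bordism over the algebra $\mathcal{A}_1\subseteq\pmb{\mathcal{A}}$, viewed through $i$, combined with the observation that the universal cycle only remembers the induced $A$-action (Remark \ref{remarnknad}). If that fails, I would fall back on the directed-limit description and the compatibility of $\beta$ with restriction.
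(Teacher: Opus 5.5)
Your main line is the paper's proof: the square built from Proposition \ref{dirneirnedin} and Theorem \ref{bddisothm}, which collapses because $\id_A$ is final in the index set. Read the order as $i_1=j\circ i_2$ for $j:\pmb{A}_2\to\pmb{A}_1$, so that the transition maps are restrictions and $j=i$ gives $(i,\pmb{\mathcal{A}})\leq(\id_A,\mathcal{A}_0)$, and commit to that reading as the paper does.

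Drop the cycle-level fallback, since it fails as written. An injective $*$-homomorphism of $C^*$-algebras has closed range, so ``dense image'' forces $i$ to be an isomorphism. Moreover, $i^{-1}(\pmb{\mathcal{A}})$ need not be dense in $A$ in general: for $A=C[0,1]$ embedded diagonally in $A\oplus A$ with $\pmb{\mathcal{A}}=\C[x]\oplus\C[\mathrm{e}^x]$, one gets $i^{-1}(\pmb{\mathcal{A}})=\C$.
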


\begin{proof}
We have a commuting diagram 
\begin{align}
\label{comomondianrin}
\xymatrix{
\Omega_*^{\rm un}(A, B) \ar[dd]^{\beta_{\rm un}}   \ar[rr] & & \varinjlim_{\mathfrak{SInc}(A)} \Omega_*(\pmb{\mathcal{A}},B)
\ar[dd]_{\beta_{\rm un}}  
 \\ \\
KK_*(A,B)\ar[rr]&&\varinjlim_{\mathfrak{SInc}(A)} KK_*(\pmb{A},B)
}
\end{align}
The top horizontal arrow is an isomorphism by Proposition \ref{dirneirnedin} and the right vertical map is an isomorphism by Theorem \ref{bddisothm}. Write $SInc(A)$ for the directed set of unitary isomorphism classes of inclusions of $A$ into separable $C^*$-algebras. The identity on $A$ is a final object in $SInc(A)$. We then have the isomorphisms 
$$\varinjlim_{\mathfrak{SInc}(A)} KK_*(\pmb{A},B)\cong\varinjlim_{SInc(A)} KK_*(\pmb{A},B)\cong KK_*(A,B).$$
This isomorphism coincides with the bottom arrow in \eqref{comomondianrin}. We conclude that $\beta_{\rm un}:\Omega_*^{\rm un}(A,B)\to KK_*(A,B)$ is an isomorphism.
\end{proof}

Theorem \ref{bladalnl} and \cite[Theorem B]{meslandvandung} imply the next theorem.

\begin{theorem}
Let $A$ be a separable $C^*$-algebra.  For any $C^*$-algebra $B$, we have a commuting diagram
$$\xymatrix{
\Omega_*^{\rm un}(A, B) \ar[rdd]^{\beta}   \ar[rr] & & \overline{UKK}_*(A,B) \ar[ldd]_{\beta_{\rm un}}  
 \\ \\
&KK_*(A,B)&
}$$
where all the maps are isomorphisms and the vertical map is induced from the identification of cycles discussed in Remark \ref{remarnknad}. 
\end{theorem}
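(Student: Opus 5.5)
The statement is a commuting triangle of isomorphisms, so the plan splits into three pieces: the two slanted maps are isomorphisms, the horizontal map is well defined, and the triangle commutes. Bijectivity of the horizontal map then follows formally, since $\beta_{\rm un}\circ\iota=\beta$ with both $\beta$ and $\beta_{\rm un}$ bijective.

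First, the left slanted map $\beta:\Omega^{\rm un}_*(A,B)\to KK_*(A,B)$ is an isomorphism by Theorem \ref{bladalnl}, since $A$ is separable. The right slanted map $\beta_{\rm un}:\overline{UKK}_*(A,B)\to KK_*(A,B)$ is an isomorphism by \cite[Theorem B]{meslandvandung}. Nothing new is needed for these two.

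Second, I would build the horizontal map $\iota:\Omega^{\rm un}_*(A,B)\to\overline{UKK}_*(A,B)$ on cycles using Remark \ref{remarnknad}. A universal cycle $(\mathpzc{E},D,i)$ with $i:A\to\pmb{A}$ and $\pmb{\mathcal{A}}\subseteq\mathrm{Lip}_0(D)$ dense in $\pmb{A}$ gives $\pi:=i$ with values in $\overline{\mathrm{Lip}_0(D)}$. This is a Mesland--van Dungen cycle $(\pi,\mathpzc{E},D)$, and the assignment respects isomorphism and direct sums.

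Third, I would show that $\iota$ descends to bordism classes, which I expect to be the main obstacle. A direct route, turning a universal bordism into a homotopy in $\overline{UKK}$, would require redoing Hilsum's argument in the homotopy picture. I would instead avoid this by arguing through $KK$. Suppose $\mathfrak{Y}\sim_{\rm un-bor}0$. By Theorem \ref{lngjdjkddkj}, or equivalently Theorem \ref{alknlkanda} applied over $\pmb{\mathcal{A}}$ and pulled back along $i$, we get $[i^*\beta(\mathfrak{Y})]=0$ in $KK_*(A,B)$. The bounded transform of $\iota(\mathfrak{Y})$ in the sense of \cite{meslandvandung} is literally the same Kasparov cycle $(i^*\mathpzc{E},D(1+D^2)^{-1/2})$. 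Since that bounded transform is injective on $\overline{UKK}_*(A,B)$, it follows that $[\iota(\mathfrak{Y})]=0$. The same reasoning shows $\iota$ is additive, because the class of $\iota(\mathfrak{Y}_1+\mathfrak{Y}_2)$ and $\iota(\mathfrak{Y}_1)+\iota(\mathfrak{Y}_2)$ have equal bounded transforms.

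Commutativity of the triangle holds at the level of cycles, by the same observation that both bounded transforms produce $(i^*\mathpzc{E},D(1+D^2)^{-1/2})$. Combined with the first step, this gives that $\iota=\beta_{\rm un}^{-1}\circ\beta$ is an isomorphism.
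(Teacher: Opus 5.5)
Your proposal is correct and follows the paper's route, which simply combines Theorem \ref{bladalnl} with \cite[Theorem B]{meslandvandung}. Your step that uses injectivity of the Mesland--van Dungen bounded transform to show the cycle-level identification descends to bordism classes, and is additive, makes explicit what the paper leaves implicit. One small notational fix: the representation is the left action composed with $i$, not $i$ itself.
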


\section{Examples when the bounded transform is an isomorphism}
\label{examplesofisooss}

While Theorem \ref{bddisothm} is only stated for countably generated $*$-algebras, we can readily generate more examples using the rigidity properties of Part \ref{part:prelim} and \ref{part:geocons}. We summarize this procedure in the following theorem. 

\begin{theorem}
\label{bddisothmmoregen}
Let $\mathcal{A}$ be a $*$-algebra with $C^*$-closure $A$. Assume that there exists a countably generated subalgebra $\mathcal{A}_0\subseteq \mathcal{A}$, a number $\alpha_0>1$ such that $\mathcal{A}\subseteq \mathcal{A}^{(\alpha_0)}$ or for some $\alpha_1\in (0,1]$ there is a Banach $*$-algebra $\mathcal{A}_1\subseteq \mathcal{A}^{(\alpha_0)}$ such that $\mathcal{A}\subseteq [\mathcal{A}_1,A]_{\alpha_1}$. Then $\mathcal{A}$ is $\Omega_*$-admissible, i.e. for any $C^*$-algebra $B$, the bounded transform 
$$\beta:\Omega_*(\mathcal{A},B)\to KK_*(A,B),$$
is an isomorphism.
\end{theorem}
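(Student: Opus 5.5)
The strategy is to reduce to Theorem \ref{bddisothm} via the invariance results of Subsection \ref{invariancesubsec}. The statement is slightly garbled: $\mathcal{A}^{(\alpha_0)}$ should be read as $\overline{\mathcal{A}_0}^{(\alpha_0)}$, the $C^{\alpha_0}$-functional calculus closure of the countably generated $\mathcal{A}_0$. We interpret the hypotheses as follows. Either $\mathcal{A}_0\subseteq\mathcal{A}\subseteq\overline{\mathcal{A}_0}^{(\alpha_0)}$, or there is a Banach $*$-algebra $\mathcal{A}_1$ with $\mathcal{A}_0\subseteq\mathcal{A}_1\subseteq\overline{\mathcal{A}_0}^{(\alpha_0)}$ and $\mathcal{A}_1\subseteq\mathcal{A}\subseteq[A,\mathcal{A}_1]_{\alpha_1}$. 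In both cases all algebras are dense in $A$, since $\mathcal{A}_0$ is. The key observation is that the bounded transform is natural with respect to restriction. For an inclusion $\mathcal{A}'\subseteq\mathcal{A}''$ of dense subalgebras of $A$, the diagram formed by $\Omega_*(\mathcal{A}'',B)\to\Omega_*(\mathcal{A}',B)$ and the two bounded transforms to $KK_*(A,B)$ commutes. This holds on the nose at the level of cycles, because restriction does not change $(\mathpzc{E},D)$ and $\beta$ only depends on the module and operator.

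\emph{First case.} Theorem \ref{caalpainandthm}, applied to $\mathcal{A}_0$ and $\mathcal{A}'=\mathcal{A}$, shows that restriction $\Omega_*(\mathcal{A},B)\to\Omega_*(\mathcal{A}_0,B)$ is an isomorphism. Theorem \ref{bddisothm} shows that $\beta:\Omega_*(\mathcal{A}_0,B)\to KK_*(A,B)$ is an isomorphism. By naturality, $\beta$ for $\mathcal{A}$ is the composite of these two isomorphisms, hence an isomorphism.

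\emph{Second case.} There are two steps.
\begin{enumerate}
\item Theorem \ref{caalpainandthm}, with $\mathcal{A}_0\subseteq\mathcal{A}_1\subseteq\overline{\mathcal{A}_0}^{(\alpha_0)}$, gives $\Omega_*(\mathcal{A}_1,B)\cong\Omega_*(\mathcal{A}_0,B)$.
\item Theorem \ref{cpxinerpbofodthm}, with $\mathcal{A}_1$ in the role of the Banach $*$-algebra and $\mathcal{A}_1\subseteq\mathcal{A}\subseteq(\mathcal{A}_1)_{\alpha_1}$, gives $\Omega_*(\mathcal{A},B)\cong\Omega_*(\mathcal{A}_1,B)$.
\end{enumerate}
Composing these with Theorem \ref{bddisothm} for $\mathcal{A}_0$, and using naturality of $\beta$ along $\mathcal{A}_0\subseteq\mathcal{A}_1\subseteq\mathcal{A}$, gives the conclusion.

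The main obstacle is verifying the hypotheses of Theorem \ref{cpxinerpbofodthm}. It requires the inclusion $\mathcal{A}_1\hookrightarrow A$ to be continuous for the Banach norm, which is needed for the interpolation space $[A,\mathcal{A}_1]_{\alpha_1}$ to make sense. We take this to be implicit in the phrase "Banach $*$-algebra $\mathcal{A}_1\subseteq A$", and otherwise add it as a hypothesis. One must also check that the algebras in the chain are genuinely $*$-subalgebras, as the earlier theorems require. This is routine, since the $C^{\alpha}$-closure and interpolation spaces are $*$-algebras.
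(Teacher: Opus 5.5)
Your proposal is correct and follows the paper's own argument. The paper also proves this in one line, combining the invariance under complex interpolation (Theorem \ref{cpxinerpbofodthm}) and under $C^\alpha$-functional calculus (Theorem \ref{caalpainandthm}) with Theorem \ref{bddisothm} for the countably generated $\mathcal{A}_0$, and relying implicitly on the naturality of $\beta$ under restriction that you make explicit. Your reading of the garbled hypotheses is sensible: $\overline{\mathcal{A}_0}^{(\alpha_0)}$ in place of $\mathcal{A}^{(\alpha_0)}$, together with $\mathcal{A}_1\subseteq\mathcal{A}\subseteq[A,\mathcal{A}_1]_{\alpha_1}$ and $\mathcal{A}_1\hookrightarrow A$ continuous, agrees with the version of the interpolation invariance stated in the introduction.
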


Theorem \ref{bddisothmmoregen} follows directly from combining Theorem \ref{cpxinerpbofodthm} and \ref{caalpainandthm} with Theorem \ref{bddisothm}. While the assumptions of Theorem \ref{bddisothmmoregen} might seem technical at first, we shall below see numerous examples where the conditions naturally appears and are easy to verify.

\subsection{Manifolds and closed subsets of the Hilbert cube}
\label{ainaodna}

In this section we will give examples of commutative $*$-algebras for which the bounded transform is an isomorphism. We will make use of the the Hilbert cube 
$$\pmb{\Box}:=[-1,1]^\N.$$
By Tychonoff's theorem, the Hilbert cube is compact. We can consider $\pmb{\Box}$ as a separable metric space by embedding $\pmb{\Box}$ into the unit ball of $\ell^2(\N,\R)$ as the subset $\prod_{n\in \N}[-1/(n+1),1/(n+1)]$ of sequences $(x_n)_{n\in \N}$ such that $|x_n|\leq 1/(n+1)$. Any compact, separable, metric space embeds isometrically into $\pmb{\Box}$. For the sake of examples, we also take a Cantor set $\pmb{C}$.

We define $C^\infty(\pmb{\Box}\times \pmb{C})$ as the space of functions that in the Cantor direction is locally constant and in the Hilbert cube direction only depend on finitely many variables and does so in a smooth way. We note that $C^\infty(\pmb{\Box}\times \pmb{C})$ is the direct limit of $C^\infty([-1,1]^N\times \pmb{C})$, embedded via pullback along the projections $\pmb{\Box}\to [-1,1]^N$. If $X\subseteq X_0$ is an open subset of a closed subset $X_0\subseteq \pmb{\Box}\times \pmb{C}$, we define 
$$C^\infty_c(X):=\{f|_X: f\in C^\infty(\pmb{\Box}\times \pmb{C})\}.$$
We conclude from Stone-Weierstrass' theorem that $C^\infty(X_0)\subseteq C(X_0)$ is dense, so in particular we conclude the following. 

\begin{prop}
If $X\subseteq X_0$ is an open subset of a closed subset $X_0\subseteq \pmb{\Box}\times \pmb{C}$, we have a dense inclusion 
$$C^\infty_c(X)\subseteq C_0(X).$$
\end{prop}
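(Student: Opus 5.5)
The plan is to reduce the density claim to the density of $C^\infty(X_0)$ in $C(X_0)$, which is a Stone--Weierstrass statement, and then pass from $X_0$ to the open subset $X$ by a cut-off argument. Concretely, I would first check that the restriction algebra $C^\infty(X_0):=\{f|_{X_0}: f\in C^\infty(\pmb{\Box}\times\pmb{C})\}$ is a unital $*$-subalgebra of $C(X_0)$ separating points. The algebra property is immediate, since $C^\infty(\pmb{\Box}\times \pmb{C})$ is a direct limit of the algebras $C^\infty([-1,1]^N\times\pmb{C})$ and these are closed under products and conjugation. For point separation, take distinct $(x,c)\neq(x',c')$. If $c\neq c'$, a clopen subset of $\pmb{C}$ containing $c$ but not $c'$ gives a locally constant separating function. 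If $x\neq x'$, they differ in some coordinate $n$, and the coordinate function $x\mapsto x_n$ depends on one variable smoothly. Compactness of $X_0$ (closed in the compact space $\pmb{\Box}\times\pmb{C}$) and Stone--Weierstrass then give density of $C^\infty(X_0)$ in $C(X_0)$.

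Next I would handle the open subset $X\subseteq X_0$. Here I would interpret $C^\infty_c(X)$ as restrictions to $X$ of smooth functions whose support meets $X_0$ inside a compact subset of $X$; the displayed definition omits this support condition, but it is needed for the inclusion into $C_0(X)$ to make sense. Take $g\in C_c(X)$, which is dense in $C_0(X)$, and let $K=\mathrm{supp}(g)\subseteq X$, a compact set. Approximate $g$, viewed as an element of $C(X_0)$ by extending by zero, uniformly within $\epsilon$ by some $f\in C^\infty(X_0)$. Then multiply $f$ by a smooth cut-off $\phi\in C^\infty(X_0)$ with $0\le\phi\le 1$, $\phi=1$ on $K$ and $\mathrm{supp}\,\phi\cap X_0\subseteq X$ compact. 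The product $\phi f$ lies in $C^\infty_c(X)$, and $\|\phi f-g\|\le\|\phi(f-g)\|\le\epsilon$ since $\phi g=g$.

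The main obstacle is constructing $\phi$, because functions in the algebra depend on only finitely many coordinates and so cannot be localized arbitrarily finely in a naive way. I would handle this in two steps. First pick a continuous $\psi\in C(X_0,[0,1])$ with $\psi=1$ on $K$ and $\psi=0$ outside a compact neighbourhood $L\subseteq X$ of $K$; this exists by Urysohn, since $X_0$ is compact metric. Then approximate $\psi$ within $1/4$ by a real-valued $h\in C^\infty(X_0)$, using the density just proven and taking real parts. Finally compose, setting $\phi:=\rho\circ h$ for a smooth $\rho:\mathbb{R}\to[0,1]$ with $\rho=1$ on $[3/4,\infty)$ and $\rho=0$ on $(-\infty,1/4]$.

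Composition with $\rho$ preserves the class of functions depending smoothly on finitely many coordinates and locally constantly on $\pmb{C}$, so $\phi$ lies in the algebra. We have $\phi=1$ on $K$, because there $h\ge 3/4$. Moreover $\phi$ vanishes where $\psi=0$, because there $h\le 1/4$. Hence $\mathrm{supp}\,\phi\cap X_0\subseteq L$, which is compact in $X$, and this completes the density argument.
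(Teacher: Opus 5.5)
Your proof is correct and follows the paper's route. The paper simply invokes Stone--Weierstrass to get density of $C^\infty(X_0)$ in $C(X_0)$ for the compact set $X_0$, and says the statement for the open subset $X$ follows ``in particular''. Your cut-off construction $\phi=\rho\circ h$ supplies exactly that omitted passage from $X_0$ to $X$, and you are right that the displayed definition of $C^\infty_c(X)$ needs a support condition for the inclusion into $C_0(X)$ to make sense.
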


\begin{theorem}
\label{closedinrnthm}
Let $X\subseteq X_0$ be an open subset of a closed subset $X_0\subseteq \pmb{\Box}\times \pmb{C}$. For any $C^*$-algebra $B$, the bounded transform 
$$\beta:\Omega_*(C^\infty_c(X),B)\to KK_*(C_0(X),B),$$
is an isomorphism.
\end{theorem}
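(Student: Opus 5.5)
The plan is to reduce the statement directly to Theorem \ref{bddisothm} by showing that $C^\infty_c(X)$ is a countably generated $*$-algebra; the only preliminary fact needed, that $C^\infty_c(X)$ is dense in $C_0(X)$, has already been recorded in the preceding proposition.

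\textbf{Step 1: countable generation upstairs.} First we show that $C^\infty(\pmb{\Box}\times \pmb{C})$ has a countable set of $*$-algebraic generators, and then pass to $X$ by restriction.
\begin{itemize}
\item By construction $C^\infty(\pmb{\Box}\times \pmb{C})=\varinjlim_N C^\infty([-1,1]^N\times \pmb{C})$, so it suffices to handle each $C^\infty([-1,1]^N\times \pmb{C})$.
\item The locally constant functions on $\pmb{C}$ are spanned by the characteristic functions of the countably many clopen sets. Hence $C^\infty([-1,1]^N\times\pmb{C})$ is the algebraic tensor product $C^\infty([-1,1]^N)\otimes_{\rm alg} LC(\pmb{C})$, and $LC(\pmb{C})$ is countably generated.
\item The difficulty is that $C^\infty([-1,1]^N)$ is \emph{not} countably generated as a $*$-algebra.
\end{itemize}

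\textbf{Step 2: functional calculus to fix this.} We use Theorem \ref{caalpainandthm}.
\begin{itemize}
\item Let $\mathcal{A}_0\subseteq C^\infty_c(X)$ be the $*$-algebra generated by the restrictions to $X$ of the coordinate functions $x_n$, $n\in\N$, and of the characteristic functions of clopen subsets of $\pmb{C}$. In the non-unital case we multiply by suitable compactly supported elements to land in $C^\infty_c(X)$; this is the point to check carefully, since $X$ is only open in $X_0$.
\item $\mathcal{A}_0$ is countably generated.
\item Every $f\in C^\infty([-1,1]^N)$ equals $g(x_1,\dots,x_N)$ for some $g\in C^\infty_c(\R^N)$. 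For any $\alpha>1$ we have $g\in C^{\alpha N+1}_c(\R^N)$, so $f=g(x_1,\dots,x_N)\in \mathcal{A}_0^{(\alpha)}$, the functional calculus being the ordinary one for commuting self-adjoint elements.
\item Products with the Cantor characteristic functions stay in the algebra. Hence $\mathcal{A}_0\subseteq C^\infty_c(X)\subseteq \overline{\mathcal{A}_0}^{(\alpha)}$.
\end{itemize}

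\textbf{Step 3: conclude.} Theorem \ref{caalpainandthm}, in its sandwich form, gives that restriction induces an isomorphism $\Omega_*(C^\infty_c(X),B)\cong\Omega_*(\mathcal{A}_0,B)$. Since $\mathcal{A}_0$ and $C^\infty_c(X)$ have the same closure $C_0(X)$, the bounded transforms are compatible with this restriction. Theorem \ref{bddisothm} applied to $\mathcal{A}_0$ then yields the claim; equivalently, this is Theorem \ref{bddisothmmoregen}.

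\textbf{Main obstacle.} The hardest step is the non-unital case. The definition of $\overline{\mathcal{A}}^{(\alpha)}$ for non-unital algebras requires functions $g$ vanishing whenever some argument $t_j=0$. For $X$ open, elements of $C^\infty_c(X)$ are restrictions of global smooth functions and need not vanish at the boundary of $X$ in $X_0$. So the generators must be chosen as products $\phi\cdot x_n$ with $\phi$ from a countable family of cutoffs whose supports exhaust $X$, and one must check that every $f$ is expressible as $g(\phi_1,\dots,\phi_k,\ldots)$ with $g$ vanishing on the coordinate hyperplanes.

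What we would try first is to take the $\phi$'s to be elements of $C^\infty_c(X)$ that are locally constant in $\pmb C$ and depend on finitely many coordinates. Where such a $\phi$ is nonzero we can recover the coordinates as $x_n=(\phi x_n)/\phi$. If instead $C^\infty_c(X)$ is unital (for example when $X$ is closed), the unital definition applies and this obstacle disappears.
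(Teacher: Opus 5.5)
Your route is the paper's route. The paper takes $\mathcal{A}_0$ generated by $x_kc_l\chi_j$: coordinates, Cantor cylinder functions, and a countable partition of unity $(\chi_j)\subseteq C^\infty_c(X)$. It applies Theorem \ref{bddisothm} to $\mathcal{A}_0$ and passes to $C^\infty_c(X)$ via Theorem \ref{caalpainandthm}. The inclusion $C^\infty_c(X)\subseteq\overline{\mathcal{A}_0}^{(\alpha)}$ is dismissed there as ``a short computation''.

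What your proposal leaves open is exactly that computation in the non-unital case, and your first attempt at it does not work as stated. A side remark on the definition: $C^\infty_c(X)$ must be read as restrictions with compact support in $X$, since otherwise it is not contained in $C_0(X)$. So its elements do vanish near the boundary of $X$ in $X_0$. The only real obstruction is that the functions used in the calculus must vanish on the coordinate hyperplanes.

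\textbf{Why the first attempt fails.} Take the tuple $(\phi,\phi x_1,\dots,\phi x_N)$ and $g(s,y)=\rho(s)F(y/s)$. At any point of $\mathrm{supp} f$ with $x_n=0$ the tuple lies on $\{y_n=0\}$. There $g$ is forced to vanish, so $g(a)\neq f$ whenever $f$ is nonzero somewhere on a coordinate hyperplane.

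\textbf{The fix: shift the coordinates.}
\begin{itemize}
\item Include the $c_l\chi_j$ themselves among the generators of $\mathcal{A}_0$.
\item Let $\phi=\sum_{j\leq J}\chi_j$, with $J$ large enough that $\phi=1$ on $\mathrm{supp} f$; this is possible by local finiteness.
\item Use the self-adjoint tuple $a=(\phi,\ \phi x_1+2\phi,\ \dots,\ \phi x_N+2\phi)$ in $\mathcal{A}_0$.
\item Use $g(s,y)=\rho(s)\eta(y)F(y/s-2)$. Here $\rho\in C^\infty_c(\R)$ vanishes on $[-\delta,\delta]$ and has $\rho(1)=1$. The function $\eta\in C^\infty_c(\R^N)$ equals $1$ on $[\delta,3]^N$ and vanishes near every hyperplane $\{y_k=0\}$.
\end{itemize}
Then $g$ is smooth, compactly supported, and vanishes on all coordinate hyperplanes. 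Since $x_k+2\in[1,3]$, we have $\eta=1$ wherever $\rho(\phi)\neq 0$. Hence, pointwise on $X$,
$$g(a)=\rho(\phi)F|_X=\rho(\phi)f=f.$$
For the Cantor direction, multiply $\phi$ by the finitely many relevant $c_l$ and sum. This gives $C^\infty_c(X)\subseteq\mathcal{A}_0^{(\alpha)}$ for every $\alpha>1$, and your Step 3 then goes through verbatim.
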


\begin{proof}
Let $X_k$ denote the coordinate functions on $\pmb{\Box}$. Enumerate the cylinder functions $c_l$ on $\pmb{C}$. We can pick a countable partition of unity $(\chi_j)_{j\in \N}$ of $X$ from $C^\infty_c(X)$ and define $\mathcal{A}_0$ as the $*$-algebra generated by $\{X_kc_l\chi_j: j,k,l\in \N\}$. By Stone-Weierstrass' theorem, $\mathcal{A}_0\subseteq C_0(X)$ is dense. It follows from Theorem \ref{bddisothm} that 
$$\beta:\Omega_*(\mathcal{A}_0,B)\to KK_*(C_0(X),B),$$
is an isomorphism for all $B$. A short computation shows that $C^\infty_c(X)=\overline{\mathcal{A}_0}^{(\infty)}$ and the theorem now follows from Theorem \ref{caalpainandthm} (see page \pageref{caalpainandthm}, compare to Theorem \ref{bddisothmmoregen}).
\end{proof}

\begin{cor}
\label{closedinrcor}
Let $X$ be an open subset of a compact, separable and metrizable space (e.g. a locally finite CW-complex or a paracompact manifold with boundary). For $C^\infty_c(X)\subseteq C_0(X)$ defined via an embedding into $\pmb{\Box}\times \pmb{C}$ as above, and any $C^*$-algebra $B$ the bounded transform 
$$\beta:\Omega_*(C^\infty_c(X),B)\to KK_*(C_0(X),B),$$
is an isomorphism.
\end{cor}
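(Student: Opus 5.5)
The plan is to reduce Corollary \ref{closedinrcor} directly to Theorem \ref{closedinrnthm}. Let $Y$ be the compact, separable, metrizable space containing $X$ as an open subset. The first step is to embed $Y$ into the Hilbert cube. By Urysohn's metrization theorem, a compact metrizable space is second countable, so it embeds homeomorphically into $\pmb{\Box}$; one can also argue directly that a compatible metric bounded by $1$ gives an isometric embedding into $\pmb{\Box}$, as noted just before Theorem \ref{closedinrnthm}. Composing with the inclusion $\pmb{\Box}\cong \pmb{\Box}\times\{c_0\}\subseteq \pmb{\Box}\times \pmb{C}$ for a fixed point $c_0\in \pmb{C}$ gives an embedding $\iota:Y\to \pmb{\Box}\times \pmb{C}$.

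Since $Y$ is compact, $X_0:=\iota(Y)$ is closed, and $\iota(X)$ is open in $X_0$. The algebra $C^\infty_c(X)$ is by definition the restriction algebra defined via this embedding, and $\iota$ induces an isomorphism $C_0(\iota(X))\cong C_0(X)$. We can therefore apply Theorem \ref{closedinrnthm} to $\iota(X)\subseteq X_0$ and obtain that $\beta$ is an isomorphism for every $C^*$-algebra $B$.

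For the parenthetical examples, it remains to check the hypothesis. A locally finite CW-complex is metrizable and locally compact, hence it is an open subset of its one-point compactification. For a countable complex this compactification is compact and metrizable, hence separable; for a finite complex the space is already compact. A paracompact manifold with boundary with countably many components is handled in the same way. The only point needing care is second countability of the one-point compactification, which follows from $\sigma$-compactness together with local compactness and metrizability.

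I expect no real obstacle; the content is all in Theorem \ref{closedinrnthm}. The one subtlety to remark on is that $C^\infty_c(X)$ depends on the chosen embedding. The statement fixes an embedding, so no independence claim is needed: each choice yields an $\Omega_*$-admissible algebra.
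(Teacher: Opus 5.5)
Your argument is correct and is essentially the intended one: the paper gives no separate proof and treats the corollary as an immediate consequence of Theorem \ref{closedinrnthm}, once the ambient compact metrizable space is embedded topologically into $\pmb{\Box}$ (hence into $\pmb{\Box}\times\pmb{C}$) as a closed subset; your countability caveat for the parenthetical examples is exactly what makes them open subsets of a compact metrizable space. Only a topological embedding is needed, so drop the aside about an \emph{isometric} embedding into $\pmb{\Box}$: it fails in general, because $\pmb{\Box}$ sits isometrically inside $\ell^2$ and not every compact metric space embeds isometrically into a Hilbert space.
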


\begin{remark}
\label{discusdinbd}
By combining Corollary \ref{closedinrcor} with \cite[Theorem 4.11]{DGM} we see that the Baum-Douglas relations \cite{BD,BDbor} can be replaced with Hilsum's analytic $KK$-bordism relation. The latter analytic relation is implemented on geometric Baum-Douglas cycles via the map $\gamma_0$ introduced in  \cite[Section 4]{DGM} mapping geometric cycles to analytic cycles.
\end{remark}

\subsection{Poincaré duality of manifolds} 

Let us discuss how Corollary \ref{closedinrcor} can be used to study Poincaré duality explicitly at the unbounded level. The $KK$-bordism groups $\Omega_*(C^\infty(M;\C\ell(M)),B)$ inherits several structures from the $KK$-groups that allow us to qualitatively describe unbounded $KK$-cycles on manifolds up to $KK$-bordism. Consider a compact manifold $M$ with an auxiliary choice of Riemannian metric and write  $\C\ell(M)\to M$ for its Clifford algebra bundle. Write $\Delta_M:=\{(x,x): x\in M\}\subseteq M\times M$ for the diagonal, and choose an identification $\varphi:N^*\Delta_M\hookrightarrow M\times M$ of the conormal bundle to the diagonal with a tubular neighborhood of the diagonal. The Thom class of the bundle $p_N:N^*\Delta_M\to M$ defines a class $\mathrm{Th}(N^*\Delta_M)\in KK_0(\C,C_0(N^*\Delta_M,p_N^*\C\ell(M)))$ and we can define a class $\Theta\in KK_0(\C,C(M)\otimes C(M,\C\ell(M)))$ as the image of  $\mathrm{Th}(N^*\Delta_M)$ under the wrong way map 
\begin{align*}
KK_0(\C,C_0(N^*\Delta_M,p_N^*\C\ell(M)))\to &KK_0(\C,C(M\times M,p_2^*\C\ell(M)))=\\
&=KK_0(\C,C(M)\otimes C(M,\C\ell(M))),
\end{align*}
induced by $\varphi$. The class $\Theta$ can be represented in $K$-theory as $[\theta_M]-[1]$ for a smooth projection $\theta_M$. The projection $\theta_M$ can explicitly be described using that $\mathrm{Th}(N^*\Delta_M)$ is the class of the unbounded cycle $(C_0(N^*\Delta_M,p_N^*\wedge^*T^*M),c)$ where $c$ is  pointwise Clifford multiplication on $N^*\Delta_M$; indeed $\theta_M$ is the extension of the Higson transform $h_f\in 1+C_c^\infty(N^*\Delta_M,\mathrm{End}(p_N^*\wedge^*T^*M))$ of $c$ defined as
$$h_f(c):=(1-f(c)^2)\gamma-f(c)\sqrt{1-f(c)^2}+\frac{1-\gamma}{2},$$
for an odd function $f\in C^\infty(\R,[-1,1])$ with $f^2-1\in C^\infty_c(\R)$ and $f(x)\geq 0$ for $x>0$, and where $\gamma$ denotes the grading. Compare to Definition \ref{casandlknad} above and \cite[Equation (1.5)]{Hig}.

We write $[\slashed{D}_{\C\ell}]\in KK_0(C(M,\C\ell(M)),\C)$ for the class of the Euler-de Rham operator; in \cite{Kas2} this is called the Dirac element of $M$ and is denoted by $[d_M]$. It is represented by the unbounded cycle $(L^2(M;\wedge^*T^*M),\slashed{D}_{\C\ell})$ where $\slashed{D}_{\C\ell}=\mathrm{d}+\mathrm{d}^*$. The cap product with $[\slashed{D}_{\C\ell}]$ on $KK$ lifts to a mapping 
$$\cap[\slashed{D}_{\C\ell}]:K_*(C^\infty(M,B))\to\Omega_*(C^\infty(M;\C\ell(M)),B),$$
simply by a mapping of cycles defined by 
$$p\mapsto  (pL^2(M;\wedge^*T^*M\otimes B^N),p(\slashed{D}_{\C\ell}\otimes 1_{B^N})p),$$ 
for a projection $p\in M_N(C^\infty(M,B))$. We shall tacitly use a connection $\nabla$ for the cotangent bundle of $M$.

\begin{cor}
\label{poincaredual}
Let $M$ be a compact manifold with Clifford algebra bundle $\C\ell(M)\to M$. Then for any $\sigma$-unital $C^*$-algebra $B$, the mapping 
\begin{align*}
\rho:\Omega_*(C^\infty(M;\C\ell(M)),B)&\to \Omega_*(\C,C(M,B)),\\
\rho(\mathpzc{E},D):=&(\theta_M\cdot (C(M\times M;p_2^*\wedge^*T^*M)\otimes_{C(M)} \mathpzc{E}),\theta_M\cdot (1\otimes_\nabla D)\cdot \theta_M)-\\
&-(C(M\times M;p_2^*\wedge^*T^*M)\otimes_{C(M)} \mathpzc{E},1\otimes_\nabla D),
\end{align*}
is an isomorphism. Its inverse is given by the composition 
$$\Omega_*(\C,C(M,B))\xrightarrow{\ind}K_*(C(M,B))=K_*(C^\infty(M,B))\xrightarrow{\cap[\slashed{D}_{\C\ell}]}\Omega_*(C^\infty(M;\C\ell(M)),B).$$
In particular, the mapping 
\begin{align*}
\Omega_*(C^\infty(M;\C\ell(M)),B)\to &K_*(C(M,B))),\\
(\mathpzc{E},D)\mapsto &\ind_{C(M,B)}(\rho(\mathpzc{E},D)),
\end{align*}
is an isomorphism of abelian groups and any cycle $(\mathpzc{E},D)$ for $\Omega_*(C^\infty(M;\C\ell(M)),B)$ is bordant to one of the form $(L^2(M;\mathcal{E}_B), D_\mathcal{E})$ where $\mathcal{E}_B\to M$ is a $B$-Clifford bundle and $D_\mathcal{E}$ a Dirac operator thereon.
\end{cor}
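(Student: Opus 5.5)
The plan is to reduce everything to bounded $KK$-theory through the bounded transform, and then lift the resulting identifications back to $KK$-bordism. First, by Corollary \ref{closedinrcor} applied to $X=M$ and, after replacing $C^\infty(M;\C\ell(M))$ by a Morita equivalent commutative algebra, to the Clifford-valued case, the bounded transform $\beta:\Omega_*(C^\infty(M;\C\ell(M)),B)\to KK_*(C(M,\C\ell(M)),B)$ is an isomorphism. For the Morita step we write $\C\ell(M)$ as a corner $p\,M_N(C^\infty(M))p$ up to a graded FGP bimodule and use fgp Morita invariance in the first leg. The theorem proved in Subsection \ref{cayleandihigsoin}, together with \cite[Theorem 3.7]{DGM}, gives that $\ind:\Omega_*(\C,C(M,B))\to K_*(C(M,B))$ is an isomorphism. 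Both sides are therefore identified with bounded $KK$-groups. It then suffices to check that $\rho$ and the proposed inverse $\cap[\slashed{D}_{\C\ell}]\circ\ind$ are well defined on bordism classes, and that after applying $\beta$ they become Kasparov's Poincaré duality maps $x\mapsto \Theta\otimes_{C(M,\C\ell(M))}x$ and $y\mapsto y\otimes_{C(M)}[\slashed{D}_{\C\ell}]$. Kasparov \cite{Kas2} shows that these maps are mutually inverse.

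For well-definedness of $\rho$, I would note that $\rho$ is built from two operations: an interior product with the FGP module $C(M\times M;p_2^*\wedge^*T^*M)$, twisted by $\nabla$, and a compression by the smooth projection $\theta_M$. Both operations send cycles to cycles and bordisms to bordisms, by \cite[Subsection 1.4]{DGM}, exactly as in the proof of fgp Morita invariance. Changing the connection only adds a locally bounded perturbation, so Corollary \ref{locbddperturcor} handles it. Subtracting the $[1]$-term is simply the formal difference in the group. The map $\cap[\slashed{D}_{\C\ell}]$ is well defined on $K$-theory for two reasons. Stable equivalence and homotopy of smooth projections are realised by unitary conjugation, which gives isomorphisms up to bounded perturbations. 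Smooth homotopies of projections give operator homotopies satisfying Proposition \ref{homotopywithstrongcond}.

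To identify the classes, I would use the Kucerovsky criterion \cite[Theorem 13]{kucerovskyprod}, as in Theorem \ref{wrongwayconst}. It should show that $\beta\rho(\mathpzc{E},D)$ represents $\Theta\otimes\beta(\mathpzc{E},D)$. Here $\Theta=[\theta_M]-[1]$ is a $K$-theory class, so the product is just compression of the exterior product, and the connection condition reduces to the fact that $\theta_M$ is smooth. Similarly, $\beta(\cap[\slashed{D}_{\C\ell}](p))=[p]\otimes[\slashed{D}_{\C\ell}]$ holds essentially by definition. Since $\beta$ and $\ind$ are isomorphisms, $\rho$ is an isomorphism with the stated inverse, and so is the composite $\ind\circ\rho$. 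The final claim follows because every class lies in the image of $\cap[\slashed{D}_{\C\ell}]$. The cycle $p(\slashed{D}_{\C\ell}\otimes 1)p$ is a Dirac operator on the $B$-Clifford bundle $p(\wedge^*T^*M\otimes B^N)$ with the Grassmann connection, up to a bounded perturbation, which is removed by Corollary \ref{locbddperturcor}.

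The main obstacle will be the first step, namely applying Corollary \ref{closedinrcor} to the noncommutative algebra $C^\infty(M;\C\ell(M))$. That corollary is stated only for commutative algebras. Fgp Morita invariance needs a full FGP module and a unital, ungraded setting, whereas $\C\ell(M)$ is graded and only locally a matrix algebra. If this route fails, my fallback is to apply Theorem \ref{bddisothmmoregen} directly. I would pick a countably generated subalgebra, for instance one generated by a countable partition of unity times local Clifford generators, and show that its smooth functional calculus closure contains $C^\infty(M;\C\ell(M))$, using Theorem \ref{caalpainandthm}.
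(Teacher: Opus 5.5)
Your proposal follows the paper's argument. Both sides are identified with $KK$-groups via the bounded transform (and $\ind$ on $\Omega_*(\C,C(M,B))$), $\rho$ and $\cap[\slashed{D}_{\C\ell}]\circ\ind$ are recognised as Kasparov's Poincar\'e duality maps, and a diagram chase finishes; injectivity of $\beta$ on both sides already gives well-definedness on bordism classes, so your separate bordism-level checks are not needed.

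You are right that Corollary \ref{closedinrcor} does not literally cover the noncommutative algebra $C^\infty(M;\C\ell(M))$. Your corner/Morita route breaks down in general, already for odd-dimensional $M$, where the fibres of $\C\ell(M)$ are not full matrix algebras. Your fallback does work, and it is the step the paper's citation implicitly requires. Let $\mathcal{A}_0$ be generated by $1$, the coordinate functions of an embedding $M\hookrightarrow\R^N$, and finitely many sections generating $C^\infty(M;\C\ell(M))$ over $C^\infty(M)$. Then $\mathcal{A}_0\subseteq C^\infty(M;\C\ell(M))\subseteq\mathcal{A}_0^{(\alpha)}$ for every $\alpha>1$, so Theorems \ref{caalpainandthm} and \ref{bddisothm} apply.
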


\begin{proof}
The classes $\Theta\in KK_0(\C,C(M)\otimes C(M,\C\ell(M)))$ and $m^*[\slashed{D}_{\C\ell}]\in KK_0(C(M)\otimes C(M,\C\ell(M)),\C)$, for $m:C(M)\otimes C(M,\C\ell(M))\to C(M,\C\ell(M))$ denoting multiplication, define  ordinary Poincaré duality of $KK$-groups by \cite[Section 4]{Kas2}. The theorem now follows from a diagram chase with the bounded transform using Corollary \ref{closedinrcor}. 
\end{proof}

\subsection{Lipschitz functions and topological manifolds}

The goal of the subsection is to prove the following theorem.

\begin{theorem}
\label{lipmfdthm}
Let $X$ be a compact Lipschitz manifold. For any $C^*$-algebra $B$, the bounded transform 
$$\beta:\Omega_*(\mathrm{Lip}(X),B)\to KK_*(C(X),B),$$
is an isomorphism.
\end{theorem}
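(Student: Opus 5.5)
The strategy is to reduce to Theorem \ref{bddisothmmoregen}: exhibit a countably generated dense $*$-subalgebra $\mathcal{A}_0\subseteq \mathrm{Lip}(X)$ together with a Banach $*$-algebra $\mathcal{A}_1$ squeezed between $\mathcal{A}_0$ and its functional calculus closure, such that $\mathrm{Lip}(X)$ sits inside a complex interpolation space $[A,\mathcal{A}_1]_\alpha$. Complex interpolation (Theorem \ref{cpxinerpbofodthm}) and functional calculus invariance (Theorem \ref{caalpainandthm}) then transfer the isomorphism from $\mathcal{A}_0$, where it holds by Theorem \ref{bddisothm}.

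The Banach algebra will be $\mathcal{A}_1:=C^{1}(X)$-type functions defined through a Lipschitz atlas. More concretely, I would fix a finite Lipschitz atlas $(U_j,\psi_j)_{j=1}^N$ and a Lipschitz partition of unity $(\chi_j)$ subordinate to it. Since $X$ is compact, the map $\Psi:=(\chi_j,\chi_j\psi_j)_j:X\to\R^{N(n+1)}$ is a bi-Lipschitz embedding onto a compact subset $K$. The subalgebra $\mathcal{A}_0$ is then generated by the coordinate functions of $\Psi$, which lie in $\mathrm{Lip}(X)$ and separate points, so $\mathcal{A}_0$ is dense by Stone–Weierstrass. Its smooth functional calculus closure $\overline{\mathcal{A}_0}^{(\alpha_0)}$ contains all $f\circ\Psi$ with $f\in C^k_c(\R^{N(n+1)})$ for $k$ large. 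I would take $\mathcal{A}_1:=\{f\circ\Psi: f\in C^{2}(\R^{N(n+1)})\}$ with its quotient norm; this is a Banach $*$-algebra contained in $\overline{\mathcal{A}_0}^{(\alpha_0)}$, where $\alpha_0$ is chosen so that $C^2$ is covered. The inclusion $\mathcal{A}_0\subseteq\mathcal{A}_1$ also holds, because the coordinate functions are restrictions of linear functions.

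The main obstacle is showing $\mathrm{Lip}(X)\subseteq [C(X),\mathcal{A}_1]_\alpha$ for some $\alpha\in(0,1]$. Via $\Psi$, $\mathrm{Lip}(X)\cong\mathrm{Lip}(K)$, and by McShane extension every Lipschitz function on $K$ extends to a compactly supported Lipschitz function on $\R^{m}$. Interpolation commutes with the retraction structure: restriction is bounded from $[C_0(\R^m),C^2_c]_\alpha\to[C(K),\mathcal{A}_1]_\alpha$. It therefore suffices to know that $\mathrm{Lip}_c(\R^m)\subseteq[C_0,C^2]_\alpha$ for some $\alpha>0$. Classically $[C_0,C^2]_\alpha$ contains the Hölder–Zygmund space $C^{2\alpha+\epsilon}$, so any $\alpha<1/2$ works, since Lipschitz functions are in $C^{\gamma}$ for every $\gamma<1$. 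The delicate point is quoting the correct complex interpolation result for spaces of continuous (non-$L^p$) functions; if needed I would replace $C^2$ by the little Hölder space $h^{2}$ and use the known identification of complex interpolants of little Hölder spaces. The density requirements in the definition of $\mathcal{A}_\alpha$ are satisfied because $C^2$ is dense in $C(K)$.

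Finally, Theorem \ref{bddisothmmoregen} gives that $\mathrm{Lip}(X)$ is $\Omega_*$-admissible. An alternative route, should the interpolation estimate prove awkward, is to compare directly via Theorem \ref{cpxinerpbofodthm} with $\mathcal{A}=\mathcal{A}_1$ and $\mathcal{A}'=\mathrm{Lip}(X)$, which requires exactly the same inclusion.
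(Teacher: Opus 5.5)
Your route is the paper's. You bi-Lipschitz embed $X$, squeeze $\mathrm{Lip}(X)$ between a Banach algebra $\mathcal{A}_1$ of restricted $C^k$-functions and $[C(X),\mathcal{A}_1]_\alpha$, pass to $\mathcal{A}_1$ by Theorem \ref{cpxinerpbofodthm}, and then pass to a smooth algebra by Theorem \ref{caalpainandthm}, where Theorem \ref{bddisothm} applies. The paper goes through $C^\infty(i(X))$ and Theorem \ref{closedinrnthm}; your finitely generated $\mathcal{A}_0$ is an inessential shortcut.

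The specific choice $\mathcal{A}_1=\{f\circ\Psi: f\in C^2\}$ breaks the functional-calculus step. You claim $\mathcal{A}_1\subseteq\overline{\mathcal{A}_0}^{(\alpha_0)}$ for a suitable $\alpha_0$, but no admissible $\alpha_0$ works. The closure $\overline{\mathcal{A}_0}^{(\alpha_0)}$ only uses $f\in C^{\alpha_0 n+1}_c(\R^n)$ with $\alpha_0>1$. Since $\mathcal{A}_0$ consists of restrictions of polynomials, induction over the steps $\mathcal{A}_k^{(\alpha_0)}$ shows that every element of $\overline{\mathcal{A}_0}^{(\alpha_0)}$ is the restriction of a $C^{\alpha_0+1}$-function. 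Because $C^{\alpha_0+1}$ and $C^r$, $r>1$, are stable under composition and products, this class is strictly smoother than $C^2$. Already for smooth $X$ and smooth $\Psi$, not every $C^2$-restriction is of this form. So Theorem \ref{caalpainandthm} does not give $\Omega_*(\mathcal{A}_1,B)\cong\Omega_*(\mathcal{A}_0,B)$, and you have not shown $\mathcal{A}_1$ to be $\Omega_*$-admissible.

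The fix is to trade smoothness of $\mathcal{A}_1$ for a smaller interpolation exponent, which is what the paper does. It uses $C^k(i(X))$ and $\alpha\in(0,1/k)$ for some $k$ at one's disposal. The paper states its Lemma \ref{lipmfdtlemmaveofoe} with $k>2$, but the inclusion of $C^k(i(X))$ into the functional calculus closure that the proof uses also needs $k$ large relative to the ambient dimension, for the same reason.

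Concretely, with $m=N(n+1)$, take $\mathcal{A}_1:=\{f\circ\Psi: f\in C^k(\R^m)\}$ with $k>m+1$. Then $k\ge\alpha_0 m+1$ for some $\alpha_0>1$. Since the coordinates $x_1,\dots,x_m$ of $\Psi$ commute, $f(x_1,\dots,x_m)=f\circ\Psi$, so $\mathcal{A}_0\subseteq\mathcal{A}_1\subseteq\overline{\mathcal{A}_0}^{(\alpha_0)}$.

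Your interpolation argument goes through with $C^k$ in place of $C^2$. A compactly supported Lipschitz $g$ lies in $[C_0(\R^m),C^k_b(\R^m)]_\alpha$ for every $\alpha<1/k$. For instance, with a Littlewood--Paley decomposition, $F(z)=\sum_j 2^{jk(\alpha-z)}\Delta_j g$ converges absolutely in $C_0$ on $\mathrm{Re}\,z=0$ and in $C^k_b$ on $\mathrm{Re}\,z=1$ when $k\alpha<1$, and $F(\alpha)=g$. Theorem \ref{cpxinerpbofodthm} accepts any $\alpha\in(0,1]$, so the small exponent is harmless. Also use a complete space such as $C^k_b$ rather than $C^2_c$ when forming the interpolation couple.
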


The key step in the proof of the theorem is made using the next lemma.

\begin{lemma}
\label{lipmfdtlemmaveofoe}
Let $X$ be a compact Lipschitz manifold with a Lipschitz embedding $i:X\to \R^N$. Take $k>2$. Then $C^k(i(X)):=\mathrm{im}(C^k_c(\R^N)\to C(i(X)))$ satisfies that 
\begin{enumerate}
\item $C^k(i(X))=\overline{C^\infty(i(X))}^{(k)}$
\item $C^k(i(X))\subseteq \mathrm{Lip}(X)\subseteq C^k(i(X))_\alpha$ for any $\alpha\in (0,1/k)$.
\end{enumerate}
In the last statement, we identified $\mathrm{Lip}(X)$ with $\mathrm{Lip}(i(X))$ under the embedding $i$.
\end{lemma}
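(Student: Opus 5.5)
Both items reduce to standard facts about functions on $\R^N$, transported to $i(X)$ by restriction; throughout we use that $C^k(i(X))$ carries the quotient norm from $C^k_c(\R^N)$ (supports in a fixed large ball $K\supseteq i(X)$), which makes it a Banach $*$-algebra continuously and densely included in $C(X)$.

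For item (1), the inclusion $\overline{C^\infty(i(X))}^{(k)}\subseteq C^k(i(X))$ goes as follows. $C^\infty(i(X))$ is generated by restrictions of the coordinate functions $x_1,\dots,x_N$ together with smooth cutoffs. Applying $f\in C^{kn+1}_c(\R^n)$ to a self-adjoint tuple of $C^k$ functions gives, by the chain rule, a $C^k$ function on $i(X)$ (the definition via Fourier integrals agrees with pointwise composition for commuting elements, by the remark following the definition of $f(a_1,\dots,a_n)$). Iterating, every $\mathcal{A}^{(k)}_m$ stays inside $C^k(i(X))$. For the reverse inclusion, take $g\in C^k_c(\R^N)$ and write $g|_{i(X)}=g(x_1,\dots,x_N)|_{i(X)}$. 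The issue is that $g$ is only $C^k$, while the definition of $\mathcal{A}^{(k)}$ requires $f\in C^{kN+1}_c$. I would first try to avoid this by approximating within the class; if that fails, I would interpret the generating class of functions as $C^k$ (the natural reading making (1) true) and check that the Wiener-type seminorm from Lemma \ref{funccalclemmavari} is not needed for (1) itself. I expect this regularity-index bookkeeping to be the main obstacle for item (1).

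For item (2), the first inclusion $C^k(i(X))\subseteq \mathrm{Lip}(X)$ holds because $C^1$ functions on $\R^N$ are Lipschitz and $i$ is Lipschitz. For $\mathrm{Lip}(X)\subseteq [C(X),C^k(i(X))]_\alpha$, extend $h\in\mathrm{Lip}(i(X))$ to a compactly supported Lipschitz function $H$ on $\R^N$ using McShane/Kirszbraun plus a cutoff. Then show $H\in [C_0(\R^N),C^k_c]_\alpha$ for $\alpha<1/k$. Complex interpolation between $C^0$ and $C^k$ spaces is a Zygmund/Hölder-type space of order $k\alpha<1$, which contains Lipschitz functions. To make this concrete, I would use the mollification $H_t=H*\phi_t$, which satisfies $\|H-H_t\|_\infty\lesssim t$ and $\|H_t\|_{C^k}\lesssim t^{1-k}$. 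The analytic family $z\mapsto \int_0^1 t^{(kz-1)}\,\dots$ (or equivalently the $K$-functional estimate $K(s,H)\lesssim s^{1/k}$) then places $H$ in the real interpolation space $(C,C^k)_{1/k,\infty}$, which embeds into $[C,C^k]_\alpha$ for every $\alpha<1/k$. Finally, restriction $C_0(\R^N)\to C(i(X))$ and $C^k_c\to C^k(i(X))$ are quotient maps, so they map interpolation spaces contractively into interpolation spaces, which gives the claim on $i(X)$. This last chain of embeddings (real into complex interpolation with a slight loss of exponent) is standard (Bergh–Löfström), and I do not expect it to be the hard part.
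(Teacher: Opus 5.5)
Your route is the paper's: prove everything on $\R^N$ and restrict to $i(X)$. For item (2) your argument is correct, and it is more complete than the paper's, which only asserts $C^k_c(\R^N)\subseteq \Lip(\R^N)\cap C_c(\R^N)\subseteq C^k_c(\R^N)_\alpha$ and that interpolation commutes with restriction. Three ingredients are all that is needed, and you supply them:
\begin{itemize}
\item the estimate $K(s,H)\lesssim s^{1/k}$ obtained by mollification;
\item the embedding $(X_0,X_1)_{1/k,\infty}\subseteq (X_0,X_1)_{\alpha,1}\subseteq [X_0,X_1]_\alpha$ for $\alpha<1/k$, which is legitimate because the couple is ordered;
\item boundedness of the restriction maps between the two couples.
\end{itemize}

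The gap is the reverse inclusion $C^k(i(X))\subseteq \overline{C^\infty(i(X))}^{(k)}$ in item (1), and neither of your fallbacks closes it. Approximation is unavailable, because $\overline{\mathcal{A}}^{(\alpha)}$ is a union of algebraically generated $*$-algebras with no topological closure step. Reading the generating class as $C^k$ changes the statement.

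Your worry is in fact decisive. Consider a generator $f(a_1,\ldots,a_n)$ with $f\in C^{kn+1}_c(\R^n)$, $n\geq 1$, and each $a_j$ real-valued of class $C^{k+1}$. By the chain rule in H\"older classes it is again of class $C^{k+1}$. Since $C^{k+1}$ is stable under sums, products and conjugation, induction gives $\overline{C^\infty(i(X))}^{(k)}\subseteq C^{k+1}(i(X))$. This is a proper subset of $C^k(i(X))$ already for a smoothly embedded circle. So item (1) is false as written, and the paper's ``readily verified'' equality $\overline{C^\infty_c(\R^N)}^{(k)}=C^k_c(\R^N)$ fails for the same reason.

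What the application (Theorem \ref{lipmfdthm} via Theorem \ref{caalpainandthm}) actually needs is only $C^\infty(i(X))\subseteq C^k(i(X))\subseteq \overline{C^\infty(i(X))}^{(\alpha)}$ for \emph{some} $\alpha>1$. Your own composition argument gives this once $k$ is large. Set $\alpha:=(k-1)/N$, so that $C^k_c(\R^N)=C^{\alpha N+1}_c(\R^N)$. Then any $g\in C^k_c(\R^N)$ satisfies $g|_{i(X)}=g(x_1,\ldots,x_N)|_{i(X)}\in (C^\infty(i(X)))^{(\alpha)}$, using the coordinate functions as the commuting self-adjoint tuple. Moreover $\alpha>1$ precisely when $k>N+1$. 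Replacing the hypothesis $k>2$ by $k>N+1$, with item (2) unchanged, gives a correct lemma that suffices for the downstream theorem.
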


\begin{proof}
It is readily verified that $\overline{C^\infty_c(\R^N)}^{(k)}=C^k_c(\R^N)$, and since restrictions commute with functional calculus we can conclude that $C^k(i(X))=\overline{C^\infty(i(X))}^{(k)}$. By the same token, $C^k_c(\R^N)\subseteq \mathrm{Lip}(\R^N)\cap C_c(\R^N)\subseteq C^k_c(\R^N)_\alpha$ for any $\alpha\in (0,1/k)$, and complex interpolation commutes with restriction so $C^k(i(X))\subseteq \mathrm{Lip}(X)\subseteq C^k(i(X))_\alpha$ follows for any $\alpha\in (0,1/k)$.
\end{proof}

\begin{proof}[Proof of Theorem \ref{lipmfdthm}]
If $X$ is a compact Lipschitz manifold, there exists a Lipschitz embedding into $\R^N$ as a closed subset. By Lemma \ref{lipmfdtlemmaveofoe} (item 2) and Theorem \ref{cpxinerpbofodthm} (see page \pageref{cpxinerpbofodthm}), it suffices to prove that 
$$\beta:\Omega_*(C^k(i(X)),B)\to KK_*(C(X),B),$$
is an isomorphism for all $B$ and some $k>2$. By Lemma \ref{lipmfdtlemmaveofoe} (item 1) and Theorem \ref{caalpainandthm} (see page \pageref{caalpainandthm}), it suffices to prove that  that 
$$\beta:\Omega_*(C^\infty(i(X)),B)\to KK_*(C(X),B),$$
is an isomorphism for all $B$. The last mapping is an isomorphism (for all $B$) by Theorem \ref{closedinrnthm} (see page \pageref{closedinrnthm}).
\end{proof}

\subsection{Etale groupoids}

In this subsection we consider \'etale groupoids over a totally disconnected space or a manifold with boundary, more generally over an open subset $X\subseteq X_0$ of a closed subset $X_0\subseteq \pmb{\Box}\times \pmb{C}$ of the Hilbert cube times a Cantor set. Let $\mathcal{G}$ denote such an \'etale groupoid. We write $C^\infty_c(\mathcal{G})$ for the compactly supported functions that under local bisection pull back to functions from $C^\infty(X)$ (see Subsection \ref{ainaodna}). The $*$-algebra $C^\infty_c(\mathcal{G})$ can be considered a dense $*$-subalgebra of the full groupoid $C^*$-algebra, or the reduced groupoid $C^*$-algebra. For our purposes it is not important which $C^*$-completion we pick, but when writing $C^\infty_c(\mathcal{G})$ we are implicitly choosing the $C^*$-completion and this choice remains implicit in the construction of $\Omega_*(C^\infty_c(\mathcal{G}),B)$ (and in general $\Omega_*(C^\infty_c(\mathcal{G}),B)$ depends on this choice).

\begin{theorem}
\label{etalegroupoidthm}
Let $\mathcal{G}$ be an \'etale groupoid over an open subset $X\subseteq X_0$ of a closed subset $X_0\subseteq \pmb{\Box}\times \pmb{C}$. For any $C^*$-algebra $B$, the bounded transform 
$$\beta:\Omega_*(C^\infty_c(\mathcal{G}),B)\to KK_*(C^*(\mathcal{G}),B),$$
is an isomorphism.
\end{theorem}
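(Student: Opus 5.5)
The plan is to mimic the proof of Theorem \ref{closedinrnthm}: find a countably generated dense $*$-subalgebra $\mathcal{A}_0\subseteq C^\infty_c(\mathcal{G})$ whose smooth functional calculus closure $\overline{\mathcal{A}_0}^{(\infty)}$ contains $C^\infty_c(\mathcal{G})$. We then combine Theorem \ref{bddisothm} (for $\mathcal{A}_0$) with Theorem \ref{caalpainandthm} (invariance under $C^\alpha$-functional calculus), in the form packaged in Theorem \ref{bddisothmmoregen}.

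First we construct $\mathcal{A}_0$. Since $X$ is second countable and $\mathcal{G}$ is étale, hence also second countable in the cases at hand, we choose a countable cover of $\mathcal{G}$ by open bisections $(U_j)_{j\in\N}$ with $r(U_j)$, $s(U_j)$ open in $X$. For each $j$ we pick countably many bump functions $\phi_{j,m}\in C^\infty_c(U_j)$ whose supports cover $U_j$. We also take the countable family from the proof of Theorem \ref{closedinrnthm}: coordinate functions $X_k$, cylinder functions $c_l$ on $\pmb{C}$, and a partition of unity $(\chi_i)$ on $X$. Pulled back along $r|_{U_j}$, these give countably many elements $f_{j,m,k,l,i}:=\phi_{j,m}\cdot(r|_{U_j})^*(X_kc_l\chi_i)\in C^\infty_c(\mathcal{G})$. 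Let $\mathcal{A}_0$ be the $*$-algebra they generate, together with the unit-space functions $X_kc_l\chi_i$, which lie in $C^\infty_c(X)\subseteq C^\infty_c(\mathcal{G})$. Density of $\mathcal{A}_0$ in $C^*(\mathcal{G})$ (either completion) follows from density in $C_c(\mathcal{G})$ for the inductive limit topology. On each bisection this density holds by Stone--Weierstrass, exactly as in Theorem \ref{closedinrnthm}, and the pieces are glued using the partition of unity. Since the $C^*$-norm on functions supported in a fixed bisection is dominated by the sup norm, density in $C^*(\mathcal{G})$ follows.

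Second, we show that $C^\infty_c(\mathcal{G})\subseteq \overline{\mathcal{A}_0}^{(\alpha)}$ for some fixed $\alpha>1$. Theorem \ref{closedinrnthm} already gives $C^\infty_c(X)=\overline{\mathcal{A}_0\cap C^\infty_c(X)}^{(\infty)}$ on the unit space. A general $f\in C^\infty_c(\mathcal{G})$ is a finite sum of functions supported in single bisections $U_j$. Each such function can be written as $g\cdot\phi_{j,m}$ with $g\in C^\infty_c(X)$ acting by left multiplication through $r$: one chooses $\phi_{j,m}$ equal to $1$ on the support and $g=f\circ(r|_{U_j})^{-1}$. This expresses it as a product of an element of the functional calculus closure of $\mathcal{A}_0\cap C^\infty_c(X)$ with an element of $\mathcal{A}_0$, so it lies in $\overline{\mathcal{A}_0}^{(\infty)}$ since the latter is an algebra. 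The step I expect to be the main obstacle is arranging the $\phi_{j,m}$ so that every compact subset of each $U_j$ is covered by a single $\phi_{j,m}$ equal to $1$ there. This is possible by taking countably many compact exhaustions. A second subtlety is that the functional calculus of Theorem \ref{caalpainandthm} is computed in $C^*(\mathcal{G})$ rather than in $C_0(X)$. This is harmless because $C_0(X)\subseteq C^*(\mathcal{G})$ is a $C^*$-subalgebra, so functional calculus of self-adjoint elements of $C^\infty_c(X)$ agrees in both. Finally, the identity $\overline{\mathcal{A}}^{(\infty)}=\cup_\alpha\overline{\mathcal{A}}^{(\alpha)}$ needs a fixed $\alpha$; I would take $\alpha$ large, noting that only $C^\infty$-functions of finitely many generators are used.

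With $\mathcal{A}_0\subseteq C^\infty_c(\mathcal{G})\subseteq\overline{\mathcal{A}_0}^{(\alpha)}$ established, Theorem \ref{caalpainandthm} shows that restriction $\Omega_*(C^\infty_c(\mathcal{G}),B)\to\Omega_*(\mathcal{A}_0,B)$ is an isomorphism. Theorem \ref{bddisothm} makes $\beta$ for $\mathcal{A}_0$ an isomorphism. The bounded transform commutes with restriction, so $\beta$ for $C^\infty_c(\mathcal{G})$ is an isomorphism. The listed special cases (discrete groups, transformation groupoids, Deaconu groupoids, AF-groupoids) then follow by identifying the unit spaces as open subsets of closed subsets of $\pmb{\Box}\times\pmb{C}$.
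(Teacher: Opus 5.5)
Your proposal is correct and follows essentially the paper's route. You build a countably generated dense subalgebra from the unit-space generators of Theorem~\ref{closedinrnthm} together with bisection-supported functions, write each element of $C^\infty_c(\mathcal{G})$ as a finite sum of smooth unit-space functions times these bisection generators, and conclude via Theorem~\ref{bddisothmmoregen}, i.e.\ Theorems~\ref{caalpainandthm} and~\ref{bddisothm}.

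The differences are cosmetic. You use bump functions equal to $1$ on supports, whereas the paper writes $a=\sum_j a_jg_j$ with a partition of unity $(g_j)$ subordinate to bisections. Also, $\overline{\mathcal{A}_0}^{(\alpha)}$ decreases as $\alpha$ grows, so $\overline{\mathcal{A}_0}^{(\infty)}$ is an intersection rather than a union; smooth functional calculus lands in every $\overline{\mathcal{A}_0}^{(\alpha)}$, so any fixed $\alpha>1$ works.
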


\begin{proof}
Pick a countable partition of unity $(g_j)_{j\in \N}\subseteq C^\infty_c(\mathcal{G})$ such that the domain and source mapping $\mathrm{supp}(g_j)\to X$ are homeomorphisms onto their ranges. Define $\mathcal{A}$ to be the countably generated $*$-algebra generated by $C^\infty_c(X)$ and $(g_j)_{j\in \N}$. By Stone-Weierstrass there is a linear embedding $\mathcal{A}\to C_c(\mathcal{G})$ with dense range, so $\mathcal{A}\subseteq C^*(\mathcal{G})$ is dense. 

Any $a\in C^\infty_c(\mathcal{G})$ can be written as a finite sum $a=\sum a_j g_j$ for $a_j\in C^\infty_c(X)$. Since $(g_j)_{j\in \N}\subseteq \mathcal{A}$ (by construction) and $C^\infty_c(X)\subseteq \overline{\mathcal{A}}^{(\infty)}$, we have that $\mathcal{A}\subseteq C^\infty_c(\mathcal{G})\subseteq \overline{\mathcal{A}}^{(\infty)}$. The theorem now follows from Theorem \ref{bddisothmmoregen} (see also Theorem \ref{caalpainandthm} on page \pageref{caalpainandthm} and Theorem \ref{bddisothm} on page \pageref{bddisothm}).

\end{proof}

\begin{cor}
\label{talegroupoidcor}
Consider an open subset $X\subseteq X_0$ of a closed subset $X_0\subseteq \pmb{\Box}\times \pmb{C}$. Let $\mathcal{G}$ be of one of the following forms:
\begin{itemize}
\item $\mathcal{G}:=\Gamma$ is a discrete countable group, and so $C^\infty_c(\mathcal{G})=\C[\Gamma]$ is the group algebra and $C^*(\mathcal{G})=C^*(\Gamma)$. 
\item $\mathcal{G}:=X\rtimes \Gamma$, for a discrete countable group $\Gamma$ acting on $X$, and so $C^\infty_c(\mathcal{G})=C^\infty_c(X)\rtimes_{\rm alg} \Gamma$ and $C^*(\mathcal{G})=C_0(X)\rtimes \Gamma$.
\item $\mathcal{G}$ is the Deaconu groupoid associated with a surjective local homeomorphism $g:X\to X$, and so $C^\infty_c(\mathcal{G})=C^\infty_c(X)\rtimes_{\rm alg} \N$ and $C^*(\mathcal{G})=C_0(X)\rtimes_g \N=O_{E_g}$ is a Cuntz-Pimsner algebra.
\item $\mathcal{G}$ is the groupoid defining an AF-algebra $A$, i.e. $\mathcal{G}$ is a groupoid over the totally disconnected spectrum of a MASA in $A$, and $A=C^*(\mathcal{G})$.
\end{itemize}
Then for any $C^*$-algebra $B$, the bounded transform 
$$\beta:\Omega_*(C^\infty_c(\mathcal{G}),B)\to KK_*(C^*(\mathcal{G}),B),$$
is an isomorphism.
\end{cor}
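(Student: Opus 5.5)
The plan is to reduce each of the four cases to Theorem \ref{etalegroupoidthm}. For each item we check two things: that $\mathcal{G}$ is an \'etale groupoid whose unit space is an open subset of a closed subset of $\pmb{\Box}\times \pmb{C}$, and that the $*$-algebra named in the statement coincides with $C^\infty_c(\mathcal{G})$ as defined just before Theorem \ref{etalegroupoidthm}. Once both are in place, the bounded transform is an isomorphism by that theorem. If there is a mismatch, it will only be between $\mathcal{A}$ and $C^\infty_c(\mathcal{G})$ up to smooth functional calculus closure; in that case we appeal to Theorem \ref{caalpainandthm} or Theorem \ref{bddisothmmoregen}.

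For a discrete countable group $\Gamma$, the unit space is a point, and a point embeds trivially into $\pmb{\Box}\times\pmb{C}$. Local bisections are singletons $\{\gamma\}$, so $C^\infty_c(\Gamma)$ is the space of finitely supported functions, namely $\C[\Gamma]$. Alternatively, $\C[\Gamma]$ is countably generated, so Theorem \ref{bddisothm} applies directly.

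For $X\rtimes\Gamma$, the sets $X\times\{\gamma\}$ are global bisections. Pulling back along them identifies $C^\infty_c(X\rtimes\Gamma)$ with finite sums $\sum a_\gamma u_\gamma$, $a_\gamma\in C^\infty_c(X)$, which is $C^\infty_c(X)\rtimes_{\rm alg}\Gamma$. One point needs care: the definition of $C^\infty_c(\mathcal{G})$ uses smoothness relative to the embedding $X\subseteq \pmb{\Box}\times \pmb{C}$. Whether $\Gamma$ preserves $C^\infty_c(X)$ therefore depends on the embedding. Since $C^*(\mathcal{G})$ is the same for any choice, we interpret smoothness as in the proof of Theorem \ref{etalegroupoidthm}. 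That proof only uses the countably generated algebra generated by $C^\infty_c(X)$ and the bisection functions, so the same argument goes through.

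For the Deaconu groupoid $\{(x,m-n,y): g^m x=g^n y\}$, a cover by bisections is given by sets of the form $\{(x,m-n,y): x\in U, y\in V\}$, where $U,V$ are open sets on which $g^m$ and $g^n$ are injective. Finite sums over such sets reproduce $C^\infty_c(X)\rtimes_{\rm alg}\N$, and $C^*(\mathcal{G})\cong O_{E_g}$ is the standard identification.

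For an AF-algebra $A$, we use the AF groupoid: the unit space is the spectrum of a diagonal MASA, which is totally disconnected and hence embeds into $\pmb{C}$. Then $C^\infty_c(\mathcal{G})$ consists of locally constant compactly supported functions. This is the algebraic union of the finite-dimensional subalgebras, which is countably generated, so we again conclude by Theorem \ref{etalegroupoidthm}.

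The main obstacle is the compatibility of the algebraic crossed products with the notion of smoothness relative to the embedding, in the second and third items. The fallback is to enlarge $\mathcal{A}$ to its $C^\infty$-functional calculus closure and invoke Theorem \ref{caalpainandthm}.
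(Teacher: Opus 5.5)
Your proposal is correct and takes the same route as the paper. The paper states this corollary without a separate proof, as an immediate case-by-case application of Theorem \ref{etalegroupoidthm}. Your caveat about whether $\Gamma$ (or $g$) preserves the smooth structure coming from the embedding is something the paper assumes tacitly when it writes $C^\infty_c(\mathcal{G})=C^\infty_c(X)\rtimes_{\rm alg}\Gamma$. Your fallback via Theorem \ref{caalpainandthm} is the same mechanism already used inside the proof of Theorem \ref{etalegroupoidthm}.
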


\section{Exactness and homotopy invariance in $KK$-bordism groups}
\label{secexact}

Two important tools to compute $KK$-groups relies on their exactness and homotopy invariance. In light of Theorem \ref{bddisothmmoregen}, there is an abundance of $\Omega_*$-admissible $*$-algebras, so we can transfer results for $KK$-groups to $KK$-bordism groups. We note that the proofs of the results in this section are more or less trivial, as they piggyback on corresponding results in $KK$-theory, but we include them for completeness and hope that the results can be appreciated as having rather deep implications in Hilsum's theory of $KK$-bordisms (see for instance Remark \ref{exicsion} below).

\begin{theorem}
\label{exact2nd}
Let $\mathcal{A}$ be an $\Omega_*$-admissible $*$-algebra and 
$$0\to J\xrightarrow{j}B\xrightarrow{q} D\to 0,$$
a semisplit short exact sequence of $C^*$-algebras. Then there is a natural map
$$\partial:\Omega_*(\mathcal{A},D)\to \Omega_{*+1}(\mathcal{A},J),$$
that fits into a six term exact sequence
\[
\begin{CD}
\Omega_0(\mathcal{A},J) @>j_*>>\Omega_0(\mathcal{A},B) @>q_*>> \Omega_0(\mathcal{A},D)\\
@A\partial AA  @. @ VV\partial V\\
\Omega_1(\mathcal{A},D) @<q_*<< \Omega_1(\mathcal{A},B) @<j_*<< \Omega_1(\mathcal{A},J) \\
\end{CD} \]
\end{theorem}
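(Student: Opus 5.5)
The plan is to transport the six term exact sequence in the second variable of Kasparov's $KK$-theory through the bounded transform, using $\Omega_*$-admissibility. Since $\mathcal{A}$ is $\Omega_*$-admissible, the bounded transform $\beta:\Omega_*(\mathcal{A},E)\to KK_*(A,E)$ is an isomorphism for every $C^*$-algebra $E$, in particular for $E\in\{J,B,D\}$. For a semisplit extension $0\to J\to B\to D\to 0$, Kasparov's theory provides a boundary map $\partial_{KK}:KK_*(A,D)\to KK_{*+1}(A,J)$, namely the Kasparov product with the extension class in $KK_1(D,J)$. It fits into a six term exact sequence with $j_*$ and $q_*$. I will define
$$\partial:=\beta^{-1}\circ\partial_{KK}\circ\beta:\Omega_*(\mathcal{A},D)\to\Omega_{*+1}(\mathcal{A},J).$$

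The first step is to check that $\beta$ is natural in the second variable. For a $*$-homomorphism $\varphi:B_1\to B_2$ and an $(\mathcal{A},B_1)$-cycle $(\mathpzc{E},D)$, the pushforward from Definition \ref{pushpull} gives $(\mathpzc{E}\otimes_\varphi B_2,D\otimes 1)$. Functional calculus commutes with $\otimes_\varphi 1$, since $(1+(D\otimes1)^2)^{-1/2}=(1+D^2)^{-1/2}\otimes 1$ by regularity. Hence $\beta\circ\varphi_*=\varphi_*\circ\beta$ already at the level of cycles. Applied to $j$ and $q$, the squares relating the $\Omega_*$-rows to the $KK$-rows commute.

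With this in hand, the diagram of $\Omega_*$-groups is isomorphic, via the vertical maps $\beta$, to Kasparov's exact hexagon. The four squares involving $j_*$ and $q_*$ commute by naturality, and the two squares involving $\partial$ commute by construction. Exactness is invariant under isomorphism of diagrams, so the $\Omega_*$-hexagon is exact.

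For naturality of $\partial$, with respect to morphisms of semisplit extensions and pullback along $\mathcal{A}$, I will use naturality of $\partial_{KK}$ together with naturality of $\beta$ in both variables. For pullbacks in the first variable, $\beta$ visibly commutes with $\phi^*$ at the cycle level.

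The only genuinely delicate point is the naturality claim. Any explicit description of $\partial$ on unbounded cycles, for instance via a Kasparov product with an unbounded representative of the extension class, is not needed here and would be the hard part. I will avoid it and state naturality only for the functorialities where $\beta$ is known to be natural, with admissibility required on both sides when the algebra varies.
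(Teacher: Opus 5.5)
Your proposal is correct and follows the same route as the paper, which likewise obtains the hexagon by conjugating Kasparov's six-term exact sequence for semisplit extensions with the bounded-transform isomorphisms supplied by $\Omega_*$-admissibility. Your cycle-level check that $\beta$ commutes with the pushforwards $j_*$ and $q_*$ (via $F_{D\otimes 1}=F_D\otimes 1$) supplies a detail the paper leaves implicit.
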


By the $\Omega_*$-admissibility assumption on $\mathcal{A}$, the bounded transforms are isomorphisms and Theorem \ref{exact2nd} follows from exactness of $KK$-groups for semisplit short exact sequences, see for instance \cite[Chapter 19.5]{Bla}. To state the corresponding result for exactness in the contravariant leg, we make the following definition. 

\begin{define}
A sequence of $*$-homomorphisms
$$0\to \mathcal{I}\xrightarrow{i} \mathcal{A}\xrightarrow{p} \mathcal{C}\to 0,$$
will be called an $\Omega_*$SES if 
\begin{itemize}
\item $\mathcal{I}\subseteq I$, $\mathcal{A}\subseteq A$ and $\mathcal{C}\subseteq C$ are $\Omega_*$-admissible dense $*$-subalgebras of $C^*$-algebras;
\item the maps $i$ and $p$ are continuous in the $C^*$-norm;
\item the sequence of maps 
$$0\to I\xrightarrow{i}A\xrightarrow{p}C\to 0,$$
is a semisplit short exact sequence of $C^*$-algebras.
\end{itemize}
\end{define}

We note that if we have an $\Omega_*$SES $0\to \mathcal{I}\xrightarrow{i} \mathcal{A}\xrightarrow{p} \mathcal{C}\to 0$, we are in its definition implicitly identifying the $*$-homomorphisms with their extensions by continuiuty to the ambient $C^*$-algebras and view the $\Omega_*$SES via its embedding into the short exact sequence of $C^*$-algebras: 
\[
\begin{CD}
0  @>>> \mathcal{I} @>i>>\mathcal{A} @>p>>  \mathcal{C}@>>>0 \\
@. @V\subseteq VV  @V\subseteq VV @ V\subseteq VV@.\\
0  @>>>  I @>i>>A @>p >> C@>>>0 \\
\end{CD} \]
The reader should be aware that $p:\mathcal{A}\to \mathcal{C}$ need not be surjective and $i(\mathcal{I})$ is potentially a proper subalgebra of $\ker(p:\mathcal{A}\to \mathcal{C})=\mathcal{A}\cap \ker(p:A\to C)$, which need not even be an ideal in $\mathcal{A}$.

\begin{theorem}
\label{exact1st}
Assume that $B$ is a $C^*$-algebra and that 
$$0\to \mathcal{I}\xrightarrow{i} \mathcal{A}\xrightarrow{p} \mathcal{C}\to 0,$$
is an $\Omega_*$SES. Then there is a natural map
$$\partial:\Omega_*(\mathcal{I},B)\to \Omega_{*-1}(\mathcal{C},B),$$
that fits into a six term exact sequence
\[
\begin{CD}
\Omega_0(\mathcal{C},B) @>p^*>>\Omega_0(\mathcal{A},B) @>i^*>> \Omega_0(\mathcal{I},B)\\
@A\partial AA  @. @ VV\partial V\\
\Omega_1(\mathcal{I},B) @<i^*<< \Omega_1(\mathcal{A},B) @<p^*<< \Omega_1(\mathcal{C},B) \\
\end{CD} \]
\end{theorem}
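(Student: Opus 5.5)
The plan is to transport the six term exact sequence in the contravariant leg of Kasparov's $KK$-theory along the bounded transform, exactly as for Theorem \ref{exact2nd}. Since $\mathcal{I}$, $\mathcal{A}$ and $\mathcal{C}$ are all $\Omega_*$-admissible, the bounded transforms
$$\beta:\Omega_*(\mathcal{I},B)\to KK_*(I,B),\quad \beta:\Omega_*(\mathcal{A},B)\to KK_*(A,B),\quad \beta:\Omega_*(\mathcal{C},B)\to KK_*(C,B)$$
are isomorphisms by definition. For a semisplit short exact sequence $0\to I\to A\to C\to 0$, Kasparov's theory supplies the six term exact sequence in the first variable, with boundary map given by the Kasparov product with the extension class in $KK_1(C,I)$; see for instance \cite[Chapter 19.5]{Bla}. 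I then define
$$\partial:=\beta^{-1}\circ \partial_{KK}\circ \beta:\Omega_*(\mathcal{I},B)\to \Omega_{*-1}(\mathcal{C},B).$$

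The first step is to check that the bounded transform intertwines the pullback maps $i^*$ and $p^*$ on $\Omega_*$, as defined in Definition \ref{pushpull}, with $i^*$ and $p^*$ on $KK_*$. At the level of cycles this holds by inspection: $\beta(\phi^*(\mathpzc{E},D))=(_\phi\mathpzc{E},D(1+D^2)^{-1/2})=\phi^*\beta(\mathpzc{E},D)$. This uses that $i$ and $p$ extend continuously to the $C^*$-closures, which is part of the definition of an $\Omega_*$SES, so that the pulled back actions are the restrictions of the extended $*$-homomorphisms.

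With this in hand, the three vertical isomorphisms $\beta$ make the $\Omega_*$-diagram commute with the $KK$-diagram at every node. The squares involving $\partial$ commute by construction of $\partial$. Exactness at each vertex then follows from exactness of the $KK$ sequence, since commuting diagrams of isomorphisms carry exact sequences to exact sequences.

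Naturality of $\partial$ is handled the same way. A morphism of $\Omega_*$SES's is a commuting diagram of continuous $*$-homomorphisms, and $\beta$ is natural with respect to both pullbacks and pushforwards along $B$, by the same computation as in the first step. Naturality of $\partial_{KK}$ therefore transfers to $\partial$.

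I do not expect a genuine obstacle. The only point requiring care is that $\Omega_*$-admissibility must hold for all three algebras, which is built into the definition of an $\Omega_*$SES. One should also note that $\partial$ is defined abstractly through $\beta^{-1}$ rather than by an explicit cycle-level construction; no geometric description of $\partial$ is claimed here.
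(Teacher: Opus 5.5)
Your proposal is correct and is essentially the paper's argument. The paper also derives the sequence from $\Omega_*$-admissibility, which makes the bounded transforms isomorphisms intertwining $i^*$ and $p^*$, together with exactness of Kasparov's sequence for semisplit extensions \cite[Chapter 19.5]{Bla}; your definition of $\partial$ by conjugating the $KK$ boundary map with $\beta$ simply spells this transport out explicitly.
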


Again, Theorem \ref{exact1st} follows from the $\Omega_*$-admissibility assumption on $\mathcal{A}$ and exactness of $KK$-groups for semisplit short exact sequences, see for instance \cite[Chapter 19.5]{Bla}. 

\begin{remark}
\label{exicsion}
We note that the authors are unaware of any direct proof of Theorem \ref{exact1st}, even of the weaker statement that 
$$\Omega_*(\mathcal{C},B) \xrightarrow{p^*}\Omega_*(\mathcal{A},B) \xrightarrow{i^*} \Omega_*(\mathcal{I},B)$$
is exact for a short exact sequence $0\to \mathcal{I}\xrightarrow{i} \mathcal{A}\xrightarrow{p} \mathcal{C}\to 0$ giving rise to an $\Omega_*$SES. 

In fact, the exactness statement at $\Omega_*(\mathcal{A},B) $ in Theorem \ref{exact1st} is equivalent to the following rather deep statement. Suppose that $(\mathpzc{E},D)$ is an $(\mathcal{A},B)$-cycle with $i^*(\mathpzc{E},D)=\partial \mathfrak{X}$ for an $(\mathcal{I},B)$-bordism $\mathfrak{X}$, then there is an $(\mathcal{A},B)$-cycle $(\mathpzc{E}_0,D_0)$ with $I$ acting trivially on $\mathpzc{E}_0$ and an $(\mathcal{A},B)$-bordism $\tilde{\mathfrak{X}}$ with 
$$\partial\tilde{\mathfrak{X}}=-(\mathpzc{E},D)+(\mathpzc{E}_0,D_0).$$
\end{remark}

\begin{theorem}
Let $\mathcal{A}_1$ and $\mathcal{A}_2$ be $\Omega_*$-admissible $*$-algebras and $B$ a $C^*$-algebra. Assume that 
$$\pi:A_1\to C([0,1],A_2),$$
is a $*$-homomorphism such that $\pi_t:=\mathrm{ev}_t\circ \pi$ satisfies 
$$\pi_0(\mathcal{A}_1)\subseteq \mathcal{A}_2\quad\mbox{and}\quad \pi_1(\mathcal{A}_1)\subseteq \mathcal{A}_2.$$
Then 
$$\pi_0^*=\pi_1^*:\Omega_*(\mathcal{A}_2,B)\to \Omega_*(\mathcal{A}_1,B).$$
\end{theorem}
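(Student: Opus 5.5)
The plan is to transfer homotopy invariance from Kasparov's $KK$-theory via the bounded transform, exactly as for the exactness statements above. Since $\pi_0(\mathcal{A}_1)\subseteq\mathcal{A}_2$ and $\pi_1(\mathcal{A}_1)\subseteq\mathcal{A}_2$, and both maps are $*$-homomorphisms continuous in the $C^*$-norms, the pullbacks
$$\pi_j^*:\Omega_*(\mathcal{A}_2,B)\to\Omega_*(\mathcal{A}_1,B),\quad j=0,1,$$
are well defined group homomorphisms by the functoriality theorem at the start of Subsection \ref{invariancesubsec}. Moreover, the bounded transform is natural with respect to pullback: for an $(\mathcal{A}_2,B)$-cycle $(\mathpzc{E},D)$ we have $\beta(\pi_j^*(\mathpzc{E},D))=({}_{\pi_j}\mathpzc{E},D(1+D^2)^{-1/2})=\pi_j^*\beta(\mathpzc{E},D)$ on the nose at the level of cycles. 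This gives a commuting square
$$\beta\circ\pi_j^*=\pi_j^*\circ\beta:\Omega_*(\mathcal{A}_2,B)\to KK_*(A_1,B).$$

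Next, I will use the homotopy invariance of $KK$-theory in its first variable. The homomorphism $\pi:A_1\to C([0,1],A_2)$ is a homotopy between $\pi_0,\pi_1:A_1\to A_2$, so $\pi_0^*=\pi_1^*:KK_*(A_2,B)\to KK_*(A_1,B)$; this is standard, see for instance \cite{Bla}. Combining this with the square above gives
$$\beta\circ\pi_0^*=\pi_0^*\circ\beta=\pi_1^*\circ\beta=\beta\circ\pi_1^*.$$
Since $\mathcal{A}_1$ is $\Omega_*$-admissible, $\beta:\Omega_*(\mathcal{A}_1,B)\to KK_*(A_1,B)$ is injective, so $\pi_0^*=\pi_1^*$. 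Only injectivity on $\mathcal{A}_1$ is needed; admissibility of $\mathcal{A}_2$ plays no role.

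There is essentially no obstacle; the only point requiring care is naturality of $\beta$ under pullback, and that holds on the nose. One could instead try a direct bordism in the spirit of Theorem \ref{homotopyofrep}, but that would require $\pi(\mathcal{A}_1)\subseteq C^1([0,1],\mathcal{A}_2)$, which is not assumed. This is exactly why I will route the argument through $KK$.
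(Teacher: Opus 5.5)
Your argument is correct and is essentially the paper's: naturality of the bounded transform under pullback, homotopy invariance of $KK$ in the first variable, and admissibility to transfer the equality back to $\Omega_*$. Your observation that only injectivity of $\beta:\Omega_*(\mathcal{A}_1,B)\to KK_*(A_1,B)$ is used is also correct, so admissibility of $\mathcal{A}_2$, which the paper invokes, is not actually needed.
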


By a similar argument as above, the theorem follows from the $\Omega_*$-admissibility assumption on $\mathcal{A}_1$ and $\mathcal{A}_2$ and homotopy invariance of $KK$-groups, see for instance \cite[Chapter 17.9]{Bla}.

\section{Examples when the bounded transform is not an isomorphism}
\label{bddtransformnotisosec}

We will prove that if $\A$ is an amenable Banach $*$-algebra that includes densely and continuously in a $C^{*}$-algebra $A$ then $\Omega_{*}(\A,\mathbb{C})$ is "small" in a precise sense. This yields examples of dense subalgebras for which $\Omega_{*}(\A,\mathbb{C})\ncong KK_{*}(A,\mathbb{C})$.
The same argument applies to Kaad's group $UK^{\rm top}(\A,\mathbb{C})$ from \cite{Kaadunbdd}. 

For a Banach bimodule $X$ over a Banach algebra $\A$ we denote by $HH^{1}(\A,X)$ the first Hochschild cohomology group of $\A$ with coefficients in $X$. We recall that a bimodule derivation $\delta:\A\to X$ is a continuous linear map 
\[\delta:\A\to X,\quad  \forall a,b\in\A \quad \delta(ab)=a\delta(b)+\delta(a)b.\]
 The space of all such derivations is denoted $Z^{1}(\A,X)$. A bimodule derivation $\delta$ is called \emph{inner} if there exists $x\in X$ such that $\delta(a)=\delta_{x}(a):=ax-xa$. The space of inner derivations is denoted $B^{1}(\A,X)$ and is clearly a subspace of $Z^{1}(\A,X)$. The first \emph{Hochschild cohomology group of $\A$ with coefficients in $X$} then admits the description
\[HH^{1}(\A,X):=\frac{Z^{1}(\mathcal{A},X)}{B^{1}(\mathcal{A},X)}.\]
For a Banach bimodule $X$, the \emph{dual module} is the continuous dual space $X^{*}$ provided with the bimodule structure
\[a\cdot \varphi \cdot b (x):=\varphi(bxa),\quad a,b\in\A,\quad x\in X.\]
We call Banach bimodule $Y$ a \emph{dual Banach bimodule} if $Y=X^{*}$ for some Banach bimodule $X$.

\begin{define} 
A Banach algebra $\A$ is \emph{amenable} if $HH^{1}(\A,X^*)=0$ for all Banach bimodules $X$. We say that $\A$ is $\mathbb{B}(H)$-amenable if $HH^{1}(\A,\mathbb{B}(H))=0$.
\end{define}

Note that since $\mathbb{B}(H)=\mathcal{L}^1(H)^*$, $\mathbb{B}(H)$ is a dual Banach bimodule. Therefore amenability implies $\mathbb{B}(H)$-amenability.

\begin{lemma}
\label{commpert}
Let $\A$ a $\mathbb{B}(H)$-amenable Banach algebra and $(\A,H,D)$ an unbounded $(\A,\mathbb{C})$-cycle. Then there exists $R\in \mathbb{B}(H)$ such that $[D-R,a]=0$ for all $a\in\A$. If $H$ is graded and the operator $D$ is odd, then $R$ can be chosen to be odd.
\end{lemma}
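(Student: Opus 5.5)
The plan is to realize the commutator map $a\mapsto [D,a]$ as a bounded derivation of $\A$ into the dual Banach bimodule $\mathbb{B}(H)$. Then $\mathbb{B}(H)$-amenability says this derivation is inner. Its implementing element is the operator $R$ we want.

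First I check that $\delta(a):=[D,a]$ (the bounded extension) is a continuous bimodule derivation $\A\to\mathbb{B}(H)$. Here $\A$ acts on $\mathbb{B}(H)$ by left and right multiplication through the representation on $H$, and this is a Banach bimodule structure because $\A\hookrightarrow A\to \mathbb{B}(H)$ is continuous. The Leibniz rule $\delta(ab)=a\delta(b)+\delta(a)b$ holds on $\Dom(D)$, since $\A\subseteq\mathrm{Lip}(D)$ preserves the domain. It then extends to all of $H$ by density. Continuity of $\delta$ is exactly Proposition \ref{autoodkkd} (automatic continuity), which uses the closed graph theorem and the Banach algebra norm on $\A$. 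Thus $\delta\in Z^1(\A,\mathbb{B}(H))$.

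By $\mathbb{B}(H)$-amenability, $HH^1(\A,\mathbb{B}(H))=0$, so there is $R_0\in\mathbb{B}(H)$ with $[D,a]=aR_0-R_0a=[a,R_0]$ for all $a\in\A$. Hence $[D+R_0,a]=0$ on $\Dom(D)$, and I set $R:=-R_0$. The identity $[D-R,a]=0$ is meaningful because $R$ is bounded, so $\Dom(D-R)=\Dom(D)$, which is preserved by $\A$. One may also want $R$ self-adjoint, so that $D-R$ is again self-adjoint. For this I replace $R$ by $\tfrac12(R+R^*)$. If $\A$ is a $*$-algebra acting by a $*$-representation, taking adjoints of $[D,a^*]=[a^*,R_0]$ gives $[D,a]=[a,-R_0^*]$ (using that $D$ is self-adjoint, so $[D,a^*]^*=-[D,a]$). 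The averaged operator therefore still implements $\delta$.

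For the graded case, let $\gamma$ be the grading. The operator $D$ is odd and $\A$ acts evenly, so $\delta(a)$ is odd. Given any implementing $R$, decompose it as $R=R_{\rm ev}+R_{\rm odd}$ using $R_{\rm ev}=\tfrac12(R+\gamma R\gamma)$. Since $[a,R_{\rm ev}]$ is even and $[a,R_{\rm odd}]$ is odd, comparing the odd parts of $[D,a]=[R,a]$ gives $[D,a]=[R_{\rm odd},a]$. So $R_{\rm odd}$ works, and self-adjointness survives this projection.

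The only genuinely nontrivial input is continuity of $\delta$ in the Banach topology of $\A$, which Proposition \ref{autoodkkd} supplies. I expect the remaining care to lie in checking that the bimodule used in the amenability definition matches the left and right multiplication structure on $\mathbb{B}(H)=\mathcal{L}^1(H)^*$. That is routine.
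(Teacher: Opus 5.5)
Your proposal is correct and is essentially the paper's proof. Proposition \ref{autoodkkd} makes $a\mapsto[D,a]$ a bounded derivation into $\mathbb{B}(H)$, $\mathbb{B}(H)$-amenability makes it inner, and in the graded case you pass to the odd part of the implementing operator. Your optional self-adjointness step is not in the paper, although it is implicitly needed when $D-R$ is used as a cycle operator in Proposition \ref{cyclereduction}. That step has a dropped sign: taking adjoints of $[D,a^*]=[a^*,R_0]$ gives $-[D,a]=[a,-R_0^*]$, i.e.\ $[D,a]=[a,R_0^*]$, and it is this corrected identity that makes $\tfrac12(R_0+R_0^*)$ an implementer, so your conclusion still stands.
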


\begin{proof}
It follows from Proposition \ref{autoodkkd} that $\delta_{D}(a):=[D,a]$ defines a bimodule derivation $\delta_{D}:\A\to \mathbb{B}(H)$.
Since $\A$ is $\mathbb{B}(H)$-amenable, there exists $R\in\mathbb{B}(H)$ such that $\delta_{D}(a)=\delta_{R}(a)=[R,a]$. In case $(\A,H,D)$ is graded, since $[D,a]$ is an odd operator, it follows that $[R,a]$ is an odd operator and thus the even part of $R$ commutes with $\A$. We can thus without loss of generality replace $R$ by its odd part.
The result now follows.
\end{proof}

\begin{prop}
\label{cyclereduction}
Let $\A$ be an $\mathbb{B}(H)$-amenable Banach algebra, $(\A,H,D)$ an unbounded $(\A,\mathbb{C})$-cycle and $R$ be as in Lemma \ref{commpert}. Then the $\A$ representation on $H$ restricts to $\ker (D-R)$ and there it acts by compact operators.   We have
\[[(\A,H,D)]=[(\A,\ker(D-R),0)],\]
in $\Omega^{*}(\A,\mathbb{C})$.
\end{prop}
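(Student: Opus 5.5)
The plan is to first replace $D$ by $D-R$ using Corollary \ref{locbddperturcor}, and then split $D-R$ into its kernel and a part on which it is invertible and commutes with $\A$.

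\textbf{Step 1: perturb.} Since $R\in\mathbb{B}(H)$, I will replace it by its self-adjoint (and odd, in the graded case) part. The commutator identity $[D-R,a]=0$ is preserved because $\A$ is a $*$-algebra: taking adjoints in $[R,a]=[D,a]$ gives $[R^*,a^*]=[D,a^*]$, so $[R^*,b]=[D,b]$ for all $b$. A bounded symmetric operator is relatively bounded with bound $0$. Corollary \ref{locbddperturcor} therefore gives a bordism $(H,D)\sim_{\rm bor}(H,D-R)$, and $(H,D-R)$ is again an $(\A,\mathbb{C})$-cycle.

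\textbf{Step 2: decompose.} Set $D':=D-R$. It is self-adjoint, and $a(1+D'^2)^{-1}$ is compact for every $a\in A$. By density the relation $[D',a]=0$ extends from $\A$ to all $a\in A$, so $A$ commutes with the bounded Borel functional calculus of $D'$. In particular the spectral projections $P_0:=\chi_{\{0\}}(D')$, $\chi_{(0,\infty)}(D')$ and $\chi_{(-\infty,0)}(D')$ commute with $A$. Hence the representation restricts to $\ker D'=P_0H$, and on it $aP_0=a(1+D'^2)^{-1}P_0$ is compact.

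This gives a splitting of cycles
$$(H,D')=(\ker D',0)+((1-P_0)H,D'|).$$
In the graded case $P_0$ is even, since $D'$ is odd.

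\textbf{Step 3: the invertible part is nullbordant.} The operator $D'|$ on $(1-P_0)H$ is injective. The obstacle is that it need not be invertible: $0$ may be an accumulation point of the spectrum when $A$ acts non-unitally or non-essentially. I would handle this by deforming the spectrum away from zero via functional calculus. Choose an odd function $f$ with $f(x)=x$ for $|x|\ge1$ and $|f(x)|\ge 1/2$ for $x\neq0$, e.g. $f(x)=\mathrm{sign}(x)\max(|x|,1/2)$.

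Then $f(D')-D'$ is bounded and commutes with $A$, so Corollary \ref{locbddperturcor} gives a bordism from $D'|$ to the invertible operator $f(D')|$. Compactness of $a(1+f(D')^2)^{-1}$ follows from that of $a(1+D'^2)^{-1}$, because $(1+f(x)^2)^{-1}\le C(1+x^2)^{-1}$. The operator $f(D')|$ is invertible and commutes with $A$.

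Corollary \ref{specedmopmad} now applies with $\chi$ a smooth odd function equal to $\pm1$ on the spectrum. Then $\chi(f(D'))$ commutes with $A$, so the cycle is spectrally decomposable and hence nullbordant.

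\textbf{Step 4: conclude.} Adding the bordisms from Steps 1--3 gives
$$[(H,D)]=[(\ker(D-R),0)]$$
in $\Omega_*(\A,\mathbb{C})$, which is the claimed equality.
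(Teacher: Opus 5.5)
Your proof is correct. It follows a somewhat different decomposition from the paper's, and along the way it closes a small gap in the paper's argument.

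\textbf{Comparison of routes.} Both proofs start with the same bounded perturbation, replacing $D$ by $D-R$. After that the paper does not split $H$. Instead it adds $-\ker(D-R)$ and couples it to $H$ through the inclusion $\iota$, giving the cycle $\bigl(H\oplus -\ker(D-R),\bigl(\begin{smallmatrix} D-R&\iota\\ \iota^*&0\end{smallmatrix}\bigr)\bigr)$. This is a bounded perturbation of $(H,D-R)+(-(\ker(D-R),0))$, and the paper declares it degenerate because its operator is invertible and commutes with $\A$. You instead split off $\ker D'$ as an orthogonal direct summand. You then show the complement is null-bordant after pushing its spectrum away from $0$ with $f$.

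\textbf{What each route buys.} The paper's cancellation trick is shorter, but its invertibility claim tacitly requires $0$ to be an isolated point of the spectrum of $D-R$. This is automatic when $(1+(D-R)^2)^{-1}$ is compact, for instance when $\A$ is unital and acts unitally. It can fail in general. Take $c_0(\mathbb{N})$, which is nuclear and hence amenable, acting diagonally on $\ell^2(\mathbb{N})$ with $D=\mathrm{diag}(1/n)$ and $R=0$. Then the kernel is zero, the coupled operator is just $D$, and it is not invertible. Your deformation by $f$ is exactly what repairs this, and it could equally be inserted into the paper's argument.

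\textbf{Further points you handle.} You pass to the self-adjoint part of $R$, which is needed for $D-R$ to be self-adjoint. You also verify that the representation restricts to $\ker(D-R)$ and acts there by compact operators. The paper's proof leaves both points implicit.
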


\begin{proof}
Since $R$ is bounded $(\A,H,D)$ is bordant to $(\A,H,D-R)$. Denote by $\iota:\ker (D-R)\to H$ the inclusion. The cycle
\begin{equation}
\label{degeneratemodification}
\left(\A, H\oplus -\ker(D-R),\begin{pmatrix}D-R & \iota\\ \iota^{*} & 0\end{pmatrix}\right),
\end{equation}
is bordant to  $(\A,H,D-R)\oplus -(\A,\ker(D-R),0)$ as it is a bounded perturbation. However, the cycle \eqref{degeneratemodification} is degenerate, as its operator is invertible and
commutes with $\A$. Hence it is null-bordant and
\[[(\A,H,D-R)]-[ (\A,\ker(D-R),0)]=0,\]
in $\Omega^{*}(\A,\mathbb{C})$ as desired.
\end{proof}

\begin{theorem}
\label{amenable}
Let $\mathcal{A}$ be a $\mathbb{B}(H)$-amenable Banach algebra that includes densely and continuously in $A$. Then $\Omega_{1}(\A,\mathbb{C})=0$ and $\Omega_{0}(\A,\mathbb{C})$ is generated by $KK$-bordism classes of $*$-homomorphisms $\pi:A\to \mathbb{K}(H)$
\end{theorem}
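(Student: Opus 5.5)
The plan is to reduce every cycle to one with zero operator, using Proposition \ref{cyclereduction}, and then analyse what a cycle with zero operator can be. Take an $(\A,\C)$-cycle $(H,D)$, where $H$ is a separable Hilbert space (graded in the even case). Lemma \ref{commpert} supplies $R\in\mathbb{B}(H)$, odd in the graded case, with $[D-R,a]=0$ for all $a\in\A$. Proposition \ref{cyclereduction} then gives
$$[(H,D)]=[(\ker(D-R),0)]\quad\mbox{in } \Omega_*(\A,\C),$$
and on $\ker(D-R)$ the algebra $\A$, hence also $A$ by density and continuity, acts by compact operators. Before using this, I will check that $\ker(D-R)$ is invariant under the action. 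This holds because $D-R$ commutes with $\A$ on $\Dom(D)$ and $\A$ preserves the domain. Since $D-R$ is odd, $\ker(D-R)$ is also graded in the even case.

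In the odd case, $(\ker(D-R),0)$ is an odd cycle with zero operator. Its operator is trivially self-adjoint and commutes with $A$, so the only nontrivial cycle condition is $a(1+0)^{-1}=a$ being compact, which is exactly the compact-action statement above. To show this cycle is nullbordant, I would follow the proof of Proposition \ref{cyclereduction}. An odd cycle with zero operator is a bounded perturbation of the cycle with operator $1$, by Corollary \ref{locbddperturcor}. The operator $1$ commutes with $A$, is invertible, and $\chi_{[0,\infty)}(1)=1$ commutes with $A$, so Corollary \ref{specedmopmad} shows the perturbed cycle is spectrally decomposable and hence nullbordant. The resolvent condition for the operator $1$ also follows from compactness of the action. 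Gluing the two bordisms gives $\Omega_1(\A,\C)=0$.

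In the even case, write $\ker(D-R)=K_+\oplus K_-$ according to the grading. The action of $A$ is even, so it splits as two $*$-homomorphisms $\pi_\pm:A\to\mathbb{K}(K_\pm)$. With zero operator the cycle is exactly $(K_+,\pi_+,0)+(-(K_-,\pi_-,0))$, where each summand is viewed as a graded module concentrated in a single degree. The $KK$-bordism class is therefore the difference of classes of $*$-homomorphisms $\pi:A\to\mathbb{K}(H)$, which proves generation.

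The main obstacle I expect is the bookkeeping in the reduction. One has to check that the perturbation by the bounded operator $R$ is a legitimate bordism, which is Corollary \ref{locbddperturcor} since $R$ is bounded, and that the restriction to $\ker(D-R)$ still satisfies the resolvent condition, which reduces to the compactness statement in Proposition \ref{cyclereduction}. The remaining steps are formal.
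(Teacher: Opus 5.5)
Your proof is correct and follows the paper's route: Lemma \ref{commpert} and Proposition \ref{cyclereduction} reduce any cycle to one with zero operator on which $A$ acts by compact operators, and in the even case the class is read off as a difference of two compact representations. The paper simply asserts $\Omega_1(\A,\C)=0$ at that point; you supply the missing nullbordism of the odd zero-operator cycle by perturbing $0$ to the invertible operator $1$, which commutes with $A$, and this is the same mechanism the paper uses inside the proof of Proposition \ref{cyclereduction}.
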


\begin{proof}
By Proposition \ref{cyclereduction} any element in $\Omega_{*}(\A,\mathbb{C})$ can be represented by a cycle $(A,H,0)$ (in particular $A$ acts by compact operators). Therefore
$\Omega_{0}(\A,\mathbb{C})$ consists of bordism classes of compact $A$ representations and $\Omega_{1}(\A,\mathbb{C})=0$.
\end{proof}

\begin{remark}
Theorem \ref{amenable} is an instance of the principle that if $\mathcal{A}\subseteq A$ is ``too big'', then there are to few $(\mathcal{A},B)$-cycles to exhaust $KK_*(A,B)$. We note that this stands in sharp contrast to the situation for $K$-theory, where the opposite phenomen occurs: if $\mathcal{A}\subseteq A$ is ``too small'' then both injectivity and surjectivity of $K_*(\mathcal{A})\to K_*(A)$ can fail.
\end{remark}

We can apply Theorem \ref{amenable} to concrete $\mathbb{B}(H)$-amenable Banach algebras so as to conclude that $\Omega_{*}(\A,\mathbb{C})\ncong KK_{*}(A,\mathbb{C})$. A first class of examples arises by taking $\A=A$ a $C^{*}$-algebra. Recall that a $C^{*}$-algebra $A$ has the \emph{similarity property} if every homomorphism $A\to\mathbb{B}(H)$ is similar to a $*$-homomorphism, and $A$ has the \emph{derivation property} if every derivation $\delta:A\to \mathbb{B}(H)$ is inner. By a theorem of Kirchberg \cite{kirchequi} these two properties are equivalent.

\begin{cor} 
If $A$ is a nuclear $C^{*}$-algebra or a $C^*$-algebra with no tracial states, and $K^1(A)\neq 0$ there is no isomorphism $\Omega_{*}(A,\mathbb{C})\cong KK_{*}(A,\mathbb{C})$.
\end{cor}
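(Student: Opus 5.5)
The plan is to show that $\Omega_*(A,\C)$ is too small to surject onto $KK_*(A,\C)$ in odd degree, working with $\mathcal{A}=A$ regarded as a Banach $*$-algebra (the $C^*$-norm trivially makes the inclusion continuous).

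First, I verify that $A$ is $\mathbb{B}(H)$-amenable in each of the two cases. If $A$ is nuclear, it is amenable as a Banach algebra by Haagerup's theorem, and amenability implies $\mathbb{B}(H)$-amenability because $\mathbb{B}(H)=\mathcal{L}^1(H)^*$ is a dual bimodule. If $A$ has no tracial states, then by Haagerup's result on the similarity problem $A$ has the similarity property. Kirchberg's theorem \cite{kirchequi} then gives the derivation property, namely that every derivation $A\to\mathbb{B}(H)$ is inner, and this is exactly $HH^1(A,\mathbb{B}(H))=0$. One technical point needs care here: automatic continuity of derivations on $C^*$-algebras (Ringrose) ensures that Kirchberg's derivation property covers all bounded derivations. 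In any case, the derivations that actually arise come from Proposition \ref{autoodkkd} and are therefore already continuous.

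Next, I apply Theorem \ref{amenable} with $\mathcal{A}=A$ and conclude that $\Omega_1(A,\C)=0$. Since $KK_1(A,\C)=K^1(A)\neq 0$ by hypothesis, the bounded transform $\beta:\Omega_1(A,\C)\to KK_1(A,\C)$ cannot be surjective, so $\beta$ is not an isomorphism.

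The statement as written asks that there be no isomorphism whatsoever between the two groups. Degree one already settles this for graded groups: any graded isomorphism would have to identify $\Omega_1(A,\C)=0$ with $KK_1(A,\C)\neq0$, which is impossible. The step I expect to need the most care is the no-tracial-state case. There the input is the Haagerup–Kirchberg chain of results, and I must confirm that "derivation property" is being applied to derivations into $\mathbb{B}(H)$ for arbitrary (also non-unital or degenerate) representations. To handle this, I would reduce to the nondegenerate part of the representation $H$ on which $A$ acts, and treat the complement, where $A$ acts by zero, by hand: there a derivation is determined by $a\mapsto a x$ type terms, which are easily seen to be inner.
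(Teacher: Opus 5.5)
Your proposal is correct and follows the paper's proof. You establish that $A$ is $\mathbb{B}(H)$-amenable (in the nuclear case via Haagerup's amenability theorem rather than via the similarity property, which is equally valid), apply Theorem \ref{amenable} to get $\Omega_1(A,\C)=0$, and contrast this with $KK_1(A,\C)=K^1(A)\neq 0$; your extra care about degenerate representations and automatic continuity is routine bookkeeping that the paper leaves implicit.
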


\begin{proof}
All nuclear $C^{*}$-algebras and all traceless $C^{*}$-algebras have the similarity property and hence the derivation property. Thus it follows from Theorem \ref{amenable} that $\Omega_1(A,\mathbb{C})=0$. If $K^1(A)=KK_1(A,\C)\neq 0$, there is no isomorphism $\Omega_{*}(A,\mathbb{C})\cong KK_{*}(A,\mathbb{C})$.
\end{proof}

Clearly, the counterexamples arising from taking $\A$ to be a $C^{*}$-algebra are somewhat artificial. A more natural class of counterexamples arises from convolution algebras of locally compact groups. Recall that for a locally compact group $G$, $L^1(G)$ is amenable if and only if $G$ is amenable by a theorem of Johnson \cite{johnsonmem}.

\begin{cor} 
\label{lonenontosdd}
If $G$ is an amenable locally compact group for which $K^1(C^*(G))\neq 0$, there is no isomorphism $\Omega_*(L^{1}(G),\mathbb{C})\ncong KK_*(C^{*}(G),\mathbb{C})$.
\end{cor}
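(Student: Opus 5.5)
The plan is to apply Theorem \ref{amenable} to $\A=L^1(G)$ and then see that the resulting constraint on $\Omega_1$ contradicts $K^1(C^*(G))\neq 0$. First, $L^1(G)$ is a Banach $*$-algebra that includes densely and continuously (contractively) into $C^*(G)$. By Johnson's theorem \cite{johnsonmem}, amenability of $G$ makes $L^1(G)$ an amenable Banach algebra. Since $\mathbb{B}(H)=\mathcal{L}^1(H)^*$ is a dual Banach bimodule for every representation of $L^1(G)$ on $H$, amenability gives $\mathbb{B}(H)$-amenability, as remarked after the definition. Theorem \ref{amenable} therefore applies and yields $\Omega_1(L^1(G),\C)=0$.

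On the other side, $KK_1(C^*(G),\C)=K^1(C^*(G))\neq 0$ by hypothesis. So the odd parts of $\Omega_*(L^1(G),\C)$ and $KK_*(C^*(G),\C)$ are non-isomorphic. This rules out any graded isomorphism, and in particular shows that the bounded transform $\beta$ is not surjective in odd degree. If one wants to exclude even ungraded abstract isomorphisms, one can additionally use the description of $\Omega_0$ from Theorem \ref{amenable}. However, the statement concerns the $\Z/2$-graded groups, so the degree-one comparison suffices.

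The only point needing care is the precise hypothesis in Theorem \ref{amenable}. It requires $\mathbb{B}(H)$-amenability for the Hilbert spaces $H$ carrying $(\A,\C)$-cycles, which holds for all $H$ by amenability. Also, Proposition \ref{autoodkkd} gives continuity of the derivation $a\mapsto[D,a]$ because $L^1(G)\to C^*(G)$ is continuous. Beyond these checks there is no real obstacle: the argument is a direct combination of Johnson's theorem with Theorem \ref{amenable}.
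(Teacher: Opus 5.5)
Your proposal is correct and follows the paper's proof essentially verbatim: Johnson's theorem gives amenability, hence $\mathbb{B}(H)$-amenability, of $L^1(G)$. Theorem~\ref{amenable} then forces $\Omega_1(L^1(G),\C)=0$, which cannot be matched with $KK_1(C^*(G),\C)=K^1(C^*(G))\neq 0$ as $\Z/2$-graded groups. Drop your side remark about excluding ungraded abstract isomorphisms via $\Omega_0$: it is not needed, and Theorem~\ref{amenable}'s description of $\Omega_0$ does not justify it in general.
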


\begin{proof}
By Johnson's theorem \cite{johnsonmem}, $L^{1}(G)$ is $\mathbb{B}(H)$-amenable. Thus for amenable groups we can apply Theorem \ref{amenable} to $L^{1}(G)$ to deduce $\Omega_1(L^1(G),\C)=0$. 
\end{proof}

We note that $G=\Z$ satisfies all the assumptions of Corollary \ref{lonenontosdd}. The Fourier transform realizes $\ell^1(\Z)$ as a $*$-subalgebra of $C(S^1)$ -- the Wiener algebra. Let $\Z[S^1]$ denote the free group generated on $S^1$ viewed as a discrete set. It is readily deduced from Theorem \ref{amenable} that there is a surjection 
$$\Z[S^1]\to \Omega_*(\ell^1(\Z),\C).$$
The mapping sends a point $t$ to the even spectral triple $(\C_t,0)$ where $\C_t:=\C$ as an evenly graded Hilbert space with a left action of $\ell^1(\Z)$ given by the representation of $\Z$ with character $t$. We note that the class of $(\C_t,0)$ in $\Omega_*(\ell^1(\Z)\cap C^1(S^1),\C)$ is independent of $t$. Indeed, for $s,t\in S^1$ said bordism stems from Theorem \ref{homotopyofrep} using an interval $[s_0,t_0]\subseteq [0,1)$ for $\mathrm{e}^{2\pi is_0}=s$ and  $\mathrm{e}^{2\pi it_0}=t$. We can even say that $\Omega_*(\ell^1(\Z)\cap C^1(S^1),\C)\cong \Z$ for $*=0,1$ using Theorem \ref{cpxinerpbofodthm} and Corollary \ref{closedinrcor}. The situation for $\ell^1(\Z)$ is drastically different.

\begin{theorem}
\label{bordismforwiener}
The mapping defined in the proceeding paragraph is an isomorphism 
$$\Z[S^1]\cong \Omega_*(\ell^1(\Z),\C).$$
In particular, $\Omega_*(\ell^1(\Z),\C)$ is an uncountably generated abelian group. Moreover, the bounded transform 
$$\beta:\Omega_*(\ell^1(\Z),\C)\to KK_*(C(S^1),\C),$$
fails to be injective for $*=0$ and fails to be surjective for $*=1$.
\end{theorem}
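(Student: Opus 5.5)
The plan is to construct the map $\Z[S^1]\to\Omega_0(\ell^1(\Z),\C)$, $t\mapsto[(\C_t,0)]$, show it is surjective onto $\Omega_0$ and that $\Omega_1=0$ using Theorem \ref{amenable}, and then detect each generator by a bordism invariant in order to get injectivity. Since $\Z$ is amenable, $\ell^1(\Z)$ is amenable and hence $\mathbb{B}(H)$-amenable. Theorem \ref{amenable} then gives $\Omega_1(\ell^1(\Z),\C)=0$, and every class in $\Omega_0$ is represented by a cycle $(H,0)$ with $C(S^1)$ acting by compact operators. Such a representation is diagonalizable, with each eigenvalue $t\in S^1$ of finite multiplicity on $H^+$ and on $H^-$, and only countably many eigenvalues occur. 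Using Proposition \ref{cyclereduction} and the fact that the $0$-operator cycle is a direct sum, I would write $(H,0)=\bigoplus_t(\C_t^{m_t^+}\oplus -\C_t^{m_t^-},0)$. Surjectivity then reduces to showing that the infinite sums are bordant to finite ones: the pairs $\C_t\oplus-\C_t$ are nullbordant via the bounded perturbation $\begin{pmatrix}0&1\\1&0\end{pmatrix}$, which commutes with the action and is invertible, hence spectrally decomposable. Compactness of the action forces $m^+_t-m^-_t\neq 0$ for only finitely many $t$, since a norm limit of finite rank operators cannot have infinitely many nonzero eigenvalue-$\lambda$ contributions of size one bounded away from zero only if the multiplicities blow up. I need to check this step carefully.

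For injectivity, I would define for each $t\in S^1$ an invariant $\Omega_0(\ell^1(\Z),\C)\to\Z$ that reads off $m^+_t-m^-_t$, and show that it is a bordism invariant. Given a bordism $\mathfrak{X}=((\mathpzc{N},T),\Theta,(H,0))$, amenability applied to the interior chain (via Lemma \ref{commpert}, on the localized algebra $C_c^\infty((0,1],\ell^1(\Z))$, which is still amenable as a projective tensor product with a Banach algebra) lets me replace $T$ by $T-R$ commuting with $\ell^1(\Z)$. The operator then decomposes along the joint spectral subspaces of the unitary generator $u$. The key observation is that $u$ has no continuous functional calculus within $\ell^1$ that separates points, but the commutation is exact, so I can compress to the spectral projection $\chi_{\{t\}}(u)$, a von Neumann algebra projection commuting with everything. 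This yields a $(\C,\C)$-bordism whose boundary is $(\C^{m_t^+}\oplus-\C^{m_t^-},0)$. The index isomorphism $\Omega_*(\C,\C)\cong\Z$ from \cite[Theorem 3.7]{DGM} then shows $m^+_t-m^-_t=0$. The main obstacle is this compression step: I must verify that compressing by $\chi_{\{t\}}(u)$ preserves regularity, the half-closedness and the boundary data. I would handle this by noting that $\chi_{\{t\}}(u)$ commutes with $b$ and with $T-R$ on its domain, so all axioms of Definition \ref{HilBorDef} restrict, and local compactness for $b(\phi)$ is preserved by compression. Also, $R$ must be chosen compatibly with the product structure near the boundary; since $R$ is bounded, this is absorbed by a bounded-perturbation bordism, by Corollary \ref{locbddperturcor}.

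The final claims about the bounded transform follow directly. $\beta$ sends $\sum n_t[t]$ to $(\sum n_t)[\mathrm{pt}]\in KK_0(C(S^1),\C)\cong\Z$, so $[\C_s]-[\C_t]\neq 0$ is in the kernel, and $\beta$ is not injective for $*=0$. For $*=1$, the groups are $\Omega_1=0$ and $KK_1(C(S^1),\C)\cong\Z$, so $\beta$ is not surjective. Uncountable generation follows from the isomorphism with $\Z[S^1]$.
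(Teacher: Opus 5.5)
Your route differs from the paper's, and after the repairs below it is sound.

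\textbf{Comparison with the paper.} The paper reduces to showing that a nullbordant $\bigoplus_j(\C^{k_j}_{t_j},0)$ vanishes. It then asserts, by appeal to the argument of Theorem \ref{amenable}, that the interior of any nullbordism is finite-dimensional, which forces $p=0$ and a trivial boundary.

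You instead localize at each character. Once $[T-R,a]=0$ for all $a\in\ell^1(\Z)$, the projection $e_t=\chi_{\{t\}}(\pi(u))$ commutes with $T-R$, $T^*-R$, $p$ and every $b(\phi)$. Also $\theta$ carries $e_tp\mathpzc{N}$ onto $L^2[0,1]\hat{\boxtimes}e_tH$. So the compression is a $(\C,\C)$-bordism with boundary $(e_tH,0)$. Half-closedness survives because $e_t\leq\pi(1)$, hence $b(\psi)e_t=b(\psi\cdot 1_{\ell^1(\Z)})e_t$ for scalar $\psi$. Theorem \ref{alknlkanda} then gives $m_t^+=m_t^-$.

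What your version buys is the sign information. The cylinder $\Psi(\C_t,0)$ is an $(\ell^1(\Z),\C)$-bordism with infinite-dimensional interior and $p\neq 0$, whose boundary is $\C_t\oplus-\C_t$. So finite-dimensionality of the interior cannot follow from amenability alone, and some index input separating $\C_t\oplus-\C_t$ from $\C_t\oplus\C_t$ is needed. Your invariant $m_t^+-m_t^-$ is exactly that input.

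\textbf{Repairs.} Two of your justifications must be replaced, and the surjectivity bookkeeping needs fixing.

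First, $C^\infty_c((0,1],\ell^1(\Z))$ is not amenable. Algebras of differentiable functions carry nonzero point derivations $\phi\mapsto\phi'(s)$ into one-dimensional dual modules, and these are never inner. You do not need it, however:
\begin{itemize}
\item the interior chain is a symmetric $(\ell^1(\Z),\C)$-chain, and Proposition \ref{autoodkkd} applies to chains;
\item amenability of $\ell^1(\Z)$ alone then produces $R$;
\item $R$ can be taken self-adjoint, because $[T,a]^*=-[T,a^*]$ for symmetric $T$.
\end{itemize}

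Second, Corollary \ref{locbddperturcor} concerns cycles and cannot be used to modify the interior of a bordism; a general bounded $R$ destroys conditions b) and c) of Definition \ref{HilBorDef}. The correct fix uses that the boundary operator is $0$.
\begin{itemize}
\item For $\chi\in C^\infty_c(0,1)$, $T=\theta^{-1}\Psi(0)\theta$ on $b(\chi)\Dom T$, and $\Psi(0)$ commutes with $\ell^1(\Z)$. Hence $[T,a]b(\chi)=b(\chi)[T,a]=0$.
\item Letting $b(\chi_n)\to p$ strongly gives $[T,a]=(1-p)[T,a](1-p)$.
\item So $R':=(1-p)R(1-p)$ still implements the derivation and vanishes on the collar.
\item Then $T-R'$ satisfies a)--c) and half-closedness, with the same domain as $T$.
\end{itemize}

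Third, $\pi(1)$ is a compact projection, hence of finite rank. So only finitely many $t$ occur and no infinite sums arise. What you must still dispose of is the summand $\ker\pi(1)$, on which $\ell^1(\Z)$ acts by zero. It bounds the half-line chain $\Psi_{(0,\infty)}(\ker\pi(1),0)$ of Example \ref{halflineex}. That chain is a genuine bordism here, since $b(\phi)=0$ for every $\ell^1(\Z)$-valued $\phi$.
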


\begin{proof}
For $k\in \Z$ and $t\in S^1$, we let $\C_t^k$ denote the Hilbert space $\C^{|k|}$ graded as an even subspace if $k>0$ and as an odd subspace if $k<0$. We use the convention that $\C^0=0$. The graded Hilbert space $\C_t^k$ carries an action of $\ell^1(\Z)$ defined as the diagonal action by the character $t$. It suffices to show that for $t_1,\ldots, t_N\in S^1$ and $k_1,\ldots, k_N\in \Z$ are such that $\bigoplus_{j=1}^N (\C_{t_j}^{k_j},0)\sim_{\rm bor}0$, then $k_1=\cdots=k_N=0$. Set $\mathpzc{E}:=\bigoplus_{j=1}^N \C_{t_j}^{k_j}$ and $D:=0$. We need to show that $\mathpzc{E}=0$ if $(\mathpzc{E},D)\sim_{\rm bor}0$.

Assume that $\mathfrak{X}$ is a bordism with boundary $(\mathpzc{E},D)$. By the same argument as in the proof of Theorem \ref{amenable}, the interior chain $(\mathpzc{N},T)$ of $\mathfrak{X}$ must satisfy that $\mathpzc{N}$ is finite-dimensional since $C_0((0,1],S^1)$ acts by compacts which also holds true on $p\mathcal{N}\cong \Psi(\mathpzc{E})$. We can from this argument also conclude that $p=0$. As such, $\partial \mathfrak{X}=0$ and it follows that $\mathpzc{E}=0$.

The claim concerning failure of injectivity for $*=0$ and surjectivity for $*=1$ follows from that $KK_*(C(S^1),\C)\cong \Z\oplus \Z[1]$.
\end{proof}

\begin{remark}
For $s,t\in S^1$ with $s\neq t$, consider the cycle $(\C_t,0)\oplus (-\C_s,0)$ which is both a bounded and an unbounded cycle. Theorem \ref{bordismforwiener} shows that this cycle is not nullbordant as an $(\ell^1(\Z),\C)$-cycle. This stands in contrast to the homotopy approach, this cycle is null homotopic both in the bounded and the unbounded sense.
\end{remark}

\section{Examples when $\Omega_*(-,\C)$ fails to satisfy excision}
\label{sec:failexc}

A natural question is to ask whether the $KK$-bordism groups can be computed from excision. This is the case for Kasparov's $KK$-groups: exact sequences arise from semisplit extensions as shown in for instance \cite[Chapter 19.5]{Bla}, see also \cite{HRbook}, and plays an important role in applications \cite{HigsonChar,HigsonCount}. In this section we will construct an example where excision in the contravariant leg fails. Of course, if we are in the class of $*$-algebras studied in Section \ref{bddtransformisosec} where the bounded transform is an isomorphism, excision holds because it holds in $KK$-theory. Our construction relies on the counterexamples to be bounded transform being an isomorphism from Section \ref{bddtransformnotisosec} in combination with the computational virtues of Section \ref{bddtransformisosec}.

Let $\mathcal{A}$ denote the Wiener algebra on $S^1$. This algebra consists of functions $a\in C(S^1)$ whose Fourier expansion $a=\sum_{k\in \Z}a_kz^k$ satisfies that $\sum_k |a_k|<\infty$. In other words, $\mathcal{A}$ can be defined from the property that it is isomorphic to $\ell^1(\Z)$ under the Fourier transform. We note that $\mathcal{A}\hookrightarrow C(S^1)$ is closed under holomorphic functional calculus and induces an isomorphism on $K$-theory. Nevertheless, amenability of $\Z$ and Theorem \ref{amenable}  shows that $\Omega_1(\mathcal{A},\C)=0$ despite $KK_1(C(S^1),\C)\cong \Z$. 

Define the *-subalgebra $\mathcal{B}\subseteq \mathcal{T}$ of the Toeplitz algebra to consist of Toeplitz operators with symbol in $\mathcal{A}$. By construction, there is a short exact sequence of $*$-algebras
\begin{equation}
\label{toepwiener}
0\to \K(H^2(S^1))\to \mathcal{B}\to \mathcal{A}\to 0.
\end{equation}
Alternatively, we could set $\mathcal{B}:=P\mathcal{A}P+ \K(H^2(S^1))$ where $P$ denotes the Szegö projection. 

\begin{theorem}
\label{failureofexicsionthm}
There are no maps $\Omega_0(\K,\C)\to\Omega_1(\mathcal{A},\C)$ and $\Omega_1(\K,\C)\to \Omega_0(\mathcal{A},\C)$ for which the  six term sequence 
$$\begin{CD}
\Omega_0(\mathcal{A},\C)@>>>\Omega_0(\mathcal{B},\C) @>>>\Omega_0(\K,\C) \\
@AAA  @. @VVV \\
\Omega_1(\K,\C)@<<<\Omega_1(\mathcal{B},\C)@<<< \Omega_1(\mathcal{A},\C)
\end{CD},$$
induced from Equation \eqref{toepwiener} is exact. 
\end{theorem}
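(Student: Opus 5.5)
The plan is to compute every group in the would-be six term sequence and show that exactness fails at some node no matter which boundary maps are chosen.

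First, the groups for $\K=\K(H^2(S^1))$ and $\mathcal{A}$. We regard $\K$ as the algebra of finite rank operators with matrix entries in a fixed basis, which is countably generated. Then Theorem \ref{bddisothm} gives $\Omega_0(\K,\C)\cong KK_0(\K,\C)\cong\Z$ and $\Omega_1(\K,\C)=0$. If the intended dense subalgebra is all of $\K$, we instead use the $C^*$-algebra case of Theorem \ref{amenable}: $\K$ is nuclear, hence has the derivation property, so every cycle reduces to a compact representation, and such cycles are classified by multiplicity. Either way we get $\Omega_0(\K,\C)\cong\Z$ and $\Omega_1(\K,\C)=0$. For $\mathcal{A}\cong\ell^1(\Z)$, Theorem \ref{bordismforwiener} gives $\Omega_1(\mathcal{A},\C)=0$ and $\Omega_0(\mathcal{A},\C)\cong\Z[S^1]$, free abelian on uncountably many generators.

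Second, $\Omega_*(\mathcal{B},\C)$. The key observation is that $\mathcal{B}=P\mathcal{A}P+\K$ is countably generated: it is generated by the Toeplitz operators $T_z,T_z^*$ (note $T_{z^k}=T_z^k$, $T_{z^{-k}}=(T_z^*)^k$) together with the countably many matrix units of $\K$. However, a general element $T_a$ with $a\in\mathcal{A}$ is an infinite $\ell^1$-sum, so $\mathcal{B}$ itself is not algebraically countably generated. We therefore take a different route. We show that $\mathcal{B}$ is a Banach $*$-algebra in the norm $\|a\|_{\ell^1}+\|k\|$ (up to the compact part), containing the countably generated algebra $\mathcal{B}_0:=\mathrm{span}\{T_z^kT_z^{*l}\}+\text{finite rank}$ densely. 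Then, since $\mathcal{B}_0\subseteq\mathcal{B}\subseteq\mathcal{T}$, we would like Theorem \ref{bddisothmmoregen} to apply. If that route is blocked, we fall back on only the qualitative information needed.

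In fact only a weaker fact should be required. The obstruction is the node $\Omega_0(\mathcal{A},\C)\to\Omega_0(\mathcal{B},\C)\to\Omega_0(\K,\C)$ together with $\Omega_1(\K,\C)=0\to\Omega_0(\mathcal{A},\C)$. Exactness at $\Omega_0(\mathcal{A},\C)$, given that the incoming group is $0$, forces $p^*:\Z[S^1]\to\Omega_0(\mathcal{B},\C)$ to be injective. We then show that $p^*$ kills differences $[\C_t]-[\C_s]$. The composite representation of $\mathcal{B}$ on $\C_t$ is evaluation of the symbol at $t$, and these characters of $\mathcal{B}$ are smoothly homotopic: the path $t\mapsto(\text{symbol at }t)$ restricted to $\mathcal{B}_0$ is polynomial. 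We then need the bordism to make sense for all of $\mathcal{B}$. We plan to argue via $\beta$. The bounded transform $\Omega_0(\mathcal{B},\C)\to KK_0(\mathcal{T},\C)\cong\Z$ sends all $[\C_t]$ to the same class, and then either show $\beta$ is injective for $\mathcal{B}$ via Theorem \ref{bddisothmmoregen}, or construct the bordism directly with Theorem \ref{homotopyofrep} using the rotation action $a\mapsto a(\mathrm{e}^{i\theta}\cdot)$. That action is isometric on $\ell^1$ and norm continuous on $\mathcal{B}$, so a $C^1$ homotopy is obtainable by averaging/mollifying in $\theta$ on $\mathcal{B}_0$ and extending by Proposition \ref{autoodkkd}. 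Non-injectivity of $p^*$ then contradicts exactness, whatever the connecting maps are.

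The main obstacle is this second step: producing an honest bordism $(\C_t,0)\sim_{\rm bor}(\C_s,0)$ over $\mathcal{B}$, or a computation of $\Omega_0(\mathcal{B},\C)$, despite $\mathcal{B}$ not being countably generated. Smooth homotopy of the representation needs $\Pi(\mathcal{B})\subseteq C^1$. This fails for rotation on general $\ell^1$ symbols, which is exactly why it fails for $\mathcal{A}$. So the extra compact ideal must be what is exploited. The first thing to try is to show, via the Kaad-type Lemma \ref{prop62kaad} applied inside $\mathcal{T}$ with the Szegö projection providing a symmetry, that the Toeplitz extension makes $\mathcal{B}$-cycles rigid enough that $\Omega_0(\mathcal{B},\C)$ is countable. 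This would give non-injectivity of $p^*$ on the uncountable group $\Z[S^1]$.
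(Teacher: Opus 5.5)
\paragraph{The step you need is false.} Your computations of $\Omega_*(\K,\C)$ and $\Omega_*(\mathcal{A},\C)$ agree with the paper. Your reduction is also correct: since $\Omega_1(\K,\C)=0$, exactness at $\Omega_0(\mathcal{A},\C)$ is equivalent to injectivity of $p^*\colon\Z[S^1]\cong\Omega_0(\mathcal{A},\C)\to\Omega_0(\mathcal{B},\C)$. The gap is that $p^*$ \emph{is} injective. So the step you single out as the main obstacle is false, not merely unproven, and exactness cannot fail at this node.

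The rigidity that makes $\Omega_0(\mathcal{A},\C)$ uncountable survives the passage to $\mathcal{B}$. In the norm $\|PaP+k\|_{\mathcal{B}}:=\|a\|_{\ell^1}+\|PaP+k\|$ that you suggest, $\mathcal{B}$ is a Banach $*$-algebra, continuously and densely included in $\mathcal{T}$. It is amenable, being an extension of the amenable algebra $\ell^1(\Z)$ by the nuclear, hence amenable, closed ideal $\K$. The argument then runs as follows.
\begin{enumerate}
\item Let $\mathfrak{X}$ be a $(\mathcal{B},\C)$-bordism with boundary $\bigoplus_j(\C^{k_j}_{t_j},0)$ and interior operator $T$. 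Because the boundary operator is $0$, conditions b) and c) of Definition \ref{HilBorDef} show that the derivation $a\mapsto[T,a]$ vanishes on the collar.
\item The bounded operator $R$ with $[T,a]=[R,a]$ given by amenability can be chosen to vanish there too, since averaging over an approximate diagonal commutes with the operators $b(\phi)$, which commute with $\mathcal{B}$. Then $T-R$ is the interior operator of a bordism with the same boundary, and it commutes with $\mathcal{B}$, hence with $\pi(\mathcal{T})''$.
\item The support projection of $\pi(\K)$ is central, commutes with $T-R$, and is orthogonal to the collar, because $\K$ acts by zero on each $\C_t$. Cutting it away leaves an $(\mathcal{A},\C)$-null-bordism of the same boundary, and Theorem \ref{bordismforwiener} forces every $k_j$ to vanish.
\item Combined with Theorem \ref{amenable} (a representation of $\mathcal{T}$ by compact operators must kill $\K$), this gives $\Omega_0(\mathcal{B},\C)\cong\Z[S^1]$ with $p^*$ an isomorphism.
\end{enumerate}
Consequently all three of your routes aim at false statements. $\beta$ is not injective on $\Omega_0(\mathcal{B},\C)$, so Theorem \ref{bddisothmmoregen} cannot apply to $\mathcal{B}$. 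There is no bordism $(\C_t,0)\sim_{\rm bor}(\C_s,0)$ over $\mathcal{B}$ for $s\neq t$, via Theorem \ref{homotopyofrep} or otherwise. And $\Omega_0(\mathcal{B},\C)$ is uncountable. The compact ideal gives no extra room, since it acts trivially on every cycle in question.

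\paragraph{Where exactness actually fails.} The failure sits at the other end of that row, at $\Omega_0(\K,\C)\cong\Z$. Its outgoing map lands in $\Omega_1(\mathcal{A},\C)=0$, so exactness there requires $j^*\colon\Omega_0(\mathcal{B},\C)\to\Omega_0(\K,\C)$ to be surjective. The paper shows $j^*=0$ by comparison with the countably generated algebra $\mathcal{B}_0:=P\mathcal{A}_0P+M_\infty(\C)$, where $\mathcal{A}_0$ is the algebra of trigonometric polynomials.
\begin{itemize}
\item Theorem \ref{bddisothm} and excision in $KK$ for the Toeplitz extension make $\Omega_0(\mathcal{A}_0,\C)\to\Omega_0(\mathcal{B}_0,\C)$ surjective.
\item Hence $\Omega_0(\mathcal{B}_0,\C)\to\Omega_0(M_\infty(\C),\C)$ is zero.
\item By naturality, $j^*$ factors through this zero map, because restriction $\Omega_0(\K,\C)\to\Omega_0(M_\infty(\C),\C)$ is an isomorphism.
\end{itemize}
With the amenability of $\mathcal{B}$ established above, you could also see $j^*=0$ directly. $\Omega_0(\mathcal{B},\C)$ is generated by the classes $[\C_t]$, and $j^*$ sends each of them to a cycle on which $\K$ acts by zero. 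Such a cycle is nullbordant, for instance via the half-line chain of Example \ref{halflineex}, which is a bordism once the action vanishes.
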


\begin{remark}
By Theorem \ref{amenable}, there is only one map $\Omega_0(\K,\C)\to\Omega_1(\mathcal{A},\C)$ and only one map $\Omega_1(\K,\C)\to \Omega_0(\mathcal{A},\C)$ because $\Omega_1(\mathcal{A},\C)=0$ and $\Omega_1(\K,\C)=0$.
\end{remark}

\begin{proof}[Proof of Theorem \ref{failureofexicsionthm}]
Theorem \ref{amenable} (see page \pageref{amenable}) shows that $\Omega_1(\mathcal{A},\C)=0$. Moreover, $\K$ is a nuclear $C^*$-algebra, and therefore amenable, so Theorem \ref{amenable} shows that $\Omega_0(\K,\C)=\Z$ and $\Omega_1(\K,\C)=0$. Moreover, $\Omega_0(\mathcal{A},\C)\cong \Z[S^1]$ by Theorem \ref{bordismforwiener} (see page \pageref{bordismforwiener}). Let $j:\K\to \mathcal{B}$ denote the inclusion. We thus need to show that the following sequence is not exact:
$$\begin{CD}
\Z[S^1]@>>>\Omega_0(\mathcal{B},\C) @>j^*>>\Z \\
@AAA  @. @VVV \\
0@<<<\Omega_1(\mathcal{B},\C)@<<< 0
\end{CD}.$$
We shall prove that exactness fails because $j^*=0$. 

Let $\mathcal{A}_0$ denote the polynomial algebra on $S^1$. In other words, $\mathcal{A}_0$ can be defined from the property that it is isomorphic to $c_c(\Z)$ under the Fourier transform. We let $M_\infty(\C)$ denote the infinite matrices (with only finitely many non-zero entries) which we identify with a dense $*$-subalgebra of $\K(H^2(S^1))$ via the Fourier basis of $H^2(S^1)$. We define $\mathcal{B}_0:= P\mathcal{A}_0P+ M_\infty(\C)$, or in other words $\mathcal{B}_0$ is defined from a commuting diagram with exact rows
$$\begin{CD}
0@>>>M_\infty(\C)@>>>\mathcal{B}_0@>>> \mathcal{A}_0@>>> 0\\
@.@VVV  @VVV @VVV @.\\
0@>>>\K(H^2(S^1))@>>>\mathcal{B}@>>> \mathcal{A}@>>> 0
\end{CD}.$$
The algebras $\mathcal{A}_0$, $\mathcal{B}_0$ and $M_\infty(\C)$ are all countably generated. 

By Theorem \ref{bddisothm} and excision in $KK$-theory, we have a six term exact sequence
$$\begin{CD}
\Omega_0(\mathcal{A}_0,\C)@>>>\Omega_0(\mathcal{B}_0,\C) @>j_0^*>>\Omega_0(M_\infty(\C),\C) \\
@AAA  @. @VVV \\
\Omega_1(M_\infty(\C),\C)@<<<\Omega_1(\mathcal{B}_0,\C)@<<< \Omega_1(\mathcal{A}_0,\C)
\end{CD},$$
where $j:M_\infty(\C)\to \mathcal{B}_0$ denotes the inclusion. The Toeplitz algebra $\mathcal{T}$, which is the $C^*$-closure of $\mathcal{B}_0$, is $KK$-equivalent to $\C$ via the composition of the symbol mapping $\mathcal{T}\to C(S^1)$ composed with a point evaluation. As such, $KK_0(C(S^1),\C)\to KK_0(\mathcal{T},\C)$ is surjective. Theorem \ref{bddisothm} (see page \pageref{bddisothm}) implies that $\Omega_0(\mathcal{A}_0,\C)\to \Omega_0(\mathcal{B}_0,\C)$ is surjective and so $j_0^*=0$.

Note that the pull back mapping $\Omega_0(\K,\C)\to \Omega_0(M_\infty(\C),\C)$ is an isomorphism by Theorem \ref{bddisothm}. By naturality, we have a commuting diagram 
$$\begin{CD}
\Omega_0(\mathcal{B},\C)@>j^*>>\Omega_0(\K,\C) \\
@VVV  @V\cong VV \\
\Omega_0(\mathcal{B}_0,\C)@>0>> \Omega_0(M_\infty(\C),\C)
\end{CD}$$
We conclude that $j^*=0$ since it factors over the zero mapping.

\end{proof}

\end{document}